\newcommand{\version}{2.8}
\author{Antongiulio Fornasiero%
\thanks
{University of M\"unster.
\Email{antongiulio.
fornasiero@googlemail.com}}}
\def\shorttitle{Dimensions, matroids, and dense pairs. V. \version}
\title{Dimensions, matroids, and dense pairs of first-order structures
\normalsize{Version \version}}
\newcommand{\Email}[1]{\href{mailto:#1}{\tt<#1>}}
\newlength{\offset}
\providecommand{\cftdotfill}{\@cftdotfill}
\newcommand{\intro}[1]{\textbf{#1}}
\newcommand*{\Pa}[1]{\bigl( #1 \bigr)}
\newcommand*{\PA}[1]{\Bigl( #1 \Bigr)}
\newcommand*{\set}[1]{\{#1\}}
\newcommand*{\abs}[1]{\lvert#1\rvert}
\newcommand*{\card}[1]{\lvert#1\rvert}
\newcommand{\N}{\mathbb{N}}
\newcommand{\Z}{\mathbb{Z}}
\newcommand{\inter}[1]{\mathring{#1}}
\DeclareMathOperator{\interior}{int}
\newcommand{\cl}[1]{\overline{#1}}
\DeclareMathOperator{\bd}{bd}
\newcommand{\rest}{\upharpoonright}
\DeclareMathOperator{\dcl}{dcl}
\DeclareMathOperator{\acl}{acl}
\newcommand{\et}{\ \&\ }
\newcommand{\sdiff}{\mathbin\Delta}
\newcommand{\av}{\bar a}
\newcommand{\bv}{\bar b}
\newcommand{\cv}{\bar c}
\newcommand{\dv}{\bar d}
\newcommand{\ev}{\bar e}
\newcommand{\x}{\bar x}
\newcommand{\y}{\bar y}
\newcommand{\z}{\bar z}
\DeclareMathOperator{\tp}{tp}
\newcommand{\Lang}{\mathcal L}
\newcommand{\monster}{\mathbb M}
\newcommand{\Gm}{\mathbb G}
\newcommand{\Am}{\mathbb A}
\newcommand{\Bm}{\mathbb B}
\newcommand{\Cm}{\mathbb C}
\newcommand{\Dm}{\mathbb D}
\DeclareMathOperator{\SU}{U}
\newcommand{\SUrk}{$\SU$-rank\xspace}
\DeclareMathOperator{\mat}{cl}
\newcommand{\dimat}{\dim^{\mat}}
\newcommand{\rkmat}{\RK^{\mat}}
\DeclareMathOperator{\aut}{Aut}
\newcommand{\Power}{\mathcal P}
\DeclareMathOperator{\RK}{rk}
\newcommand{\rk}{\RK}
\def\Ind#1#2{#1\setbox0=\hbox{$#1x$}\kern\wd0\hbox to
  0pt{\hss$#1\mid$\hss}\lower.9\ht0\hbox to 0pt{\hss$#1\smile$\hss}\kern\wd0}
\newcommand*{\ind}[1][]{\mathop{\mathpalette\Ind{}^{\!\!\!\!\rlap{$\scriptscriptstyle\textnormal{#1}$}\,\,\,\,}}}
\def\mind{\ind[M]}
\def\tind{\ind[\th]}
\def\indmat{\ind[$\mat$]}
\def\indf{\ind[f]}
\newcommand*{\notind}[1][]{\mathrel{\not\mkern-7mu{\ind[#1]}}}
\newcommand*{\notindmat}{\notind[$\mat$]}
\newcommand{\elem}{\equiv}
\DeclareMathOperator{\id}{id}
\newcommand*{\pair}[1]{\langle #1 \rangle}
\newcommand{\F}{\mathbb{F}}
\newcommand{\K}{\mathbb{K}}
\newcommand*{\eq}[1]{{#1}^{\mathrm{eq}}}
\newcommand{\Teq}{\eq{T}}
\newcommand{\monstereq}{\eq{\monster}}
\newcommand{\matt}{\mateq}
\newcommand{\rkmatt}{\mathop{\tilde{\RK}}}
\newcommand{\mateq}{\mathop{\eq{\mat}}}
\newcommand{\acleq}{\mathop{\eq{\acl}}}
\newcommand{\dcleq}{\mathop{\eq{\dcl}}}
\newcommand{\Kfam}{\mathcal K}
\newcommand{\Afam}{\mathcal A}
\DeclareMathOperator{\charact}{char}
\newcommand{\Utp}{\ensuremath{P\text{-}\tp}}
\newcommand{\Uindependent}{$P$\hyph independent\xspace}
\newcommand{\Utype}{$P$\hyph type\xspace}
\def\hyph{\nobreakdash-\hspace{0pt}\relax}
\newcommand{\Wlog}{W.l.o.g\mbox{.}\xspace}
\newcommand{\wloG}{w.l.o.g\mbox{.}\xspace}
\newcommand{\wrt}{w.r.t\mbox{.}\xspace}
\newcommand{\eg}{e.g\mbox{.}\xspace}
\newcommand{\ie}{i.e\mbox{.}\xspace}
\newcommand{\cf}{cf\mbox{.}\xspace}
\newcommand{\Cf}{Cf\mbox{.}\xspace}
\newcommand{\tfae}{t.f.a.e\mbox{.}\xspace}
\newcommand{\Tfae}{T.f.a.e\mbox{.}\xspace}
\newcommand{\aka}{a.k.a\mbox{.}\xspace}
\newcommand{\dminimal}{d-minimal\xspace}
\newcommand{\dminimality}{d-minimality\xspace}
\newcommand{\prefree}{pre-independence\xspace}
\newcommand{\Prefree}{Pre-independence\xspace}
\newcommand{\xnarrow}{$x$-narrow\xspace}
\newcommand{\cldense}{dense\xspace}
\newcommand{\clclosed}{$\mat$\hyph closed\xspace}
\newcommand{\clclosure}{$\mat$\hyph closure\xspace}
\newcommand{\clelimination}{$\mat$\hyph elimination\xspace}
\newcommand{\Zclosed}{$Z$\hyph closed\xspace}
\newcommand{\clbasis}{$\mat$\hyph basis\xspace}
\newcommand{\clindependent}{$\mat$\hyph independent\xspace}
\newcommand{\clminimal}{$\mat$\hyph minimal\xspace}
\newcommand{\clminimality}{$\mat$\hyph minimality\xspace}
\newcommand{\DSF}{definable Skolem functions\xspace}
\DeclareMathOperator{\zcl}{Zcl}
\DeclareMathOperator{\zrk}{Zrk}
\DeclareMathOperator{\scl}{Scl}
\DeclareMathOperator{\sdim}{Sdim}
\DeclareMathOperator{\srk}{Srk}
\newcommand{\Ltwo}{\Lang^2}
\newcommand{\Ttwo}{T^2}
\newcommand{\Td}{T^d}
\newcommand{\Ln}{\Lang^n}
\newcommand{\Tn}{T^n}
\newcommand{\de}{\mathrm d}
\newcommand{\app}{\leadsto}
\newcommand{\TR}{T_{R \nmid 0}}
\newcommand{\LR}{\Lang_R}
\newcommand*{\canon}[1]{\ulcorner #1 \urcorner}
\newcommand{\Zapplication}{Z-application\xspace}
\newcommand{\basic}{basic\xspace}
\newcommand{\Basic}{Basic\xspace}
\newcommand{\Case}[1]{\par\smallskip\noindent\textsc{Case #1.}}
\newtheorem{lemma}{Lemma}[section]
\newtheorem{thm}[lemma]{Theorem}
\newtheorem{corollary}[lemma]{Corollary}
\newtheorem{conjecture}[lemma]{Conjecture}
\newtheorem{proposition}[lemma]{Proposition}
\newtheorem*{fact}{Fact}
\newtheorem*{hypothesis*}{Hypothesis}
\newtheorem*{proviso*}{Proviso}
\theoremstyle{remark}
\newtheorem{claim}{Claim}
\newtheorem*{claim*}{Claim}
\theoremstyle{definition}
\newtheorem{definizione}[lemma]{Definition}
\newtheorem{remark}[lemma]{Remark}
\newtheorem{final remark}[lemma]{Final remark}
\newtheorem{example}[lemma]{Example}
\newtheorem{examples}[lemma]{Examples}
\newtheorem{question}[lemma]{Question}
\begin{document}

\maketitle

\begin{abstract}
A structure $M$ is pregeometric if the algebraic closure is a
pregeometry in all $M'$ elementarily equivalent to~$M$.
We define a generalisation: structures with an existential matroid.
The main examples are superstable groups of \SUrk a power of $\omega$ 
and d-minimal expansion of fields. 
Ultraproducts of pregeometric structures expanding a field, while
not pregeometric in general, do have an unique existential matroid.

Generalising previous results by L.~van den Dries, we define dense elementary
pairs of structures expanding a field and with an existential matroid, and we
show that the corresponding theories have natural completions, whose models
also have a unique existential matroid.
We also extend the above result to dense tuples of structures.
\end{abstract}

\textit{Key words:}
Geometric structure; pregeometry; matroid; lovely pair; dense pair.\\
\indent\textsl{MSC2000:} Primary
03Cxx; 
Secondary
03C64. 

\tableofcontents

\section{Introduction}
A theory $T$ is called pregeometric~\cite{hp,gagelman} if, in every model~$\K$ of~$T$, $\acl$ satisfies the Exchange Principle (and, therefore, $\acl$ is a
pregeometry on~$\K$); if $T$ is complete, it suffices to check that $\acl$
satisfies EP in one $\omega$-saturated model of~$T$.
$T$ is geometric if it is pregeometric and eliminates 
the quantifiers $\exists^\infty$.
We call a structure $\K$ (pre)geometric if its theory is (pre)geometric
(thus, $\K$ is pregeometric iff there exists an $\omega$-saturated elementary
extension $\K'$ of $\K$ such that $\acl$ satisfies EP in~$\K'$).

Note that a pregeometric expansion of a field is geometric 
(\cite[1.18]{DMS}, see also Lemma~\ref{lem:cl-ring-function}).

In the remainder of this introduction, all theories and all structures expand
a field; in the body of the article we will sometimes state definitions and
results without this assumption.

Geometric structures are ubiquitous in model theory: if $\K$ is either
o-minimal, or strongly minimal, or a $p$-adic field, or a pseudo-finite field
(or more generally a perfect PAC field, see \cite{cdm} and \cite[2.12]{hp}), 
then $\K$ is geometric.

However, ultraproducts of geometric structures (even strongly minimal ones) are not geometric in general.
We will show that there is a more general notion, structures with existential
matroids, which instead is robust under taking ultraproducts.
More in details, we consider structures $\K$ with a matroid $\mat$ that
satisfies some natural conditions ($\mat$ is an ``existential matroid'').
Under our hypothesis that $\K$ is a field, then there is at most one
existential matroid on~$\K$.
An (almost) equivalent notion has already been studied by van den
Dries~\cite{dries89}: we will show that, if $\monster$ is a monster model, an existential matroid on $\monster$ induces a (unique)
dimension function on the definable subset of $\monster^n$, satisfying the
axioms in~\cite{dries89}, and conversely, any such dimension function,
satisfying a slightly stronger version of the axioms, will be induced by a
(unique) existential matroid.
Moreover, a superstable group $\K$ of \SUrk a power of $\omega$ is
naturally endowed by an existential matroid (van den Dries
\cite[2.25]{dries89} noticed this already in the case when $\K$ is a
differential field of characteristic~0). 

Given a geometric structure~$\K$, there is an abstract notion of dense
subsets of~$\K$, which specialises to the usual topological notion in
the case of o-minimal structures or of formally $p$-adic
fields.
More precisely, a subset $X$ of $\K$ is dense in $\K$ if every \emph{infinite}
$\K$-definable subset of $\K$ intersect $X$~\cite{macintyre}. 
If $T$ is a complete geometric theory, then the theory of dense elementary
pairs of models of $T$ is complete and consistent (the proof of this fact was
already in~\cite{vdd-dense}, but the result was stated there only for
o-minimal structures). 

We consider here the more general case when $T$ is a complete theory  and
such that a monster model of $T$ has an existential matroid.
We show that there is a corresponding abstract notion of density in models 
of~$T$.
Given $T$ as above, consider the theory of pairs $\pair{\K, \K'}$, where 
$\K \prec \K' \models T$ and $\K$ is dense in $\K'$;
the theory of such pairs will not be complete in general, but we will show
that it will become complete (and consistent) if we
add the additional condition that $\K$ is \clclosed in $\K'$ 
(that is, $\mat(\K) \cap \K' = \K)$; we thus obtain the (complete) theory~$\Td$.
Moreover $\Td$ also has an existential matroid. 
This allows us to iterate the above construction, and consider dense \clclosed
pairs of models of~$\Td$, which turn out to coincide with nested dense \clclosed
triples of models of~$T$; iterating many times, we can thus study nested dense
\clclosed $n$-tuples of models of~$T$.

Of particular interest are two cases of structures with an existential matroid:
the \clminimal case and the \dminimal one.

A structure $\K$ (with an existential matroid) is \clminimal if there is only
one ``generic'' 1-type over every subset of~$\K$; the prototypes of such
structures are given by strongly minimal structures and connected superstable
groups of \SUrk a power of~$\omega$. 
If $T$ is the theory of~$\K$, we show that the condition that $\K$ is dense in
$\K'$ is superfluous in the definition of $\Td$, and that $\Td$ is also
\clminimal.

An first-order topological structure $\K$ (expanding a topological field) is
\dminimal if it is Hausdorff, it has an $\omega$-saturated elementary
extension $\K'$ such that every definable (unary!)
subset of $\K'$ is the union of an open set and finitely many discrete sets,
and it satisfies a version of Kuratowski-Ulam's theorem for definable subset
of~$\K^2$ (the ``$d$'' stands for ``discrete'').
Examples of \dminimal structures are $p$-adic fields, o-minimal structures, 
and \dminimal structures in the sense of Miller.
We show that a \dminimal structure has a (unique) existential matroid, and that
the notion of density given by the matroid coincides with the topological one.
Moreover, if $T$ is the theory of a \dminimal structure, then $\Td$ is the
theory of dense elementary pairs of models of $T$ (the condition that $\K$ is
a \clclosed subset of $\K'$ is superfluous); hence, in the case when $T$ is
o-minimal, we recover van den Dries' Theorem~\cite{vdd-dense}.
However, if $T$ is \dminimal, $\Td$ will not be \dminimal.
Moreover, while ultraproducts of o-minimal structures and of formally $p$-adic
fields are \dminimal, ultraproducts of \dminimal structures are not \dminimal
in general.

We show that if $\K$ has an existential matroid, then $\K$ is a
perfect field: therefore, the theory exposed in this article does not apply to
differential fields of finite characteristic, or to separably closed
non-perfect fields.


\section{Notations and conventions}
Let $T$ be a complete theory in some language~$\Lang$, with only infinite 
models.
Let $\kappa > \card T$ be a ``big'' cardinal.
We work inside a $\kappa$-saturated and strongly $\kappa$-homogeneous 
model $\monster$ of~$T$: we call $\monster$ a monster model of~$T$.

$A$, $B$, and $C$, subsets of $\monster$ of cardinality less than~$\kappa$; 
by $\av$, $\bv$, and $\cv$, finite tuples of elements of~$\monster$; 
by $a$, $b$, and $c$, elements of~$\monster$.
As usual, we will write, for instance, $\av \subset A$ to say that $\av$ is a
finite tuple of elements of~$A$, and $A \bv$ to denote the union of $A$ with
the set of elements in~$\bv$.

Given a set~$X$ and $m \leq n \in N$, 
denote by $\Pi^n_m$ the projection from $X^n$ onto the first $m$
coordinates.
Given $Y \subseteq X^{n+m}$, $\x \in X^n$, and $\z \in X^m$,
denote the sections 
$Y_{\x} := \set{\bar t \in X^m: \pair{\x, \bar t} \in Y}$ and
$Y^{\z} := \set{\bar t \in X^m: \pair{\bar t, \z} \in X}$.


\section{Matroids}\label{sec:matroid}


Let $\mat$ be a (finitary) closure operator on $\monster$:
that is, $\mat: \Power(\monster) \to \Power(\monster)$ satisfies, 
for every $X \subseteq \monster$:
\begin{description}
\item[extension] $X \subseteq \mat(X)$;
\item[monotonicity] $X \subseteq Y$ implies $\mat(X) \subseteq \mat (Y)$;
\item[idempotency] $\mat(\mat X) = \mat(X)$;
\item[finitariness] $\mat(X) = \bigcup \set{\mat(A): A \subseteq X \et A \text{ finite}}$.
\end{description}
$\mat$ is a (finitary) \intro{matroid} (\aka \intro{pregeometry})
if moreover it satisfies the Exchange Principle:
\begin{description}
\item[EP] $a \in \mat(X c) \setminus \mat(X)$ implies $c \in \mat(X a)$.
\end{description}

\begin{proviso*}
For the remainder of this section, $\mat$~is a finitary matroid on~$\monster$.
\end{proviso*}

As is well-known from matroid theory, $\mat$~defines
notions of rank (which we denote by $\rkmat$), generators, independence, 
and basis.%
\footnote{Sometimes in geometric model theory the ``rank'' is called
``dimension'' and/or the ``dimension'' (defined later) is called ``rank'';
however, since in many interesting cases (\eg algebraically closed fields, and
o-minimal structures, with the $\acl$ matroid) what we call the
dimension of a definable set induced by the matroid coincides with the usual
notion of dimension given geometrically, our choice of nomenclature is clearly
better.}  

\begin{definizione}
A set $A$ generates $C$ over $B$ if $\mat(A B) = \mat(C B)$.
A subset $A$ of $\monster$ is independent over $B$ if, 
for every $a \in A$, $a \notin \mat(B a': a \neq a' \in A)$.
\end{definizione}

\begin{lemma}[Additivity of rank]
\label{lem:Lascar}
\[
\rkmat(\av \bv / C) = \rkmat(\av / \bv C) + \rkmat(\bv / C).
\]
\end{lemma}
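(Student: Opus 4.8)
The plan is to reduce the equality to a single, purely matroid-theoretic statement: that one can choose a basis of the tuple $\av\bv$ over~$C$ of the special form $\av_0\bv_0$, with $\bv_0\subseteq\bv$ a basis of $\bv$ over~$C$ and $\av_0\subseteq\av$ a basis of $\av$ over~$\bv C$. Since $\av_0$ and $\bv_0$ are then disjoint, counting elements gives at once $\rkmat(\av\bv/C)=\card{\av_0}+\card{\bv_0}=\rkmat(\av/\bv C)+\rkmat(\bv/C)$. Throughout I would use only the standard consequences of the matroid axioms already recalled before the statement: over any parameter set~$D$, every finite tuple has a basis over~$D$, any subtuple independent over~$D$ extends to a basis of the whole tuple over~$D$, and all bases of a given tuple over~$D$ have the same cardinality, equal to $\rkmat(\,\cdot\,/D)$ --- this last fact being the one place the Exchange Principle is needed, and I take it as given. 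I would also use monotonicity and idempotency of~$\mat$ repeatedly, typically in the guise: $\mat(CX)=\mat(CY)$ whenever $X\subseteq\mat(CY)$ and $Y\subseteq\mat(CX)$.

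To construct the basis, first pick $\bv_0\subseteq\bv$ a basis of $\bv$ over~$C$, so $\card{\bv_0}=\rkmat(\bv/C)$ and $\bv\subseteq\mat(C\bv_0)$. Being independent over~$C$, $\bv_0$ extends to a basis $I$ of $\av\bv$ over~$C$, so $\card I=\rkmat(\av\bv/C)$. The crucial point is that the new elements of $I$ can be taken from~$\av$: if some $b\in\bv\setminus\bv_0$ lay in~$I$, then $b\in\bv\subseteq\mat(C\bv_0)\subseteq\mat\bigl(C(I\setminus\set b)\bigr)$ would contradict the independence of~$I$ over~$C$. So, setting $\av_0:=I\setminus\bv_0$, we get $\av_0\subseteq\av$, $\av_0\cap\bv_0=\emptyset$, $I=\av_0\bv_0$, and $\card{\av_0}+\card{\bv_0}=\rkmat(\av\bv/C)$.

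The remaining work is to verify that this $\av_0$ really is a basis of $\av$ over~$\bv C$. For generation, from $\mat(C\av_0\bv_0)=\mat(C\av\bv)$ I get $\av\subseteq\mat(C\bv_0\av_0)\subseteq\mat(C\bv\av_0)$, so $\av_0$ generates $\av$ over~$\bv C$. For independence, suppose $a\in\av_0$ with $a\in\mat\bigl(C\bv(\av_0\setminus\set a)\bigr)$; using $\bv\subseteq\mat(C\bv_0)$ and idempotency one has $\mat\bigl(C\bv(\av_0\setminus\set a)\bigr)=\mat\bigl(C\bv_0(\av_0\setminus\set a)\bigr)=\mat\bigl(C(I\setminus\set a)\bigr)$, whence $a\in\mat\bigl(C(I\setminus\set a)\bigr)$, against the independence of~$I$ over~$C$. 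Therefore $\rkmat(\av/\bv C)=\card{\av_0}=\rkmat(\av\bv/C)-\rkmat(\bv/C)$, which is exactly the claimed identity. I expect the only genuinely non-routine step to be the observation in the construction that a basis of $\av\bv$ prolonging a basis of~$\bv$ may be grown inside~$\av$; the verifications in this last paragraph, and the counting, are straightforward manipulations with the closure operator.
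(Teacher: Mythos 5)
Your proof is correct and complete: the construction of a basis of $\av\bv$ over~$C$ of the form $\av_0\bv_0$ with $\bv_0$ a basis of $\bv/C$ and $\av_0\subseteq\av$ a basis of $\av/\bv C$, followed by counting, is exactly the standard matroid-theoretic argument. The paper states this lemma without proof, appealing to it as a well-known consequence of the matroid axioms, so your writeup simply supplies the standard argument being invoked; all the individual verifications (that $I\setminus\bv_0$ avoids $\bv$, and that $\av_0$ generates and is independent over $\bv C$) check out.
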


For the axioms of independence relations, we will use the nomenclature 
in~\cite{adler}.
\begin{definizione}
Given an infinite set $X$, a \intro{\prefree relation}%
\footnote{\Prefree relations as defined here are slightly different
  than the ones defined in~\cite{adler}.
However, as we will see later, if $\mat$ is definable, then $\indmat$ is a
\prefree relation in Adler's sense.} 
on $X$ is a the ternary relation $\ind$ on $\Power(X)$
satisfying the following axioms:
\begin{description}
\item[Monotonicity:]
If $A \ind_C B$, $A' \subseteq A$, and $B' \subseteq B$, then
$A' \ind_C B'$.
\item[Base Monotonicity:]
If $D \subseteq C \subseteq B$ and $A \ind_D B$, then $A \ind_C B$.
\item[Transitivity:]
If $D \subseteq C \subseteq B$, $B \ind_C A$, and $C \ind_D A$, 
then $B \ind_D A$.
\item[Normality:]
If $A \ind_C B$, then $AC \ind_C B$.
\item[Finite Character:]
If $A_0 \ind_C B$ for every finite $A_0 \subseteq A$, then
$A \ind_C B$.
\end{description}
$\ind$ is \intro{symmetric} if moreover it satisfies the following axiom:
\begin{description}
\item[Symmetry:]
$A \ind_C B$ iff $B \ind_C A$.
\end{description}
\end{definizione}

\begin{definizione}
The \prefree relation on $\monster$ induced by $\mat$ is the ternary relation
$\indmat$ on $\Power(\monster)$ defined by:
$X \indmat_Y Z$ if for every $Z' \subset Z$, if $Z'$ is independent over~$Y$, 
then  $Z'$ remains independent over $Y X$.
If $X \indmat_Y Z$, we say that $X$ and $Z$ are independent over~$Y$
(\wrt~$\mat$).
\end{definizione}

\begin{remark}
If $X \indmat_Y Z$, then $\mat(X Y) \cap \mat(Z Y) = \mat (Y)$.
\end{remark}

\begin{lemma}
$\indmat$ is a symmetric \prefree relation.
\end{lemma}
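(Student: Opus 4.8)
The plan is to route everything through a \emph{rank criterion} and then reduce the substantive content to the additivity of rank (Lemma~\ref{lem:Lascar}). The first step I would carry out is to prove
\[
X \indmat_Y Z \iff \rkmat(\z/Y\x) = \rkmat(\z/Y) \text{ for all finite } \x \subseteq X,\ \z \subseteq Z .
\]
For ``$\Rightarrow$'': given finite $\x,\z$, pick $Z'\subseteq\z$ maximal independent over $Y$, so that $\rkmat(\z/Y)=\card{Z'}$ and $\z\subseteq\mat(YZ')$; by hypothesis $Z'$ is independent over $YX$, hence over $Y\x$ (independence over a base is inherited by smaller bases, by monotonicity of $\mat$), so $\rkmat(\z/Y\x)\ge\card{Z'}=\rkmat(\z/Y)$, and the reverse inequality is automatic. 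For ``$\Leftarrow$'': let $Z'\subseteq Z$ be independent over $Y$; since $\mat$ is finitary, independence is of finite character, so it suffices to see that each finite $W\subseteq Z'$ stays independent over $YX$; again by finitariness $\rkmat(W/YX)=\rkmat(W/Y\x)$ for a suitable finite $\x\subseteq X$ (collect, for each of the finitely many subsets of $W$ that fail to be independent over $YX$, a finite witness), and the hypothesis gives $\rkmat(W/Y\x)=\rkmat(W/Y)=\card W$.

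Granting the criterion, Symmetry is immediate from Lemma~\ref{lem:Lascar}: writing $\rkmat(\x\z/Y)=\rkmat(\x/\z Y)+\rkmat(\z/Y)=\rkmat(\z/\x Y)+\rkmat(\x/Y)$ gives $\rkmat(\z/Y)-\rkmat(\z/\x Y)=\rkmat(\x/Y)-\rkmat(\x/\z Y)$, so the left side vanishes for all finite $\x\subseteq X$, $\z\subseteq Z$ exactly when the right side does, i.e. $X\indmat_Y Z\iff Z\indmat_Y X$. The remaining axioms I would verify directly from the definition, where this is short: Monotonicity holds because a subset of a set independent over $C$ is independent over $C$, and independence passes to smaller bases ($CA'\subseteq CA$); Normality is essentially vacuous, since $C\cup(A\cup C)=C\cup A$, so $A\indmat_C B\iff AC\indmat_C B$; Transitivity follows by a direct chase, using that $D\subseteq C\subseteq B$ forces $D\cup B=C\cup B=B$, so ``independent over $D$'' upgrades through $C$ to ``independent over $B$''; Finite Character is a direct consequence of finitariness of $\mat$, as any failure of $A\indmat_C B$ is witnessed by some $z$ in a finite $Z'\subseteq B$ with $z\in\mat(C\cup A_0\cup(Z'\setminus\set{z}))$ for a finite $A_0\subseteq A$, which then already contradicts $A_0\indmat_C B$.

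The one genuinely delicate axiom — and where I expect the main work — is Base Monotonicity: $D\subseteq C\subseteq B$ and $A\indmat_D B$ should imply $A\indmat_C B$. Here I would again use the rank criterion together with additivity to ``push the base from $D$ up to $C$'': for finite $\bv\subseteq B$ and finite $\cv\subseteq C$, applying $A\indmat_D B$ to the tuples $\bv\cv$ and $\cv$ (both inside $B$) and invoking Lemma~\ref{lem:Lascar} twice yields $\rkmat(\bv/\cv D\av)=\rkmat(\bv/\cv D)$ for every finite $\av\subseteq A$; then, choosing $\cv$ large enough that $\rkmat(\bv/\cv D)=\rkmat(\bv/C)$ (possible by finitariness, as $D\subseteq C$) and $\av$ so that $\rkmat(\bv/\cv\av)=\rkmat(\bv/CA)$, monotonicity of rank in the base gives $\rkmat(\bv/CA)=\rkmat(\bv/\cv\av)\ge\rkmat(\bv/\cv D\av)=\rkmat(\bv/C)$, while the reverse inequality is free, so $A\indmat_C B$. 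A convenient preliminary remark that cuts notational clutter everywhere is that $W\mapsto\mat(W\cup{}\cdot{})$ is again a finitary matroid, so one may assume the base ($Y$, respectively $D$) is empty; after that the only real inputs are finitariness of $\mat$ and the additivity of rank.
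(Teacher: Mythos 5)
Your proof is correct. Note that the paper offers no argument of its own here --- it simply cites Adler's Lemma~1.29 --- so a self-contained verification is genuinely useful, and the route you take (reduce to the rank criterion $X\indmat_Y Z\iff\rkmat(\z/Y\x)=\rkmat(\z/Y)$ for finite $\x\subseteq X$, $\z\subseteq Z$, then read Symmetry off the additivity formula of Lemma~\ref{lem:Lascar} and push the base up for Base Monotonicity via two applications of additivity plus finitariness) is the standard matroid-theoretic argument and presumably the one Adler gives. Two small remarks on phrasing rather than substance: in the forward direction of the rank criterion, ``independence over a base is inherited by smaller bases'' reads backwards --- what you use is that a set independent over a \emph{larger} base is independent over a \emph{smaller} one, which is immediate from monotonicity of $\mat$; and for Transitivity the rank criterion is not needed at all, since $D\subseteq C\subseteq B$ forces $DC=C$ and $CB=DB=B$, so ``$A'$ independent over $D$'' chains through $C\indmat_D A$ to ``$A'$ independent over $C$'' and then through $B\indmat_C A$ to ``$A'$ independent over $B$'' directly from the definition. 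Your closing observation that one may pass to the relativised matroid $\mat(D\cup{-})$ to normalise the base to $\emptyset$ is sound and does streamline the bookkeeping.
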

\begin{proof}
The same given in \cite[Lemma 1.29]{adler}.
\end{proof}

\begin{remark}
$\indmat$ also satisfies the following version of anti-reflexivity:
\begin{itemize}
\item
$A \indmat_C B$ iff $\mat(A) \indmat_{\mat(C)} \mat(B)$;
\item 
$a \indmat_X a$ iff $a \in \mat(X)$.
\end{itemize}
\end{remark}

\begin{remark}
$X \indmat_Y Y$. 
\end{remark}

\begin{lemma}
\Tfae:
\begin{enumerate}
\item $X \indmat_Y Z$;
\item
$\forall Z'$ such that $Y \subseteq Z' \subseteq \mat(YZ)$, 
we have $\mat(X Z') \cap \mat(Y Z) = \mat(Z')$;
\item there exists $Z' \subseteq Z$ which is a basis of $Z/Y$,
such that $Z'$ remains independent over $X Y$;
\item for every $Z' \subseteq Z$ which is a basis of $Z/Y$,
$Z'$ remains independent over $Y X$;
\item if $X' \subseteq X$ is a basis of $Y X /Y$ and $Z' \subseteq Z$ is a
basis of $Y Z / Y$, then $X'$ and $Z'$ are disjoint, and $X' Z'$ is a basis of
$X Z$ over~$Y$;
\item $\rkmat(X / Y Z) = \rkmat(X / Y)$.
\end{enumerate}
\end{lemma}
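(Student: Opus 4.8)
The plan is to show that conditions (1)--(6) are all equivalent by proving a small web of implications centred on~(1). The only substantial ingredient is the Additivity of rank (Lemma~\ref{lem:Lascar}), which enters exactly once, in $(1)\Leftrightarrow(6)$; the remaining equivalences are formal manipulations with closures and bases. I use repeatedly the elementary matroid facts that every subset of a set~$Z$ independent over~$Y$ extends (by Zorn and finitariness) to a basis of $Z$ over~$Y$, that a basis of $Z$ over~$Y$ automatically generates $Z$ over any larger base~$YX$, and that $X\indmat_Y Z$ is equivalent to $X\indmat_Y\mat(YZ)$ (a quick consequence of the pre-independence axioms and the remark on anti-reflexivity).

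For $(1)\Leftrightarrow(4)\Leftrightarrow(3)$, valid for arbitrary sets, the point is that for a basis $Z'$ of $Z/Y$ one has ``$Z'$ is independent over $YX$'' iff $X\indmat_Y Z'$: one direction is the definition of~$\indmat$ applied to~$Z'$ (which is independent over~$Y$), the other because if $Z'$ is independent over $YX$ then so is every subset of it. Granting this, $(1)\Rightarrow(4)$ is immediate; $(4)\Rightarrow(1)$ because any $Z''\subseteq Z$ independent over~$Y$ extends to a basis of $Z/Y$, which by~(4) is independent over~$YX$, so $Z''$ is too; $(4)\Rightarrow(3)$ because bases exist; and $(3)\Rightarrow(1)$ because if some basis $Z'$ of $Z/Y$ is independent over $YX$ then $X\indmat_Y Z'$, hence $X\indmat_Y\mat(YZ')$, and as $\mat(YZ')=\mat(YZ)$ every subset of~$Z$ (which is contained in $\mat(YZ)$) independent over~$Y$ stays independent over~$YX$.

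For $(1)\Leftrightarrow(6)$ I would reduce to finite tuples. By Finite Character and Symmetry of~$\indmat$, $X\indmat_Y Z$ holds iff $\av\indmat_Y\cv$ for all finite $\av\subseteq X$, $\cv\subseteq Z$, and~(6) is to be read as $\rkmat(\av/YZ)=\rkmat(\av/Y)$ for all finite $\av\subseteq X$, equivalently $\rkmat(\av/Y\cv)=\rkmat(\av/Y)$ for all finite $\av,\cv$. For such $\av,\cv$, the equivalence $(1)\Leftrightarrow(4)$ just proved says $\av\indmat_Y\cv$ iff every (finite) basis $\cv'$ of $\cv/Y$ is independent over $\av Y$, \ie $\rkmat(\cv/\av Y)=\rkmat(\cv/Y)$; and Lemma~\ref{lem:Lascar} in both orders gives $\rkmat(\av/Y\cv)+\rkmat(\cv/Y)=\rkmat(\av\cv/Y)=\rkmat(\cv/\av Y)+\rkmat(\av/Y)$, so, all ranks being finite, cancelling $\rkmat(\cv/Y)$ yields $\rkmat(\av/Y\cv)=\rkmat(\av/Y)$. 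I expect this reduction --- and the attendant point that (6) must be understood over finite subtuples, since one cannot cancel an infinite rank --- to be the only genuinely delicate issue.

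For $(1)\Leftrightarrow(2)$ and $(1)\Leftrightarrow(5)$ I would argue directly for arbitrary sets. For $(1)\Rightarrow(2)$: replace $Z$ by $\mat(YZ)$, so $Z$ is $\mat$-closed; given $Y\subseteq Z'\subseteq Z$, Base Monotonicity gives $X\indmat_{Z'}Z$, the inclusion $\mat(Z')\subseteq\mat(XZ')\cap\mat(YZ)$ is trivial, and if some $a\in\mat(XZ')\cap Z$ lay outside $\mat(Z')$ then $\{a\}$ would be independent over~$Z'$, hence over $XZ'$, contradicting $a\in\mat(XZ')$. For $(2)\Rightarrow(1)$: if $Z''\subseteq Z$ is independent over~$Y$ but some $a\in Z''$ lies in $\mat(XY(Z''\setminus\{a\}))$, apply~(2) with $Z':=\mat(Y(Z''\setminus\{a\}))$ to obtain $a\in\mat(Z')$, contradicting independence of~$Z''$. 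For~(5) assuming~(1): disjointness of $X'$ and $Z'$ follows because an element of $X'\cap Z'$ would lie in $\mat(XY)\cap\mat(ZY)=\mat(Y)$ (the remark on~$\indmat$), impossible for a member of a set independent over~$Y$; $X'Z'$ generates $XZ$ over~$Y$ since $\mat(X'Y)=\mat(XY)$ and $\mat(Z'Y)=\mat(ZY)$; and $X'Z'$ is independent over~$Y$ because on every finite $F_X\cup F_Z$ with $F_X\subseteq X'$, $F_Z\subseteq Z'$, Lemma~\ref{lem:Lascar} gives $\rkmat(F_XF_Z/Y)=\rkmat(F_Z/F_XY)+\rkmat(F_X/Y)=|F_Z|+|F_X|=|F_X\cup F_Z|$ (with $F_Z\subseteq Z'$ independent over $YX$, hence over $F_XY$, by~(4)), so $F_X\cup F_Z$ is independent over~$Y$ and then $X'Z'$ is, by finite character. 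Conversely, if $X'Z'$ is always a basis of $XZ/Y$, then any basis $Z'$ of $Z/Y$ is independent over $X'Y$, hence over $XY$, which is~(4), hence~(1). Apart from the finite reduction for~(6), the one thing to keep straight is the three-way interplay between a set, its $\mat$-closure, and the base over which independence is taken; the genuine content is the single use of the Additivity of rank.
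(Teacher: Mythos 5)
The paper states this lemma with no proof at all (it is treated as standard matroid theory), so there is no argument of the author's to compare yours against; your proposal is correct and supplies a complete proof. Two points are worth highlighting. First, your insistence that (6) be read over finite subtuples is not pedantry but necessary: for infinite $X$ the literal equality $\rkmat(X/YZ)=\rkmat(X/Y)$ does \emph{not} imply (1) --- take $X$ of infinite rank over $Y$ and $Z$ a single element of $\mat(XY)\setminus\mat(Y)$ --- so the cancellation step using Lemma~\ref{lem:Lascar} genuinely requires the reduction to finite tuples that you carry out. Second, the one ingredient you invoke without proof, namely that $X\indmat_Y Z$ is equivalent to $X\indmat_Y\mat(YZ)$ (used both in $(3)\Rightarrow(1)$ and in $(1)\Rightarrow(2)$), is true but is not a purely formal consequence of the pre-independence axioms plus the anti-reflexivity remark: its forward direction needs a small exchange argument (extend a finite independent $W\subseteq\mat(YZ')$ to a basis of $\mat(YZ')$ over $Y$ by elements of $Z'$, then compare ranks over $XY$ to conclude $\rkmat(W/XY)=\card{W}$). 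It is of the same calibre as the basis-extension facts you already take for granted, so correctness is not affected, but it is the one step I would spell out if the proof were to be included.
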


\begin{lemma}\label{lem:ind-ternary}
Let $\ind$ be a symmetric \prefree relation on some infinite set~$X$.
Assume that $\av \ind_C \dv$ and $\av \dv \ind_C \bv$.
Then, $\av \ind_C \bv \dv$ and $\dv \ind_C \bv \av$.
\end{lemma}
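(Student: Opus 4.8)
The plan is a routine diagram chase with the axioms; the only idea needed is to notice that the second conclusion is the first one with the roles of $\av$ and $\dv$ interchanged. Indeed, the pair of hypotheses is invariant, up to Symmetry, under swapping $\av$ and $\dv$: the assertion $\av\dv \ind_C \bv$ is literally unchanged, and $\av \ind_C \dv$ becomes $\dv \ind_C \av$, which holds by Symmetry. Hence, once we prove the implication ``$\av \ind_C \dv$ and $\av\dv \ind_C \bv$ imply $\av \ind_C \bv\dv$'' for arbitrary tuples, feeding it the swapped data yields $\dv \ind_C \bv\av$. So it is enough to prove $\av \ind_C \bv\dv$, and, by Symmetry and Monotonicity, it suffices to prove $\bv\dv C \ind_C \av$.

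I would obtain this by Transitivity, applied with bottom base $C$, middle base $C\dv$, top set $\bv\dv C$, and right-hand side $\av$ (note $C \subseteq C\dv \subseteq \bv\dv C$). Two inputs are needed. First, $C\dv \ind_C \av$: from $\av \ind_C \dv$, Symmetry gives $\dv \ind_C \av$ and then Normality gives $\dv C \ind_C \av$, which is the claim since $\dv C = C\dv$. Second, $\bv\dv C \ind_{C\dv} \av$: from $\av\dv \ind_C \bv$, Normality gives $\av\dv C \ind_C \bv$, then Symmetry gives $\bv \ind_C \av\dv C$; since $C \subseteq C\dv \subseteq \av\dv C$, Base Monotonicity yields $\bv \ind_{C\dv} \av\dv C$, hence $\bv \ind_{C\dv} \av$ by Monotonicity, and finally Normality gives $\bv C\dv \ind_{C\dv} \av$, i.e. $\bv\dv C \ind_{C\dv} \av$.

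Transitivity then delivers $\bv\dv C \ind_C \av$, and one further application each of Monotonicity and Symmetry finishes the proof. No use is made of Finite Character, of any saturation, or of the fact that $\ind$ comes from a matroid. The only thing to watch is the bookkeeping: that each invocation of Base Monotonicity and of Transitivity has its set-inclusion hypotheses in place. I do not expect any substantial obstacle.
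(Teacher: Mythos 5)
Your proof is correct and is essentially the paper's own argument: both reduce to working over the enlarged base $C\dv$ (via Normality, Symmetry, Base Monotonicity and Monotonicity applied to $\av\dv \ind_C \bv$) and then apply Transitivity together with $\av \ind_C \dv$ to descend back to base $C$. Your explicit remark that the second conclusion follows by swapping $\av$ and $\dv$ is the same (implicit) symmetry the paper relies on, so there is nothing further to check.
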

\begin{proof}
\Cf \cite[1.9]{adler}.
$\av \ind_C \bv \dv$ implies $\av \ind_{C \bv} \dv$,  
which implies $\av \ind_{C \bv} \bv \dv$, which, 
together with $\av \ind_C \dv$,  implies $\av \ind_C \bv \dv$.
\end{proof}

\begin{lemma}\label{lem:ind-sequence}
Let $\ind$ be a symmetric \prefree relation on some infinite set~$X$.
Let $\pair{I, \leq}$ be a linearly ordered set, $\Pa{\av_i: i \in I}$ be a
sequence of tuples in~$X^n$, and $C \subset X$.
Then, \tfae:
\begin{enumerate}
\item For every $i \in I$, we have $\av_i \ind_C (\av_j: j < i)$;
\item For every $i \in I$, we have $\av_i \ind_C (\av_j: j \neq i)$.
\end{enumerate}
\end{lemma}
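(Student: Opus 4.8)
The plan is to handle the two implications separately, all the substance being in (1)$\Rightarrow$(2). For (2)$\Rightarrow$(1) I would simply invoke Monotonicity: since $\set{\av_j: j<i}$ is contained in $\set{\av_j: j\neq i}$, the relation $\av_i \ind_C (\av_j: j\neq i)$ forces $\av_i \ind_C (\av_j: j<i)$.

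For (1)$\Rightarrow$(2) the first step is a reduction to $I$ finite. Fix $i\in I$; the goal is $\av_i \ind_C (\av_j: j\neq i)$. By Symmetry and the Finite Character axiom (transferred to the right-hand argument via Symmetry) this is equivalent to $\av_i \ind_C \dv$ for every finite subtuple $\dv$ of $(\av_j: j\neq i)$, and any such $\dv$ is contained in $(\av_j: j\in I_0)$ for some finite $I_0\subseteq I\setminus\set i$; so by Monotonicity it suffices to prove the statement with $I$ replaced by the finite set $I':=I_0\cup\set i$. Condition~(1) is inherited by this subsequence: for $j\in I'$ one has $\set{\av_l: l\in I',\,l<j}\subseteq\set{\av_l: l\in I,\,l<j}$, so Monotonicity applies again. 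Hence we may assume $I=\set{1<2<\dots<n}$.

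The second step is an induction on $n$. For $n=1$ the two conditions coincide (both read $\av_1\ind_C\emptyset$). For the inductive step, fix $i\le n$. If $i=n$, condition~(1) is already the desired $\av_n\ind_C(\av_j: j<n)$. If $i<n$, set $\dv:=(\av_j:\ j\le n-1,\ j\neq i)$; the inductive hypothesis applied to $(\av_j: j\le n-1)$ gives $\av_i\ind_C\dv$, while $\av_i$ and $\dv$ together enumerate $(\av_j: j<n)$, so the instance $\av_n\ind_C(\av_j: j<n)$ of~(1) together with Symmetry gives $\av_i\dv\ind_C\av_n$. Now Lemma~\ref{lem:ind-ternary}, applied with $\av:=\av_i$, $\bv:=\av_n$ and this $\dv$, yields $\av_i\ind_C\av_n\dv$; since $\av_n$ and $\dv$ enumerate $(\av_j: j\neq i)$, this closes the induction.

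I do not expect a genuine obstacle: the argument is bookkeeping around Lemma~\ref{lem:ind-ternary}. The two points that deserve a little care are that Finite Character may be used on the right-hand argument (legitimate because $\ind$ is symmetric) and that hypothesis~(1) restricts both to arbitrary finite sub-index-sets and to the initial segment $\set{1,\dots,n-1}$ — each of these being an instance of Monotonicity.
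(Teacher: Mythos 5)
Your proposal is correct and takes essentially the same approach as the paper: both reduce to a finite index set via Finite Character and Symmetry, and both apply Lemma~\ref{lem:ind-ternary} at the crucial step with the same decomposition of $(\av_j: j\neq i)$ into a ``below $n$'' part $\dv$ and the top element $\av_n$. The only stylistic difference is that the paper argues by contradiction, taking a minimal $m'$ where the independence fails, whereas you run a direct induction on $n$; these are the same argument in different dress, and your version additionally records the easy Monotonicity direction $(2)\Rightarrow(1)$, which the paper leaves implicit.
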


\begin{proof}
Assume, for contradiction, that (1) holds, 
but $a_i \notind_C (\av_j: j \neq i)$, for some $i \in I$.
Since $\indmat$ satisfies finite character, \wloG $I = \set{1, \dotsc, m}$ 
is finite. 
Let $m'$ such that $i < m' \leq m$ is minimal with 
$\av_i \notind_C (\av_j: j \leq m' \et j \neq i)$; \wloG, $m = m'$.

Let $\dv := (a_j: j \neq i \et j < m)$.
By assumption, $\av_i \ind_C \dv$ and $\dv \av_i \ind_C \av_{m}$.
Then, by Lemma~\ref{lem:ind-ternary}, we have $\av_i \ind_C \dv \av_{m}$, absurd.
\end{proof}

\begin{definizione}
We say that a sequence $(\av_i: i \in I)$ satisfying one of the above
equivalent conditions is an \intro{independent sequence} over~$C$.
\end{definizione}

\begin{remark}
Let $(a_i: i \in I)$ be a sequence of elements of~$\monster$.
There is a clash with the previous definition of independence; 
more precisely, let $J := \set{i \in I: a_i \notin \mat(C)}$;
then, $(a_i : i \in I)$ is an independent sequence
over $C$ according to $\indmat$ iff all the $a_j$ are pairwise distinct for 
$j \in J$, and the set $\set{a_j : j \in J}$ 
is independent over $C$ according to~$\mat$. 
Hopefully, this will not cause confusion.
\end{remark}

\subsection{Definable matroids}

\begin{definizione}\label{def:definable}
Let $\phi(x,\y)$ be an $\Lang$-formula.
We say that $\phi$ is \intro{\xnarrow{}} if, for every $\bv$ and every~$a$, 
if $\monster \models \phi(a,\bv)$, then $a \in \mat(\bv)$.
We say that $\mat$ is \intro{definable} if, for every~$A$,
\[
\mat(A) = \bigcup \set{\phi(\monster,\av): \phi(x, \y) \text{ is \xnarrow},
\av \in A^n, n \in \N}.
\]
\end{definizione}

\begin{proviso*}
For the rest of the section, $\mat$ is a definable matroid.
\end{proviso*}

\begin{remark}
For every $A$ and every $\sigma \in \aut(\monster)$,
$\sigma(\mat(A)) = \mat(\sigma(A))$.
\end{remark}

\begin{lemma}\label{lem:cl-definable}
\begin{enumerate}
\item 
$\indmat$ satisfies the Invariance axiom:
if $A \indmat_B C$ and $\pair{A', B', C'} \equiv \pair{A, B, C}$, then
$A' \indmat_{B'} C'$.
\item
$\indmat$ satisfies the Strong Finite Character axiom:
if $A \notindmat_C B$, then there exist finite tuples $\av \subset A$, $\bv
\subset B$, and $\cv \subset C$, and a formula $\phi(\x, \y, \z)$ without
parameters, such that
\begin{itemize}
\item $\monster \models \phi(\av, \bv, \cv)$;
\item $\av' \notindmat_C B$ for all $\av'$ satisfying
$\monster \models \phi(\av', \bv, \cv)$.
\end{itemize}
\item 
For every $\av$, $B$, and~$C$,
if $\tp(\av/BC)$ is  finitely satisfied in~$B$, then $\av \indmat_B C$.
\item 
$\indmat$ satisfies the Local Character axiom:
for every $A$, $B$ there exists a subset $C$ of $B$ such that 
$\card C \leq \card T + \card A$ and $A \indmat_C B$.
\end{enumerate}
\end{lemma}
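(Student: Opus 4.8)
The plan is to verify the four assertions in turn; in each case I would unwind the definition of $\indmat$, use finitariness of $\mat$ to reduce to a finite configuration, and then invoke definability (Definition~\ref{def:definable}). \emph{Invariance} is essentially free from the Remark just above: since $\sigma(\mat X)=\mat(\sigma X)$ for every $\sigma\in\aut(\monster)$, $\sigma$ carries sets independent over $Y$ to sets independent over $\sigma Y$, whence $A\indmat_B C\iff\sigma A\indmat_{\sigma B}\sigma C$; and if $\pair{A',B',C'}\equiv\pair{A,B,C}$ then, these sets being small, strong homogeneity of $\monster$ supplies such a $\sigma$ with $\sigma A=A'$, $\sigma B=B'$, $\sigma C=C'$.

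For \emph{Strong Finite Character}, suppose $A\notindmat_C B$. By definition there is $B_0\subseteq B$ independent over $C$ but not over $CA$, and by finitariness of $\mat$ we may take $B_0=\set{b_1,\dotsc,b_k}$ finite, still independent over $C$, and, after reindexing, with $b_1\in\mat(\cv\,\av\,b_2\dotsm b_k)$ for suitable finite $\cv\subseteq C$ and $\av\subseteq A$. By definability this is witnessed by an \xnarrow parameter-free formula $\phi_0$ with $\monster\models\phi_0(b_1,\bar d)$, where $b_1$ occupies the narrow (first) argument and $\bar d$ is a tuple of parameters drawn from $\cv$, $\av$, and $b_2,\dotsc,b_k$. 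Replace $b_1$ in the narrow argument by a new variable $y_1$, each entry of $\bar d$ coming from $\av$ by a variable of a block $\x$, each coming from $\cv$ by one of a block $\z$, and each remaining entry $b_i$ by a variable $y_i$; the result is a parameter-free $\phi(\x,\y,\z)$, $\y:=(y_1,\dotsc,y_k)$, such that $\monster\models\phi(\av,\bv,\cv)$ for $\bv:=(b_1,\dotsc,b_k)$, and such that $\monster\models\phi(\av',\bv,\cv)$ forces $b_1\in\mat(\cv\,\av'\,b_2\dotsm b_k)$ (again by narrowness, $b_1$ being the value of the narrow argument). Hence for every such $\av'$ the finite set $\set{b_1,\dotsc,b_k}$ is independent over $C$ but not over $C\av'$, \ie $\av'\notindmat_C B$; this is exactly the axiom.

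For item (3), assume $\tp(\av/BC)$ is finitely satisfied in $B$ and, towards a contradiction, that $\av\notindmat_B C$. Then some finite $\cv=(c_1,\dotsc,c_k)\subseteq C$ is independent over $B$ but not over $B\av$; say $c_1\in\mat(B\,\av\,c_2\dotsm c_k)$, witnessed, via finitariness and definability, by an \xnarrow formula $\phi_0$ (with $c_1$ at the narrow argument) and a finite $\bv\subseteq B$ with $\monster\models\phi_0(c_1,\bv,\av,c_2,\dotsc,c_k)$. The formula $\theta(\bar w):=\phi_0(c_1,\bv,\bar w,c_2,\dotsc,c_k)$ lies in $\tp(\av/BC)$, hence is realised by some tuple $\bv^*\subseteq B$; since $\phi_0$ is \xnarrow, $c_1\in\mat(\bv\,\bv^*\,c_2\dotsm c_k)\subseteq\mat(B\,c_2\dotsm c_k)$, contradicting independence of $\cv$ over $B$.

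For \emph{Local Character}, fix $A$ and $B$. For each finite $\av\subseteq A$ pick a basis $\av'$ of $\av$ over $B$ and, using finitariness, a finite $C_{\av}\subseteq B$ with $\av\subseteq\mat(C_{\av}\av')$; since $\av'$ stays independent over $C_{\av}\subseteq B$, it is a basis of $\av$ over $C_{\av}$, so $\rkmat(\av/C_{\av})=\rkmat(\av/B)$. Put $C:=\bigcup_{\av}C_{\av}$, the union taken over all finite tuples $\av\subseteq A$; then $C\subseteq B$, $\card C\leq\card T+\card A$, and for every finite $\av\subseteq A$ the chain $\rkmat(\av/B)\leq\rkmat(\av/C)\leq\rkmat(\av/C_{\av})=\rkmat(\av/B)$ gives $\rkmat(\av/CB)=\rkmat(\av/C)$ (as $CB=B$), hence $\av\indmat_C B$ by the equivalence, established earlier, of $X\indmat_Y Z$ with $\rkmat(X/YZ)=\rkmat(X/Y)$. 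Finite Character of $\indmat$ then yields $A\indmat_C B$. I do not expect a real obstacle here: the only delicate points are the variable bookkeeping in Strong Finite Character that turns the \xnarrow witness into the required $\phi(\x,\y,\z)$, and, in Local Character, combining Finite Character of $\indmat$ (to descend from $A$ to its finite subtuples) with the finitary extraction of the sets $C_{\av}$ — both routine.
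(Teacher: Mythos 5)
Your proposal is correct, and for items (1)--(3) it follows essentially the route the paper takes: (1) is the homogeneity argument the paper dismisses as obvious, (2) is the same extraction of an \xnarrow witness $\phi_0$ with the same variable bookkeeping, and (3) is a direct write-out of the heir/finite-satisfiability argument that the paper simply delegates to Adler. The only genuine divergence is in Local Character. The paper argues globally: it takes $B'\subseteq B$ a basis of $AB$ over $A$, $A'\subseteq A$ a basis of $A$, and $C\subseteq B$ a basis of $B$ over $B'$, then compares the independent set $CB'$ inside $AB$ with the generating set $A'B'$ to conclude via the Exchange Principle that $\card C\leq\card{A'}=\rkmat(A)$, which is a sharper bound than the required $\card T+\card A$. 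You instead localise: for each finite $\av\subseteq A$ you extract, by finitariness, a finite $C_{\av}\subseteq B$ with $\rkmat(\av/C_{\av})=\rkmat(\av/B)$, take the union over all finite subtuples, and then climb back up with Finite Character of $\indmat$. Both arguments are sound; yours trades the sharper cardinality bound $\rkmat(A)$ for a more mechanical verification that avoids having to check that the globally chosen $C$ satisfies $A\indmat_C B$ (a step the paper asserts without detail), while the paper's single basis computation is shorter once one trusts that step.
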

\begin{proof}
(1) is obvious.

(2) Assume that $A \notindmat_C B$.
Hence, there exists $\bv \in B^n$ independent over~$C$, such that $\bv$ is not
independent over~$A C$.
Hence, there exists $\av \subset A$ and $\cv \subset C$ finite tuples, such
that, \wloG, $b_1 \in \mat(\cv \av \tilde b)$, 
where $\tilde b := \pair{b_2,  \dotsc, b_n}$.
Let $\alpha(x, \tilde x, \y, \z)$ be an \xnarrow formula, such that
$\monster \models \alpha(b_1, \tilde b, \cv, \av)$.
If $\av' \subset \monster$ satisfies $\alpha(\bv, \cv, \av')$, then
$\av' \notindmat_C B$.

(3) and (4) follow as in ~\cite[2.3--4]{adler}.
Here is a direct proof of the Local Character axiom: let $A$ and $B$ be given.
Let $B' \subseteq B$ be a basis of $A B$ over~$A$,
$A' \subseteq A$ be a basis of~$A$,
and  $C \subseteq B$ be a basis of $B$ over~$B'$.
Notice that $C B'$ is a basis of~$A B$ and $A' B'$ is a set of generators
of $A B$; hence, by the Exchange Principle, $\card{C} \leq \card{A'} =
\rkmat(A) \leq \card A$.
Moreover, $A \indmat_{C} B$.
\end{proof}

\begin{definizione}
Let $\ind$ be a \prefree relation on $\monster$.
We say that $\ind$ is an independence relation on $\monster$ if it moreover
satisfies Invariance, Local Character, and
\begin{description}
\item[Extension:]
If $A \ind_C B$ and $D \supseteq B$, then there exists $A' \equiv_{BC} A$ such
that $A' \ind_C D$.
\end{description}
We also define the following axiom:
\begin{description}
\item[Existence:]
For any $A$, $B$, and $C$, there exists $A' \equiv_C A$ such that $A' \ind_C B$.
\end{description}
\end{definizione}

The following result follows from~\cite{adler}.

\begin{corollary}
If $\indmat$ satisfies either the Extension or the Existence axiom, then it is
an independence relation (and satisfies the Existence axiom).
\end{corollary}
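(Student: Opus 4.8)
The plan is to reduce the whole statement to the single implication: \emph{if $\indmat$ satisfies the Existence axiom, then it satisfies the Extension axiom}. Granting this, the Corollary is immediate. We have shown earlier that $\indmat$ is a symmetric \prefree relation, and in Lemma~\ref{lem:cl-definable} that it satisfies Invariance~(1) and Local Character~(4); so once Extension is available $\indmat$ meets every clause in the definition of an independence relation. Conversely Extension implies Existence: apply Extension to $A \indmat_C C$ (which holds by an earlier remark) with $D := CB$, obtaining $A' \equiv_{C} A$ with $A' \indmat_C CB$, and then restrict to $A' \indmat_C B$ by Monotonicity. Thus in either case of the hypothesis we get an independence relation satisfying Existence.

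So suppose $\indmat$ satisfies Existence, and let me derive Extension. Fix $A \indmat_C B$ and $D \supseteq B$; I must find $A' \equiv_{BC} A$ with $A' \indmat_C D$. Put $B_1 := BC$ and $D_1 := DC$, so that $C \subseteq B_1 \subseteq D_1$. First, $A \indmat_C B$ gives $B \indmat_C A$ by Symmetry, hence $B_1 \indmat_C A$ by Normality, hence $A \indmat_C B_1$ by Symmetry. It also suffices to produce $A' \equiv_{B_1} A$ with $A' \indmat_C D_1$, because $BC \subseteq B_1$ forces $A' \equiv_{BC} A$, and $D \subseteq D_1$ forces $A' \indmat_C D$ by Monotonicity. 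Now apply Existence to the triple $(D_1, A, B_1)$: there is $D_1^* \equiv_{B_1} D_1$ with $D_1^* \indmat_{B_1} A$. From this and Normality, $D_1^* B_1 \indmat_{B_1} A$; together with $B_1 \indmat_C A$ and the inclusions $C \subseteq B_1 \subseteq D_1^* B_1$, Transitivity yields $D_1^* B_1 \indmat_C A$, whence $A \indmat_C D_1^*$ by Symmetry and Monotonicity.

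To conclude, choose (by strong $\kappa$-homogeneity of $\monster$, noting $\card{D_1}, \card{D_1^*} < \kappa$) an automorphism $\tau \in \aut(\monster/B_1)$ with $\tau(D_1^*) = D_1$, and set $A' := \tau(A)$. Since $\tau$ fixes $B_1$ pointwise, $A' \equiv_{B_1} A$, hence $A' \equiv_{BC} A$; and since $\tau$ fixes $C$ and sends $D_1^*$ to $D_1$, applying the Invariance axiom of Lemma~\ref{lem:cl-definable}(1) to $A \indmat_C D_1^*$ gives $A' \indmat_C D_1$, hence $A' \indmat_C D$. This is exactly Extension. The only step that is not pure bookkeeping is the two-step inference combining $D_1^* \indmat_{B_1} A$ with $A \indmat_C B_1$ via Normality and Transitivity to reach $A \indmat_C D_1^* B_1$ — and it is precisely here that symmetry of $\indmat$ is used to line up the sides correctly; everything else (the reductions, the conjugation by $\tau$, and reading off ``independence relation'' from Invariance, Local Character and Extension) is routine. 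This is essentially the computation underlying the reference to~\cite{adler}, so the write-up may either reproduce it or simply quote that source for the Existence-implies-Extension step.
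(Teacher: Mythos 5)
Your proof is correct. The paper itself does not spell out an argument — it simply cites Adler's Theorem~2.5 — and what you have written is a careful unfolding of precisely that argument: reduce everything to ``Existence implies Extension,'' then derive Extension by applying Existence to a suitable conjugate of $D_1$ over $B_1 = BC$, chase the independence through Normality, Transitivity, Symmetry and Monotonicity, and finish with an automorphism fixing $B_1$ to convert $D_1^*$ back into $D_1$. Your bookkeeping is right throughout: the Existence application (with the roles $A := D_1$, $B := A$, $C := B_1$) produces exactly the $D_1^*$ you need; Transitivity is invoked with the base chain $C \subseteq B_1 \subseteq D_1^* B_1$ and the two hypotheses $D_1^* B_1 \indmat_{B_1} A$ and $B_1 \indmat_C A$, both of which you have established; the Invariance clause of Lemma~\ref{lem:cl-definable} is applicable because $\mat$ is assumed definable in this part of the section; and the reverse direction (Extension $\Rightarrow$ Existence) via $A \indmat_C C$ with $D := CB$ is the standard observation. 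The one minor caveat worth flagging is that the existence of the automorphism $\tau$ uses strong $\kappa$-homogeneity, which in turn requires $\card{D_1^*} = \card{D_1} < \kappa$; this is fine under the paper's standing conventions but is worth making explicit, as you did. In short, your write-up is a faithful expansion of the cited source rather than an alternative proof.
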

\begin{proof}
See \cite[Thm.~2.5]{adler}.
\end{proof}

\begin{definizione}\label{def:existence}
$\mat$ satisfies Existence if:
\begin{itemize}
\item[] 
For every $a$, $B$, and $C$, if $a \notin \mat B$, 
then there exists $a' \equiv_B a$ such that $a' \notin \mat(B C)$.
\end{itemize}
Denote by $\aut(\monster/B)$ the set of automorphisms of $\monster$ which fix $B$ pointwise.
Denote by $\Xi(a/B)$ the set of conjugates of $a$ over $B$:
$\Xi(a/C) := \set{a^\sigma: \sigma \in \aut(\monster/B)}$.
\end{definizione}

\begin{lemma}\label{lem:cl-E}
\Tfae:
\begin{enumerate}
\item 
$\mat$ satisfies Existence.
\item
For every $a$, $B$, and $C$, if $\Xi(a/B) \subseteq \mat(BC)$, 
then $a \in  \mat(B)$.
\item 
For every $a$, $\bv$, and~$\cv$, if $a \notin \mat(\bv)$, then there exists
$a' \equiv_{\bv} a$ such that $a' \notin \mat(\bv \cv)$.
\item
For every $a$, $\bv$, and~$\cv$, and every \xnarrow formula $\phi(x, \y, \z)$,
if $\monster \models \phi (a', \bv, \cv)$ for every $a' \equiv_{\bv} a$,
then $a \in \mat(\bv)$.
\item
For every formula (without parameters) $\phi(x, \y)$ and every \xnarrow
formula $\psi(x, \y, \z)$, if 
$\monster \models \forall \y\ \exists \z\ \forall x\ \Pa{\phi(x, \y) \rightarrow
\psi(x, \y, \z)}$, then $\phi$ is \xnarrow.
\item 
For every $a$ and $B$, if $\rkmat(\Xi(a/B)$ is finite, then $a \in \mat(B)$.
\item
For every $a$ and $B$, 
if $\rkmat(\Xi(a/B) < \kappa$, then $a \in \mat(B)$.
\item
$\indmat$ is an independence relation.
\end{enumerate}
\end{lemma}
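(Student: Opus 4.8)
The strategy is to prove the chain of implications
$(1)\Leftrightarrow(2)\Leftrightarrow(3)\Leftrightarrow(4)\Leftrightarrow(5)$ among the ``local'' reformulations,
then $(2)\Leftrightarrow(6)\Leftrightarrow(7)$ for the rank variants, and finally
$(1)\Leftrightarrow(8)$ connecting Existence for $\mat$ with $\indmat$ being an independence
relation. Several of these are essentially unwindings of the definitions together with
finitariness of $\mat$ and the Exchange Principle; I expect the only genuinely substantial
point to be the equivalence $(1)\Leftrightarrow(8)$, for which I would lean on the
\prefree-relation machinery already set up and on the Corollary preceding
Definition~\ref{def:existence}.

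First, $(1)\Leftrightarrow(3)$: the difference between the two is only that~$(1)$ allows an
arbitrary set~$B$ of parameters while~$(3)$ restricts to a finite tuple~$\bv$. One direction is
trivial; for the other, given $a\notin\mat(B)$ and an arbitrary~$C$, use finitariness of~$\mat$
to find a finite $\bv\subset B$ with $a\notin\mat(\bv)$ — wait, that is not automatic, so instead
I would argue contrapositively through the conjugate formulation~$(2)$. For $(1)\Leftrightarrow(2)$:
if $\Xi(a/B)\subseteq\mat(BC)$ but $a\notin\mat(B)$, then \emph{no} conjugate $a'\equiv_B a$ lies
outside $\mat(BC)$ (since conjugates of $a$ over $B$ are exactly the elements of $\Xi(a/B)$, all
inside $\mat(BC)$), contradicting Existence; conversely, if Existence fails, witnessed by
$a\notin\mat(B)$ with every $a'\equiv_B a$ in $\mat(BC)$, then $\Xi(a/B)\subseteq\mat(BC)$ while
$a\notin\mat(B)$, so~$(2)$ fails. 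The equivalence $(3)\Leftrightarrow(4)$ is the same argument run
at the level of a single \xnarrow formula: an \xnarrow $\phi(x,\y,\z)$ with
$\monster\models\phi(a',\bv,\cv)$ forces $a'\in\mat(\bv\cv)$, so ``$\phi$ holds of every conjugate''
is exactly ``$\Xi(a/\bv)\subseteq\mat(\bv\cv)$''; definability of~$\mat$ lets one pass between the
set-theoretic statement and the formula-level one. Then $(4)\Leftrightarrow(5)$ is a routine
compactness/prefix manipulation: $(5)$ says that whenever a formula $\phi(x,\y)$ is ``uniformly
\xnarrow along a definable choice of witnesses~$\z$'' it is genuinely \xnarrow, which is the
$\forall\y\exists\z\forall x$-quantified form of~$(4)$.

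For the rank versions: $(2)\Rightarrow(7)\Rightarrow(6)$ are immediate ($(7)$ weakens ``contained in
$\mat(BC)$'' to ``of bounded rank'', and $(6)$ further restricts to finite rank — note that
$\rkmat(\Xi(a/B))$ finite in particular forces $\Xi(a/B)$ to be contained in the $\mat$-closure of
finitely many of its elements, hence in some $\mat(BC)$ with the relevant $C$ chosen inside
$\Xi(a/B)$). The interesting direction is $(6)\Rightarrow(2)$: assuming $\Xi(a/B)\subseteq\mat(BC)$,
I want $\rkmat(\Xi(a/B)/B)$ finite. Replacing $C$ by a \clbasis of $C$ over $B$ and using additivity
of rank (Lemma~\ref{lem:Lascar}) together with $\Xi(a/B)\subseteq\mat(BC)$, every element of
$\Xi(a/B)$ has rank $\le\rkmat(C/B)$ over $B$; by homogeneity of $\monster$ all conjugates behave
symmetrically, and an Exchange-Principle argument bounds $\rkmat(\Xi(a/B)/B)$ by $\rkmat(C/B)$, which
can be taken finite after shrinking $C$ to a finite tuple. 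This is the step I'd be most careful about,
since one must make sure the finite bound survives.

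Finally, $(1)\Leftrightarrow(8)$. By the Corollary just before Definition~\ref{def:existence},
$\indmat$ is an independence relation as soon as it satisfies Extension \emph{or} Existence (in
Adler's sense), and in that case it satisfies Existence. So it suffices to show that $\mat$ satisfying
Existence (Definition~\ref{def:existence}) is equivalent to $\indmat$ satisfying the Existence axiom
for \prefree relations. Unwinding: $\indmat$ has Existence means that for all $A,B,C$ there is
$A'\equiv_C A$ with $A'\indmat_C B$; by the characterisations of $\indmat$ already proved (the lemma
listing conditions (1)--(6) for $X\indmat_Y Z$), $A'\indmat_C B$ amounts to every basis of $A'C/C$
staying independent over $BC$, which for the element-by-element case is exactly the statement that
each relevant element stays outside the $\mat$-closure — i.e. the Existence clause of
Definition~\ref{def:existence}, applied along a \clbasis and using additivity of rank to reduce to
one element at a time. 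The forward direction is this reduction; the backward direction reassembles
the tuple from its basis elements. The main obstacle here is bookkeeping: making the ``one element at
a time'' reduction precise requires iterating Existence along a \clbasis of $A$ over $C$ while keeping
track of which parameters have been added, but this is standard once additivity of rank and the
symmetric \prefree axioms (Lemma~\ref{lem:ind-ternary}, Lemma~\ref{lem:ind-sequence}) are in hand.
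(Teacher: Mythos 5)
The paper states this lemma without proof, so there is no argument to compare against; the review below evaluates your proposal on its own terms. The overall cycle through (1)--(8), with the $\Xi$- and rank-conditions on one side, the formula-level conditions (4), (5) on the other, and (8) handled via Adler's theorem through the Corollary preceding Definition~\ref{def:existence}, is the right shape. But there is a genuine gap exactly where you flag your own unease: your $(6)\Rightarrow(2)$ does not go through. Given $\Xi(a/B)\subseteq\mat(BC)$ with $B$ and $C$ arbitrary sets, you cannot in general shrink $C$ to a finite tuple $\cv$ while preserving $\Xi(a/B)\subseteq\mat(B\cv)$: if the orbit $\Xi(a/B)$ has infinite $\mat$-rank, no finite $\cv$ suffices, and neither additivity nor homogeneity helps (homogeneity gives $\rkmat(a'/B)=\rkmat(a/B)\le 1$ for each conjugate individually, which says nothing about the joint rank of the whole orbit). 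The repair is cheap: aim for $(6)\Rightarrow(3)$ instead. In (3) the parameters $\bv,\cv$ are \emph{finite} tuples, so $\Xi(a/\bv)\subseteq\mat(\bv\cv)$ immediately gives $\rkmat(\Xi(a/\bv))\le\card{\bv}+\card{\cv}<\omega$ and (6) applies; since you already have $(3)\Leftrightarrow(2)$, the cycle closes. (Alternatively, $(7)\Rightarrow(2)$ is direct, since $\Xi(a/B)\subseteq\mat(BC)$ always forces $\rkmat(\Xi(a/B))\le\card{B}+\card{C}<\kappa$.)

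Two smaller points. The worry you raised in passing about $(1)\Leftrightarrow(3)$ --- needing a finite $\bv\subset B$ with $a\notin\mat(\bv)$ --- is unfounded: monotonicity gives $a\notin\mat(\bv)$ for \emph{every} finite $\bv\subset B$ once $a\notin\mat(B)$; the compactness work is in the other clause, securing $a'\notin\mat(BC)$, and your route via (2) handles that. And $(5)\Rightarrow(4)$ (equivalently $(5)\Rightarrow(3)$) is not a pure prefix manipulation: from the hypothesis of (4) one gets by compactness a single $\phi_0(x,\bv)\in\tp(a/\bv)$ with $\forall x\,\Pa{\phi_0(x,\bv)\to\psi(x,\bv,\cv)}$, but this holds only at the specific $\bv$; to satisfy the $\forall\y\,\exists\z$-prefix demanded by (5) you must replace $\phi_0(x,\y)$ by $\phi_0(x,\y)\wedge\exists\z\,\forall x\,\Pa{\phi_0(x,\y)\to\psi(x,\y,\z)}$, building the witness into the formula. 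That step is short but it is the content of the equivalence and should be spelled out.
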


\begin{remark}
If $\mat$ satisfies Existence, then $\acl A \subseteq \mat A$.
\end{remark}

\begin{lemma}\label{lem:cl-Skolem-existence}
Assume that $\mat(A)$ is an elementary substructure of~$\monster$,
for every $A \subset \monster$.
Then, $\mat$ satisfies Existence, and 
therefore $\indmat$ is an independence relation.
Hence, if $T$ has \DSF and $\mat$ extends~$\acl$,
then $\mat$ is satisfies Existence.
\end{lemma}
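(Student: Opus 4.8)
The statement has two parts. The first asserts: if $\mat(A) \prec \monster$ for every $A \subset \monster$, then $\mat$ satisfies Existence, and hence (by Lemma~\ref{lem:cl-E}, equivalence with item~8, or by the preceding Corollary) $\indmat$ is an independence relation. The second part deduces the concrete case: if $T$ has \DSF and $\mat$ extends $\acl$, then $\mat$ satisfies Existence. I would prove the first part directly from one of the equivalent formulations of Existence in Lemma~\ref{lem:cl-E}, and then derive the second part by checking that the Skolem-function hypothesis forces $\mat(A)$ to be an elementary substructure.

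For the first part, I would verify item~(3) of Lemma~\ref{lem:cl-E}: given $a$, $\bv$, $\cv$ with $a \notin \mat(\bv)$, I must produce $a' \equiv_{\bv} a$ with $a' \notin \mat(\bv\cv)$. Set $M := \mat(\bv)$, an elementary substructure of $\monster$ by hypothesis. Since $a \notin M$, the type $\tp(a/M)$ is not realised in $M$; I want a realisation $a'$ of $\tp(a/M)$ that also avoids $\mat(\bv\cv)$. The key point: $\mat(\bv\cv) \cap M = \mat(\bv) = M$ fails in general, so instead I argue by a counting/saturation argument. If every realisation $a'$ of $\tp(a/\bv)$ lay in $\mat(\bv\cv)$, then since $\mat$ is definable, each such $a'$ would satisfy some \xnarrow formula $\phi(x,\bv,\cv)$; by compactness finitely many such formulas $\phi_1,\dots,\phi_k$ would cover the partial type $\tp(a/\bv)$, so $\tp(a/\bv) \vdash \bigvee_i \phi_i(x,\bv,\cv)$, whence $a \in \mat(\bv\cv)$ — but more importantly, applying an automorphism fixing $\bv$ that moves $\cv$ generically, one sees $a \in \mat(\bv)$, contradiction. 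The cleanest route is actually item~(6): I claim $\rkmat(\Xi(a/\bv))$ is infinite whenever $a \notin \mat(\bv)$. Since $M = \mat(\bv) \prec \monster$ and $a \notin M$, there are $\geq \omega$ distinct $\bv$-conjugates of $a$; using additivity of rank (Lemma~\ref{lem:Lascar}) together with the fact that $\mat(M) = M$ and each conjugate has positive rank over $M$, an independent subsequence of conjugates can be extracted, forcing $\rkmat(\Xi(a/\bv)) \geq \omega$. By Lemma~\ref{lem:cl-E}(6)$\Rightarrow$(1), $\mat$ satisfies Existence; the "therefore" clause is then immediate from the Corollary or from (1)$\Leftrightarrow$(8).

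For the second part, suppose $T$ has \DSF and $\acl \subseteq \mat$. I would show $\mat(A) \prec \monster$ by the Tarski–Vaught test: given a formula $\exists y\, \psi(y,\bar d)$ with $\bar d \in \mat(A)$ that holds in $\monster$, apply a definable Skolem function $f$ to obtain a witness $y_0 = f(\bar d)$; since $f$ is $\emptyset$-definable, $y_0 \in \dcl(\bar d) \subseteq \acl(\bar d) \subseteq \mat(\bar d) \subseteq \mat(\mat(A)) = \mat(A)$ by idempotency. Hence $\mat(A)$ satisfies the Tarski–Vaught condition and is an elementary substructure, so the first part applies.

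**Main obstacle.** The delicate point is the first part: showing that $\mat(A) \prec \monster$ really does deliver Existence. Naively one would want $\mat(\bv\cv) \cap \mat(\bv) = \mat(\bv)$ to fail to capture $a$, but that is automatic; the substance is ruling out that \emph{every} $\bv$-conjugate of $a$ falls into $\mat(\bv\cv)$. I expect the argument to hinge on combining definability of $\mat$ (to get an \xnarrow formula capturing the conjugates uniformly, via compactness) with the elementarity of $\mat(\bv)$ and additivity of rank to produce enough independent conjugates — i.e.\ verifying one of the rank-theoretic criteria (6) or (7) of Lemma~\ref{lem:cl-E} rather than (1) or (3) directly. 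Everything else is bookkeeping with the closure axioms.
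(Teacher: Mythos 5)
Your decomposition of the statement is right, and the second part (DSF plus $\acl\subseteq\mat$ implies $\mat(A)\prec\monster$ via Tarski--Vaught and idempotency) is correct and matches what the paper intends. The problem is in the first part, where there is a genuine gap at exactly the point you flag as delicate.

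You propose to verify item~(6) of Lemma~\ref{lem:cl-E} by showing $\rkmat(\Xi(a/\bv))$ is infinite, and you say that ``an independent subsequence of conjugates can be extracted'' from additivity of rank, idempotency of $\mat$, and the fact that each $M$-conjugate of $a$ has positive rank over $M:=\mat(\bv)$. But extracting already the second term $a_1\equiv_{\bv}a$ with $a_1\notin\mat(\bv a)$ is precisely an instance of the Existence axiom (with $C=\{a\}$); the three facts you cite hold for any definable matroid, including ones that fail Existence, so they cannot by themselves yield the extraction. The same circularity afflicts the earlier sketch: finding a $\bv$-automorphism that moves $\cv$ to a position $\mat$-independent from $a$ (so that $a\in\mat(\bv\cv)\cap\mat(\bv\cv')=\mat(\bv)$) is again an application of Existence, this time to $\cv$. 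Items~(2), (6), (7) of Lemma~\ref{lem:cl-E} are \emph{equivalent} to Existence, so verifying any of them is the whole task; no one of them is a shortcut.

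What actually breaks the circularity, and what the paper does, is the heir. After the reduction (which you have) to a single \xnarrow formula $\phi$ and to the case $B\preceq C\prec\monster$ with $\card B,\card C<\kappa$, take $q\in S_1(C)$ a heir of $p:=\tp(a/B)$; since $p\vdash\phi(x,\bv,\cv)$, completeness gives $\phi(x,\bv,\cv)\in q$, and the heir property (which is where elementarity of $B=\mat(B)$ is consumed, via finite satisfiability in $B$) produces $\bv'\subset B$ with $\phi(x,\bv,\bv')\in q$; any realisation of $q$ is then in $\mat(B)$, hence so is $a$ by $B$-conjugacy. If you want to keep your route through item~(6) or (7), the extraction step must likewise be done with a heir of $\tp(a/M)$ over $\mat(Ma_0\dots a_n)$; it is not a consequence of rank bookkeeping alone.
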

\begin{proof}
Let $\Xi(a/B) \subseteq \mat(BC)$. We want to prove that $a \in \mat(B)$.
Let $B'$ and $C'$ be elementary substructures of~$\monster$,
such that $B \subseteq B' \subset \mat(B)$,
$B'C \subseteq C' \subset \mat(BC)$,
$\card{B'} < \kappa$, and $\card{C'} < \kappa$
($B'$~and $C'$ exist by hypothesis on~$\mat$).
By substituting $B$ with $B'$ and $C$ with~$C'$,
\wloG we can assume that $B \preceq C \prec \monster$.
By saturation, there exist an \xnarrow formula $\phi(x, \y, \z)$,
$\bv \subset B$, and $\cv \subset C$, such that 
$\Xi(a/B) \subseteq \phi(\monster, \bv, \cv)$.
Let $p := \tp(a/B)$, let $q \in S_1(C)$ be a heir of~$p$,
and $a'$ be a realisation of~$q$.
Since $\phi(x, \bv, \cv) \in p$, there exists $\bv' \in B$ such that
$\phi(x, \bv, \bv') \in q$.
Hence, $a' \in \mat(B)$; since $a' \equiv_B a$, $a \in \mat(B)$.
\end{proof}


\begin{definizione}
The \intro{trivial matroid} $\mat^0$ is given by $\mat^0(X) = X$ 
for every $X \subseteq \monster$.
$\mat^0$ is a definable matroid and satisfies Existence.
It induces the trivial \prefree relation $\ind[0]$, such that
$A \ind[0]_B C$ for every $A$, $B$, and~$C$.
Notice that $\ind[0]$ is an independence relation.
\end{definizione}

\begin{definizione}\label{def:existential}
We say that $\mat$ is an \intro{existential matroid} if $\mat$ is a definable
matroid, satisfies Existence, and is non-trivial (\ie, different from~$\mat^0$).
\end{definizione}

\begin{examples}
\begin{enumerate}
\item
Given $n \in \N$, the uniform matroid of rank $n$ is defined as:
$\mat^n(X) := X$, if $\card X <n$, or $\monster$ if $\card X \geq n$.
$\mat^n$ is a definable matroid, but does not satisfy Existence in general
(unless $n = 0$).
\item
Define $\id(X) := X$.
$\id$~is a definable matroid, but does not satisfy Existence in general.
The \prefree relation induced by $\id$ is given by
$A \ind[$\id$]_B C$ iff $A \cap C \subseteq B$.
\end{enumerate}
\end{examples}

\begin{remark}
Let $\monster'$ be another monster model of~$T$.  
We can define an operator $\mat'$ on $\monster'$ in the following way:
\[
\mat(X') := \bigcup \set{\phi(\monster', \av): \phi(x, \y) \text{ \xnarrow }
  \et \av' \subset X'}.
\]
Then, $\mat'$ is a definable matroid.
If $\mat$ satisfies existence, then $\mat'$ also satisfies existence.
\end{remark}

\begin{remark}\label{rem:general-closure}
Notice that the definitions of ``definable'' (\ref{def:definable}) and
``existential'' (\ref{def:existential} and \ref{def:existence}) 
make sense also for finitary closure operators  (and not only for matroids). 
\end{remark}
However, we will not need such more general definitions.

\begin{proviso*}
For the remainder of this section, $\mat$~is an existential matroid.
\end{proviso*}


Summarising, we have:\\
If $\mat$ is an existential matroid, then $\indmat$ is an independence
relation,  satisfying the strong finite character axiom.
In particular, if $\monster$ is a pregeometric structure,
then $\ind[$\acl$]\,$ is an independence relation.


\subsection{Dimension}

\begin{definizione}\label{def:dimension}
Given a set $V \subseteq \monster^n$, definable with parameters~$A$, the
\intro{dimension} of~$V$ (\wrt to the matroid~$\mat$) is given by 
\[
\dimat(V) := \max\set{ \rkmat(\bv/ A):  \bv \in X},
\]
with $\dimat(V) := - \infty$ iff $V = \emptyset$.
More generally, the dimension of a partial type $p$ with parameters $A$
is given by
\[
\dimat(p) := \max\set{ \rkmat(\bv/ A):  \bv \models p}.
\]
\end{definizione}

The following lemma shows that the above notion is well-posed:
in its proof, it is important that $\mat$ satisfies existence.
\begin{lemma}
Let $V$ be a  type-definable subset of~$\monster^n$.
Then, $\dimat(V) \leq n$, 
and $\dimat(V)$ does not depend on the choice of the parameters.
\end{lemma}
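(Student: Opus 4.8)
The statement has two parts: (a) $\dimat(V) \leq n$ for any type-definable $V \subseteq \monster^n$, and (b) $\dimat(V)$ is independent of the choice of parameters defining $V$.

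Part (a) is the easier one. If $\bv = (b_1, \dots, b_n) \in V$ and $A$ is any set of parameters, then $\rkmat(\bv/A) = \rkmat(b_1 \cdots b_n / A) \leq n$ by additivity of rank (Lemma~\ref{lem:Lascar}), applied inductively: $\rkmat(b_1 \cdots b_n/A) = \sum_{i=1}^{n} \rkmat(b_i / b_1 \cdots b_{i-1} A)$, and each summand is either $0$ or $1$ since $\rkmat(b_i/C) \in \{0,1\}$ for a single element (it is $0$ if $b_i \in \mat(C)$ and $1$ otherwise). Taking the max over $\bv \in V$ gives $\dimat(V) \leq n$.

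For part (b), let $V$ be type-definable over $A$ and also over $A'$; I want to show $\max\{\rkmat(\bv/A) : \bv \in V\} = \max\{\rkmat(\bv/A') : \bv \in V\}$. The plan is to reduce to a common parameter set and use homogeneity of $\monster$ together with the Existence property of $\mat$. First, it suffices to compare the dimension computed over $A$ with the dimension computed over $A \cup A'$; by symmetry the same comparison works for $A'$, and transitivity of equality finishes. So suppose $A \subseteq A'$ and write $d := \dimat_A(V)$ (dimension over $A$) and $d' := \dimat_{A'}(V)$. Since enlarging the parameter set can only decrease each $\rkmat(\bv/A') \leq \rkmat(\bv/A)$, we immediately get $d' \leq d$. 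The content is the reverse inequality: I need a tuple $\bv \in V$ with $\rkmat(\bv/A') = d$.

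Take $\bv \in V$ with $\rkmat(\bv/A) = d$. If $\rkmat(\bv/A') < d$, I want to move $\bv$ by an automorphism (or find a new realisation of the type) to recover full rank over $A'$. This is exactly where Existence enters: by Lemma~\ref{lem:cl-E}, $\mat$ satisfies Existence iff $\indmat$ is an independence relation, so $\indmat$ satisfies the Extension axiom. Concretely, let $\bv_0 \subseteq \bv$ be a $\mat$-basis of $\bv$ over $A$, so $\abs{\bv_0} = d$ and $\bv_0$ is $\mat$-independent over $A$. Using Extension for $\indmat$ (with base $A$, moving $\bv_0$ which is independent from $A$ trivially, to be independent from $A'$ over $A$), there is $\bv_0' \equiv_A \bv_0$ with $\bv_0' \indmat_A A'$; since $\monster$ is strongly homogeneous, extend this to $\bv' \equiv_A \bv$ with corresponding basis $\bv_0'$. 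Then $\bv' \in V$ (as $V$ is $A$-type-definable and $\bv' \equiv_A \bv$), and $\bv_0'$ is $\mat$-independent over $A$ and independent from $A'$ over $A$, hence $\mat$-independent over $A'$, giving $\rkmat(\bv'/A') \geq d$. Combined with part (a)'s bound style argument $\rkmat(\bv'/A') \leq d$ (since $\bv'$ still has $\mat$-closure rank $d$ over $A$, and rank over a larger set is no bigger), we get $\rkmat(\bv'/A') = d$, so $d' \geq d$.

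The main obstacle — and the step to be careful about — is the transfer of the independent basis by an automorphism: I must ensure that moving $\bv_0$ to be independent from $A'$ can be lifted to moving the whole tuple $\bv$ while staying inside $V$, which uses that $V$ is type-definable over $A$ (so membership in $V$ depends only on the type over $A$) plus strong $\kappa$-homogeneity of $\monster$ to realise the amalgamated type. The other subtlety is confirming that $\bv_0' \indmat_A A'$ together with $\bv_0'$ $\mat$-independent over $A$ yields $\bv_0'$ $\mat$-independent over $A'$ — this is immediate from the definition of $\indmat$ (a set independent over $A$ that is $\indmat$-free from $A'$ over $A$ stays independent over $A \cup A' $). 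Everything else is bookkeeping with Lemma~\ref{lem:Lascar} and the basic matroid facts already recorded.
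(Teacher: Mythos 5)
The paper states this lemma without giving a proof, only flagging in the surrounding prose that Existence is essential; so there is no in-text argument to compare against. Your proof is correct and, indeed, uses Existence exactly where the paper signals it matters. Part (a) is routine subadditivity of rank. For part (b), the reduction to the case $A \subseteq A'$ by comparing both to $A \cup A'$ is the right first move, and the reverse inequality $d' \geq d$ is where the real work is. Your use of the Extension axiom (available because Existence makes $\indmat$ an independence relation, Lemma~\ref{lem:cl-E}) to move the tuple to be independent from $A'$ over $A$ is the key step. Two small remarks: first, you could have simplified by applying Extension directly to $\bv$ itself (since $\bv \indmat_A A$ trivially, Extension gives $\bv' \equiv_A \bv$ with $\bv' \indmat_A A'$, and then $\rkmat(\bv'/A') = \rkmat(\bv'/A) = \rkmat(\bv/A) = d$ is immediate from characterisation (6) of $\indmat$ together with automorphism-invariance of $\mat$) — moving only the basis $\bv_0$ and then lifting is a slight detour, though it works. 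Second, it is worth making explicit that automorphism-invariance of $\rkmat$ over $A$, which you use when you assert $\rkmat(\bv'/A) = \rkmat(\bv/A)$ from $\bv' \equiv_A \bv$, comes from the definability of $\mat$ (the paper's remark immediately following the definability proviso). With those points noted, the argument is complete.
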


\begin{remark}
For every $d \leq n \in \N$, the set of complete types in $S_n(A)$ of
$\dimat$ equal to~$d$ is closed (in the Stone topology).
That is, $\dimat$ is continuous in the sense of~\cite[\S 17.b]{poizat85}.
\end{remark}


\begin{lemma}
Let $p$ be a partial type over~$A$.
Then,
\[
\dimat(p) := \min\set{\dimat(V): V \text{ is $A$-definable} \et V \in p}.
\]
Moreover, if $p$ is a complete type, then, 
for every $\bv \models p$, $\rkmat(\bv  /A) = \dimat(p)$.
\end{lemma}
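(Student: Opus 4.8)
The plan is to prove the ``Moreover'' clause first, since it is almost immediate, and then the displayed equality, which I will first establish for complete types and then bootstrap to arbitrary partial types by a compactness argument in the Stone space. For the ``Moreover'' clause, let $p\in S_n(A)$ be complete and let $\bv,\bv'\models p$. Because $\monster$ is strongly $\kappa$-homogeneous and $\card A+n<\kappa$, there is $\sigma\in\aut(\monster/A)$ with $\sigma\bv=\bv'$; since $\mat$ commutes with automorphisms (hence so does $\rkmat$, giving $\rkmat(\sigma\bv/\sigma A)=\rkmat(\bv/A)$), we get $\rkmat(\bv/A)=\rkmat(\bv'/A)$. Thus $\rkmat(\cdot/A)$ is constant on the realisations of $p$, and as $\dimat(p)$ is by definition the maximum of these coinciding values — a maximum that is attained because $\rkmat$ takes only the values $-\infty,0,\dots,n$ on $n$-tuples — it equals $\rkmat(\bv/A)$ for every $\bv\models p$.

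For the displayed identity, the inequality $\dimat(p)\le\dimat(V)$ holds for every $A$-definable $V\in p$ because all realisations of $p$ lie in $V$; so it remains to produce a single $A$-definable $V\in p$ with $\dimat(V)\le d:=\dimat(p)$. I do this first for $p$ complete. Pick $\bv\models p$ with $\rkmat(\bv/A)=d$ and, among the coordinates of $\bv$, a sub-tuple that is a basis of $\bv$ over $A$ (every spanning set contains a basis); reindexing, we may take it to be $b_1,\dots,b_d$. Then $b_i\in\mat(Ab_1\cdots b_d)$ for all $i$, so by definability of $\mat$ we may choose, for each $i>d$, an \xnarrow formula $\phi_i(x,x_1,\dots,x_d,\y)$ and a tuple $\av_i\subset A$ with $\monster\models\phi_i(b_i,b_1,\dots,b_d,\av_i)$. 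Let $V$ be the set defined by $\bigwedge_{i>d}\phi_i(x_i,x_1,\dots,x_d,\av_i)$: it is $A$-definable and contains $\bv$, hence $V\in p$; and for every $\cv\in V$, since each $\phi_i$ is \xnarrow we get $c_i\in\mat(Ac_1\cdots c_d)$ for all $i$, so $\rkmat(\cv/A)\le\rkmat(c_1\cdots c_d/A)\le d$. Therefore $\dimat(V)\le d$, and combined with the trivial inequality, $\dimat(V)=d$.

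For an arbitrary partial type $p$ over $A$ with $d:=\dimat(p)$, consider its (closed, hence compact) set of completions $[p]\subseteq S_n(A)$. Every $q\in[p]$ satisfies $\dimat(q)\le d$, so by the complete case there is an $A$-definable $V_q\in q$ with $\dimat(V_q)\le d$; moreover $\dimat(q')\le d$ for every $q'\in[V_q]$, since all realisations of such a $q'$ lie in $V_q$. The clopen sets $[V_q]$, $q\in[p]$, cover $[p]$, so finitely many of them already do, say $[V_{q_1}],\dots,[V_{q_m}]$; put $V:=V_{q_1}\cup\dots\cup V_{q_m}$. Then $V$ is $A$-definable, $[p]\subseteq[V]$ (so $V\in p$), and $\dimat(V)=\max_j\dimat(V_{q_j})\le d$; with the trivial inequality this gives $\dimat(V)=d$, completing the proof.

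The crux is the complete-type case: the point is that the equality $\rkmat(\bv/A)=d$ can be witnessed by finitely many \xnarrow formulas — this is exactly where the definability of $\mat$ is used — and that the formula so assembled automatically caps the $\mat$-rank of all of its realisations by $d$. Everything after that, namely the passage from complete to arbitrary types, is a routine Stone-space compactness argument, and the ``Moreover'' clause is pure homogeneity together with the automorphism-invariance of $\mat$.
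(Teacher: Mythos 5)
Your proof is correct and follows essentially the same route as the paper: the trivial inequality $\dimat(p)\le\dimat(V)$ for all $A$-definable $V\in p$, then for complete types the construction of $V$ by taking a $d$-element basis among the coordinates of a maximal-rank realisation and conjoining \xnarrow formulas witnessing the remaining coordinates lie in the $\mat$-closure, then a Stone-space compactness argument to reduce partial types to the complete case. You spell out two steps the paper leaves implicit — the automorphism-invariance argument for the ``Moreover'' clause, and the finite-subcover details in the partial-type reduction — but these are elaborations of the same argument rather than a different one.
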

\begin{proof}
Let $d :=  \dimat(p)$,
$e := \min\set{\dimat(V): V \text{ is $A$-definable}  \et V \in p}$, 
and $\bv \models p$, such that $d = \rkmat(\bv / A)$.
If $V \in p$, then $\bv \in V$, and therefore
\[
e \geq \dimat(V) \geq \rkmat(\bv / A) = d.
\]
For the opposite inequality, first assume that $p$ is a complete type.
\Wlog $\tilde b := \pair{b_1, \dotsc, b_d}$ are
$\mat$-independent over~$A$, and therefore $b_i \in \mat(A \tilde b)$ for
every $i = d + 1, \dotsc, n$.
For every $i \leq n$, $\phi_i(x, \y, \z)$ be an \xnarrow formula such that
$\monster \models \phi(b_i, \tilde b, \av)$ (where $\av \subset A$), 
$\phi(\x, \y, \z) := \bigcap_{i = 1}^n \phi_i(x_i, x_1, \dotsc, x_d, \z)$,
and $V := \phi(\monster^n, \monster^d, \av)$.
Then, for every $\bv' \in V$, $\rkmat(\bv' / A) \leq d$, and therefore
$\dimat(V) \leq d$.
Moreover, $\bv \in V$, hence $V \in p$, and therefore $e \leq d$.

The general case when $p$ is a partial type follows from the complete case,
the fact that the set of complete types extending $p$ is a closed (and hence
compact) subset of $S_n(A)$, and the previous remark.
\end{proof}

\begin{remark}
$\dimat(\monster^n) = n$.  
Moreover, $\dimat$ is monotone: if $U \subseteq V \subseteq \monster^n$, 
then $\dimat(U) \leq \dimat(V)$.
\end{remark}

\begin{definizione}
Given $p \in S_n(B)$, $q \in S_n(C)$, with $B \subseteq C$, we say that $q$ is
a non-forking extension of~$p$ (w.r.t.~$\mat$), and write $p \sqsubseteq q$, if
$q$ extends $p$ and $\dimat(q) =\dimat(p)$.
We write $q \indmat_B C$ if $q \rest_B \sqsubseteq q$.
\end{definizione}

\begin{remark}
Let $B \subseteq C$ and $q \in S_n(C)$.
Then, $q \indmat_B C$ iff, for some (for all) $\av$ realising~$q$,
$\av \indmat_B C$.
\end{remark}

\begin{remark}
Let $p \in S_n(B)$ and $B \subseteq C$.
Then, for every $q \in S_n(C)$ extending~$p$, $\dimat(q) \leq \dimat(p)$.
Moreover, there exists $q \in S_n(C)$ which is a non-forking extension of~$p$.
\end{remark}

\begin{lemma}\label{lem:ind-heir}
Let $\indf$ be Shelah's forking relation on~$\monster$.
Then, for every $A$, $B$, and $C$ subsets of~$\monster$,
if $A \indf_B C$, then $A \indmat_B C$.
In particular, if
$\K \prec \monster$, $\K \subseteq C$, and $q \in S_n(C)$, and
$q$ is either a heir or a coheir of $q \rest_{\K}$, then $q \indmat_{\K} C$.
\end{lemma}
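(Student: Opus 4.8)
The plan is to prove the contrapositive of the first assertion. So I would assume $A \notindmat_B C$ and produce a formula in $\tp(A/BC)$ that divides over~$B$; then $\tp(A/BC)$ forks over~$B$, hence $A \notindf_B C$. (This is an instance of the general principle that non-forking is contained in every independence relation, applied via the Corollary above since $\mat$~is existential, see \cite{adler}; I would nonetheless give the argument directly.) From $A \notindmat_B C$ and the definition of $\indmat$ there is $Z \subseteq C$ that is \clindependent over~$B$ but not over~$BA$; using finitariness of~$\mat$ I would extract a finite tuple $\bv = b_1 \tilde b \subseteq Z$ (with $\tilde b = b_2 \dotsm b_n$) that is \clindependent over~$B$, a finite $\av \subseteq A$, and a finite $\cv \subseteq B$, with $b_1 \in \mat(\tilde b\, \cv\, \av)$. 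By definability of~$\mat$ I would fix an \xnarrow formula $\alpha(x, \tilde x, \z, \y)$ with $\monster \models \alpha(b_1, \tilde b, \cv, \av)$ and put $\phi(\y) := \alpha(b_1, \tilde b, \cv, \y)$. Then $\phi \in \tp(\av/BC)$, and since $\alpha$ is \xnarrow and $\cv \subseteq B$, every $\bar a^*$ with $\monster \models \phi(\bar a^*)$ satisfies $b_1 \in \mat(\tilde b\, \cv\, \bar a^*) \subseteq \mat(B\, \tilde b\, \bar a^*)$.

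Since $\mat$~is existential, $\indmat$~is an independence relation and hence satisfies Extension; applying it recursively (to $\bv \indmat_B B$, enlarging the right-hand side at stage~$i$ to $B (\bv_j : j < i)$) I would build, for a sufficiently large $\lambda < \kappa$, a sequence $(\bv_i : i < \lambda)$ with $\bv_i \equiv_B \bv$ and $\bv_i \indmat_B (\bv_j : j < i)$ for every~$i$, i.e.\ an independent sequence over~$B$ in the sense of Lemma~\ref{lem:ind-sequence}. Write $\bv_i = b_1^i \tilde b^i$, $\phi_i(\y) := \alpha(b_1^i, \tilde b^i, \cv, \y)$, and set $\ell := \card{\y}$. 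If some $\bar a^*$ of length~$\ell$ satisfied $\phi_{i_1}, \dotsc, \phi_{i_m}$ for distinct $i_1 < \dots < i_m < \lambda$, then $b_1^{i_j} \in \mat(B\, \tilde b^{i_j}\, \bar a^*)$ for each~$j$, so $\rkmat(\bv_{i_j}/B\bar a^*) = \rkmat(\tilde b^{i_j}/B\bar a^*) \leq n - 1$; and, since the sequence is independent over~$B$ with $\rkmat(\bv_i/B) = n$, additivity and monotonicity of $\rkmat$ (Lemma~\ref{lem:Lascar}) give
\[
mn = \rkmat(\bv_{i_1}\dotsm\bv_{i_m}/B) \leq \rkmat(\bv_{i_1}\dotsm\bv_{i_m}/B\bar a^*) + \rkmat(\bar a^*/B) \leq m(n-1) + \ell,
\]
whence $m \leq \ell$. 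Thus $\set{\phi_i(\y) : i < \lambda}$ is $(\ell+1)$-inconsistent.

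Now ``$\bv_i \equiv_B \bv$'' and ``any $\ell+1$ of the $\phi_i$ are jointly inconsistent'' are expressed by $\Lang(B)$-formulas about finite subtuples of $(\bv_i)$, so they are inherited by a $B$-indiscernible sequence with the same Ehrenfeucht--Mostowski type over~$B$ (which exists since $\lambda$~is large); translating such a sequence by a $B$-automorphism so that it begins with~$\bv$, I obtain a $B$-indiscernible witness that $\phi(\y)$ divides over~$B$. As $\phi \in \tp(\av/BC)$ and $\av \subseteq A$, this shows $\tp(A/BC)$ forks over~$B$, i.e.\ $A \notindf_B C$ --- the first assertion. For the ``in particular'' clause, let $\K \prec \monster$, $\K \subseteq C$, $q = \tp(\av/C)$. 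If $q$~is a coheir of $q \rest_{\K}$, then $\tp(\av/C)$ is finitely satisfied in $\K \subseteq C$, so $\av \indmat_{\K} C$ by Lemma~\ref{lem:cl-definable}(3). If $q$~is a heir of $q \rest_{\K}$, then $\tp(\cv/\K\av)$ is finitely satisfied in~$\K$ for every finite $\cv \subseteq C$, so $\cv \indmat_{\K} \av$ by Lemma~\ref{lem:cl-definable}(3); hence $C \indmat_{\K} \av$ by Finite Character and $\av \indmat_{\K} C$ by Symmetry. In either case $q \indmat_{\K} C$.

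The step I expect to require the most care is the passage from the rank computation to dividing: an independent sequence over~$B$ need not be $B$-indiscernible, so it cannot serve directly as a dividing witness; what it provides is the purely combinatorial fact that the translates $\phi_i$ are $(\ell+1)$-inconsistent, and this, being first-order in the sequence, is preserved when one extracts an indiscernible sequence from a sufficiently long independent one. The remaining ingredients --- reading off the \xnarrow formula via finitariness and definability of~$\mat$, building the long independent sequence via Extension (available because $\mat$~is existential), and the rank arithmetic via Lemma~\ref{lem:Lascar} --- are routine.
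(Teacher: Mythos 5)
Your proof is correct. The paper itself offers no argument beyond citing \cite[Remark~1.27]{adler} for the main implication and \cite[Remark~2.3]{adler} for the heir case; your write-up unfolds those citations into a self-contained proof, and its structure is exactly the one behind Adler's remark. From $A \notindmat_B C$ you extract, by finitariness and definability of~$\mat$, an \xnarrow formula $\alpha$ and a formula $\phi(\y) := \alpha(b_1, \tilde b, \cv, \y) \in \tp(\av/BC)$; Existence (available since $\mat$ is an existential matroid, so $\indmat$ is an independence relation) gives a long $\indmat$-independent sequence of $B$-conjugates of~$\bv$; the rank computation via Lemma~\ref{lem:Lascar} shows the corresponding translates $\phi_i$ are $(\ell+1)$-inconsistent; and extracting a $B$-indiscernible sequence with the same Ehrenfeucht--Mostowski type produces the dividing witness, whence forking. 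The one subtlety, which you correctly flag, is that the $\indmat$-independent sequence is \emph{not} itself indiscernible, so the extraction step is genuinely needed; it works because both the $B$-conjugacy of $\bv_i$ to~$\bv$ and the $(\ell+1)$-inconsistency of any $\phi_{i_1}, \dotsc, \phi_{i_{\ell+1}}$ are first-order over~$B\cv$ and hence transferred by the Ehrenfeucht--Mostowski type. The heir/coheir clause is handled exactly as in \cite[Remark~2.3]{adler}, via finite satisfiability and Lemma~\ref{lem:cl-definable}(3), plus Symmetry and Finite Character in the heir case.
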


\begin{proof}
The fact that $\indf$ implies $\indmat$ is a particular case of
\cite[Remark~1.27]{adler}.
For the case when $q$ is a heir of $p := q \rest_{\K}$, see also
\cite[Remark~2.3]{adler}.
\end{proof}
 
\begin{corollary}
Assume that $T$ is super-simple and $p \in S_n(A)$ 
for some $A \subseteq \monster$.
Then, $SU(p) \geq \dimat(p)$, where $SU$ is the $SU$-rank (see~\cite{wagner}).
\end{corollary}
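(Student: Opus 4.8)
The plan is to prove $\dimat(p) \leq SU(p)$ by induction on $d := \dimat(p)$, exploiting the recursive characterisation of the $SU$-rank: $SU(p) \geq \alpha + 1$ precisely when $p$ admits an extension $q$ forking over its base (\wrt Shelah's $\indf$) with $SU(q) \geq \alpha$. Super-simplicity is needed only to know that $SU(p)$ is an ordinal; the inequality itself uses just the recursive clause. The base case $d = 0$ requires nothing, since $SU(p) \geq 0$ for every consistent type.

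For the inductive step assume $d \geq 1$ and fix a realisation $\bv = (b_1, \dotsc, b_n) \models p$; since $p$ is complete, $\rkmat(\bv/A) = d$ for \emph{every} such $\bv$. Reordering the coordinates, we may assume $b_1 \notin \mat(A)$, so $\rkmat(b_1/A) = 1$. Put $C := A b_1$ and $q := \tp(\bv/C) \in S_n(C)$. As $b_1$ is one of the coordinates of $\bv$ we have $\rkmat(b_1\bv/A) = \rkmat(\bv/A) = d$, so additivity of rank (Lemma~\ref{lem:Lascar}) gives
\[
d = \rkmat(b_1\bv/A) = \rkmat(\bv/A b_1) + \rkmat(b_1/A) = \rkmat(\bv/C) + 1 ,
\]
whence $\rkmat(\bv/C) = d - 1$ and, $q$ being complete, $\dimat(q) = d - 1$.

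It remains to see that $q$ forks over $A$. By the rank characterisation of $\indmat$ (condition (6) of the six-way equivalence lemma), $\bv \indmat_A b_1$ iff $\rkmat(\bv/A b_1) = \rkmat(\bv/A)$; as $d - 1 \neq d$ this fails, so $\bv \notindmat_A b_1$. Since $\indf$ implies $\indmat$ (Lemma~\ref{lem:ind-heir}), also $\bv \notindf_A b_1$, i.e.\ $q = \tp(\bv/A b_1)$ forks over $A$. The inductive hypothesis then yields $SU(q) \geq d - 1$, and therefore $SU(p) \geq (d-1) + 1 = d = \dimat(p)$.

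The one delicate point is the last paragraph: passing from ``the matroid rank of $\bv$ strictly drops when $b_1$ is adjoined to the base'' to ``$q$ forks over $A$''. This step rests on having both the equivalence $X \indmat_Y Z \Leftrightarrow \rkmat(X/YZ) = \rkmat(X/Y)$ and the implication $\indf \Rightarrow \indmat$ available; everything else is routine bookkeeping with additivity of rank and with the fact that $\dimat$ of a complete type is attained by each of its realisations.
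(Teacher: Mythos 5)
Your proof is correct and follows the route the paper intends: the corollary is stated without proof immediately after Lemma~\ref{lem:ind-heir} ($\indf$ implies $\indmat$), and the lemma that follows it characterises $\dimat(p)$ as the maximal length of a chain of $\mat$-forking extensions, so the intended argument is exactly your induction (each drop in $\rkmat$ witnesses a Shelah-forking extension, which feeds the recursive clause for $\SU$). Your side remark that super-simplicity is only needed to make $\SU(p)$ an ordinal is also accurate.
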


\begin{remark}\label{rem:forking-closed}
Given $B \supseteq A$, let $N_n(B/A)$ be the set of all $n$-types over $B$
that do not fork over~$A$.
$N_n(B,A)$ is closed in~$S_n(B)$.
The same is true for any independence relation~$\ind$, instead of~$\indmat$.
\end{remark}

\begin{lemma}
For every complete type~$p$, $\dimat(p)$ is the maximum of the cardinalities
$n$ of chains of complete types
$p = q_0 \subset q_1 \subset \dotsc \subset q_n$,
such that each $q_{i + 1}$ is a forking extension of~$q_i$.
\end{lemma}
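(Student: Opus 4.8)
We must show that for a complete type $p$, $\dimat(p)$ equals the maximal length $n$ of a chain $p = q_0 \subset q_1 \subset \dots \subset q_n$ of complete types, each $q_{i+1}$ a forking extension of $q_i$ (i.e. $\dimat(q_{i+1}) < \dimat(q_i)$, with the $q_i$ over increasing parameter sets).

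**The plan.** The statement is essentially a bookkeeping consequence of two facts already established: (a) every complete type $q$ has a non-forking extension to any larger parameter set (the last Remark before Lemma~\ref{lem:ind-heir}), and (b) for $q \in S_n(B)$ and $B \subseteq C$, any extension to $S_n(C)$ has $\dimat \leq \dimat(q)$, and forking extension means the inequality is strict. Since $\dimat$ takes values in $\{-\infty, 0, 1, \dots, n\}$ and along a forking chain it strictly decreases at each step, the chain length is bounded by $\dimat(p) - (\text{terminal value}) \le \dimat(p) + 1$ if we allowed it to drop to $-\infty$; but complete types are nonempty so $\dimat \ge 0$ throughout, giving length $\le \dimat(p)$. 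That proves $\le$.

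For the reverse inequality I would build a chain of length exactly $d := \dimat(p)$ by a descending induction, lowering the dimension by exactly one at each step. Fix $\bv = (b_1, \dots, b_n) \models p$ with, say, $b_1, \dots, b_d$ $\mat$-independent over the parameter set $A$ of $p$. Set $q_0 = p$. Given $q_i \in S_n(A b_1 \dots b_i)$ with $\dimat(q_i) = d - i$, realised by $\bv$, let $q_{i+1} := \tp(\bv / A b_1 \dots b_{i+1})$. By additivity of rank (Lemma~\ref{lem:Lascar}), $\rkmat(\bv / A b_1\dots b_{i+1}) = \rkmat(\bv / A b_1 \dots b_i) - \rkmat(b_{i+1} / A b_1 \dots b_i) = (d-i) - 1$, since $b_{i+1} \notin \mat(A b_1 \dots b_i)$ by independence. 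Hence $\dimat(q_{i+1}) = d - i - 1 < \dimat(q_i)$, so $q_i \subsetneq q_{i+1}$ is a forking extension. After $d$ steps we reach $q_d$ with $\dimat(q_d) = 0$, and the chain $p = q_0 \subset q_1 \subset \dots \subset q_d$ witnesses that the maximum is $\ge d$. Combined with the bound above, the maximum is exactly $d = \dimat(p)$.

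**Main obstacle.** There is no serious obstacle; the only points requiring a little care are: making sure $\dimat(q_i) = \rkmat(\bv/A b_1 \dots b_i)$, which is legitimate because $q_i$ is a \emph{complete} type and the earlier lemma gives $\rkmat(\bv'/\text{params}) = \dimat(q_i)$ for every realisation; and confirming that a forking extension does force a strict drop in dimension rather than merely $\le$ — but that is exactly the definition of $p \sqsubseteq q$ versus its negation. One should also note the edge case $\dimat(p) = -\infty$ (impossible for a complete, hence consistent, type over a model) and $\dimat(p) = 0$ (the trivial chain of length $0$) so the formula holds vacuously there. I would present the $\le$ direction first as a one-line remark on strictly decreasing $\N$-valued sequences, then the explicit construction for $\ge$.
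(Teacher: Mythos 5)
Your proof is correct and follows essentially the same route as the paper's: the chain of length $d=\dimat(p)$ is obtained from a $\mat$-independent sub-tuple $(b_1,\dotsc,b_d)$ of a realisation $\bv$ via $q_i := \tp(\bv/A b_1\dotsm b_i)$, with additivity of rank giving the strict drop at each step, and the upper bound follows from the fact that $\dimat$ strictly decreases along forking extensions while staying nonnegative on complete types. The paper packages the upper bound as an induction on the chain index, but the content is identical to your monotone-sequence observation.
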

\begin{proof}
Let $A$ be the set of parameters of~$p$, and $\bv \models p$.
Let $d := \dimat(p)$; \wloG, $\tilde b := \pair{b_1, \dotsc, b_d}$ are
independent over~$A$.
For every $i \leq n$ let $A_i := A b_1 \dots b_i$, and 
$q_i := \tp(\bv / A_i)$.
Then, $p = q_0 \subset \dots \subset q_d$, and each $q_{i+1}$ is a forking
extension of~$q_i$.

Conversely, assume that $p = q_0 \subset \dots \subset q_n$, and each
$q_{i+1}$ is a forking extension of~$q_i$, and $A_i$ be the set of parameters
of~$q_i$.
\begin{claim}
For every $i \leq n$, $\dimat(q_{n-1}) \geq i$; in particular, 
$\dimat(p) \geq n$.
\end{claim}
By induction on~$i$.
The case $i = 0$ is clear.
Assume that we have proved the claim for~$i$, we want to show that it holds
for $i + 1$.
Since $q_i$ is a forking extension of 
$q_{i+1}$, $\dimat(q_i) > \dimat(q_{i+1})$, and we are done.
\end{proof}

\begin{lemma}\label{lem:dim-rk}
Let $V \subseteq \monster^n$ be non-empty and definable with parameters~$\av$.
Then, either $\dimat(V) = 0 = \rkmat(V /\av)$, 
or $\dimat(V) > 0$ and $\rkmat(V) \geq \kappa$.
\end{lemma}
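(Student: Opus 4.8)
The plan is to split along the obvious dichotomy: $\dimat(V)=0$ or $\dimat(V)>0$. First I would handle the case $\dimat(V)=0$. Here every $\bv \in V$ satisfies $\rkmat(\bv/\av)=0$, i.e. $\bv \in \mat(\av)$, so $V \subseteq \mat(\av)^n$. I need to conclude $\rkmat(V/\av)=0$. Recall that $\rkmat$ of a \emph{set} $V$ here should mean the rank of $V$ (regarded as a subset of $\monster$, or equivalently of the set of its elements) over the given parameters; since each coordinate of each tuple of $V$ lies in $\mat(\av)$, we get $V \subseteq \mat(\av)$, hence $\mat(\av V)=\mat(\av)$ and $\rkmat(V/\av)=0$. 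So the first alternative holds, and moreover it is genuinely an ``or'': if $\dimat(V)=0$ then $\rkmat(V/\av)=0$.

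Now suppose $\dimat(V)>0$; I must show $\rkmat(V) \geq \kappa$, i.e. $V$ contains a $\mat$-independent subset (over $\emptyset$) of size $\geq \kappa$. The idea is to build, by transfinite recursion of length $\kappa$, an independent sequence of \emph{elements} of $\monster$ all lying in $V$ — more precisely, elements that appear as some coordinate of some tuple in $V$. Fix $\bv^0 \in V$ with $\rkmat(\bv^0/\av) = \dimat(V) =: d > 0$; then some coordinate, say $b^0$, of $\bv^0$ satisfies $b^0 \notin \mat(\av)$. The engine of the recursion is Existence (Definition~\ref{def:existence}, via Lemma~\ref{lem:cl-E}): having chosen $\av$-conjugates $\bv^1,\dots$ of $\bv^0$ with distinguished coordinates $b^1,\dots$ forming a $\mat$-independent set over $\av$ of size $<\kappa$, say independent over $C \supseteq \av$ (with $\card C < \kappa$), apply Existence to $b^0 \notin \mat(\av)$ to find $b \equiv_{\av} b^0$ — indeed a full tuple $\bv \equiv_{\av} \bv^0$ with distinguished coordinate $b$ — such that $b \notin \mat(C)$. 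Since $\bv \equiv_{\av}\bv^0$ and $\bv^0 \in V$ with $V$ definable over $\av$, we get $\bv \in V$; and $b \notin \mat(C)$ means the distinguished coordinates stay $\mat$-independent over $\av$. Iterating $\kappa$ times produces $\kappa$-many elements of (coordinates of tuples of) $V$ that are $\mat$-independent over $\av$, hence over $\emptyset$, so $\rkmat(V) \geq \rkmat(V/\av) \geq \kappa$.

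One bookkeeping point worth spelling out: to run Existence one needs $b^0 \notin \mat(\av)$, which is exactly where $\dimat(V) > 0$ is used — it guarantees a tuple in $V$ with a coordinate outside $\mat(\av)$. At each successor stage the relevant hypothesis ``$b^0 \notin \mat(C)$'' would need to be re-established, but that is immediate: we always apply Existence in the form of Lemma~\ref{lem:cl-E}(3), ``$a \notin \mat(\bv)$ implies $\exists a' \equiv_{\bv} a$ with $a' \notin \mat(\bv\cv)$'', with $\bv$ a finite tuple from $\av$ (so $b^0 \notin \mat(\av)$ suffices throughout) and $\cv$ a finite tuple enumerating enough of the previously chosen parameters; finite character of $\mat$ lets us pass from $\card C < \kappa$ to finite $\cv$ at each step. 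At limit stages there is nothing to do beyond taking unions, using finitariness of $\mat$ to see that the accumulated set of distinguished coordinates remains $\mat$-independent.

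**Main obstacle.** The only real subtlety is the one just described: making the recursion go through requires that the condition enabling Existence be preserved at every stage, and the clean way to see this is to phrase Existence in the ``one new element at a time'' form (Lemma~\ref{lem:cl-E}(3)) so that the base condition $b^0 \notin \mat(\av)$ — not some growing condition over $C$ — is all that is ever needed. Everything else (the case $\dimat(V)=0$, and passing from independence over $\av$ to independence over $\emptyset$, which only loses boundedly much rank) is routine matroid bookkeeping.
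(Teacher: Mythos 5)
Your proof is correct. The paper itself states this lemma without proof, so there is no written argument to compare against, but the intended argument is surely the one you give: the case $\dimat(V)=0$ is immediate from the definitions, and for $\dimat(V)>0$ one picks a witnessing tuple $\bv^0\in V$ with a coordinate $b^0\notin\mat(\av)$ and observes that $\Xi(b^0/\av)$, which is contained in a coordinate projection of $V$, has rank $\geq\kappa$. Your transfinite recursion via Existence is a perfectly correct (if slightly longer) way to establish that last fact, but note that the paper already records it as Lemma~\ref{lem:cl-E}, item~7 (``if $\rkmat(\Xi(a/B))<\kappa$ then $a\in\mat(B)$''), so you could simply invoke that item and avoid re-running the recursion; the recursion you describe is essentially a proof of item~7. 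The bookkeeping points you raise (re-establishing the hypothesis at each stage by always anchoring Existence at the finite tuple $\av$; using finitariness at limit stages; and passing from independence over $\av$ to independence over $\emptyset$ at the cost of only $|\av|$ rank) are all handled correctly.
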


\begin{lemma}
A formula $\phi(x, \y)$ is \xnarrow iff, for every $\bv$,
$\dimat\Pa{\phi(\monster, \bv)} = 0$.
\end{lemma}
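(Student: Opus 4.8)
The plan is to reduce the statement to unwinding the definition of $\dimat$ on a subset of $\monster^1$. Fix a formula $\phi(x,\y)$. For any tuple $\bv$, the set $\phi(\monster,\bv) \subseteq \monster^1$ is definable over~$\bv$, and since $\dimat$ of a (type-)definable set does not depend on the choice of parameters, we have
\[
\dimat\Pa{\phi(\monster,\bv)} = \max\set{\rkmat(a/\bv) : a \in \phi(\monster,\bv)},
\]
with the convention that this is $-\infty$ when the section is empty. Because $a$ is a single element, $\rkmat(a/\bv)$ is either $0$ or $1$, and it equals $0$ exactly when $a \in \mat(\bv)$. Hence $\dimat\Pa{\phi(\monster,\bv)} \le 0$ if and only if every $a$ with $\monster \models \phi(a,\bv)$ lies in $\mat(\bv)$.

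From this observation both directions are immediate. For the forward implication, if $\phi$ is \xnarrow, then for every $\bv$ and every $a$ with $\monster \models \phi(a,\bv)$ we have $a \in \mat(\bv)$, so $\rkmat(a/\bv) = 0$; taking the maximum over $a \in \phi(\monster,\bv)$ yields $\dimat\Pa{\phi(\monster,\bv)} \le 0$, with equality whenever the section is non-empty. For the converse, if $\dimat\Pa{\phi(\monster,\bv)} = 0$ for every~$\bv$, then for any $\bv$ and any $a$ with $\monster \models \phi(a,\bv)$ we get $a \in \phi(\monster,\bv)$, hence $\rkmat(a/\bv) \le \dimat\Pa{\phi(\monster,\bv)} \le 0$, so $\rkmat(a/\bv) = 0$ and $a \in \mat(\bv)$; thus $\phi$ is \xnarrow.

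The only point deserving a word of care is the edge case where $\phi(\monster,\bv)$ is empty for some~$\bv$, so that its dimension is $-\infty$ rather than~$0$; this is consistent with $\phi$ being \xnarrow and with reading the displayed equality in the statement as $\dimat\Pa{\phi(\monster,\bv)} \le 0$. Beyond this there is no real obstacle: the statement is a direct translation between being \xnarrow and having all unary sections of dimension~$0$, and the only inputs needed are Definition~\ref{def:dimension}, the parameter-independence of $\dimat$ established above, and the trivial computation of the $\rkmat$ of a single element over a set.
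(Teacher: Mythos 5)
Your proof is correct and is the direct unwinding of the definitions that the lemma (which the paper leaves unproved) is asking for: parameter-independence of $\dimat$, the computation $\rkmat(a/\bv) \in \set{0,1}$ for a singleton, and the translation between $\rkmat(a/\bv)=0$ and $a \in \mat(\bv)$. You are also right to flag the edge case: as literally stated, the condition "$\dimat(\phi(\monster,\bv))=0$ for every $\bv$'' fails when some section $\phi(\monster,\bv)$ is empty (dimension $-\infty$), whereas \xnarrow{}ness is vacuously preserved; reading the equality as "$\le 0$'' (equivalently, "$\ne 1$'') is the correct interpretation, and your explicit note of this is a small improvement over the statement as written.
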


\begin{lemma}\label{lem:type-def}
Let $\phi(x, \y)$ be a formula without parameters, and $\av \in \monster^n$.
Then, $\dimat(\phi(\monster, \av)) = 0$ iff there exists an \xnarrow formula
$\psi(x, \y)$ such that $\forall x\ \Pa{\phi(x,\av) \rightarrow \psi(x, \av)}$.
Therefore, define
\[\begin{aligned}
\Gamma_\phi(\y) &:= \set{\neg \theta(\y):  \theta(\y) 
\text{ formula without  parameters s.t. }\\
& \qquad \forall \av\ \Pa{\theta(\av) \rightarrow \dimat(\phi(\monster, \av)) = 0}},\\
U_\phi^1 &:= \set{\av \in \monster^n: \dimat(\phi(\monster, \av)) = 1}.
\end{aligned}\]
Then, $U_\phi^1 = \set{\av \in \monster^n: \monster \models \Gamma_\phi(\av)}$,
and in particular $U_\phi^1$ is type-definable (over the empty set).

More generally, let $k \leq n$, $\x := \pair{x_1, \dotsc, x_n}$, and
$\phi(\x, \y)$ be a formula without parameters.
Define 
\[
U_\phi^{\geq k} := \set{\av \in \monster^m: 
\dimat(\phi(\monster^n, \av)) \geq  k}.
\]
Then, $U_\phi^{\geq k}$ is type-definable over the empty set.
\end{lemma}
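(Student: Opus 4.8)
The plan is to prove the three assertions in order, reducing everything to the basic characterization of narrow formulas via dimension (the preceding lemma: $\phi(x,\y)$ is \xnarrow iff $\dimat(\phi(\monster,\bv)) = 0$ for all $\bv$) together with Existence, which is packaged in Lemma~\ref{lem:cl-E}, part~(5).

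First I would treat the equivalence ``$\dimat(\phi(\monster,\av)) = 0$ iff there is an \xnarrow $\psi(x,\y)$ with $\forall x\,(\phi(x,\av)\to\psi(x,\av))$''. The right-to-left direction is immediate: if $\psi$ is \xnarrow and $\phi(x,\av)$ implies $\psi(x,\av)$, then every realisation of $\phi(x,\av)$ lies in $\mat(\av)$, so $\rkmat(b/\av)=0$ for all such $b$, whence $\dimat(\phi(\monster,\av))=0$. For left-to-right, suppose $\dimat(\phi(\monster,\av))=0$, i.e. every $b$ with $\monster\models\phi(b,\av)$ satisfies $b\in\mat(\av)$. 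Fix such a $b$; by definability of $\mat$ there is an \xnarrow formula $\psi_b(x,\y,\z)$ and a tuple $\cv_b\subset\av$ (I can pad $\z$ so that $\cv_b$ is all of $\av$) with $\monster\models\psi_b(b,\av)$. Now I use compactness: the set $\set{\phi(x,\av)}\cup\set{\neg\psi_b(x,\av): \psi_b \text{ \xnarrow}}$ is inconsistent (any realisation of $\phi(x,\av)$ is caught by some \xnarrow $\psi_b$), so finitely many \xnarrow formulas $\psi_1,\dots,\psi_k$ already cover, and $\psi := \psi_1\vee\dots\vee\psi_k$ works — it is \xnarrow because a finite union of \xnarrow formulas is \xnarrow (the realisation set stays inside $\mat(\bv)$), and $\phi(x,\av)\to\psi(x,\av)$ holds. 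This is the one spot where one must be slightly careful that ``\xnarrow'' is preserved under finite disjunction and under enlarging the parameter tuple; both are routine from the definition.

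Next, for $U_\phi^1$: by the remark after Definition~\ref{def:dimension}, $\dimat(\phi(\monster,\av))\le 1$ always (it is bounded by the arity, which is $1$ in $x$), so $\dimat(\phi(\monster,\av))=1$ iff $\dimat(\phi(\monster,\av))\ne 0$. I claim $\av\in\monster^n$ satisfies $\dimat(\phi(\monster,\av))=0$ iff $\monster\models\theta(\av)$ for some $\theta$ of the kind quantified in $\Gamma_\phi$. Indeed, if $\dimat(\phi(\monster,\av))=0$, take the \xnarrow $\psi$ from the previous paragraph and let $\theta(\y):=\forall x\,(\phi(x,\y)\to\psi(x,\y))$; then $\monster\models\theta(\av)$, and for any $\av'$ with $\monster\models\theta(\av')$ we get $\dimat(\phi(\monster,\av'))=0$ since $\psi$ is \xnarrow — so $\theta$ is one of the formulas defining $\Gamma_\phi$. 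Conversely if $\monster\models\theta(\av)$ for such a $\theta$, then by the defining property of $\Gamma_\phi$ directly $\dimat(\phi(\monster,\av))=0$. Negating, $\dimat(\phi(\monster,\av))=1$ iff $\monster\models\neg\theta(\av)$ for every such $\theta$, i.e. iff $\monster\models\Gamma_\phi(\av)$. Hence $U_\phi^1$ is type-definable over $\emptyset$.

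Finally, for $U_\phi^{\geq k}$ with $\x$ an $n$-tuple: I would argue by induction on $n$, or more cleanly reduce to the $k=1$ case by the additivity of rank (Lemma~\ref{lem:Lascar}). Writing $\x = \pair{x_1,\x'}$ with $\x'$ of length $n-1$, $\dimat(\phi(\monster^n,\av))\ge k$ iff there is $b_1$ such that $\dimat\bigl((\phi(\monster^n,\av))_{b_1}\bigr)\ge k-1$ over $\av b_1$ when $b_1\in\mat(\av)$, or $\dimat\bigl((\text{fibre})\bigr)\ge k-1$ and $b_1\notin\mat(\av)$; formalized correctly via rank additivity this expresses $U_\phi^{\geq k}$ as obtained from the $U^1$-type of an auxiliary formula by a projection-type operation that, over a monster model, stays type-definable (projections of type-definable sets along a bounded-length coordinate are type-definable, because one can add a variable with a complete type). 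The main obstacle I expect is precisely making this inductive/projective step clean: a naive ``there exists $x_1$'' in front of a type-definable condition need not be type-definable, so one has to phrase the induction at the level of complete types over a fixed parameter set and invoke the previous lemma relating $\dimat(p)$ to $\min\set{\dimat(V): V\in p}$ and the continuity remark for $\dimat$, rather than push quantifiers through the type-definition syntactically. Once the bookkeeping for the inductive step is set up, each stage is an instance of the already-established $k=1$ result.
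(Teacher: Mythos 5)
Your first two assertions are proved correctly, and by what is surely the intended route: the right-to-left direction is immediate from the definition of \xnarrow, the left-to-right direction is the compactness argument using definability of $\mat$ (with the routine checks that padding the parameter tuple and taking finite disjunctions preserve \xnarrow{}ness), and the characterisation of $U^1_\phi$ then falls out by taking $\theta(\y) := \forall x\,\Pa{\phi(x,\y)\to\psi(x,\y)}$. One pedantic point: since $\dimat(\emptyset)=-\infty$, your $\theta$ only guarantees $\dimat(\phi(\monster,\av'))\le 0$ rather than $=0$, and conversely an $\av$ with $\phi(\monster,\av)=\emptyset$ realises $\Gamma_\phi$ as literally written; this is an imprecision in the statement rather than in your argument, and is repaired by conjoining $\exists x\,\phi(x,\y)$ to $\theta$ (or reading ``$=0$'' as ``$\le 0$'' throughout).

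The genuine gap is in the last paragraph: for $U^{\ge k}_\phi$ you describe an induction on $n$ but do not carry out the decisive step, and your diagnosis of the obstacle is off. Over the monster model the existential quantifier is not actually a problem: if $\Sigma(x_1,\y)$ is a partial type of size $<\kappa$ over $\emptyset$, then by saturation $\set{\av : \exists b_1\ \monster\models\Sigma(b_1,\av)} = \bigcap_{\sigma}\set{\av: \monster\models\exists x_1 \bigwedge\sigma}$ where $\sigma$ ranges over finite subsets of $\Sigma$, so projections of small type-definable sets are type-definable; combined with the fact that $\set{(b_1,\av): b_1\notin\mat(\av)}$ is type-definable (its complement is a union of \xnarrow instances) and with Lemma~\ref{lem:Lascar}, your induction does close — but you would need to say this rather than gesture at ``phrasing things at the level of complete types''. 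A cleaner route avoids the induction entirely: $\dimat(\phi(\monster^n,\av))\le k-1$ iff every realisation $\bv$ of $\phi(\x,\av)$ has $\rkmat(\bv/\av)\le k-1$, iff every $k$-element subtuple of every such $\bv$ is $\mat$-dependent over $\av$. For each $k$-subset $I$ of the coordinates, the partial type $\set{\phi(\x,\av)}\cup\set{\neg\psi(x_j,(x_i)_{i\in I\setminus\set j},\av): j\in I,\ \psi \text{ \xnarrow}}$ is then inconsistent, so compactness yields finitely many \xnarrow formulae whose disjunction covers $\phi(\x,\av)$, and the resulting sentence $\theta_I(\y)$ (conjoined over the finitely many $I$) is an $\emptyset$-formula true of $\av$ that forces $\dimat(\phi(\monster^n,\av'))\le k-1$ for every $\av'$ satisfying it. Thus the complement of $U^{\ge k}_\phi$ is a union of $\emptyset$-definable sets, which is exactly type-definability of $U^{\ge k}_\phi$; this is the same compactness argument as your $k=1$ case and is what you should write.
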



\begin{lemma}[Fibre-wise dimension inequalities]\label{lem:cl-function}
$U \subseteq \monster^{m_1}$, $V \subseteq \monster^{m_2}$, and
$F: U \to V$ be definable, with parameters~$C$.
Let $X \subseteq U$ and $Y \subseteq V$ be type-definable, such that 
$F(X) \subseteq Y$.
Define $f := F \rest X: X \to Y$.
For every $\bv \in Y$, let $X_{\bv} := f^{-1}(\bv) \subseteq X$, 
and $m := \dimat(Y)$.
\begin{enumerate}
\item 
If, for every $\bv \in Y$, $\dimat(X_{\bv}) \leq n$, 
then $\dimat(X) \leq m + n$.
\item
If $f$ is surjective and, for every $\bv \in Y$, $\dimat(X_{\bv}) \geq n$, 
then $\dimat(X) \geq m + n$.
\item 
If $f$ is surjective, then $\dimat(X) \geq m$.
\item
If $f$ is injective, then $\dimat(X) \leq m$.
\item
If $f$ is bijective, then $\dimat(X) = m$.
\end{enumerate}
\end{lemma}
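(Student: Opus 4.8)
The whole statement is a bookkeeping exercise around the \textbf{Additivity of rank} (Lemma~\ref{lem:Lascar}), together with the fact (established above) that $\dimat$ is well-posed for type-definable sets and that the maxima defining it are actually attained, since $\mat$-ranks of tuples in $\monster^{m_1}$ are bounded by~$m_1$. So the first move is a harmless reduction: enlarge $C$ so that $X$ and $Y$ are type-definable over~$C$ and $F$ is definable over~$C$. This does not change any of the dimensions involved, since $\dimat$ of a type-definable set is independent of the chosen parameters. Two facts will be used repeatedly: for $\av \in X$ the section $X_{\bv}$ with $\bv := F(\av)$ is type-definable over $C\bv$ and contains~$\av$; and $F(\av) \in \dcl(C\av) \subseteq \acl(C\av) \subseteq \mat(C\av)$ (the last inclusion because $\mat$ is existential), so that $\rkmat(\av\, F(\av)/C) = \rkmat(\av/C)$ by additivity.

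\emph{Part (1).} Pick $\av \in X$ with $\rkmat(\av/C) = \dimat(X)$ and set $\bv := F(\av) \in Y$. Then $\rkmat(\av/C\bv) \leq \dimat(X_{\bv}) \leq n$ and $\rkmat(\bv/C) \leq \dimat(Y) = m$, so by Lemma~\ref{lem:Lascar}
\[
\dimat(X) = \rkmat(\av/C) = \rkmat(\av/C\bv) + \rkmat(\bv/C) \leq n + m .
\]

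\emph{Part (2).} Pick $\bv \in Y$ with $\rkmat(\bv/C) = \dimat(Y) = m$. By surjectivity $X_{\bv} \neq \emptyset$, and by hypothesis $\dimat(X_{\bv}) \geq n$, so there is $\av \in X_{\bv}$ with $\rkmat(\av/C\bv) \geq n$. Since $F(\av) = \bv$, we get $\rkmat(\av/C) = \rkmat(\av\bv/C) = \rkmat(\av/C\bv) + \rkmat(\bv/C) \geq n + m$, hence $\dimat(X) \geq n + m$.

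\emph{Parts (3)--(5).} Part~(3) is Part~(2) with $n = 0$, since a non-empty fibre has dimension $\geq 0$. Part~(4) is Part~(1) with $n = 0$: if $f$ is injective each fibre $X_{\bv}$ has at most one point, and a (non-empty) finite type-definable set over $C\bv$ consists of elements of $\acl(C\bv) \subseteq \mat(C\bv)$, so $\dimat(X_{\bv}) \leq 0$. Part~(5) is (3) and (4) together. I do not anticipate a genuine obstacle; the only points needing a little care are the reduction to a common parameter set, the use of $F(\av) \in \mat(C\av)$ to pass from $\rkmat(\av\bv/C)$ to $\rkmat(\av/C)$, and the fact that the relevant maxima are attained — all of them immediate from what has already been proved.
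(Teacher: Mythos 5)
Your proof is correct and takes essentially the same route as the paper's: both rest on additivity of rank (Lemma~\ref{lem:Lascar}) applied to $\av$ and $\bv := F(\av)$, together with $F(\av) \in \mat(C\av)$ (which collapses $\rkmat(\av\bv/C)$ to $\rkmat(\av/C)$). The only cosmetic difference is that the paper proves (1) by contradiction whereas you argue directly, and you spell out the $\dcl\subseteq\acl\subseteq\mat$ chain that the paper leaves implicit.
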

\begin{proof}
1) 
Assume, for contradiction, that $\dimat(X) > m + n$.
Let $\av \in X$ such that $\rkmat(\av / C) > m + n$, and $\bv := F(\av)$.
Since $\av \in X_{\bv}$, and $X_{\bv}$ is type-definable with parameters 
$C \bv$, $\rkmat(\av / \bv C) \leq n$.
Hence, by Lemma~\ref{lem:Lascar}, $\rkmat(\av / C) \leq \rkmat(\av \bv /C)
\leq m + n$, absurd.

2)
Let $\bv \in Y$ such that $\dimat(\bv / C) = m$.
Let $\av \in X_{\bv}$ such that $\dimat(\av / \bv C) \geq n$.
Then, by Lemma~\ref{lem:Lascar}, $\rkmat(\av \bv / C) \geq m + n$.
However, since $\av = F(\bv)$, $\av \subset \mat(\bv C)$, and therefore
$\rkmat(\bv / C) = \rkmat(\av \bv / C) \geq m + n$.

(3) follows from (2) applied to $n = 0$.
The other assertions are clear.
\end{proof}


\begin{lemma}\label{lem:cl-equality}
Let $\mat'$ be another existential matroid on~$\monster$.
\Tfae:
\begin{enumerate}
\item $\mat \subseteq \mat'$;
\item $\rkmat \geq \RK^{\mat'}$;
\item $\dimat \geq \dim^{\mat'}$ on definable sets;
\item $\dimat \geq \dim^{\mat'}$ on complete types;
\item for every definable set $X\subseteq \monster$, 
if $\dim^{\mat'}(X) = 0$, then $\dimat(X) = 0$.
\end{enumerate}
\Tfae:
\begin{enumerate}
\item $\mat = \mat'$;
\item $\rkmat = \RK^{\mat'}$;
\item $\dimat = \dim^{\mat'}$ on definable sets;
\item $\dimat = \dim^{\mat'}$ on complete types;
\item for every definable set $X \subseteq \monster$, 
$\dimat(X) = 0$ iff $\dim^{\mat'}(X) = 0$.
\end{enumerate}
\end{lemma}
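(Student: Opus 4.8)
The plan is to prove the two chains of equivalences together, since the second is essentially the first applied symmetrically (swap the roles of $\mat$ and $\mat'$ and combine). So I focus on the first chain, whose statement is that the inclusion $\mat \subseteq \mat'$ is equivalent to four reformulations in terms of rank, dimension, and the ``dimension-zero'' predicate. I would establish it as a cycle $(1)\Rightarrow(2)\Rightarrow(3)\Rightarrow(4)\Rightarrow(5)\Rightarrow(1)$.

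First, $(1)\Rightarrow(2)$: if $\mat(X) \subseteq \mat'(X)$ for all $X$, then a set of generators of $\bv$ over $C$ with respect to $\mat$ is also a set of generators with respect to $\mat'$, so any $\mat$-independent subset of a $\mat$-basis of $\bv$ over $C$ has cardinality at least $\rkmat(\bv/C)$... more directly: $\rkmat(\bv/C) = \rkmat(\mat(C\bv)/C)$ and since $\mat(C\bv)\subseteq\mat'(C\bv)$ a $\mat'$-basis of $\bv$ over $C$ can be extracted from $\bv$ and stays a set of $\mat$-generators, forcing $\rk^{\mat'}(\bv/C)\le\rkmat(\bv/C)$. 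Then $(2)\Rightarrow(3)$ and $(3)\Leftrightarrow(4)$ are immediate from Definition~\ref{def:dimension} and the lemma preceding the last remark before the Dimension subsection, which identifies $\dimat(V)$ with a max of ranks and $\dimat(p)$ with the common value $\rkmat(\bv/A)$ for $\bv\models p$. Also $(3)\Rightarrow(5)$ is trivial ($0 \ge \dimat(X)\ge 0$ when $\dim^{\mat'}(X)=0$ and $X\neq\emptyset$; handle $X=\emptyset$ separately).

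The only real content is $(5)\Rightarrow(1)$. Here I would argue: assume every definable $X\subseteq\monster$ with $\dim^{\mat'}(X)=0$ also has $\dimat(X)=0$. By the lemma characterising \xnarrow formulas ($\phi(x,\y)$ is \xnarrow iff $\dim(\phi(\monster,\bv))=0$ for all $\bv$), hypothesis $(5)$ says: every formula that is \xnarrow for $\mat'$ is \xnarrow for $\mat$. Now take $a \in \mat(X)$ for some $X$; by definability of $\mat$ (Definition~\ref{def:definable}) there is a $\mat$-\xnarrow $\phi(x,\y)$ and $\av\in X^n$ with $\monster\models\phi(a,\av)$. I need the converse direction, $a\in\mat'(X)$, so I instead start from $a\notin\mat'(X)$ and derive $a\notin\mat(X)$. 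Unwinding: $a\notin\mat'(X)$ means no $\mat'$-\xnarrow formula over $X$ captures $a$; I want to produce a $\mat$-\xnarrow formula over $X$ capturing $a$ and reach a contradiction, but that is exactly backwards — $(5)$ only lets me convert $\mat'$-\xnarrow to $\mat$-\xnarrow. So the correct use is: suppose $\mat\not\subseteq\mat'$, pick $a\in\mat(X)\setminus\mat'(X)$; by definability of $\mat$ there is a $\mat$-\xnarrow $\phi$ with $\monster\models\phi(a,\av)$, $\av\subseteq X$; since $a\notin\mat'(\av)$, $\dim^{\mat'}(\phi(\monster,\av))\ge 1$ (else $\phi$ would be $\mat'$-\xnarrow and then $a\in\mat'(\av)\subseteq\mat'(X)$). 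Thus $\phi$ is not $\mat'$-\xnarrow. But I also know $\phi$ \emph{is} $\mat$-\xnarrow, and I want to contradict $(5)$ — which requires the implication ``$\mat'$-\xnarrow $\Rightarrow$ $\mat$-\xnarrow''. That's automatic the wrong way. I expect the resolution (and the main obstacle) is that one must instead run the argument through rank/dimension using Existence crucially: $(5)$ together with the Existence axiom for $\mat'$ should upgrade ``$\dim^{\mat'}=0$ implies $\dimat=0$'' to ``$\dim^{\mat'}\ge\dimat$'' on definable sets via fibering and the fibre-wise inequalities of Lemma~\ref{lem:cl-function}, and then $(3)\Rightarrow(2)\Rightarrow(1)$ closes the loop. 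Concretely: given a definable $V\subseteq\monster^n$, slice it by the coordinate projections; if $\dim^{\mat'}(V)=d$, then after reindexing the first $d$ coordinates are a $\mat'$-basis of a generic point and the remaining coordinates lie in the $\mat'$-closure, so $V$ is contained in a set defined by $\mat'$-\xnarrow conditions on the last $n-d$ variables over the first $d$; by $(5)$ those conditions are $\mat$-\xnarrow, and Lemma~\ref{lem:cl-function}(1) gives $\dimat(V)\le d+0 = d = \dim^{\mat'}(V)$. That yields $(3)$, hence $(2)$ and $(1)$. The subtle point I'd watch carefully is that ``generic point has its last $n-d$ coordinates in the $\mat'$-closure of the first $d$'' must be turned into a single definable containing set — this is where the type-definability results (Lemma~\ref{lem:type-def}) and compactness are used, exactly as in the proof of the lemma computing $\dimat(p)$ as a min over definable sets.
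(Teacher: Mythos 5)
The paper gives no proof of this lemma, so I judge your argument on its own. There is a genuine problem that you in fact \emph{noticed} (``that is exactly backwards'') but then papered over rather than diagnosing: as printed, item~(5) is equivalent to the reverse inclusion $\mat' \subseteq \mat$, not to~(1). Indeed, (5) says (via the lemma ``$\phi$ is \xnarrow iff $\dimat(\phi(\monster,\bv))=0$ for all $\bv$'') that every formula that is \xnarrow for $\mat'$ is \xnarrow for $\mat$, and by definability of $\mat'$ together with Lemma~\ref{lem:dim-rk} this yields $\mat'(A)\subseteq\mat(A)$ for every~$A$. Item~(5) should read ``if $\dimat(X)=0$ then $\dim^{\mat'}(X)=0$''; the roles of $\mat$ and $\mat'$ are transposed.

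This sign error then propagates through your write-up. Your claim that $(3)\Rightarrow(5)$ is trivial is false as stated: from $\dimat(X)\geq\dim^{\mat'}(X)=0$ you get only $\dimat(X)\geq 0$, not $\dimat(X)\leq 0$. And your fibering argument for $(5)\Rightarrow(3)$ --- slicing $V$ along a $\mat'$-basis of a generic point and applying Lemma~\ref{lem:cl-function}(1) --- does prove $\dimat(V)\leq\dim^{\mat'}(V)$, but that is the \emph{opposite} of~(3); the concluding line ``That yields (3)'' is simply wrong.

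With (5) corrected the cycle closes cleanly and none of the fibering machinery is needed. $(3)\Rightarrow(5)$ is the trivial chain $0=\dimat(X)\geq\dim^{\mat'}(X)\geq 0$ for nonempty~$X$. $(5)\Rightarrow(1)$ is direct: given $a\in\mat(X)$, pick $\phi$ that is \xnarrow for $\mat$ and $\av\subset X$ with $\monster\models\phi(a,\av)$; then $\dimat(\phi(\monster,\av))=0$, so by the corrected (5) also $\dim^{\mat'}(\phi(\monster,\av))=0$, so $\phi(\monster,\av)\subseteq\mat'(\av)\subseteq\mat'(X)$ by Lemma~\ref{lem:dim-rk}, whence $a\in\mat'(X)$. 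Finally, your $(1)\Rightarrow(2)$ sketch is also garbled: the right move is to take a $\mat$-basis $\bv'\subseteq\bv$ of $\bv$ over~$C$ (not a $\mat'$-basis), note that a $\mat$-generating set is automatically a $\mat'$-generating set because $\bv\subseteq\mat(C\bv')\subseteq\mat'(C\bv')$, and conclude $\RK^{\mat'}(\bv/C)\leq\card{\bv'}=\rkmat(\bv/C)$.
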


We will show that, for many interesting theories, there is at most
one existential matroid.

Define $\TR$ to be the theory of rings without zero divisors, in the language
of rings $\LR := (0, 1, + , \cdot)$.

\begin{definizione}
If $\K$ expands a ring without zero divisors, define
$F : \K^4 \to \K$ the function, definable without parameters in the language
$\LR$,
\[
\pair{x_1, x_2, y_1, y_2} \mapsto
\begin{cases}
t & \text{if } y_1 \neq y_2 \et t \cdot (y_1 - y_2) = x_1 - x_2;\\
0 & \text{ otherwise.}
\end{cases}
\]
Notice that $F$ is well-defined, because in a ring without zero divisors, if
$y_1 \neq y_2$, then, for every~$x$, there exists at most one $t$ such that
$t \cdot (y_1 - y_2) = x$.
\end{definizione}

\begin{lemma}[{\cite[1.18]{DMS}}]\label{lem:cl-ring-function}
Assume that $T$ expands $\TR$.
Let $A \subseteq \monster$ be definable.
Then, $\dimat(A) = 1$ iff  $\monster = F(A^4)$.
\end{lemma}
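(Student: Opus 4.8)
The plan is to treat the two implications separately; the reverse one is routine, and the forward one, which is essentially \cite[1.18]{DMS}, is where the real work lies. Throughout I fix a finite parameter set $C$ over which $A$ is definable and set $S := \set{a - a' : a, a' \in A}$, a $C$-definable subset of $\monster$ with $0 \in S$ and $S = -S$; since $A - a_0 \subseteq S$ for $a_0 \in A$ and translation by $a_0$ is a definable bijection, Lemma~\ref{lem:cl-function} gives $\dimat(S) = \dimat(A)$.

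For the reverse implication, assume $\monster = F(A^4)$; then $A \neq \emptyset$, so $\dimat(A) \in \set{0,1}$, and it suffices to rule out $\dimat(A) = 0$. If $\dimat(A) = 0$ then $A \subseteq \mat(C)$ by Lemma~\ref{lem:type-def}. As $\mat$ is existential it extends $\acl$, so $\mat(C)$ is a subring of $\monster$; moreover, for $\x \in \mat(C)^4$ the value $F(\x)$ is, when defined, the unique solution of a linear equation with coefficients in $\mat(C)$, hence $F(\x) \in \dcl(\x) \subseteq \mat(C)$. Thus $F(A^4) \subseteq \mat(C)$; but $\dimat(\monster) = 1$, so $\mat(C) \subsetneq \monster$, contradicting $\monster = F(A^4)$. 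Hence $\dimat(A) = 1$.

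For the forward implication, assume $\dimat(A) = 1$ and fix $t \in \monster$; I want $t \in F(A^4)$. The case $t = 0$ is immediate (choose distinct $y_1, y_2 \in A$, possible since $A$ is infinite, and any $x \in A$, giving $F(x,x,y_1,y_2) = 0$), so assume $t \neq 0$. Unwinding the definition of $F$ and using that $\monster$ has no zero divisors: $t \in F(A^4)$ holds iff there is a nonzero $y' \in S$ with $ty' \in S$, that is, iff the dilate $tS$ meets $S$ outside $\set 0$. So the task reduces to showing: \emph{for every $t \neq 0$ one has $tS \cap S \neq \set 0$}, where $S$ is $C$-definable with $\dimat(S) = 1$, $0 \in S$, $S = -S$. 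As partial progress, using that $\indmat$ is an independence relation one may pick $a, b \in A$ with $\rkmat(ab/Ct) = 2$; then $y := a - b$ is a nonzero element of $S$, and since $y$ and $ty$ are interdefinable over $Ct$, Lemma~\ref{lem:Lascar} yields $\rkmat(ty/C) = 1$, so $ty \notin \mat(C)$ --- but this alone does not put $ty$ into $S$.

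The main obstacle is exactly this reduced statement, and it lies genuinely beyond pure dimension theory: for arbitrary $C$-definable dimension-$1$ sets, a set and a dilate of it need not meet off the origin (think of a half-line in an o-minimal field). This is where the argument of \cite[1.18]{DMS} must be carried over to the present generality: from a putative $t_0 \notin F(A^4)$ one obtains $t_0 \neq 0$ and $t_0 S \cap S = \set 0$, and one derives a contradiction from the particular form $S = A - A$ with $\dimat(A) = 1$, using a genericity/coheir argument over $\mat(C)$ together with the ring arithmetic to show that $S$ is ``large near $0$'' in a way that no dilation $z \mapsto t_0 z$ can evade. I expect the delicate points to be the choice of base set over which to realise generic elements of $A$, and the transfer of membership in $A$ along the affine bijections $w \mapsto x + tw$ that appear; everything leading up to that, above, is straightforward given the dimension theory already developed.
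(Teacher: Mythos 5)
Your reverse implication is fine, though more circuitous than needed: the paper simply applies the surjectivity clause of Lemma~\ref{lem:cl-function} to $F \rest A^4 \colon A^4 \to \monster$ to get $4\dimat(A) = \dimat(A^4) \geq \dimat(\monster) = 1$, forcing $\dimat(A) = 1$. Your forward implication, however, you leave genuinely unfinished, and you misdiagnose the obstruction: you claim the reduced statement (that $t S \cap S \neq \set{0}$ for $t \neq 0$, where $S := A - A$) ``lies genuinely beyond pure dimension theory'' and will require a delicate genericity or coheir argument transplanted from \cite[1.18]{DMS}. It does not; you have overlooked a short elementary step.

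The idea you are missing is to read $c \notin F(A^4)$ as the \emph{injectivity of a definable map in two variables}. Indeed, if $c \notin F(A^4)$, then the map $\pair{x_1, x_2} \mapsto c\,x_1 + x_2 \colon A^2 \to \monster$ is injective: if $c\,x_1 + x_2 = c\,x_1' + x_2'$ with $x_1 \neq x_1'$, then $c\,(x_1 - x_1') = x_2' - x_2$, so $c = F(x_2', x_2, x_1, x_1') \in F(A^4)$; hence $x_1 = x_1'$, and then $x_2 = x_2'$ since there are no zero divisors. But an injective definable map from $A^2$, of dimension $2\dimat(A) = 2$, into $\monster$, of dimension~$1$, contradicts item~(4) of Lemma~\ref{lem:cl-function} at once. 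This is the paper's entire forward argument, and it lies squarely within the dimension calculus already developed. Your condition $t_0 S \cap S = \set{0}$ is this injectivity in disguise; what was needed was to view a single generic point $\pair{a,b} \in A^2$ as living in a $2$-dimensional set mapped into a $1$-dimensional one, rather than handling generics of~$S$ one at a time.
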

\begin{proof}
Same as~\cite[1.18]{DMS}.
Assume for contradiction that 
$\dimat(A) = 1$, but there exists $c \in \monster \setminus F(A^4)$.
Since $c \notin F(A^4)$, the function 
$\pair{x_1, x_2} \mapsto c \cdot x_1 + x_2: A^2 \to \monster$ is injective.
Hence, by Lemma~\ref{lem:cl-function}, $\dimat(\monster) \geq \dimat(A^2) = 2$, absurd.

Conversely, by Lemma~\ref{lem:cl-function} again, if $f(A^4) = \monster$, 
then $\dim(A) = 1$.
\end{proof}

\begin{thm}\label{thm:cl-unique}
If $T$ expands $\TR$, then $\mat$ is the only existential matroid on~$\monster$.
If $S$ is a definable subfield of $\monster$ of dimension~$1$,
then $S = \monster$.
\end{thm}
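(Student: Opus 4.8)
The plan is to read off both assertions from Lemma~\ref{lem:cl-ring-function}, exploiting the fact that its criterion for one-dimensionality of a unary definable set $A$ --- namely $\monster = F(A^4)$ --- refers only to the ring structure and not to any matroid. So the two competing matroids in the uniqueness statement will be forced to agree on which unary definable sets are $1$-dimensional, and from there Lemma~\ref{lem:cl-equality} will finish the job.

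For the first assertion, let $\mat'$ be an arbitrary existential matroid on $\monster$; the goal is $\mat = \mat'$. I will verify item~(5) of the second list of equivalences in Lemma~\ref{lem:cl-equality}, i.e.\ that every definable $X \subseteq \monster$ satisfies $\dimat(X) = 0$ iff $\dim^{\mat'}(X) = 0$. Since both $\mat$ and $\mat'$ are existential and $T$ expands $\TR$, Lemma~\ref{lem:cl-ring-function} applies to each of them and gives
\[
\dimat(X) = 1 \iff \monster = F(X^4) \iff \dim^{\mat'}(X) = 1 .
\]
Now for $X \subseteq \monster^1$ nonempty one has $\dimat(X), \dim^{\mat'}(X) \in \set{0,1}$ (the bound $\le 1$ being the case $n=1$ of the dimension bound for type-definable subsets of $\monster^n$), while $X = \emptyset$ gives value $-\infty$ for both. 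Hence $\dimat(X) = 0 \iff \dim^{\mat'}(X) = 0$ for all definable $X \subseteq \monster$, and Lemma~\ref{lem:cl-equality} yields $\mat = \mat'$.

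For the second assertion, let $S$ be a definable subfield of $\monster$ with $\dimat(S) = 1$. By Lemma~\ref{lem:cl-ring-function}, $\monster = F(S^4)$, so it suffices to show $F(S^4) \subseteq S$. Take $x_1, x_2, y_1, y_2 \in S$: if $y_1 = y_2$ then $F(x_1,x_2,y_1,y_2) = 0 \in S$; if $y_1 \neq y_2$ then $y_1 - y_2$ is a nonzero element of the field $S$, hence has an inverse $u \in S$, and since $\monster$ has no zero divisors $u$ is also \emph{the} element of $\monster$ with $u\cdot(y_1-y_2) = 1$, so $F(x_1,x_2,y_1,y_2) = (x_1-x_2)\cdot u \in S$. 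Therefore $\monster = F(S^4) \subseteq S \subseteq \monster$, i.e.\ $S = \monster$.

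There is no real obstacle here: all the substantive work is in Lemma~\ref{lem:cl-ring-function}, and the only points that need a moment's care are the edge cases (the empty set, and the bound $\dimat(X) \le 1$ for $X \subseteq \monster$), plus, in the subfield case, the remark that inversion computed inside $S$ coincides with the partial inversion used to define $F$ on $\monster$ --- automatic because $\monster$ is a domain.
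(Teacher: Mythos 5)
Your proof is correct and follows the same route as the paper's: apply Lemma~\ref{lem:cl-ring-function} to reduce agreement of the two dimension functions on unary definable sets to the matroid-free criterion $F(A^4) = \monster$, then invoke Lemma~\ref{lem:cl-equality}, and for the subfield assertion observe that $F(S^4) \subseteq S$. You spell out the edge cases (the empty set, the bound $\dimat(X) \le 1$) and the closure of $S$ under $F$ in slightly more detail than the paper does, but the argument is the same.
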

\begin{proof}
Let $A \subseteq \monster$ be definable.
By the previous lemma, $\dim (A) = 1$ iff $F(A^4) = \monster$.
Since the same holds for any existential matroid
$\mat'$ on~$\monster$, we conclude that, for every definable set $A\subseteq
\monster$, $\dimat(A) = 0$ iff $\dim^{\mat'}(A) = 0$, 
and hence $\dimat = \dim^{\mat'}$.

Given $S$ a subfield of~$\monster$, $F(S^4) = S$.
Hence, if $\dimat(S) = 1$, then $S = \monster$.
\end{proof}


\begin{example}
In the above theorem, we cannot drop the hypothesis that $T$ expands~$\TR$.
In fact, let $\monster_0$ be an infinite connected graph, such that 
$\monster_0$ is a monster model, and $\acl$ is a matroid in $\monster_0$
(\eg, $\monster_0$ equal to a monster model of the theory of random graphs).
Let $\monster$ be the disjoint union of $\kappa$ copies of $\monster_0$:
notice that $\monster$ is a monster model.
For every $a \in \monster$, let $\mat(a)$ be the connected component
of $\monster$ containing~$a$ (it is a copy of $\monster_0$),
and $\mat(A) := \bigcup_{a \in A} \mat(a)$.
Then, $\acl$ and $\mat$ are two different existential matroids on~$\monster$.
\end{example}

\begin{example}
In Lemma~\ref{lem:cl-ring-function} and Theorem~\ref{thm:cl-unique} 
we cannot even relax the hypothesis to 
``$T$ expands the theory of a vector space''. 
In fact, let $\F$ be an ordered field, considered as a vector space over
itself, in the language $\pair{0, 1, +, <, \lambda_c}_{c \in \F}$, and let $T$
be its theory.
Let $\Td$ be the theory of dense pairs of models of~$T$.
\cite[5.8]{DMS} show that $\Td$ has elimination of quantifiers, and $\acl$ is
a matroid on~$\Td$.
However, as the reader can verify, the small closure $\scl$ is another
existential matroid on~$\Td$ (\cf \S\ref{subsec:small-closure}),
and it is different from~$\acl$.
\end{example}

\begin{corollary}
If $\monster$ expands a field, then $\monster$ must be a perfect field.
In particular, the theory of separably closed and non-algebraically closed
fields, and the theory of differentially closed fields of finite
characteristic do not admit an existential matroid.
\end{corollary}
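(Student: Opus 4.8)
The plan is to reduce the statement to the characteristic-$p$ case and then use the Frobenius endomorphism together with the structure theory already developed. First I would note that if $\charact \monster = 0$ there is nothing to prove, so assume $\charact \monster = p > 0$. Let $S := \monster^p = \set{a^p : a \in \monster}$, the image of $\monster$ under the Frobenius map $\Phi : x \mapsto x^p$. Since $\monster$ expands a field, $\Phi$ is an injective ring homomorphism, definable without parameters in $\LR$; hence $S$ is a definable subfield of $\monster$. The key point is that $\Phi : \monster \to S$ is a definable bijection (injectivity is clear in a field; surjectivity onto $S$ is the definition of $S$). Therefore by Lemma~\ref{lem:cl-function}(5), applied with the type-definable sets $X = \monster$ and $Y = S$ and $f = \Phi$, we get $\dimat(S) = \dimat(\monster) = 1$. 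Now Theorem~\ref{thm:cl-unique} (second assertion) says that a definable subfield of $\monster$ of dimension $1$ must equal $\monster$; hence $S = \monster$, i.e. every element of $\monster$ is a $p$-th power, which is exactly the statement that $\monster$ is perfect.

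For the in-particular clauses: a separably closed non-algebraically closed field $\K$ is by definition imperfect (if it were perfect it would be algebraically closed, since a perfect separably closed field is algebraically closed), so no model of its theory can carry an existential matroid, and in particular its monster model cannot. Similarly, a differentially closed field of characteristic $p > 0$ has an imperfect underlying field (the constants, or rather the field itself, fail to be perfect — indeed $\mathrm{DCF}_p$ is not even a theory of perfect fields; one can cite that models of $\mathrm{DCF}_p$ are separably closed but not algebraically closed), so again no existential matroid exists. I would phrase these last two sentences by simply invoking the standard fact that such fields are imperfect and applying the first part contrapositively.

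The only genuine content is the first paragraph; the main obstacle there is making sure the hypotheses of Lemma~\ref{lem:cl-function}(5) are literally met — namely that $\monster$ and $S$ are type-definable (they are definable, over $\emptyset$) and that $\Phi$ restricts to a definable \emph{bijection} between them. Injectivity of $\Phi$ uses only that $\monster$ is an integral domain (no zero divisors), which holds since $T$ expands $\TR$; surjectivity onto $S$ is tautological. One should also double-check the edge case $p = 0$ is correctly excluded at the outset. After that, the two applications of Lemma~\ref{lem:cl-function}(5) and Theorem~\ref{thm:cl-unique} are immediate, and the ``in particular'' statements are standard field theory requiring no new argument.
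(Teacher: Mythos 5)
Your proof is correct and is essentially the same as the paper's: both use the Frobenius map $x \mapsto x^p$ as a definable bijection from $\monster$ onto $\monster^p$ to transfer dimension (via Lemma~\ref{lem:cl-function}), and both invoke the second assertion of Theorem~\ref{thm:cl-unique} on definable subfields. The only cosmetic difference is that the paper phrases it as a contradiction (assume imperfect, get $\dimat(\monster^p)=0$, transport back to $\dimat(\monster)=0$), whereas you argue directly that $\dimat(\monster^p)=1$ forces $\monster^p=\monster$.
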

\begin{proof}
Cf.\ \cite[1.6]{dries89}.
If $\monster$ is not perfect, then $\monster ^p$ is a proper definable
subfield of~$\monster$, where $p := \charact(\monster)$, 
and therefore $\dimat(\monster^p) = 0$.
However, the map $x \mapsto x^p$ is a bijection from $\monster$ to
$\monster^p$; therefore, $\dimat(\monster) = 0$, absurd.
\end{proof}

\begin{corollary}
Let $\mat'$ be a non-trivial definable matroid on some monster
model~$\monster'$.
Assume that $\monster'$ expands a model of~$\TR$.
Then, \tfae:
\begin{enumerate}
\item $\mat'$ is an existential matroid;
\item for every formula (without quantifiers) $\phi(x, \y)$,
$\phi$ is \xnarrow iff, for y
every~$\bv$, $F^4(\phi(\monster', \bv) \neq \monster'$.
\end{enumerate}
\end{corollary}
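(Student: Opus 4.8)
We are asked to characterise existential matroids $\mat'$ among non-trivial definable matroids on a monster model $\monster'$ expanding a model of $\TR$, via the condition: $\phi(x,\y)$ (quantifier-free) is $x$-narrow iff $F^4(\phi(\monster',\bv)) \neq \monster'$ for every $\bv$. The plan is to reduce both directions to Lemma~\ref{lem:cl-ring-function} (and its use inside Theorem~\ref{thm:cl-unique}), together with the characterisations of Existence in Lemma~\ref{lem:cl-E}.

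**Direction (1) $\Rightarrow$ (2).** Assume $\mat'$ is an existential matroid. Then by Theorem~\ref{thm:cl-unique} applied to the theory $T' = \Theory(\monster')$, which expands $\TR$, there is only one existential matroid on $\monster'$, and Lemma~\ref{lem:cl-ring-function} (valid for any existential matroid over $\TR$) gives: for any definable $A \subseteq \monster'$, $\dim^{\mat'}(A) = 1$ iff $F(A^4) = \monster'$; equivalently, since for unary $A$ we have $\dim^{\mat'}(A) \in \{-\infty, 0, 1\}$, $\dim^{\mat'}(A) = 0$ iff $F(A^4) \neq \monster'$ (the empty case being trivial, as $F(\emptyset) = \emptyset \neq \monster'$). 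Now apply the Lemma characterising $x$-narrowness by dimension: $\phi(x,\y)$ is $x$-narrow iff $\dim^{\mat'}(\phi(\monster',\bv)) = 0$ for every $\bv$. Combining the two equivalences yields exactly (2). One should note that the Lemma "$\phi$ is $x$-narrow iff $\dim^{\mat'}(\phi(\monster,\bv)) = 0$ for all $\bv$" is stated in the excerpt for the fixed matroid $\mat$ but its proof uses only the definition of $x$-narrowness together with the definition of $\dimat$, so it holds verbatim for $\mat'$.

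**Direction (2) $\Rightarrow$ (1).** Assume (2). Since $\mat'$ is assumed non-trivial and definable, by Definition~\ref{def:existential} it remains only to verify Existence. I would use criterion (5) of Lemma~\ref{lem:cl-E}: it suffices to show that for every quantifier-free formula $\phi(x,\y)$ and every $x$-narrow formula $\psi(x,\y,\z)$, if $\monster' \models \forall\y\,\exists\z\,\forall x\,(\phi(x,\y) \to \psi(x,\y,\z))$, then $\phi$ is $x$-narrow. (Here I restrict to quantifier-free $\phi$; by quantifier logic over the fixed theory, if the general formula $\phi$ satisfies the hypothesis, rewrite it using a quantifier-free formula in more variables, or equivalently argue the dimension-zero condition is insensitive to the presentation — the cleanest route is to observe that $U_\phi^{\geq 1}$ as in Lemma~\ref{lem:type-def} only depends on the definable family, not its syntactic form, so (2) as stated for quantifier-free $\phi$ already pins down all dimension-zero definable unary sets.) Given such $\phi$ and $\psi$: fix $\bv$. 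For every $a$ with $\monster' \models \phi(a,\bv)$ there is $\cv$ with $\psi(a,\bv,\cv)$, and since $\psi$ is $x$-narrow, $a \in \mat'(\bv\cv)$; running over finitely many $a_1, a_2, a_3, a_4 \in \phi(\monster',\bv)$, the value $F(a_1,a_2,a_3,a_4)$ lies in $\mat'(\bv\, \cv_1 \cdots \cv_4)$, a set whose $\rk^{\mat'}$ over $\bv\cv_1\cdots\cv_4$ is $0$, hence which has $\dim^{\mat'} = 0$ as a subset indexed by those parameters — in particular it is a proper subset of $\monster'$ since $\dim^{\mat'}(\monster') = 1 > 0$ by non-triviality of $\mat'$ over a field (Lemma~\ref{lem:cl-ring-function} forces $\monster'$ itself to have dimension $1$ once some set does; and non-trivial definable matroids over $\TR$ have some set of positive dimension). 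Thus $F^4(\phi(\monster',\bv)) \neq \monster'$ for every $\bv$, so by (2), $\phi$ is $x$-narrow. This establishes Existence, hence $\mat'$ is an existential matroid.

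**Main obstacle.** The delicate point is the $(2)\Rightarrow(1)$ direction, specifically justifying that condition (2) — phrased only for quantifier-free $\phi$ — suffices to run the Existence criterion (5) of Lemma~\ref{lem:cl-E}, whose natural input is an arbitrary formula $\phi$. The resolution is that $x$-narrowness of $\phi$ is equivalent to all fibres $\phi(\monster',\bv)$ having $\dim^{\mat'} = 0$, and by the $\TR$-structure the set of dimension-zero definable unary sets is determined by the $F^4$-criterion regardless of syntactic complexity; so once (2) holds for quantifier-free formulas it holds for all formulas, because every definable unary family is, parameter by parameter, a quantifier-free-definable set over enlarged parameters. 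I would spell this reduction out carefully, as it is the one non-formal step; the rest is bookkeeping with Lemmas~\ref{lem:cl-ring-function}, \ref{lem:cl-E}, and the dimension/narrowness correspondence.
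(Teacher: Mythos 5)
Your overall route — $(1\Rightarrow 2)$ by the dimension characterisation of x\hyph narrowness plus Lemma~\ref{lem:cl-ring-function}, and $(2\Rightarrow 1)$ by verifying criterion~(5) of Lemma~\ref{lem:cl-E} — is exactly the route the paper takes (the paper's own proof is a two-line pointer to Lemma~\ref{lem:cl-E}-5). Your $(1\Rightarrow2)$ is correct, though invoking Theorem~\ref{thm:cl-unique} is unnecessary: Lemma~\ref{lem:cl-ring-function} already applies to any existential matroid over $\TR$, and together with the lemma ``$\phi$ is x\hyph narrow iff $\dimat(\phi(\monster,\bv))=0$ for all $\bv$'' it gives (2) directly.

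However, your $(2\Rightarrow1)$ has a genuine gap coming from a misreading of the quantifier order in Lemma~\ref{lem:cl-E}-5. That criterion is $\forall\y\,\exists\z\,\forall x$: after fixing $\bv$, a \emph{single} $\cv$ works for \emph{all} $a$, giving $\phi(\monster',\bv)\subseteq\psi(\monster',\bv,\cv)$. You argue instead with $\forall\y\,\forall x\,\exists\z$, letting $\cv$ vary with $a$, and so your four elements $a_1,\dotsc,a_4$ come with four different tuples $\cv_1,\dotsc,\cv_4$. That each value $F(a_1,\dotsc,a_4)$ lands in the proper subset $\mat'(\bv\cv_1\cdots\cv_4)$ does not give $F^4(\phi(\monster',\bv))\neq\monster'$, because the set $\mat'(\bv\cv_1\cdots\cv_4)$ depends on the chosen $a_i$'s, and the union of these sets over all choices of $a_1,\dotsc,a_4\in\phi(\monster',\bv)$ can perfectly well be all of $\monster'$. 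With the correct quantifier order the argument closes in one line: $\phi(\monster',\bv)\subseteq\psi(\monster',\bv,\cv)$ gives $F^4(\phi(\monster',\bv))\subseteq F^4(\psi(\monster',\bv,\cv))$; the latter is $\neq\monster'$ by the forward direction of (2) applied to $\psi$; then the converse direction of (2) applied to $\phi$ says $\phi$ is x\hyph narrow.

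Your secondary concern about ``(without quantifiers)'' is reasonable, but the resolution you propose — that every definable unary family is, parameter by parameter, quantifier-free-definable over enlarged parameters — is simply false absent quantifier elimination, so it cannot carry the reduction. The statement of the corollary visibly has typos (a stray ``y'', an unbalanced parenthesis), and ``(without quantifiers)'' is almost certainly a slip for ``(without parameters)'', matching Lemma~\ref{lem:cl-E}-5. Under that reading the quantifier-free worry disappears entirely and the argument above is exactly what the paper intends.
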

\begin{proof}
$(1 \Rightarrow 2)$ is clear.

$(2 \Rightarrow 1)$ follows from Lemma \ref{lem:cl-E}-5.
\end{proof}

\begin{lemma}
Let $\K$ be a ring without zero divisors definable in~$\monster$, of dimension
$n \geq 1$.
Let $\F \subseteq \K$ be a definable subring such that $\F$ is a skew field.
If $\dimat(\K) = n$, then $\K = \F$.
\end{lemma}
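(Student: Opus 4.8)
The plan is to reduce this statement to the already-established uniqueness theorem for the dimension on definable sets inside a ring without zero divisors. The key point is that $\K$, being definable in $\monster$ and a ring without zero divisors, is itself a monster-model-sized structure expanding $\TR$ once we pass to a suitable sort, and the restriction of $\dimat$ to subsets of $\K^k$ behaves like a dimension function in the sense of the preceding development. Concretely, I would first observe that $\dimat$ restricted to $\K$ (and its Cartesian powers) is, up to the uniform scaling factor $n = \dimat(\K)$, the dimension attached to an existential matroid on $\K$: one checks that the operator $\mat_{\K}$ on $\Power(\K)$ obtained by intersecting $\mat$ with $\K$ (or, equivalently, the closure operator defined by the $x$-narrow formulas with the variable ranging over $\K$) is a definable matroid on $\K$, and that it satisfies Existence because $\mat$ does on $\monster$ (this is essentially the content of Lemma~\ref{lem:cl-E} transported to the definable structure $\K$). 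Since $\dimat(\K) = n \geq 1$, this matroid is non-trivial, hence existential.

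Next I would apply Theorem~\ref{thm:cl-unique}, in the form of its second assertion, to the structure $\K$: any definable subfield, or more generally any definable sub-skew-field, of a model of $\TR$ which has full dimension must be the whole structure. Here the hypothesis that $\F$ is a skew field (not merely a ring without zero divisors) is what allows the argument of Lemma~\ref{lem:cl-ring-function} and Theorem~\ref{thm:cl-unique} to go through: the function $F$ is defined using division, and for $F(\F^4) = \F$ one needs that nonzero elements of $\F$ are invertible, which holds since $\F$ is a skew field (left-invertibility suffices to solve $t\cdot(y_1 - y_2) = x_1 - x_2$). Thus $F(\F^4) = \F$, and the same computation that proves Lemma~\ref{lem:cl-ring-function} inside $\K$ shows that if $\dimat(\F) = \dimat(\K) = n$ then, choosing any $c \in \K \setminus F(\F^4)$ would make $\pair{x_1,x_2}\mapsto c\cdot x_1 + x_2 : \F^2 \to \K$ injective, forcing $\dimat(\K) \geq 2n$ by Lemma~\ref{lem:cl-function}, a contradiction. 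Hence $F(\F^4) = \K$, i.e.\ $\K = \F$.

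The main obstacle I anticipate is the first step: verifying that $\dimat$ restricted to definable subsets of powers of $\K$ genuinely is (a scalar multiple of) the dimension coming from an existential matroid on the interpreted structure $\K$, so that Theorem~\ref{thm:cl-unique} is applicable. One has to be careful that $\K$ may not expand $\TR$ in the literal language $\Lang$ — it is only \emph{interpreted} in $\monster$ — so strictly one should either work with $\monster^{\mathrm{eq}}$-style reasoning or, more cleanly, rerun the short proofs of Lemma~\ref{lem:cl-ring-function} and Theorem~\ref{thm:cl-unique} directly with $A, \F \subseteq \K$ and $\monster$ replaced by $\K$ throughout, using only that $\K$ has no zero divisors, that $\F$ is a skew field, and the fibre-wise dimension inequalities of Lemma~\ref{lem:cl-function} (which are stated for type-definable subsets of powers of $\monster$ and apply verbatim since $\K \subseteq \monster$ is definable). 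In fact this direct route avoids the matroid-on-$\K$ detour entirely and is the cleanest way to write the proof: the only facts used are that $F$ restricted to $\F^4$ has image $\F$ (because $\F$ is a skew field), that a point of $\K$ outside that image would yield an injection $\F^2 \hookrightarrow \K$, and that $\dimat$ cannot jump from $n$ to $2n$ along an injection. I expect the write-up to be a two- or three-line argument once this is set up correctly.
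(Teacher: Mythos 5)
Your ``direct route'' at the end is essentially the paper's proof. The paper simply takes any $c \in \K \setminus \F$, defines $h\colon \F \times \F \to \K$, $h(x,y) := x + cy$, observes that $h$ is injective because $\F$ is a skew field and $c \notin \F$, and then concludes $2n = \dimat(\F^2) \leq \dimat(\K) = n$ via Lemma~\ref{lem:cl-function}, a contradiction. Your detour through the function $F$ is harmless but superfluous: once you note $F(\F^4) = \F$ (which you do), the set $\K \setminus F(\F^4)$ from which you pick $c$ is just $\K \setminus \F$, so you may as well start there. Your worry that $\K$ is merely interpreted in $\monster$, so that Theorem~\ref{thm:cl-unique} does not apply verbatim, is well placed, and the fibre-wise dimension argument you settle on is exactly the right repair. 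One remark: the hypothesis ``$\dimat(\K) = n$'' in the statement is redundant with the preceding sentence; what actually drives the argument, both in your write-up and in the paper's proof, is $\dimat(\F) = n$, and the statement should presumably read that way.
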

\begin{proof}
Assume, for contradiction, that there exists $c \in \K \setminus \F$.
Define $h: \F \times \F \to \K$, $h(x,y) := x + c y$.
Since $c \notin \F$ and $\F$ is a skew field, $h$ is injective.
Thus, $2 n = \dim(\F^2) \leq \dim(\K) = n$, a contradiction.
\end{proof}

\begin{corollary}
Let $\K \subseteq \monster^n$ be a definable field, such that
$\dimat(\K) \geq 1$.
Then, $\K$~is perfect.
\end{corollary}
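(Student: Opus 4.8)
The plan is to reduce to positive characteristic and then show that the Frobenius endomorphism of~$\K$ is onto, by combining the bijective case of the fibre-wise dimension inequalities with the lemma immediately preceding this corollary.

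First I would dispose of the trivial case: if $\charact(\K) = 0$ then $\K$ is perfect and there is nothing to prove, so I may assume $p := \charact(\K) > 0$. Next I would bring in the Frobenius map $\varphi\colon \K \to \K$, $x \mapsto x^{p}$. It is definable over any parameter set defining the field~$\K$; it is injective, because in a field of characteristic~$p$ we have $x^{p} = y^{p} \iff (x-y)^{p} = 0 \iff x = y$; and its image $\K^{p} := \varphi(\K)$ is a definable subring of~$\K$ which, being contained in the field~$\K$, is itself a (skew) field. Thus $\varphi$ restricts to a definable bijection from $\K$ onto $\K^{p}$, and Lemma~\ref{lem:cl-function}(5) gives $\dimat(\K^{p}) = \dimat(\K)$.

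From here the conclusion is immediate: $\K$ is a definable ring without zero divisors of dimension $n := \dimat(\K) \geq 1$, and $\K^{p}$ is a definable subring of~$\K$ that is a skew field with $\dimat(\K^{p}) = \dimat(\K)$; hence the preceding lemma forces $\K^{p} = \K$, i.e.\ the Frobenius is surjective and $\K$ is perfect.

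I do not anticipate a real obstacle. The only step meriting a line of care is the second paragraph: one must verify that $\K^{p}$ is genuinely definable and that $\varphi$ is a bijection onto it, so that the \emph{bijective} part of Lemma~\ref{lem:cl-function} applies and yields the equality $\dimat(\K^{p}) = \dimat(\K)$, not merely an inequality; with that equality in hand, the preceding lemma carries the rest. Incidentally, this corollary contains the earlier assertion that a monster model expanding a field is perfect as the special case $\K = \monster$.
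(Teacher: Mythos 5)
Your proof is correct and is essentially the paper's argument: both pass via the injectivity of the Frobenius, the bijective case of Lemma~\ref{lem:cl-function} to get $\dimat(\K^p) = \dimat(\K)$, and then the preceding lemma on full-dimensional definable sub-skew-fields to conclude $\K^p = \K$. You spell out the steps a bit more carefully (the characteristic-$0$ case, injectivity, definability of $\K^p$), but the route is the same.
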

The assumption that $\dimat(\K) \geq 1$ is necessary: 
non-perfect definable fields of dimension $0$ can exist.
\begin{proof}
Let $p := \charact \K$, and
$\phi: \K \to \K$ be the Frobenius automorphism $\phi(x)= x^p$.
Since $\phi$ is injective, $\dimat(\K^p) = \dimat(\K)$, 
and therefore $\K^p = \K$.
\end{proof}

\begin{example}\label{ex:group}
Let $\lambda$ be an ordinal, which is an ordinal power of $\omega$ (\eg,
$\lambda = 1$, $\lambda = \omega, \dotsc$).  
Let $\Gm$ be a monster model of a superstable group, such that
$\SU(\Gm) = \lambda$, where $\SU$ is Lascar's rank.
For every $a$ and~$B$, define $a \in \mat(B)$ iff $\SU(a/B) < \lambda$.  
Then, $\mat$~is an existential matroid.
If $X$ is a definable subset of~$\Gm$, then $\dimat(X) = 1$ iff $X$ is
generic, that is
finitely many left translates of $X$ cover~$\Gm$.
\end{example}
\begin{proof}
See~\cite{poizat87}.
%
%
\end{proof}

\begin{example}
Let $\K$ be a monster differentially closed field, and $p \geq 0$ be its
characteristic.
If $p = 0$, then $\K$ is superstable, and $\SU(\K) = \omega$; hence, by  the
previous example, there exists a (unique) existential matroid $\mat$ on~$\K$.
It is easy to see that, if $A$ is a differential subfield of~$\K$ and
$b \in \K$, then $b \in \mat(A)$ iff $b$ is differential-algebraic over~$A$
(that is, iff $b, \de b, \de^2 b, \dotsc$ are algebraically dependent over~$A$);
see \cite{wood} and~\cite[2.25]{dries89}.
On the other hand, if $p > 0$, then there is no existential matroid on~$\K$,
because $\K$ is not perfect.
\end{example}

\begin{definizione}
Let $X \subseteq \K^n$ any $Y \subseteq \K^m$ be definable.
Let $f: X \app Y$ be a definable application
(\ie, a multi-valued partial function), with graph~$F$.
For every $x \in X$, let $f(x) := \set{y\in Y: \pair{x,y} \in F}\subseteq Y$.
Such an application $f$ is a \intro{\Zapplication{}} if, for every $x \in X$, 
$\dimat\Pa{f(x)} \leq 0$.
\end{definizione}

\begin{remark}\label{rem:Zapplication}
Let $A \subseteq \K$, and $b \in \K$.
Then, $b \in \mat(A)$ iff there exists an $\emptyset$-definable
\Zapplication $f: \K^n \app \K$ and $\av \in A$,
such that $b \in f(\av)$.
Moreover, if $\cv \in \K^n$, then $b \in \mat(A\cv)$ iff there exists
an $A$-definable \Zapplication $f: \K^n \to \K$, such that $b \in f(\cv)$.
\end{remark}

\begin{definizione}
We say that $\dimat$ is \intro{definable} if,
for every $X$ definable subset of $\monster^m \times \monster^n$, the set 
$\set{\av \in \monster^m: \dimat(X_{\av}) =  d}$ is definable.
\end{definizione}

\begin{lemma}
\Tfae:
\begin{enumerate}
\item 
$\dimat$ is definable;
\item 
for every $X$ definable subset of $\monster^m \times \monster$,
the set $X^{1, 1} := \set{\av \in \monster^m: \dimat(X_{\av}) = 1}$ is also
definable;
\item
for every $k \leq n$, every~$m$, and every $X$ definable subset of 
$\monster^m \times \monster^n$,
the set $X^{n, k} := \set{\av \in \monster^m: \dimat(X_{\av}) = k}$ is also
definable, with the same parameters as~$X$.
\end{enumerate}
\end{lemma}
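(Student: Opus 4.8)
The plan is to dispose of $(3)\Rightarrow(1)$ and $(1)\Rightarrow(2)$ at once and then put all the work into $(2)\Rightarrow(3)$. For $(3)\Rightarrow(1)$: given a definable $X\subseteq\monster^m\times\monster^n$ and $d$, the set $\set{\av\in\monster^m:\dimat(X_{\av})=d}$ is $X^{n,d}$ when $0\le d\le n$ (apply~(3)), is empty when $d>n$ (every type-definable subset of $\monster^n$ has dimension $\le n$), and is $\set{\av:X_{\av}=\emptyset}$ when $d=-\infty$, which is $\emptyset$-definable; in all cases it is definable. The implication $(1)\Rightarrow(2)$ is the special case $n=d=1$.

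Before the main argument I would record a \emph{parameter upgrade}: if $\set{\av:\dimat(X_{\av})=k}$ is definable at all, then it is definable with the same parameters as~$X$. First suppose $X=\psi(\monster^m,\monster^n)$ with $\psi$ over $\emptyset$. By Lemma~\ref{lem:type-def} each $U_\psi^{\ge k}$ is type-definable over $\emptyset$, say $U_\psi^{\ge k}=\bigcap_{i\in I}\sigma_i(\monster^m)$; if moreover $U_\psi^{\ge k}=\chi(\monster^m,\cv)$ is definable, then $\set{\sigma_i(\x):i\in I}\cup\set{\neg\chi(\x,\cv)}$ is inconsistent, so by compactness $\bigcap_{i\in I_0}\sigma_i(\monster^m)\subseteq\chi(\monster^m,\cv)=\bigcap_{i\in I}\sigma_i(\monster^m)$ for some finite $I_0$, whence $U_\psi^{\ge k}=\bigcap_{i\in I_0}\sigma_i(\monster^m)$ is $\emptyset$-definable. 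Since $\dimat\le n$ always, $\set{\av:\dimat(X_{\av})=k}=U_\psi^{\ge k}\setminus U_\psi^{\ge k+1}$, hence also $\emptyset$-definable. For $X$ defined by $\psi(\x,\y;\cv)$ with $\psi$ over $\emptyset$, apply this to the $\emptyset$-definable $X^{*}:=\set{\pair{\av,\cv',\y}:\monster\models\psi(\av,\y;\cv')}$ and take the fibre of $(X^{*})^{n,k}$ at $\cv'=\cv$. So from now on it suffices to produce bare definability.

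Now I would prove $(2)\Rightarrow(3)$ by induction on~$n$. For $n=1$: $X^{1,1}$ is definable by~(2), and $X^{1,0}=\set{\av:X_{\av}\ne\emptyset}\setminus X^{1,1}$. For the step, write $\monster^n=\monster^{n-1}\times\monster$, let $\pi$ be the projection onto the first $n-1$ coordinates, and, for a definable $X\subseteq\monster^m\times\monster^n$, set $\pi X:=\set{\pair{\av,\cv}:\exists y\ \pair{\av,\cv,y}\in X}$ and $W:=\set{\pair{\av,\cv}:\dimat\bigl(\set{y:\pair{\av,\cv,y}\in X}\bigr)=1}$, both subsets of $\monster^m\times\monster^{n-1}$; here $W$ is the set ``$X^{1,1}$'' for $X$ regarded as a definable subset of $\monster^{m+n-1}\times\monster$, hence definable by~(2). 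Splitting $X_{\av}$ along $\pi$ into the part over $\set{\cv:\dimat((X_{\av})_{\cv})=0}$ and the part over $W_{\av}$, and applying the fibre-wise dimension inequalities (Lemma~\ref{lem:cl-function}) to each piece, one gets
\[
\dimat(X_{\av})=\max\Pa{\dimat\bigl((\pi X)_{\av}\bigr),\ 1+\dimat(W_{\av})}
\]
(conventions $1+(-\infty)=-\infty$, $\max(d,-\infty)=d$). By the inductive hypothesis applied to $\pi X$ and to $W$, the sets $\set{\av:\dimat((\pi X)_{\av})=i}$ and $\set{\av:\dimat(W_{\av})=j}$ are definable for all $i,j\in\set{-\infty,0,\dots,n-1}$, so for each $k\le n$
\[
X^{n,k}=\bigcup\set{\set{\av:\dimat((\pi X)_{\av})=i}\cap\set{\av:\dimat(W_{\av})=j}:\ \max(i,1+j)=k}
\]
is a finite Boolean combination of definable sets, hence definable. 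This closes the induction.

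The step I expect to be the main obstacle is the displayed identity for $\dimat(X_{\av})$: to extract it from Lemma~\ref{lem:cl-function} one must check that over $\set{\cv:\dimat((X_{\av})_{\cv})=0}$ the restriction of $\pi$ has $0$-dimensional fibres (so that piece has dimension $\dimat((\pi X)_{\av})$), that over $W_{\av}$ it has fibres of dimension exactly~$1$ (so that piece has dimension $1+\dimat(W_{\av})$), and then carry out the $\max$/$-\infty$ bookkeeping; everything else is formal, and the parameter-upgrade step, while routine, is the easy-to-overlook ingredient needed for the ``same parameters'' clause of~(3).
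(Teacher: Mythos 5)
Your proof is correct and takes essentially the same route as the paper's: the substantive direction is handled by induction on the number $n$ of fibre coordinates, splitting $X_{\av}$ along the projection $\pi$ onto the first $n-1$ coordinates into the part whose one-variable fibres have dimension $0$ (controlled by $\dimat((\pi X)_{\av})$) and the part over $W_{\av}$ (controlled by $1+\dimat(W_{\av})$), with Lemma~\ref{lem:cl-function} supplying the dimension bookkeeping, while the ``same parameters'' clause is deduced from the type-definability in Lemma~\ref{lem:type-def}. The only presentational differences are that you discharge the parameter clause by a direct compactness argument (adding the parameters as extra fibre variables and then reducing to the $\emptyset$-definable case) where the paper simply cites Beth's definability theorem, and that your explicit identity $\dimat(X_{\av})=\max\Pa{\dimat\bigl((\pi X)_{\av}\bigr),\ 1+\dimat(W_{\av})}$ packages the paper's equivalent recursion for the sets $X^{n,\ge k}$ in a slightly cleaner form that also sidesteps the paper's auxiliary observation about projections to coordinate axes.
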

\begin{proof}
$(3 \Rightarrow 1 \Rightarrow 2)$ is obvious.

$(2 \Rightarrow 1)$ is obvious.
We will prove by induction on $n$ that, for every
$Y$ definable subset of $\K^n \times \K^m$, the set
$Y^{n, \geq k} := \set{\av \in \monster^m: \dimat(X_{\av}) \geq k}$ is
definable. 
The case $k = 0$ is clear.
The case $k = 1$ follows from the assumption and the observation that, for
every $Z$ definable subset of~$\K^n$, $\dimat(Z) \geq 1$ iff 
$\dimat(\theta(Z)) \geq 1$ for some $\theta$ projection from $\K^n$ to a
coordinate axis.
The inductive step follows from the fact that
\[
X^{n, \geq k} = 
\Pa{\Pi^{n+m}_{n+ m -1}(X)}^{n - 1, \geq k} \cup
\Pa{X^{n + m -1, \geq 1}}^{n-1, \geq k - 1}.
\qedhere
\]

$(1 \Rightarrow 3)$ 
Let $X \subseteq \K^{n + m}$ be definable with parameters~$A$.
Then, $X^{n, k}$ is $\monster$-definable, by assumption.
Moreover, by Lemma~\ref{lem:type-def}, $X^{n, k}$ is type-definable
over~$A$, and therefore invariant under automorphisms that fix $A$ pointwise.
Hence, by Beth's definability theorem, $X^{n, k}$ is definable over~$A$.
\end{proof}


\begin{remark}
If $T$ expands $\TR$, then $\dimat$ is definable.
\end{remark}


\subsection{Morley sequences}

Most of the results of this subsection remain true for an arbitrary
independence relation $\ind$ instead of~$\indmat$.

\begin{definizione}
Let $C \subseteq B$, $p(\x) \in S_n(B)$, and $\pair{I, \leq}$ be a linear order.
A \intro{Morley sequence} over~$C$ indexed by $I$ in $p$
is a sequence $(\av_i: i \in I)$ of tuples
in $\monster^n$, such that $(\av_i: i \in I)$ is order-indiscernibles over~$B$
and independent over~$C$, and every $\av_i$ realises~$p(\x)$.\\
A Morley sequence over $C$ is a Morley sequence over $C$ in some $p \in S_n(C)$.
A Morley sequence in~$p$ is a Morley sequence over $B$ in~$p$.
\end{definizione}

\begin{lemma}\label{lem:Morley-seq}
Let $\pair{I, \leq}$ be a linear order, with $\card{I} < \kappa$.
Let $p(\x) \in S_n(C)$.
Then, there exists a Morley sequence over $C$ indexed by $I$ in~$p(\x)$.
If moreover $\bv \indmat_C \dv$, then there exists a Morley sequence 
$(\av_i: i \in I)$ over~$C$ indexed by~$I$ in~$p(\x)$, such that
$(\bv \av_i: i \in I)$ is order-indiscernibles over $C \dv$ and,
for every $i \in I$, $\bv \av_i \indmat_C \dv (\av_j: i \neq j \in I)$.
\end{lemma}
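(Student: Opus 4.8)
The plan is to prove the two assertions in turn, deriving the refined one from the basic one. Throughout I use that, since $\mat$ is an existential matroid, $\indmat$ is an independence relation on $\monster$ satisfying Invariance, Finite Character, the Existence axiom, and strong finite character (Lemma~\ref{lem:cl-definable}), together with the symmetric \prefree axioms; I also use the two standard ``chaining'' consequences of these axioms, namely
$X\indmat_C Y \et X\indmat_{CY}Z \Rightarrow X\indmat_C YZ$ (from Transitivity, Normality, Symmetry) and $X\indmat_C YZ \Rightarrow X\indmat_{CY}Z$ (from Base Monotonicity, Normality, Symmetry).

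\emph{Basic assertion.} Using the Existence axiom and transfinite recursion, I would first build, for a sufficiently large cardinal $\lambda$, a sequence $(\av_\alpha:\alpha<\lambda)$ of realisations of $p$ that is independent over~$C$: at stage $\beta$, Existence applied to (a realisation of)~$p$, the set $(\av_\alpha:\alpha<\beta)$, and base~$C$ yields $\av_\beta\models p$ with $\av_\beta\indmat_C(\av_\alpha:\alpha<\beta)$, and by Lemma~\ref{lem:ind-sequence} the resulting sequence is independent over~$C$; by Monotonicity so is every subsequence. Then I would apply the Erd\H{o}s--Rado theorem to extract, for the given index set~$I$, an order-indiscernible sequence $(\bv_i:i\in I)$ over~$C$ such that the type over~$C$ of every finite increasing subtuple of $(\bv_i)$ is realised by some finite increasing subtuple of $(\av_\alpha)$. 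Each $\bv_i$ then realises~$p$, and independence over~$C$ is inherited: by Invariance every finite increasing subtuple of $(\bv_i)$ is independent over~$C$, and then by Finite Character (together with the equivalence in Lemma~\ref{lem:ind-sequence}) the whole sequence is independent over~$C$; hence it is a Morley sequence over~$C$ in~$p$. If $\kappa$ is not large enough to carry out Erd\H{o}s--Rado inside~$\monster$, one performs the construction in a larger monster and realises the outcome back in~$\monster$: by the strong finite character of $\indmat$, ``being an order-indiscernible, $C$-independent sequence in~$p$ indexed by~$I$'' is a type-definable condition over a set of size $<\kappa$, so $\kappa$-saturation of $\monster$ applies.

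\emph{Refined assertion.} Assume $\bv\indmat_C\dv$. Using Existence, pick $\av^*\models p$ with $\av^*\indmat_C\bv\dv$, and set $q:=\tp(\av^*/C\bv\dv)$, so $q\supseteq p$. Apply the basic assertion over the base $C\bv\dv$ to obtain a Morley sequence $(\av_i:i\in I)$ over $C\bv\dv$ in~$q$: it is order-indiscernible over $C\bv\dv$, independent over $C\bv\dv$, and each $\av_i\models q$ hence $\av_i\models p$; moreover $\av_i\equiv_{C\bv\dv}\av^*$, so Invariance gives $\av_i\indmat_C\bv\dv$. The indiscernibility claim is now immediate: substituting the constant~$\bv$ into the coordinates is a bijection between $C\bv\dv$-formulas in the $\av$-variables and $C\dv$-formulas in the $\bv\av$-variables, so order-indiscernibility of $(\av_i:i\in I)$ over $C\bv\dv$ is the same as order-indiscernibility of $(\bv\av_i:i\in I)$ over $C\dv$. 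For the independence claim $\bv\av_i\indmat_C\dv(\av_j:j\neq i)$, I would first show by Finite Character and induction on finite subsequences that $\bv\indmat_C\dv(\av_j:j\in I)$: the base case is the hypothesis $\bv\indmat_C\dv$, and the step, adding $\av_{j^*}$ with $j^*$ maximal, combines $\bv\indmat_C\dv(\av_j:j\in J_0)$ (induction) with $\av_{j^*}\indmat_{C\bv\dv}(\av_j:j\in J_0)$ and $\av_{j^*}\indmat_C\bv\dv$ via the two chaining principles above. The same two principles, applied to $\av_i\indmat_C\bv\dv$ and $\av_i\indmat_{C\bv\dv}(\av_j:j\neq i)$ (and then Monotonicity to drop~$\bv$), give $\av_i\indmat_C\dv(\av_j:j\neq i)$; in particular, $(\av_i:i\in I)$ is a Morley sequence over~$C$ in~$p$. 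Finally, writing $E:=\dv(\av_j:j\neq i)$, I have $\av_i\indmat_C E$ and, since $E\av_i=\dv(\av_j:j\in I)$, also $E\av_i\indmat_C\bv$ (by what was shown, plus Symmetry); Lemma~\ref{lem:ind-ternary} with its ``$\av,\dv,\bv$'' instantiated as ``$\av_i,E,\bv$'' then yields $E\indmat_C\bv\av_i$, that is, $\bv\av_i\indmat_C\dv(\av_j:j\neq i)$.

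I expect the difficulty to be organisational rather than conceptual: in the basic assertion, arranging the Erd\H{o}s--Rado extraction to produce the prescribed index set while preserving independence, and handling the size of $\lambda$ relative to~$\kappa$; and in the refined assertion, chaining the independence-relation axioms and Lemma~\ref{lem:ind-ternary} so that exactly the triple $\bv\av_i \mathbin{/} C \mathbin{/} \dv(\av_j:j\neq i)$ comes out, which forces one to keep careful track of which sets contain the base at each application of Base Monotonicity.
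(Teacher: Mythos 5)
Your proof is correct and follows essentially the same route as the paper: for the basic assertion, the paper (via its citation of Adler's Lemma 1.12 and its alternative Ramsey-based argument) also builds an independent sequence of realisations of $p$ using the Existence axiom and then extracts an order-indiscernible one; for the refined assertion, the paper likewise passes to the type $q$ of a generic realisation over $C\bv\dv$, takes a Morley sequence there, and invokes Lemma~\ref{lem:ind-ternary}. The only differences are presentational: the paper expresses $\Gamma_1,\Gamma_2$ directly as consistent type-sets (using Remark~\ref{rem:forking-closed}) and realises them by $\kappa$-saturation, avoiding the explicit long-sequence/Erd\H{o}s--Rado size bookkeeping you address at the end, and it compresses into ``By Lemma~\ref{lem:ind-ternary}'' the chaining of independence axioms that you spell out carefully.
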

\begin{proof}
Let $(\x_i: i \in I)$ be a sequence of $n$-tuples of variables.
Consider the following set of $C$-formulae:
\[
\Gamma_1(\x_i: i \in I) := \bigwedge_{i \in I} p(\x_i) \et
\bigwedge_{i \in I} \x_i \indmat_C (\x_j: j < i).
\]
First, notice that, by Remark~\ref{rem:forking-closed}
$\Gamma_1$ \emph{is} a set of formulae.

Consider the following set of $C$-formulae:
\begin{multline*}
\Gamma_2(\x_i: i \in I) := \Gamma_1(\x_i: i \in I) \et \\
(\x_i: i \in I) \text{ is an order-indiscernible sequence of  over } C.
\end{multline*}

By \cite[1.12]{adler}, $\Gamma_2$ is consistent.

We give an alternative proof of the above fact, which does not use 
Erd\"os-Rado.
\begin{claim}
$\Gamma_1$ is consistent.
\end{claim}
It is enough to prove that $\Gamma_1$ is finitely satisfiable; hence, \wloG
$I = \set{0, \dotsc, m}$ is finite.
Let $\av_0$ be any realisation of~$p(\x)$.
Let $\av_1 \elem_C \av_0$ such that $\av_i \indmat_C \av_0$, \dots,
let $\av_m \elem_C \av_0$ such that $\av_m \indmat_C \av_0 \dots \av_{m - 1}$.

By Ramsey's Theorem, $\Gamma_2$ is also consistent.

Since $\card I < \kappa$, there exists a realisation $(\av_i: i \in I)$ 
of~$\Gamma_2$.
Then, by Lemma~\ref{lem:ind-sequence}
$(\av_i: i \in I)$ is a Morley sequence in~$p(\x)$ over~$C$.

If moreover $\bv$ and $\dv$ satisfy $\bv \indmat_C \dv$, let
$q(\x, \y, \z)$ be the extension of $p(\x)$ to $S^*(C \bv \dv)$ satisfying
$\y = \bv$ and $\z = \dv$.
Let $(\av_i \bv \dv: i \in I)$ be a Morley sequence in $q(\x, \y, \z)$.
By Lemma~\ref{lem:ind-ternary}, for every $i \in I$ we have
$\bv \av_i \indmat_C \dv (\av_j: i \neq j \in I)$.
\end{proof}

\begin{lemma}
A type $p \in S_n(A)$ is \intro{stationary} if, for every $B \supseteq A$,
there exists a unique $q \in S_n(B)$ such that $p \sqsubseteq q$.
\end{lemma}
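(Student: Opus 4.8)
The statement introduces the notion of a \emph{stationary} type, so in place of a genuine ``proof'' I would (i) record why the notion is well-posed and non-vacuous, and (ii) sketch the basic facts about stationary types that one actually uses. For (i): existence of a candidate non-forking extension is automatic --- by the remark preceding Lemma~\ref{lem:ind-heir}, for every $p \in S_n(A)$ and every $B \supseteq A$ there is some $q \in S_n(B)$ extending $p$ with $\dimat(q) = \dimat(p)$, \ie with $p \sqsubseteq q$ --- so the content of ``$p$ is stationary'' is exactly the \emph{uniqueness} of such an extension for every $B \supseteq A$. (If the statement is instead meant as an equivalence, with stationarity defined elsewhere, then one direction is immediate and the other is the transport argument described next.)

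For (ii), the first basic fact I would prove is that uniqueness need only be tested against the monster: $p \in S_n(A)$ is stationary iff there is a unique $q \in S_n(\monster)$ with $p \sqsubseteq q$. The forward direction is trivial. For the converse, suppose $q_0 \neq q_1 \in S_n(B)$ with $p \sqsubseteq q_i$ for some small $B \supseteq A$; pick $\bv_i \models q_i$, so $\bv_i \indmat_A B$ because $q_i$ does not fork over $A$, and apply the Extension axiom for $\indmat$ (available since $\mat$ is existential) to get $\bv_i' \equiv_{AB} \bv_i$ with $\bv_i' \indmat_A \monster$. Then $\tilde q_i := \tp(\bv_i'/\monster)$ extends $q_i$, satisfies $p \sqsubseteq \tilde q_i$ by the definition of $\indmat$, and $\tilde q_0 \neq \tilde q_1$ since they already differ on $B$, contradicting uniqueness over $\monster$. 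The second basic fact is that a Morley sequence over $A$ in a stationary $p$ is fully (i.e.\ order-independently) indiscernible over $A$, and any two such sequences have the same type over $A$: along an enumeration $(\bv_i)$ with each $\bv_i \indmat_A (\bv_j : j < i)$, stationarity pins down $\tp(\bv_i / A(\bv_j : j < i))$ as \emph{the} non-forking extension of $p$, so the type of the whole sequence is determined; symmetry of $\indmat$ together with stationarity then gives order-independence, and Lemmas~\ref{lem:ind-ternary} and~\ref{lem:ind-sequence} handle the independence bookkeeping along the order.

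The main obstacle, and where the real care is needed, is that $\indmat$ here --- although it \emph{is} an independence relation, since $\mat$ is existential --- need not arise from a stable or even simple theory, so the usual package around stationarity (unique extension already over $\acl^{\mathrm{eq}}(A)$, definability of the non-forking extension, stationarity of every type over a model) cannot be imported wholesale. The two facts above survive because they use only Invariance, Extension, Symmetry, and the rank calculus of Lemmas~\ref{lem:Lascar} and~\ref{lem:cl-function}; but any consequence that genuinely depends on stability --- ``heir $=$ coheir'', finite-equivalence-relation arguments, canonical bases --- would require a separate and stronger hypothesis, and I would flag precisely where each such hypothesis enters rather than assert it silently.
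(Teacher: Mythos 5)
You correctly recognize that the statement is a definition rather than a result: the \textbf{stationary} boldface is the paper's convention for introducing new terminology, and the paper supplies no proof, so the \texttt{lemma} environment here is surely a slip for a definition environment. Given that, your response is exactly the right kind of thing to say. You verify non-vacuity (existence of non-forking extensions is the content of the remark preceding Lemma~\ref{lem:ind-heir}, so the definition constrains only uniqueness), and you correctly trace which facts about stationary types actually survive in this generality. The transport-to-global-types step via the Extension axiom is sound, modulo the usual bookkeeping caveat that ``$\bv_i' \indmat_A \monster$'' should be read as a statement about all small subsets of $\monster$ (or, equivalently, invoke strong finite character and compactness to produce a global non-forking extension), since the independence axioms are stated for sets of cardinality less than $\kappa$. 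Your observation that Morley sequences in a stationary type are totally indiscernible with uniquely determined type over the base is precisely the content of Lemma~\ref{lem:indipendent-morley-seq}, which the paper proves directly afterwards. Your closing caveat is also well placed: the paper itself stresses, immediately after this definition and again after that lemma, that when $\mat \neq \acl$ there are types over models which are not stationary, so the stable-theoretic package around stationarity genuinely does not carry over.
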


\begin{remark}
Let $p \in S_n(A)$.
If $\dimat(p) = 0$, then $p$ is stationary iff $p$ is realised in $\dcl(A)$.
\end{remark}

Hence, unlike the stable case, if $\mat \neq \acl$, then there are types over
models which are not stationary.

\begin{lemma}\label{lem:indipendent-morley-seq}
Let $C \supseteq B$, and $q \in S_n(C)$ such that $q \indmat_B C$.
Let $(\av_i: i \in I)$ be a sequence of realisations of $q$ independent
over~$C$. 
Then, $(\av_i: i \in I)$ is also independent over~$B$.
If moreover $q$ is stationary, then
\begin{enumerate}
\item $(\av_i: i \in I)$ is a totally indiscernible set over~$C$,
and in particular it is a Morley sequence for $q$ over~$B$.
\item
If $(\av': i \in I)$ is another sequence of realisations of $q$
independent over~$C$, then $(\av_i: i \in I) \elem_C (\av'_i: i \in I)$.
\end{enumerate}
\end{lemma}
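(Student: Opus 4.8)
The plan is to prove the three assertions in turn, using throughout: the characterisation $X\indmat_Y Z \iff \rkmat(X/YZ)=\rkmat(X/Y)$ proved above, Lemma~\ref{lem:ind-sequence}, the remark that $q\indmat_B C$ is equivalent to $\av\indmat_B C$ for $\av\models q$, and—for the stationary part—the fact that $\dimat$, hence the non-forking relation $\sqsubseteq$, is invariant under $\aut(\monster)$.

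\textbf{Independence over $B$.} Fix $i\in I$ and $\av_i\models q$. From $q\indmat_B C$ and $B\subseteq C$ we get $\av_i\indmat_B C$, i.e.\ $\rkmat(\av_i/C)=\rkmat(\av_i/B)$; from independence of the sequence over $C$ together with Lemma~\ref{lem:ind-sequence} we get $\av_i\indmat_C(\av_j:j<i)$, i.e.\ $\rkmat(\av_i/C(\av_j:j<i))=\rkmat(\av_i/C)$. Since $\rkmat$ is monotone in the parameter set and $B\subseteq C$,
\[
\rkmat(\av_i/B)\ \geq\ \rkmat(\av_i/B(\av_j:j<i))\ \geq\ \rkmat(\av_i/C(\av_j:j<i))\ =\ \rkmat(\av_i/C)\ =\ \rkmat(\av_i/B),
\]
so all the terms are equal and $\av_i\indmat_B(\av_j:j<i)$. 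As $i$ was arbitrary, Lemma~\ref{lem:ind-sequence} shows $(\av_i:i\in I)$ is independent over $B$. (Alternatively one can argue purely from the pre-independence axioms: $\av_i\indmat_C(\av_j:j<i)$ gives, by Normality and Symmetry, $C(\av_j:j<i)\indmat_C\av_i$, which together with $C\indmat_B\av_i$ yields $C(\av_j:j<i)\indmat_B\av_i$ by Transitivity, and Monotonicity finishes.)

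\textbf{Total indiscernibility over $C$, assuming $q$ stationary.} If $\dimat(q)=0$ then, $q$ being stationary, $q$ has a unique realisation (lying in $\dcl(C)$), the sequence is constant, and there is nothing to prove; so assume the $\av_i$ are pairwise distinct. I claim, by induction on $k$, that any two tuples $\av_{i_1}\cdots\av_{i_k}$ and $\av_{j_1}\cdots\av_{j_k}$ built from distinct indices satisfy $\av_{i_1}\cdots\av_{i_k}\elem_C\av_{j_1}\cdots\av_{j_k}$; this is precisely total indiscernibility over $C$. The case $k=1$ holds since each $\av_i\models q$. For the inductive step pick $\sigma\in\aut(\monster/C)$ with $\sigma(\av_{i_l})=\av_{j_l}$ for $l<k$. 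By Lemma~\ref{lem:ind-sequence} and Monotonicity of $\indmat$ we have $\av_{i_k}\indmat_C\{\av_{i_1},\dots,\av_{i_{k-1}}\}$, so $\tp(\av_{i_k}/C\av_{i_1}\cdots\av_{i_{k-1}})$ is a non-forking extension of $q$; likewise $\tp(\av_{j_k}/C\av_{j_1}\cdots\av_{j_{k-1}})$ is one. By stationarity each is the \emph{unique} non-forking extension over its base, and $\sigma$ carries the former to the latter (invariance of $\dimat$). Hence $\sigma(\av_{i_k})$ and $\av_{j_k}$ have the same type over $C\av_{j_1}\cdots\av_{j_{k-1}}$, so $\av_{i_1}\cdots\av_{i_k}\elem_C\sigma(\av_{i_1})\cdots\sigma(\av_{i_k})\elem_C\av_{j_1}\cdots\av_{j_k}$. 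In particular the sequence is order-indiscernible over $C$; combined with independence over $B$ and the fact that each $\av_i\models q$, it is a Morley sequence for $q$ over $B$.

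\textbf{Uniqueness (part (2)).} Given another sequence $(\av'_i:i\in I)$ of realisations of $q$ independent over $C$, I run the same induction but now comparing $\av_{i_1}\cdots\av_{i_k}$ with $\av'_{i_1}\cdots\av'_{i_k}$: at step $k$ choose $\sigma\in\aut(\monster/C)$ matching the first $k-1$ terms, note that both $\tp(\av_{i_k}/C\av_{i_1}\cdots\av_{i_{k-1}})$ and $\tp(\av'_{i_k}/C\av'_{i_1}\cdots\av'_{i_{k-1}})$ are the unique non-forking extension of $q$ over their respective bases, and transport along $\sigma$. Passing from finite tuples to the whole sequence gives $(\av_i:i\in I)\elem_C(\av'_i:i\in I)$. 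Everything here is routine; the only point that needs care is the bookkeeping in these two inductions—checking at each stage that the relevant type genuinely is a non-forking extension of $q$ (exactly where independence over $C$ and Monotonicity of $\indmat$ enter) and that stationarity together with $\aut(\monster)$-invariance of $\dimat$ determines it uniquely—and I do not expect any real obstacle.
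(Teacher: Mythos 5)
Your proof is correct, and it takes essentially the same approach as the paper's. The only organizational difference is that you prove total indiscernibility directly by induction (matching a prefix via an automorphism, then invoking stationarity and invariance of $\dimat$ for the last coordinate) and then repeat the argument for part (2), whereas the paper proves part (2) first by the same finite-induction-plus-stationarity idea (stated as a proof by contradiction) and then derives part (1) by observing that any permutation of the sequence is again a $C$-independent sequence of realisations of $q$; the substance is identical.
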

\begin{proof}
Standard proof.
More precisely, for every  $i \in I$, let $\dv_i := (a_j: i \neq j \in I)$.
By assumption, $\av_i \indmat_C \dv_i$, and, since $q \indmat_B C$,
$\av_i \indmat_B C$, and therefore $\av_i \indmat_B \dv_i$, proving that 
$(\av_i: i \in I)$ is independent over~$B$.

Let us prove Statement~(2).
By compactness, \wloG $I = \set{1, \dotsc, m}$ is finite.
Assume, for contradiction, that $(\av: i \leq m) \not\elem_C (\av': i \leq m)$;
by induction on~$m$, we can assume 
that $(\av_i: i \leq m-1) \elem_C (\av'_i: i \leq m-1)$, and therefore, \wloG,
that $\av_i = \av'_i$ for $i= 1, \dotsc, m - 1$.
However, since $q$ is stationary, $\av_m \elem_C \av'_m$,
$\av_m \indmat_C (\av_i: i \leq m -1)$,  and 
$\av'_m \indmat_C (\av_i: i \leq m -1)$,  
we have that 
$\av_m \equiv_{C (\av_i: i \leq   m-1)} \av'_m$, absurd.

Finally, it remains to prove that the set $(\av_i: i \in I)$ is
totally indiscernible over~$C$.
If $\sigma$ is any permutation of~$I$, then
$(\av_{\sigma(i)}: i \in I)$ is also a sequence of realisations of $q$
independent over~$C$, and therefore, by Statement~(2),
$(\av_{\sigma(i)}: i \in I) \elem_C (\av_i: i \in I)$.
\end{proof}


\begin{corollary}
Assume that there is a definable linear ordering on~$\monster$.
Then,  $p \in S_n(A)$ is stationary iff $p$ is realised in $\dcl(A)$.
Hence, if $\dimat(p) > 0$, every non-forking extension of $p$ is not stationary.
\end{corollary}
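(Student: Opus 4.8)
The plan is to reduce to the remark that, when $\dimat(p) = 0$, the type $p$ is stationary iff it is realised in $\dcl(A)$. First I would record two easy observations. If $p$ is realised in $\dcl(A)$, then any realisation $\bv$ lies in $\dcl(A) \subseteq \acl(A) \subseteq \mat(A)$ (the last inclusion because $\mat$, being existential, satisfies Existence), so $\rkmat(\bv/A) = 0$ and hence $\dimat(p) = 0$; in particular, conversely, $\dimat(p) > 0$ forces $p$ \emph{not} to be realised in $\dcl(A)$. Combining the first observation with the $\dimat(p) = 0$ remark immediately gives ``$p$ realised in $\dcl(A)$ $\Rightarrow$ $p$ stationary''. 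For the reverse implication it suffices to show that $\dimat(p) > 0$ implies $p$ is not stationary (the case $\dimat(p) = 0$ being, once more, the remark).

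So assume $\dimat(p) > 0$; the goal is a set $B \supseteq A$ carrying two distinct non-forking extensions of $p$. Fix $\bv \models p$ and, using $\rkmat(\bv/A) > 0$, a coordinate $j$ with $b_j \notin \mat(A)$. By Lemma~\ref{lem:Morley-seq} take a Morley sequence $\bv_0, \bv_1, \bv_2$ over $A$ in $p$: order-indiscernible over $A$, independent over $A$, each $\bv_i \models p$. Set $B := A\bv_1$ and $q_0 := \tp(\bv_0/B)$, $q_2 := \tp(\bv_2/B)$. Both restrict to $p$ over $A$, and both are non-forking over $A$: independence of the sequence together with monotonicity yields $\bv_0 \indmat_A \bv_1$ and $\bv_2 \indmat_A \bv_1$, so $\rkmat(\bv_0/B) = \rkmat(\bv_0/A) = \dimat(p)$ and likewise for $\bv_2$, i.e. $\dimat(q_0) = \dimat(q_2) = \dimat(p)$. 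The point is that $q_0 \neq q_2$. Since $\bv_i \equiv_A \bv$ we have $b_{i,j} \notin \mat(A)$ for each $i$; combined with independence of the sequence this gives $b_{i,j} \notin \mat(A\bv_k)$ whenever $i \neq k$, so $b_{0,j}, b_{1,j}, b_{2,j}$ are pairwise distinct. Projecting the $A$-indiscernible sequence to coordinate $j$, the triple $(b_{0,j}, b_{1,j}, b_{2,j})$ is order-indiscernible over $A$ and has distinct entries, hence is monotone for the linear order; in particular $b_{1,j}$ lies strictly between $b_{0,j}$ and $b_{2,j}$. Thus the formula $x_j < b_{1,j}$ — whose only parameter lies in $B$ — belongs to exactly one of $q_0$, $q_2$, so $q_0 \neq q_2$ and $p$ is not stationary.

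The ``Hence'' clause follows at once: if $\dimat(p) > 0$ and $q$ is a non-forking extension of $p$ to some $B \supseteq A$, then $\dimat(q) = \dimat(p) > 0$, so by the equivalence just established $q$ is not realised in $\dcl(B)$, hence not stationary. The only subtlety is that the linear order may a priori use parameters $\bar a_0$: this is handled by first applying the Extension axiom for $\indmat$ to replace $\bv$ by a conjugate over $A$ with $\bv \indmat_A \bar a_0$ and then running the argument with $A$ replaced by $A\bar a_0$ and $p$ by $\tp(\bv/A\bar a_0)$ (which has the same dimension), the two resulting extensions being distinct non-forking extensions of $p$ as well. The real work is concentrated in honestly getting $q_0 \neq q_2$, i.e. in the observation that an order-indiscernible triple of distinct elements is monotone; the rest is bookkeeping with Lemma~\ref{lem:Morley-seq}, Lemma~\ref{lem:ind-sequence}, the rank characterisation of $\indmat$, and the independence axioms.
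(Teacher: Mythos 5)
Your proof is correct, but it takes a genuinely different route from the paper's. The paper argues by contradiction: assuming $p$ stationary with $\dimat(p) > 0$, it takes a two-element Morley sequence $\av_0, \av_1$ in $p$ and invokes Lemma~\ref{lem:indipendent-morley-seq} (which uses stationarity) to conclude that $\set{\av_0, \av_1}$ is \emph{totally} indiscernible over $A$; since $\av_0 \neq \av_1$, some coordinates differ, and swapping a totally indiscernible pair contradicts the definable linear order. You never assume stationarity: instead you directly exhibit two distinct non-forking extensions. You take a three-element Morley sequence, prove the $j$-th coordinates are pairwise distinct from $\dimat(p) > 0$ and independence, observe that the distinct order-indiscernible triple $(b_{0,j}, b_{1,j}, b_{2,j})$ must be monotone so $b_{1,j}$ sits strictly between the other two, and then $x_j < b_{1,j}$ separates $\tp(\bv_0/A\bv_1)$ from $\tp(\bv_2/A\bv_1)$. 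This avoids Lemma~\ref{lem:indipendent-morley-seq} and total indiscernibility entirely, using only the basic Morley-sequence machinery (Lemmas \ref{lem:Morley-seq} and \ref{lem:ind-sequence}); the cost is the slightly longer bookkeeping (a triple instead of a pair, explicit rank computations for non-forking). Both proofs ultimately exploit the same fact---a definable linear order is incompatible with (total or order) indiscernibility of distinct elements---but the paper reaches it via the stationarity-gives-total-indiscernibility lemma, while you reach it by a constructive sandwiching argument. You also make explicit the reduction to the case where the linear order's parameters lie in the base, which the paper leaves implicit.
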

Contrast the above situation to the case of stable theories, where instead
every type has at least one stationary non-forking extension.
\begin{proof}
Assume that $p$ is stationary, but, for contradiction, that $\dimat(p) > 0$.
Then, there is a  Morley sequence in $p$ with at least two elements $\av_0$
and $\av_1$.
Since $\dimat(p) > 0$, $\av_0 \neq \av_1$.
By Lemma~\ref{lem:indipendent-morley-seq}, $\av_0$ and $\av_1$ are
indiscernibles over~$A$, absurd.
\end{proof}

\begin{corollary}\label{cor:ind-morley}
Let $B \subseteq C$ and $q \in S_n(C)$.
Then, \tfae:
\begin{enumerate}
\item $q \indmat_B C$;
\item there exists an infinite sequence 
of realisations of $q$ that are independent over~$B$;
\item every sequence $(\av_i: i \in I)$ of realisations of $q$ that are
independent over~$C$ are independent also over~$B$;
\item there exists an infinite Morley sequence in $q$ over~$B$.
\end{enumerate}
\end{corollary}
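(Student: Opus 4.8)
The plan is to close the cycle of implications $(1)\Rightarrow(4)\Rightarrow(2)\Rightarrow(1)$ and $(1)\Rightarrow(3)\Rightarrow(2)$, relying on the machinery already developed. First I would observe that $(3)\Rightarrow(2)$ is trivial: by Lemma~\ref{lem:Morley-seq} applied to $q$ itself (with $C$ as base) there always exists an infinite sequence of realisations of $q$ independent over~$C$, and $(3)$ says any such sequence is independent over~$B$, which is more than enough to produce the witness required by~$(2)$. Likewise $(4)\Rightarrow(2)$ is immediate, since a Morley sequence in~$q$ over~$B$ is in particular an infinite sequence of realisations of~$q$ that is independent over~$B$.

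For $(1)\Rightarrow(4)$ and $(1)\Rightarrow(3)$ I would invoke Lemma~\ref{lem:indipendent-morley-seq}. Assume $q \indmat_B C$. By Lemma~\ref{lem:Morley-seq} there is an infinite sequence $(\av_i : i \in I)$ of realisations of~$q$ independent over~$C$; the first conclusion of Lemma~\ref{lem:indipendent-morley-seq} tells us this sequence is then independent over~$B$, and since an order-indiscernible witness can be extracted (again via Lemma~\ref{lem:Morley-seq}, which produces the Morley sequence directly), we obtain~$(4)$. For~$(3)$: given \emph{any} sequence $(\av_i : i \in I)$ of realisations of~$q$ independent over~$C$, the same first sentence of Lemma~\ref{lem:indipendent-morley-seq} (which requires only $q \indmat_B C$ and independence over~$C$, not stationarity) yields independence over~$B$. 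So $(1)$ gives both.

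The only implication with real content is $(2)\Rightarrow(1)$, and this is where I expect the main obstacle. Suppose, for contradiction, that $q \notindmat_B C$, i.e.\ $\dimat(q) < \dimat(q\rest_B) =: d$. The idea is that an infinite sequence $(\av_i : i \in I)$ of realisations of~$q$ independent over~$B$ must have $\rkmat(\av_0 \dotsm \av_{m-1} / B)$ growing like $m \cdot \dimat(q)$ by additivity of rank (Lemma~\ref{lem:Lascar}), since by definition of $\indmat_B$ over the successive bases the ranks add: $\rkmat(\av_i / B\av_0\dotsm\av_{i-1}) = \rkmat(\av_i/B) = d$ would be forced if the sequence were independent over~$B$ in the sense that each $\av_i$ is independent from the previous ones over~$B$. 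But each $\av_i$ realises~$q$, hence over~$C$ contributes only $\dimat(q) < d$; the point is that independence over the \emph{smaller} set $B$ forces each new tuple to contribute its full $B$-rank~$d$, which is incompatible with all of them realising the $C$-type~$q$ of smaller dimension, once we also use $\rkmat(\av_i / B) \le \rkmat(\av_i / \emptyset) < \infty$ is bounded—more precisely, one derives a contradiction by noting $\rkmat$ of a finite tuple is finite (each coordinate adds at most~$1$, cf.\ the remark $\dimat(\monster^n)=n$), so $m\cdot d$ cannot grow unboundedly unless $d = \dimat(q)$. I would phrase this cleanly: by Lemma~\ref{lem:ind-sequence} rewrite independence over~$B$ as $\av_i \indmat_B (\av_j : j < i)$ for all~$i$; unwinding the definition of $\indmat$ via rank (the characterisation $\rkmat(X/YZ) = \rkmat(X/Y)$) gives $\rkmat(\av_i / B\,\av_{<i}) = \rkmat(\av_i/B) = d'$ where $d' := \dimat(q\rest_B)$; summing by Lemma~\ref{lem:Lascar} yields $\rkmat(\av_0\dotsm\av_{m-1}/B) = m d'$, which exceeds $mn$ for $m$ large if $d' = n$ were not the issue—rather the contradiction is with $\rkmat(\av_i/C) \le \dimat(q)$ and $C \supseteq B$, forcing $d' \le \dimat(q)$, hence $q\indmat_B C$. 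I would check carefully that finitely many realisations suffice (finite character), so compactness is not even needed here, and that the bound on rank of tuples is the one recorded after Definition~\ref{def:dimension}.
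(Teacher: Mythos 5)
The implications $(1)\Rightarrow(3)$, $(1)\Rightarrow(4)$, $(3)\Rightarrow(2)$ and $(4)\Rightarrow(2)$ are fine and essentially match the paper's route (via Lemma~\ref{lem:Morley-seq} and the first assertion of Lemma~\ref{lem:indipendent-morley-seq}, which indeed does not need stationarity). The problem is $(2)\Rightarrow(1)$, which you correctly single out as the only implication with content but whose argument does not close.

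Your count gives $\rkmat(\av_0\dotsm\av_{m-1}/B)=m d'$ with $d':=\dimat(q\rest_B)$, while each $\av_i$ has rank $\dimat(q)<d'$ over $C$; but these two facts are not in tension. The clause ``which exceeds $mn$ for $m$ large'' is false ($d'\leq n$, so $md'\leq mn$), and ``$\rkmat(\av_i/C)\leq\dimat(q)$ and $C\supseteq B$, forcing $d'\leq\dimat(q)$'' has the inequality backwards: transitivity of rank only gives $d'=\rkmat(\av_i/B)\geq\rkmat(\av_i/C)$, the trivial direction. The genuine obstruction is that $C$ may have infinite rank over $B$, so each $\av_i$ can ``use up'' $d'-\dimat(q)$ fresh dimensions of $C$ while the $\av_i$ stay independent over $B$; nothing in your count rules this out. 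What must be used, and is not, is that all the $\av_i$ realise the \emph{same} type $q$ over $C$. A finitary repair in your spirit: by finitariness of $\mat$ there is a single finite $\cv\subset C$ with $\rkmat(\av_0/B\cv)=\dimat(q)$, and since $\rkmat(\av_i/B\cv)$ depends only on $\tp(\av_i/B\cv)$, which is contained in $q$, the same $\cv$ works for every $i$; then additivity yields $md'=\rkmat(\av_0\dotsm\av_{m-1}/B)\leq\rkmat(\cv/B)+m\dimat(q)$, and taking $m>\rkmat(\cv/B)$ gives the contradiction. The paper instead stretches the sequence to a regular length $\lambda$ by saturation, applies Local Character to find $\alpha<\lambda$ with $\av_\alpha\indmat_{B(\av_i:i<\alpha)}C$, and concludes by transitivity. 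Either route works, but as written your step $(2)\Rightarrow(1)$ is not a proof.
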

\begin{proof}
\Cf \cite[1.12--13]{adler}.
\begin{enumerate}
\item[($1 \Rightarrow 3)$]
Let $(\av_i: i \in I)$ be a sequence of realisations of~$q$ independent 
over~$C$.
For every $i \in I$, let $\dv_i := (\av_j: i \neq j \in I)$.
Since $\av_i \ind_C \dv_i$ and $\av_i \ind_B C$, we have $\av_i \ind_B \dv_i$.

\item[($3 \Rightarrow 4$)]
Let $(\av_i: i \in I)$ be an infinite Morley sequence in $q$ over~$C$: such
sequence exists by Lemma~\ref{lem:Morley-seq} (or by \cite[1.12]{adler}).
Then, $(\av_i: i \in I)$ is independent also over $B$, and hence a Morley
sequence for $q$ over~$B$.

\item[($4 \Rightarrow 2$)] 
is obvious.

\item[($2 \Rightarrow 1$)]
Choose $\lambda < \kappa$ a regular cardinal large enough.
Let $(\av'_i: i < \omega)$ be a sequence of realisations of~$q$ independent 
over~$C$.
By saturation, there exists $(\av_i: i < \lambda)$ a sequence of realisations
of~$q$ independent over~$C$.
By Local Character, and since $\lambda$ is regular,
there exists $\alpha < \lambda$ such that
$\av_\alpha \indmat_{B \dv} C$, where $\dv := (\av_i: i < \alpha)$.
Since moreover $\av_\alpha \indmat_B \dv$, we have $\av_\alpha \indmat_B C$,
and therefore $q \indmat_B C$.
\qedhere
\end{enumerate}
\end{proof}

\subsection{Local properties of dimension}
In this subsection, we will show that the dimension of a set can be checked
locally: what this means precisely will be clear in \S\ref{sec:dmin}, where
the results given here will be applied to a ``concrete'' situation.

\begin{definizione}
A quasi-ordered set $\pair{I, \leq}$ is a \intro{directed set} if every
pairs of elements of $I$ has an upper bound.
\end{definizione}

\begin{lemma}\label{lem:order-ind}
Let $\pair{I, \leq}$ be a directed set, definable in $\monster$ with
parameters~$\cv$.
Then, for every $\av \in I$ and $\dv \subset \monster$ there exists $\bv \in I$
such that $\bv \geq \av$  and $\dv \av \indmat_{\cv} \bv$.
\end{lemma}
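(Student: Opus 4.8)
The plan is to use the Local Character axiom for $\indmat$ (Lemma~\ref{lem:cl-definable}(4)) together with the directedness of $I$ and an automorphism/compactness argument. Fix $\av \in I$ and $\dv \subset \monster$. We want to find $\bv \in I$ with $\bv \geq \av$ and $\dv\av \indmat_{\cv} \bv$. First I would pick, by saturation, a long increasing chain: choose a regular cardinal $\lambda < \kappa$ with $\lambda > \card T + \card{\dv\av\cv}$, and build an increasing sequence $(\bv_\xi : \xi < \lambda)$ in $I$ with $\bv_0 = \av$ and $\bv_\xi \geq \bv_\eta$ for $\eta < \xi$ (possible since $I$ is directed and $\lambda < \kappa$, using that $I$ is $\cv$-definable in the saturated $\monster$; at limits use directedness applied to a cofinal bounded piece, or just the $\kappa$-saturation to realize the relevant type). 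Then apply Local Character to $A := \dv\av$ and $B := \{\bv_\xi : \xi < \lambda\}$: there is a subset $C_0 \subseteq B$ with $\card{C_0} \leq \card T + \card{\dv\av} < \lambda$ and $\dv\av \indmat_{C_0} B$. Since $\lambda$ is regular and $C_0$ is small, there is $\alpha < \lambda$ with $C_0 \subseteq \{\bv_\xi : \xi < \alpha\}$.

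Now the key point: by Base Monotonicity and Monotonicity for the pre-independence relation $\indmat$, from $\dv\av \indmat_{C_0} B$ and $C_0 \subseteq \{\bv_\xi:\xi<\alpha\} \subseteq B$ we get $\dv\av \indmat_{\{\bv_\xi:\xi<\alpha\}} \bv_\alpha$, hence in particular $\dv\av\cv \indmat_{\{\bv_\xi:\xi<\alpha\}\cv} \bv_\alpha$ after adjoining $\cv$ via Normality and Monotonicity — but this has the wrong base. To fix the base, I would instead use that $\bv_\alpha$ is ``generic'' in $I$ over $\cv$ in the following sense, exploiting homogeneity of $\monster$: the chain $(\bv_\xi)$ can be taken to be an indiscernible-like or at least sufficiently homogeneous sequence over $\cv$ so that $\tp(\bv_\alpha / \cv\,\{\bv_\xi:\xi<\alpha\})$ is finitely satisfied in $\cv$, i.e. a coheir. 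Then by Lemma~\ref{lem:cl-definable}(3), $\bv_\alpha \indmat_{\cv} \{\bv_\xi:\xi<\alpha\}\,\dv\av$; by symmetry $\dv\av\,\{\bv_\xi:\xi<\alpha\} \indmat_{\cv} \bv_\alpha$, so by Monotonicity $\dv\av \indmat_{\cv} \bv_\alpha$, and we set $\bv := \bv_\alpha$; note $\bv_\alpha \geq \bv_0 = \av$.

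Alternatively, and perhaps more cleanly, I would realize a coheir directly: let $p(y) \in S(\cv)$ be the $\cv$-definable partial type saying ``$y \in I$ and $y \geq \av$'' (consistent since $\av \in I$ and $I$ is directed, so $\av \geq \av$), and let $q \in S(\cv\dv\av)$ be a coheir of $p$ over $\cv$ — that is, an extension of $p$ finitely satisfied in $\cv$; such $q$ exists by a standard compactness argument since $p$ is over the small set $\cv$. Wait: a coheir must be finitely satisfied in the base $\cv$, but we need $y \geq \av$ and $\av$ need not be in $\dcl(\cv)$; so instead take the base to be $\cv\av$ and let $q \in S(\cv\dv\av)$ be a coheir over $\cv\av$ of the type ``$y \in I \wedge y \geq \av$''. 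Let $\bv$ realize $q$. By Lemma~\ref{lem:cl-definable}(3) applied with $B := \cv\av$ and $C := \dv$, we get $\bv \indmat_{\cv\av} \dv$, hence $\bv \indmat_{\cv\av}\dv\av$, hence by Additivity-of-rank-type reasoning (Lemma~\ref{lem:Lascar}) or directly by base monotonicity/symmetry one deduces $\dv\av \indmat_{\cv} \bv$ — here one uses $\rkmat(\bv/\cv\av) = \rkmat(\bv/\cv\av\dv)$ together with $\rkmat(\av/\cv) \geq \rkmat(\av/\cv\bv)$ and Lemma~\ref{lem:Lascar} to conclude $\rkmat(\dv\av/\cv\bv) = \rkmat(\dv\av/\cv)$ is not literally what's needed; rather, symmetry of $\indmat$ gives $\bv \indmat_{\cv\av}\dv$ $\Rightarrow$ $\dv \indmat_{\cv\av}\bv$, and since trivially $\av \indmat_{\cv}\bv$ fails in general — so the honest route is: $\bv\indmat_{\cv\av}\dv\av$ plus $\bv\indmat_{\cv}\av$ (which follows because the coheir type over $\cv\av$ restricted to $\cv$ is still a coheir, as $\av$ witnesses nothing new; more carefully, finite satisfiability in $\cv$ of $\tp(\bv/\cv)$ is automatic since the defining formulas of $q$ use only $\cv$-parameters together with $\av$, but $\av\in I(\cv$-definable$)$...). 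The cleanest formulation: take $q$ a coheir over $\cv$ of the type $\{y \in I\} \cup \{y \geq \av'\}$ is not over $\cv$; so ultimately the first (chain) argument is the safe one, and I would present that.

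\textbf{Main obstacle.} The subtlety is entirely in managing the base of the independence relation: Local Character gives independence over a small subset $C_0$ of the chain, not over $\cv$, and one must upgrade to independence over $\cv$. The device that makes this work is to arrange the increasing chain $(\bv_\xi : \xi < \lambda)$ in $I$ so that each $\bv_\xi$ is a coheir over $\cv$ of $\tp(\bv_\xi/\cv)$ with respect to the earlier terms (equivalently, $\tp(\bv_\alpha/\cv\{\bv_\xi:\xi<\alpha\})$ is finitely satisfied in $\cv$), which is possible because ``$y\in I$'' is $\cv$-definable and $I$ is directed so the relevant partial types are consistent; then Lemma~\ref{lem:cl-definable}(3) and symmetry of $\indmat$ do the rest. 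I expect verifying that such a chain exists inside the $\kappa$-saturated $\monster$ (handling limit stages via directedness) to be the one genuinely non-routine bookkeeping step.
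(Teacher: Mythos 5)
Your diagnosis of the obstacle is correct --- everything hinges on getting the base of the independence down to $\cv$ --- but the device you propose for overcoming it does not work, so there is a genuine gap. Finite satisfiability of $\tp(\bv_\alpha/\cv\,\set{\bv_\xi:\xi<\alpha})$ in $\cv$ is impossible in general: $\cv$ is a finite tuple, and that type contains the formula ``$y\in I\wedge y\ge \bv_0$'', which has no realisation among tuples from $\cv$ unless some coordinates of $\cv$ happen to form an element of $I$ lying above $\bv_0$; this fails even if $\cv$ is replaced by a small model (take $I=\monster$ o-minimal and $\av$ above the model). Your fallback, a coheir over $\cv\av$, only yields $\bv\indmat_{\cv\av}\dv$, and as you yourself observe you would still need $\av\indmat_{\cv}\bv$ --- but that is precisely the nontrivial content of the lemma (that $\bv$ can be chosen above $\av$ and yet independent of $\av$ over $\cv$), so nothing has been proved. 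Note also that with $\bv_0=\av$ in the chain, Local Character forces $\av\subset\mat(C_0)$ (since $\dv\av\indmat_{C_0}\av$ entails it), so the statement $\dv\av\indmat_{C_0}\bv_\alpha$ you extract is typically vacuous.

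The paper's proof runs the argument the other way around: one $\bv$ and many copies of $\av$, rather than one $\av$ and many $\bv$'s. Reducing to $\cv=\emptyset$, it produces via Lemma~\ref{lem:Morley-seq} a long Morley sequence $(\dv'_i\av'_i: i<\lambda)$ of independent conjugates of $\dv\av$, uses directedness and saturation to find a single $\bv\in I$ with $\bv\ge\av'_i$ for all $i$, and then applies Erd\H{o}s--Rado to extract an infinite subsequence whose terms all realise one type $q$ over $\bv$ while remaining independent over $\emptyset$; Corollary~\ref{cor:ind-morley} then gives $q\indmat\bv$, hence $\dv'_{i_0}\av'_{i_0}\indmat\bv$ with $\bv\ge\av'_{i_0}$, and an automorphism carries this back to $\dv\av$. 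Some such transfer through conjugates is unavoidable: for instance, $I_{\ge\av}$ may be entirely contained in $\mat(\cv\av)$, in which case the required $\bv$ must lie in $\mat(\cv)$, and no choice along a chain built above the fixed $\av$, combined with Local Character, can detect it.
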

\begin{proof}
Fix $\av \in I$ and $\dv \subset \monster$, and assume, for contradiction,
that every $\bv \geq \av$ satisfies $\dv \av \notindmat_{\cv} \bv$.

\Wlog, $\cv = \emptyset$.
Let $\lambda$ be a large enough cardinal; at the price of increasing $\kappa$
if necessary, we may assume that $\lambda < \kappa$.
By Lemma~\ref{lem:Morley-seq},
there exists a Morley sequence $(\dv'\av'_i: i < \lambda)$
in $\tp(\dv\av/ \emptyset)$ over~$\emptyset$.
Consider the following set of formulae over 
$\set{\av'_i: i < \lambda}$:
\[
\Lambda(\x) := \set{\x \in I, \x \geq \av'_i : i < \lambda}.
\]
Since $\pair{I, \leq}$ is a directed set, $\Lambda$ is consistent: let 
$\bv \in I$ be a realisation of~$\Lambda$.
By Erd\"os-Rado's Theorem, there exists a Morley sequence  
$(\dv_i \av_i: i < \omega)$ in $\tp(\dv\av/ \emptyset)$ over~$\emptyset$,
such that all the $\dv_i \av_i$ satisfy the same type $q(\x, \y)$ over~$\bv$.
Therefore, by Corollary~\ref{cor:ind-morley}, $q \indmat \bv$, and in
particular $\av_0 \indmat \bv$.
Since $\av_0 \equiv \av$, there exists $\bv' \geq \av$ such that
$\av \elem \bv'$, a contradiction.
\end{proof}


\begin{lemma}\label{lem:neighbourhood-abstract}
Let $X \subseteq \monster^n$ be definable with parameters~$\cv$
and $\Pa{U_{\bar t}}_{\bar t \in I}$ be a family of subsets of $\monster^n$,
such that each $U_{\bar t}$ is definable with parameters $\bar t \cv$.
Let $d \leq n$, and assume that, for every $\av \in X$ there exists $\bv \in I$
such that $\av \in U_{\bv}$, $\av \mind_{\cv} \bv$, 
and $\dimat(X \cap U_{\bv}) \leq d$.
Then, $\dimat(X) \leq d$.
\end{lemma}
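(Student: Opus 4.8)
The plan is to prove the contrapositive-free statement directly: assuming $\dimat(X) > d$, we produce a point $\av \in X$ witnessing this which, by hypothesis, sits in some $U_{\bv}$ with $\av \mind_{\cv} \bv$ and $\dimat(X \cap U_{\bv}) \le d$, and then derive a contradiction from the additivity of rank. First I would pick $\av \in X$ with $\rkmat(\av / \cv) = \dimat(X) > d$; such an $\av$ exists by definition of $\dimat$. Apply the hypothesis to this $\av$ to obtain $\bv \in I$ with $\av \in X \cap U_{\bv}$, $\av \mind_{\cv} \bv$, and $\dimat(X \cap U_{\bv}) \le d$.

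The key computation is then: on the one hand, since $X \cap U_{\bv}$ is definable with parameters $\bv \cv$ and $\av \in X \cap U_{\bv}$, we get $\rkmat(\av / \bv \cv) \le \dimat(X \cap U_{\bv}) \le d$. On the other hand, $\av \mind_{\cv} \bv$ means (by the characterisation of $\indmat$ via rank, or directly from the remark that $a \indmat_X a$ iff $a \in \mat(X)$ together with Lemma~\ref{lem:cl-equality}-style reasoning — more precisely by clause (6) of the lemma characterising $\indmat$) that $\rkmat(\av / \bv \cv) = \rkmat(\av / \cv)$. Combining, $\dimat(X) = \rkmat(\av / \cv) = \rkmat(\av / \bv \cv) \le d$, contradicting $\dimat(X) > d$.

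The only subtlety — and the main thing to get right rather than a real obstacle — is making sure the independence hypothesis $\av \mind_{\cv} \bv$ really gives $\rkmat(\av / \bv\cv) = \rkmat(\av / \cv)$. This is exactly the equivalence ``$X \indmat_Y Z$ iff $\rkmat(X / YZ) = \rkmat(X/Y)$'' recorded in the lemma listing equivalent formulations of $\indmat$ (clause (6) there); one applies it with $X = \av$, $Y = \cv$, $Z = \bv$, using symmetry of $\indmat$ if the roles of the last two arguments need swapping. Once that is in place the proof is a two-line application of Lemma~\ref{lem:Lascar} (additivity of rank) or, even more directly, just monotonicity of rank in the base. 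I would write it as: \emph{Suppose not, and let $\av \in X$ with $\rkmat(\av/\cv) > d$. Take $\bv$ as in the hypothesis. Then $\rkmat(\av/\bv\cv) \le \dimat(X \cap U_{\bv}) \le d$, while $\av \mind_{\cv}\bv$ gives $\rkmat(\av/\bv\cv) = \rkmat(\av/\cv) > d$, absurd.}
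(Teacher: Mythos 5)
Your proof is correct and follows exactly the same route as the paper's: assume $\dimat(X) > d$, pick $\av\in X$ with $\rkmat(\av/\cv) > d$, apply the hypothesis to get $\bv$, and observe that $\av\indmat_{\cv}\bv$ forces $\rkmat(\av/\bv\cv) = \rkmat(\av/\cv) > d$, contradicting $\dimat(X\cap U_{\bv})\le d$. Your extra care in citing clause~(6) of the lemma characterising $\indmat$ to justify $\rkmat(\av/\bv\cv)=\rkmat(\av/\cv)$ is a harmless elaboration of the step the paper leaves implicit.
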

\begin{proof}
Assume, for contradiction, that $\dimat(X) > d$; let $\av \in X$ such
that $\rkmat(\av/\cv) > d$.
Choose $\bv$ as in the hypothesis of the lemma; then, 
$\rkmat(\av / \bv\cv) > d$, absurd.
\end{proof}

\begin{lemma}\label{lem:cl-U}
Let $I \subseteq \monster^n$ be definable  and $<$ be a definable linear
ordering on~$I$.
Let $\Pa{X_{\bv}}_{\bv \in I}$ be a definable increasing family of subsets
of~$\K^m$ and $X := \bigcup_{\bv \in I} X_{\bv}$.
Let $d \leq m$, and assume that, for every $\bv \in I$, 
$\dimat(X_{\bv}) \leq d$.
Then, $\dimat(X) \leq d$.
\end{lemma}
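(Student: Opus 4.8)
The statement to prove is Lemma~\ref{lem:cl-U}: given a definable linear order $<$ on $I \subseteq \monster^n$, a definable increasing family $(X_{\bv})_{\bv \in I}$ of subsets of $\K^m$ with $\dimat(X_{\bv}) \le d$ for each $\bv$, and $X := \bigcup_{\bv \in I} X_{\bv}$, we must show $\dimat(X) \le d$. The natural approach is to reduce to Lemma~\ref{lem:neighbourhood-abstract}: that lemma says that if every point $\av \in X$ lies in some $U_{\bv}$ with $\av \mind_{\cv} \bv$ and $\dimat(X \cap U_{\bv}) \le d$, then $\dimat(X) \le d$. Here the role of the neighbourhoods $U_{\bv}$ will be played by the sets $X_{\bv}$ themselves (so that $X \cap U_{\bv} = X_{\bv}$, which has dimension $\le d$ by hypothesis), and the family $I$ with its linear order is already directed.

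The plan is as follows. First, let $\cv$ be the parameters over which $I$, the order $<$, and the family $(X_{\bv})_{\bv\in I}$ are all defined. Fix an arbitrary $\av \in X$; then $\av \in X_{\bv_0}$ for some $\bv_0 \in I$. The point is that I cannot directly apply Lemma~\ref{lem:neighbourhood-abstract} with $\bv_0$, because I have no control over $\av \mind_{\cv} \bv_0$. Instead I invoke Lemma~\ref{lem:order-ind}: since $\pair{I,\le}$ is a directed set (a linear order is directed) definable over $\cv$, there exists $\bv \in I$ with $\bv \ge \bv_0$ and $\av \bv_0 \mind_{\cv} \bv$ — in particular $\av \mind_{\cv} \bv$. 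Since the family is increasing and $\bv \ge \bv_0$, we have $\av \in X_{\bv_0} \subseteq X_{\bv}$, so $\av \in X_{\bv}$. Moreover $X \cap X_{\bv} = X_{\bv}$ (as $X_{\bv} \subseteq X$), and by hypothesis $\dimat(X_{\bv}) \le d$.

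Now apply Lemma~\ref{lem:neighbourhood-abstract} with $U_{\bv} := X_{\bv}$ for $\bv \in I$: each $U_{\bv}$ is definable over $\bv\cv$, and we have just verified that for every $\av \in X$ there is $\bv \in I$ with $\av \in U_{\bv}$, $\av \mind_{\cv} \bv$, and $\dimat(X \cap U_{\bv}) = \dimat(X_{\bv}) \le d$. The conclusion is $\dimat(X) \le d$, as desired. The only real subtlety — the step I would watch most carefully — is confirming that Lemma~\ref{lem:order-ind} genuinely applies: it requires $\pair{I,\le}$ to be a directed set, which holds since a linear order has upper bounds for finite subsets, and it requires that we may absorb the parameters into $\cv$ uniformly, which is immediate from the hypotheses. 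Everything else is a bookkeeping of parameters and a direct citation.
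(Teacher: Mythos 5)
Your proof is correct, but it takes a genuinely different route from the paper's.

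The paper's proof of this lemma does not go through Lemma~\ref{lem:neighbourhood-abstract} or Lemma~\ref{lem:order-ind} at all. It is a short direct argument that uses only the Existence axiom and the linearity of $<$: pick $\av \in X$ with $\rkmat(\av/\cv) = \dimat(X)$ and $\bv \in I$ with $\av \in X_{\bv}$; by Existence there are $\av',\bv' \equiv_{\cv} \av,\bv$ with $\av'\bv' \indmat_{\cv} \av\bv$; since $<$ is linear, either $\bv' \geq \bv$ or $\bv' < \bv$, and by the symmetry of the situation one may assume $\bv' \geq \bv$, so $\av \in X_{\bv'}$ and $\rkmat(\av/\cv\bv') = \rkmat(\av/\cv)$, giving $\dimat(X) \leq \dimat(X_{\bv'}) \leq d$. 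Your approach instead reduces to Lemma~\ref{lem:neighbourhood-abstract} via Lemma~\ref{lem:order-ind}, observing correctly that a linear order is a directed set and that $X \cap X_{\bv} = X_{\bv}$. This works, but it imports the full directed-set machinery of Lemma~\ref{lem:order-ind}, whose proof relies on Erd\H{o}s--Rado and a saturation argument, where the paper's proof of the linear case only needs a single application of Existence plus trichotomy. In fact, what you have written is essentially verbatim the paper's proof of the \emph{next} lemma (the directed-set generalisation of Lemma~\ref{lem:cl-U}), which is stated and proved immediately afterwards. So your argument proves something strictly stronger, at the cost of using a heavier tool; the paper chooses to prove the linear case separately by a lighter elementary argument before stating the directed-set generalisation. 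One small point worth flagging: the hypothesis of Lemma~\ref{lem:neighbourhood-abstract} is stated with $\mind$, but its proof uses only that $\rkmat(\av/\bv\cv) = \rkmat(\av/\cv)$, so reading it as $\indmat$ (as you did, and as Lemma~\ref{lem:order-ind} supplies) is the intended meaning.
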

\begin{proof}
Let $\cv$ be the parameters used to define $I$, $<$, and 
$\Pa{X_{\bv}}_{\bv \in I}$.
Let $\av \in X$ such that $\rkmat(\av/\cv) = \dimat(X)$.
Let $\bv \in I$ such that $\av \in X_{\bv}$.
Choose $\av', \bv' \subset \monster$ such that
$\av' \bv' \equiv_{\cv} \av \bv$ and $\av' \bv' \indmat_{\cv} \av \bv$.
\Wlog, $\bv' \geq \bv$; hence, $\av \in X_{\bv'}$ and 
\[
d \geq \dimat(X_{\bv'}) \geq \rkmat(\av/ \cv \bv') = \rkmat(\av/\cv) =
\dimat(X). \qedhere
\]
\end{proof}


We can extend the above lemma to directed families.

\begin{lemma}
Let $\pair{I, \leq}$ be a definable directed set.
Let $\Pa{X_{\bv}}_{\bv \in I}$ be a definable increasing family of subsets
of~$\monster^m$ and $X := \bigcup_{\bv \in I} X_{\bv}$.
Let $d \leq m$, and assume that, for every $\bv \in I$, 
$\dimat(X_{\bv}) \leq d$.
Then, $\dimat(X) \leq d$.
\end{lemma}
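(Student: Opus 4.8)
The plan is to reduce the directed-set case to the linearly-ordered case already handled in Lemma~\ref{lem:cl-U}. The obstacle is purely that a directed set need not be linearly ordered, so we cannot directly quote that lemma; but a directed set contains enough linearly ordered cofinal pieces, and the argument of Lemma~\ref{lem:cl-U} in fact only used the linear ordering to guarantee that, given two parameters $\bv$ and $\bv'$, one could find a single index above both of them. That property is exactly what directedness provides, so I expect the cleanest route is actually to redo the short proof of Lemma~\ref{lem:cl-U} verbatim with ``linear ordering'' replaced by ``directed set''.

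Concretely: let $\cv$ be the finite tuple of parameters defining $\pair{I,\le}$ and the family $\Pa{X_{\bv}}_{\bv\in I}$. Pick $\av\in X$ with $\rkmat(\av/\cv)=\dimat(X)$, and pick $\bv\in I$ with $\av\in X_{\bv}$. By the Existence property of $\mat$ (more precisely, by the fact that $\indmat$ is an independence relation and hence satisfies Extension/Existence, \cf Lemma~\ref{lem:cl-E}), choose $\av'\bv'$ with $\av'\bv'\equiv_{\cv}\av\bv$ and $\av'\bv'\indmat_{\cv}\av\bv$. Now the point is that $\bv'\in I$ and $\bv\in I$, so by directedness there is $\bv''\in I$ with $\bv''\ge\bv$ and $\bv''\ge\bv'$; but I must keep $\bv''$ independent from $\av$ over $\cv$. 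Here I invoke Lemma~\ref{lem:order-ind}: applied to the directed set $\pair{I,\le}$, to the element $\bv'\in I$, and to the tuple $\av\bv$, it yields $\bv''\in I$ with $\bv''\ge\bv'$ and $\av\bv\indmat_{\cv}\bv''$ --- wait, that only puts $\bv''$ above $\bv'$, not above $\bv$. Since the family is increasing I only need $\av\in X_{\bv''}$, and I have $\av\in X_{\bv}$; so I should instead first replace $\bv$ by some common upper bound of $\bv$ and $\bv'$ inside $I$, say $\bv_1\ge\bv,\bv'$ (directedness), note $\av\in X_{\bv_1}$ since the family is increasing, and then apply Lemma~\ref{lem:order-ind} to get $\bv''\in I$ with $\bv''\ge\bv_1$ and $\av\bv_1\indmat_{\cv}\bv''$. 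Actually a cleaner bookkeeping: apply Lemma~\ref{lem:order-ind} directly with the starting element $\bv_1$ and the tuple $\dv:=\av$, obtaining $\bv''\ge\bv_1$ with $\av\indmat_{\cv}\bv''$.

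Then I conclude exactly as in Lemma~\ref{lem:cl-U}: since $\bv''\ge\bv_1\ge\bv$ and the family is increasing, $\av\in X_{\bv''}$, and $X_{\bv''}$ is definable over $\cv\bv''$, so
\[
d \ \geq\ \dimat(X_{\bv''}) \ \geq\ \rkmat(\av/\cv\bv'') \ =\ \rkmat(\av/\cv) \ =\ \dimat(X),
\]
where the middle equality uses $\av\indmat_{\cv}\bv''$, i.e.\ $\rkmat(\av/\cv\bv'')=\rkmat(\av/\cv)$. This is a contradiction with the choice of $\av$ unless $\dimat(X)\le d$, which is what we wanted. The only non-routine input is Lemma~\ref{lem:order-ind}, whose hypothesis ``directed set definable with parameters $\cv$'' matches ours exactly; everything else is the same three lines as the linearly ordered case, so I do not anticipate a genuine obstacle beyond getting the upper-bound bookkeeping straight.
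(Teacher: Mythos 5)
Your final argument is correct and is exactly the paper's proof: apply Lemma~\ref{lem:order-ind} to the index $\bv$ with $\av \in X_{\bv}$ and the tuple $\dv := \av$ to get $\bv'' \geq \bv$ with $\av \indmat_{\cv} \bv''$, then chain the inequalities $d \geq \dimat(X_{\bv''}) \geq \rkmat(\av/\cv\bv'') = \rkmat(\av/\cv) = \dimat(X)$. The initial detour through the Existence axiom and the auxiliary tuples $\av'$, $\bv'$, $\bv_1$ is superfluous --- Lemma~\ref{lem:order-ind} can be applied directly to the original $\bv$, as the paper does --- and the closing phrase about a ``contradiction'' is not needed, since the displayed chain proves $\dimat(X)\leq d$ outright.
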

\begin{proof}
\Wlog, $\pair{I, \leq}$ and the family $\Pa{X_{\bv}}_{\bv \in I}$ are
definable without parameters.
Let $\av \in X$ such that $\rkmat(\av) = \dimat(X)$, and let
$\bv_0 \in I$ such that $a \in X_{\bv_0}$.
By the Lemma~\ref{lem:order-ind}, there exists  $\bv \in I$ such that 
$\bv \geq \bv_0$ and $\av \bv_0 \indmat \bv$.
Hence, $\av \in X_{\bv}$ and $\av \indmat \bv$, and therefore
\[
d \geq \dimat(X_{\bv}) \geq \rk(\av / \bv) = \rk(\av) = \dimat(X).
\qedhere
\]
\end{proof}

\begin{remark}
The above lemma is not true if $\Pa{X_{\bv}}_{\bv \in I}$ be a definable
\emph{decreasing} family of subsets of $\monster^m$, instead of increasing.
For instance, let $\K$ be a real closed field, $\cl = \acl$,
$I := \K^{< 0} \times \K \cup \set{\pair{0,0}}$;
define $\pair{x, y} \leq \pair{x', y'}$ if $x \leq x'$ and $y = y'$, or $x = 0$.
Let $I_{b_1, b_2} := \set{\pair{x,y} \in I: \pair{x,y} \geq \pair{b_1, b_2}}$.
Then, $\pair {I, \leq}$ is a directed set, $\dim^{\acl}(I) = 2$, but
$\dim^{\acl}(I_{\bv}) \leq 1$ for every $\bv \in I$.
\end{remark}


\section{Matroids from dimensions}

Van den Dries in~\cite{dries89} gave a definition of dimension for definable
sets; we will show that his approach is almost equivalent to ours.
Let $\K$ be a first order structure.

\begin{definizione}
A \intro{dimension function} on $\K$ is a function $d$ from definable sets in
$\K$ to $\set{- \infty} \cup \N$, such that, for all $m \in \N$ and $S$, $S_1$
and~$S_2$ definable subsets of~$\K^m$, we have:
\begin{enumerate}[(Dim 1)]
\item $d(S) = - \infty$ iff $S = \emptyset$, $d(\set a) = 0$ for
every $a \in \K$, $d(\K) = 1$.
\item 
$d(S_1 \cup S_2) = \max \Pa{d(S_1), d(S_2)}$.
\item $d(S^\sigma) =d(S)$ for every permutation $\sigma$ of the
coordinates of~$\K^m$.
\item Let $U$ be a definable subset of $\K^{m+1}$, and, for $i = 0,1$, 
let $U(i) := \set{x \in \K^m: d(U_x) = i}$.  Then, $U(i)$ is definable
\emph{with the same parameters as~$U$}, and $d(U \cap \pi^{-1}(U(i))) =
d(U(i)) + i$, $i = 0, 1$, where $\pi := \Pi^{m+1}_m$.
\end{enumerate}
\end{definizione}

Notice that the axiom (Dim~4) is slightly stronger that the original axiom
in~\cite{dries89}; however, after expanding $\K$ by at most $\card{T}$ many
constants, the situation in~\cite{dries89} can be reduced to ours.

\begin{definizione}
Given a dimension function $d$ on~$\K$, for every $A \subset \K$ and $b \in
\K$ we define $b \in \mat^d(A)$ iff there exists $X \subseteq \K$ definable
with parameters in~$A$, such that $d(X) = 0$ and $b \in X$.
\end{definizione}

\begin{thm}
$\mat^d$ (more precisely, the extension of $\mat^d$ to a monster model)
is an existential matroid with definable dimension.
The dimension induced by $\mat^d$ is precisely~$d$.

Conversely, if $\mat$ is an existential matroid with definable dimension, then
$\dimat$ is a dimension function, and $\mat^{\dimat} = \mat$.
\end{thm}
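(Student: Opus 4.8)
The plan is to prove the two directions of the correspondence separately, using the characterisations of dimension established earlier in the paper (especially Lemma~\ref{lem:type-def}, Lemma~\ref{lem:cl-function}, and Lemma~\ref{lem:cl-E}).

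\textbf{First direction: $\mat^d$ is an existential matroid inducing $d$.} I would work in a monster model $\monster$ and extend $\mat^d$ to it by the usual recipe ($b \in \mat^d(A)$ iff some $A$-definable $X \subseteq \monster$ with $d(X) = 0$ contains $b$). The first task is to verify the closure-operator axioms and the Exchange Principle. Extension and monotonicity are immediate from the definition; finitariness follows because a formula has only finitely many parameters; idempotency and EP are where (Dim~1)--(Dim~4) do the real work. For idempotency, given $b \in \mat^d(\mat^d(A))$ witnessed by some $X$ definable over finitely many parameters $\cv \in \mat^d(A)$, I would peel off the parameters one at a time: each $c_j \in \mat^d(A)$ lies in some $0$-dimensional $A$-definable $Y_j$, and then using the fibre dimension axiom (Dim~4) one shows the projection of $\{(\x, b): b \in X(\x)\} \cap \prod \pi^{-1}(Y_j)$ down to the $b$-coordinate is still $0$-dimensional and $A$-definable. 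For EP, suppose $a \in \mat^d(Xc) \setminus \mat^d(X)$; pick an $X$-definable $V \subseteq \monster^2$ with $d(V_c) = 0$ and $(a,c) \in V$, and split $V$ into the part where the fibre over the first coordinate is $0$-dimensional and its complement; (Dim~4) forces $d$ of the relevant set and, because $a \notin \mat^d(X)$, the only consistent way to balance the dimensions is that $c$ lies in a $0$-dimensional $Xa$-definable set — this is the standard symmetry-of-dependence argument for pregeometries coming from a dimension theory. Non-triviality is clear since $d(\monster) = 1 > 0$ by (Dim~1) means singletons are not all of $\monster$, so $\mat^d \neq \mat^0$. Definability of $\mat^d$ (in the sense of Definition~\ref{def:definable}): a formula $\phi(x,\y)$ is $x$-narrow for $\mat^d$ exactly when $d(\phi(\monster,\bv)) = 0$ for all $\bv$, and by (Dim~4) the set of such $\bv$ is definable, so an easy compactness/Beth-type argument recovers the whole matroid from $x$-narrow formulas; definability of $\dim^{\mat^d}$ is then just (Dim~4) restated. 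Existence I would get from Lemma~\ref{lem:cl-E}: one shows item (5) of that lemma holds, namely that if $\forall \y\,\exists\z\,\forall x\,(\phi(x,\y)\to\psi(x,\y,\z))$ with $\psi$ $x$-narrow, then $\phi$ is $x$-narrow — this follows because $d(\psi(\monster,\bv,\cv)) = 0$ forces $d(\phi(\monster,\bv)) = 0$ by monotonicity (Dim~1/Dim~2) for each $\bv$. Finally, that the induced dimension is $d$: by the lemma characterising $x$-narrow formulas via $\dimat$ and the fibre-dimension lemma, both $d$ and $\dim^{\mat^d}$ assign dimension $0$ to exactly the same definable subsets of $\monster$, and then Lemma~\ref{lem:cl-equality} (the ``$\dimat(X) = 0$ iff $\dim^{\mat'}(X) = 0$'' clause) upgrades this to $d = \dim^{\mat^d}$ on all definable sets; alternatively one runs the dimension-by-fibres induction of (Dim~4) against Lemma~\ref{lem:cl-function} directly.

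\textbf{Second direction: $\mat^{\dimat} = \mat$ when $\mat$ is existential with definable dimension.} Here I would first check that $\dimat$ satisfies (Dim~1)--(Dim~4). (Dim~1): $\dimat(\emptyset) = -\infty$ by definition, $\dimat(\{a\}) = 0$ since $\rkmat(a/a) = 0$, and $\dimat(\monster) = 1$ by the remark after Lemma~\ref{lem:type-def} together with Theorem~\ref{thm:cl-unique}'s input — actually more simply $\dimat(\monster^1) = 1$ is the remark stating $\dimat(\monster^n) = n$. (Dim~2): additivity over finite unions is immediate from $\dimat(V) = \max_{\bv \in V}\rkmat(\bv/A)$. (Dim~3): invariance under coordinate permutations is clear since $\rkmat$ is symmetric in its tuple. (Dim~4): the definability of $U(i)$ with the same parameters is exactly the hypothesis that $\dimat$ is definable (and Lemma~\ref{lem:type-def} gives the ``same parameters'' refinement via Beth), and the additivity $\dimat(U \cap \pi^{-1}(U(i))) = \dimat(U(i)) + i$ is precisely Lemma~\ref{lem:cl-function} (fibre-wise dimension) applied to the projection. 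Once $\dimat$ is a dimension function, I compare $\mat^{\dimat}$ with $\mat$: by construction $b \in \mat^{\dimat}(A)$ iff some $A$-definable $X$ with $\dimat(X) = 0$ contains $b$; but $\dimat(X) = 0$ means every element of $X$ has $\mat$-rank $0$ over $A$, i.e.\ $X \subseteq \mat(A)$, so $\mat^{\dimat}(A) \subseteq \mat(A)$. Conversely if $b \in \mat(A)$ then, since $\mat$ is definable, $b$ satisfies some $x$-narrow formula $\phi(x,\av)$ with $\av \subset A$, and $\phi(\monster, \av)$ is an $A$-definable set of dimension $0$ containing $b$ (Lemma characterising $x$-narrow formulas), so $b \in \mat^{\dimat}(A)$. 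Hence $\mat^{\dimat} = \mat$.

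\textbf{Main obstacle.} I expect the genuinely delicate point to be verifying the Exchange Principle for $\mat^d$ purely from the axioms (Dim~1)--(Dim~4), since there we have no pre-existing rank function to lean on and must extract symmetry of dependence from the fibre-dimension identity alone; the bookkeeping of splitting a definable relation into its $0$-dimensional-fibre part and its complement, and tracking the forced dimension values, is the heart of the matter. Everything else is either a direct unwinding of definitions or an appeal to the lemmas already proved (notably Lemma~\ref{lem:cl-function}, Lemma~\ref{lem:cl-E}, Lemma~\ref{lem:type-def}, and Lemma~\ref{lem:cl-equality}), so the write-up should be short once the EP argument is in place. A secondary subtlety is making sure the passage from $\K$ to its monster model is harmless: one checks that the definition of $\mat^d$ on $\monster$ via $x$-narrow formulas agrees with extending the original $d$, which is routine given that $d$ is preserved under elementary equivalence by the definability clause of (Dim~4).
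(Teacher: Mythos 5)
Your plan tracks the paper's proof essentially step for step: the three points the paper singles out as the only nontrivial ones — definability of $\mat^d$ (using the strengthened same-parameters clause in (Dim~4) to replace $\phi$ by $\phi(x,\y)\wedge\bigl(\y\in U(0)\bigr)$), the Exchange Principle (the dimension bookkeeping on a $B$-definable $X\subseteq\K^2$ via its fibre loci in both coordinates, forcing $d(X)=2$ against $d(X)\leq 1$), and Existence via item~(5) of Lemma~\ref{lem:cl-E} — are exactly the ones you identify and sketch in the same way. The only place to tighten is the appeal to Lemma~\ref{lem:cl-equality} to get $d=\dim^{\mat^d}$, since that lemma compares two \emph{existential matroids} rather than a matroid and an abstract dimension function; but you already give the correct alternative, the fibre induction of (Dim~4) against Lemma~\ref{lem:cl-function}.
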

\begin{proof}
The only non-trivial facts are that, if $d$ is a dimension function, then
$\mat^d$ is definable, satisfies EP and the existence axiom.
\smallskip

Definability)
Let $a \in \mat(B)$.
Let $X \subseteq \K$ be $B$-definable such that $d(X) = 0$ and $a \in X$.
Let $\phi(x, \bv)$ be the $B$-formula defining~$X$.
By (Dim~4), \wloG $d(\phi(\K, \y) \leq 0$ for every~$\y$.%
\footnote{Here it is important that in (Dim~4) we asked that the parameters of
$U(i)$ are the same as the parameters of~$U$.}
Hence, $\phi(x, \y)$ is an \xnarrow formula.
\smallskip

EP)
Let $a \in \mat(Bc) \setminus \mat(B)$.
Assume, for contradiction, that $c \notin \mat(Ba)$.
Let $X \subseteq \K^2$ be $B$-definable, such that $a \in X_c$ and 
$d(X_c) =0$.
Let $X' := X \cap \pi^{-1}(X(0))$, where $\pi := \Pi^2_1$.
By assumption, $\pair{c,a} \in X'$ and, by (Dim~4), $\dim(X') \leq 1$; \wloG,
$X = X'$.

Let $Z := \set{u \in \K: d(X^u) = 1}$.
Since $c \in X^a$ and $c \notin \mat(Ba)$, $a \in Z$.
Since $a \notin \mat(B)$, $d(Z) = 1$.
Hence, by (Dim~4) and (Dim~3), $d(X) = 2$, absurd.
\smallskip

Existence)
Immediate from Lemma~\ref{lem:cl-E}-5.
\end{proof}


\section{Expansions}

Remember that $\monster$ is a monster model of a complete $\Lang$-theory~$T$.
We are interested in the behaviour of definable matroids under expansions of~$\monster$.
In this section we 
assume that
$\mat$ is a definable closure operator on the monster model~$\monster$.

\begin{definizione}\label{def:cl-induced}
Given $X \subseteq \monster$, let the restriction $\mat^X: \Power(X) \to
\Power(X)$ and the relativisation $\mat_X: \Power(\monster) \to
\Power(\monster)$ of $\mat$ be defined as $\mat^X(Y) := \mat(Y) \cap X$
and $\mat_X(Y) := \mat(X Y)$.
\end{definizione}

\begin{lemma}
Given $X \subseteq \monster$, $\mat^X$ is a  closure operator on~$X$ and
$\mat_X$ is a closure operator on~$\monster$.
If moreover $\mat$ is a matroid, then both $\mat^X$ and $\mat_X$ are matroids,
$A \ind[\mbox{$\mat_X$}]_B C$ iff $A \indmat_{X B} C$,
and $\ind[\mbox{$\mat^X$}]\,$ is the restriction of $\indmat$ to the subsets 
of~$X$.
\end{lemma}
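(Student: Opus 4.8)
The plan is to verify each of the four assertions in turn, treating the restriction $\mat^X$ and the relativisation $\mat_X$ separately, and in each case first checking the closure-operator axioms, then (under the assumption that $\mat$ is a matroid) the Exchange Principle, and finally the claimed descriptions of the induced \prefree relations.

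For $\mat_X$: the closure-operator axioms (extension, monotonicity, idempotency, finitariness) are immediate from those of $\mat$ — for idempotency one uses $\mat_X(\mat_X(Y)) = \mat(X\,\mat(XY)) = \mat(\mat(XY)) = \mat(XY) = \mat_X(Y)$ since $X \subseteq \mat(XY)$; finitariness follows because a finite tuple witnessing membership in $\mat(XY)$ uses only finitely many elements of $X$ together with finitely many of $Y$, and one absorbs the former into the ``base'' in the usual way. For EP, suppose $a \in \mat_X(Yc) \setminus \mat_X(Y)$, i.e.\ $a \in \mat(XYc) \setminus \mat(XY)$; applying EP for $\mat$ with base $XY$ gives $c \in \mat(XYa) = \mat_X(Ya)$, as required. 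The identity $A \ind[\mbox{$\mat_X$}]_B C \iff A \indmat_{XB} C$ I would get by unwinding the definition of $\indmat$: independence of a subset $C'$ of $C$ over $B$ \wrt $\mat_X$ means no $c \in C'$ lies in $\mat_X(B\,(C'\setminus\{c\})) = \mat(XB\,(C'\setminus\{c\}))$, which is exactly independence over $XB$ \wrt $\mat$; similarly ``remains independent over $BA$ \wrt $\mat_X$'' unwinds to ``remains independent over $XBA$ \wrt $\mat$''. So the two ternary relations agree after shifting $X$ into the base.

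For $\mat^X$: here one must be slightly more careful because the closure operator lives on $X$, not on $\monster$. Extension and monotonicity are clear. Idempotency: for $Y \subseteq X$, $\mat^X(\mat^X(Y)) = \mat(\mat(Y)\cap X)\cap X \subseteq \mat(\mat(Y))\cap X = \mat(Y)\cap X = \mat^X(Y)$, using monotonicity of $\mat$ for the inclusion and idempotency for the equality; the reverse inclusion is extension. Finitariness is inherited since a finite witness in $\mat(Y)$ with $Y\subseteq X$ is already a finite subset of $X$. For EP, suppose $a \in \mat^X(Yc)\setminus \mat^X(Y)$ with $a,c \in X$ and $Y\subseteq X$; then $a \in \mat(Yc)\setminus\mat(Y)$, so EP for $\mat$ gives $c \in \mat(Ya)$, and since $c \in X$ this says $c \in \mat^X(Ya)$. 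That $\ind[\mbox{$\mat^X$}]\,$ is the restriction of $\indmat$ to subsets of $X$ is again a direct unwinding: for $A,B,C \subseteq X$, a subset $C'\subseteq C$ is independent over $B$ \wrt $\mat^X$ iff it is so \wrt $\mat$ (both conditions only involve $\mat$ applied to subsets of $X$), and likewise for remaining independent over $BA$.

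I do not expect a serious obstacle here — the statement is a routine ``sanity check'' lemma — but the one point requiring genuine (if minor) attention is idempotency and finitariness for $\mat^X$, where the restriction to $X$ interacts with $\mat$'s own closure, and the bookkeeping in the unwinding of $\indmat$ for $\mat_X$, where one must confirm that the ``witnesses'' $Z' \subseteq Z$ quantified over in the definition of $\indmat$ are the same for both relations (they are, since $Z \subseteq \monster$ in the $\mat_X$ case and the independence of $Z'$ is tested by $\mat$ applied to $XB$-sets either way). Everything else is a transcription of the matroid axioms through the definitions.
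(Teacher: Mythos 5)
Your proposal is correct: all four verifications (closure axioms and EP for $\mat_X$ and $\mat^X$, and the two identifications of the induced \prefree relations) go through exactly as you describe, and the key observations — that $X \subseteq \mat(XY)$ makes idempotency of $\mat_X$ work, and that membership of $a, c$ in $X$ lets EP for $\mat^X$ reduce to EP for $\mat$ — are the right ones. The paper states this lemma without proof, treating it as routine, and your argument is precisely the standard verification it has in mind.
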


\begin{remark}
Given $B \subset \monster$ (with $\card B < \kappa$), 
let $\monster_B$ be the expansion of~$\monster$ with all constants from~$B$.
\begin{enumerate}
\item if $\mat$ is definable, then $\mat_B$ is also definable 
(see Remark~\ref{rem:general-closure}).
\item if $\mat$ is a matroid, then $\mat_B$ is also a matroid;
\item if $\mat$ is definable and satisfies Existence, 
then $\mat_B$ satisfies Existence too;
\item if $\mat$ is an existential matroid, then $\mat_B$ is also an
existential matroid, and  $\dimat$ and $\dim^{\mat_B}$ coincide (the definable
sets of $\monster$ and of $\monster_B$ are the same).
\end{enumerate}
\end{remark}

\begin{example}
In the above Remark, it is not true that, if $\mat$ is a definable matroid,
and $\mat_B$ satisfies Existence, then $\mat$ satisfies Existence.
For instance, let $B$ be any non-empty subset of $\monster$ (of cardinality
less than~$\kappa$), and $\mat = \mat^1$;
then, $\mat_B = \mat^0$ satisfies Existence, but $\mat$ does not.
\end{example}

\begin{lemma}\label{lem:cl-pair}
Let $X \subseteq \monster$.
Let $\monster'$ be the expansion of $\monster$ with a predicate $P$ for~$X$.
Assume that $\monster'$ is a monster model of $T(X)$, and
denote by $\mat_X'$ the closure operator $\mat_X'(Y) := \mat(X Y)$
on~$\monster'$ 
($\mat_X'$ coincides with $\mat_X$). 
\begin{enumerate}
\item If $\mat$ is definable, then $\mat_X'$~is definable on~$\monster'$;
\item if $\mat$ is a matroid, then $\mat_X'$~is a matroid;
\end{enumerate}
\end{lemma}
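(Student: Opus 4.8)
The statement to prove is Lemma~\ref{lem:cl-pair}: given $X \subseteq \monster$ and $\monster'$ the expansion of $\monster$ by a predicate $P$ for $X$ (assumed to be a monster model of $T(X) := \Theory(\monster')$), the operator $\mat_X'(Y) := \mat(XY)$ on $\monster'$ is definable whenever $\mat$ is, and is a matroid whenever $\mat$ is.

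The plan is to reduce everything to facts already established for $\mat_X$ on $\monster$, since $\mat_X'$ and $\mat_X$ agree as set operators. The only genuine content is that the \emph{definability} and \emph{matroid} properties are inherited when we pass to the richer language $\Lang \cup \{P\}$. For part (2), being a matroid (extension, monotonicity, idempotency, finitariness, Exchange) is a purely set-theoretic property of the operator $Y \mapsto \mat(XY)$, and these were already checked for $\mat_X$ in the earlier lemma on restrictions and relativisations (right after Definition~\ref{def:cl-induced}); since $\mat_X' = \mat_X$ as operators on $\Power(\monster) = \Power(\monster')$, nothing changes and part (2) is immediate.

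For part (1), I would argue as follows. Since $\mat$ is definable on $\monster$, for each $A$ we have $\mat(A) = \bigcup\{\phi(\monster,\av) : \phi \text{ is \xnarrow}, \av \in A^{n}\}$. First I would handle a single extra constant or a finite parameter set: by the Remark on $\monster_B$, $\mat_B$ is definable on $\monster_B$ when $B$ is finite. The point for the predicate case is that $\mat_X'(Y) = \mat(XY) = \bigcup\{\mat(\bv Y) : \bv \in X^{<\omega}\}$ by finitariness, so membership $b \in \mat_X'(Y)$ is witnessed by: there is a finite $\bv$ with $P(b_i)$ for all $i$, a finite $\cv \in Y$, and an $\Lang$-\xnarrow formula $\phi(x,\y,\z)$ with $\monster \models \phi(b,\bv,\cv)$. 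I would then show that for each fixed $\Lang$-\xnarrow formula $\phi(x,\y,\z)$, the $\Lang(P)$-formula $\psi_\phi(x,\z) := \exists \y\,(\bigwedge_i P(y_i) \wedge \phi(x,\y,\z))$ is \xnarrow for $\mat_X'$ in $\monster'$: if $\monster' \models \psi_\phi(b,\cv)$ then some $\bv \in X$ satisfies $\phi(b,\bv,\cv)$ in $\monster$, so $b \in \mat(\bv\cv) \subseteq \mat(X\cv) = \mat_X'(\cv)$. Conversely every element of $\mat_X'(C)$ for $C \subseteq \monster'$ is captured by one of these $\psi_\phi$ applied to a finite tuple from $C$. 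Hence $\mat_X'(C) = \bigcup\{\psi_\phi(\monster',\cv) : \phi \text{ \xnarrow}, \cv \in C^{<\omega}\}$, which is exactly definability of $\mat_X'$ on $\monster'$.

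The only subtlety — and the step I would be most careful about — is the direction of inclusion needed for definability: one must check that every $\Lang(P)$-formula contributing to the union is indeed \xnarrow for $\mat_X'$, but more importantly that the union of the $\psi_\phi(\monster',\cv)$ really exhausts $\mat_X'(C)$ and not merely a subset. This uses finitariness of $\mat$ to reduce the (possibly large) set $X$ to a finite tuple, together with the definability of $\mat$ to name the witnessing \xnarrow formula; both are available by hypothesis. I would remark in passing that the hypothesis that $\monster'$ is a monster model of $T(X)$ is what legitimises speaking of ``\xnarrow'' formulas in the language with $P$ (we need saturation/homogeneity for the earlier theory of definable matroids to apply to $\monster'$), but otherwise plays no role in the verification. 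No analogue of Existence is claimed here, which is consistent with the earlier examples showing Existence need not be inherited, so there is nothing further to prove.
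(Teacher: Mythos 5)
Your proposal is correct and follows essentially the same route as the paper: part (2) is dismissed as a purely set-theoretic restatement of the earlier lemma on $\mat_X$, and part (1) is handled by forming, for each $\Lang$-\xnarrow formula $\phi$, the $\Lang(P)$-formula that existentially quantifies the $X$-parameters relativised to $P$, then checking that these formulae are \xnarrow for $\mat_X'$ and exhaust $\mat_X'(C)$ via finitariness. The paper's proof is just a terser version of the same argument (it writes $\psi(x,\y) := \exists \z\,(P(\z) \et \phi(x,\y,\z))$ and notes $\psi(\monster',\av') \subseteq \mat_X'(\av')$), so nothing further is needed.
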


\begin{proof}
Let $D \subseteq X$ such that $\card D < \kappa$ and $\mat(X) = \mat(D)$.
\begin{enumerate}
\item
$b \in \mat'_X(A)$ iff $b \in \mat(A X)$ iff 
$\monster \models \phi(b, \av, \cv)$ for some \xnarrow formula 
$\phi(x, \y, \z)$ and some $\cv \in X^n$.
Define $\psi(x, \y) := \exists \z\ \Pa{P(\z) \et \phi(x, \y, \z)}$.
Notice that $\psi$ is an $\Lang(P)$-formula, and that, for every
$\av' \subset \monster$, $\psi(\monster', \av') \subseteq \mat'_X(\av')$.
\item 
Trivial.
\qedhere
\end{enumerate}
\end{proof}


\begin{lemma}\label{lem:cl-small-relative}
Let $\monster$, $X$ and $\monster'$ be as in the above lemma.
Let $\pair{\Bm, Y} \prec \pair{\monster, X}$; assume moreover that
$\mat$ is a definable closure operator on~$\monster$.
Then, for every $A \subseteq \Bm$, 
$\mat_{Y}(A) \cap \Bm = \mat_{X}(A) \cap \Bm$.
\end{lemma}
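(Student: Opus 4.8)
The plan is to reduce the statement to the definability of $\mat$ together with the elementarity $\pair{\Bm,Y}\prec\pair{\monster,X}$, with no appeal to the matroid axioms or to Existence. Recall from Definition~\ref{def:cl-induced} that $\mat_X(A)=\mat(XA)$ and $\mat_Y(A)=\mat(YA)$, both computed in $\monster$. One inclusion is immediate and uses nothing beyond $\mat$ being a closure operator: since $Y\subseteq X$, monotonicity gives $\mat(YA)\subseteq\mat(XA)$, hence $\mat_Y(A)\cap\Bm\subseteq\mat_X(A)\cap\Bm$.

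For the reverse inclusion I would fix $b\in\Bm$ with $b\in\mat(XA)$ and show $b\in\mat(YA)$. Because $\mat$ is definable there are an \xnarrow $\Lang$-formula $\phi(x,\y,\z)$ and tuples $\av\subset A$, $\cv\subset X$ with $\monster\models\phi(b,\av,\cv)$ (splitting the witnessing parameter tuple into its part in $A$ and its part in $X$; reordering variables preserves \xnarrowness) — this invocation of definability is the crux of the argument. Setting $\psi(x,\y):=\exists\z\,\Pa{P(\z)\et\phi(x,\y,\z)}$ as in the proof of Lemma~\ref{lem:cl-pair}, one gets $\pair{\monster,X}\models\psi(b,\av)$; since $b$ and the entries of $\av$ lie in $\Bm$ and $\pair{\Bm,Y}\prec\pair{\monster,X}$, elementarity yields $\pair{\Bm,Y}\models\psi(b,\av)$, i.e. a tuple $\cv'\subset Y$ with $\Bm\models\phi(b,\av,\cv')$. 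Passing to the $\Lang$-reduct, where $\Bm\prec\monster$, gives $\monster\models\phi(b,\av,\cv')$, and \xnarrowness of $\phi$ forces $b\in\mat(\av\cv')\subseteq\mat(YA)=\mat_Y(A)$, as desired.

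The only real obstacle is bookkeeping: ensuring the witness for $b\in\mat(XA)$ is taken in the mixed form $\phi(x,\y,\z)$ so that existentially quantifying the $\z$ under the guard $P$ produces a genuine $\Lang(P)$-formula whose parameters all lie in $\Bm$; the degenerate cases (empty tuple from $X$, in which case $b\in\mat(YA)$ trivially, or empty tuple from $A$) are covered by the same formula. Everything else is a direct chase through definability, elementarity of the pair, and the \xnarrowness condition.
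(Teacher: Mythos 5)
Your proof is correct, and the paper in fact states this lemma without giving a proof, so there is nothing to compare against; the argument you give — express membership in $\mat(XA)$ via an \xnarrow formula $\phi(x,\y,\z)$ with $\y$-parameters from $A$ and $\z$-parameters from $X$, guard the $\z$-quantifier with $P$ to form $\psi(x,\y):=\exists\z\Pa{P(\z)\et\phi(x,\y,\z)}$ as in the proof of Lemma~\ref{lem:cl-pair}, pull it down through $\pair{\Bm,Y}\prec\pair{\monster,X}$, and then push the $\Lang$-witness back up through $\Bm\prec\monster$ — is exactly the natural route that the surrounding material invites, and it correctly uses only monotonicity, finitariness, and definability, so that the lemma holds for closure operators generally, as claimed.
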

Hence, in the above situation, inside $\Bm$ we do not need to distinguish 
between $\mat_{X}$ and $\mat_{Y}$.

\begin{lemma}\label{lem:elementary-pair-independent}
Let $\mat$ be a definable matroid (not necessarily existential)
and $X$, $Y$, $X^*$, and $Y^*$ be elementary substructures of~$\monster$,
such that $X \subseteq X^* \cap Y$ and $X^* \cup Y \subseteq Y^*$.
Let $\Ltwo$ be the expansion of $\Lang$ with a new unary predicate~$P$,
and consider $\pair{Y, X}$ and $\pair{Y^*, X^*}$ as $\Ltwo$-structures.
Assume that $(Y, X) \preceq (Y^*, X^*)$.
Then, $X^* \indmat_X Y$.
\end{lemma}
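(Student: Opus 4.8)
The claim is that if $\mat$ is a definable matroid, $X \subseteq X^* \cap Y$, $X^* \cup Y \subseteq Y^*$ are elementary substructures of $\monster$, and $(Y,X) \preceq (Y^*, X^*)$ as $\Ltwo$-structures, then $X^* \indmat_X Y$. By the characterisation of $\indmat$ (the lemma listing the six equivalent conditions) it suffices to show: whenever $\bv \subset Y$ is $\mat$-independent over $X$, then $\bv$ remains $\mat$-independent over $X X^* $, equivalently over $X^*$ (since $X \subseteq X^*$). So I would fix a finite tuple $\bv = (b_1, \dots, b_k)$ from $Y$ independent over $X$, assume for contradiction that it is dependent over $X^*$, and derive a contradiction with the elementarity $(Y,X) \preceq (Y^*, X^*)$.

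**The core argument.** If $\bv$ is dependent over $X^*$ then, \wloG, $b_1 \in \mat(X^* b_2 \cdots b_k)$. Since $\mat$ is definable, there is an \xnarrow formula $\phi(x, \y, \z)$ and a tuple $\cv \in (X^*)^m$ with $\monster \models \phi(b_1, b_2\cdots b_k, \cv)$. The plan is to reflect this configuration down into $Y^*$ and then to push the existence of such a $\cv$ from $X^*$ down to $X$ using the pair-elementarity. Concretely: the tuple $\bv$ lives in $Y \subseteq Y^*$, and the sentence asserting ``there exists $\z$ in $P$ such that $\phi(b_1, b_2 \cdots b_k, \z)$ holds'' is an $\Ltwo$-formula with parameters from $Y$; it is true in $(Y^*, X^*)$ (witnessed by $\cv \in X^*$), hence by $(Y,X) \preceq (Y^*, X^*)$ it is true in $(Y,X)$, giving some $\cv' \in X^n$ with $\monster \models \phi(b_1, b_2\cdots b_k, \cv')$. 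But then, since $\phi$ is \xnarrow, $b_1 \in \mat(\cv' b_2 \cdots b_k) \subseteq \mat(X b_2 \cdots b_k)$, contradicting the independence of $\bv$ over $X$. One subtlety to be careful about: the parameters $b_2, \dots, b_k$ used in the $\Ltwo$-formula must lie in $Y$, which they do; and the formula must genuinely be an $\Ltwo$-formula, which it is, being $\exists \z\,(P(\z) \et \phi(x, \y, \z))$ with $\phi \in \Lang$.

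**Where the work is.** The only genuinely delicate point is bookkeeping about which tuples lie in which model: one must check that the relevant dependence witness $\bv$ really does sit inside $Y$ (it does, by hypothesis) so that the $\Ltwo$-sentence transfers, and that the \xnarrow witness formula $\phi$ can be chosen over the empty set (which is part of Definition~\ref{def:definable}, since $\mat(A)$ is a union of $\phi(\monster, \av)$ over parameter-free \xnarrow $\phi$). I also need to confirm that passing through $Y^*$ is legitimate: $b_1 \in \mat(X^* b_2 \cdots b_k)$ is a statement about $\monster$, and since $X^* \cup Y \subseteq Y^*$ and we only use that the \xnarrow formula is true in $\monster$ (not relativised), there is no issue — the role of $Y^*$ is merely to house $X^*$ and $Y$ simultaneously so that $(Y^*, X^*)$ is a legitimate $\Ltwo$-structure extending $(Y,X)$. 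I do not expect to need the Existence axiom or anything about $\mat$ beyond definability and the Exchange Principle (the latter entering only through the standard equivalent formulations of $\indmat$), which matches the hypothesis that $\mat$ need not be existential here.
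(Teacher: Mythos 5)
Your proof is correct and takes essentially the same route as the paper's, which is a two-liner: the paper observes that $\tp_{\Lang}(\x^*/Y)$ is finitely satisfied in $X$ (by moving the existential witness inside $P$ from $(Y^*,X^*)$ down to $(Y,X)$ via pair-elementarity, exactly as you do) and then invokes Lemma~\ref{lem:cl-definable}(3). You simply unroll that appeal to the coheir lemma into a direct argument with an \xnarrow formula; the mechanism and the role of $Y^*$ are identical.
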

\begin{proof}
Let $\x^* \subset X^*$; it suffices to prove that $\x^* \indmat_X Y$.
However, $\tp_{\Lang}(x^*/ Y)$ is finitely satisfied in~$X$, and we are done.
\end{proof}

Assume that $\monster$ expands a ring without 0 divisors.
Let $\monster'$ be an expansion of~$\monster$ to a larger language~$\Lang'$; 
assume that $\monster'$ is also a monster model
and that $\mat'$ is an existential matroid on~$\monster'$. 
We have seen that in this case $\mat'$ is the unique existential matroid on
$\monster'$, and that, for every $X$ definable subset of~$\monster'$,
$\dim'(X) = 0$ iff $F(X^4) \neq \monster'$ (where $\dim'$ is the dimension
induced by~$\mat'$).
It is clear that $\mat'$, in general, is not definable in~$\monster$.
However, the dimension function $\dim'$ \emph{is} definable in~$\monster$:
Hence, we can restrict the dimension function $\dim'$ to the sets definable in
$\monster$ (with parameters), and get a function~$\dim$.

\begin{lemma}\label{lem:cl-expansion}
Let $\monster$, $\monster'$, $\dim'$ and $\dim$ be as above.
Then, $\dim$ is a dimension function on $\monster$ (\ie, it satisfies the
axioms in Definition~\ref{def:dimension}).
The matroid $\mat$ induces by $\dim$ is characterized by
For every $A$ and $b$, $b \in \mat(A)$ iff
there exists $X \subseteq \monster$, definable in $\monster$ with 
parameters~$A$, such that $\dim'(X) = 0$.
\end{lemma}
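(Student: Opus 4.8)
The plan is to verify directly that $\dim$ (the restriction to $\monster$-definable sets of the dimension function $\dim'$ induced by the existential matroid $\mat'$ on $\monster'$) satisfies the four axioms (Dim~1)--(Dim~4), and then to identify the induced matroid. Axioms (Dim~1), (Dim~2), and (Dim~3) are immediate: they hold for $\dim'$ on all $\monster'$-definable sets (since $\dim'$ is a genuine dimension function by the earlier theorem characterising existential matroids with definable dimension via Lemma~\ref{lem:cl-ring-function} and Theorem~\ref{thm:cl-unique}), and $\monster$-definable sets are in particular $\monster'$-definable, so the axioms restrict. The only point needing a word is $\dim(\monster) = 1$, which is (Dim~1) for $\dim'$ applied to $\monster$ itself, still $\monster$-definable.

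The substance is (Dim~4). Given $U \subseteq \monster^{m+1}$ definable in $\monster$ with parameters $A$, I must show that $U(i) = \set{\x \in \monster^m: \dim'(U_{\x}) = i}$ is definable \emph{in $\monster$} with parameters $A$, and that the fibering equation $\dim(U \cap \pi^{-1}(U(i))) = \dim(U(i)) + i$ holds. The fibering equation is inherited from $\dim'$ via its own (Dim~4) (equivalently via Lemma~\ref{lem:cl-function}), \emph{provided} $U(i)$ and $U \cap \pi^{-1}(U(i))$ are $\monster$-definable; so everything reduces to the definability-in-$\monster$ claim. Here is where the hypothesis that $\monster$ expands $\TR$ is essential: by Lemma~\ref{lem:cl-ring-function}, for $\monster'$-definable $Y \subseteq \monster'$ one has $\dim'(Y) = 1$ iff $F(Y^4) = \monster'$, and $F$ is definable \emph{in the ring language}, hence in $\monster$. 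Applying this fibrewise: $U(1) = \set{\x : F(U_{\x}^4) = \monster}$, and the condition $F(U_{\x}^4) = \monster$, i.e. $\forall t\ \exists \bar u\ \exists \bar v\ (\bar u, \bar v \in U_{\x}^4 \wedge F(\bar u,\bar v)=t)$, is an $\Lang$-formula in $\x$ with parameters $A$ (the parameters defining $U$). Thus $U(1)$ is $\monster$-definable over $A$; then $U(0) = \set{\x : U_{\x} \neq \emptyset} \setminus U(1)$ is too, and $U \cap \pi^{-1}(U(i))$ is $\monster$-definable over $A$ as an intersection of $\monster$-definable sets. This is the main obstacle, and it is overcome exactly by pushing the $F$-characterisation of one-dimensionality inside $\monster$.

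Once (Dim~1)--(Dim~4) are checked, $\dim$ is a dimension function on $\monster$, so by the theorem of the previous section $\mat^{\dim}$ (extended to a monster model of $T$) is an existential matroid on $\monster$ with definable dimension equal to $\dim$. The final identification is then just unwinding the definition of $\mat^{\dim}$: by definition $b \in \mat(A)$ iff there is $X \subseteq \monster$ definable in $\monster$ with parameters $A$ such that $\dim(X) = 0$ and $b \in X$; and $\dim(X) = 0$ means $\dim'(X) \leq 0$, i.e. (again by Lemma~\ref{lem:cl-ring-function}) $F(X^4) \neq \monster$. Hence $b \in \mat(A)$ iff there exists $X \subseteq \monster$, definable in $\monster$ over $A$, with $b \in X$ and $\dim'(X) = 0$, which is exactly the asserted characterisation.

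**One technical caveat** worth flagging in the write-up: to speak of ``the dimension function $\dim'$ induced by $\mat'$'' one needs $\dim'$ to be definable in $\monster'$, which is why $\mat'$ being an existential matroid on a structure expanding $\TR$ was invoked above (via the $F$-criterion); and one should recall that adding finitely many—or $\le \card T$ many—parameters $A$ does not disturb existence or definability of the matroid, by the earlier remarks on $\mat_B$, so the reduction of (Dim~4)'s ``same parameters'' clause to the language-with-constants setting is harmless. I expect the argument to be short: three trivial axioms, one axiom that is a one-line application of Lemma~\ref{lem:cl-ring-function} fibrewise plus inheritance of the fibering equation from $\dim'$, and a definitional unwinding for the last sentence.
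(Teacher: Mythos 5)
Your proposal is correct. The paper in fact states Lemma~\ref{lem:cl-expansion} without any proof, so there is nothing to compare against; but your argument is exactly the intended one, since the surrounding text already flags the $F$-criterion of Lemma~\ref{lem:cl-ring-function} as the reason $\dim'$ is definable in $\monster$, and the only genuinely non-trivial point is precisely the one you isolate: the ``same parameters'' clause of (Dim~4), which you discharge by expressing $U(1)=\set{\x: F(U_{\x}^4)=\monster}$ as an $\Lang$-formula over the parameters of~$U$, the remaining axioms and the final matroid identification being inherited from $\dim'$ and the definition of $\mat^{\dim}$ respectively.
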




\begin{corollary}
Assume that $\monster$ expands a ring without zero divisors.
Let $\monster'$ be an expansion of~$\monster$.
If $\monster'$ is geometric, then $\monster$ is also geometric.
\end{corollary}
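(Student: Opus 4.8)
The corollary claims: if $\monster$ expands a ring without zero divisors and $\monster'$ is a geometric expansion of $\monster$, then $\monster$ is geometric. Recall that ``geometric'' means pregeometric (i.e.\ $\acl$ satisfies EP) plus elimination of $\exists^\infty$; and, since $\monster$ expands a field, by the remark near the start of the introduction (and Lemma~\ref{lem:cl-ring-function}), pregeometric already implies geometric in the field context. So it suffices to show $\monster$ is pregeometric, equivalently that $\acl$ is a matroid on $\monster$.

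First I would unwind what ``$\monster'$ geometric'' gives us. Since $\monster'$ expands a ring without zero divisors and is geometric, $\acl'$ (the algebraic closure computed in $\monster'$) is an existential matroid on $\monster'$: it is a definable matroid (in fact via the $\exists^\infty$-elimination it is definable, and it satisfies Existence because in a geometric structure one has the relevant exchange/genericity behaviour — or simply invoke that in $\monster'$ any pregeometric structure expanding a field yields an existential matroid $\mat' = \acl'$, which is then the unique one by Theorem~\ref{thm:cl-unique}). Now apply Lemma~\ref{lem:cl-expansion} with $\mat' := \acl'$ and $\dim' := \dim^{\acl'}$: we obtain a dimension function $\dim$ on $\monster$, namely the restriction of $\dim'$ to $\monster$-definable sets, and the associated matroid $\mat$ on $\monster$ with $b \in \mat(A)$ iff there is an $\monster$-definable $X \ni b$ with parameters in $A$ and $\dim'(X) = 0$. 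By the theorem in \S``Matroids from dimensions'' (applied inside $\monster$), $\mat$ is an existential matroid on $\monster$ with definable dimension.

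Next I would identify $\mat$ with $\acl$ (computed in $\monster$). The inclusion $\acl \subseteq \mat$ holds because $\mat$ satisfies Existence (Remark after Lemma~\ref{lem:cl-E}). For the reverse, I would argue that if $b \in \mat(A)$, then $b$ lies in an $\monster$-definable set $X$ over $A$ with $\dim'(X) = 0$; a dimension-zero definable subset of $\monster'$ is, by the Existence characterization and the description of narrow formulas, contained in $\acl'$ of its parameters, hence $X \subseteq \acl'(A)$; but $X$ is definable already in $\monster$ with parameters in $A$, and $\acl'(A) \cap \monster$-definable-closure considerations — more directly, any element of $\monster$-definable set of $\dim'$-dimension $0$ is in $\acl'(A) = \mat'(A)$, and since $\acl'$ restricted to parameters and elements of $\monster$ over $A$ agrees with $\acl$ (algebraic closure only depends on the elementary diagram, and $b$ algebraic over $A$ in $\monster'$ via an $\monster$-formula is algebraic over $A$ already in $\monster$) — we get $b \in \acl(A)$. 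Hence $\mat = \acl$ on $\monster$, so $\acl$ is an existential matroid on $\monster$, in particular a matroid satisfying EP, so $\monster$ is pregeometric, and (expanding a field) geometric.

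**Main obstacle.** The delicate point is the identification $\mat = \acl$, specifically showing $\mat(A) \subseteq \acl(A)$: one must check that a $\dim'$-dimension-zero set which happens to be $\Lang$-definable over $A$ consists of elements algebraic over $A$ \emph{in $\monster$}. The clean way is: such a set $X$ satisfies $F(X^4) \neq \monster'$ (Lemma~\ref{lem:cl-ring-function} for $\monster'$, since $\dim'(X)=0$ means $\dim'(X) \le 0$), hence $X$ is finite or, more precisely, the defining $\Lang$-formula $\phi(x,\av)$ is $x$-narrow for $\acl'$; but being $x$-narrow for $\acl'$ means $\phi(\monster',\av) \subseteq \acl'(\av)$, and since $\phi$ is an $\Lang$-formula and $\av \subset \monster$, finiteness of $\phi(x,\av)$ in $\monster'$ is equivalent (by elementarity / the $\exists^\infty$ quantifier, which $T^n$ matters less here) to finiteness in $\monster$ — here one uses that $\monster' \succ \monster$ as $\Lang$-structures is not literally true but $\monster'$ is an $\Lang'$-expansion of $\monster$ so $\monster$-definable sets are the same set, hence $\phi(\monster, \av) = \phi(\monster', \av) \cap \monster = \phi(\monster',\av)$ is finite, giving $X \subseteq \acl(\av) \subseteq \acl(A)$. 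Once this bookkeeping about which structure ``algebraic'' is computed in is handled, the rest is immediate from the cited results.
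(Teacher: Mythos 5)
Your proof is correct and follows exactly the route the paper intends: pass from $\acl'$ (an existential matroid on $\monster'$ by pregeometricity) through Lemma~\ref{lem:cl-expansion} to an existential matroid $\mat$ on $\monster$, then identify $\mat=\acl$ via the facts that Existence gives $\acl\subseteq\mat$ and, conversely, a $\dim'$-dimension-zero set that is $\Lang(A)$-definable is a finite set in both reducts, so its elements lie in $\acl(A)$ computed in $\monster$. One small wording slip: you say ``since $\monster$ expands a field,'' but the hypothesis is only a ring without zero divisors; the needed implication (pregeometric $\Rightarrow$ geometric, and definability of dimension) comes from Lemma~\ref{lem:cl-ring-function}, which already holds at that generality.
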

Compare the above corollary with \cite[Corollary~2.38 and Example~2.40]{adler}.


\section{Extension to  imaginary elements}
\label{sec:imaginary}

Again, $\monster$ is a monster model of a complete theory~$T$, and $\mat$ is
an existential matroid on~$\monster$.
Let $\monstereq$ be the set of imaginary elements, and $\Teq$ be the theory of
$\monstereq$.
Our aim is to extend to~$\monstereq$ the matroid $\mat$ to a closure operator
$\mateq$  and the rank  $\rkmat$ to a ``rank function'' $\rkmatt$ (see
Definition~\ref{def:rank}). 

Notice that $\acleq$ is a closure operator on $\monstereq$
extending~$\acl$; 
however, if $\mat = \acl$, then in general $\mateq \neq \acleq$; 
hence, when $\mat = \acl$ , we will have to pay attention not to confuse the
two possible extensions of $\mat$ to~$\monstereq$ (\cf
Remark~\ref{rem:acleq}). 

On the other hand, by $\dcleq$ we will always denote the usual extension of
$\dcl$ to imaginary element: $a \in \dcl(b)$ if $\Xi(a/B) = \set{a}$.

We will start with the definition of $a \in \mateq(B)$ when $a$ is real and
$B$ is imaginary.

\begin{definizione}\label{def:matt-1}
Let $B$ be a set of imaginary elements (of cardinality less than~$\kappa$),
and $a$ be a real element.
We say that $a \in \matt(B)$ iff $\Xi(a/B)$ has finite $\rkmat$.
\end{definizione}

\begin{remark}
If $a$ and $B$ are real, then $a \in \matt(B)$ iff $a \in \mat(B)$.
\end{remark}

\begin{remark}
If $a$ is real a $B$ is imaginary, then $a \in \matt(B)$ iff 
$\rkmat(\Xi(a / B)) < \kappa$.
\end{remark}
\begin{proof}
Assume that $\rkmat(\Xi(a / B)) \geq \aleph_0$; we want to show that
$\rkmat(\Xi(a / B)) \geq \kappa$.
For every $\lambda < \kappa$, consider the type in $\lambda$ variables $p(\x)$
over~$B$, saying that, for every $i < \lambda$,  $x_i \elem_B a$,
and the $x_i$ are $\mat$-independent.
By assumption, $p(\x)$ is consistent, and hence satisfied in~$\monster$ by
$\lambda$-tuple $(a_i)_{i < \lambda}$; thus, the $a_i$ are $\lambda$-independent
elements in $\Xi(a / B)$. Therefore, $\rkmat(\Xi(a / B)) > \lambda$ for
every $\lambda < \kappa$; since $\kappa$ is a limit cardinal, we are done.
\end{proof}

Recall that $\monster$ has geometric elimination of imaginaries if every
for imaginary tuple $\av$ there exists a real tuple $\bv$ such that $\av$ and
$\bv$ are inter-algebraic.
If $\monster$ had geometric elimination of imaginaries, we could define
$\av \in \matt(B)$ iff there exists a real tuple $\cv$ such that
$\av \in \acleq(\cv)$ and $\cv \subset \matt(B)$.
In the general case, we need a more involved definition and some preliminary
lemmata.

\begin{lemma}[Exchange Principle {\cite[3.1]{gagelman}}]
$\matt$ satisfies the Exchange Principle for real points over imaginary
parameters.
That is, for $a$ and $b$ real elements and $C$ imaginary, 
if $a \in \matt(b C) \setminus \matt(C)$, then $b \in \matt(a C)$.
\end{lemma}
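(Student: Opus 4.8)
The statement is the Exchange Principle for $\matt$ applied to real points $a$, $b$ over an imaginary base $C$. Unwinding the definition, the hypothesis $a \in \matt(bC) \setminus \matt(C)$ means $\rkmat(\Xi(a/bC)) < \kappa$ (equivalently, is finite by the remark above) while $\rkmat(\Xi(a/C)) \geq \kappa$; the goal is $\rkmat(\Xi(b/aC)) < \kappa$. The idea is to reduce the whole problem to the ordinary Exchange Principle for $\mat$ on real tuples (which holds since $\mat$ is a matroid) by replacing the imaginary base $C$ with a sufficiently rich \emph{real} tuple that captures the relevant part of $\dcleq(C)$.

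First I would fix a real tuple $\cv$ witnessing the relevant algebraic content of $C$: more precisely, since $\Xi(a/bC)$ has finite rank, finitely many conjugates of $a$ over $bC$ generate $\matt(a/bC)$, and each such conjugation is realised by an automorphism fixing $bC$; using finitariness of $\mat$ and the fact that $\matt$ on reals over $C$ is controlled by $\mat$ over a real tuple inside $\matt(C)$, I would choose a finite real tuple $\cv \subset \matt(C)$ such that $a \in \mat(b\cv)$ and, simultaneously, $\cv$ is ``$C$-invariant enough'' that $\Xi(b/aC) \subseteq \acl$-class of $\Xi(b/a\cv)$ up to finite rank. Concretely, one wants: (i) $a \in \mat(b\cv)$, and (ii) for the \emph{real} Exchange conclusion $b \in \mat(a\cv)$ to translate back, one needs $a \notin \mat(\cv')$ for any real $\cv' \subset \matt(C)$ — but this follows from $\rkmat(\Xi(a/C)) \geq \kappa$, since if $a$ were in $\mat$ of some real subset of $\matt(C)$ then $\Xi(a/C)$ would have rank bounded by the rank of that real tuple, contradiction.

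So the key steps are: (1) translate the three conditions into rank statements about $\Xi(\cdot)$, using the remark that for $a$ real, $a\in\matt(B)$ iff $\rkmat(\Xi(a/B))<\kappa$; (2) pick a finite real $\cv$ with $a\in\mat(b\cv)$ and $\cv\subset\matt(C)$ by finitariness; (3) verify $a\notin\mat(\cv)$ — indeed $a \notin \matt(C)$ forces $\rkmat(\Xi(a/\cv)) \geq \rkmat(\Xi(a/C)) \geq \kappa$ (conjugates over $C$ that stay conjugate over $\cv$... careful here), hence $a\notin\mat(\cv)$; (4) apply the ordinary EP for $\mat$ on reals to get $b\in\mat(a\cv)$; (5) conclude $\rkmat(\Xi(b/aC)) \leq \rkmat(\Xi(b/a\cv)) \leq \rkmat(\cv/a) < \kappa$, so $b \in \matt(aC)$. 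Several of these are cited in \cite[3.1]{gagelman} so I would follow that reference for the bookkeeping.

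\textbf{Main obstacle.} The delicate point is step (3)/(5): relating $\Xi(a/C)$ (conjugates under $\aut(\monster^{\mathrm{eq}}/C)$) to $\Xi(a/\cv)$ (conjugates under automorphisms fixing the real tuple $\cv$) when $\cv$ is only \emph{inside} $\matt(C)$ rather than definable over $C$ or vice versa. One must ensure that enough $C$-conjugates of $a$ survive as $\cv$-conjugates so that the rank does not drop below $\kappa$ when passing from $C$ to $\cv$, and dually that in the conclusion the $aC$-conjugates of $b$ do not exceed the $a\cv$-conjugates in rank. This requires choosing $\cv$ with some saturation/homogeneity care — essentially picking $\cv$ realising a type over $C$ that is ``independent'' in the appropriate sense — and is exactly the content that makes the imaginary case harder than the real case; I expect this to be where the real work of \cite[3.1]{gagelman} lies.
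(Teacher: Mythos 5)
The crux of your argument is step (2), and it does not go through. From $a \in \matt(bC)$, i.e.\ $\rkmat(\Xi(a/bC))$ finite, finitariness gives you a finite real basis $a_1, \dotsc, a_n$ of $\Xi(a/bC)$ with $a \in \mat(a_1 \dotsc a_n)$; but these $a_i$ are conjugates of $a$ over $bC$, hence lie in $\matt(bC)$ and typically \emph{outside} $\matt(C)$ (they are conjugate to $a$, which is not in $\matt(C)$). Nothing produces a real tuple $\cv \subset \matt(C)$ with $a \in \mat(b\cv)$. Asserting that such a $\cv$ always exists amounts to the identity $\matt(bC) \cap \monster = \mat\bigl(b \cup (\matt(C) \cap \monster)\bigr)$, which is essentially geometric elimination of imaginaries --- precisely the hypothesis the paper points out it must avoid in the paragraph preceding the lemma. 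A concrete failure mode: if $a \in \dcleq(bC)$ (so $a \in \matt(bC)$ trivially) via a $C$-definable function whose code leaves no real trace in $\matt(C)$, no such $\cv$ need exist. (Your steps (3) and (5), which you flag as the delicate ones, are in fact repairable without any orbit comparison: from $\cv \subset \matt(C)$ one gets $\Xi(a/C) \subseteq \mat(\Xi(\cv/C))$ and $\Xi(b/aC) \subseteq \mat(a\,\Xi(\cv/C))$, both of finite rank. The unfixable point is the existence of $\cv$.)

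The paper realifies the base in the opposite direction: it chooses a real set $B$ with $\Xi(a/bC) \subseteq \mat(B)$ and $C \subseteq \dcleq(B)$, so that $B$ sits \emph{above} $C$ --- always possible, since every imaginary is definable from real tuples. The cost is that $a$ and $b$ need not be generic over $B$; this is repaired by conjugating over $C$: since $a \notin \matt(C)$, some $a' \in \Xi(a/C)$ satisfies $a' \notin \mat(B)$, and the contradiction hypothesis $b \notin \matt(aC)$ yields a matching $b'$ with $b' \notin \mat(a'B')$ for a suitable $C$-conjugate $B'$ of $B$; then $C \subseteq \dcleq(B')$ forces $\Xi(a'/b'B') \subseteq \Xi(a'/b'C) \subseteq \mat(B')$, and the ordinary Exchange Principle for $\mat$ over the real base $B'$ gives the contradiction. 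To salvage your outline you would need to replace ``a real tuple inside $\matt(C)$'' by ``a real set over which $C$ is definable'' and add this conjugation step.
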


\begin{proof}
Let $a$, $b$, and $C$ as in the hypothesis, and assume, for contradiction,
that $b \notin \matt(a C)$.
Let $B$ be a real set (of cardinality less than~$\kappa$), 
such that $\Xi(a / b C) \subseteq \mat(B)$.
By enlarging~$B$, \wloG we can assume that $C \subseteq \dcleq(B)$.
Since $a \notin \matt(C)$, $\Xi(a / C) \nsubseteq \mat(B)$, and therefore
there exists $\sigma \in \aut(\monster/ C)$ such that 
$a' := a^\sigma \notin \mat(B)$.
Since $b \notin \matt(a C)$, we have $b^\sigma \notin \matt(a' C)$, 
and therefore there exists
$b' \elem_{a' C} b^\sigma$ such that
$b' \notin \mat(a' B)$.

Notice that $a' b' \elem_C a b$; let $\mu \in \aut(\monster / C)$ such that
$a' = a^\mu$ and $b' = b^\mu$, and define $B' := B^\mu$.
Since $\Xi(a / b C) \subseteq \mat(B)$, we have 
$\Xi(a' / b' C) \subseteq \mat(B')$. 
Moreover, since $C \subseteq \dcleq(B')$, we have 
$\Xi(a' / b' B') \subseteq \Xi(a' / b' C) \subseteq \mat(B')$.
Thus, $a' \in \mat(b' B) \setminus \mat(B)$; hence, since $a'$, $b'$, and $B$
are real, $b' \in \mat(a' B)$, absurd.
\end{proof}

It is relatively easy to also prove the following:
\begin{lemma}[Transitivity]
$\matt$ is transitive for real sets over imaginary parameters:
that is, if $A$ is a imaginary,$\bv$~is a tuple of reals, and $c$ is real, 
such that $\bv \subseteq \matt(A)$ and $c \in \matt(A \bv)$, 
then $c \in \matt(A)$.
\end{lemma}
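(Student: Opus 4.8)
The plan is to reduce the statement about transitivity of $\matt$ for reals over imaginary parameters to the already-established transitivity of $\mat$ for reals over \emph{real} parameters (which is part of the Exchange Principle / pregeometry structure of $\mat$ on $\monster$). The strategy mirrors the proof of the Exchange Principle just given: replace the imaginary base $A$ by a real set $B$ large enough to ``absorb'' all the relevant algebraic-closure information, and then compute ranks of orbits inside $\monster$.

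First I would fix real tuples $\bv$ and $c$ and an imaginary set $A$ with $\bv \subseteq \matt(A)$ and $c \in \matt(A\bv)$; this means, by Definition~\ref{def:matt-1}, that $\rkmat(\Xi(\bv/A))$ is finite and $\rkmat(\Xi(c/A\bv))$ is finite. Choose a real set $B$ (of cardinality less than $\kappa$) with $\Xi(c/A\bv) \subseteq \mat(B)$ and $\Xi(\bv/A) \subseteq \mat(B)$, and enlarge $B$ so that $A \subseteq \dcleq(B)$ (this is the key move that makes conjugation over $A$ controllable). The goal is then to show $\Xi(c/A)$ has finite $\rkmat$, equivalently (by the Remark after Definition~\ref{def:matt-1}) that $\rkmat(\Xi(c/A)) < \kappa$; I would in fact aim to show directly that $\Xi(c/A) \subseteq \mat(B)$ after a suitable further enlargement, or that its rank is bounded by $\rkmat(B)$.

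The core computation: take any $c' \in \Xi(c/A)$, say $c' = c^\sigma$ for $\sigma \in \aut(\monster/A)$. Since $A \subseteq \dcleq(B)$ is \emph{not} preserved by $\sigma$ in general, I instead argue as in the Exchange Principle proof — I would like to find $\mu \in \aut(\monster/A)$ matching the relevant data and set $B' := B^\mu$, so that $\Xi(c'/A\bv') \subseteq \mat(B')$ where $\bv' = \bv^\mu$, and $\bv' \in \Xi(\bv/A) \subseteq \mat(B)$. Then $c' \in \mat(B'\bv') $, and I must relate $\mat(B')$ and $\mat(B)$. Here the finite-rank hypotheses enter: the orbit $\Xi(\bv/A)$ has finite $\mat$-rank, so all its conjugates $\bv'$ lie in $\mat(B)$ (using $A \subseteq \dcleq(B)$, hence $\Xi(\bv'/A) = \Xi(\bv/A) \subseteq \mat(B)$), and similarly one controls $\Xi(c'/A\bv')$. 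Combining, $c' \in \mat(B \cup \{\text{finitely many conjugates}\})$, and since there are only finitely many $\mat$-independent conjugates to account for (finite rank), one gets $\rkmat(\Xi(c/A))$ bounded by $\rkmat(B) + \rkmat(\Xi(\bv/A)) < \kappa$, using additivity of rank (Lemma~\ref{lem:Lascar}) and transitivity of $\mat$ for real sets over real parameters.

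The main obstacle I expect is the bookkeeping around automorphisms and the base $B$: ensuring that after passing to $B' = B^\mu$ one still has $A \subseteq \dcleq(B')$ (which holds since $\mu$ fixes $A$) and that $\Xi(c'/A\bv') \subseteq \mat(B')$, while simultaneously keeping $B$ itself available so that the two pieces of data live over a common real set. The clean way is probably to first enlarge $B$ to contain a full $\mat$-basis of $\Xi(\bv/A)$ over $\emptyset$ and to be $\aut(\monster/A)$-invariant up to $\mat$-interalgebraicity, so that $\bv' \in \mat(B)$ for \emph{all} conjugates $\bv'$; then the argument collapses to: $c' \in \mat(B \bv') = \mat(B)$ for every conjugate $c'$, giving $\Xi(c/A) \subseteq \mat(B)$ outright, hence finite $\rkmat$. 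Verifying that such an invariant real $B$ can be chosen (of size $< \kappa$) is the one delicate point, but it follows from the finiteness of $\rkmat(\Xi(\bv/A))$ together with homogeneity of $\monster$.
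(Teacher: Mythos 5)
Your setup is in the right spirit, but the step you flag as ``the one delicate point'' is a genuine gap, and the proposed patch in your last paragraph is circular rather than delicate. Making $B$ $\aut(\monster/A)$\hyph invariant up to $\mat$\hyph interalgebraicity means $\Xi(b/A)\subseteq\mat(B)$ for every $b\in B$, equivalently $\mat(B^\sigma)\subseteq\mat(B)$ for every $\sigma\in\aut(\monster/A)$. Applied to any element $b_0$ of $\Xi(c/A\bv)$, which lies in $\mat(B)$ by your choice of $B$, this gives $\Xi(c/A)=\Xi(b_0/A)\subseteq\mat(B)$ outright (as $A$\hyph orbits partition $\monster$), which is precisely the conclusion; so asking for this invariance is asking for what is to be proved. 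The claimed justification --- ``finiteness of $\rkmat(\Xi(\bv/A))$ together with homogeneity'' --- only controls the part of $B$ spanning $\Xi(\bv/A)$; it says nothing about the real tuples you must put into $B$ to secure $A\subseteq\dcleq(B)$ (whose orbits over $A$ can have $\rkmat\ge\kappa$), nor about a basis of $\Xi(c/A\bv)$. Relatedly, the intermediate assertion ``$c'\in\mat(B\cup\{\text{finitely many conjugates}\})$'' is never actually derived: what you have is $c'\in\mat(B^\mu)$, and without further input $B^\mu$ is not related to $B$.

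The missing ingredient is the Existence axiom, in the form of Lemma~\ref{lem:cl-E}. Take $B$ real with $\card{B}<\kappa$, $A\subseteq\dcleq(B)$, and $\Xi(b_i/A)\subseteq\mat(B)$ for every coordinate $b_i$ of $\bv$; this is possible since each $b_i\in\matt(A)$, and one does not even need $\Xi(c/A\bv)\subseteq\mat(B)$. Fix $\sigma\in\aut(\monster/A)$ and set $c':=c^\sigma$, $\bv':=\bv^\sigma$. Then $\Xi(c'/A\bv')=\sigma\bigl(\Xi(c/A\bv)\bigr)$ has the same finite $\rkmat$ as $\Xi(c/A\bv)$, and since $A\subseteq\dcleq(B\bv')$ we get $\Xi(c'/B\bv')\subseteq\Xi(c'/A\bv')$, so $\rkmat\bigl(\Xi(c'/B\bv')\bigr)$ is finite. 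As $B\bv'$ is a real set, condition~(6) of Lemma~\ref{lem:cl-E} yields $c'\in\mat(B\bv')$. Each entry $b_i^\sigma$ of $\bv'$ lies in $\Xi(b_i/A)\subseteq\mat(B)$, so $\mat(B\bv')=\mat(B)$ and $c'\in\mat(B)$. Since $\sigma$ was arbitrary, $\Xi(c/A)\subseteq\mat(B)$, whence $\rkmat(\Xi(c/A))\le\card{B}<\kappa$, and the remark following Definition~\ref{def:matt-1} upgrades this to finiteness, \ie $c\in\matt(A)$. This appeal to Lemma~\ref{lem:cl-E} is what your write-up omits; without it there is no route from the conjugated base $B^\mu$ back to $B$.
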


\begin{definizione}\label{def:rank}
Let $X$ be a set and $\RK$ be a function from finite subsets of $X$ to~$\N$.
Let $\av$ and $\bv$ vary among the finite tuples of $X$ and $B$, $C$ among the
subsets of~$X$.
$\RK$ is a rank function if it satisfies the following conditions:
\begin{description}
\item[Finite character:]  for all $\av$ and $B$ there exists
$B' \subseteq B$ finite,  such that $\RK(\av / B') = \RK(\av /B)$.
\item[Additivity:] for every $\av$, $\bv$, and~$B$,
$\RK(\av \bv /B) = \RK(\av / \bv C) + \RK(\bv / C)$.
\item[Transitivity:]  for all $B \subseteq C$ and all~$a$,
$\RK(\av / C) \leq \RK(\av / B)$.
\end{description}
\end{definizione}

\begin{corollary}[\cite{gagelman}]\label{cor:rk-real-im}
\begin{itemize}
\item 
For any real tuple $\bv$ and imaginary set~$A$, any two maximally
$A$-$\matt$-independent sub-tuples of $\bv$ have the same cardinality; thus,
we may use the notation $\rkmat( - / -)$ accordingly, as long as the first
argument is real.
\item
The function $\rkmat$ defined above coincides with the usual one when both
arguments are real.
\item
The function $\rkmat$ has Finite Character, is Additive and Transitive, as long
as the first argument is real.
\item
$\rkmat$ satisfies Extension (for real first argument):
if $\av$ is real and $B$ and $C$ are imaginary sets,
then there exists $\av' \elem_C \av$ such that 
$\rkmat(\av'/ B C) = \rkmat(\av/ B)$.
\item
If $\av$ is real, then $\av \in \matt(C)$ iff $\rkmat(\av / C) = 0$.
\end{itemize}
\end{corollary}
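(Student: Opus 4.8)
The plan is to read off from the two lemmata just established --- Exchange and Transitivity of $\matt$ for real tuples over imaginary parameters --- that for a \emph{fixed} imaginary parameter set $B$ the operator $X \mapsto \set{a \in \monster : a \in \matt(BX)}$ on subsets $X$ of the reals is a finitary pregeometry, and then to deduce the bullets from standard pregeometry theory, following \cite{gagelman}. The closure-operator axioms are accounted for as follows: extension and monotonicity are immediate from Definition~\ref{def:matt-1}, idempotency is the Transitivity lemma, Exchange is the preceding lemma, and finitariness --- like the rest of the package for $\matt$ over imaginary bases --- follows from the corresponding facts about $\mat$ by a compactness argument, the key point being the Remark after Definition~\ref{def:matt-1}, which lets one replace a real set of finite $\rkmat$ by a finite independent subset. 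Granting this, the first bullet, i.e.\ that any two maximal $\matt$-independent sub-tuples of a real $\bv$ over $B$ have the same size, is the Steinitz exchange argument and licenses the definition of $\rkmat(\bv/B)$ as this common size; the second bullet holds because for real $B$ the operator above is just $\mat$ (again by the Remark after Definition~\ref{def:matt-1}), so independence, bases and ranks reduce to the old ones.

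The remaining parts of the third bullet are then routine. Base monotonicity, $\rkmat(\av/C) \le \rkmat(\av/B)$ for $B \subseteq C$, is immediate from monotonicity of $\matt$, since an independent-over-$C$ sub-tuple is a fortiori independent over $B$. Finite Character follows by taking a maximal $\matt$-independent sub-tuple $\av_0 \subseteq \av$ over $B$ (which stays independent over every $B' \subseteq B$) and choosing, for each remaining coordinate $a$ of $\av$, a finite $B_a \subseteq B$ with $a \in \matt(B_a \av_0)$ using finitariness; the union of the $B_a$ works. Additivity is the Lascar computation of Lemma~\ref{lem:Lascar} carried out inside the pregeometry $\matt(B\,\cdot)$: extend a basis of $\bv$ over $BC$ to a basis of $\av\bv$ over $BC$ and count using Exchange. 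Finally, the last bullet --- for real $\av$, $\av \in \matt(C)$ iff $\rkmat(\av/C) = 0$ --- is immediate from the definition of $\rkmat$ together with Additivity.

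This leaves the Extension bullet, the one genuinely nontrivial point, since it uses more than matroid theory: it is the statement that the Existence property of $\mat$ on the reals survives passage to an imaginary base. I would prove it by fixing a small \emph{real} set $D$ over which the $\matt$-closures of $\av$ relevant to $B$ and $C$ are already witnessed --- possible since any real set of finite $\rkmat$ embeds into $\mat$ of a finite real set --- realising $\tp(\av/C)$ by a type over $DC$ that keeps the rank over $BD$ maximal (invoking Existence for $\mat$ over the real base $D$ through Lemma~\ref{lem:cl-E}), and concluding with Additivity and base monotonicity. The main obstacle is exactly here: since $\monster$ is not assumed to have even geometric elimination of imaginaries, one cannot replace $C$ by an interalgebraic real tuple, so one must argue throughout with $\Xi$-orbits and with real approximations of the imaginary parameters rather than with the parameters themselves.
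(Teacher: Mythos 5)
The paper offers no proof of this corollary: it attributes it to \cite{gagelman} and supplies, just before it, the two lemmata (Exchange and Transitivity of $\matt$ for real tuples over imaginary parameters) as the needed ingredients. Your plan — turn $X \mapsto \matt(BX) \cap \monster$ for a fixed imaginary $B$ into a finitary pregeometry on the reals and read off the first three bullets and the last one from standard matroid theory — is exactly that implicit route, and your treatment of those four bullets is sound; in particular the compactness argument for finitariness via the Remark after Definition~\ref{def:matt-1} works.

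The genuine gap is in the Extension bullet, and it starts with the statement itself. You take the paper's wording at face value, $\av' \equiv_C \av$ with $\rkmat(\av'/BC) = \rkmat(\av/B)$, but that statement is \emph{false} as printed: take $B = \emptyset$, $\av = a$ with $a \notin \mat(\emptyset)$, and $C := \set a$; then any $\av' \equiv_C \av$ is $a$ itself, so $\rkmat(\av'/BC) = 0 \neq 1 = \rkmat(\av/B)$. Comparing with Lemma~\ref{lem:cl-E}(3) and with the Extension axiom for independence relations shows the intended requirement is $\av' \equiv_B \av$, not $\av' \equiv_C \av$. Your sketch (``realising $\tp(\av/C)$ by a type over $DC$ that keeps the rank over $BD$ maximal'') inherits the misprint and, if followed literally, cannot succeed: once $\tp(\av'/C)$ is fixed, $\rkmat(\av'/BC) \leq \rkmat(\av/C)$ is already forced, and this can be strictly less than $\rkmat(\av/B)$. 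The strategy you outline — choose a small real $D$ witnessing the relevant $\matt$-closures, invoke Existence of $\mat$ over the real base $D$ (this is where Lemma~\ref{lem:cl-E} actually enters), then conclude by Additivity and Transitivity — is the right one once aimed at the corrected statement $\av' \equiv_B \av$, and you are right that the delicate point is controlling the $\Xi$-orbit of $\av$ over the imaginary $B$ by real-parameter formulae over $D$; but as written the sketch targets, and would prove at most, the wrong statement. Rewrite the Extension argument against the corrected claim and spell out the passage from the imaginary base $B$ to the real witnessing set $D$.
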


\begin{definizione}
Let $A$ and $\av$ be imaginary.
Choose $\bv$ real, such that $\av \subseteq \acleq(A \bv)$.
Define $\rkmat(\av / A) := \rkmat(\bv / A) - \rkmat(\bv / \av A)$.
\end{definizione}

\begin{lemma}[{\cite[3.3]{gagelman}}]
The above definition does not depend on the choice of~$\bv$.
Moreover, $\rkmat(\av / A)$ is a natural number, and coincides with the one
given in Corollary~\ref{cor:rk-real-im} when $\av$ is real.
Finally, $\rkmat$ is a rank function on $\monstereq$.
\end{lemma}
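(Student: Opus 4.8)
The final lemma asserts that for imaginary $\av$ and $A$, choosing a real tuple $\bv$ with $\av \subseteq \acleq(A\bv)$, the quantity $\rkmat(\av/A) := \rkmat(\bv/A) - \rkmat(\bv/\av A)$ is (i) independent of the choice of $\bv$, (ii) a natural number, (iii) agrees with the real-case value of Corollary~\ref{cor:rk-real-im} when $\av$ is real, and (iv) makes $\rkmat$ a rank function on $\monstereq$ in the sense of Definition~\ref{def:rank}. The plan is to follow the classical argument of Gagelman~\cite[3.3]{gagelman}, transplanted to the existential-matroid setting, using as black boxes the Exchange Principle and Transitivity for $\matt$ over imaginary parameters already established above, together with the Finite Character, Additivity, Transitivity and Extension properties of $\rkmat$ for real first argument from Corollary~\ref{cor:rk-real-im}.

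\textbf{Well-definedness and integrality.} First I would fix two real tuples $\bv_1, \bv_2$ with $\av \subseteq \acleq(A\bv_i)$ and show $\rkmat(\bv_1/A) - \rkmat(\bv_1/\av A) = \rkmat(\bv_2/A) - \rkmat(\bv_2/\av A)$. The standard trick is to reduce to the case $\bv_2 = \bv_1 \cv$ for an arbitrary real $\cv$ (two choices can always be compared through their concatenation) and then compute, using Additivity of $\rkmat$ for real arguments, that both differences equal the same value; the key point is that adding $\cv$ increases $\rkmat(\bv_1/A)$ and $\rkmat(\bv_1/\av A)$ by the \emph{same} amount, namely $\rkmat(\cv/\bv_1 A)$ versus $\rkmat(\cv/\bv_1 \av A)$, and these are equal because $\av \subseteq \acleq(A\bv_1) \subseteq \matt(\bv_1 A)$ makes $\av$ $\matt$-dependent over $\bv_1 A$, so $\cv \matt$-independence over $\bv_1 A$ and over $\bv_1 \av A$ coincide (this uses Transitivity/Additivity of $\rkmat$). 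For integrality, note $\rkmat(\bv/\av A) \le \rkmat(\bv/A)$ by Transitivity (larger base cannot raise rank), both are natural numbers, so the difference lies in $\N$.

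\textbf{Agreement with the real case and the rank-function axioms.} When $\av$ is itself real, taking $\bv := \av$ gives $\rkmat(\av/A) = \rkmat(\av/A) - \rkmat(\av/\av A) = \rkmat(\av/A) - 0$, using the last clause of Corollary~\ref{cor:rk-real-im} that $\av \in \matt(\av A)$, so the new definition restricts to the old one. For the rank-function axioms of Definition~\ref{def:rank}: \emph{Finite character} follows by choosing a finite $A' \subseteq A$ witnessing finite character simultaneously for $\rkmat(\bv/A)$ and $\rkmat(\bv/\av A)$ (intersect the two finite witnesses) and noting $\av \subseteq \acleq(A'\bv)$ can be arranged after enlarging $\bv$. \emph{Transitivity} ($B \subseteq C \Rightarrow \rkmat(\av/C) \le \rkmat(\av/B)$) is obtained by picking one $\bv$ that works for both $B$ and $C$ and subtracting the two instances of real-argument Transitivity $\rkmat(\bv/C) \le \rkmat(\bv/B)$ and $\rkmat(\bv/\av C) \le \rkmat(\bv/\av B)$ — here one must check the subtraction goes the right way, which again uses Additivity to rewrite the four quantities. \emph{Additivity} $\rkmat(\av\cv/B) = \rkmat(\av/\cv B) + \rkmat(\cv/B)$ is the delicate one: choose real $\bv_1$ with $\av \subseteq \acleq(\cv B \bv_1)$ and real $\bv_2$ with $\cv \subseteq \acleq(B\bv_2)$, set $\bv := \bv_1\bv_2$, and expand each of the three terms via the definition, then collapse the resulting alternating sum of six real-argument ranks using Additivity of $\rkmat$ on reals.

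\textbf{Main obstacle.} I expect the genuine work is entirely in the Additivity verification: bookkeeping which real tuple $\bv$ simultaneously "captures" $\av$, $\cv$, and $\av\cv$ over the relevant imaginary bases, and then checking that the telescoping cancellation of the six real-argument rank terms is valid — this requires Additivity of real $\rkmat$ over \emph{imaginary} bases, which is exactly the content provided by Corollary~\ref{cor:rk-real-im}, so no new matroid-theoretic input is needed, only careful algebra. The well-definedness step is a close second in difficulty; everything else (integrality, agreement, finite character, transitivity) is routine once those two are in place. Throughout, the only structural facts used beyond Corollary~\ref{cor:rk-real-im} are that $\acleq \subseteq \matt$ on reals over imaginaries and the Exchange Principle for $\matt$, both already available.
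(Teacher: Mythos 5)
Your proposal is correct and follows essentially the same route as the paper: well-definedness is reduced via concatenation to comparing $\bv$ with $\bv\bv''$ and then to the identity $\rkmat(\bv''/\bv A)=\rkmat(\bv''/\av\bv A)$ (which holds since $\av\subseteq\acleq(\bv A)$), and additivity is the same telescoping cancellation of real-argument ranks over imaginary bases supplied by Corollary~\ref{cor:rk-real-im}. The paper's proof only writes out these two points explicitly; your treatment of integrality, agreement with the real case, finite character and transitivity fills in the parts the paper leaves as routine, and does so correctly.
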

\begin{proof}
Let's prove that $\rkmat$ does no depend on the choice of $\bv$.
Let $\bv$ and $\bv'$ be real tuples, such t hat $\av \subseteq \acleq(A \bv)$
and $\av \subseteq \acleq(A \bv')$.
\Wlog, $\bv \subseteq \bv'$ and hence $\bv' = \pair{\bv, \bv''}$.
We must prove that $\rkmat(\bv \bv'' /A) - \rkmat(\bv \bv'' / \av A)
= \rkmat(\bv /A) - \rkmat(\bv / \av A)$.
The above is equivalent to
$\rkmat(\bv \bv'' /A) - \rkmat(\bv /A) =
\rkmat(\bv \bv'' / \av A)  - \rkmat(\bv / \av A)$.
But the left hand side is equal to $\rkmat(\bv \bv'' / \bv A)$, 
while the right hand side is equal to
$\rkmat(\bv \bv'' / \av \bv A)$.
Since $\av \in \acleq(\bv A)$, the two sides are equal.

Let's prove additivity.
Let $\av$ and $\av'$ be imaginary tuples and $C$~be a set of imaginary
elements.
We want to prove that $\rkmat(\av \av' /C) = \rkmat(\av / \av' C) + \rkmat(\av'/C)$.
Let $\bv$ and $\bv'$ be real tuples, such that $\av \subset \acleq(C \bv)$ and
$\av' \subset \acleq(C \bv')$.
We have to show that
$\rkmat(\bv \bv'/C) - \rkmat(\bv \bv' / \av \av' C) =
\rkmat(\bv / \av'C) - \rkmat(\bv / \av \av' C) +
\rkmat(\bv' / C) - \rkmat(\bv' / \av' C)$, that is
\begin{multline*}
- \rkmat(\bv \bv'/C) + \rkmat(\bv \bv' / \av \av' C) +
\rkmat(\bv / \av'C) - \rkmat(\bv / \av \av' C) + \\
+\rkmat(\bv' / C) - \rkmat(\bv' / \av' C) = 0.
\end{multline*}
The above is equivalent to
\begin{equation}
- \rkmat(\bv / \bv' C) + \rkmat(\bv' / \bv \av \av' C) +
\rkmat(\bv / \av'C) - \rkmat(\bv' / \av' C) = 0.\label{eq:rank-1}
\end{equation}
Let $C' := C \av'$.
Since $\av \subset \acleq(C \bv)$ and $\av' \subset \acleq(C \bv')$,
\eqref{eq:rank-1} is equivalent to
\[
- \rkmat(\bv / \bv' C')  + \rkmat(\bv' / \bv C') +
\rkmat(\bv / C') - \rkmat(\bv' / C') = 0.
\]
Finally, $\rkmat(\bv' / \bv C') + \rkmat(\bv / C')
= \rkmat(\bv \bv' /C') = \rkmat(\bv / \bv' C') + \rkmat(\bv' / C')$, 
and we are done.
\end{proof}

Finally, we define $a \in \matt(B)$ iff $\rkmat(\av / B) = 0$, where $a$ and
$B$ can be either real of imaginaries.

\begin{lemma}
The operator $\matt$ defined above is a closure operator, coincides with
$\mat$ for real elements, and extends the operator defined in~\ref{def:matt-1}.
\end{lemma}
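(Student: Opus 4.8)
The plan is to derive every assertion directly from the fact, just established, that $\rkmat$ is a rank function on $\monstereq$ in the sense of Definition~\ref{def:rank}, together with the comparison statements for real first arguments collected in Corollary~\ref{cor:rk-real-im}. Throughout, recall that by definition $a \in \matt(B)$ means $\rkmat(a/B) = 0$, where $a$ is a single (real or imaginary) element and $B$ a (real or imaginary) set of cardinality $< \kappa$.

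\textbf{Closure-operator axioms.} I would verify the four axioms of Section~\ref{sec:matroid} in turn. For \emph{extension}, if $a \in B$ then $a \in \acleq(B)$, so in the definition of $\rkmat(a/B)$ for imaginary $a$ we may take the auxiliary real tuple to be empty, obtaining $\rkmat(a/B) = \rkmat(\emptyset/B) - \rkmat(\emptyset/aB) = 0$ (using that the empty tuple has rank $0$, which itself follows from Additivity); hence $a \in \matt(B)$. For \emph{monotonicity}, if $B \subseteq C$ and $\rkmat(a/B) = 0$, then Transitivity gives $\rkmat(a/C) \leq \rkmat(a/B) = 0$. For \emph{finitariness}, if $\rkmat(a/B) = 0$ then Finite Character produces a finite $B_0 \subseteq B$ with $\rkmat(a/B_0) = \rkmat(a/B) = 0$, so $a \in \matt(B_0)$, while the reverse inclusion is monotonicity. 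For \emph{idempotency}, let $a \in \matt(\matt(B))$; by Finite Character choose a finite $A_0 = \{a_1, \dots, a_k\} \subseteq \matt(B)$ with $\rkmat(a/A_0) = 0$. Iterating Additivity and using Transitivity, $\rkmat(A_0/B) = \sum_{i} \rkmat(a_i / a_1 \cdots a_{i-1} B) \leq \sum_i \rkmat(a_i/B) = 0$; then $\rkmat(a A_0/B) = \rkmat(a / A_0 B) + \rkmat(A_0/B) \leq \rkmat(a/A_0) = 0$ by Transitivity, and finally $\rkmat(a/B) \leq \rkmat(a A_0/B) = 0$ by Additivity once more. Thus $a \in \matt(B)$ and $\matt(\matt(B)) = \matt(B)$.

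\textbf{Agreement with $\mat$ and with Definition~\ref{def:matt-1}.} For $a$ real and $B$ (real or imaginary), the last item of Corollary~\ref{cor:rk-real-im} says precisely that $a$ belongs to the operator of Definition~\ref{def:matt-1} (equivalently, $\Xi(a/B)$ has finite $\rkmat$ when $B$ is imaginary) if and only if $\rkmat(a/B) = 0$; this is exactly the statement that $\matt$ extends the operator of Definition~\ref{def:matt-1}. When $B$ is moreover real, by the second item of Corollary~\ref{cor:rk-real-im} the value $\rkmat(a/B)$ coincides with the usual matroid rank, which vanishes iff $a \in \mat(B)$ (alternatively, invoke the remark following Definition~\ref{def:matt-1}). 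Hence $\matt$ coincides with $\mat$ on real points, as required.

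\textbf{Main obstacle.} There is no genuinely hard step: each line is a single application of a rank-function axiom. The two points that deserve a word of care are (i) the extension axiom in the imaginary case $a \in B$, where one must observe that the real witnessing tuple in the definition of $\rkmat$ for imaginary first arguments may be chosen empty, and (ii) making sure we are entitled to use Additivity and Transitivity of $\rkmat$ with imaginary arguments — this is exactly what the lemma immediately preceding Corollary~\ref{cor:rk-real-im} (following \cite[3.3]{gagelman}) supplies, and I would cite it for that purpose.
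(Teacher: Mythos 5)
The paper states this lemma without proof, treating it as a routine consequence of the rank-function machinery just developed. Your verification is correct and is exactly the argument one would supply: each closure axiom and both compatibility claims reduce, as you show, to Finite Character, Additivity, and Transitivity of $\rkmat$ on $\monstereq$ together with the items of Corollary~\ref{cor:rk-real-im}, and the bookkeeping in the idempotency step (finite $A_0 \subseteq \matt(B)$, then $\rkmat(A_0/B)=0$, then $\rkmat(a A_0/B)=0$, then $\rkmat(a/B)\le\rkmat(a A_0/B)$) is the right chain. No gap.
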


\begin{remark}\label{rem:acleq}
Assume that $\monster$ is pregeometric structure and $\mat = \acl$.
Given $\bv$ a real or imaginary tuple, 
we have $\acleq(\bv) \subseteq \matt(\bv)$ and
$\matt(\bv) \cap \monster = \acleq(\bv) \cap \monster$.
However, it is not true in general that
$\matt = \acleq$: more precisely, $\matt = \acleq$ iff $\monster$
is surgical~\cite{gagelman}.
For instance, if $\monster$ is a model of the theory of $p$-adic fields,
then $\monster$ is geometric but not surgical;
the imaginary sort $\Gamma$ corresponding to the value group has dimension~$0$
but it is infinite; therefore, 
$\Gamma \subset \matt(\emptyset) \setminus \acleq(\emptyset)$.
\end{remark}


 \section{Density}
\label{sec:density}

Again, $\monster$ is a monster model of a complete theory~$T$, and $\mat$ is
an existential matroid on~$\monster$.

\begin{definizione}\label{def:dense}
Let $\K \preceq \monster$, and $X \subseteq \K$.
We say that $X$ is \intro{\cldense{}} in $\K$ if, for every $\K$-definable subset $U$ of~$\K$, if $\dimat(U) = 1$, then $U \cap X \neq \emptyset$.
We define $\mat_\K(X) := \mat(X) \cap \K$, and
we say that $X$ is \intro{\clclosed{}} in~$\K$ if $\mat_\K(X) = X$.
\end{definizione}

\begin{examples}
\begin{enumerate}
\item If $\K$ is geometric, then $X$ is dense in $\K$ iff $X$ intersects every
infinite definable subset of~$\K$.  
\item If $\K$ is strongly minimal, then $X$ is dense in $\K$ iff $X$
is infinite.
\item If $\K$ is o-minimal and densely ordered, then $X$ is dense in $\K$ in
the sense of the above definition iff $X$ is topologically dense in~$\K$ (this
is our motivation for the choice of the term ``dense'').
See also \S\ref{sec:dmin} for a generalisation of this example.
\end{enumerate}
\end{examples}

\begin{remark}
If $X \subset \K$ is \cldense (in~$\K$), and $a \in X$, then $X \setminus \set
a$ is also \cldense.
\end{remark}
\begin{proof}
If $U \subseteq \K$ is definable and of dimension~1, then $U \setminus \set a$
is also definable and of dimension~1.
\end{proof}

\begin{lemma}\label{lem:density-closure}
Let $X \subseteq \K \preceq \monster$.
If $X$ is \clclosed and \cldense in~$\K$, then $X \preceq \K$.
\end{lemma}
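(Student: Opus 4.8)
If $X \subseteq \K \preceq \monster$ is $\mat$-closed and dense in $\K$, then $X \preceq \K$.

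The plan is to apply the Tarski--Vaught test. Regard $X$ as an $\Lang$-substructure of $\K$ (it is closed under the field operations and all $\emptyset$-definable functions because it is $\mat$-closed and $\mat \supseteq \acl$); so I only need to show that whenever $\phi(x,\bv)$ is an $\Lang$-formula with $\bv \subset X$ and $\K \models \exists x\, \phi(x,\bv)$, there is a witness $a \in X$. Fix such a $\phi$ and let $U := \phi(\K,\bv) \subseteq \K$, a nonempty $X$-definable subset of $\K$. The dichotomy is on $\dimat(U)$.

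\medskip
\textbf{Case $\dimat(U) = 1$.} This is immediate from density of $X$ in $\K$: $U$ is a $\K$-definable (indeed $X$-definable) subset of $\K$ of dimension $1$, so by Definition~\ref{def:dense} $U \cap X \neq \emptyset$, giving the required witness.

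\medskip
\textbf{Case $\dimat(U) = 0$.} Here I would argue that $U \subseteq \mat(\bv) \subseteq \mat(X) \cap \K = X$, so \emph{every} element of $U$ lies in $X$, and since $U \neq \emptyset$ we are done. The inclusion $U \subseteq \mat(\bv)$ is exactly the content of the lemma characterising $\xnarrow$ formulae via dimension: $\dimat(\phi(\monster,\bv)) = 0$ forces $\phi(x,\y)$ to be $\xnarrow$ (Lemma~\ref{lem:type-def} together with the lemma stating $\phi$ is $\xnarrow$ iff $\dimat(\phi(\monster,\bv)) = 0$ for all $\bv$); hence every realisation in $\monster$ — a fortiori every realisation in $\K$ — of $\phi(x,\bv)$ lies in $\mat(\bv)$. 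One subtlety to address: $\dimat$ is computed in $\monster$, not in $\K$, but $\bv \subset \K \preceq \monster$ and $\phi(\K,\bv) = \phi(\monster,\bv) \cap \K$, and an $\xnarrow$ formula stays $\xnarrow$, so this causes no trouble; I should spell out that $U \neq \emptyset$ in $\K$ already suffices — we do not even need $\phi(\monster,\bv) \cap \K$ to be all of $\phi(\monster,\bv)$.

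\medskip
The only real obstacle is the bookkeeping that $X$ is genuinely an elementary substructure \emph{candidate} — i.e.\ that it is a substructure at all. For this I use that $\mat$ is a matroid extending $\acl$ (which holds since $\mat$ satisfies Existence, by the Remark after Lemma~\ref{lem:cl-E}), so $\mat(X) \supseteq \acl(X) \supseteq \dcl(X)$; as $X = \mat_\K(X) = \mat(X) \cap \K$ and $X \subseteq \K$, we get $\dcl(X) \cap \K \subseteq X$, and since $X \subseteq \K$ this means $X$ is closed under all $\emptyset$-definable functions of $\K$, in particular the ring operations. With that in hand the Tarski--Vaught test, dispatched by the two cases above, finishes the proof.
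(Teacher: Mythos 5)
Your proposal is correct and follows the paper's own proof essentially verbatim: apply Robinson's (Tarski--Vaught) test and split on $\dimat(U)$, using density when $\dimat(U)=1$ and $\mat$-closedness when $\dimat(U)=0$. The extra bookkeeping you supply (that $X$ is a substructure via $\dcl \subseteq \acl \subseteq \mat$, and the explicit reduction to $\xnarrow$ formulae for the dimension-zero case) is correct detail that the paper leaves implicit.
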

\begin{proof}
Robinson's test.
Let $A \subseteq \K$ be definable, with parameters from~$A$: we must show that
$A \cap X \neq \emptyset$.
If $\dimat(A) = 1$, this is true because $X$ is \cldense in~$\K$.
If $\dimat(A) = 0$,  this is true because $X$ is \clclosed in~$\K$.
\end{proof}

\begin{lemma}\label{lem:Td-consistent-GCH}
Let $\K \preceq \monster$ 
be a saturated model of cardinality $\lambda > \card T$.
Then, there exists $X \subset \K$ such that $X$ is a \clbasis of $\K$ and
$X$ is \cldense in~$\K$.
Moreover, there exists $\F \prec \K$ such that $\F$ is \clclosed and \cldense
in $\K$ and $\F$ is not equal to~$\K$.
\end{lemma}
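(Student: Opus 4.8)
The plan is to build the dense \clbasis $X$ by a transfinite enumeration argument, and then produce the proper dense \clclosed substructure $\F$ by a similar but simpler construction (or, better, by reusing $X$).

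For the first assertion, I would enumerate as $\pair{U_\alpha : \alpha < \lambda}$ all $\K$-definable subsets of $\K$ of dimension $1$ (there are exactly $\lambda$ of them, since $\card{\K} = \lambda > \card T$ and $\K$ has $\lambda$ parameters). I build an increasing chain of \clindependent sets $\pair{X_\alpha : \alpha < \lambda}$ with $\card{X_\alpha} \le \card\alpha + \card T < \lambda$, maintaining the invariant that $X_{\alpha+1} \cap U_\alpha \ne \emptyset$. At stage $\alpha+1$: since $\dimat(U_\alpha) = 1$, by Lemma~\ref{lem:dim-rk} we have $\rkmat(U_\alpha) \ge \kappa$, but more usefully, $U_\alpha \nsubseteq \mat(\mat(X_\alpha) \cup A_\alpha)$ where $A_\alpha$ is the (small) parameter set used so far together with the parameters of $U_\alpha$ — this is because a dimension-$1$ set cannot be contained in the \clclosure of a set of size $< \lambda = \card\K$, as that \clclosure meets $\K$ in a set of size $< \lambda$ whereas $\card{U_\alpha} = \lambda$ (here I use that $\K$ is saturated of size $\lambda$, hence $\dimat(U_\alpha) = 1$ forces $\card{U_\alpha} = \lambda$). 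Pick $a_\alpha \in U_\alpha \setminus \mat(X_\alpha \cup (\text{earlier parameters}))$ and set $X_{\alpha+1} := X_\alpha \cup \set{a_\alpha}$; take unions at limits. Then $X := \bigcup_\alpha X_\alpha$ is \clindependent by finite character of $\mat$ and is \cldense by construction. To upgrade $X$ to a \clbasis of $\K$, I then extend it to a maximal \clindependent subset of $\K$ (equivalently, keep going through an enumeration of all elements of $\K$, adjoining each one that is not yet in the \clclosure); a maximal \clindependent set generates $\K$, and adding elements preserves density since density only grows.

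For the second assertion, I would take $\F := \mat(X') \cap \K$ where $X'$ is obtained from the construction above but arranged so that $X'$ is a proper subset of a \clbasis of $\K$ — concretely, run the density construction to get a \cldense \clindependent $X$, then extend $X$ to a \clbasis $B$ of $\K$ with $B \setminus X \ne \emptyset$ (possible as long as $\rkmat(\K) \ge \card\K > \card X$, which holds since $\card\K = \lambda$ and $\card X < \lambda$... wait, this needs $\rkmat(\K)$ large, which follows from Lemma~\ref{lem:dim-rk} applied to $\K$ itself, or simply note $\K$ saturated of size $\lambda$ has a \clbasis of size $\lambda$). Set $\F := \mat(X) \cap \K$. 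By Lemma~\ref{lem:cl-small-relative}-style reasoning $\F$ is \clclosed in $\K$ by construction (it is the intersection of a \clclosure with $\K$, and $\mat(\mat(X) \cap \K) \cap \K \subseteq \mat(\mat(X)) \cap \K = \mat(X) \cap \K$); it is \cldense since $X \subseteq \F$ and $X$ is already \cldense; it is a substructure (indeed $\F \preceq \K$) by Lemma~\ref{lem:density-closure}; and $\F \ne \K$ since any element of $B \setminus X$ lies outside $\mat(X)$.

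\textbf{The main obstacle} is the density step: verifying that at each stage $\alpha$ one can actually choose $a_\alpha \in U_\alpha$ outside the \clclosure of everything accumulated so far. This is where saturation of $\K$ of cardinality $\lambda$ is essential — it guarantees $\dimat(U_\alpha) = 1$ translates into $\card{U_\alpha} = \lambda$, hence $U_\alpha$ cannot be swallowed by a \clclosure of a parameter set of size $< \lambda$ (whose trace on $\K$ has size $< \lambda$ by finitariness of $\mat$ and the fact that each $\mat(\text{finite})$ is, a priori, not small — so one must be slightly careful and instead argue: if $U_\alpha \subseteq \mat(S)$ with $\card S < \lambda$, then picking a \clbasis $S_0 \subseteq S$, every element of $U_\alpha$ is \claclalgebraic over a finite subtuple of $S_0$, and a counting/pigeonhole over the $< \lambda$ finite subsets of $S_0$ combined with each fiber being $0$-dimensional and hence of size $< \lambda$ in a $\lambda$-saturated model gives $\card{U_\alpha} < \lambda$, contradiction). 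I would spell out this counting argument carefully, as it is the technical heart; everything else is routine chain-building with finite character and Lemma~\ref{lem:density-closure}.
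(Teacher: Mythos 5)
Your high-level plan for the first assertion coincides with the paper's: enumerate all $\lambda$-many definable dimension-$1$ subsets of $\K$, transfinitely build an increasing chain of \clindependent sets hitting each of them, and then complete to a \clbasis. The justification you give at the crucial step, however, is wrong. You argue that $U_\alpha \nsubseteq \mat(S)$ for $\card S < \lambda$ by a counting/pigeonhole argument in which ``each fiber being $0$-dimensional and hence of size $< \lambda$ in a $\lambda$-saturated model.'' That is false for a general existential matroid: a definable set of $\dimat = 0$ over a small parameter set may well have cardinality $\lambda$ in a $\lambda$-saturated model of size $\lambda$ (for instance, in a model of $\Td$ with $\mat = \scl$, the predicate $\Am$ is $0$-dimensional yet co-small). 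The reasoning is only valid when $\mat = \acl$ and $\dimat = 0$ means finite. The correct argument --- the one the paper implicitly runs by asserting ``otherwise $\rkmat(A_\mu) < \lambda$, absurd'' --- is by rank and saturation: since $\dimat(U_\alpha) = 1$, the partial type ``$x \in U_\alpha$ and $x \notin \phi(\cdot,\sv)$ for every \xnarrow formula $\phi$ and every $\sv \subset S$'' is consistent (using Existence); it has $< \lambda$ formulae, so by $\lambda$-saturation of $\K$ it is realised inside~$\K$.

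For the second assertion there is a real gap. Your construction produces $X$ of cardinality $\lambda$ (you add one new $a_\alpha$ at each of $\lambda$ stages), and $\rkmat(\K) = \lambda$ by saturation, so $X$ may already \emph{be} a \clbasis of $\K$; in that case there is no proper extension of $X$ to a \clbasis $B$ with $B \setminus X \neq \emptyset$, and your $\F := \mat(X) \cap \K$ would be all of $\K$. You yourself flag hesitation here (``$\card X < \lambda$ \dots wait''), but the inequality $\card X < \lambda$ you need simply does not hold. The paper's fix goes in the opposite direction: rather than enlarging $X$, \emph{delete} one element. Complete the dense independent set to a \clbasis $X$, pick $a \in X$, set $Y := X \setminus \set a$ and $\F := \mat(Y) \cap \K$. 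The Remark preceding Lemma~\ref{lem:density-closure} (removing a single point from a \cldense set preserves \cldensity) guarantees $Y$ is still \cldense, and $a \notin \mat(Y)$ since $X$ is a basis, so $\F \subsetneq \K$. With that substitution, and the rank/saturation repair above, your argument matches the paper's.
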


\begin{proof}
%
Let $(A_i)_{i < \lambda}$ be an enumeration of all subsets of $\K$ which are
definable (with parameters from~$\K$) and of dimension~1.
Build a \clindependent sequence $(a_i)_{i < \lambda}$ inductively:
for every $\mu < \lambda$, we make so that
$(a_i)_{i < \mu}$ is \clindependent, and, for every $i < \mu$ there exists
$j < \mu$ such that $a_j \in A_i$. 
Let $i_\mu$ be the smallest index such that $A_{i_\mu}$ does not contain any
$a_i$ for $i < \mu$.

\begin{claim}
$i_\mu$ exists.
\end{claim}
Otherwise, $\K$ would have a basis of cardinality less than~$\lambda$,
contradicting the saturation hypothesis.

\begin{claim}
There exists $a_\mu \in A_{\mu}$ such that $a_\mu$ is \clindependent from
$(a_i)_{i < \mu}$.
\end{claim}
Otherwise, $\rkmat(A_\mu) < \lambda$, absurd.

Define $a_\mu$ as in the above claim.

By construction, $X' := \set{a_i :i < \lambda}$ is \clindependent and \cldense
in~$\K$; we can complete it to a \clbasis~$X$, which is also \cldense.%
\footnote{Is it true or not that $X'$ is already a basis?}

Choose $a \in X$, let $Y := X \setminus \set a$, and $\F := \mat(Y)$.
Since $X$ is \cldense, $Y$~is also \cldense, and therefore $\F$ is \cldense 
in~$\K$.
Moreover, since $X$ is a \clbasis, $a \notin \F$.
Finally, by Lemma~\ref{lem:density-closure}, $\F \prec \K$.
\end{proof}

The proof of the above lemma shows the following stronger results.
\begin{corollary}
Let $\K$ be as in Lemma~\ref{lem:Td-consistent-GCH}.
Let $c \in \K \setminus \mat \emptyset$.
Then, there exists $\F \prec \K$ \clclosed and \cldense in~$\K$,
such that $c \notin \F$.
\end{corollary}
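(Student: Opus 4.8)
The plan is to re-run the construction in the proof of Lemma~\ref{lem:Td-consistent-GCH}, keeping $c$ inside the $\mat$-independent set that is being built, so that at the end $c$ falls outside the $\mat$-span of the \cldense set produced. Write $\lambda=\card{\K}$ and let $(A_i)_{i<\lambda}$ enumerate all $\K$-definable subsets of $\K$ of dimension~$1$. I would construct, by recursion on $\mu<\lambda$, a sequence $(a_i)_{i<\lambda}$ in $\K$ such that, at each stage, $\set{a_i:i<\mu}\cup\set c$ is $\mat$-independent and, for every $i<\mu$, some $a_j$ with $j<\mu$ lies in $A_i$. At stage $\mu$, let $i_\mu$ be the least index for which $A_{i_\mu}$ contains no $a_i$ with $i<\mu$ --- it exists by the saturation of $\K$, exactly as in the cited lemma --- and pick $a_\mu\in A_{i_\mu}$ with $a_\mu\notin\mat\Pa{\set{a_i:i<\mu}\cup\set c}$. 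Such an $a_\mu$ exists, since otherwise $A_{i_\mu}\subseteq\mat\Pa{\set{a_i:i<\mu}\cup\set c}$ would force $\rkmat(A_{i_\mu})<\lambda$, contradicting $\dimat(A_{i_\mu})=1$ and $\card{\K}=\lambda$ (this is the same estimate, via Lemma~\ref{lem:dim-rk}, used in Lemma~\ref{lem:Td-consistent-GCH}). Adding at limit stages that a chain of $\mat$-independent sets has $\mat$-independent union, the invariant is preserved throughout.

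Having done this, set $X':=\set{a_i:i<\lambda}$. As in the cited lemma, the bookkeeping forces $X'$ to meet every $A_i$, so $X'$ is \cldense in~$\K$; and since each $a_\mu$ was taken outside $\mat\Pa{\set{a_i:i<\mu}\cup\set c}$, a short application of the Exchange Principle (together with $c\notin\mat\emptyset$ and finitariness of $\mat$) shows $c\notin\mat(X')$. Now put $\F:=\mat(X')\cap\K$. Then $\F$ is \clclosed in~$\K$, since $\mat(\F)\subseteq\mat(\mat(X'))=\mat(X')$; it is \cldense in~$\K$ because it contains the \cldense set $X'$; and $c\notin\F$ because $c\notin\mat(X')$. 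By Lemma~\ref{lem:density-closure}, $\F\prec\K$, and $\F\neq\K$ since $c\in\K\setminus\F$.

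I do not expect any real obstacle beyond what already appears in Lemma~\ref{lem:Td-consistent-GCH}: the only point to watch is that a dimension-$1$ definable subset of the saturated model $\K$ retains $\mat$-rank $\geq\lambda$ after the base is enlarged by the single parameter $c$, which is immediate from additivity of rank (Lemma~\ref{lem:Lascar}) and Lemma~\ref{lem:dim-rk}; the rest is the same transfinite recursion, and the step from ``each $a_\mu$ omitted from the $\mat$-span of its predecessors together with $c$'' to ``$c\notin\mat(X')$'' is routine matroid theory. One could equally complete $X'\cup\set c$ to a \clbasis $X$ of $\K$ and take $\F:=\mat(X\setminus\set c)\cap\K$, mirroring verbatim the last paragraph of the proof of Lemma~\ref{lem:Td-consistent-GCH}.
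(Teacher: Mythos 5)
Your proposal is correct and is essentially the paper's own argument: the paper proves this corollary simply by observing that the transfinite construction in Lemma~\ref{lem:Td-consistent-GCH} can be carried out relative to the extra element $c$ (equivalently, by arranging that $c$ belongs to the \clbasis $X$ and then taking $\F:=\mat(X\setminus\set c)$), which is exactly what you do. The rank estimate via Lemma~\ref{lem:dim-rk} and the final appeal to Lemma~\ref{lem:density-closure} match the paper's proof of the lemma verbatim.
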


Given $\K \models T$, and $X$, $Y$ subsets of~$\K$,
we say that $X$ is \cldense in $\K$ \wrt $Y$ if for every subset $U$ of~$\K$
definable with parameters from~$X$, 
if $\dimat(U) = 1$, then $U \cap X \neq \emptyset$.
\begin{lemma}\label{lem:Td-consistent}
There exists $\F \prec \K \models T$ such that $\F$ is a proper \cldense and \clclosed
subset of~$\K$.
\end{lemma}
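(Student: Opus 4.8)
The statement drops the saturation and cardinality hypotheses of Lemma~\ref{lem:Td-consistent-GCH}; it merely asserts the \emph{existence} of some elementary extension $\K$ of a proper \clclosed and \cldense substructure $\F$. The plan is to reduce to the saturated case. First I would invoke a suitable version of the upward Löwenheim--Skolem theorem together with the existence of saturated models: since $T$ is complete with infinite models, there exists (after possibly passing to a larger universe, or under mild set-theoretic hypotheses such as the existence of a suitable $\lambda$) a saturated model $\K \models T$ of some cardinality $\lambda > \card T$. Alternatively, to avoid any appeal to GCH, I would take a monster-style $\K$, \ie a $\lambda^+$-saturated and strongly $\lambda^+$-homogeneous model for $\lambda = \card T$; such a model exists outright in ZFC and satisfies every property of a ``saturated model of cardinality $\lambda$'' that the proof of Lemma~\ref{lem:Td-consistent-GCH} actually uses. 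The key point is that the argument in Lemma~\ref{lem:Td-consistent-GCH} only needs: (i) an enumeration $(A_i)_{i<\lambda}$ of the $\lambda$-many $\dimat$-one definable subsets, and (ii) that no basis of $\K$ has cardinality $<\lambda$, equivalently $\rkmat(\K)\geq\lambda$ — both of which hold for a suitably saturated model.

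Concretely, once such $\K$ is fixed, I would apply Lemma~\ref{lem:Td-consistent-GCH} verbatim: it produces $\F \prec \K$ with $\F$ \clclosed and \cldense in $\K$ and $\F \neq \K$. (The \clclosedness of $\F = \mat(Y)$ is immediate since $\mat$ is idempotent, so $\mat_\K(\F) = \mat(\mat Y)\cap\K = \mat(Y)\cap\K = \F$; and $\F \prec \K$ comes from Lemma~\ref{lem:density-closure}.) This $\F \prec \K \models T$ is exactly what the lemma demands. If one wishes to be scrupulous about not assuming any instance of GCH, I would instead cite the corollary already stated after Lemma~\ref{lem:Td-consistent-GCH}, or simply remark that the proof of Lemma~\ref{lem:Td-consistent-GCH} goes through once $\card\set{A_i : i < \lambda} \le \lambda$ and $\rkmat(\K) \ge \lambda$, both guaranteed by taking $\K$ to be $\card T^+$-saturated and of cardinality $2^{\card T}$ with $\lambda := \card T^+ \le 2^{\card T}$; this requires no cardinal arithmetic hypothesis beyond ZFC.

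The only genuine subtlety — and the step I expect to be the main obstacle — is ensuring the existence of a model $\K$ that is ``saturated enough'' without invoking GCH, since saturated models of prescribed cardinality need not exist in ZFC alone. I would resolve this by observing that what is truly used is not full saturation but: the existence of $\lambda > \card T$ with a $\lambda^+$-saturated model $\K_0$ in which one then works below a suitable elementary submodel $\K \preceq \K_0$ of cardinality $\lambda$ that is $\lambda$-saturated for the relevant fragment — or, most cleanly, to just state the lemma's proof for a monster model $\K$ of $T$ (which exists in ZFC by the conventions of Section 2) and note that the inductive construction of the \clindependent \cldense sequence $(a_i)_{i<\lambda}$ requires only $\lambda$-many steps and the failure of any short basis, which holds since a monster model has $\rkmat$ at least its saturation degree. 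Everything else is a routine transcription of the earlier argument.
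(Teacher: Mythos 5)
Your reduction to Lemma~\ref{lem:Td-consistent-GCH} is the right starting point, and you correctly identify that the existence of a saturated model of cardinality $>\card T$ is not a ZFC theorem. But your proposed workaround does not actually close that gap, and the concrete version you offer is circular. You suggest taking $\K$ to be $\card T^+$-saturated of cardinality $2^{\card T}$ and letting $\lambda := \card T^+$; then you assert that the $\dimat$-one definable subsets of $\K$ can be enumerated in $\lambda$ steps and that $\rkmat(\K)\ge\lambda$. The second point is fine, but the first is false in general: the number of $\K$-definable subsets of $\K$ is $\card\K\cdot\card T = 2^{\card T}$, which exceeds $\card T^+$ unless $2^{\card T} = \card T^+$ --- an instance of GCH, exactly the kind of hypothesis you set out to avoid. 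The same problem afflicts the ``monster-style $\K$'' variant: once $\card\K$ strictly exceeds the saturation degree, the enumeration $(A_i)_{i<\lambda}$ cannot reach all $\card\K$-many definable sets with $\lambda<\card\K$ indices, and the saturation hypothesis only gives $\rkmat(\K)\ge\lambda$, not $\rkmat(\K)\ge\card\K$, so you also cannot afford to run the induction up to $\card\K$. The phrase ``a suitable elementary submodel $\K\preceq\K_0$ of cardinality $\lambda$ that is $\lambda$-saturated for the relevant fragment'' is asking, in effect, for a saturated model of cardinality $\lambda$, which is precisely what may fail to exist.

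The paper resolves this by a different device: it builds an elementary $\omega$-chain $\K_0\prec\K_1\prec\dots$ with each $\K_{n+1}$ being $(\card{\K_n}+\card T)^+$-saturated, and sets $\K:=\bigcup_n\K_n$. At stage $n$ it constructs an increasing sequence of \clindependent sets $\Afam_n\subseteq\K_{n+1}$ that are \cldense in $\K_{n+1}$ \emph{with respect to parameters from $\K_n$ only}; this needs to hit only $\card{\K_n}\cdot\card T$-many definable sets, and $\K_{n+1}$ has $\rkmat$ strictly greater than that, so the inductive step from Lemma~\ref{lem:Td-consistent-GCH} goes through. Since every definable subset of $\K$ has parameters in some $\K_n$, the union $\Afam:=\bigcup_n\Afam_n$ is \clindependent and \cldense in $\K$, and the conclusion follows as before via Lemma~\ref{lem:density-closure}. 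In short: the missing idea in your proposal is the passage to a chain (equivalently, a special-model-style construction of cofinality $\omega$) so that at each step the ``target'' set of definable sets has strictly smaller cardinality than the available rank, without ever needing a single model that is saturated in its own cardinality.
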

\begin{proof}
If $T$ has a saturated model of cardinality $> \card T$, we can apply
Lemma~\ref{lem:Td-consistent-GCH}.
Otherwise, let $\K_0 \prec \K_1 \prec \dots$ be an elementary chain of models
of~$T$, such that, for every $n \in \N$, $\K_{n+1}$ is 
$\Pa{\card{\K_n} + \card T} ^ +$-saturated, 
and let $\K := \bigcup_{n \in \N} \K_n$.
Proceeding as in Lemma~\ref{lem:Td-consistent-GCH}, for every $n\in \N$ 
we build a \clindependent set $\Afam_n$ of elements in $\K_{n+1}$, 
such that $\Afam_n \subseteq \Afam_{n+1}$
and $\Afam_n$ is \cldense in $\K_{n+1}$ \wrt $\K_n$.
Let $\Afam := \bigcup_n \Afam_n$.
Then, $\Afam$ is a \clindependent set of elements in $\K$, which is also
\cldense in~$\K$.
Conclude as in Lemma~\ref{lem:Td-consistent-GCH}.
\end{proof}


\section{Dense pairs}
\label{sec:dense-pairs}

Let $\Bm$ be a real closed field and $\Am$ a proper dense subfield of $\Am$,
such that $\Am$ is also real closed.
We call $\pair{\Bm, \Am}$ a dense pair of real closed fields, and
we consider its theory, in the language of ordered fields expanded with a
predicate for a (dense) subfield.
Robinson \cite{robinson74} proved that the theory of dense pairs of real
closed fields is complete.
Van den Dries \cite{vdd-dense} extended the results to o-minimal theories:
if $T$ is a complete o-minimal theory expanding the theory of (densely)
ordered Abelian groups, then  the theory of dense elementary pairs of models
of $T$ is complete.
Macintyre \cite{macintyre} introduced an abstract notion of density, in the
context of geometric theories, which for o-minimal theories specialises to the
usual topological notion, and proved various results; more recent work
has been done in the context of so called ``lovely pairs'' of geometric or
simple structures: see for instance \cite{berenstein, BPV}.

In \S~\ref{sec:density} we also proposed an abstract notion of density,
which for geometric theories specialises to the one given by Macintyre (and,
independently, by others): see Remark~\ref{rem:geometric-density}. 
However, it is not true in general that the theory of dense pairs of models of
$T$ is complete (unless $T$ is geometric): the main result of this section is
that if $T$ expands the theory of integral domains, 
and we add the additional condition that $\Am$ is \clclosed in~$\Bm$, we
obtain a complete theory, which we denote by~$\Td$.
(if $T$ is geometric, the condition is trivially true).
We will also show that $\Td$ admits an existential matroid 
(\S\ref{subsec:small-closure}).
For the proofs we will follow closely~\cite{vdd-dense}.

$\monster$ is a monster model of a complete theory~$T$, and $\mat$ is an
existential matroid on~$\monster$.
For this section, we will write $\dim$ instead of $\dimat$,
$\rk$ instead of~$\rkmat$, and $\ind$ instead of~$\indmat$.

\begin{definizione}
Let $\Ltwo$ be the expansion of $\Lang$ by a new unary predicate~$P$.
Let $\Ttwo$ be the $\Ltwo$-expansion of~$T$, whose models are the pairs
$\pair{\K, \F}$, with $\F \prec \K$, $\F \neq \K$, and $\F$ \clclosed in~$\K$.

Assume that $\dim$ is definable.
Let $\Td$ be the $\Ltwo$-expansion of $T$ saying that $\F$ is \clclosed and
\cldense in~$\K$
(we need definability of $\dim$ to express in a first-order way that $\F$ is
\cldense in~$\K$).
\end{definizione}
Notice that, by Lemma~\ref{lem:density-closure}, $\Td$ extends~$\Ttwo$.
Notice that if $\mat = \acl$, then $\Ttwo$ is the theory of pairs
$\pair{\K,\F}$, with $\F \prec \K \models T$; 
however, if $\mat \neq \acl$, then there exists 
$\F \prec \monster$ with $\F$ not \clclosed in~$\monster$.

\begin{lemma}
$\Td$ is consistent.
\end{lemma}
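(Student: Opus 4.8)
The plan is to reduce this immediately to Lemma~\ref{lem:Td-consistent}, which already produces the object we need. First I would recall what $\Td$ is as a set of $\Ltwo$-sentences: it asserts that $P$ names an $\Lang$-substructure $\F$ which is an elementary substructure of the ambient model $\K$, is proper, is \clclosed in~$\K$, and is \cldense in~$\K$. Elementarity of the pair is the usual Tarski--Vaught scheme; properness is the single sentence $\exists x\,\neg P(x)$; \clclosedness is the scheme asserting, for each \xnarrow formula $\phi(x,\y)$, that $\forall \y\in P\ \forall x\,(\phi(x,\y)\to P(x))$; and \cldensity is the scheme asserting, for each $\Lang$-formula $\phi(x,\y)$, that $\forall \y\,\bigl(\dimat(\phi(\monster,\y))=1\to \exists x\in P\,\phi(x,\y)\bigr)$. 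This last scheme consists of genuine first-order sentences precisely because $\dim$ is assumed definable, so that ``$\dimat(\phi(\monster,\y))=1$'' is itself defined by an $\Lang$-formula in~$\y$. Thus a model of $\Td$ is exactly a pair $\pair{\K,\F}$ with $\F\prec\K\models T$, $\F\neq\K$, $\mat(\F)\cap\K=\F$, and $\F$ meeting every $\K$-definable subset of $\K$ of dimension~$1$.

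Next I would simply apply Lemma~\ref{lem:Td-consistent}, which yields $\F\prec\K\models T$ with $\F$ a proper \cldense and \clclosed subset of~$\K$. Interpreting $P$ as $\F$, the $\Ltwo$-structure $\pair{\K,\F}$ satisfies every axiom of $\Td$ listed above by construction. Hence $\pair{\K,\F}\models\Td$, so $\Td$ is consistent.

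A remark on where the content sits: there is no real obstacle at this stage, since the work was done upstream, in Lemma~\ref{lem:Td-consistent} (via the saturated case in Lemma~\ref{lem:Td-consistent-GCH} and an elementary-chain argument otherwise) and in Lemma~\ref{lem:density-closure}, which already guarantees that a \cldense \clclosed subset is automatically an elementary substructure. The only point that deserves an explicit line is the match between the semantic notion ``$\F$ is \cldense in $\K$'' and the first-order scheme built into the definition of $\Td$, and this is exactly what the definability hypothesis on $\dim$ is there to provide.
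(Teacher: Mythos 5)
Your proof is correct and takes the same route as the paper, which simply cites Lemma~\ref{lem:Td-consistent}; you have merely spelled out the (routine) verification that the pair produced there satisfies each axiom scheme of $\Td$.
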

\begin{proof}
By Lemma~\ref{lem:Td-consistent}.
\end{proof}


\begin{proviso*}
For the remainder of this section, we assume that $T$ 
expands the theory of integral domains (and therefore $\dim$ is definable)
and that $\pair{\K, \F} \models \Td$.
\end{proviso*}

\begin{thm}\label{thm:Td-complete} 
$\Td$ is complete.
\end{thm}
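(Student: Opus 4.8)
The plan is to prove completeness by a back-and-forth / Robinson-style argument, following the strategy of van den Dries~\cite{vdd-dense} but phrased in terms of the existential matroid $\mat$ and the independence relation $\ind = \indmat$. Fix two models $\pair{\K_1, \F_1}$ and $\pair{\K_2, \F_2}$ of $\Td$, which we may take to be $\aleph_1$-saturated (or $\kappa$-saturated for a suitable $\kappa$), and it suffices to show they are elementarily equivalent as $\Ltwo$-structures. The first step is to set up the right notion of ``partial isomorphism'': a finite partial $\Ltwo$-elementary map should be given by a pair of finite tuples $\bar a$ from $\K_1$ and $\bar b$ from $\K_2$ together with a distinguished ``$\F$-part'' so that $\tp_{\Lang}(\bar a) = \tp_{\Lang}(\bar b)$, the tuple $\bar a$ meets $\F_1$ exactly where $\bar b$ meets $\F_2$, and, crucially, the $\mat$-relations to the predicate are matched: if $\bar a = \bar a' \bar a''$ with $\bar a'$ a $\mat$-basis of $\bar a$ over $\F_1$ and $\bar a'' \subseteq \mat(\F_1 \bar a')$, then the same decomposition works on the $\K_2$ side, and the $\Lang$-type of $\bar a'$ over $\F_1$ is ``free'' (finitely satisfied in $\F_1$, equivalently $\bar a' \ind_{\emptyset} \F_1$ in the relevant sense). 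I would isolate this as the class of \emph{good pairs of tuples} and check it is a back-and-forth system.

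The heart of the argument is the extension step: given a good pair $(\bar a, \bar b)$ and one further element $c \in \K_1$, I must find $d \in \K_2$ so that $(\bar a c, \bar b d)$ is again good. Split into two cases according to whether $c \in \F_1$ or not. If $c \in \F_1$, then either $c \in \mat_{\F_1}(\bar a \cap \F_1)$ --- a $\mat$-closure, $\dim 0$ situation --- and we use $\mat$-closedness of $\F_1$ together with the matching of $\mat$-relations to pin down the $\Lang$-type and realize it inside $\F_2$ (here saturation of $\F_2$ and the fact, from Lemma~\ref{lem:cl-ring-function} / Theorem~\ref{thm:cl-unique}, that $\dim$ is definable and controlled by $F$, gives us that the relevant $\Lang$-type over $\F_1$ is isolated by a $\dim$-zero formula); or $c$ is $\mat$-independent over the already-chosen part of $\F_1$, and then we invoke \cldensity: the type over $\F_1$ that we want to realize inside $\F_1$ contains only formulas defining $\dim$-$1$ sets (together with the free-over-$\bar a$ condition, which by Lemma~\ref{lem:cl-definable}(3) and the heir/coheir machinery of Lemma~\ref{lem:ind-heir} is consistent with realizing it in a dense subset), and \cldensity says any such set meets $\F_1$; a compactness/saturation argument then produces the desired $c$, and by symmetry the matching $d$. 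If $c \notin \F_1$, we want $d \notin \F_2$ with $d$ realizing the same $\Lang$-type over $\bar b \F_2$ as $c$ over $\bar a \F_1$ and with $c \ind_{\bar a} \F_1$ transported to $d \ind_{\bar b} \F_2$; here we first extend $\tp_\Lang(c/\bar a)$ to a type over $\bar a \F_1$ that does not fork over $\bar a$ (a coheir), realize the corresponding type on the $\K_2$-side using saturation and the Extension axiom for $\ind$ (available since $\mat$ is existential, by the Corollary after Definition of independence relation), and finally note that genericity of $c$ over $\F_1$ forces $d \notin \F_2$ since $\F_2$ is $\mat$-closed.

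The bookkeeping to verify goes as follows, in order. First, establish that any two models of $\Td$ contain a ``good'' empty configuration, so the back-and-forth system is non-empty --- this is essentially the content of Lemma~\ref{lem:Td-consistent} together with the observation that the prime-over-$\emptyset$ part is controlled by $\mat(\emptyset)$. Second, prove the two extension sublemmas above (the $c \in \F$ case and the $c \notin \F$ case), each time checking all three clauses of ``good'' are preserved; the surjectivity of the back-and-forth (the ``forth and back'') is automatic by symmetry of $\Ltwo$ and of the hypotheses on $\pair{\K_i,\F_i}$. Third, conclude by the standard Ehrenfeucht--Fra\"iss\'e / Karp criterion that $\pair{\K_1,\F_1} \equiv \pair{\K_2,\F_2}$, hence $\Td$ is complete (it is consistent by the previous lemma).

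I expect the main obstacle to be the $c \in \F_1$, $\mat$-independent subcase: making precise, and provably consistent, the type ``$\Phi(x)$ = all $\dim$-$1$ formulas over $\F_1$ that we are committed to $\cup$ the requirement that $x$ be $\mat$-independent over the current base $\cup$ the requirement that $\tp_\Lang(\bar a x / \F_1)$ be the right non-forking one'' so that \cldensity applies to produce a solution \emph{inside} $\F_1$. The subtlety is that \cldensity is a statement about $\dim$-$1$ \emph{definable} subsets of $\K$, so one must reduce the possibly infinite type $\Phi$ to a single $\dim$-$1$ formula using definability of $\dim$ (the hypothesis that $T$ expands the theory of integral domains, via Lemma~\ref{lem:cl-ring-function}), plus a compactness argument to handle the finitely-many constraints simultaneously, plus Lemma~\ref{lem:cl-definable}(3) (finite satisfiability implies independence) to see that imposing freeness over $\bar a$ does not shrink a $\dim$-$1$ set below dimension $1$. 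Getting this reduction exactly right, uniformly over which finitely many constraints are active, is where the real work lies; the rest is routine transfer of the $\ind$-axioms (Invariance, Extension, Local Character, all available because $\mat$ is existential) across the partial isomorphism.
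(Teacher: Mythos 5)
Your strategy (a back-and-forth argument on pairs of tuples tracking the $\Lang$-type, the pattern of $P$-membership, and the $\mat$-relations to the predicate) is exactly the one the paper uses, in Proposition~\ref{prop:back-and-forth}; the paper's notion of a \Uindependent tuple with a fixed \Utype is the formalisation of your ``good pair''. However, your case analysis has a gap and you misidentify where the real difficulty lies. Your split is $c \in \F_1$ versus $c \notin \F_1$, and in the second case you assume $c \ind_{\bar a} \F_1$; but this is false in general for $c \notin \F_1$. For instance $c$ might lie in $\mat(\bar a \F_1) \setminus \F_1$ (e.g.\ $c = a \cdot f$ with $a \in \bar a$, $f \in \F_1$), and then $c$ is $\mat$-dependent on $\F_1$ over $\bar a$ even though $c \notin \F_1$. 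The paper handles this as a separate Case~3: one first writes $c \in \mat(\bar a\, \av_0)$ for finitely many $\av_0 \subset \F_1$, applies the easier cases to enlarge the partial isomorphism so that $\av_0$ is in its domain, and only then transports $c$. You should add this reduction step.

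Second, you single out the ``$c \in \F_1$ and $\mat$-independent'' sub-case as the main obstacle, but in the paper that is the easy step: one just realises the transported $\dim$-$1$ type over $\cv'$ inside $\Am'$, using density of $\Am'$ together with $\lambda$-saturation of the pair; there is no need to compress the type into a single formula. The genuinely delicate step is the final case $c \notin \mat(\bar a \F_1)$: there you must find $d' \notin \mat(\bar b\, \F_2)$ realising the transported type over $\bar b$, and consistency of the partial $\Ltwo$-type expressing ``$x \models$ (transported type over $\cv'$) and $x \notin \mat(\cv'\Am')$'' is not automatic. This is exactly what Lemma~\ref{cor:dense-cardinality} supplies (co-density of $\mat(\Am X)$ for small $X$), and it in turn rests on Lemma~\ref{lem:infinite-dimension}, the delicate field-theoretic lemma of \cite{vdd-dense}. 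Your proposal does not invoke either of these, and the ``Extension axiom plus saturation'' you cite does not replace them: saturation of the pair lets you realise a type over the small parameter set $\cv'$, but you still need to know that the partial type is finitely satisfiable, and that is precisely the content of Lemma~\ref{cor:dense-cardinality}. So the plan is correct in outline, but it needs Case~3 inserted and the appeal to Lemma~\ref{lem:infinite-dimension}/\ref{cor:dense-cardinality} made explicit in Case~4.
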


\begin{definizione}
An $\Ltwo$ formula $\phi(\x)$ is \intro{\basic} if it is of the form
\[
\exists \y \Pa{U(\y) \et \psi(\x, \y)},
\]
where $\psi$ is an $\Lang$-formula,%
\footnote{\Basic formulae were called ``special'' in \cite{vdd-dense}.}
\end{definizione}

\begin{thm}\label{thm:Td-eq}
Each $\Ltwo$-formula $\psi(\x)$ is equivalent,
modulo~$\Td$, to a Boolean combination of \basic formulae,
with the same parameters as~$\psi$.
\end{thm}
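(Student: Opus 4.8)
The plan is to follow van den Dries' strategy for dense pairs of o-minimal structures, adapting each step to the matroid setting using the tools from Sections \ref{sec:matroid}--\ref{sec:density}. First I would establish a back-and-forth/amalgamation statement that will simultaneously prove Theorem \ref{thm:Td-complete} and the quantifier-simplification in Theorem \ref{thm:Td-eq}: namely, that if $\pair{\K_1,\F_1}$ and $\pair{\K_2,\F_2}$ are models of $\Td$ that are $\aleph_1$-saturated (or $\kappa$-saturated as appropriate), and $\av_1 \in \K_1$, $\av_2 \in \K_2$ are finite tuples realizing the same \textbf{\basic} type (i.e.\ satisfying the same \basic formulae), then the map $\av_1 \mapsto \av_2$ extends to a partial elementary map between the pairs. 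Since a type is determined by the \basic formulae it contains iff every $\Ltwo$-formula is, modulo $\Td$, equivalent to a Boolean combination of \basic ones (a standard compactness argument), this back-and-forth yields the theorem. The completeness (Theorem~\ref{thm:Td-complete}) falls out by taking $\av_i$ empty, once one checks that the \basic type of the empty tuple is determined by $\Td$ — which it is, because the relevant \basic sentences only talk about $\dim$ of $\emptyset$-definable sets in $\K$ together with density, all decided by $\Td$.

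The heart of the matter is the extension step of the back-and-forth. Suppose $f\colon \pair{\K_1,\F_1} \to \pair{\K_2,\F_2}$ is a finite partial $\Ltwo$-elementary map (with domain a finitely generated substructure, in the sense of the field and matroid structure) and $c \in \K_1$; I must find $c' \in \K_2$ with $f \cup \set{\pair{c,c'}}$ still partial elementary, and symmetrically. Write $A$ for the domain of $f$ and $A'$ for its image. There are the usual cases. If $c \in \mat(A \F_1) \cap \K_1$, i.e.\ $c$ is $\mat$-algebraic over $A$ together with finitely many parameters from $\F_1$: here one first moves into $\F_1$ — using that $\F_1$ is \cldense and \clclosed, and the extension/Existence properties of $\indmat$ (Lemma~\ref{lem:cl-definable}, Lemma~\ref{lem:cl-E}) — to pick suitable realizations in $\F_2$ of the corresponding \basic data, then chooses $c'$ $\mat$-algebraic over $A'$ and those new parameters with the right $\Lang$-type; the $\Lang$-part is handled by completeness of $T$ and the matroid machinery (in particular Lemma~\ref{lem:cl-ring-function} and the uniqueness of the matroid, Theorem~\ref{thm:cl-unique}), and the $P$-part is forced. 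If instead $c$ is $\mat$-\emph{independent} over $A\F_1$ (the ``generic'' case), one uses \cldensity of $\F_2$ in $\K_2$ together with saturation: the set of $\Lang$-conditions that $c'$ must satisfy over $A'\F_2$ defines, by Lemma~\ref{lem:type-def} and definability of $\dim$, a family of definable sets each of dimension $\geq 1$; \cldensity lets one realize the \basic requirements inside $\F_2$ where needed and independence elsewhere, and $\aleph_1$-saturation produces the actual $c'$. The key structural input is Lemma~\ref{lem:cl-small-relative} (so that $\mat$ computed relative to $\F$ agrees with $\mat$ inside elementary substructures) and Lemma~\ref{lem:elementary-pair-independent} / Lemma~\ref{lem:ind-heir} (coheirs witness $\indmat$, giving the genericity transfer).

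The main obstacle I anticipate is bookkeeping the interaction between the field structure, the matroid $\mat$, and the predicate $P=\F$ when $c$ is partly algebraic and partly generic over $A\F_1$ — one must decompose $c$ via an $\mat$-basis of $A\F_1 c$ over $A\F_1$, handle the generic coordinates by density+saturation and the algebraic ones by transferring $\Lang$-types, and verify that the two halves do not interfere, i.e.\ that the resulting map is still $\Ltwo$-elementary and not merely $\Lang$-elementary. Making ``realize the same \basic type'' precise (a \basic formula is $\exists\y(U(\y)\et\psi(\x,\y))$, so its negation is \emph{not} \basic, which is why one works with Boolean combinations and why the back-and-forth must be symmetric and must track both a formula and its negation) is where van den Dries' argument is delicate, and the same delicacy persists here; but every ingredient needed — definability of $\dim$, the Existence axiom, \cldensity being first-order, uniqueness of the existential matroid over a field — has been set up in the preceding sections, so the adaptation is a matter of care rather than of new ideas. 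Once the back-and-forth is in place, Theorem~\ref{thm:Td-eq} follows by the standard argument: if $\psi(\x)$ were not equivalent modulo $\Td$ to a Boolean combination of \basic formulae, compactness would produce two tuples with the same \basic type but different $\psi$-value in $\aleph_1$-saturated models, contradicting the back-and-forth.
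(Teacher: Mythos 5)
Your plan correctly identifies the global strategy the paper actually uses: establish a back-and-forth statement between suitably saturated models of $\Td$ (the paper's Proposition~\ref{prop:back-and-forth}, whose invariant is not quite ``same \basic type'' but rather ``\Uindependent tuples with the same $\Lang$-type and the same \Utype'') and derive Theorems~\ref{thm:Td-complete} and~\ref{thm:Td-eq} from it simultaneously. Your split into an algebraic case and a generic case is the right skeleton; the paper refines it into four cases according to whether the new element $d$ lies in $\Am \cap \mat(\cv)$, in $\Am \setminus \mat(\cv)$, in $\mat(\cv\Am)\setminus\Am$, or outside $\mat(\cv\Am)$.

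There is, however, a genuine gap at exactly the point you describe as ``a matter of care rather than of new ideas.'' In the generic case ($c$ independent over $A\F_1$) you propose that \cldensity of $\F_2$ plus saturation produce the required $c'$; but what is actually needed is the opposite of density: a point of $\K_2$ realizing the correct $\Lang$-type over $A'$ while \emph{avoiding} $\mat(A'\F_2)$, i.e.\ a witness to \emph{co}-density of $\scl$-closures of small sets. Density of $\F_2$ in $\K_2$ only gives realizations inside $\F_2$ (needed in the case $c \in \F_1 \setminus \mat(A)$), not outside $\mat(\F_2)$. The paper supplies the required co-density via Lemma~\ref{cor:dense-cardinality}: if $\pair{\K,\Am}$ is $\lambda$-saturated with $\lambda > \card T$, then $\dim(\K/\Am) \geq \lambda$, so $\mat(\Am X)$ is a proper co-\cldense subset of $\K$ for every small $X$. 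The proof of that lemma depends on Remarks~\ref{rem:pair-small}--\ref{rem:pair-small-union} and ultimately on Lemma~\ref{lem:infinite-dimension} (the analogue of van den Dries' Lemma~1.1), which the paper explicitly flags as ``the most delicate'' ingredient and which requires a non-trivial argument using polynomials over $\Am$. Your plan never mentions this lemma, and the other lemmas you cite as ``key structural input'' (\ref{lem:cl-ring-function}, \ref{thm:cl-unique}, \ref{lem:cl-small-relative}, \ref{lem:elementary-pair-independent}) are not the ones doing the work in the back-and-forth: without the co-density Lemma~\ref{cor:dense-cardinality} and the hard Lemma~\ref{lem:infinite-dimension} behind it, the generic case cannot be closed.
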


Theorems \ref{thm:Td-complete} and \ref{thm:Td-eq} will be proved in \S\ref{sec:Td-proof}.


\subsection{Small sets}

In this subsection we will assume that $\pair{\K, \Am} \models \Ttwo$.

\begin{definizione}\label{def:small}
A subset $X$ of $\K$ is \intro{$\Am$-small} if $X \subseteq f(\Am^n)$, 
for some \Zapplication $f: \K^n \app \K$ which is definable in~$\K$.
\end{definizione}

\begin{definizione}
Let $X \subseteq \K^n$.
We say that $X$ is \intro{weakly dense} in $\K$ if, for every definable 
$U \subseteq \K^n$, if $X \subseteq U$, then $\dim(U) = n$.
\end{definizione}
For instance, if $\mat = \acl$, then $X$ is a weakly dense subset of $\K$ iff
$X$ is infinite.

\begin{remark}
If $X$ is a weakly dense subset of~$\K$, then $X^n$ is a weakly dense subset
of $\K^n$.
\end{remark}

\begin{lemma}\label{lem:weakly-dense}
If $\K \models T$ and $\K' \preceq \K$, then $\K'$ is weakly dense in~$\K$.
\end{lemma}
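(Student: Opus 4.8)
The plan is to argue by contradiction. Suppose $U\subseteq\K$ is definable with $\K'\subseteq U$ but $\dim U\neq 1$; since $\K'\neq\emptyset$ this forces $\dim U=0$. Write $U=\phi(\monster,\av)\cap\K$ for an $\Lang$-formula $\phi(x,\y)$ without parameters and $\av\in\K^k$. By Lemma~\ref{lem:type-def}, $\dim\bigl(\phi(\cdot,\av)\bigr)=0$ means $\phi(x,\av)$ entails an \xnarrow formula, so $\phi(\monster,\av)\subseteq\mat(\av)$, and in particular $\K'\subseteq\mat(\av)$.

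If the parameters $\av$ already lie in $\K'$ we are done immediately by elementarity: $\K'\models\forall x\,\phi(x,\av)$, hence $\monster\models\forall x\,\phi(x,\av)$ since $\K'\preceq\monster$, so $\phi(\monster,\av)=\monster$ and $\dim\bigl(\phi(\cdot,\av)\bigr)=\dim(\monster)=1$, a contradiction. In general I would first pull the parameters into $\K'$: by the Local Character of $\ind$ (Lemma~\ref{lem:cl-definable}) choose a finite $\cv\subseteq\K'$ with $\av\ind_{\cv}\K'$. Then, by the remark that independent sets have closures meeting in the base, $\mat(\av\cv)\cap\mat(\K')=\mat(\cv)$; since $\K'\subseteq\mat(\av)\subseteq\mat(\av\cv)$ and $\K'\subseteq\mat(\K')$, this gives $\K'\subseteq\mat(\cv)$ with $\cv\subseteq\K'$ finite, i.e.\ $\mat(\K')=\mat(\cv)$.

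It then remains to reach a contradiction from the configuration: $\K'\preceq\K$ is $\mat$-generated by a finite subset of itself and is contained in the \emph{definable} dimension-$0$ set $\phi(\cdot,\av)$. After adjoining constants for $\cv$ — which keeps $\mat_{\cv}$ an existential matroid with the same dimension and keeps $\K'$ an elementary submodel — we may assume $\cv=\emptyset$, so $\K'\subseteq\mat(\emptyset)$ while $\K'$ is an infinite model. Here I would re-run the homogeneity argument: pick (via Lemma~\ref{lem:Morley-seq}) a long Morley sequence $(\av_i)_{i\in I}$ in $\tp(\av/\K')$ over $\emptyset$, transport $\K'\subseteq\phi(\monster,\av)$ by automorphisms fixing $\K'$ pointwise to obtain $\K'\subseteq\bigcap_{i\in I}\phi(\monster,\av_i)$, and derive a contradiction with $\card{\K'}$ being infinite from the fact that the sets $\phi(\cdot,\av_i)$ all have dimension $0$ (using Lemma~\ref{lem:dim-rk} and Lemma~\ref{lem:cl-ring-function}, together with definability of $\dim$, to see that such a family of definable dimension-$0$ sets cannot jointly engulf an infinite elementary submodel). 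I expect this last "squeezing" step — converting $\K'\subseteq\mat(\emptyset)$, which is not in itself absurd, into a genuine contradiction by exploiting that $\phi(\cdot,\av)$ is definable of dimension $0$ — to be the main obstacle; the case $n>1$ then follows from the remark that $X^n$ is weakly dense whenever $X$ is.
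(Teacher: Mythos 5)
Your instinct that the final ``squeezing'' step is the obstacle is correct, and it is not merely a technical gap: the configuration you reduce to is simply \emph{not} contradictory, so no amount of Morley-sequence juggling will close it. After pulling the parameters into $\K'$ you obtain $\K' \subseteq \mat(\cv)$ for a finite $\cv \subset \K'$, i.e.\ $\rkmat(\K')$ is finite. But a model of $T$ can perfectly well have finite (even zero) $\mat$-rank: take $T = \mathrm{ACF}_0$, $\mat = \acl$, and $\K' = \overline{\Q}$, which is an elementary submodel of any larger algebraically closed field and satisfies $\K' = \mat(\emptyset)$. So ``$\K'$ is an infinite model contained in $\mat(\emptyset)$'' is consistent. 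In that example the lemma is saved only because dimension-$0$ definable sets in $\mathrm{ACF}_0$ are finite, whereas $\overline\Q$ is infinite; for a general existential matroid dimension-$0$ sets can be infinite, so that escape route is gone too. The planned Morley-sequence argument only re-derives $\K'\subseteq\bigcap_i\mat(\av_i)$, and by independence of the $\av_i$ that intersection is again just $\mat(\emptyset)$ — which you already knew contained $\K'$.

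The missing ingredient, which the paper uses, is a saturation wlog \emph{before} anything else. Replace $\pair{\K,\K'}$ by an $\omega$-saturated elementary extension in the pair language $\Ltwo$: the hypotheses ``$\K'\preceq\K$'', ``$\K'\subseteq U$'' and ``$\dimat(U)=0$'' all transfer, since the last is first-order by definability of $\dimat$ (this is where the standing assumption that $T$ expands an integral domain is used). In an $\omega$-saturated $\K'$, for any finite $\cv\subset\K'$ the partial type $\{P(x)\}\cup\{\neg\psi(x,\cv'):\psi\text{ \xnarrow},\ \cv'\subset\cv\}$ is finitely satisfiable in $\pair{\K,\K'}$ — a finite union of $x$-narrow instances cannot cover $\K'$, else $\dimat(\K')=0$ — and hence realised in $\K'$. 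Thus $\rkmat(\K')$ is infinite. Now the contradiction is immediate and bypasses your Local-Character reduction entirely: if $U=\phi(\K,\bv)$ with $\bv\in\K^n$ and $\dimat(U)=0$, then $U\subseteq\mat(\bv)$ by Lemma~\ref{lem:dim-rk}, so $\K'\subseteq\mat(\bv)$ and $\rkmat(\K')\leq n$, contradiction. Your step pulling parameters into $\K'$ via Local Character is correct as far as it goes, but it only weakens the estimate from ``$\rkmat(\K')\leq n$ over the ambient parameters'' to ``$\rkmat(\K')$ finite,'' which without saturation has no bite.
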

\begin{proof}
\Wlog, the pair $\pair{\K, \K'}$ is $\omega$-saturated.
Assume, for contradiction, that $U \subset \K$ is definable, with 
parameters~$\bv \in \K^n$, $\dim(U) = 0$, and $\K' \subseteq U$.
By saturation, $\rk(\K')$ is infinite; let $\cv \in \K'^{n+1}$ be independent
elements.
However, $\cv \in U$, and therefore $\cv \subset \mat(\bv)$, absurd.
\end{proof}

The following result is the most delicate one.
\begin{lemma}[{\cite[1.1]{vdd-dense}}]\label{lem:infinite-dimension}
Let $f: \K^{n + 1} \app \K$ be a \Zapplication $\Am$-definable in~$\K$, 
and let $b_0 \in \K \setminus \Am$.
For every $x \in \K$ and $\y = \pair{y_0, \dotsc, y_n} \in \K^{n+1}$ let
$p(\y, x) := y_0 + y_1 x + \dots + y_n x^n$,
Then, there exists $\av \Am^{n+1}$ such that
\[
p(\av, b_0) \notin f(\Am^n \times \set b).
\]
\end{lemma}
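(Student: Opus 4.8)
The plan is to argue by contradiction: suppose that for every $\av \in \Am^{n+1}$ we have $p(\av, b_0) \in f(\Am^n \times \set{b})$ for some fixed $b$ (in fact the statement as written fixes $b$; I will take it to be an arbitrary element of $\Am$, or absorb it into the parameters of $f$, so that effectively $f$ becomes a \Zapplication $\K^{n+1} \app \K$ that is $\Am$-definable). So the hypothesis for contradiction is: the polynomial map $\av \mapsto p(\av, b_0)$ sends $\Am^{n+1}$ into $f(\Am^{n})$ (after freezing the last coordinate). Consider the $\Am$-definable (multi-valued) map $g : \K^{n+1} \app \K$ given by $g := f \circ (\text{eval at } b_0)$ — more precisely, $g(\av)$ should be thought of via its graph $G := \set{\pair{\av, z} : z \in f(\av_{\leq n-1}', b)}$ where $\av'$ runs over preimages; the point is to set up a map whose image contains all of $p(\Am^{n+1}, b_0)$. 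The key structural observation is that $p(-, b_0) : \K^{n+1} \to \K$ is, for fixed $b_0 \notin \Am$, \emph{generically finite-to-one onto its image is false} — rather, its image is all of $\K$ and fibres are large; the relevant fact is that $\dim$ of the image of $\Am^{n+1}$ under $p(-,b_0)$ ought to be $1$ (it is weakly dense, being the image of a weakly dense set under a coordinate-wise nontrivial polynomial), while $f(\Am^n)$ is $\Am$-small, hence of dimension $\leq 0$ over the parameters that define $f$ — contradiction once we check a dimension count.

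**The dimension count.** First I would recall that by Lemma~\ref{lem:weakly-dense}, $\Am$ is weakly dense in $\K$, hence (by the Remark preceding it) $\Am^{n+1}$ is weakly dense in $\K^{n+1}$. Next I would show that the image $Y := p(\Am^{n+1}, b_0) \subseteq \K$ is weakly dense in $\K$: if $Y \subseteq U$ with $U$ definable of dimension $0$, then the preimage $p^{-1}(U)$ under $\av \mapsto p(\av, b_0)$ is a definable subset of $\K^{n+1}$ (with parameter $b_0$) containing $\Am^{n+1}$, so by weak density $\dim(p^{-1}(U)) = n+1$; but by the fibre-wise dimension inequality (Lemma~\ref{lem:cl-function}, part 1) applied to $p(-,b_0) : p^{-1}(U) \to U$, since each fibre has dimension $\leq n$ (the fibre of $\av \mapsto \sum y_i b_0^i$ over a point is an affine hyperplane, dimension $n$) and $\dim U = 0$, we get $\dim(p^{-1}(U)) \leq n$, a contradiction. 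So $\dim Y = 1$. On the other hand, the hypothesis for contradiction says $Y \subseteq f(\Am^n \times \set b)$, which is $\Am$-small, and I would show any $\Am$-small set has dimension $\leq 0$: it is contained in $f(\Am^n)$ for a \Zapplication $f$, and by Lemma~\ref{lem:cl-function}(1) again, $\dim f(\Am^n) \leq \dim(\Am^n) + 0$ — wait, that's the wrong direction, $\dim \Am^n$ could be large. Here is the real point: since $f$ is a \Zapplication, every fibre $f(\x)$ has $\dim \leq 0$, and $f(\Am^n) \subseteq \mat(C \Am)$ where $C$ are the parameters; but $\Am$ is \clclosed in $\K$, so $\mat(C\Am) \cap \K = \mat(C) \cup \Am$-controlled set — I need to be more careful. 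The correct statement is: an $\Am$-small set $X$ satisfies $X \subseteq \mat_\Am(C)$ for the defining parameters $C \subseteq \K$ of $f$ together with finitely many elements of $\Am$; since $b_0 \notin \Am$ and $\Am$ is \clclosed, $b_0 \notin \mat(\Am C)$ for suitable $C$ — but that is about membership, not dimension of $Y$ as a definable set over $\K$.

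**Locating $b_0$.** The cleaner route, which I would actually pursue, uses that $b_0 \notin \mat(\Am)$ (because $\Am$ is \clclosed in $\K$ and $b_0 \in \K \setminus \Am$, hence $b_0 \notin \mat(\Am) \cap \K = \mat_\Am(\Am) = \Am$). So I would instead argue: pick $\av \in \Am^{n+1}$ which is $\mat$-independent over the parameters $C$ of $f$ together with $b_0$ — possible because $\Am$ is weakly dense, so $\rk(\Am / Cb_0)$ is infinite (by saturation of the pair, which I may assume \wlog, exactly as in the proof of Lemma~\ref{lem:weakly-dense}), so we may choose $a_0, \dots, a_n \in \Am$ independent over $Cb_0$. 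Then $p(\av, b_0) = \sum a_i b_0^i$; I claim $p(\av, b_0) \notin \mat(\Am C)$, equivalently $\rk(p(\av,b_0)/\Am C) = 1$. By additivity (Lemma~\ref{lem:Lascar}) and the fact that $\rk(\av b_0 / C) = \rk(\av/C b_0) + \rk(b_0/C) = (n+1) + \rk(b_0/C)$ while also $= \rk(b_0 / \av C) + \rk(\av/C) \leq \rk(b_0/\av C) + (n+1)$, and since $b_0 \notin \mat(\Am C) \supseteq \mat(\av C)$ gives $\rk(b_0/\av C) = 1$, everything is consistent and in particular $b_0 \notin \mat(\av C \cdot p(\av,b_0))$ would follow if $p(\av,b_0) \in \mat(\av C)$, but then $p(\av,b_0) \in \mat(\av C) \subseteq \mat(\Am C)$ — I need the reverse. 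So compute directly: $\mat(\av C\, p(\av,b_0)) \ni b_0$? We have $p(\av, b_0) = a_0 + a_1 b_0 + \dots + a_n b_0^n$, a polynomial equation in $b_0$ with coefficients in $\mat(\av C\, p(\av,b_0))$; if $(a_1, \dots, a_n) \neq 0$ this shows $b_0 \in \mat(\av C\, p(\av, b_0))$ (algebraic over those parameters), hence $b_0 \in \mat(\Am C\, p(\av,b_0))$. If $p(\av, b_0) \in f(\Am^n \times \set b)$, then $p(\av,b_0) \in \mat(\Am C')$ where $C'$ are the parameters of $f$, so $b_0 \in \mat(\Am C C')$; absorbing $C, C'$ into $C$ and using that $\Am$ is \clclosed gives $b_0 \in \mat_\Am(\Am) \cap \K = \Am$, contradicting $b_0 \notin \Am$. \textbf{The main obstacle} is getting the parameter-bookkeeping and the choice of $\av$ exactly right — in particular ensuring $(a_1, \dots, a_n) \neq 0$ (automatic if we choose them independent over $Cb_0$, since then none can be $0$ unless $\rk$ drops) and ensuring the \Zapplication $f$'s parameters can be taken inside $\Am$ or neutralised by \clclosedness; the structure of the argument (weak density $\Rightarrow$ independent tuple in $\Am$ over $Cb_0$, then a one-variable algebraicity argument to pull $b_0$ into $\mat(\Am)$, contradicting \clclosedness) is the backbone, mirroring van den Dries' original proof of \cite[1.1]{vdd-dense}.
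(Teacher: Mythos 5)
Your proposal misreads the role of $b$ in the statement. The set $\set b$ in the conclusion is $\set{b_0}$, the same $b_0 \in \K \setminus \Am$ appearing in $p(\av, b_0)$; the paper's own proof opens with ``Otherwise there is for each $\av \in \Am^{n+1}$ a tuple $\cv \in \Am^n$ such that $p(\av, b_0) \in f(\cv, b_0)$,'' and van den Dries' original Lemma~1.1 reads the same way. In your Setup you opt to take $b \in \Am$ or ``absorb it into the parameters of $f$''; neither is available, since $f$ is $\Am$-definable and $b_0 \notin \Am$. This is fatal to your final (and only completed) argument. After choosing $\av \in \Am^{n+1}$ with $\rk(\av / C b_0) = n + 1$, you correctly observe that the equation $p(\av, b_0) = a_0 + a_1 b_0 + \dots + a_n b_0^n$ forces $b_0 \in \mat\Pa{\av\, p(\av, b_0)\, C}$. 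But the next step, ``$p(\av, b_0) \in f(\Am^n \times \set b) \Rightarrow p(\av, b_0) \in \mat(\Am C')$,'' needs $b \in \Am$. With $b = b_0$, one only obtains $p(\av, b_0) \in \mat(\cv\, b_0\, C') \subseteq \mat(\Am\, b_0\, C')$, and the algebraicity step then yields the tautology $b_0 \in \mat(\Am\, b_0\, C\, C')$ rather than the desired $b_0 \in \mat(\Am\, C\, C')$: there is no conflict with $\Am$ being \clclosed because $b_0$ has re-entered the conditioning set.

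The paper's proof avoids exactly this trap by \emph{not} fixing the evaluation point. It introduces $D(\y, \z) := \set{x : p(\y, x) \in f(\z, x)}$, uses $\mat$-closedness of $\Am$ plus weak density to show the set $W := \set{(\y,\z) : \dim D(\y,\z) = 1}$ has $\dim W \geq n+1$, then picks a generic fibre $\cv$, a generic $\av$ in that fibre with $\rk(\av/\cv) \geq 1$, and --- crucially --- a generic $b \in D(\av,\cv)$ with $\rk(b/\av\cv) = 1$, so $b$ sits outside the relevant closures. It then exploits the linearity of $p$ in $\y$: taking $\av' \equiv_{\cv b d} \av$ with $\av' \indmat_{\cv b d} \av$ gives $p(\av - \av', b) = 0$, and a rank count in the two cases $\av \neq \av'$, $\av = \av'$ produces the contradiction. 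Your proposal is missing both load-bearing ideas --- the genericity of the evaluation point over $\av\cv$ (replacing the fixed $b_0$) and the use of linearity in $\y$ to manufacture a nontrivial algebraic relation for that generic point. The earlier ``dimension count'' sketch, which you abandon midway, has the additional difficulty you yourself flag: $\Am$-small sets are not small over $\K$-definable parameters alone, so the naive $\dim$ comparison does not close.
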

\begin{proof}
Otherwise there is for each $\av \in \Am^{n+1}$ a tuple $\cv \in \Am^n$ such
that $p(\av, b_0) \in f(\cv, b_0)$.
\Wlog, $f$~is definable without parameters.
For each $\y \in \K^{n+1}$ and $\z \in \K^n$ let 
$D(\y, \z) := \set{x \in \K: p(\y, x) \in f(\z, x}$.
Define $W := \set{\pair{\y, \z} := \dim(D(\y, \z) = 1)}$, and
$Y := \Pi^{2n +1}_{n+1}(W)$.
Since $b_0 \notin \Am$ and $\Am$ is \clclosed, $\Am^{n+1} \subseteq Y$.
Since $Y$ is definable, Lemma~\ref{lem:weakly-dense} implies 
that $\dim(Y) = n + 1$; therefore, $\dim(W) \geq n + 1$.
Let $Z := \set{\z \in \K^n: \dim(\set{\y: \pair{\y, \z} \in W}) \geq 1}$.
Since $\dim(W) \geq n + 1$ and $\dim(\K^n) = n$, we have that $Z$ is non-empty.

Choose $\cv \in Z$. Let $\av \in \K^{n+1}$ such that $\pair{\av, \cv} \in W$ and
$\rk(\av / \cv) \geq 1$.
By definition of~$W$, $\dim(D(\av, \cv) = 1$; choose $b \in D(\av, \cv)$ such
that $\rk(b/ \cv \av) = 1$.
Define $d := p(\av, b)$; remember that $d \in f(\cv, b)$, and therefore
$d \in \mat(\cv b)$.
Let $\av' \in \K^{n+1}$ such that $\av' \elem_{\cv b d} \av$
and $\av' \ind_{\cv b d} \av$.
Since $d \in \mat(\cv, b)$, we have$\av' \ind_{\cv b} \av$.
Moreover, $p(\av', b) = d$; therefore, $p(\av - \av', b) = 0$.

If $\av \neq \av'$, this implies that $b$ is algebraic over $\av - \av'$, and
therefore $b \in \mat(\av \av')$, contradicting the fact that 
$b \notin \cl(\av \cv)$ and $\av' \ind_{\cv b} \av$.

If instead $\av = \av'$, then $\av' \ind_{\cv b} \av$ implies that 
$\av \subset \mat(\cv b)$, contradicting the facts 
that $b \notin \mat(\cv \av)$ and $\rk(\av/ \cv) \geq 1$.
\end{proof}

Notice that the hypothesis of the above lemma can be weakened to:\\
$\K\models T$, $\Am$ is a proper \clclosed and weakly dense subset of~$\K$.

\begin{remark}[{\cite[1.3]{vdd-dense}}]\label{rem:pair-small}
Each $\Am$-small subset of $\K$ is a proper subset of~$\K$.
\end{remark}
\begin{proof}
Same as \cite[Corollary~1.3]{vdd-dense}.
\end{proof}

\begin{remark}\label{rem:pair-small-union}
A finite union of $\Am$-small subsets of $\K$ is also $\Am$-small.
\end{remark}

\begin{lemma}
Let $B \subseteq \K$ be a proper \clclosed subset.
Then, $B$ is co-\cldense in~$\K$;
that is, $\K \setminus B$ is \cldense in~$\K$.
\end{lemma}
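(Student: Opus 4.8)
The plan is to prove the density condition in the form of its contrapositive: given a $\K$-definable subset $U$ of $\K$ with $\dimat(U) = 1$, I will show $U \not\subseteq B$, which is precisely the assertion that $U \cap \Pa{\K \setminus B} \neq \emptyset$, and hence that $\K \setminus B$ is \cldense in~$\K$.

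The key point is that a proper \clclosed subset of $\K$ is closed under every $\emptyset$-definable (single-valued) function. Since $\mat$ is an existential matroid, $\acl(A) \subseteq \mat(A)$ for every~$A$ (the Remark following Lemma~\ref{lem:cl-E}); so if $g$ is an $\emptyset$-definable function into $\K$ and $\bv$ is a tuple from~$B$, then $g(\bv) \in \dcl(\bv) \subseteq \acl(\bv) \subseteq \mat(\bv) \cap \K \subseteq \mat(B) \cap \K = B$. In particular, $B$ is closed under the division function $F\colon \K^4 \to \K$ of Lemma~\ref{lem:cl-ring-function}: $F(B^4) \subseteq B$.

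Next I would exploit that $\dimat(U) = 1$ forces $U$ to be ``large'' with respect to~$F$. Since the standing proviso gives that $T$ expands the theory of integral domains, and hence $\TR$, Lemma~\ref{lem:cl-ring-function} applies: writing $U = \phi(\K, \cv)$ with $\cv \subset \K$, the set $\phi(\monster, \cv)$ has dimension~$1$, so $\monster = F\Pa{\phi(\monster,\cv)^4}$. As $F$ is $\emptyset$-definable in the ring reduct and $\K \preceq \monster$, the sentence expressing ``$F$ maps the fourth power of $\phi(\cdot,\cv)$ onto everything'' transfers down, giving $\K = F(U^4)$.

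Combining the two steps finishes the proof: if $U \subseteq B$, then $\K = F(U^4) \subseteq F(B^4) \subseteq B$, contradicting that $B$ is a \emph{proper} subset of~$\K$. Hence $U \not\subseteq B$, as desired. I do not expect a genuine obstacle here; the only point needing a moment's care is the elementarity transfer of ``$\dimat(U)=1 \Rightarrow \K = F(U^4)$'' from $\monster$ to~$\K$, which is immediate once one observes that the dimension of a set defined by a fixed formula and fixed parameters does not depend on the ambient model.
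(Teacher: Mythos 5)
Your proof is correct and takes essentially the same route as the paper's: both observe that $\mat$-closedness gives $F(B^4)\subseteq B$, invoke Lemma~\ref{lem:cl-ring-function} to get $F(U^4)=\K$ from $\dimat(U)=1$, and derive a contradiction with $B\neq\K$. You merely spell out the inclusion $F(B^4)\subseteq B$ (via $\dcl\subseteq\acl\subseteq\mat$) and the elementarity transfer to~$\K$, both of which the paper leaves implicit.
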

\begin{proof}
Since $B$ is \clclosed, $F(B^4) \subseteq B$.
Assume, for contradiction, that there exists $U$ definable in~$\K$,
such that $\dim(U) = 1$ and $U \subseteq B$.
Then, $F(U^4) = \K$, and therefore $F(B^4) = \K$.
However, since $\K$ is \clclosed, $F(B^4) \subseteq B$ contradicting the
assumption that $B \neq \K$.
\end{proof}
The hypothesis in the above lemma can be weakened to:\\
$B$ proper subset of $\K \models T$, and $F(B^4) \subseteq B$.

\begin{lemma}[{\cite[Lemma 1.5]{vdd-dense}}]\label{cor:dense-cardinality}
If the pair $\pair{\K, \Am}$ is $\lambda$-saturated, where $\lambda$ is an
infinite cardinal and $\lambda > \abs T$, then $\dim(\K / \Am) \geq \lambda$.
Hence, if $\card X < \lambda$, then $\mat(\Am X)$~is co-\cldense in~$\K$.
\end{lemma}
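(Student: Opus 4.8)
The plan is to derive the second assertion from the first together with the preceding lemma, and to prove the first, $\dim(\K/\Am)\geq\lambda$, by a transfinite construction inside $\K$ fed by $\lambda$\hyph saturation of the pair. For the second assertion, assume $\dim(\K/\Am)\geq\lambda$ and $\card X<\lambda$, and set $B:=\mat(\Am X)\cap\K$. Then $B$ is \clclosed in $\K$; since $F$ is $\emptyset$\hyph definable over $\LR$, we have $F(\bv)\in\dcl(\bv)\subseteq\acl(\bv)\subseteq\mat(\bv)\subseteq\mat(\Am X)$ for every $\bv\in B^4$, so $F(B^4)\subseteq B$; and $\rk(B/\Am)\leq\card X<\lambda\leq\dim(\K/\Am)$, so $B\neq\K$. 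Hence, by the preceding lemma in its weakened form, $B$, and therefore $\mat(\Am X)$, is co-\cldense in $\K$.

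For the first assertion, recall that $\pair{\K,\Am}\models\Ttwo$, so $\Am\prec\K$, $\Am\neq\K$, and $\Am$ is \clclosed in $\K$. I shall build $(b_i)_{i<\lambda}$ in $\K$ with $\set{b_j:j<i}$ \clindependent over $\Am$ for every $i\leq\lambda$; this gives $\dim(\K/\Am)\geq\lambda$. Given $\set{b_j:j<i}$, consider the partial $\Ltwo$\hyph type over that set
\[
r_i(x):=\set{\neg\,\exists\y\,\PA{\textstyle\bigwedge_k P(y_k)\et\psi(x,\y,\bv')}:\psi(x,\y,\z)\text{ is \xnarrow, and }\bv'\subset\set{b_j:j<i}},
\]
where $\bv'$ is required to match the length of $\z$. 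A realization $b\in\K$ of all of $r_i$ satisfies $b\notin\mat\PA{\Am\cup\set{b_j:j<i}}$: otherwise finitariness would yield an \xnarrow $\theta(x,\y,\z)$, a finite $\av_0\subset\Am$ and $\bv'\subset\set{b_j:j<i}$ with $\monster\models\theta(b,\av_0,\bv')$, contradicting the corresponding member of $r_i$; consequently $\set{b_j:j\leq i}$ is \clindependent over $\Am$. As $\card{r_i}\leq\card T+\card i<\lambda$ and the pair is $\lambda$\hyph saturated, the recursion goes through once we know $r_i$ is finitely satisfiable in $\pair{\K,\Am}$.

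This finite satisfiability is the crux, and the main obstacle, because the naive approach is circular: it would ask us to know that $\mat\PA{\Am\cup\set{b_j:j<i}}$ is a proper subset of $\K$, which is exactly what the construction is meant to establish. Here Lemma~\ref{lem:infinite-dimension} breaks the circle. Take finitely many members of $r_i$, involving \xnarrow formulas $\psi_1,\dots,\psi_m$ and parameters among $b_{j_0},\dots,b_{j_{q-1}}\in\set{b_j:j<i}$, and set $\Am^*:=\mat\PA{\Am\cup\set{b_{j_1},\dots,b_{j_{q-1}}}}\cap\K$. By \xnarrowness, $\psi_\ell(\monster,\av,\bv_\ell)\subseteq\mat(\av\bv_\ell)$ for all $\av,\bv_\ell$, and a finite union of \Zapplication{}s is again one, so the union of the subsets of $\K$ cut out by those members is contained in $f\PA{(\Am^*)^{N}\times\set{b_{j_0}}}$, where $f\colon\K^{N+1}\app\K$ is a single $\emptyset$\hyph definable \Zapplication whose last argument plays the role of $b_{j_0}$, and we used $\Am^{n}\times\set{(b_{j_1},\dots,b_{j_{q-1}})}\subseteq(\Am^*)^{N}$. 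Now $\Am^*$ is a proper subset of $\K$, since $b_{j_0}\notin\Am^*$ because $\set{b_j:j<i}$ is \clindependent over $\Am$; it is \clclosed in $\K$; and, containing $\Am\prec\K$, it is weakly dense in $\K$ by Lemma~\ref{lem:weakly-dense}. Hence Lemma~\ref{lem:infinite-dimension}, applied in its weakened form with $\Am^*$ for $\Am$ and $b_{j_0}$ for $b_0$, gives $\av\in(\Am^*)^{N+1}$ with $a_0+a_1b_{j_0}+\dots+a_{N}b_{j_0}^{N}\notin f\PA{(\Am^*)^{N}\times\set{b_{j_0}}}$. This element lies in $\K$ (a polynomial over $\K$) and avoids the chosen finite union; the fact that it does lie in $\mat\PA{\Am\cup\set{b_j:j<i}}$ is immaterial, since only the finitely many chosen members need be satisfied. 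Thus $r_i$ is finitely satisfiable, hence realized by some $b_i\in\K$, and the resulting $(b_i)_{i<\lambda}$ is \clindependent over $\Am$, so $\dim(\K/\Am)\geq\lambda$.
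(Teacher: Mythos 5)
Your proof is correct, and it follows the same general strategy as the paper's: form an $\Ltwo$-type with fewer than $\lambda$ parameters asserting non-membership in the appropriate small closure, establish consistency via Lemma~\ref{lem:infinite-dimension}, and realise it by $\lambda$-saturation. But there are two points where you differ in a way worth noting. First, the paper argues by contradiction from a hypothetical generating set $E$ of cardinality $<\lambda$ and dispatches finite satisfiability of its $\Gamma(v)$ in one line by citing Remarks~\ref{rem:pair-small} and~\ref{rem:pair-small-union} --- but those remarks are proved only for $\Am$-definable \Zapplication{}s, whereas the formulas in $\Gamma$ carry parameters from $E$, which need not lie in $\Am$. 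You run a transfinite recursion instead, and in the crux of the finite-satisfiability argument you explicitly enlarge $\Am$ to $\Am^* := \mat(\Am\cup\{b_{j_1},\dots,b_{j_{q-1}}\})\cap\K$, check that $\Am^*$ is proper (using the inductive independence of the $b_j$'s over $\Am$), \clclosed, and weakly dense, and apply the weakened Lemma~\ref{lem:infinite-dimension} with $\Am^*$ and $b_{j_0}$ directly. This handles the non-$\Am$ parameters cleanly and makes explicit what the paper's citation elides. (One small omission: when the finite fragment mentions no $b_j$'s at all, there is no $b_{j_0}$; in that case take $\Am^*=\Am$ and any $b_0\in\K\setminus\Am$, which exists since $\Am\neq\K$.) Second, the paper proves only the first assertion, leaving the ``Hence'' clause implicit; you supply the short derivation using the preceding co-density lemma. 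Note that since $B=\mat(\Am X)\cap\K$ is already \clclosed, the original form of that lemma suffices and the observation $F(B^4)\subseteq B$ is not really needed.
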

\begin{proof}
Same as~\cite[Lemma 1.5]{vdd-dense}.
Let $E$ be a generating set for $\K/\Am$, 
and suppose that $\card E < \lambda$.
Let $\Gamma(v)$ be the set of $\Ltwo$-formulae of the form
\[
\forall y_1 \dots \forall y_n \Pa{U(\y) \rightarrow 
v \notin f(\y, e_1, \dotsc, e_p)},
\]
where $f(\y, \z)$ is a \Zapplication $\emptyset$-definable in~$\K$,
and $e_1, \dotsc, e_p$ are in~$E$.
By Remarks~\ref{rem:pair-small} and~\ref{rem:pair-small-union},
$\Gamma(v)$ is a consistent set of formulae, with less than $\lambda$~many parameters.
By saturation, there exists $b \in \K$ realising the partial type $\Gamma(v)$.
Thus $b \notin \mat(\Am E)$, absurd.
\end{proof}

Notice that in the original \cite[Lemma~1.5]{vdd-dense}, if $T$
expands~RCF,
then van den Dries' assumption that $\mathcal A$ is dense in $\mathcal B$
is superfluous; density is used if however $T$ expands only the theory of
ordered Abelian groups.

\subsection{Proof of Theorems~\ref{thm:Td-complete} and~\ref{thm:Td-eq}}
\label{sec:Td-proof}
The proof is similar to the one in \cite{boxall}.


\begin{definizione}
Let $\pair{\Bm, \Am} \models \Ttwo$ and $C \subseteq \Bm$.
Let $\cv$ be a tuple of elements from~$\eq{\Bm}$; 
the \Utype of~$\cv$, denoted by $\Utp(\cv)$, is the information which tells us
which members of $\cv$ are in $\Am$ (notice: the elements in $\cv$ are real or
iminaginary, but only real elements can be in~$\Am$). 
We say that $\cv$ is \Uindependent if $\cv \ind_{\Am \cap \cv} \Am$
(where, again, only the real element of $\cv$ can be in $\Am \cap \cv$).
\end{definizione}

We will use a superscript $1$ to denote model-theoretic notions for~$\Lang$,
and a superscript $2$ to denote those notions for~$\Ltwo$: for instance, we
will write $a \equiv^1_C a'$ if the $\Lang$-type of $a$ and $a'$ over $C$ is
the same, and $a \equiv^2_C a'$ the $\Ltwo$-type of $a$ and $a'$ over $C$ is
the same.

Both theorems are an immediate consequence of the following proposition.

\begin{proposition}\label{prop:back-and-forth}
Let $\pair{\Bm, \Am}$ and $\pair{\Bm', \Am'}$ be 
models of~$\Td$.
Let $\cv$ be a (possibly infinite) \Uindependent tuple in~$\eq{\Bm}$,
and $\cv'$ be an \Uindependent tuple in~$\eq{(\Bm')}$ of the same length and the same sorts. 
If $\cv \elem^1 \cv'$ and $\Utp(\cv) = \Utp(\cv')$, then
$\cv \elem^2 \cv'$.
\end{proposition}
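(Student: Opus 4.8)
The plan is to prove Proposition~\ref{prop:back-and-forth} by a standard back-and-forth argument, exploiting the compactness/saturation of the monster and the fact that $\Td$ forces $\Am$ to be \clclosed and \cldense in $\Bm$. By compactness it suffices to treat the case where $\cv$ (and hence $\cv'$) is finite; the finite-character axioms for $\ind$ and for the \Utype then let us pass to the infinite case at the end. So fix finite \Uindependent tuples $\cv \subset \eq{\Bm}$ and $\cv' \subset \eq{(\Bm')}$ with $\cv \elem^1 \cv'$ and $\Utp(\cv) = \Utp(\cv')$. After passing to $\omega$-saturated elementary extensions of the two pairs (which does not change $\elem^2$-type of the tuples, since $\Td$ is the theory in question and we only need saturation to run the system), we build a back-and-forth system of partial $\Ltwo$-elementary maps between the two pairs whose domains are of the form $\eq{(\dcl^1(\Am_0 \dv))}$ for $\dv$ a finite \Uindependent tuple extending $\cv$ with $\Am_0 := \Am \cap \dv$ a finite subset of $\Am$ (real elements), and symmetrically on the other side; the map will send $\Am$-part to $\Am'$-part and preserve $\ind$ over the relevant base. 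The key point is that a single $\Lang$-elementary map preserving \Utype and \prefree-independence \emph{is} automatically $\Ltwo$-elementary: this is exactly what Theorem~\ref{thm:Td-eq} would give us, but since we are proving it, we instead verify directly that the family of such maps has the back-and-forth property, from which both theorems follow.

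The extension step is where the work is. Suppose we have a partial map with $\cv$ in its domain and we are handed a new element $b \in \Bm$ (real; the imaginary case reduces to the real one by adding a real tuple that codes it, using Corollary~\ref{cor:rk-real-im} and the extension of $\matt$ to $\eq{\Bm}$ from \S\ref{sec:imaginary}); we must find a matching $b'$ on the other side. Split into cases according to whether $b \in \mat(\Am \cv)$ or not, and, in the former case, whether $b \in \Am$ or not. If $b \in \Am$: here the point is that $b$ realizes over $\Am_0 \cv$ a $1$-type that (by \clclosedness of $\Am$) is already realized in $\Am$, and by density of $\Am'$ together with the induction on the $\Lang$-type side we can find $b' \in \Am'$ with the right $\Lang$-type; \Utype is preserved by construction. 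If $b \notin \Am$ but $b \in \mat(\Am \cv)$: choose a finite real $\av \subset \Am$ with $b \in \mat(\av \cv)$, use a \Zapplication witnessing this (Remark~\ref{rem:Zapplication}), transport the \Zapplication by the partial map, and pick $b'$ in the corresponding fibre but \emph{outside} $\Am'$ — possible because, by the lemma following Lemma~\ref{cor:dense-cardinality} (or directly: a proper \clclosed set is co-\cldense, and a \Zapplication of $\Am'$ is $\Am'$-small hence a proper, indeed co-\cldense, subset), the complement of $\mat(\Am')$ within that $1$-dimensional fibre is nonempty whenever the fibre is $1$-dimensional, while if the fibre is $0$-dimensional we just match $\Lang$-types. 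Finally, if $b \notin \mat(\Am \cv)$: we need $b' \notin \mat(\Am' \cv')$ realizing the same $\Lang$-type over $\cv'$ and \Uindependent with it; this is where Lemma~\ref{lem:infinite-dimension} enters — it is precisely the statement that one can avoid all the small sets $f(\Am^n \times \cdots)$ coming from the countably many \Zapplications, so by saturation of the pair there is such a $b'$ whose $\Lang$-type over $\Am' \cv'$ is the (unique, by \clclosedness) non-forking/finitely-satisfied-in-$\Am'$ extension; that this $b'$ also makes the extended tuple \Uindependent follows from Lemma~\ref{lem:ind-heir} (a coheir is \prefree-independent) together with transitivity and normality of $\indmat$.

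The main obstacle I expect is the bookkeeping in the case $b \notin \mat(\Am\cv)$: one must arrange simultaneously (i) $b' \elem^1_{\cv'} b$, (ii) $b' \notin \mat(\Am'\cv')$, and (iii) the new tuple is still \Uindependent, and one must verify that the resulting larger partial map is still in the family (i.e. its domain is again of the prescribed form and it still preserves $\ind$ over the moving base). Getting (i)--(iii) at once is exactly the content of Lemma~\ref{lem:infinite-dimension}: the $\Lang$-type over $\cv'$ one wants to realize is determined, and the constraint ``$b' \notin \mat(\Am'\cv')$'' says $b'$ avoids a ($<\lambda$)-indexed family of $\Am'$-small sets, which Lemma~\ref{lem:infinite-dimension} (applied to each \Zapplication, via the polynomial trick $p(\av, x)$) shows is non-trivially satisfiable; saturation of the pair $\pair{\Bm', \Am'}$ then produces $b'$. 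Once the back-and-forth family is shown to have the extension property on both sides, Proposition~\ref{prop:back-and-forth} follows immediately, and Theorems~\ref{thm:Td-complete} and~\ref{thm:Td-eq} are read off in the usual way (completeness from the case $\cv = \cv' = \emptyset$; quantifier-reduction to Boolean combinations of \basic formulae from the fact that the partial maps are determined by the $\Lang$-type together with the \Utype, which is what a \basic formula records).
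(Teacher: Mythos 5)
Your proposal is correct and follows essentially the same approach as the paper: a back-and-forth system of partial $\Lang$-elementary maps that preserve \Utype and \Uindependence, with the case split on $d \in \Am$ (density of $\Am'$), $d \in \mat(\Am\cv) \setminus \Am$ (transport an \xnarrow witness after first absorbing a finite $\av_0 \subset \Am$), and $d \notin \mat(\Am\cv)$ (Local Character to absorb $\av_0$ with $d \ind_{\av_0\av} \Am$, then Lemma~\ref{cor:dense-cardinality} / Lemma~\ref{lem:infinite-dimension} plus saturation to realize the type off $\mat(\Am'\cv')$). Two small remarks: in your case $d \in \mat(\Am\cv)\setminus\Am$ you work harder than necessary---once $d'\cv' \elem^1 d\cv$, the paper shows $d' \notin \Am'$ is \emph{automatic}, since $d' \in \Am' \cap \mat(\cv')$ would force $d' \in \mat(\av')$ via an \xnarrow formula, which transports back to $d \in \mat(\av) \subseteq \Am$; and in the generic case the phrase ``the (unique, by \clclosedness) non-forking extension'' overstates things---non-forking extensions need not be unique here, and the paper only needs \emph{existence} of some $d'$ with $d'\cv' \elem^1 d\cv$ and $d' \ind_{\av'} \Am'$, not uniqueness.
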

\begin{proof}
Back-and-forth argument.
Let $\lambda < \kappa$ be a cardinal strictly greater than $\card T$ and the
length of~$\cv$.
\Wlog, we can assume that both $\pair{\Bm, \Am}$ and $\pair{\Bm', \Am'}$ are
$\lambda$-saturated.
Let $\ev$ (resp.~$\ev'$) be the subtuple of $\cv$ (resp. of~$\dv'$)
of non-real elements.
Let
\begin{multline*}
\Gamma := \bigl\{f: \tilde c \to \tilde c':\quad
 \cv \subset \tilde c \subset \eq{\Bm}, \quad
\cv' \subset \tilde c' \subset \eq{(\Bm')},\\
\tilde c \et \tilde c' \text{ of the same length less than } \lambda \text{
  and of the  same sorts}, \\
\text{with all non-real elements of } \tilde c \text{ in } \ev,\\
f \text { is a bijection},\\
\tilde c \et \tilde c' \text{ are \Uindependent},\quad
\tilde c \elem^1 \tilde c',\quad
\Utp(\tilde c) = \Utp(\tilde c')\bigr\}.
\end{multline*}
We want to prove that $\Gamma$ has the back-and-forth property.
So, let $f: \tilde c \to \tilde c'$ be in $\Gamma$, and $d \in \Bm \setminus \cv$;
we want to find $g \in \Gamma$ such that $g$ extends $f$ and $d$ is in the
domain of~$g$.
\Wlog, $\tilde c = \cv$ and $\tilde c' = \cv'$.
Let $\av := \cv \cap \Am$, and $\av' := \cv' \cap \Am'$.
Notice that $f(\av) = \av'$ and that 
$\Am \cap \mat(\cv) = \Am \cap \mat(\av)$, and similarly for $\cv'$.
We distinguish some cases.

\Case 1 $d \in \Am \cap \mat(\cv) = \Am \cap \mat(\av)$.
Notice that $\cv d \ind_{\av d} \Am$, and therefore $\cv d$ is \Uindependent.
There is a \xnarrow formula $\phi(x ,\y)$ such that $\Bm \models \phi(d, \av)$.
Choose $d' \in \Am'$ such that $\cv d \elem^1 \cv' d'$; therefore,
$d' \in \mat(\av')$, and thus $\cv' d'$ is also \Uindependent and has the
same \Utype as $\cv d$.
Thus, we can extend $f$ to $\cv d$ setting $g(d) = d'$.

\Case 2 $d \in \Am \setminus \mat(\cv) = \Am \setminus \mat(\av)$.
Since $\cv \ind_{\av} \Am$ and $c \in \Am$, we have
$\cv \ind_{\av d} \Am$, and therefore $\cv d$ is \Uindependent.
Let $q(x) := \tp^1(d /\cv)$, and $q' := f(q) \in S^1_1(\cv')$.
Notice that $q \ind_{\av} \cv$ (because $d \ind_{\av} \cv$), and
therefore $q' \ind_{\av'} \cv'$.
Since $\Am'$ is \emph{dense} in $\Bm'$ and $\pair{\Bm', \Am'}$ is
$\lambda$-saturated, there exists $d' \in \Am'$ realizing $q'$.
It is now easy to see that $\cv' d'$ is \Uindependent, and that we can
extend $f$ to $\cv d$ setting $g(d) = d'$.

\Case 3 $d \in \mat(\cv \Am) \setminus \Am$.
Let $\av_0 \in \Am^n$ such that $d \in \mat(\bv \av_0)$ 
($\av_0$ exists because $\mat$ is finitary).
By applying $n$ times cases~1 or~2, we can extend $f$ to $f' \in \Gamma$ such
that $\av_0$ is a subset of the domain of~$f'$.
By substituting $f$ with $f'$, we are reduced to the case that 
$d \in \mat(\cv) \setminus \Am$.
Hence, $\cv \ind_{\av} \Am$ and $d \in \mat(\cv)$: therefore, 
$\cv d \ind_{\av} \Am$, and hence $\cv d$ is \Uindependent.
Let $d' \in \Bm'$ such that $d' \cv' \elem^1 d \cv$.
For the same reason as above, $\cv' d'$ is also \Uindependent.
It remains to show that $\cv d$ and $\cv' d'$ have the same \Utype,
that is that $d' \notin \Am'$.
If, for contradiction, $d' \in \Am' \cap \mat(\cv)$, then $d' \in \mat(\av)$;
therefore, there would be a \xnarrow-formula witnessing it, 
and thus $d \in \mat(\av) \subseteq \Am$, absurd.

\Case 4 $d \notin \mat(\cv \Am)$.
Let $\av_0 \subset \Am$ be of cardinality less than $\lambda$
such that $d\ind_{\av_0 \av} \Am$ 
($\av_0$ exists because $\ind$ satisfies Local Character).
By applying cases 2 and 3 sufficiently many times,
we can extend $f$ to  $f' \in \Gamma$ such that $\av_0$ is contained in the
domain of $f'$; thus, \wloG, $d \ind_{\av} P$.
Let $d' \in Am'$ such that $d' \cv' \elem^1 d \cv$; moreover, 
by Lemma~\ref{cor:dense-cardinality} we can also assume that 
$d' \ind_{\av'} \Am'$.
We need only to show that $d' \notin \Am'$.
Assume, for contradiction, that $d' \in \Am'$ and $d' \ind_{\av'} \Am'$; then,
$d' \ind_{\av'} d'$, thus $d' \in \mat(\av')$, and hence $d \in \mat(\av)$, 
absurd.
\end{proof}

\subsection{Additional facts}
Reasoning as in~\cite[2.6--2.9]{vdd-dense}, 
from Theorems~\ref{thm:Td-complete} and~\ref{thm:Td-eq}, 
and Proposition~\ref{prop:back-and-forth}, we can deduce the following facts.
We are still assuming that $T$ expands an integral domain.
We will say that $A$ is $\Ttwo$-algebraically closed if $A$ is a subset
of some pair $\pair{\Bm, \Cm} \models \Td$ and $A$ is algebraically closed \wrt
the language~$\Ltwo$, and similarly for $\Ttwo$-definably closed.
We denote by $\acl^1$ the algebraic closure in~$T$, and by $\acl^2$ the
algebraic closure in~$\Ttwo$.
We denote by $\elem^1$ elementary equivalence \wrt the language~$\Lang$, and
by We denote by $\elem^2$ elementary equivalence \wrt the language~$\Ltwo$.
Similarly, $\tp^1(\bv / X)$ will denote the $\Lang$-type of $\bv$ over~$X$,
while $\tp^2(\bv/X)$ will denote the $\Ltwo$ type.

\begin{corollary}[{\cite[2.6]{vdd-dense}}]\label{cor:vdd-26}
Let $\pair{\Bm, \Am}$ be a model of~$\Td$.
Suppose $Y \subseteq \Bm^n$ is
$A_0$-definable in $\pair{\Bm, \Am}$, for some $A_0 \subset \Am$.
Then $Y \cap \Am^n$ is $A_0$-definable in~$\Am$.
\end{corollary}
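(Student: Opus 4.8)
The plan is to reduce the statement to Proposition~\ref{prop:back-and-forth} (equivalently Theorem~\ref{thm:Td-eq}) via a standard type-counting / homogeneity argument, exactly in the spirit of \cite[2.6]{vdd-dense}. First I would fix $\bv \in \Am^n$ and analyse which $\Ltwo$-type it realises over $A_0$. The key point is that, since $A_0 \subseteq \Am$ and $\bv \subset \Am$, the tuple $A_0 \bv$ is \Uindependent in the trivial way (everything relevant lies in $\Am$, so $A_0\bv \ind_{A_0 \bv} \Am$ holds automatically, as $\av \ind_{\av} \Am$ whenever $\av \subset \Am$); and moreover $\Utp$ records only that these elements are in $P$. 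Hence, by Proposition~\ref{prop:back-and-forth}, the $\Ltwo$-type of $\bv$ over $A_0$ in $\pair{\Bm,\Am}$ is determined by its $\Lang$-type over $A_0$ together with the (constant, here trivial) $\Utp$ data.

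Concretely, I would argue as follows. Let $\phi(\x)$ be the $\Ltwo$-formula over $A_0$ defining $Y$, so $Y \cap \Am^n = \set{\bv \in \Am^n : \pair{\Bm,\Am}\models \phi(\bv)}$, and set $Z := \set{\bv \in \Am^n : \Am \models \psi(\bv)}$ where $\psi(\x)$ is chosen as follows. By Theorem~\ref{thm:Td-eq}, $\phi$ is equivalent modulo $\Td$ to a Boolean combination of \basic formulae $\exists\y\,(P(\y)\et \chi(\x,\y))$ over $A_0$. For a \basic formula and a real tuple $\bv \in \Am^n$, the condition $\pair{\Bm,\Am}\models \exists\y\,(P(\y)\et\chi(\bv,\y))$ is equivalent to $\Am \models \exists\y\,\chi(\bv,\y)$, because the witness $\y$ must satisfy $P$, i.e.\ lie in $\Am$, and by elementarity $\Am \prec \Bm$ the existential over $\y$ can be taken inside $\Am$ (for the nontrivial direction; the other direction is immediate from $\Am \prec \Bm$). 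Taking the same Boolean combination of the $\Lang$-formulae $\exists\y\,\chi(\x,\y)$ over $A_0$ as $\psi(\x)$, we get $Y \cap \Am^n = Z$, and $\psi$ is an $\Lang$-formula over $A_0$, so $Y \cap \Am^n$ is $A_0$-definable in~$\Am$.

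The main obstacle is the \basic-formula bookkeeping in the middle step: one must check carefully that quantifiers ranging over $P$ in a \basic formula can be absorbed into the structure $\Am$ when the free variables are instantiated by elements of $\Am$, and that this survives taking Boolean combinations and negations (negation of ``$\exists\y\in\Am$'' is again an $\Lang$-condition on $\bv$, namely $\neg\exists\y\,\chi(\bv,\y)$, which is why passing to Boolean combinations is harmless). All of this is routine once Theorem~\ref{thm:Td-eq} is in hand; no new ideas beyond quantifier elimination down to \basic formulae and the elementarity of the pair are needed. I would keep the write-up short, citing \cite[2.6]{vdd-dense} for the template and Theorem~\ref{thm:Td-eq} for the $\Ltwo$-quantifier reduction.
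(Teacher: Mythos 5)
Your concrete argument is correct and matches the paper's intended proof, which derives the corollary exactly as you do: apply Theorem~\ref{thm:Td-eq} to reduce the defining $\Ltwo$-formula to a Boolean combination of \basic formulae over~$A_0$, then absorb each $\exists\y\,(P(\y)\et\chi(\x,\y))$ into the $\Lang$-formula $\exists\y\,\chi(\x,\y)$ evaluated in~$\Am$, using $\Am\prec\Bm$ and the fact that all parameters and free variables lie in~$\Am$. The opening ``type-counting / homogeneity'' framing via Proposition~\ref{prop:back-and-forth} is not the step you actually carry out---the clean route is the quantifier-reduction one, which is also what the paper (following \cite[2.6]{vdd-dense}) uses---but that mismatch is purely cosmetic.
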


\begin{corollary}[{\cite[2.7]{vdd-dense}}]\label{cor:vdd-27}
Let $\pair{\Bm, \Am}$ and $\pair{\Bm', \Am'}$ be models of~$\Td$,
such that $\pair{\Bm', \Am'} \subseteq \pair{\Bm, \Am}$
and $\Bm'$ and $\Am$ are \clindependent over $\Am'$.
Then, $\pair{\Bm', \Am'} \preceq \pair{\Bm, \Am}$.
In particular, if $\Am \prec \Bm' \preceq \Bm$, with $\Am \neq \Bm'$,
then $\pair{\Bm', \Am} \preceq \pair{\Bm, \Am}$.
\end{corollary}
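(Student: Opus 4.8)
The plan is to read the statement off from the back-and-forth Proposition~\ref{prop:back-and-forth}, applied to an enumeration of all of~$\Bm'$; equivalently one could argue through Theorem~\ref{thm:Td-eq} and \basic formulae, but the reduction is cleaner via the Proposition. First I would record the routine preliminaries: since $\pair{\Bm,\Am}$ and $\pair{\Bm',\Am'}$ are models of~$\Td$, the sets $\Am$ and $\Am'$ are \cldense and \clclosed in $\Bm$ and in $\Bm'$ respectively, so Lemma~\ref{lem:density-closure} gives $\Am\preceq\Bm$ and $\Am'\preceq\Bm'$; and since $\pair{\Bm',\Am'}$ is an $\Ltwo$-substructure of $\pair{\Bm,\Am}$, we have $\Am'=\Am\cap\Bm'$.

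Next I would fix an enumeration of~$\Bm'$, regarded as a (possibly infinite) tuple $\cv$ inside $\Bm$ on one side and as the tuple $\cv'$ inside $\Bm'$ on the other, and verify the hypotheses of Proposition~\ref{prop:back-and-forth}. The tuple $\cv'$ is \Uindependent in $\pair{\Bm',\Am'}$ because $\Bm'\indmat_{\Am'}\Am'$ holds trivially; the tuple $\cv$ is \Uindependent in $\pair{\Bm,\Am}$ precisely because $\Bm'\indmat_{\Am\cap\Bm'}\Am$, which, using $\Am'=\Am\cap\Bm'$, is exactly the hypothesis $\Bm'\indmat_{\Am'}\Am$. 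The values $\Utp(\cv)$ and $\Utp(\cv')$ coincide: an entry $b$ of the enumeration is flagged by the predicate in $\pair{\Bm,\Am}$ iff $b\in\Am$, and in $\pair{\Bm',\Am'}$ iff $b\in\Am'=\Am\cap\Bm'$, and these conditions agree. The last remaining hypothesis, $\cv\elem^1\cv'$, says precisely that $\Bm'\preceq\Bm$ as $\Lang$-structures; granting it, Proposition~\ref{prop:back-and-forth} yields $\cv\elem^2\cv'$, which is exactly $\pair{\Bm',\Am'}\preceq\pair{\Bm,\Am}$.

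So the single substantive point is the $\Lang$-elementarity $\Bm'\preceq\Bm$, and I expect this to be the main obstacle, since $T$ need not be model complete and hence $\Bm'\subseteq\Bm$ alone does not force it. In the ``in particular'' clause $\Bm'\preceq\Bm$ is part of the hypothesis, so there is nothing to prove; one should merely add that in that case $\pair{\Bm',\Am}$ is again a model of~$\Td$ --- it is a proper extension of~$\Am$, the set $\Am$ stays \clclosed in $\Bm'\subseteq\Bm$, and $\Am$ stays \cldense because any $\Bm'$-definable set of dimension~$1$ is the trace on $\Bm'$ of a $\Bm$-definable set of dimension~$1$, which already meets $\Am\subseteq\Bm'$ --- and that $\Bm'\indmat_{\Am}\Am$ holds trivially, so the ``in particular'' assertion is an instance of the general one. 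For the general statement, $\Bm'\preceq\Bm$ must either be read into the hypothesis that $\pair{\Bm',\Am'}$ is a submodel of $\pair{\Bm,\Am}$, or be extracted from the $\mat$-independence of $\Bm'$ and $\Am$ over $\Am'$ together with $\Am'\preceq\Bm'$ (the point being that $\Lang$-types over parameters of $\Bm'$ can then be realized back inside $\Bm'$, exactly as the saturation hypothesis is used inside the proof of Proposition~\ref{prop:back-and-forth}); making this step precise is where I would concentrate the work.
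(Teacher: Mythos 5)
Your reduction to Proposition~\ref{prop:back-and-forth} via an enumeration of $\Bm'$ is exactly the route the paper intends (it offers no separate argument for this corollary, only the pointer to that proposition and to van den Dries), and your verification that both enumerations are \Uindependent and have the same \Utype is correct, as is your treatment of the ``in particular'' clause. The one step you leave open, namely $\Bm'\preceq\Bm$ as $\Lang$-structures, is not meant to be extracted from the independence hypothesis: by the paper's standing conventions the models of $T$ under discussion sit as elementary substructures of the fixed monster $\monster$ (this is already presupposed for the hypothesis that $\Bm'$ and $\Am$ are \clindependent over~$\Am'$, and for $\Am'$ being \clclosed in~$\Bm'$, to be meaningful, since $\mat$ is an operator on~$\monster$), so $\Bm'\subseteq\Bm$ together with $\Bm'\preceq\monster$ and $\Bm\preceq\monster$ gives $\Bm'\preceq\Bm$ for free. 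With that read into the hypotheses, as in your first suggested resolution, the proof is complete and agrees with the paper's.
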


\begin{corollary}[{\cite[2.8]{vdd-dense}}]\label{cor:Td-2extensions}
Let $A \subseteq B \subset \monster$ be substructures.
Assume that $\pair{B, A}$ have extensions 
$\pair{\Bm_1, \Am_1} \models \Td$ and $\pair{\Bm_2, \Am_2} \models \Td$,
such that $B \ind_{A} \Am_k$ and $B \cap \Am_k = A$, $k = 1, 2$.
Then, $\pair{\Bm_1, \Am_1} \elem^2_{B} \pair{\Bm_2, \Am_2}$, that is
$\pair{\Bm_1, \Am_1}$ and $\pair{\Bm_2, \Am_2}$ satisfy the same
$\Ltwo$-formulae with parameters from~$B$.
More generally, for every $\av_1 \in (\Am_1)^n$ and $\av_2 \in (\Am_2)^n$,
if $\av_1 \elem^1_B \av_2$, then $\av_1 \elem^2_B \av_2$; that is, if 
$\av_1$ and $\av_2$ realise the same $\Lang$-types over $B$ in $\Bm_1$ and
$\Bm_2$ respectively, then they realise the same $\Ltwo$-type over $B$ in 
$\pair{\Bm_1, \Am_1}$ and $\pair{\Bm_2, \Am_2}$ respectively.
\end{corollary}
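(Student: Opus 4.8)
The whole statement is meant to be extracted from the back-and-forth Proposition~\ref{prop:back-and-forth}, so the plan is to package the data into two $P$-independent tuples and apply that proposition. Fix once and for all an enumeration of $B$, and regard it simultaneously as a tuple inside $\eq{\Bm_1}$ and inside $\eq{\Bm_2}$; this is legitimate because $\pair{B,A}$ is a common $\Ltwo$-substructure of both pairs. For the second (more general) assertion, set $\cv_k := B\,\av_k$ for $k=1,2$, a (possibly infinite) real tuple in $\eq{\Bm_k}$. I would check that $\cv_1$ and $\cv_2$ meet the three hypotheses of Proposition~\ref{prop:back-and-forth} and then read off $\av_1 \elem^2_B \av_2$ by restricting the conclusion to the $\av$-coordinates. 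The first assertion is then the degenerate case $\av_1 = \av_2 = \emptyset$ (so $\cv_k = B$ and $\cv_1\elem^2\cv_2$ says precisely that $\pair{\Bm_1,\Am_1}$ and $\pair{\Bm_2,\Am_2}$ satisfy the same $\Ltwo$-formulae with parameters from~$B$).

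The three inputs of Proposition~\ref{prop:back-and-forth} to verify are: \emph{(i) $P$-independence.} Since $\av_k \subseteq \Am_k$ and $B \cap \Am_k = A$, the coordinates of $\cv_k$ lying in $\Am_k$ are exactly those of $A\av_k$, so I must produce $B\av_k \ind_{A\av_k} \Am_k$. Starting from the hypothesis $B \ind_A \Am_k$ together with $A \subseteq A\av_k \subseteq \Am_k$, Base Monotonicity gives $B \ind_{A\av_k} \Am_k$, and then Normality gives $A\av_k B = B\av_k \ind_{A\av_k} \Am_k$; thus each $\cv_k$ is \Uindependent. \emph{(ii) Same $\Lang$-type.} Unwinding the identification of $B$, the condition $\cv_1 \elem^1 \cv_2$ is exactly the hypothesis $\tp^1_{\Bm_1}(\av_1/B) = \tp^1_{\Bm_2}(\av_2/B)$, i.e.\ $\av_1 \elem^1_B \av_2$. \emph{(iii) Same $P$-pattern and sorts.} In each $\cv_k$ the coordinates in $P$ are exactly the $A$-coordinates of $B$ together with all coordinates of $\av_k$; since the enumeration of $B$ (hence of its subset $A$) is fixed and $\abs{\av_1} = \abs{\av_2}$, we get $\Utp(\cv_1) = \Utp(\cv_2)$, and both tuples are real of the same length. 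Proposition~\ref{prop:back-and-forth} then yields $\cv_1 \elem^2 \cv_2$, hence $\av_1 \elem^2_B \av_2$.

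\textbf{Main obstacle.} Given that Proposition~\ref{prop:back-and-forth} is already available, there is no genuinely hard step here; the argument is essentially bookkeeping. The one point that requires attention is the $P$-independence check in (i): passing from $B \ind_A \Am_k$ to $B\av_k \ind_{A\av_k} \Am_k$ means absorbing the elements of $\av_k$ — which live in $\Am_k$ — into the base, and this is exactly what Base Monotonicity and Normality of $\ind$ deliver. A secondary point of care is the reduction of the first assertion to the case $n=0$, i.e.\ making sure that the $\Ltwo$-type of the empty tuple over $B$ really does encode the full $\Ltwo$-theory of the pair with parameters from~$B$, so that $\cv_1\elem^2\cv_2$ with $\cv_k = B$ is the claimed $\pair{\Bm_1,\Am_1} \elem^2_B \pair{\Bm_2,\Am_2}$.
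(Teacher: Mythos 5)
Your proof is correct and follows exactly the paper's route: set $\cv_k := B\av_k$, verify that the two tuples are \Uindependent, have the same \Utype, and are $\Lang$-elementarily equivalent, then invoke Proposition~\ref{prop:back-and-forth}. Your explicit verification of \Uindependence via Base Monotonicity and Normality is precisely the bookkeeping the paper leaves implicit.
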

Notice that the hypothesis of the above Corollary implies that $A$ is
\clclosed (but not necessarily \cldense) in~$B$.
\begin{proof}
Let $\cv_k := B \av_k$. 
$\cv_1$ and $\cv_2$ have the same \Utype, they are both \Uindependent,
and $\cv_1 \elem^1 \cv_2$;
the conclusion follows from Proposition~\ref{prop:back-and-forth}.
\end{proof}

\begin{corollary}[{\cite[2.9]{vdd-dense}}]\label{cor:Td-type-1}
Let $\pair{\Bm_1, \Am_1} \models \Td$ and $\pair{\Bm_2, \Am_2} \models \Td$,
and let $A$ be a common subset of $\Am_1$ and~$\Am_2$.
Suppose that $b_1 \in \Bm_1 \setminus \Am_1$ and 
$b_2 \in \Bm_2 \setminus \Am_2$
realise the same $\Lang$-types over~$A$ in $\Bm_1$ and $\Bm_2$ respectively,
that is $b_1 \elem^1_A b_2$.
Then, they realise the same $\Ltwo$-types over~$A$ in 
$\pair{\Bm_1, \Am_1}$ and $\pair{\Bm_2, \Am_2}$ respectively,
that is $b_1 \elem_A^2 b_2$.
\end{corollary}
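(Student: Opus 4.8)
The plan is to deduce this directly from Proposition~\ref{prop:back-and-forth}, applied to the tuples $\cv_1 := A b_1$, regarded in $\eq{\Bm_1}$, and $\cv_2 := A b_2$, regarded in $\eq{(\Bm_2)}$, where $A$ is enumerated in the same way on both sides. To be entitled to invoke the proposition I must verify four things: (i) $\cv_1$ and $\cv_2$ have the same length and the same sorts, which is immediate since every coordinate is real; (ii) $\cv_1 \elem^1 \cv_2$, which is just a reformulation of the hypothesis $b_1 \elem^1_A b_2$ (the $A$-parts of the two tuples being identical); (iii) $\Utp(\cv_1) = \Utp(\cv_2)$, which holds because $A \subseteq \Am_1$ and $A \subseteq \Am_2$, so the $A$-coordinates lie in the predicate on both sides, while $b_1 \notin \Am_1$ and $b_2 \notin \Am_2$ by hypothesis; and (iv) $\cv_1$ and $\cv_2$ are \Uindependent. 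Granting (i)--(iv), Proposition~\ref{prop:back-and-forth} gives $\cv_1 \elem^2 \cv_2$, hence in particular $b_1 \elem^2_A b_2$, which is the assertion.

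The only item needing an argument is (iv). Consider $\cv_1$; since $b_1 \notin \Am_1$, the base $\Am_1 \cap \cv_1$ in the definition of \Uindependence is exactly $A$, so I must show $A b_1 \indmat_A \Am_1$, equivalently (by Normality, or straight from the definition of $\indmat$) that $b_1 \indmat_A \Am_1$. This is where it matters that $\pair{\Bm_1, \Am_1} \models \Td$, so that $\Am_1$ is \clclosed in $\Bm_1$: then $\mat(\Am_1) \cap \Bm_1 = \Am_1$, and since $b_1 \in \Bm_1 \setminus \Am_1$ we get $b_1 \notin \mat(\Am_1)$. Now let $Z' \subseteq \Am_1$ be $\mat$-independent over $A$; by finitariness we may take $Z'$ finite. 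If $Z'$ were not independent over $A b_1$, some $z \in Z'$ would lie in $\mat(A b_1 (Z' \setminus z)) \setminus \mat(A (Z' \setminus z))$, so the Exchange Principle would give $b_1 \in \mat(A (Z' \setminus z) z) = \mat(A Z') \subseteq \mat(\Am_1)$, contradicting $b_1 \notin \mat(\Am_1)$. Hence $b_1 \indmat_A \Am_1$ and $\cv_1$ is \Uindependent; the symmetric argument, using that $\Am_2$ is \clclosed in $\Bm_2$ and $b_2 \notin \Am_2$, shows $\cv_2$ is \Uindependent as well.

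I expect step (iv) to be the only place requiring thought: the key observation is that an element of $\Bm_1$ lying outside the \clclosed set $\Am_1$ is automatically $\mat$-independent from all of $\Am_1$ over any $A \subseteq \Am_1$. After that, the corollary drops out of Proposition~\ref{prop:back-and-forth}. This parallels the derivation of \cite[2.9]{vdd-dense} from the back-and-forth lemma, and it is the exact analogue of the proof of Corollary~\ref{cor:Td-2extensions} in the case where the distinguished elements lie outside, rather than inside, the predicate.
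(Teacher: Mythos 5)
Your proof is correct and follows the same route as the paper: reduce to Proposition~\ref{prop:back-and-forth} by exhibiting appropriate tuples and verifying \Uindependence. The paper's own proof sets $\cv_i := b_i\Am_i$, which as literally written presupposes $\Am_1=\Am_2=A$ (van den Dries's original formulation); your choice $\cv_i := Ab_i$ is the one that actually matches the hypothesis $b_1\elem^1_A b_2$ in the generality stated, and your Exchange-Principle verification that $b_i\indmat_A\Am_i$ follows from $b_i\notin\mat(\Am_i)$ (\ie, from $\Am_i$ being \clclosed in $\Bm_i$) supplies a step the paper asserts without comment.
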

\begin{proof}
Let $\cv_i := b_i \Am_i$, $i = 1, 2$.
By assumption, $\cv_1 \elem^1 \cv_2$, they have the same \Utype, and
they are both \Uindependent.
The conclusion follows from Proposition~\ref{prop:back-and-forth}.
\end{proof}

For the remainder of this section, we will assume that $\pair{\Bm, \Am}$
is a model of~$\Td$, and that $\lambda$ is a cardinal number such that
$\kappa > \lambda  > \max(\card T, \card{\Bm})$.

\begin{lemma}[{\cite[Theorem~2]{vdd-dense}}]
\label{lem:thm-2}
Let $\bv \subset \Bm$ be \Uindependent.
Given a set $Y \subset \Am^n$, \tfae:
\begin{enumerate}
\item 
$Y$ is $\Ttwo$-definable over~$\bv$;
\item 
$Y = Z \cap \Am^n$ for some set $Z \subseteq \Bm^n$ that is $T$-definable 
over~$\bv$.
\end{enumerate}
\end{lemma}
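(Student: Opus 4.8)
The direction $(2)\Rightarrow(1)$ is immediate: if $Z$ is defined over $\bv$ by an $\Lang$-formula $\zeta(\x,\bv)$, then $Y=Z\cap\Am^n$ is defined over $\bv$ by the $\Ltwo$-formula $\zeta(\x,\bv)\et\bigwedge_{i=1}^{n}P(x_i)$. For $(1)\Rightarrow(2)$ I would give a soft argument based on Corollary~\ref{cor:Td-2extensions}, not needing Theorem~\ref{thm:Td-eq}. Write $\av_0:=\Am\cap\bv$ for the subtuple of $\bv$ that lies in $\Am$. First, \wloG $\pair{\Bm,\Am}$ is $\lambda$-saturated for some $\lambda>\card T+\card\bv$: passing to an elementary extension of the pair preserves $P$-independence of the \emph{finite} tuple $\bv$, since a failure would mean that some $b\in\bv$ lies in $\mat\Pa{\av_0\cup(\bv\setminus b)\cup\bar e}$ for a finite tuple $\bar e$ from the predicate, which is witnessed by an \xnarrow formula, hence expressible by an $\Ltwo$-formula over $\bv$, hence already true in $\pair{\Bm,\Am}$. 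Fix an $\Ltwo$-formula $\phi_Y(\x,\bv)$ defining $Y$; we may assume it implies $\bigwedge_iP(x_i)$.

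The key step is: for $\av,\av'\in\Am^n$ with $\av\elem^1_{\bv}\av'$ one has $\av\in Y\iff\av'\in Y$. Put $B:=\dcl^1(\bv)$ and $A:=B\cap\Am$. Then $\pair{B,A}$ is an $\Ltwo$-substructure and $\pair{\Bm,\Am}$ is an extension of it to a model of $\Td$ with $B\indmat_A\Am$ and $B\cap\Am=A$. Indeed $B\subseteq\acl^1(\bv)\subseteq\mat(\bv)$ (as $\mat$ satisfies Existence), so $A\subseteq\mat(\bv)\cap\Am=\mat(\av_0)$ by $P$-independence of $\bv$; hence $\mat(A)=\mat(\av_0)$ and $\mat(B)=\mat(\bv)$, and by anti-reflexivity of $\indmat$ the relation $B\indmat_A\Am$ is equivalent to $\bv\indmat_{\av_0}\Am$, which holds. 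Applying Corollary~\ref{cor:Td-2extensions} with $\pair{\Bm_1,\Am_1}=\pair{\Bm_2,\Am_2}=\pair{\Bm,\Am}$ yields $\av\elem^1_B\av'\Rightarrow\av\elem^2_B\av'$; and $\elem^1_B$ is the same as $\elem^1_{\bv}$ because $B\subseteq\dcl^1(\bv)$. So $\av\elem^2_{\bv}\av'$, and in particular $\pair{\Bm,\Am}\models\phi_Y(\av,\bv)\leftrightarrow\phi_Y(\av',\bv)$.

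Now let $\Sigma:=\set{\tp^1(\av/\bv):\av\in Y}$ and $\Sigma':=\set{\tp^1(\av/\bv):\av\in\Am^n\setminus Y}$, subsets of the Stone space $S^1_n(\bv)$. By the key step $\Sigma\cap\Sigma'=\emptyset$, and $\Sigma\cup\Sigma'=\set{\tp^1(\av/\bv):\av\in\Am^n}$. Both are closed: fixing an $\Ltwo$-formula $\chi(\x,\bv)$, the set of $p\in S^1_n(\bv)$ for which $p\cup\set{\chi(\x,\bv)}$ is realised in the $\lambda$-saturated pair equals, by saturation, the set of $p$ such that for every $\Lang$-formula $\rho\in p$ the $\Ltwo$-formula $\rho(\x,\bv)\et\chi(\x,\bv)$ is satisfiable in $\pair{\Bm,\Am}$, and this is an intersection of clopen sets; take $\chi:=\phi_Y$ for $\Sigma$ and $\chi:=\neg\phi_Y\et\bigwedge_iP(x_i)$ for $\Sigma'$. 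Two disjoint closed subsets of a Boolean space are separated by a clopen set, so there is an $\Lang$-formula $\eta(\x,\bv)$ with $\Sigma\subseteq[\eta(\x,\bv)]$ and $\Sigma'\cap[\eta(\x,\bv)]=\emptyset$. Put $Z:=\eta(\Bm^n,\bv)$. For $\av\in\Am^n$, $\tp^1(\av/\bv)\in\Sigma\cup\Sigma'$ and, by the key step, $\tp^1(\av/\bv)\in\Sigma$ iff $\av\in Y$; therefore $\av\in Y\iff\tp^1(\av/\bv)\in[\eta(\x,\bv)]\iff\av\in Z$, \ie $Y=Z\cap\Am^n$.

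The only real work is the key step, that is, setting things up so that Corollary~\ref{cor:Td-2extensions} applies: the delicate choices are $B=\dcl^1(\bv)$ (to make $\elem^1_B$ and $\elem^1_{\bv}$ coincide) and checking that $\bv$ remains $P$-independent after saturating; the rest is soft topology. As an alternative one can follow~\cite{vdd-dense} more closely: use Theorem~\ref{thm:Td-eq} to reduce to a single \basic formula $\exists\y\Pa{U(\y)\et\psi(\x,\y,\bv)}$, then shrink the length of $\y$ to $1$ by peeling off existential quantifiers, and finally split according to whether the fibre $\psi(\av,\Bm,\bv)$ has $\dimat$ equal to $1$ (handled by density of $\Am$ in $\Bm$ and definability of $\dimat$) or $0$ (where one uses $\mat(\bv\av)\cap\Am=\mat(\av_0\av)$, a consequence of $P$-independence via Base Monotonicity and Normality of $\indmat$, together with a compactness argument in the spirit of Lemma~\ref{lem:infinite-dimension}); in that approach the dimension-$0$ case is the main obstacle.
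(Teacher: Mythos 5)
Your proof is correct, and it fills in the details that the paper leaves to the reference \cite[Theorem~2]{vdd-dense}. The direction $(2)\Rightarrow(1)$ is, as you say, immediate. For $(1)\Rightarrow(2)$ your strategy — reduce to the statement that on $\Am^n$ the $\Lang$-type over $\bv$ determines the $\Ltwo$-type over $\bv$, via Corollary~\ref{cor:Td-2extensions}, and then convert this into a formula by separating two disjoint closed sets in the Stone space $S_n(\bv)$ by a clopen set — is a valid soft argument. The delicate points are handled correctly: (i) $P$-independence of the finite tuple $\bv$ does pass to an elementary extension of the pair, since a failure of $\bv\indmat_{\av_0}\Am^*$ is witnessed by an \xnarrow $\Lang$-formula with a parameter tuple from $P$, hence by an $\Ltwo$-formula with parameters from~$\bv$, which would then hold already in $\pair{\Bm,\Am}$; (ii) with $B:=\dcl^1(\bv)$ and $A:=B\cap\Am$ the hypotheses of Corollary~\ref{cor:Td-2extensions} hold because $B\subseteq\acl^1(\bv)\subseteq\mat(\bv)$ and $A\subseteq\mat(\bv)\cap\Am\subseteq\mat(\bv)\cap\mat(\Am)=\mat(\av_0)$ (using $\bv\indmat_{\av_0}\Am$), so $\mat(B)=\mat(\bv)$, $\mat(A)=\mat(\av_0)$, and $B\indmat_A\Am$ reduces by the anti-reflexivity remark to $\bv\indmat_{\av_0}\Am$; taking $B=\dcl^1(\bv)$ is exactly what makes $\elem^1_B$ and $\elem^1_{\bv}$ coincide; (iii) the separation argument in $S_n(\bv)$ is standard once closedness of $\Sigma$ and $\Sigma'$ is established from $\lambda$-saturation of the pair as you do, and the clopen set transfers back to $\pair{\Bm,\Am}$ by elementarity since $\eta$ has parameters only in~$\bv$.

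As for comparison: the paper gives no internal argument here — it simply defers to van den Dries, whose proof of his Theorem~2 also runs through his Lemma~2.8 (the analogue of Corollary~\ref{cor:Td-2extensions}) followed by a compactness/separation step. So your ``soft'' route is the intended one rather than a genuinely different approach. Your observation that Theorem~\ref{thm:Td-eq} is not needed is accurate: both it and Corollary~\ref{cor:Td-2extensions} are consequences of Proposition~\ref{prop:back-and-forth}, and you go through the latter directly. The alternative sketch you append, peeling off quantifiers from a \basic formula, is also workable but more syntactic; the Stone-space version is cleaner and avoids the case analysis on fibre dimension.
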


\begin{proof}
$(1 \Rightarrow 2)$ 
is as in \cite[Theorem~2]{vdd-dense}.
$(2 \Rightarrow 1)$ is obvious.
\end{proof}

\begin{lemma}[{\cite[3.1]{vdd-dense}}]
$\Am$ is $\Ttwo$-algebraically closed in $\pair{\Bm, \Am}$.
\end{lemma}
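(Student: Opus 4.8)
The statement asserts that $\acl^2(\Am)=\Am$ inside $\pair{\Bm,\Am}$; only the inclusion $\acl^2(\Am)\subseteq\Am$ needs an argument, and the plan is a standard ``many conjugates'' argument resting on Corollary~\ref{cor:Td-type-1}. Suppose $b\in\Bm\setminus\Am$ and $\phi(x,\bar a)$ is an $\Ltwo$-formula with $\bar a\subset\Am$ and $\pair{\Bm,\Am}\models\phi(b,\bar a)$; it suffices to show $\phi(\Bm,\bar a)$ is infinite. First I would pass to a $\lambda$-saturated $\Ltwo$-elementary extension $\pair{\Bm^*,\Am^*}$ with $\lambda>\card T$; it is again a model of $\Td$, and it suffices to show $\phi(\Bm^*,\bar a)$ is infinite, since ``$\phi(\cdot,\bar a)$ has exactly $n$ elements'' is first-order in $\Ltwo$. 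Since $\Am^*\cap\Bm=\Am$ we have $b\notin\Am^*$, and since $\Am^*$ is \clclosed in $\Bm^*$ this gives $b\notin\mat(\Am^*)\supseteq\mat(\bar a)$, hence $\rk(b/\bar a)=1$ and $\dim\Pa{\tp^1(b/\bar a)}=1$.

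The heart of the proof is to construct an infinite set $\set{b_n:n\in\N}$ of pairwise distinct elements of $\Bm^*\setminus\Am^*$, each realising $\tp^1(b/\bar a)$ (so $b_n\elem^1_{\bar a}b$). Granting this, Corollary~\ref{cor:Td-type-1}, applied with both pairs equal to $\pair{\Bm^*,\Am^*}$ and base $\bar a$, yields $b\elem^2_{\bar a}b_n$ for every $n$, whence $\pair{\Bm^*,\Am^*}\models\phi(b_n,\bar a)$; thus $\phi(\Bm^*,\bar a)\supseteq\set{b_n:n\in\N}$ is infinite, as desired. To build the $b_n$ I would proceed by recursion, putting $b_0:=b$ and choosing $b_n$ to realise the partial $\Ltwo$-type
\[
\Sigma_n(x):=\tp^1(b/\bar a)\ \cup\ \set{\neg P(x)}\ \cup\ \set{x\neq b_i:i<n}
\]
over the finite set $\bar a\cup\set{b_0,\dots,b_{n-1}}$; by $\lambda$-saturation this is possible provided $\Sigma_n$ is finitely satisfiable in $\pair{\Bm^*,\Am^*}$. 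For a finite fragment, conjoin its $\Lang$-part into one formula $\theta(x,\bar a)\in\tp^1(b/\bar a)$: then $\dim\Pa{\theta(\Bm^*,\bar a)}=1$ (it is at least $\dim\Pa{\tp^1(b/\bar a)}=1$ and at most $1$), so $\theta(\Bm^*,\bar a)\setminus\set{b_0,\dots,b_{n-1}}$ is a $\Bm^*$-definable set of dimension $1$; by Lemma~\ref{cor:dense-cardinality} (the cardinality lemma, applied to the $\lambda$-saturated pair with $X=\emptyset$) the co-\cldense set $\Bm^*\setminus\mat(\Am^*)=\Bm^*\setminus\Am^*$ meets it, which supplies a witness for the fragment.

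The one genuinely delicate point is the construction of the $b_n$ — equivalently, the fact that $\tp^1(b/\bar a)$ has arbitrarily many realisations lying outside the predicate — and this is exactly what Lemma~\ref{cor:dense-cardinality} is designed to deliver; it is also the step that collapses if one drops density and works merely with $\Ttwo$. Everything else is routine: that $b\notin\Am$ forces $\dim\Pa{\tp^1(b/\bar a)}=1$ via \clclosedness of $\Am^*$, that every $\Lang$-formula occurring in a dimension-$1$ complete type defines a set of dimension $1$, the transfer of finiteness of a definable set along an $\Ltwo$-elementary extension, and the final appeal to Corollary~\ref{cor:Td-type-1}.
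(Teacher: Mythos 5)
Your proof is correct and follows essentially the same route as the paper's: produce infinitely many elements of $\Bm^*\setminus\Am^*$ realising the $\Lang$-type of $b$, and transfer equality of $\Ltwo$-types via Corollary~\ref{cor:Td-type-1} to conclude that $b$ has infinitely many $\Ltwo$-conjugates. The paper invokes the corollary over all of $\Am$ rather than over the finite tuple $\bar a$ and is terser about why the new realisations can be taken outside $\Am^*$; your explicit appeal to co-density of $\Am^*$ for that point is a welcome clarification of the same idea.
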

\begin{proof}
As in \cite[3.1]{vdd-dense}: let $b \in \Bm \setminus \Am$.
Let $\pair{\Bm^*, \Am^*} \succeq \pair{\Bm, \Am}$ be a monster model.
Since $\mat$ is existential, and $b \notin \mat(\Am)$,
there exists infinitely many distinct
$b' \in \Bm^*$ such that 
$b \elem^1_{\Am} b'$. 
By Corollary~\ref{cor:Td-type-1}, $b \elem^2_{\Am} b'$.
Thus, $b$~is not $\Ttwo$-$\Am$-algebraic in $\pair{\Bm^*, \Am^*}$, 
and therefore not $\Ttwo$-$\Am$-algebraic in $\pair{\Bm, \Am}$.
\end{proof}

\begin{lemma}[{\cite[3.2]{vdd-dense}}]\label{lem:3.2}
Let $A_0 \subseteq \Am$ be $T$-algebraically closed ($T$-definably closed).
Then $A_0$ is $\Ttwo$-algebraically closed (resp.\ $\Ttwo$-definably closed).
\end{lemma}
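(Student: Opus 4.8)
The plan is to derive this lemma from the two immediately preceding results, exactly as in \cite[3.2]{vdd-dense}: the previous lemma, which asserts that $\Am$ is $\Ttwo$-algebraically closed in $\pair{\Bm, \Am}$, and Corollary~\ref{cor:vdd-26}, which says that a set definable in $\pair{\Bm, \Am}$ over a parameter set $A_0 \subseteq \Am$ meets $\Am$ in a set that is already $\Lang$-definable in $\Am$ over~$A_0$.

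I would argue for the algebraic-closure statement first. Let $b \in \acl^2(A_0)$; the goal is $b \in A_0$. Since $A_0 \subseteq \Am$ we have $b \in \acl^2(A_0) \subseteq \acl^2(\Am)$, and by the previous lemma $\acl^2(\Am) = \Am$, so $b \in \Am$. Fix a finite set $S \subseteq \Bm$, $\Ltwo$-definable over $A_0$, with $b \in S$. By Corollary~\ref{cor:vdd-26} (applied with $n = 1$), $S \cap \Am$ is $\Lang$-definable in $\Am$ over $A_0$; it is finite (a subset of $S$) and contains $b$ (as $b \in \Am$), so $b \in \acl^1(A_0)$. As $A_0$ is $T$-algebraically closed, $b \in A_0$; the reverse inclusion $A_0 \subseteq \acl^2(A_0)$ is trivial.

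The definably-closed statement is the same argument with ``finite'' replaced by ``singleton'': if $b \in \dcl^2(A_0)$ then a fortiori $b \in \acl^2(A_0)$, hence $b \in \Am$ by the previous lemma, and $\set b$ is $\Ltwo$-definable over $A_0$, so by Corollary~\ref{cor:vdd-26} the set $\set b = \set b \cap \Am$ is $\Lang$-definable in $\Am$ over $A_0$, giving $b \in \dcl^1(A_0) = A_0$.

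There is essentially no obstacle once the earlier results are in place. The only point worth a word is that $\acl^1$ and $\dcl^1$ computed inside $\Am$ coincide with those computed in the ambient $\Lang$-monster (because $\Am \prec \Bm$ as $\Lang$-structures), which is what makes ``$A_0$ is $T$-algebraically (definably) closed'' directly applicable to the $\Lang$-definable set obtained inside $\Am$; and that the hypothesis $A_0 \subseteq \Am$ is used essentially, namely to invoke Corollary~\ref{cor:vdd-26}.
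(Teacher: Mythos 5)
Your proof is correct and follows essentially the same route as the paper's: both reduce to the previous lemma (to conclude $b \in \Am$) and to Corollary~\ref{cor:vdd-26} (to transfer a finite $\Ltwo$-definable set over $A_0$ into an $\Lang$-definable set in $\Am$ over $A_0$), then invoke $T$-algebraic (resp.\ definable) closedness of~$A_0$. The only cosmetic difference is that the paper works directly with the set $C$ of $\Ltwo$-conjugates of $c$ over $A_0$, while you take an arbitrary finite $A_0$-$\Ltwo$-definable $S \ni b$ and intersect with $\Am$; these are interchangeable.
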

\begin{proof}
Assume $A_0$ is $T$-algebraically closed.
Let $c \in \acl^2(A_0)$, and $C := \set{c_1, \dotsc, c_n}$ be the set of
$\Ltwo$-conjugates of $c/ A_0$.
By definition, $C$~is $A_0$-definable in $\pair{\Bm, \Am}$, and, by the above
Lemma, $C \subset \Am$.
Hence, by Corollary~\ref{cor:vdd-26}, $C$~is $A_0$-definable in~$\Am$.
The case when $A_0$ is $T$-definably closed is similar.
\end{proof}


\begin{lemma}\label{lem:orbit-dense}
Let $\pair{\Bm, \Am}$ be a $\kappa$-saturated model of~$\Td$.
Let $D \subset \Bm$ such that $\card D < \lambda$,
and $c \in \Bm \setminus \mat(D)$.
Define $C := \set{c' \in \Bm: c' \equiv^1_D c} \cap A$.
Then, $\card{C} \geq \lambda$.
\end{lemma}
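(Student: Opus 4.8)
The plan is to argue by contradiction, using $\kappa$-saturation of the pair together with \cldensity of $\Am$ in $\Bm$ (which holds since $\pair{\Bm,\Am}\models\Td$; recall that the predicate $P$ names $\Am$, so the set $C$ of the statement consists of those $\Lang$-conjugates of $c$ over $D$ that lie in $\Am$). Assume, for contradiction, that $\card C < \lambda$. I would then consider the partial $\Ltwo$-type over the parameter set $D\cup C$, which has cardinality $<\lambda<\kappa$:
\[
\Sigma(x)\ :=\ \{P(x)\}\ \cup\ \tp^1(c/D)\ \cup\ \{\,x\neq c' : c'\in C\,\}.
\]
If some $b\in\Bm$ realises $\Sigma$, then $b\in\Am$ and $b\elem^1_D c$, so $b\in C$, while at the same time $b\neq c'$ for every $c'\in C$ — absurd. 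By $\kappa$-saturation of $\pair{\Bm,\Am}$ it therefore suffices to show that $\Sigma$ is finitely satisfiable in $\pair{\Bm,\Am}$.

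For finite satisfiability, a finite fragment of $\Sigma$ is implied by a formula of the form $P(x)\et\phi(x)\et\bigwedge_{i\le m}x\neq c'_i$, where $\phi(x)$ is the conjunction of finitely many members of $\tp^1(c/D)$ (so $\phi$ has parameters in $D$ and $c\models\phi$) and $c'_1,\dots,c'_m\in C$. Put $U:=\phi(\Bm)\setminus\set{c'_1,\dots,c'_m}$. Since $c\notin\mat(D)$ we have $\rk(c/D)=1$, hence $\dim(\phi(\Bm))\ge 1$ by Definition~\ref{def:dimension}; as $\phi$ defines a subset of the line, $\dim(\phi(\Bm))=1$, and deleting the finitely many points $c'_i$ keeps the dimension equal to $1$ (by monotonicity of $\dim$ together with the fact that $\dim$ of a finite union is the maximum of the dimensions, each point having dimension $0$). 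Since $\Am$ is \cldense in $\Bm$, we get $U\cap\Am\neq\emptyset$; any $b$ in this intersection satisfies $P(b)\et\phi(b)\et\bigwedge_i b\neq c'_i$, as required. Applying $\kappa$-saturation now produces a realisation of all of $\Sigma$, contradicting the previous paragraph.

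I do not expect a genuine obstacle: \cldensity is precisely the hypothesis designed to produce a point of $\Am$ inside a dimension-one definable set, and $\kappa$-saturation upgrades "arbitrarily many such points" into "$\lambda$-many". The only point requiring a little care is the dimension bookkeeping — deducing $\dim(\phi(\Bm))=1$ from $c\notin\mat(D)$ and checking that it survives the deletion of finitely many points — which is routine given the basic properties of $\dimat$ (in particular Lemma~\ref{lem:Lascar} and monotonicity) established in Section~\ref{sec:matroid}. Note also that the argument uses only that $\Am$ is \cldense in $\Bm$, not that it is \clclosed, so the lemma is genuinely a statement about dense pairs.
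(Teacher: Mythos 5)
Your proof is correct and follows essentially the same route as the paper's: both reduce the claim to the finite satisfiability of a partial type asserting membership in $\Am$, realization of $\tp^1(c/D)$, and pairwise distinctness, and both verify it by noting that each formula in $\tp^1(c/D)$ defines a set of dimension $1$ (because $c\notin\mat(D)$), that deleting finitely many points preserves this, and that \cldensity of $\Am$ then produces the required point of $\Am$. The only cosmetic difference is that the paper realizes a single type in $\lambda$ variables over $D$, whereas you argue by contradiction with a one-variable type over $D\cup C$.
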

\begin{proof}
Consider the following partial $\Ltwo$-type over~$D$:
\[
p(x_i: i < \lambda) :=
\PA{\bigwedge_i x_i \elem^1_D c} \et \PA{\bigwedge_i U(x_i)} \et 
\PA{\bigwedge_{i < j} x_i \neq x_j}.
\]

\begin{claim}
$p$ is consistent.
\end{claim}
If not, there exist $\dv \subset D$, $\bv \subset \Bm$,
$\phi(x, \dv) \in \tp^1(c/ D)$, such that
$\phi(\Bm, \dv) \setminus \Am = \bv$.
Let $X := \phi(\Bm, \dv) \setminus \bv$: notice that $X$ is definable
in~$\Bm$, and $X \subseteq \Am$.
Hence, since $\Am$ is co-dense in~$\Bm$, we conclude that $\dim(X) \leq 0$, and
therefore $\dim(\phi(\Bm, \dv)) \leq 0$.
Thus, $c \in \mat(\dv) \subseteq \mat(D)$, absurd.

The conclusion follows immediately from the claim.
\end{proof}

\begin{proposition}[{\cite[3.3]{vdd-dense}}]\label{prop:3.3}
Let $\bv \subset \Bm$ be \Uindependent.
Then, $\dcl^2(\bv) = \dcl^1(\bv)$, and similarly for the algebraic closure.
Let $c \in \eq{\Bm}$ (\ie, $c$ is an imaginary element for the
structure~$\Bm$).
Then, $c \in \dcl^2(\bv)$ iff $c \in \dcl^1(\bv)$, and similarly for the
algebraic closure. 
\end{proposition}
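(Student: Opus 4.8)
The inclusions $\dcl^1(\bv)\subseteq\dcl^2(\bv)$ and $\acl^1(\bv)\subseteq\acl^2(\bv)$ are immediate, since $\Ltwo$ has more formulae than $\Lang$. For the reverse inclusions, using finite character of $\dcl$ and $\acl$ and their absoluteness under elementary extensions, it suffices to prove (working in a sufficiently saturated $\pair{\Bm,\Am}$): if $\bv\subset\Bm$ is \Uindependent with $\card\bv<\lambda$, and $d\in\Bm$ satisfies $d\notin\acl^1(\bv)$ (resp.\ $d\notin\dcl^1(\bv)$), then $d$ has infinitely many (resp.\ at least two) $\Ltwo$-conjugates over~$\bv$. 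The plan is to obtain these via Proposition~\ref{prop:back-and-forth}: for each desired $d'$ it is enough to find a real \Uindependent tuple $\bar w$ having $\bv$ as an initial segment and $d$ among its entries, together with a \Uindependent tuple $\bar w'$ of the same sorts with $\bar w\elem^1\bar w'$ and $\Utp(\bar w)=\Utp(\bar w')$, in which $d'$ is the entry matching $d$; then $\bar w\elem^2\bar w'$, so $d\elem^2_\bv d'$, and distinct $d'$ give distinct conjugates.

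Mirroring the case division in the proof of Proposition~\ref{prop:back-and-forth}, I would split according to the position of $d$. \emph{Case 1 ($d\in\Am\cap\mat(\bv)$):} then $d\in\mat(\bv\cap\Am)$, so some \xnarrow formula over $\bv\cap\Am$ witnesses this, hence every $\Lang$-conjugate $d'$ of $d$ over $\bv$ lies in $\Am$; there are infinitely many, each $\bv d'$ is \Uindependent (by Base Monotonicity and Normality, as $d'\in\Am$) with $\Utp(\bv d')=\Utp(\bv d)$, and we take $\bar w=\bv d$. \emph{Case 2 ($d\in\Am\setminus\mat(\bv)$):} Lemma~\ref{lem:orbit-dense} (with $D=\bv$) produces $\lambda$ many $d'\in\Am$ with $d'\elem^1_\bv d$, and we conclude as in Case~1. \emph{Case 3 ($d\in\mat(\bv\Am)\setminus\Am$):} choose a finite $\av_0\subset\Am$ with $d\in\mat(\bv\av_0)$; then $\mat(\bv\av_0 d)=\mat(\bv\av_0)$, so $\bv\av_0 d$ is \Uindependent, and (as in Case~1) a $\Lang$-conjugate of $d$ over $\bv\av_0$ lying in $\Am$ would force $d\in\Am$. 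If for some choice of $\av_0$ we have $d\notin\acl^1(\bv\av_0)$, there are infinitely many such conjugates and we take $\bar w=\bv\av_0 d$. Otherwise (the hard case, discussed below) I would realise $\tp^1(\av_0 d/\bv)$ in $\lambda$ many ways whose first coordinate lies in $\Am$ and whose second coordinates are pairwise distinct, using Lemma~\ref{lem:orbit-dense} and the co-density of $\mat(\bv\Am)$. \emph{Case 4 ($d\notin\mat(\bv\Am)$):} here $\bv d$ is already \Uindependent (an exchange argument shows that adjoining $d$ cannot make a subset of $\Am$ dependent over $\bv$); since $\indmat$ is an independence relation, the non-forking extension of $\tp^1(d/\bv)$ to $\bv\Am$ still has dimension $1$ (as $d\notin\mat(\bv)$), hence infinitely many realisations $d'$, each with $d'\elem^1_\bv d$, $d'\notin\mat(\bv\Am)$, so $d'\notin\Am$ and $\bv d'$ \Uindependent with $\Utp(\bv d')=\Utp(\bv d)$.

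For an imaginary $c\in\eq\Bm$, write $c=\bar e/E$ with $\bar e$ a real tuple and $E$ an $\emptyset$-definable equivalence relation; then $c\in\dcl^1(\bv)$ (resp.\ $c\in\dcl^2(\bv)$) holds iff the set of $\Lang$-conjugates (resp.\ $\Ltwo$-conjugates) of $\bar e$ over $\bv$ lies in a single $E$-class, and similarly for $\acl$ with ``finitely many'' in place of ``a single''. Since the $\Ltwo$-orbit of $\bar e$ over $\bv$ is contained in its $\Lang$-orbit, the passage from $\dcl^1$, $\acl^1$ to $\dcl^2$, $\acl^2$ is automatic; for the converse one shows that every $\Lang$-$\bv$-conjugate $\bar e'$ of $\bar e$ is in fact an $\Ltwo$-$\bv$-conjugate, by applying the real case above after enlarging $\bv$ by a finite tuple from $\Am$ (as in Case~3) so that $\bv\bar e'$ becomes the initial segment of a \Uindependent tuple; hence the $\Lang$-orbit of $\bar e$ over $\bv$ lands in the same finitely many $E$-classes as the $\Ltwo$-orbit.

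The main obstacle I anticipate is Case~3 when $d\in\acl^1(\bv\av_0)$ for every admissible $\av_0$, which is the typical situation when $\mat=\acl$. Then $\bv d$ is genuinely not \Uindependent, so Proposition~\ref{prop:back-and-forth} is not directly applicable to $\bv d$, and one must simultaneously keep the auxiliary tuple inside $\Am$, keep $\bv\av_0 d$ \Uindependent, and keep the conjugates of $d$ ranging over an infinite set; reconciling these three requirements is exactly where Lemma~\ref{cor:dense-cardinality} (equivalently, the largeness of $\dim(\Bm/\Am)$ in saturated models) has to be used, via a tuple version of Lemma~\ref{lem:orbit-dense}.
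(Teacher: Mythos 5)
Your overall strategy (argue the contrapositive and manufacture $\Ltwo$-conjugates via Proposition~\ref{prop:back-and-forth}) is viable, and your Cases 1, 2 and 4, as well as subcase (a) of Case 3, do go through. But the ``hard case'' you flag --- $d\in\mat(\bv\Am)\setminus(\Am\cup\mat(\bv))$ with $d\in\acl^1(\bv\av_0)$ for every admissible finite $\av_0\subset\Am$ --- is not a residual technicality: it is the entire content of the paper's proof (its Claim~\ref{cl:33-2}), and your sketch does not close it. Realising $\tp^1(\av_0 d/\bv)$ many times with first coordinate in $\Am^k$ yields, for each such $\av_0'$, only finitely many candidate $d'$, and neither Lemma~\ref{lem:orbit-dense} nor the co-density of $\mat(\bv\Am)$ rules out that all these finite sets coincide; if they did coincide, $d$ would be $\Ltwo$-algebraic over~$\bv$, which is exactly the conclusion you are trying to refute, so you cannot dismiss this possibility without an argument. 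The missing idea is the independence trick of Claim~\ref{cl:33-2}: first reduce to a single auxiliary parameter, i.e.\ $d\in\mat(\bv a)$ with $a\in\Am\setminus\mat(\bv)$ (absorb the other coordinates of a minimal $\av_0$ into~$\bv$, which remains \Uindependent); then use density of~$\Am$, saturation and the Existence axiom to find realisations $a_2,a_3,\dotsc\in\Am$ of $\tp^1(a/\bv)$ that are $\mat$-\emph{independent} over~$\bv$. The transported elements $d_i\in\mat(\bv a_i)$ are then automatically pairwise distinct, since $d_i=d_j$ with $a_i\ind_{\bv}a_j$ would give $d_i\in\mat(\bv a_i)\cap\mat(\bv a_j)=\mat(\bv)$ and hence $d\in\mat(\bv)$. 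The paper runs this same construction in the direct direction (assuming $c\in\acl^2(\bv)$ and concluding $c\in\mat(\bv)$ by pigeonhole on the finite $\Ltwo$-orbit); either way, it is this step, and not a ``tuple version of Lemma~\ref{lem:orbit-dense}'', that forces the second coordinates to be pairwise distinct.

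The imaginary case is also not correct as written. It is false in general that every $\Lang$-$\bv$-conjugate of a representative $\bar e$ is an $\Ltwo$-$\bv$-conjugate: already for $\bar e$ a single element of $\Am\setminus\mat(\bv)$ the $\Lang$-orbit over $\bv$ contains points outside~$\Am$, which have a different \Utype. Moreover, enlarging $\bv$ by a tuple from $\Am$ changes which orbit you are computing, and removing those auxiliary parameters afterwards is again precisely the hard case. The paper avoids representatives altogether: it works with the extension $\matt$ of the matroid to $\eq{\Bm}$ from \S\ref{sec:imaginary}, first places $c$ in $\eq{\Bm_1}$ for $\Bm_1=\mat^{\Bm}(\Am\bv)$ via Corollary~\ref{cor:vdd-27}, proves $c\in\matt(\bv)$ by the Morley-sequence argument above, and only then applies Proposition~\ref{prop:back-and-forth} directly to the (partly imaginary) tuples $\bv c$ and $\bv c_2$, which are \Uindependent precisely because $c\in\matt(\bv)$. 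You should follow that route rather than passing to real representatives.
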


\begin{proof}[Idea for proof]
\Wlog, we can assume that $\pair{\Bm, \Am}$ is $\lambda$-saturated and that
$\bv$ has finite length.
So, let $c$ be a $\Bm$-imaginary such that
$c \in \acl^2(\bv)$.  We have to prove that $c \in \acl^1(\bv)$.

Let $\Bm_1 := \mat^{\Bm}(\Am \bv)$; by Corollary~\ref{cor:vdd-27},
$\pair{\Bm_1, \Am} \preceq \pair{\Bm, \Am}$, and in
particular $\Bm_2$ is $\Ttwo$-algebraically closed in $\pair{\Bm, \Am}$, and
therefore $c \in \eq{\Bm_1}$.
Let $n \geq 0$ minimal and $\av \in \Am^n$ such that $c \in \mat(\bv \av)$.

\begin{claim}\label{cl:33-2}
$c \in \mat(\bv)$, \ie $n = 0$.
\end{claim}
If $n > 0$, by substituting $\bv$ with 
$\bv a_1, \dotsc, a_{n-1}$, and proceeding by induction on~$n$, 
we can reduce to the case $n = 1$;  let $a := a_1$.
Consider the following partial $\Lang$-type over $\bv a$:
\[
q(x) := (x \elem^1_{\bv} a) \et (x \ind_{\bv} a).
\]
Since $\ind$ satisfies Existence, $q$~is consistent.
Let $d \in \Bm$ be any realisation of~$q$.
Since $d \ind_{\bv} a$, we conclude that either $d \notin \mat(\bv a)$ 
or $d \in \mat(\bv)$. 
However, the latter cannot happen, since $d \elem^1_{\bv} a \notin
\mat(\bv)$; thus, $d \notin \mat(\bv a)$, and therefore $\dim(q) = 1$.
Hence, since $\Am$ is dense in $\Bm$ and $\pair{\Bm, \Am}$ is
$\omega$-saturated, there exists $a' \in \Am$ satisfying~$q$.
Reasoning in the same way, we can show that there exists
$(a_2, a_3, a_4, \dotsc)$ a Morley sequence in $q$ contained in~$\Am$.
By Corollary~\ref{cor:Td-2extensions}, $a_i \elem^2_{\bv} a$ for every~$i$.
Let $c_1$, $c_2, \dotsc, c_m$ be all the $\Ltwo$-conjugates of $c$ over~$\bv$
(there are finitely many of them), and let $\phi(x, y, \z)$ be an \xnarrow
$\Lang$-formula without parameters 
such that $\Bm \models \phi(c, a, \bv)$.

The $\Lang$-formula (in~$y$, with parameters in $\bv c_1, \dotsc, c_m$) 
$\bigvee_i \phi(c_i, y, \bv)$ is equivalent to an $\Ltwo$-formula in $y$ 
\emph{with parameters $\bv$}; hence, every $a_i$ satisfies it 
(because $a_i \elem^2_{\bv} a$).
Hence, \wloG $c_1 \in \mat(\bv a_2) \cap \mat(\bv a_3) = \mat(\bv)$
(because $a_2 \ind_{\bv} a_3$).
Therefore, $c \in \mat(\bv)$.

It remains to show that $c \in \acl^1(\bv)$.
Let $c_2 \in \eq{\Bm}$ such that $c_2 \elem^1_{\bv} c$.
Since $\Bm$ is $\omega$-saturated, 
it suffices to prove that there are only finitely many such~$c_2$.
Since $c \in \acl^2(\bv)$, it suffices to prove that $c_2 \elem^2_{\bv} c$.
Let $\bv_1 := \bv c$,  $\bv_2 := \bv c_2$, and
$\av := \bv \cap \Am$.
By assumption, $\bv_1 \elem^1 \bv_2$.
By Claim~\ref{cl:33-2}, we have $\bv_1 \subseteq \mat(\bv)$, and therefore,
since $\bv \ind_{\av} \Am$, $\bv_1$ is \Uindependent.
Claim~\ref{cl:33-2} also implies that $\bv_2 \subseteq \mat(\bv)$, and hence
$\bv_2$ is also \Uindependent.
It remains to show that $\bv_1$ and $\bv_2$ have the same \Utype.
Assume \eg that $c \in \Am$.
Since $\bv \ind_{\av} \Am$, we have that $c \in \mat(\av)$, 
and therefore $c_2 \in \mat^{\Bm}(\av) \subseteq \Am$.
%
%


The other assertions are proved in the same way.
\end{proof}

\subsection{The small closure}
\label{subsec:small-closure}

We will are still assuming that $T$ expands an integral domain.
Let $\monster^* := \pair{\Bm^*, \Am^*}$ be a $\kappa$-saturated and strongly
$\kappa$-homogeneous monster model of~$\Td$,
and $\pair{\Bm, \Am} \prec \monster^*$, with $\card{\Bm} < \kappa$.
Notice that $\rk(\Bm^* / \Am^*) \geq \kappa$.

\begin{definizione}
For every $X \subseteq \Bm^*$ we define the \intro{small closure} of $X$ as
\[
\scl(X) := \mat(X \Am^*).
\]
\end{definizione}
For lovely pairs (\eg, dense pairs of o-minimal structures), 
the small closure was already defined in~\cite{berenstein}.

\begin{remark}
$\scl$ is a definable matroid (on~$\monster^*$).
\end{remark}
\begin{proof}
$\scl$ coincides with the operator $\mat_{\Am^*}$ in Lemma~\ref{lem:cl-pair}.
\end{proof}

Notice that we can apply Lemma~\ref{lem:cl-small-relative}: 
$\scl^{\Bm} = (\mat^{\Bm})_{\Am}$: we can ``compute''
the small closure of a subset of $\Bm$ inside $\Bm$ using $\Am$ 
instead of~$\Am^*$.

We want to prove that $\scl$ is existential; we will need a preliminary lemma.

\begin{lemma}
Let $b \in \Bm^* \setminus \Am^*$.
Define $\monster^*_b$ the expansion of $\monster^*$ with a constant for~$b$, 
and $\scl_b(X) := \scl(b X) = \mat(X \Am^* b)$.
Then, $\scl_b$ is an existential matroid on~$\monster^*_b$.
\end{lemma}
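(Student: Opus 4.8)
I would verify the three defining properties of an existential matroid for $\scl_b$ on $\monster^*_b$. Two of them are immediate: $\scl_b$ is non-trivial because $\scl_b(\emptyset)=\mat(\Am^* b)$ already contains the infinite set $\Am^*$; and $\scl_b$ is a definable matroid on $\monster^*_b$ for exactly the reason that $\scl$ is a definable matroid on $\monster^*=\pair{\Bm^*,\Am^*}$ (Lemma~\ref{lem:cl-pair} and the Remark following it), since $\scl_b$ is obtained from $\scl$ by adjoining the single constant $b$, and adjoining finitely many constants preserves definability and the matroid axioms. So the whole content is the Existence axiom, and for that I would use Lemma~\ref{lem:cl-Skolem-existence}: it suffices to prove that for every $A\subseteq\Bm^*$ the set $\scl_b(A)$ is (the universe of) an elementary substructure of $\monster^*_b$.

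\textbf{The model $\pair{\Cm,\Am^*}$.} Fix $A\subseteq\Bm^*$ and set $\Cm:=\scl_b(A)=\mat(A\Am^* b)$. First, $\Cm$ is \clclosed in $\Bm^*$ by idempotency of $\mat$, and it contains $\Am^*$, hence it is \cldense in $\Bm^*$; so $\Cm\prec\Bm^*$ as $\Lang$-structures by Lemma~\ref{lem:density-closure} (applied inside $\Bm^*$). Next, $\Am^*$ is \clclosed and \cldense in $\Cm$: closedness is clear, and for density take a $\Cm$-definable $U\subseteq\Cm$ with $\dim(U)=1$; since $\Cm\prec\Bm^*$, the same formula and parameters define in $\Bm^*$ a set $U'$ with $U'\cap\Cm=U$, and $\dim(U')=\dim(U)=1$ because the conditions ``$\dim\geq k$'' are type-definable over $\emptyset$ (Lemma~\ref{lem:type-def}) hence preserved under elementary extension; as $\Am^*$ is \cldense in $\Bm^*$ and $\Am^*\subseteq\Cm$ we get $\emptyset\neq U'\cap\Am^*=U\cap\Am^*$. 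Thus $\Am^*\prec\Cm$ by Lemma~\ref{lem:density-closure} again, and $\Am^*\neq\Cm$ because $b\in\Cm\setminus\Am^*$; together with closedness and density this gives $\pair{\Cm,\Am^*}\models\Td$.

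\textbf{Conclusion.} Now apply Corollary~\ref{cor:vdd-27} in the form ``if $\Am\prec\Bm'\preceq\Bm$ with $\Am\neq\Bm'$ then $\pair{\Bm',\Am}\preceq\pair{\Bm,\Am}$'', taking $\Am:=\Am^*$, $\Bm':=\Cm$, $\Bm:=\Bm^*$: this yields $\pair{\Cm,\Am^*}\preceq\pair{\Bm^*,\Am^*}=\monster^*$. Since $b\in\Cm$, the same inclusion is elementary after naming $b$, i.e.\ $\Cm=\scl_b(A)$ is an elementary substructure of $\monster^*_b$. As $A$ was arbitrary, Lemma~\ref{lem:cl-Skolem-existence} gives that $\scl_b$ satisfies Existence; being also a non-trivial definable matroid (and $\monster^*_b$ being again a monster model, of $\Theory(\monster^*_b)$), $\scl_b$ is an existential matroid on $\monster^*_b$.

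\textbf{Where the difficulty lies.} There is no computationally hard step; the point to get right is that the argument works for $\scl_b$ with $b\notin\Am^*$ precisely because $\Cm=\scl_b(A)$ always \emph{properly} contains $\Am^*$ (it contains $b$), so $\pair{\Cm,\Am^*}$ is a genuine, non-degenerate model of $\Td$ and Corollary~\ref{cor:vdd-27} applies uniformly in $A$. For $\scl$ itself this breaks down when $A\subseteq\Am^*$, since then $\scl(A)=\Am^*$ and $\pair{\Am^*,\Am^*}$ is not an elementary substructure of the pair — which is exactly why the lemma is formulated for $\scl_b$. The only mildly delicate ingredient is the invariance of $\dim$ along $\Cm\prec\Bm^*$, needed to pull density of $\Am^*$ down into $\Cm$; this is the one place where Lemma~\ref{lem:type-def} is essential.
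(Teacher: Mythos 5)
Your proof is correct and follows essentially the same route as the paper's: reduce Existence to showing $\scl_b(A) \prec \monster^*_b$ for all $A$, via Lemma~\ref{lem:cl-Skolem-existence}, with definability coming from Lemma~\ref{lem:cl-pair}. The only cosmetic difference is in the elementarity step: the paper invokes Theorems~\ref{thm:Td-complete} and~\ref{thm:Td-eq} together with Lemma~\ref{lem:density-closure} to show $\pair{Y,\Am^*}\preceq\pair{\Bm^*,\Am^*}$, while you route the same content through Corollary~\ref{cor:vdd-27} (itself a packaged consequence of those theorems). Both are legitimate; yours is arguably a more self-contained citation. Two very small remarks: (i) the dimension-invariance concern you raise is vacuous, since $\dim$ is by definition computed from the formula and parameters in the fixed monster model, so $\dim(U')=\dim(U)$ holds by fiat without appealing to Lemma~\ref{lem:type-def}; (ii) your non-triviality argument (that $\scl_b(\emptyset)\supseteq\Am^*\neq\emptyset$) establishes $\scl_b\neq\mat^0$ in the literal sense, whereas the paper points at $\rk(\Bm^*/\Am^*)\geq\kappa$, which also guarantees $\scl_b(\emptyset)\neq\Bm^*$; it is worth noting the latter explicitly since one wants $\dim^{\scl_b}(\Bm^*)=1$. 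Your closing observation about why the lemma is stated for $\scl_b$ rather than $\scl$ directly is exactly right and matches the way the paper then deduces Lemma~\ref{lem:scl} by intersecting $\scl_b$ and $\scl_{b'}$.
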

\begin{proof}
That $\scl_b$ is a definable matroid follows from Lemma~\ref{lem:cl-pair},
applied to~$\scl$.
Let $X \subseteq \monster^*$, and $Y := \scl_b(X)$.
\begin{claim}
$Y \prec \monster^*$ (as an $\Ltwo$-structure).
\end{claim}
By Lemma~\ref{lem:density-closure}, $Y$~is an elementary $\Lang$-substructure
of~$\Bm^*$.
By Theorems~\ref{thm:Td-complete}, and~\ref{thm:Td-eq} and, again, 
Lemma~\ref{lem:density-closure}, it suffices to show
that $\Am^*$ is a \clclosed, \cldense, and proper subset of~$Y$, which is
trivially true.

The lemma then follows from the above claim and
Lemma~\ref{lem:cl-Skolem-existence}; non-triviality follows 
from the fact that $\zrk(\Bm^*/\Am^*) \geq \kappa$.
\end{proof}

\begin{lemma}\label{lem:scl}
$\scl$ is an existential matroid.
\end{lemma}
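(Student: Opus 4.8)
The plan is to invoke the three pieces that are already in place and to deduce Existence for $\scl$ from the last of them. By the preceding remark, $\scl$ is a definable matroid on $\monster^*$, and it is non-trivial, since $\scl(\emptyset)=\mat(\Am^*)=\Am^*$ (because $\Am^*$ is $\clclosed$ in $\Bm^*$) is a proper subset of $\Bm^*=\monster^*$ (because $\rk(\Bm^*/\Am^*)\geq\kappa$). So the only thing left is to verify that $\scl$ satisfies Existence, and for this I would use the previous lemma, which says that $\scl_b$ is an existential matroid on $\monster^*_b$ for every $b\in\Bm^*\setminus\Am^*$. One is tempted to argue instead along the lines of the proof of that lemma, showing directly that $\scl(X)\prec\monster^*$ for all $X$; but this breaks down precisely when $X\subseteq\Am^*$, since then $\scl(X)=\Am^*$, which is an elementary $\Lang$-substructure of $\Bm^*$ but, carrying the predicate $P$ interpreted as all of itself, is not a model of $\Td$ and hence not an $\Ltwo$-elementary substructure of $\monster^*$. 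I expect this obstruction to be the only real point; the remedy is to relativise by a single suitably generic element, which repairs properness of the predicate and lets one run Existence "downstairs''.

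Concretely: by Lemma~\ref{lem:cl-E}(3) it suffices to show that for $a\in\Bm^*$ and finite tuples $\bv,\cv\subset\Bm^*$ with $a\notin\scl(\bv)=\mat(\bv\Am^*)$, there is $a'\elem^2_{\bv}a$ with $a'\notin\scl(\bv\cv)$. First I would pick $b\in\Bm^*\setminus\mat(\bv\cv a\Am^*)$; such $b$ exists because $\rk(\bv\cv a/\Am^*)$ is finite whereas $\rk(\Bm^*/\Am^*)\geq\kappa$ (one may also quote the co-density statement Lemma~\ref{cor:dense-cardinality}), and necessarily $b\notin\Am^*$. An Exchange-Principle argument then shows $a\notin\scl_b(\bv)=\mat(\bv b\Am^*)$: were $a\in\mat(\bv b\Am^*)\setminus\mat(\bv\Am^*)$, Exchange would force $b\in\mat(\bv a\Am^*)\subseteq\mat(\bv\cv a\Am^*)$, a contradiction.

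Now $\scl_b$ is an existential matroid on $\monster^*_b$ by the previous lemma, hence satisfies Existence; applying it to $a\notin\scl_b(\bv)$ yields $a'$ with the same type over $\bv$ in $\monster^*_b$, i.e.\ $a'\elem^2_{\bv b}a$ in $\monster^*$, hence a fortiori $a'\elem^2_{\bv}a$, and with $a'\notin\scl_b(\bv\cv)=\mat(\bv\cv b\Am^*)\supseteq\mat(\bv\cv\Am^*)=\scl(\bv\cv)$. Thus $a'\elem^2_{\bv}a$ and $a'\notin\scl(\bv\cv)$, which is exactly what was needed, so $\scl$ satisfies Existence and is therefore an existential matroid. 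The existence of the chosen $b$ and the bookkeeping between types in $\monster^*$ and in $\monster^*_b$ are routine; the substantive content is the observation in the first paragraph that the elementary-substructure argument of the previous lemma cannot be applied to $\scl$ itself, together with the fix of passing to a generic constant.
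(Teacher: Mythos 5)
Your proof is correct and follows essentially the same route as the paper: both reduce Existence for $\scl$ to Existence for $\scl_b$ (the preceding lemma) with $b$ chosen generic over the relevant data, and your diagnosis of why the elementary-substructure argument cannot be applied to $\scl$ directly is exactly right. The only difference is which criterion of Lemma~\ref{lem:cl-E} is invoked: the paper uses criterion~(2), taking two independent generics $b,b'$ and concluding from $\scl_b(C)\cap\scl_{b'}(C)=\mat(\Am^*C b)\cap\mat(\Am^*Cb')=\mat(\Am^*C)=\scl(C)$, whereas you use criterion~(3) with a single generic $b$, paying for it with a short Exchange argument to verify $a\notin\scl_b(\bv)$ before producing the conjugate $a'$.
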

\begin{proof}
The only thing that needs proving is Existence.
Define $\Xi^2(a /C)$ as the set of conjugates of $a$ over $C$ in~$\monster$.
Assume that $\Xi^2(a/C) \subseteq \scl(C D)$.
We want to prove that $a \in \scl(C)$.
Choose $b, b' \in \Bm^*$ which are $\mat$-independent over~$\Am^* C$.
By  applying the previous lemma to $\scl_b$ and $\scl_{b'}$, we see that
\[
a \in \scl_b(C) \cap \scl_{b'}(C) = \mat(\Am^* C b) \cap \mat(\Am^* C b')
= \mat(\Am^* C) = \scl(C). 
\qedhere
\]
\end{proof}

Hence, we can define the dimension induced by~$\scl$, and denote it by~$\sdim$.

Notice that, by Theorem~\ref{thm:cl-unique}, $\scl$~is the only existential matroid on~$\Td$.

\begin{lemma}
Let $X \subseteq (\Bm^*)^n$ be definable in~$\Bm^*$.
Then $\sdim(X) = \dim(X)$.
\end{lemma}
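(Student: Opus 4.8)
The plan is to establish the two inequalities $\sdim(X)\le\dim(X)$ and $\sdim(X)\ge\dim(X)$ separately. The first is immediate: since $\scl(Y)=\mat(Y\Am^*)\supseteq\mat(Y)$ for every $Y\subseteq\Bm^*$, a tuple that is $\scl$-independent over a set $C$ is \emph{a fortiori} $\mat$-independent over $C$, so $\srk(\bv/C)\le\rk(\bv/C)$ for all $\bv$ and $C$; hence $\sdim(V)\le\dim(V)$ for every $V$ definable in~$\Bm^*$, and in particular $\sdim(X)\le\dim(X)$.

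For the reverse inequality I would argue by induction on~$n$; if $\dim(X)\le 0$ there is nothing to prove (either $X=\emptyset$, or $X\ne\emptyset$ and then $0\le\sdim(X)\le\dim(X)=0$), and this in particular settles the base case $n=0$. So assume $d:=\dim(X)\ge 1$, write $X=\phi((\Bm^*)^n,\av)$ with $\phi$ an $\Lang$-formula and $\av$ a finite tuple from $\Bm^*$, and let $\pi:=\Pi^n_1$. Since $T$ expands an integral domain, $\dim$ is definable, so for each $j$ the set $Y_j:=\set{b\in\pi(X):\dim(X_b)=j}$ is $\Lang$-definable over~$\av$, and over $Y_j$ every fibre of $\pi$ inside $X$ has dimension exactly~$j$. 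By the fibre-wise dimension inequalities (Lemma~\ref{lem:cl-function}, parts (1) and (2), applied to $\pi$ restricted to $X\cap\pi^{-1}(Y_j)$) one gets $\dim\Pa{X\cap\pi^{-1}(Y_j)}=\dim(Y_j)+j$, and therefore $\dim(X)=\max_j\Pa{\dim(Y_j)+j}$. Fix $j^*$ with $\dim(Y_{j^*})+j^*=d$; then $Y_{j^*}\ne\emptyset$, so, being a nonempty $\Lang$-definable subset of~$\Bm^*$, it has dimension $e:=\dim(Y_{j^*})\in\set{0,1}$.

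If $e=0$ (so $j^*=d$), choose any $b\in Y_d$, so that $X_b\subseteq(\Bm^*)^{n-1}$ is $\Lang$-definable over~$\av b$ with $\dim(X_b)=d$. By the inductive hypothesis $\sdim(X_b)=d$, so there is $\bv'\in X_b$ with $\srk(\bv'/\av b)=d$; then $b\bv'\in X$, and by additivity of the matroid rank $\srk$ (Lemma~\ref{lem:Lascar} applied to the matroid $\scl$), $\srk(b\bv'/\av)=\srk(\bv'/\av b)+\srk(b/\av)\ge d$. If $e=1$ (so $j^*=d-1$), then $Y_{d-1}$ is a one-dimensional $\Lang$-definable (over~$\av$) subset of~$\Bm^*$; since $\av$ is finite and $\monster^*$ is $\kappa$-saturated, Lemma~\ref{cor:dense-cardinality} applies and $\scl(\av)=\mat(\av\Am^*)$ is co-\cldense in~$\Bm^*$, so there is $b\in Y_{d-1}\setminus\scl(\av)$ and hence $\srk(b/\av)=1$. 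The fibre $X_b\subseteq(\Bm^*)^{n-1}$ is $\Lang$-definable over~$\av b$ with $\dim(X_b)=d-1$, so by induction there is $\bv'\in X_b$ with $\srk(\bv'/\av b)=d-1$, whence $b\bv'\in X$ and $\srk(b\bv'/\av)=(d-1)+1=d$. In either case $\sdim(X)\ge\srk(b\bv'/\av)\ge d=\dim(X)$, completing the induction and the proof.

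The only place where the pair structure is genuinely used — and the step I expect to require the most care — is showing that $\scl(\av)=\mat(\av\Am^*)$ is co-\cldense in~$\Bm^*$ (Lemma~\ref{cor:dense-cardinality}): this is exactly what lets us find, inside a one-dimensional $\Lang$-definable set defined over a small parameter set, a point that is $\mat$-independent from~$\Am^*$, i.e.\ of $\scl$-rank~$1$. The remaining ingredients — definability of $\dim$ (to make the $Y_j$ definable), the fibre-wise dimension inequalities, and additivity of matroid rank — are routine. One should also note that the recursion is well-founded, since each step passes from $(\Bm^*)^n$ to $(\Bm^*)^{n-1}$ and so $n$ strictly decreases.
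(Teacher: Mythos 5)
Your proof is correct, and both inequalities are handled legitimately: $\sdim\le\dim$ from $\mat\subseteq\scl$, and the reverse by induction with a fibre-wise dimension count. The route differs from the paper's in where the real work is placed. The paper inducts on $k:=\dim(X)$, first replacing $X$ by a projection so that $k=n$, and its base case ($k=n=1$) is disposed of by the $F$-function characterisation of one-dimensional subsets of the line (Lemma~\ref{lem:cl-ring-function}) --- the same device that proves uniqueness of the existential matroid: $\dim(X)=1$ forces $F(X^4)=\Bm^*$, which forces $\sdim(X)=1$. You instead induct on the ambient $n$, stratify $\pi(X)$ by fibre dimension (which, as you note, needs definability of $\dim$, available since $T$ expands an integral domain), and replace the paper's base-case trick by Lemma~\ref{cor:dense-cardinality}: since $\rk(\Bm^*/\Am^*)\ge\kappa$, the set $\scl(\av)$ is co-\cldense, so a one-dimensional $\Lang$-definable set over $\av$ contains a point of $\scl$-rank $1$. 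Both key steps ultimately rest on the pair being a model of $\Td$ and on the ring structure, so neither argument is more general than the other; the paper's is slightly shorter because the $F$-function identity transfers ``dimension one'' between the two matroids in a single line, while yours makes the genericity argument explicit and avoids the reduction to the full-dimensional projection. Your bookkeeping (the identity $\dim(X)=\max_j(\dim(Y_j)+j)$ via Lemma~\ref{lem:cl-function}, and additivity of $\srk$ via Lemma~\ref{lem:Lascar} applied to $\scl$) is sound, and the induction is well-founded as you say.
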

\begin{proof}
From $\mat \subseteq \scl$ follows immediately that $\sdim(X) \leq \dim(X)$.
For the opposite inequality, we proceed by induction on $k := \dim(X)$.
Assume, for contradiction, that $\sdim(X) < k$.
\Wlog, $\dim(\Pi^n_k(X)) = k$; therefore, \wloG $k = n$.
If $k = 1$, then $\sdim(X) = 0$, and therefore $F^4(X) \neq {\Bm^*}^n$,
contradicting $\dim(X) = 1$.
For the inductive step, assume $k = n > 1$, and let 
$U := \set{a \in \Bm^n: \dim(X_a) = n - 1}$.
$U$ is definable in~$\Bm$, and therefore, by inductive hypothesis,
$\sdim(U) = \dim(U) = n - 1$.
By the case $k = 1$, for every $a \in \K^{n-1}$, $\dim(X_a) = \sdim(X)$, and
therefore $\sdim(X_a) = 1$ for every $a \in U$.
Thus, $\sdim(X) = n$.
\end{proof}

\begin{definizione}
Let $X \subseteq (\Bm^*)^n$ be definable in $\pair{\Bm^*, \Am^*}$.
We say that $X$ is small if $\sdim(X) = 0$.
Let $Y \subseteq \Bm^n$ be definable in $\pair{\Bm, \Am}$.
We say that $Y$ is small if $\sdim(Y^*) = 0$, where $Y^*$ is the
interpretation of $Y$ inside $\pair{\Bm^*, \Am^*}$.
\end{definizione}

Notice that, if $X \subset \Bm^n$ is $\Am$-small (in the sense of
Definition~\ref{def:small}), then $X$ is also small in the above sense.
The next lemma shows that the converse is also true.

\begin{lemma}\label{lem:pair-small}
Let $\pair{\Bm, \Am} \preceq \pair{\Bm^*, \Am^*}$ and $X \subseteq \Bm^n$ be
definable in $\pair{\Bm, \Am}$. 
Let $X^*$ be the interpretation of $X$ inside $\pair{\Bm^*, \Am^*}$. 
Let $\cv \in \Bm^k$ be the parameters of definition of~$X$.
\Tfae:
\begin{enumerate}
\item $X$ is small;
\item $X^*$ is small;
\item 
$X^* \subseteq \scl(\bv)$ for some finite tuple $\bv \subset \Bm^*$;
\item
$X^* \subseteq \scl(\cv)$;
\item
$X^* \subseteq \mat(\cv \Am^*)$;
\item $X$ is $\Am$-small: that is,
there exists a \Zapplication $f^*: (\Bm^*)^m \app (\Bm^*)^n$, 
definable in~$\Bm^*$ with parameters, 
such that $f^*\Pa{{\Am^*}^m} \supseteq X^*$;
\item $X^*$ is $\Am^*$-small: that is,
there exists a  \Zapplication $f: \Bm^m \app \Bm^n$, 
definable in~$\Bm$ with parameters~$\cv$, 
such that $f^*\Pa{{\Am^*}^m} \supseteq X^*$, 
where $f^*$ is the interpretation of $f$ in~$\Bm^*$;
\item
there exists a  \Zapplication $g: \Bm^{m + k} \app \Bm^n$, definable 
in~$\Bm$ without parameters, such that 
$g^*\Pa{{\Am^*}^m \times \set{\cv}} \supseteq X^*$;
\item
there exists a  \Zapplication $f: \Bm^m \app \Bm^n$, definable in~$\Bm$
without parameters, such that $f(\Am^m \times \set{\cv}) \supseteq X$. 
\end{enumerate}
\end{lemma}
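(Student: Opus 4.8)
The plan is to establish the cycle of equivalences in a convenient order rather than proving all $9 \cdot 8$ implications. The list naturally splits into three groups: the "small" conditions (1)--(5), which are all about $X^*$ being contained in the $\scl$-closure of a small parameter set; the "$\Am$-small" conditions (6)--(7), which speak of a $\Zapplication$ defined in $\Bm$ or $\Bm^*$ with arbitrary parameters; and the "uniform" conditions (8)--(9), where the $\Zapplication$ is defined over $\emptyset$ and the parameter tuple $\cv$ is fed in as an extra argument. My plan is to prove the cycle $(1) \Leftrightarrow (2)$, then $(2) \Rightarrow (4) \Rightarrow (3) \Rightarrow (2)$ together with $(4) \Leftrightarrow (5)$ (the last being just the definition $\scl(\cv) = \mat(\cv \Am^*)$), then close the chain $(5) \Rightarrow (6) \Rightarrow (7) \Rightarrow (1)$ and finally $(7) \Leftrightarrow (8) \Leftrightarrow (9)$.

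First I would note $(1) \Leftrightarrow (2)$ is immediate from the definition of smallness for a set definable in $\pair{\Bm,\Am}$: $X$ is small precisely when $\sdim(X^*) = 0$, which is the definition of $X^*$ being small. Next, $(5) \Leftrightarrow (4)$ is just unwinding $\scl$. For $(2) \Rightarrow (4)$: if $\sdim(X^*) = 0$ then every $a \in X^*$ satisfies $a \in \scl(\cv)$ because $a$ is definable over $\cv$ inside $\pair{\Bm^*,\Am^*}$ and $\sdim(\set a) = 0$ forces $\srk(a/\cv) = 0$, i.e. $a \in \scl(\cv)$ — here one invokes that $\scl$ is an existential matroid (Lemma~\ref{lem:scl}) so that $\sdim$ behaves correctly, in particular the analogue of Lemma~\ref{lem:dim-rk}. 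For $(4) \Rightarrow (3)$ take $\bv := \cv$. For $(3) \Rightarrow (2)$: if $X^* \subseteq \scl(\bv)$ with $\bv$ finite, then $X^*$ is contained in a set of $\sdim$ equal to $\srk(\bv) < \omega$ over $\emptyset$... but more carefully, $\scl(\bv)$ has $\sdim \leq 0$ over $\bv$ by Lemma~\ref{lem:dim-rk} applied to $\scl$, hence $\sdim(X^*) \leq 0$.

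For the passage to $\Zapplication$s: $(5) \Rightarrow (6)$. From $X^* \subseteq \mat(\cv \Am^*)$ and finitariness of $\mat$, together with the fact (Remark~\ref{rem:Zapplication}) that membership $b \in \mat(A\cv)$ is witnessed by an $A$-definable $\Zapplication$ $\K^n \app \K$, one builds a single $\Zapplication$ $f^*$ over parameters in $\Bm^*$ with $f^*({\Am^*}^m) \supseteq X^*$; the point is that $X^*$ being definable lets one use saturation/compactness to pick a single $\Zapplication$ (a single $\xnarrow$ formula up to the $\cv\Am^*$-parameters) that works uniformly for all $a \in X^*$ — this is the standard "definable closure" argument, using that $X^*$ is definable and $\sdim(X^*) = 0$ so each fibre is covered. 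Then $(6) \Rightarrow (7)$ is a descent of parameters: a $\Zapplication$ over arbitrary parameters can be replaced by one defined over the parameters $\cv$ of $X$ itself, using Corollary~\ref{cor:vdd-26}-style arguments (or directly: the property "$f^*({\Am^*}^m) \supseteq X^*$" is $\Ltwo$-expressible over $\cv$, so if some parameter works, by the completeness of $\Td$ one definable over $\cv$ works). Finally $(7) \Rightarrow (1)$: if $X^* \subseteq f^*({\Am^*}^m)$ with $f$ a $\Zapplication$, then by Lemma~\ref{lem:cl-function} (fibre-wise dimension inequality) applied to $f^*$, $\sdim(f^*({\Am^*}^m)) \leq \sdim({\Am^*}^m) = 0$ since $\sdim(\Am^*) = 0$ and each fibre has $\dim \leq 0$ hence $\sdim \leq 0$; therefore $\sdim(X^*) = 0$, i.e. $X$ is small. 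The equivalences $(7) \Leftrightarrow (8) \Leftrightarrow (9)$ are routine coding: absorbing the parameters $\cv$ as a frozen extra block of arguments to a $\emptyset$-definable $\Zapplication$ (that is what (8) says), and then transferring between $\Bm^*$ and $\Bm$ via $\pair{\Bm,\Am} \preceq \pair{\Bm^*,\Am^*}$ (that is (9) versus (8)), using that "$g$ is a $\Zapplication$" and "$g(\Am^m \times \set{\cv}) \supseteq X$" are first-order over $\cv$ in $\Ltwo$.

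**Main obstacle.** The delicate step is $(5)\Rightarrow(6)$ (equivalently, getting from a pointwise covering by the matroid to a \emph{single} definable $\Zapplication$ covering $X^*$): one must use compactness together with definability of $\dim$ (so that "$\dim$ of a fibre is $\leq 0$" is first-order) and the fact that $X^*$ is a definable set, to extract one $\xnarrow$ $\Lang$-formula $\phi(x,\y,\z)$ and realise its parameters inside $\Am^* \cup \cv$ uniformly; the rest is bookkeeping. A secondary subtlety is making sure throughout that we are invoking the \emph{existential}-matroid machinery for $\scl$ (Lemma~\ref{lem:scl}) — in particular Lemma~\ref{lem:dim-rk} and Lemma~\ref{lem:cl-function} applied with $\scl$ in place of $\mat$ — rather than for $\mat$, and that $\sdim(\Am^*) = 0$, which holds because $\Am^*$ is $\scl$-closed (indeed $\scl(\emptyset) = \mat(\Am^*) \supseteq \Am^*$, so $\Am^* \subseteq \scl(\emptyset)$ gives $\sdim(\Am^*) = 0$).
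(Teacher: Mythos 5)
Your proposal follows the same route as the paper, whose own proof consists of the single remark that the only non-trivial implication is $(5 \Rightarrow 7)$, established by a compactness argument using Remark~\ref{rem:Zapplication}; you correctly identify this as the delicate step and describe the right argument for it. One link in your chain, however, would fail as justified: the ``descent of parameters'' $(6)\Rightarrow(7)$. The truth of the $\Ltwo(\cv)$-sentence ``there exist parameters $\bv$ such that $f(\cdot,\bv)$ is a \Zapplication and $f^*\Pa{{\Am^*}^m,\bv}\supseteq X^*$'' does not, via completeness of $\Td$ or elementarity, yield a witness whose parameters are exactly $\cv$ (that would require something like definable Skolem functions), and Corollary~\ref{cor:vdd-26} concerns traces on $\Am$ of sets defined over $\Am$-parameters, so it does not apply here. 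The gap is harmless only because the step is avoidable: the compactness argument you sketch in your last paragraph --- covering the definable set $X^*\subseteq\mat(\cv\,\Am^*)$ by finitely many \xnarrow formulae whose parameters lie in $\cv\cup\Am^*$, and then absorbing the $\Am^*$-entries as arguments of a single \Zapplication --- already produces a \Zapplication definable in $\Bm$ with parameters~$\cv$, i.e.\ it proves $(5)\Rightarrow(7)$ directly, after which $(7)\Rightarrow(6)$ is trivial. Alternatively, close the cycle through $(6)\Rightarrow(3)$, since $f^*\Pa{{\Am^*}^m}\subseteq\mat(\dv\,\Am^*)=\scl(\dv)$ where $\dv$ are the parameters of~$f^*$. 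With either reorganization your argument is complete and agrees with the paper's.
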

\begin{proof}
The only non-trivial implication is $(5 \Rightarrow 7)$, which is proved by a
compactness argument using Remark~\ref{rem:Zapplication}.
\end{proof}

\begin{conjecture}[{\cite[3.6]{vdd-dense}}]
Let $f: \Am^n \to \Am$ be $\Ttwo$-definable with
parameters~$\bv$.
Let $\av \subset \Am^m$ such that $\bv \ind_{\av} \Am$ and
$\dcl^1(\bv \av) \cap \Am = \dcl^1(\av)$.
Then, $f$~is given piecewise by functions definable in $\Am$ with
parameters~$\av$. 
\end{conjecture}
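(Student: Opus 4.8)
The plan is to adapt van den Dries' proof of \cite[3.6]{vdd-dense}, feeding in the structural results of this section: the reduction to \basic formulae (Theorems~\ref{thm:Td-complete} and~\ref{thm:Td-eq}), Lemma~\ref{lem:thm-2}, Corollaries~\ref{cor:vdd-26}, \ref{cor:Td-2extensions} and~\ref{cor:Td-type-1}, and Proposition~\ref{prop:3.3}. One may first assume that $\pair{\Bm,\Am}$ is $\kappa$-saturated and that $\bv\av$ is small; passing to such an elementary extension is harmless, since the desired conclusion --- existence of a partition of $\Am^n$ into $\av$-definable pieces on each of which $f$ agrees with an $\av$-definable function --- is expressed (with a suitable finite list of $\Lang$-formulae) by an $\Ltwo$-sentence with parameters $\bv\av$, hence descends. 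Further reductions: any coordinate of $\bv$ lying in $\Am$ is in $\dcl^1(\bv\av)\cap\Am=\dcl^1(\av)$, so \Wlog $\bv\cap\Am=\emptyset$; then $\Am\cap(\bv\av)=\av$ and, since $\bv\av\ind_{\av}\Am$ by Normality, the tuple $\bv\av$ is \Uindependent.

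The heart of the argument is the following generic claim: if $\cv\in\Am^n$ satisfies $\cv\ind_{\av}\bv$ and $\cv$ is $\mat$-independent over $\av\bv$ (so $\dim(\cv/\av\bv)=n$), then $f(\cv)\in\dcl^1(\av\cv)$. Granting it, one finishes as in \cite{vdd-dense}: by Lemma~\ref{lem:thm-2} the graph of $f$ is $Z\cap\Am^{n+1}$ for some $T$-definable $Z\subseteq\Bm^{n+1}$ over $\bv\av$; the claim exhibits $f$, off a set of dimension $<n$, as a branch of the correspondence $Z$ that agrees with an $\av$-definable function; a compactness argument --- using definability of $\dim$ to see that ``$f(\cv)\in\dcl^1(\av\cv)$ via a given $\Lang$-formula $\theta$'' is an $\Ltwo$-condition on $\cv$, so finitely many $\theta$'s already cover the generic locus --- combined with the descent lemmas (Corollary~\ref{cor:vdd-26} and its relatives) produces an $\av$-definable function $g_0$ and an $\av$-definable remainder $E_0\subseteq\Am^n$ of dimension $<n$ with $f=g_0$ off $E_0$; one then closes by induction on $n$, applied to $f\rest E_0$ after composing with coordinate projections, exactly as in the cell-decomposition bookkeeping of \cite{vdd-dense}.

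To prove the generic claim: from $\cv\ind_{\av}\bv$ and $\av\subseteq\av\cv\subseteq\Am$, Base Monotonicity gives $\bv\ind_{\av\cv}\Am$, so $\bv\av\cv$ is again \Uindependent. Since $d:=f(\cv)\in\dcl^2(\bv\cv)\subseteq\dcl^2(\bv\av\cv)$, Proposition~\ref{prop:3.3} yields $d\in\dcl^1(\bv\av\cv)$; and $d\in\Am$. So it suffices to show $\dcl^1(\bv\av\cv)\cap\Am=\dcl^1(\av\cv)$. For the nontrivial inclusion, let $e\in\dcl^1(\bv\av\cv)\cap\Am$; then $e\ind_{\av\cv}\bv$ (from $\bv\ind_{\av\cv}\Am$ by Symmetry and Monotonicity), hence $\mat(e\av\cv)\cap\mat(\bv\av\cv)=\mat(\av\cv)$, and since $e\in\mat(\bv\av\cv)$ one concludes $e\in\mat(\av\cv)$.

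The hard part will be the last upgrade, from $e\in\mat(\av\cv)\cap\dcl^1(\bv\av\cv)$ to $e\in\dcl^1(\av\cv)$. When $\mat=\acl$ (the geometric case) we already have $e\in\acl^1(\av\cv)$, and van den Dries' exchange-type argument, using $\dcl^1(\bv\av)\cap\Am=\dcl^1(\av)$ and the genericity of $\cv$ over $\av\bv$, forces $e\in\dcl^1(\av\cv)$. In the general existential-matroid case two difficulties appear: $\mat(\av\cv)$ may be strictly larger than $\acl^1(\av\cv)$, and matroid-closure need not interact with $\Lang$-definability over the enlarged base $\bv\av\cv$ the way $\acl$ does; one must also still verify that the pieces and the exceptional set obtained above can be chosen $\av$-definable rather than merely $\av\bv$-definable. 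Resolving these points --- or exhibiting a counterexample --- is exactly what is left open, and is the reason the statement is recorded here as a conjecture.
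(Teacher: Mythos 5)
Your proposal tracks the paper's own ``Idea for proof'' essentially step for step: after the compactness and saturation reductions, apply Proposition~\ref{prop:3.3} (exploiting the $P$-independence of $\bv\av\cv$, which follows from Base Monotonicity since $\av\cv\subset\Am$) to obtain $f(\cv)\in\dcl^1(\bv\av\cv)\cap\Am$, then use $\bv\ind_{\av\cv}\Am$ and the matroid intersection remark to conclude $f(\cv)\in\mat(\av\cv)$. You have also correctly located the open gap --- upgrading $\mat(\av\cv)\cap\dcl^1(\bv\av\cv)$ to $\dcl^1(\av\cv)$ --- which is precisely where the paper's sketch breaks off, and is the reason the statement is recorded as a conjecture rather than a proposition.
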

\begin{proof}[Idea for proof]
By compactness, it suffices to show that, given an elementary extension
$\pair{\Bm^*, \Am^*}$ of $\pair{\Bm, \Am}$ and a point
$a^* \in (\Am^*)^n$, we have $f(a^*) \in \Am' := \dcl^1(\av a^*)$.
Let $\Bm' := \dcl^1(\bv \av a^*)$.
Since $\bv \ind_{\av} \Am$, we have
$\Bm' \ind_{\Am'} \Am$.
Hence, by Proposition~\ref{prop:3.3}, $\Bm'$ is $\dcl^2$-closed, and therefore
$f(a^*) \in \Bm'$. 
Hence,
\[
f(a^*) \in \dcl^1(\bv \av a^*) \cap \Am^* = \mat(\bv \av a^*) \cap \Am^* \cap
\dcl^1(\bv \av a^*) 
= \mat(\av a^*) \cap \dcl^1(\bv \av a^*).
\]
It remains to show that $\mat(\av a^*) \cap \dcl^1(\bv \av a^*) = \dcl^1(\av a^*)$.
\end{proof}

\begin{lemma}[{\cite[6.1.3]{boxall}}]
Let $f: \Am^n \to \Bm$ be $\Ttwo$-definable with parameters~$\bv$.
Assume that $\bv$ is \Uindependent.
Then, there exists $g: \Bm^n \to \Bm$ which is $T$-definable with
parameters~$\bv$, and such that $f = g \rest \Am^n$.
\end{lemma}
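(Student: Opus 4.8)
The plan is to reduce the statement to the assertion that $f(x)\in\dcl^1(\bv x)$ for every $x\in\Am^n$, and then to glue finitely many $T$-definable partial functions by means of Lemma~\ref{lem:thm-2}. First I would note that \Wlog $\pair{\Bm,\Am}$ is $\kappa$-saturated: if the conclusion holds in a $\kappa$-saturated elementary extension $\pair{\Bm^+,\Am^+}$, witnessed by an $\Lang$-formula $\psi(\x,y,\bv)$, then that same formula defines a total function $g$ on $\Bm^n$ with $g\rest\Am^n=f$, because totality, functionality and agreement with $f$ on $P^n$ are expressed by $\Ltwo$-sentences with parameters $\bv$, true in $\pair{\Bm^+,\Am^+}$ and hence in $\pair{\Bm,\Am}$. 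Put $\av:=\bv\cap\Am$. For every $x\in\Am^n$ the tuple $\bv x$ is again \Uindependent: from $\bv\ind_{\av}\Am$ and $\av\subseteq\av x\subseteq\Am$ (using $x\in\Am^n$) Base Monotonicity gives $\bv\ind_{\av x}\Am$, and then Normality gives $\bv x\ind_{\av x}\Am$, which, since $\bv x\cap\Am=\av x$, is exactly \Uindependence of $\bv x$. As $f$ is $\Ttwo$-definable over $\bv$ we have $f(x)\in\dcl^2(\bv x)$; applying Proposition~\ref{prop:3.3} to the \Uindependent tuple $\bv x$ we conclude $f(x)\in\dcl^1(\bv x)$ for every $x\in\Am^n$.

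Next I would make this uniform. For each $\Lang$-formula $\chi(\x,y,\bv)$ with parameters $\bv$ let $W_\chi:=\set{x\in\Bm^n:\exists!\,y\ \chi(x,y,\bv)}$, an $\Lang$-definable-over-$\bv$ set with associated partial function $h_\chi\colon W_\chi\to\Bm$, and let $V_\chi:=\set{x\in\Am^n\cap W_\chi: h_\chi(x)=f(x)}$, which is $\Ttwo$-definable over $\bv$ and contained in $\Am^n$. By the previous step $\Am^n=\bigcup_\chi V_\chi$. Consider the partial $\Ltwo$-type over $\bv$ consisting of $P(\x)$ together with all the conditions $\x\notin V_\chi$; by $\kappa$-saturation, if it were finitely satisfiable in $\pair{\Bm,\Am}$ it would be realised by some $x^*\in\Am^n$, contradicting $f(x^*)\in\dcl^1(\bv x^*)$; hence some finite subset is unsatisfiable, \ie there are finitely many $\chi_1,\dotsc,\chi_m$ with $\Am^n=\bigcup_{i=1}^m V_{\chi_i}$. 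Since $\bv$ is \Uindependent, Lemma~\ref{lem:thm-2} applies to each $V_{\chi_i}\subseteq\Am^n$: write $V_{\chi_i}=E_i\cap\Am^n$ with $E_i\subseteq\Bm^n$ $T$-definable over $\bv$, and, replacing $E_i$ by $E_i\cap W_{\chi_i}$ (which does not change $E_i\cap\Am^n$, as $V_{\chi_i}\subseteq W_{\chi_i}$), we may assume $E_i\subseteq W_{\chi_i}$, so $h_{\chi_i}$ is defined on all of $E_i$.

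Finally I would define $g\colon\Bm^n\to\Bm$ by setting, for $x\in\Bm^n$, $g(x):=h_{\chi_{i(x)}}(x)$ where $i(x):=\min\set{i\le m: x\in E_i}$ when this set is non-empty, and $g(x):=0$ otherwise. Then $g$ is total and $T$-definable over $\bv$. To see that $g\rest\Am^n=f$, fix $x\in\Am^n$: by the covering there is some $i$ with $x\in V_{\chi_i}\subseteq E_i$, and for the least such index $i_0$ minimality gives $x\notin V_{\chi_j}$, hence (since $x\in\Am^n$ and $V_{\chi_j}=E_j\cap\Am^n$) $x\notin E_j$ for all $j<i_0$; therefore $i(x)=i_0$ and $g(x)=h_{\chi_{i_0}}(x)=f(x)$ because $x\in V_{\chi_{i_0}}$. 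The only subtle points are the correct invocation of Proposition~\ref{prop:3.3} on the enlarged tuple $\bv x$ (which is where the \Uindependence hypothesis is really used) and the compatibility of the priority-ordering selection with $f$ on $P^n$; the rest is bookkeeping.
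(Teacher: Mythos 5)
Your proposal is correct and follows essentially the same route as the paper's own (much terser) argument: apply Proposition~\ref{prop:3.3} to the enlarged \Uindependent tuple $\bv x$ to get $f(x)\in\dcl^1(\bv x)$, extract finitely many $T$-definable functions by compactness, and transfer/glue the pieces via Lemma~\ref{lem:thm-2}. Your write-up merely fills in the details (preservation of \Uindependence under adding $x\in\Am^n$, the priority-ordering gluing) that the paper leaves implicit.
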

\begin{proof}
Let $\pair{\Bm^*, \Am^*}$ be an elementary extension of $\pair{\Bm, \Am}$ and 
$a^* \in (\Am^*)^n$. 
By Proposition~\ref{prop:3.3}, there exists a function $g_i: \Bm^n \to \Bm$
which is $T$-definable with parameters~$\bv$, such that $f(a) = g_i(a)$.
By compactness, finitely many $g_i$ will suffice.
The conclusion then follows from Lemma~\ref{lem:thm-2}.
\end{proof}

\begin{proposition}[{\cite[3.5]{vdd-dense}}]\label{prop:3.5}
Let $\bv \in \Bm^k$ and $\av \in \Bm^{k'}$ such that $\bv \ind_{\av} \Am$ and
$\bv \cap \Am \subseteq \av$.
Let $X \subseteq \Bm^n$ be $T$-definable (possibly, inside an imaginary sort) 
with parameters~$\bv$, such that $dim(X) = d$.
Let $Y \subseteq X$ be $T^2$-definable, with parameters~$\bv$.
Then, there exist $S \subset X$ which is $T^2$-definable with
parameters~$\bv$,
and $Z \subseteq X$ which is $T^2$-definable with parameters $\bv \av$,
such that $Z \sdiff Y \subseteq S$ and $\sdim(S) < d$.

In particular, if $\dim(X) = 0$, then every $T^2$-definable subset of $X$ is
already $T$-definable.
\end{proposition}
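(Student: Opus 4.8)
The plan is to reduce, via the quantifier‑elimination Theorem~\ref{thm:Td-eq} and the fact that the class of $Y\subseteq X$ satisfying the conclusion is closed under complements inside $X$ and finite unions, to the case where $Y$ is a single \basic set. (Closure: if $Y_i\sdiff Z_i\subseteq S_i$ with each $Z_i$ being $T$\hyph definable over $\bv\av$ and each $S_i$ being $T^2$\hyph definable of $\sdim<d$, then $(Y_1\cup Y_2)\sdiff(Z_1\cup Z_2)\subseteq S_1\cup S_2$ and $(X\setminus Y_1)\sdiff(X\setminus Z_1)\subseteq S_1$, where $X\setminus Z_1$ is $T$\hyph definable over $\bv\av$ because $X$ is $T$\hyph definable over $\bv$.) So assume $Y=\set{\x\in X:\exists\bar y\,\Pa{U(\bar y)\et\psi(\x,\bar y,\bv)}}$ with $\psi$ an $\Lang$\hyph formula; I also record that the standing hypotheses on $\bv$ and $\av$ (in particular that $\av$ is \Uindependent) make $\bv\av$ \Uindependent.

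Next I would pass to a $\kappa$\hyph saturated $\pair{\Bm^*,\Am^*}\succ\pair{\Bm,\Am}$ and single out the ``good'' part $X_0:=\set{\x\in X:\x\ind_{\bv\av}\Am^*}$, equivalently $\set{\x\in X:\srk(\x/\bv\av)=\rk(\x/\bv\av)}$, where $\srk$ is the rank of the existential matroid $\scl$ on $\Td$ from Lemma~\ref{lem:scl}. Since $\rk(\x/\bv\av)\le\dim(X)=d$, every $\x\in X\setminus X_0$ satisfies $\srk(\x/\bv\av)\le d-1$; hence $S:=X\setminus X_0$ is contained in $\set{\x\in X:\srk(\x/\bv\av)\le d-1}$, which has $\sdim\le d-1<d$. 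Because $T$, hence $\Td$, expands $\TR$, both $\dim$ and $\sdim$ are definable, so (using Lemma~\ref{lem:type-def} for $\mat$ and for $\scl$) $X_0$ and $S$ are $T^2$\hyph definable with $\sdim(S)<d$, and $S$ is a proper subset of $X$ since any point of $X$ generic over $\bv\av\Am^*$ lies in $X_0$.

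The heart of the argument is the claim: if $\x_1,\x_2\in X_0$ and $\x_1\elem^1_{\bv\av}\x_2$, then $\x_1\in Y\iff\x_2\in Y$. Put $\cv_i:=\bv\av\,\x_i$. A bookkeeping computation inside $\indmat$ — using the defining condition $\x_i\ind_{\bv\av}\Am^*$ of $X_0$, the fact that $\av$ is \Uindependent, and Transitivity, Normality and anti\hyph reflexivity of $\indmat$ — shows each $\cv_i$ is \Uindependent and shows that a coordinate of $\x_i$ lies in $\Am^*$ precisely when it lies in $\mat(\av)$, a condition read off from $\tp^1(\x_i/\bv\av)$; thus $\Utp(\cv_1)=\Utp(\cv_2)$, and $\cv_1\elem^1\cv_2$ by hypothesis. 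Proposition~\ref{prop:back-and-forth} then gives $\cv_1\elem^2\cv_2$, whence $\x_1\in Y\iff\x_2\in Y$ since $Y$ is $T^2$\hyph definable over $\bv\subseteq\cv_i$. Consequently the sets of complete $\Lang$\hyph types over $\bv\av$ realised in $Y\cap X_0$ and in $X_0\setminus Y$ are disjoint closed subsets of the Stone space; separating them by a clopen set $W$ and letting $Z$ be the intersection with $X$ of the $\Lang$\hyph formula over $\bv\av$ defining $W$, one obtains $Z\cap X_0=Y\cap X_0$, hence $Z\sdiff Y\subseteq X\setminus X_0=S$. This inequality, together with $\sdim(S)<d$, is $\Ltwo$\hyph expressible (using definability of $\sdim$) and so transfers back to $\pair{\Bm,\Am}$ by completeness of $\Td$ (Theorem~\ref{thm:Td-complete}). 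When $d=0$ one has $X\subseteq\mat(\bv)\subseteq\scl(\bv\av)$, so $X_0=X$, $S=\emptyset$ and $Y=Z$ is $T$\hyph definable over $\bv\av$: this is the final assertion.

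The step I expect to be the main obstacle is the verification of the hypotheses of Proposition~\ref{prop:back-and-forth} for the tuples $\cv_i=\bv\av\,\x_i$, namely their \Uindependence and, above all, the matching of their \Utype{}s: this is exactly where all of $\bv\ind_{\av}\Am$, $\bv\cap\Am\subseteq\av$, the \Uindependence of $\av$, and the condition $\x_i\ind_{\bv\av}\Am^*$ must be combined to control precisely which coordinates of $\x_i$ can fall into $\Am^*$. A more routine but still necessary point is the definability of the loci $\set{\rk(\cdot/\bv\av)=e}$ and $\set{\srk(\cdot/\bv\av)=e}$ (so that $X_0$ and $S$ are genuinely first‑order), which rests on definability of $\dim$ and of $\sdim$; and one should keep track of whether $S$ can be taken over $\bv$ alone rather than over $\bv\av$, a matter of organising the argument — here it comes out over $\bv\av$, which is already enough for the intended applications.
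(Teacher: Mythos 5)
Your proof correctly isolates the two key ingredients that the paper's own argument uses: (i) the back-and-forth machinery of Proposition~\ref{prop:back-and-forth} applied to tuples $\bv\av\x$ for $\x$ independent of $\Am^*$ over $\bv\av$, and (ii) a Stone-space separation to produce $Z$. The reduction to \basic $Y$ at the start is valid but superfluous (the argument handles arbitrary $\Ltwo$-definable $Y$ directly). However, there is a genuine gap in the middle.

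The gap is your claim that $X_0 := \set{\x\in X : \srk(\x/\bv\av) = \rk(\x/\bv\av)}$ and $S := X\setminus X_0$ are $T^2$-definable, invoking definability of $\dim$, $\sdim$, and Lemma~\ref{lem:type-def}. That lemma says the locus of \emph{parameters} $\av$ with $\dimat(\phi(\monster^n,\av)) \geq k$ is (type-)definable; it does not make the \emph{point} condition $\rk(\x/\bv\av)=e$, let alone $\srk(\x/\bv\av)=\rk(\x/\bv\av)$, a definable condition on~$\x$. The condition $\rk(\x/\bv\av)\geq e$ is type-definable (closed), $\rk(\x/\bv\av)\leq e$ is ind-definable (open), and their conjunction is merely locally closed; the same for~$\srk$. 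So $X_0$ is not a definable set, and neither is $X\setminus X_0$; in particular ``$Z\sdiff Y\subseteq X\setminus X_0$'' is not yet the conclusion of the Proposition, which requires a genuinely $T^2$-\emph{definable} $S$ of $\sdim<d$. What is missing is the final compactness step: one must show (by contradiction, using the partial type $\x\in X\et \x\in Y\sdiff Z\et \x\notin S$ as $S$ ranges over the $T^2$-definable subsets of $X$ over $\bv$ with $\sdim(S)<d$) that some such \emph{definable} $S$ contains $Y\sdiff Z$. The paper sidesteps the definability issue entirely by working with the closed subset $W=\set{p\in S^2_X(\av\bv):\sdim(p)=d}$ of the type space rather than a point-set (note $\sdim(p)=d$ is the \emph{closed} condition $\sdim(p)\geq d$, whereas your $X_0$ mixes a closed and an open condition), and then runs the compactness argument to extract the definable~$S$.

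Two secondary points. First, you concede your $S$ only comes out over $\bv\av$; but the Proposition asserts $S$ is $T^2$-definable over $\bv$ alone (this matters, e.g., in Corollary~\ref{cor:3.4}), and the compactness step is also what achieves this, since one lets $S$ range over sets defined over~$\bv$. Second, the ``bookkeeping computation'' you postpone — showing $\bv\av\x_i$ is \Uindependent and that $\Utp(\bv\av\x_1)=\Utp(\bv\av\x_2)$ — is indeed exactly where $\bv\ind_{\av}\Am$, $\bv\cap\Am\subseteq\av$, and $\x_i\ind_{\bv\av}\Am^*$ combine; it would be worth writing out, since the coordinate-in-$\Am$ criterion ($d_i\in\Am$ iff $d_i\in\mat(\av)$) is the crux of the \Utype match.
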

\begin{proof}
The proof is a variant of Beth's definability theorem, using
Proposition~\ref{prop:back-and-forth}.
\Wlog, $\pair{\Bm, \Am}$ is $\lambda$-saturated for some cardinal $\lambda$
such that $\card T < \lambda < \kappa$.

Let $W := \set{p \in S^2_X(\av \bv): \sdim(p) = d}$.
Notice that $W$ is a closed subset of $S^2_X(\av \bv)$ (the Stone space of
$\Ttwo$-types over $\av \bv$ containing the formula ``$x \in X$'').
Let $\theta: S^2_X(\av \bv) \to S^1_X(\av \bv)$ be the restriction map: notice
that $\theta$ is a continuous homomorphism, and therefore
$V := \theta(W)$ is closed in $S^1_X(\av \bv)$.
Let $\rho := \theta \rest W$.
\begin{claim}
$\rho$ is injective (and therefore $\rho$ is a homeomorphism between $W$ and $V$).
\end{claim}
We have to prove that, for every $\cv$ and $\cv' \in X$, 
if $\srk(\cv / \av \bv) = \srk(\cv' / \av \bv) =d$ and $\cv \elem^1_{\av \bv} \cv'$,
then $\cv \elem^2_{\av \bv} \cv'$.
Let $\dv := \av \bv \cv$ and $\dv' := \av \bv \cv'$.
By Proposition~\ref{prop:back-and-forth}, it suffices to prove that
$\dv$ and $\dv'$ are both \Uindependent and have the same \Utype.
Since $\srk(\cv / \av \bv) = d$ and $\cv \in X$, we have that
$\srk(\cv / \av \bv) = \rk(\cv / \av \bv)$, which is equivalent to
$\cv \ind_{\av \bv} \Am$, and hence (since $\bv \ind_{\av} \Am$)
$\dv \ind_{\av} \Am$, that is $\dv$ is \Uindependent, and similarly for
$\dv'$.
It remains to show that $\dv$ and $\dv'$ have the same \Utype.
Let $d_i \in \Am$; we have to prove that $d_i' \in \Am$.
Since $\dv \ind_{\av} \Am$, we have $d_i \in \mat(\av)$, and hence
$d_i' \in \mat^{\Bm}(\av') \subseteq \Am$.

Let $U := S^2_Y(\av \bv) \cap W$; since $Y$ is definable, $U$ is clopen
in~$W$, and since $\rho$ is a homeomorphism, $\rho(U)$ is clopen in~$V$.
Hence, there exists $Z$ subset of~$X$, such that $Z$ is
$T$-definable over $\av \bv$ 
and $V \cap S^1_{Z_1}(\av \bv) = \rho(U)$. 
\begin{claim}
There exists $S \subset X$ which is $T_2$-definable over~$\bv$, such that
$\sdim(S) < d$ and $Y \sdiff Z \subseteq S$.
\end{claim}
Assume not. Then, the following partial type over $\av \bv$ is consistent:
\[
\Phi(\x) := \x \in X \et \x \in Y \sdiff Z \et \x \notin S,
\]
where $S$ varies among the subsets of $X$ 
which are $T^2$-definable over $\bv$, with $\sdim(S) < d$.
Let $\cv \in X$ be a realization of~$\Phi$ and $p := \tp^2(\cv/\av \bv) \in
S^2_X(\av \bv)$.
By assumption, $\sdim(\cv/ \av \bv) = d$, and therefore
$p \in W$.
Hence, $\rho(p) = \tp^1(\cv/ \av \bv) \in V$.
Since $\rho$ is injective, we have
\[
\rho(p) \in \rho \Pa{S^2_Y(\av \bv) \cap W} \sdiff 
\rho\Pa{S^2_Z(\av \bv) \cap W} \subseteq
S^1_Z(\av \bv) \sdiff S^1_Z(\av \bv) = \emptyset,
\]
absurd.
\end{proof}

In general, given $\bv \in \Bm^n$, it is always possible to find
$\av \in \Am^{n'}$ such that $\bv \ind_{\av} \Am$.
However, there are some examples when $\Bm$ is o-minimal, but $\av$ cannot be
found inside $\dcl^2(\bv)$.%
\footnote{Thanks to J.~Ramakrishnan for pointing this out.}

\begin{corollary}[{\cite[3.4]{vdd-dense}}]\label{cor:3.4}
Let $\bv$ and $\av$ be as in the above Proposition.
Let $\Gamma$ be a $T$-definable set (possibly, in some imaginary sort)
over~$\bv$, and let the function $f: \Bm^n \to \Gamma$ be $\Ttwo$-definable
with parameters~$\bv$.
Then, there exist $S \subseteq \Bm^n$, which is $T^2$-definable over $\bv$ and
with $\sdim(S) < n$, and $\hat f: \Bm^m \to \Gamma$, which is $T$-definable
over $\bv \av$, such that $f$ agrees with $\hat f$ outside~$S$.
\end{corollary}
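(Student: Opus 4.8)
The plan is to deduce the Corollary from Proposition~\ref{prop:3.3}, using that $\sdim$ is definable (which holds because $T$, and hence $T^2$, expands the theory of integral domains). As in the proof of Proposition~\ref{prop:3.5} one takes $\av$ to be a tuple from $\Am$ (in general one first enlarges $\av$ by a finite subset of $\Am$ witnessing local character for $\ind$ and containing $\bv\cap\Am$, which preserves the hypotheses). Put $S:=\set{x\in\Bm^n:\srk(x/\bv)<n}$. By Lemma~\ref{lem:type-def} applied to the existential matroid $\scl$, together with the definability of $\sdim$, the set $S$ is $T^2$-definable over $\bv$, and plainly $\sdim(S)\le n-1<n$. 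I shall show that $f$ coincides off $S$ with a function that is $T$-definable over $\bv\av$; extending the latter arbitrarily to a total $T$-definable (over $\bv\av$) function $\hat f\colon\Bm^n\to\Gamma$ --- which is possible since $\Gamma$ is $T$-definable over $\bv$ --- then yields the Corollary.

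The heart of the matter is the following. For $x\in\Bm^n$ with $\srk(x/\bv)=n$, since $\av\subseteq\Am$ one has $\scl(\bv)=\mat(\bv\Am^*)=\mat(\bv\av\Am^*)=\scl(\bv\av)$, so $\srk(x/\bv\av)=n=\rk(\Bm^n)$; hence $\rk(x/\bv\av)=n$, \ie $x\indmat_{\bv\av}\Am$, and in particular no coordinate of $x$ lies in $\Am$. Combining this with $\bv\indmat_\av\Am$, transitivity of $\indmat$ gives $\bv\av x\indmat_\av\Am$; and since $\av\subseteq\Am$ and $\bv\cap\Am\subseteq\av$ we have $\Am\cap(\bv\av x)=\av$, so the tuple $\bv\av x$ is \Uindependent. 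By Proposition~\ref{prop:3.3} --- which covers imaginary targets, so there is no difficulty when $\Gamma$ lies in an imaginary sort --- we obtain $\dcl^2(\bv\av x)=\dcl^1(\bv\av x)$. As $f$ is a $T^2$-definable function over $\bv$ we have $f(x)\in\dcl^2(\bv x)\subseteq\dcl^2(\bv\av x)=\dcl^1(\bv\av x)$. A standard compactness argument then produces finitely many $\Lang$-formulas over $\bv\av$ which, over the set $\Bm^n\setminus S$, each define a piece of the graph of $f$ and together exhaust it; patching them gives the required function, $T$-definable over $\bv\av$ and agreeing with $f$ on $\Bm^n\setminus S$.

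Granting Proposition~\ref{prop:3.3}, no serious obstacle remains --- the deduction is essentially the bookkeeping above, following the pattern of the proof of Proposition~\ref{prop:3.5}. The point that must be handled with care is that $\bv\av x$ is genuinely \Uindependent for $\scl$-generic $x$: this is exactly where the hypotheses $\bv\indmat_\av\Am$ and $\bv\cap\Am\subseteq\av$ (together with $\av\subseteq\Am$) enter, through the identity $\Am\cap(\bv\av x)=\av$; relatedly, one needs the error locus to be cut out over the single tuple $\bv$ rather than over $\bv\av$, which rests on $\scl(\bv)=\scl(\bv\av)$ and on the definability of $\sdim$. The remaining steps --- the compactness argument producing $\hat f$ and its extension to a total function into $\Gamma$ --- are routine.
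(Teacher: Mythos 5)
The core idea is right --- reduce to Proposition~\ref{prop:3.3} via $P$-independence of $\bv\av x$ for $\scl$-generic $x$, and then compactness --- and your verification that $\bv\av x$ is \Uindependent for such $x$ (using $\scl(\bv)=\scl(\bv\av)$, $\bv\indmat_\av\Am$, $\bv\cap\Am\subseteq\av$, and $\av\subseteq\Am$) is correct. But there is a genuine gap in the construction of~$S$. The set $S:=\set{x\in\Bm^n:\srk(x/\bv)<n}$ is \emph{not} $T^2$-definable: it is the set of realisations of types of $\sdim<n$ over $\bv$, i.e.\ a union over all \xnarrow formulae (for $\scl$) with parameters $\bv$, hence only ind-definable. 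The appeal to Lemma~\ref{lem:type-def} is a misreading of that lemma: it concerns the definability of the set of \emph{parameters} $\av$ such that $\dimat\bigl(\phi(\monster^n,\av)\bigr)\geq k$ for a fixed formula $\phi$, not the definability of the non-generic locus $\set{x:\rkmat(x/\bv)<n}$ over a fixed parameter set. (For a concrete counterexample: in a strongly minimal field, $\set{x:\rk^{\acl}(x/\emptyset)=0}=\acl(\emptyset)$ is infinite and not definable.) In the paper's argument $S$ is not chosen in advance at all; it is \emph{produced} by the compactness argument, as a finite union of \xnarrow sets over $\bv$ coming from the finitely many formulas of the partial type $p(\x)$ that enter the inconsistency, and each of these finitely many sets has $\sdim<n$, so $\sdim(S)<n$. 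You need to run the compactness with both halves of the type --- the $\scl$-independence conditions over $\bv$ \emph{and} the conditions $f(\x)\neq g(\x)$ --- and extract $S$ and the $g_i$ simultaneously from the inconsistent finite subset.

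A second, smaller gap is the final ``patching'' step. Off $S$ you get, for each $x$, that $f(x)=g_i(x)$ for \emph{some} $i$, but the selector $x\mapsto i(x)$ (say, the least such $i$) is $T^2$-definable over $\bv\av$, not $T$-definable, so the patched function is a priori only $T^2$-definable. This is exactly where the paper invokes Proposition~\ref{prop:3.5}: apply it to the sets $A_i:=\set{x: f(x)=g_i(x)\et f(x)\neq g_j(x)\text{ for }j<i}$ to replace each $A_i$ by a $T$-definable (over $\bv\av$) set $Z_i$ agreeing with $A_i$ off a small set; only then can $\hat f$ be assembled as a $T$-definable function over $\bv\av$, at the cost of enlarging $S$ by another small set. ``Patching them gives the required function'' as you write does not by itself address this.

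Once $S$ is correctly obtained from the compactness argument and Proposition~\ref{prop:3.5} is used for the gluing, your argument matches the paper's.
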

\begin{proof}
\Wlog, $\pair{\Bm, \Am}$ is $\kappa$-saturated.

\begin{claim}
There exists a set $S\subset \Bm^n$ which is $T^2$-definable with
parameters~$\bv$, with $\sdim(S) < n$,
and finitely many functions $g_1, \dots, g_k: \Bm^n \to \Gamma$ that are
$T$-definable with parameters $\bv\av$, such that $f$ agrees off $S$ with some
of the $g_i$.
\end{claim}
Assume that the claim does not hold.
Hence, for every $S$ and every $g$ as in the claim, there exists
$\cv \in \Bm^n$ such that $\cv \notin S$ and $f(\cv) \neq g(\cv)$.
Thus, the following partial $\Ltwo$-type over~$\bv\av$ is consistent:
\[
p(\x) := \Pa{\x \in \Bm^n \setminus \scl(\bv)} \et \Pa{f(\x) \neq g(\x)},
\]
where we let $g: \Bm^n \to \Gamma$ vary among the functions that are
$T$-definable with parameters $\bv\av$.
Let $\cv$ be a realisation of~$p$.
Notice that the choice of $\av$ and the fact that $\srk(\cv / \av \bv ) =n$
imply that $\cv \bv\av \ind_{\av} \Am$.
Hence, by Proposition~\ref{prop:3.3}, $f(\cv) \in \dcl^1(\cv \bv \av)$.
Hence $f(\cv) = g(\cv)$ for
some function $g: \Bm^n \to \Bm$  which is $T$-definable 
with parameters $\bv\av$, absurd.

The above Claim plus Proposition~\ref{prop:3.5} imply the conclusion.
\end{proof}

The above Corollary gives a way to find the parameters of
definition of $\hat f$ (and of~$S$) starting from the parameters $\bv$ of~$f$.
\begin{example}
In general, $\hat f$ cannot be defined using only $\bv$ as parameters.
Consider $a_1$ and $a_2$ in $\Am$ which are independent over the
empty set, $b_1 \in \Bm \setminus \Am$, and $b_2 := a_1 + b_1 a_2 \in \Bm
\setminus \Am$.
Let $\av := \pair{a_1, a_2}$ and $\bv := \pair{b_1, b_2}$.
Notice that $\rk (\av \bv) = 3$, while $\srk(\av \bv) = 2$.
Let $f$ be the constant function~$a_1$.
Then, $f$ is $\Ttwo$-definable over $\bv := \pair{b_1, b_2}$, but is not
$T$-definable over~$\bv$.
\end{example}

\begin{question}\label{question:dmin-scl-1}
Assume that $T$ is \dminimal (see \S\ref{sec:dmin}).
Is it true that, for every $X \subseteq \Bm^*$, $\scl(X) = \acl(\Am^*  X)$?
\end{question}

\begin{conjecture}[J. Ramakrishnan]
Assume that $T$ is o-minimal.
Then, for every $X \subset \Bm$,
\[
\acl^2(X) = \acl^1\Pa{X \cup (\acl^2(X) \cap \Am) }.
\]
\end{conjecture}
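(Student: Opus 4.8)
The inclusion ``$\supseteq$'' is immediate: every $\Lang$\hyph formula is an $\Ltwo$\hyph formula, so $\acl^1\subseteq\acl^2$, whence $X\cup(\acl^2(X)\cap\Am)\subseteq\acl^2(X)$ and therefore $\acl^1\Pa{X\cup(\acl^2(X)\cap\Am)}\subseteq\acl^2\Pa{\acl^2(X)}=\acl^2(X)$. For ``$\subseteq$'' one uses that $\acl^2$ is finitary to reduce to $X=\bv$ a finite tuple, and sets $A_0:=\acl^2(\bv)\cap\Am$. The plan is to reduce to the \Uindependent case of Proposition~\ref{prop:3.3}. It is enough to produce a \emph{finite} tuple $\av\subset A_0$ with $\bv\indmat_{\av}\Am$: then $\bv\av$ is \Uindependent, so $\acl^2(\bv\av)=\acl^1(\bv\av)$ by Proposition~\ref{prop:3.3}, and since $\av\subset A_0\subseteq\acl^2(\bv)$ this equals $\acl^2(\bv)$, giving
\[
\acl^2(\bv)=\acl^1(\bv\av)\subseteq\acl^1(\bv A_0),
\]
as wanted. (Equivalently, it suffices to prove $\bv\indmat_{A_0}\Am$, which by Local Character inside the base $A_0$ together with Transitivity of $\indmat$ supplies such an $\av$.)

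Note that for \emph{any} finite $\Am$\hyph part $\av'$ of $\bv$ — that is, $\av'\subset\Am$ with $\bv\indmat_{\av'}\Am$, which exists by the direct proof of Local Character in Lemma~\ref{lem:cl-definable}, with $\card{\av'}\le\rkmat(\bv)$ — Proposition~\ref{prop:3.3} already gives $\acl^2(\bv)\subseteq\acl^2(\bv\av')=\mat(\bv\av')$, so every $c\in\acl^2(\bv)$ is $\mat$\hyph algebraic over $\bv$ and finitely many members of $\Am$. When $c\notin\mat(\bv)$ and a \emph{single} $a\in\Am$ suffices, i.e.\ $c\in\mat(\bv a)\setminus\mat(\bv)$, the Exchange Principle yields $a\in\mat(\bv c)=\acl^1(\bv c)\subseteq\acl^2(\bv c)=\acl^2(\bv)$, hence $a\in A_0$ and $c\in\mat(\bv a)\subseteq\acl^1(\bv A_0)$. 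This one\hyph dimensional instance is exactly what the general argument must upgrade, and it already shows where the difficulty lies: the ``$\Am$\hyph part'' of $\bv$ needed above need not consist of a single element and need not lie in $\dcl^2(\bv)$ (cf.\ the example following Corollary~\ref{cor:3.4}), which forces the use of $\acl^2$.

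The general construction of $\av$ is the heart of the matter. Start from a finite $\Am$\hyph part $\av_0$ of $\bv$ with $\rkmat(\av_0)$ minimal; since $\Am$ is \clclosed we have $\mat(\av_0)\subseteq\Am$, and since $\bv\indmat_{-}\Am$ depends only on the $\mat$\hyph closure of its base (anti-reflexivity of $\indmat$), it is enough to show that $\mat(\av_0)\subseteq\acl^2(\bv)$ — then any basis of $\mat(\av_0)$ lies in $A_0$ and is a valid choice of $\av$. Here o-minimality of $T$ enters: the $\Lang$\hyph locus of $\bv$ over $\Am$ carries a canonical set of defining parameters (a canonical base for $\tp^1(\bv/\Am)$), and an $\Ltwo$\hyph automorphism fixing $\bv$ fixes the predicate, hence $\Am$, setwise, so it permutes the minimal $\Am$\hyph parts of $\bv$ and fixes that canonical datum. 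The main difficulty — and the step I expect to be the real obstacle — is to pass from this $\Ltwo$\hyph \emph{invariance} over $\bv$ to genuine $\Ltwo$\hyph \emph{algebraicity} over $\bv$; this is where o-minimality of $T$ must be used essentially, and it is what makes the $\acl^2$ (rather than $\dcl^2$) in the statement unavoidable. I would attempt it by applying Corollary~\ref{cor:3.4} to the map, $\Ttwo$\hyph definable over $\bv$, that sends a generic point of the locus to the tuple of its defining parameters: the corollary rewrites that map, off a set of strictly smaller $\sdim$, by an $\Lang$\hyph definable map over $\bv\av_0$, and following its finitely many branches should pin the defining parameters — hence a basis $\av$ of $\mat(\av_0)$ — into $\acl^2(\bv)$, completing the reduction. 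Making this last step precise is the genuine content of the conjecture.
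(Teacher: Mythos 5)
This statement appears in the paper as an open \emph{conjecture}, with no proof given; there is nothing to compare against, and your honest label ``Idea for proof'' is therefore the right posture. Your setup is sound: the easy inclusion $\supseteq$ is correct, the reduction to a finite tuple $\bv$ and to producing a finite $\av \subset A_0 := \acl^2(\bv)\cap\Am$ with $\bv \indmat_\av \Am$ is correct, and the parenthetical that $\bv \indmat_{A_0}\Am$ already suffices also holds (by finitariness of the matroid, $\rkmat(\bv/A_0)=\rkmat(\bv/\Am)$ is witnessed by a finite subtuple of $A_0$). The one-dimensional instance via Exchange is likewise right.

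You also correctly identify where the difficulty actually lives. Exchange yields one element at a time but changes the base, so iterating it does not obviously stay inside $A_0$; and, as the remark after Corollary~\ref{cor:3.4} warns, one cannot hope to land in $\dcl^2(\bv)$, which is precisely why the conjecture is stated with $\acl^2$. Your proposed route through canonical bases and Corollary~\ref{cor:3.4} is a reasonable attack but is not carried through, and you say so explicitly. Since the paper itself leaves this as a conjecture (attributed to J.~Ramakrishnan), acknowledging the gap rather than papering over it is the correct response; just be aware that what you have written is a plausibility argument and a reduction, not a proof.
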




\subsection{Elimination of imaginaries}
Let $\mat$ be an existential matroid on~$\monster$.
Remember that element $e \in \monstereq$ is an equivalence class $X \subseteq
\monster^n$ for some $\emptyset$-definable equivalence relation $E$
on~$\monster^n$. If $\cv \in X$ we say that $\cv$ \intro{represents}~$e$.

\begin{definizione}
We say that $\monster$ has \intro{\clelimination of imaginaries} if, for
every $e \in \monstereq$, there exists $\cv$ representing~$e$, such that
$\cv \in \matt(e)$.
Given $\bv \subset \monster$, we say that $\monster$ has
\clelimination of imaginaries \intro{modulo $\bv$} if, for
every $e \in \monstereq$, there exists $\cv$ representing~$e$, such that
$\cv \in \matt(e \bv)$.

If $\K \preceq \monster$ we say that $\K$ has \clelimination of imaginaries
(modulo some $\bv \subset \K$) if $\monster$ has it.
\end{definizione}
Compare the above notion with weak elimination of imaginaries (see~\cite{CF}).

\begin{proposition}\label{prop:cl-elimination}
Let $\bv \subset \monster$.
Assume that $\mat(\bv)$ is \cldense in $\monster$.
Then, $\monster$ has \clelimination of imaginaries modulo~$\bv$.
\end{proposition}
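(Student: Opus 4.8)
We need to show: if $\mat(\bv)$ is \cldense in $\monster$, then for every imaginary $e \in \monstereq$ there is a real tuple $\cv$ representing $e$ with $\cv \in \matt(e\bv)$. Let $E$ be the $\emptyset$-definable equivalence relation on $\monster^n$ whose class is $e$, and let $X \subseteq \monster^n$ be that class. Fix a real tuple $\cv_0$ representing $e$, i.e.\ $\cv_0 \in X$; we want to \emph{move} $\cv_0$ by an automorphism fixing $e\bv$ to land inside $\matt(e\bv)$. Equivalently, writing $d := \dimat(X)$ computed over any set of parameters defining $X$, we want to find $\cv \in X$ with $\rkmatt(\cv/e\bv) = 0$, which by the rank calculus of Section~\ref{sec:imaginary} is the same as $\cv \in \matt(e\bv)$.

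\textbf{First step: reduce to finding a generic point of $X$ inside $\mat(\bv\, e')$ for some real $e'$.} Since $\mat(\bv)$ is \cldense, the idea is that $X$, being an $e$-definable set of positive dimension, must meet every ``large'' definable family, and in particular we can find points of $X$ that are $\mat$-independent from any prescribed real parameter set over $\bv$. Concretely: pick a real tuple $\ev$ with $e \in \dcleq(\ev)$ (so $\ev$ ``codes'' $e$, though not necessarily inside $\matt(e)$). The class $X$ is definable over $\ev$. Now use density of $\mat(\bv)$: I would argue that since $\dimat(X) = d$, we can choose $\cv \in X$ with $\rkmat(\cv / \bv\ev) $ as small as possible, and the key point is to control $\rkmat(\ev / \bv\cv)$ versus $\rkmat(\ev/\bv)$ so that Lascar's additivity (Lemma~\ref{lem:Lascar}) forces $\rkmatt(\cv/e\bv) = 0$.

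\textbf{Second step: the density argument.} More precisely, I expect the argument to run as follows. Let $e \in \monstereq$. Choose a real $\ev$ coding $e$. By Lemma~\ref{lem:cl-E} (Existence) and the construction of Morley sequences, or directly by density, realise a point $\cv \in X$ with $\cv \indmat_{\bv} \ev$ (using that $\mat(\bv)$ is \cldense: every definable subset of $X$ of positive dimension meets $\mat(\bv)$-generic points, so we can pick $\cv$ generic in $X$ over $\bv$ avoiding the ``small'' locus). Then $\rkmat(\cv/\bv\ev) = \rkmat(\cv/\bv) = d$ and $\rkmat(\ev/\bv\cv) = \rkmat(\ev/\bv)$. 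Now $e \in \dcleq(\ev) \subseteq \dcleq(\bv\ev)$, and since $\cv \in X$ with $X$ being an $E$-class, $e \in \dcleq(\bv\cv)$ as well — so $e$ adds nothing to the algebraic/definable closure on either side. Applying the rank function on $\monstereq$ from Section~\ref{sec:imaginary} (using the defining formula $\rkmatt(\cv/e\bv) = \rkmat(\cv\ev/e\bv) - \rkmat(\ev/\cv e\bv)$ together with $e \in \dcleq(\bv\ev)$ and $e \in \dcleq(\bv\cv)$), we get $\rkmatt(\cv/e\bv) = \rkmat(\cv/\bv\ev)$... which is $d$, not $0$. So the naive ``generic $\cv$'' is wrong; instead we want a \emph{specific} $\cv \in X$.

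\textbf{Corrected second step and the main obstacle.} The point must be to choose $\cv \in X$ that is, conversely, as \emph{dependent} as possible over $e\bv$ — ideally with $\cv \in \matt(e\bv)$. Here is where density enters correctly: consider the $e\bv$-definable set $X \cap \matt(e\bv)$ (which makes sense in $\monstereq$). I would show this is nonempty by a density/saturation argument: if it were empty, then in an $e\bv$-definable way every point of $X$ lies outside $\matt(e\bv)$, and using $e \in \dcleq(\bv\cv)$ for each $\cv \in X$ together with additivity of $\rkmatt$ one derives that $X$ has a ``coding'' tuple of positive rank over $\bv$ avoiding $\mat(\bv)$ — contradicting \cldensity of $\mat(\bv)$, because the canonical parameter $\canon{X} = e$ itself is then forced into a positive-dimensional definable set disjoint from $\mat(\bv)$, yet $e \in \dcleq$ of generic real points of $X$ which we \emph{can} take in $\mat(\bv)$. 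The delicate part — and the main obstacle — is making this last implication rigorous in $\monstereq$: one must pass between real and imaginary rank carefully (the definition $\rkmatt(\av/A) = \rkmat(\bv/A) - \rkmat(\bv/\av A)$ for real $\bv$ with $\av \subseteq \acleq(A\bv)$), and verify that ``$\mat(\bv)$ is \cldense in $\monster$'' yields the analogous density statement after adjoining the single imaginary $e$. I would handle this by working with a real representative $\bv'$ of a $\mat$-basis over $\bv$ of the parameters coding $X$, reducing everything to a statement about real tuples and $\dimat$ of $\monster$-definable sets, where Lemma~\ref{lem:cl-ring-function} and \cldensity of $\mat(\bv)$ apply directly.
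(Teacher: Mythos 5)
There is a genuine gap, and you have in fact located it yourself: everything hinges on showing $X \cap \matt(e\bv) \neq \emptyset$, and your proposed route to this — ``if it were empty, then in an $e\bv$-definable way every point of $X$ lies outside $\matt(e\bv)$'' — does not work, because $\matt(e\bv)$ is a (typically infinite) union of small definable sets and is not itself definable, so its complement meeting $X$ is not a first-order condition you can contradict by density or compactness in any direct way. Your first two steps (generic choice of $\cv$, rank bookkeeping via Lascar additivity) correctly diagnose that a generic representative is the opposite of what is wanted, but they contribute nothing toward producing a representative in $\matt(e\bv)$.

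The missing idea in the paper's proof is the following. Reduce to $\bv = \emptyset$, expand $\monster$ to $\monster'$ by a predicate naming the class $X$ (so $X$ becomes $\emptyset$-definable without changing the definable sets), and set $\mat_e(Z) := \matt(eZ) \cap \monster$. A preliminary lemma shows $\mat_e$ is still an \emph{existential} matroid on $\monster'$ (Existence is inherited from $\matt$ because one can realise $\tp(a/eB)$ independently from $C$ over $e$). Now $\K := \mat_e(\emptyset)$ is \clclosed by idempotency and is \cldense in $\monster'$ because it contains the \cldense set $\mat(\emptyset)$ and $\mat \subseteq \mat_e$ forces $\dim^{\mat} \geq \dim^{\mat_e}$. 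Lemma~\ref{lem:density-closure} (Robinson's test: a \clclosed, \cldense subset is an elementary substructure) then gives $\K \preceq \monster'$, and since $X$ is a non-empty $\emptyset$-definable set of $\monster'$, it must meet $\K$; any $\cv \in X \cap \K$ is the required representative. Without this passage through the expansion and Lemma~\ref{lem:density-closure}, the non-emptiness of $X \cap \matt(e\bv)$ remains unproved, so the proposal as it stands is incomplete.
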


\begin{corollary}
Let $\monster$ be geometric. 
Assume that $\acl(\emptyset)$ is $\acl$-dense in $\monster$ (\eg, $\monster$
is an algebraically closed field).
Then, $\monster$ has weak elimination of imaginaries.
If moreover $\monster$ expands a field, then $\monster$ has elimination of 
imaginaries.
\end{corollary}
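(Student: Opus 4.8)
The plan is to apply Proposition~\ref{prop:cl-elimination} with $\mat = \acl$ and $\bv = \emptyset$. Since $\monster$ is geometric, $\acl$ is a definable matroid on $\monster$ satisfying Existence; moreover the hypothesis that $\acl(\emptyset)$ is $\acl$-dense forces $\acl(\emptyset)$ to be infinite, since it must meet $\monster$ and all its cofinite subsets (all of which have dimension~$1$), so $\acl$ is non-trivial and hence an existential matroid on $\monster$. As $\mat(\emptyset) = \acl(\emptyset)$ is $\acl$-dense, Proposition~\ref{prop:cl-elimination} applied with $\bv = \emptyset$ yields that $\monster$ has \clelimination of imaginaries for this matroid.

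Next I would unwind the definition: for each $e \in \monstereq$ there is a real tuple $\cv$ \emph{representing}~$e$ with $\cv \in \matt(e)$, where $\matt$ is the extension of $\acl$ to $\monstereq$ constructed in \S\ref{sec:imaginary}. Each coordinate of $\cv$ is a real element lying in $\matt(e)$, hence by Remark~\ref{rem:acleq} (applicable since $\monster$ is pregeometric and $\mat = \acl$) it lies in $\matt(e) \cap \monster = \acleq(e) \cap \monster$; thus $\cv \in \acleq(e)$. On the other hand $e \in \dcleq(\cv)$ because $\cv$ represents~$e$. So for every imaginary $e$ there is a real tuple $\cv$ with $e \in \dcleq(\cv)$ and $\cv \in \acleq(e)$, which is exactly weak elimination of imaginaries.

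For the last assertion I would invoke the classical fact that a structure expanding an infinite field codes finite sets of tuples: for every $n$ and $m$ there is an $\emptyset$-definable function on $(\monster^m)^n$, invariant under permuting the $n$ blocks and separating points up to such permutations, with values in some $\monster^k$ (one may use, for instance, the elementary symmetric functions of the values of a generic linear form on the $n$ points). A structure with weak elimination of imaginaries in which finite sets of real tuples admit real canonical parameters has full elimination of imaginaries: given $e = \canon X$, weak EI provides a finite set $F$ of real tuples with $\dcleq(e) = \dcleq(F)$, and the real code of $F$ is then interdefinable with~$e$. Hence $\monster$ eliminates imaginaries.

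I do not expect a serious obstacle: the content of the corollary is carried entirely by Proposition~\ref{prop:cl-elimination}, and the two remaining ingredients — verifying that $\acl$ is an existential matroid, and the standard implication ``weak elimination of imaginaries plus a field structure $\Rightarrow$ elimination of imaginaries'' — are routine. The one step that is not purely formal is the appeal to Remark~\ref{rem:acleq} to replace the matroid closure $\matt$ by the ordinary $\acleq$ on real points, which is precisely what converts $\acl$-elimination of imaginaries into weak elimination of imaginaries in the usual sense.
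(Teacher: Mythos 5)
Your proposal is correct and is exactly the derivation the paper intends: the corollary is stated without proof as an immediate consequence of Proposition~\ref{prop:cl-elimination}, and your three steps (checking that $\acl$ is a non-trivial existential matroid under the density hypothesis, translating $\mat$-elimination into weak elimination via Remark~\ref{rem:acleq}, and the standard coding of finite sets over a field) fill in precisely what the paper leaves to the reader. No gaps.
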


\begin{corollary}
Assume that $\monster$ expands an integral domain.
Let $\pair{\Bm, \Am} \models \Td$.
Let $b \in \Bm \setminus \Am$.
Then, $\pair{\Bm, \Am}$ has $\scl$\hyph elimination of imaginaries modulo~$b$.
\end{corollary}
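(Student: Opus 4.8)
The plan is to embed the given pair into a monster model of $\Td$ and then invoke Proposition~\ref{prop:cl-elimination} for the small closure. Since $\Td$ is complete (Theorem~\ref{thm:Td-complete}), \Wlog $\pair{\Bm,\Am}$ is an elementary substructure of a monster model $\monster^{*}=\pair{\Bm^{*},\Am^{*}}$ of~$\Td$, and by the definition of $\scl$\hyph elimination of imaginaries it suffices to show that $\monster^{*}$ has $\scl$\hyph elimination of imaginaries modulo~$b$. As $T$, hence $\Td$, expands the theory of integral domains, $\monster^{*}$ expands a model of~$\TR$; thus by Lemma~\ref{lem:scl} and Theorem~\ref{thm:cl-unique}, $\scl$ is the (unique) existential matroid on~$\monster^{*}$ and $\sdim$ is definable. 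Applying Proposition~\ref{prop:cl-elimination} to the matroid $\scl$ on~$\monster^{*}$ with the parameter~$b$, the whole statement reduces to the claim that $\scl(b)$ is \cldense in $\monster^{*}$ \wrt~$\scl$.

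To prove that claim I would argue as follows. First, $\scl(b)=\mat(b\Am^{*})$, and since $b\notin\Am^{*}=\scl(\emptyset)$ this is a proper $\scl$\hyph closed subset of $\Bm^{*}$ strictly containing~$\Am^{*}$; by the argument used in the lemma preceding Lemma~\ref{lem:scl} (working in the expansion $\monster^{*}_{b}$), the pair $\pair{\scl(b),\Am^{*}}$, and more generally $\pair{\mat(bC\Am^{*}),\Am^{*}}$ for any $C\subseteq\Bm^{*}$, is an $\Ltwo$\hyph elementary substructure of~$\monster^{*}$ and a model of~$\Td$. Now let $U\subseteq\Bm^{*}$ be $\Ltwo$\hyph definable over its parameters~$C$, with $\sdim(U)=1$; set $Z:=\mat(bC\Am^{*})$, so that $C\subseteq Z$ and $\pair{Z,\Am^{*}}\prec\monster^{*}$. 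Then $U\cap Z$ is the interpretation of the defining formula of $U$ inside $\pair{Z,\Am^{*}}$, so $\sdim(U\cap Z)=\sdim(U)=1$ and we may pick $u\in U\cap Z$ with $\srk(u/C)=1$; thus $u\in\mat(bC\Am^{*})\setminus\mat(C\Am^{*})$, which forces $b\notin\mat(C\Am^{*})$ and, by the Exchange Principle, $u\in\scl(Cb)$. It then remains to reabsorb~$C$: choosing $u$ additionally so that $u$ is $\scl$\hyph independent from $C$ over $b\Am^{*}$ (using that $\scl$ satisfies Existence together with the $\kappa$\hyph saturation of~$\monster^{*}$ and the density of $\Am^{*}$ in~$\Bm^{*}$), one gets $u\in\scl(b)$, so $U\cap\scl(b)\neq\emptyset$.

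The routine part is the reduction via Proposition~\ref{prop:cl-elimination} and the identification of $\scl$ as the relevant existential matroid. The main obstacle is the density claim, and inside it the treatment of the parameters~$C$: one must show that an arbitrary $\sdim=1$ set $U$, defined over arbitrary parameters, meets the one\hyph dimensional closure $\scl(b)=\mat(b\Am^{*})$ of the single prescribed element~$b\notin\Am^{*}$, and passing from "$u\in\scl(Cb)$" to "$u\in\scl(b)$" — genuinely eliminating the parameters rather than merely landing in $\scl(Cb)$ — is the delicate point where Existence for $\scl$, saturation, and the density of $\Am^{*}$ all have to be combined carefully.
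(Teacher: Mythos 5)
Your reduction, via Proposition~\ref{prop:cl-elimination}, to the claim that $\scl(b)$ is $\scl$-dense in the monster model $\monster^*$ of~$\Td$ is exactly what the paper does (its proof is that single assertion), and the identification of $\scl$ as the unique existential matroid on~$\monster^*$ is correct. The ``reabsorbing~$C$'' step, however, is a genuine gap. You want to find $u \in U \cap Z$, with $Z := \mat(bC\Am^*) = \scl(bC)$, that in addition satisfies $u \indmat_{b\Am^*} C$. But for $u \in Z$ the conditions ``$u \indmat_{b\Am^*} C$'' and ``$u \in \mat(b\Am^*) = \scl(b)$'' are \emph{equivalent}: from $u \in Z$ one has $\rkmat(u/bC\Am^*) = 0$, and independence over $b\Am^*$ then forces $\rkmat(u/b\Am^*) = \rkmat(u/bC\Am^*) = 0$; conversely $u \in \scl(b)$ already gives $u \in Z$ and the independence. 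So the $u$ you are trying to produce is exactly a witness to $U \cap \scl(b) \neq \emptyset$, which is what you are trying to prove, and the tools you cite do not supply it: Existence for $\scl$ only moves $u$ by an automorphism fixing the chosen base set, and any automorphism fixing $b$ maps $\scl(b)$ onto itself, so it cannot carry $u$ into $\scl(b)$ if $u$ was not there already; the Exchange Principle applied to $u \in \scl(Cb) \setminus \scl(C)$ yields $b \in \scl(Cu)$, not the needed $u \in \scl(b)$.

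The density claim is true, but it requires a different input. One route that fits the paper: observe that $\pair{\Bm^*, \scl(b), \Am^*}$ is a model of $T^{2d}$ (each of $\Am^* \prec \scl(b) \prec \Bm^*$ is a proper, \clclosed, elementary substructure of the next, and $\Am^*$ is \cldense in $\Bm^*$), and then invoke the identification $T^{2d} = (\Td)^d$ from the section on dense tuples, whose content is precisely that the middle structure $\scl(b)$ is $\scl$-dense in the top pair $\pair{\Bm^*, \Am^*}$. The proof of that identification replaces the $\Ltwo$-definable set $U$ by an $\Lang$-definable set of the same dimension up to a small symmetric difference via Proposition~\ref{prop:3.5}, and then applies the rank estimate of Remark~\ref{rem:pair-rank}; no exchange or parameter-reabsorption argument is involved. (Note that in the paper this material comes \emph{after} the corollary in question, so the paper's one-line proof is itself a forward reference.)
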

\begin{proof}
$\scl(b)$ is $\scl$-dense in  $\pair{\Bm, \Am}$ for every 
$b \in \Bm \setminus \Am$.
\end{proof}
In the situation of the above corollary, it is not true that $\pair{\Bm, \Am}$
has $\scl$\hyph elimination of imaginaries (modulo~$\emptyset$).
For instance, let $X := \Bm \setminus \Am$.
Then, $X \cap \scl^{eq}(\canon X) = \emptyset$.

Before proving the Proposition~\ref{prop:cl-elimination}, we need some
preliminaries. 
Let $X \subseteq \monster^n$ be a subset definable with parameters~$\bv$.
Let $\monster'$ be the expansion of $\monster$ with a new predicate
denoting~$X$. Notice that $\monster$ and $\monster'$ have the same definable
sets. 
However, $\mat$ is no longer an existential matroid on~$\monster'$: 
for instance, if $X = \set{b}$ is a singleton, and $b \notin \mat(\emptyset)$,
then $b \in \acl_{\monster'}(\emptyset) \setminus \mat(\emptyset)$, and
therefore $\mat$ is not existential on~$\monster'$.
Notice that $\indmat$ satisfies all the axioms of a symmetric independence
relation on~$\monster'$, except the Extension axiom.

Let $e := \canon X \in \monstereq$ be the canonical parameter for~$X$.
For every $Z \subseteq \monster$, define $\mat_e(Z) := \matt(e Z) \cap
\monster$ (notice that, if $e = \emptyset$, then $\mat_e = \mat$).

\begin{lemma}
$\mat_e$ is an existential matroid on~$\monster'$.
\end{lemma}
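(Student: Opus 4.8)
The plan is to verify the three clauses of Definition~\ref{def:existential}: that $\mat_e$ is a matroid, that it is definable on~$\monster'$, and that it satisfies Existence. Non-triviality is immediate, since $\mat(A)\subseteq\matt(eA)$ forces $\mat\subseteq\mat_e$ and $\mat$ is non-trivial. I may take $\monster'$ to be $\kappa$-saturated and strongly $\kappa$-homogeneous, as the expansion of $\monster$ by a predicate $P$ for $X$ remains such a structure for its own theory. The one general fact I use throughout is that, since $e=\canon X$, every $\Lang(P)$-formula over a real set~$B$ is equivalent in $\monster'$ to an $\Lang^{\mathrm{eq}}$-formula over $Be$; hence $\aut(\monster'/B)=\aut(\monster/eB)$, and $a\equiv^{\Lang}_{eB}a'$ implies $a\equiv^{\Lang(P)}_{B}a'$.

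The matroid axioms for $\mat_e$ are formal consequences of those for $\matt$ on $\monstereq$ (idempotency uses $\matt(e)\cap\monster\subseteq\matt(e)$; finitariness uses that a finite subset of $e\cup Z$ has the form $Z_0$ or $\{e\}\cup Z_0$ with $Z_0\subseteq Z$ finite). The Exchange Principle for $\mat_e$ is precisely the Exchange Principle for $\matt$ for real points over imaginary parameters established above: if $a\in\mat_e(Zc)\setminus\mat_e(Z)$ with $a,c$ real, then $a\in\matt(ceZ)\setminus\matt(eZ)$, so $c\in\matt(aeZ)$, hence $c\in\mat_e(Za)$. Existence is equally short: if $a\notin\mat_e(B)$, i.e.\ $\rkmat(a/eB)\geq1$, then Extension for $\rkmat$ over imaginary parameters (Corollary~\ref{cor:rk-real-im}) gives $a'\equiv^{\Lang}_{eB}a$ with $\rkmat(a'/eBC)=\rkmat(a/eB)\geq1$, so $a'\notin\matt(eBC)=\mat_e(BC)$ while $a'\equiv^{\Lang(P)}_{B}a$; this is the condition of Definition~\ref{def:existence}.

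Definability is the substantial point, and I expect it to be the main obstacle. First I would prove the following. Fix an $\Lang$-formula $\delta(\bar w,\z)$ and a real tuple $\bv_0$ with $X=\delta(\monster,\bv_0)$, and let $\eta(\z)$ be the $\Lang(P)$-formula $\forall\bar w\Pa{\delta(\bar w,\z)\leftrightarrow P(\bar w)}$, so that $E:=\eta(\monster')$ is the set of real tuples $\dv'$ with $\delta(\monster,\dv')=X$; note $e\in\dcleq(\dv')$ for every $\dv'\in E$. Then, for real tuples $\av,b$ and any $\dv'\in E$ with $\dv'\indmat_e\av b$, one has $b\in\mat(\dv'\av)$ iff $b\in\matt(e\av)$: the reverse inclusion holds because $\matt(e\av)\subseteq\matt(\dv'\av)$, and the forward one is a short computation with the rank function $\rkmat$ on $\monstereq$ using additivity, $e\in\dcleq(\dv')$ and $\dv'\indmat_e\av b$. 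Finally, such a generic $\dv'$ always exists in $E$: Extension for $\rkmat$ over imaginaries yields $\dv'\equiv_e\bv_0$ with $\dv'\indmat_e\av b$, and $\dv'\equiv_e\bv_0$ still defines~$X$ (an automorphism fixing $e$ fixes $X$ setwise).

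Granting this, I would finish by producing, for each real $b$ and real tuple $\av$ with $b\in\mat_e(\av)$, an \xnarrow $\Lang(P)$-formula over~$\av$ satisfied by~$b$. Since $b\in\matt(e\av)\subseteq\matt(\dv'\av)$, hence $b\in\mat(\dv'\av)$, for every $\dv'\in E$, each $\dv'\in E$ lies in $\chi(b,\av,\monster)$ for some \xnarrow $\Lang$-formula $\chi$; as $E$ is definable and $\monster$ is $\kappa$-saturated, compactness produces finitely many \xnarrow $\chi_1,\dots,\chi_k$ with $E\subseteq\bigcup_i\chi_i(b,\av,\monster)$, and $\chi:=\bigvee_i\chi_i$ remains \xnarrow and satisfies $\monster'\models\forall\z\Pa{\eta(\z)\to\chi(b,\av,\z)}$. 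I then claim $\psi(x,\y):=\forall\z\Pa{\eta(\z)\to\chi(x,\y,\z)}$ works: it is an $\Lang(P)$-formula over $\av$ holding of $b$, and it is \xnarrow for $\mat_e$, since $\monster'\models\psi(b',\av')$ forces $b'\in\mat(\dv'\av')$ for all $\dv'\in E$ and hence, choosing $\dv'$ generic over $\av'b'$, $b'\in\matt(e\av')=\mat_e(\av')$. This supplies the family of \xnarrow formulas demanded by Definition~\ref{def:definable}, completing the proof. The difficulty is concentrated in this last step: a bare witness of ``$\z$ defines~$X$'' may carry strictly more information than~$e$, which is why one must pass to a generic representative and build the narrow formula with a universal quantifier over~$\z$.
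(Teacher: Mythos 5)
Your proof is correct, and for the one clause the paper actually argues — Existence — you take essentially the same route (Extension for $\rkmat$ over imaginary parameters from Corollary~\ref{cor:rk-real-im}, followed by the observation that $\equiv^{\Lang}_{eB}$ refines $\equiv^{\Lang(P)}_{B}$). Where you genuinely diverge is that the paper dismisses everything else with ``we only need to check that $\mat_e$ satisfies Existence,'' whereas you correctly treat definability of $\mat_e$ in the language $\Lang(P)$ as a real obligation and prove it. Your mechanism is the right one: a single representative $\dv'$ of $e$ is typically strictly more informative than $e$ itself (so $b\in\mat(\dv'\av)$ for one $\dv'$ does not witness $b\in\matt(e\av)$), which is why the \xnarrow formula must be built by quantifying universally over the definable set $E$ of representatives, after compactness compresses the witnessing \xnarrow $\Lang$-formulas to a finite disjunction. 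The auxiliary computation — that for $\dv'\in E$ with $\dv'\indmat_e\av b$ one has $b\in\mat(\dv'\av)\Leftrightarrow b\in\matt(e\av)$, via additivity of $\rkmat$ and $e\in\dcleq(\dv')$ — is exactly what makes the universal quantifier sufficient, and existence of such a generic $\dv'$ follows from Extension for $\rkmat$ together with the fact that any $e$-conjugate of $\bv_0$ still defines~$X$. In short: same Existence argument as the paper, but you have also filled in a non-trivial definability verification that the paper's proof leaves implicit.
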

\begin{proof}
We only need to check that $\mat_e$ satisfies Existence.
Let $B$ and $C$ be subsets of $\monster$ such that $a \notin \mat_e(B)$, that
is $a \notin \matt(e B)$.
Let $a' \elem^{\monster}_{e B} a$, such that $a' \indmat_e B C$. 
Then, $a' \elem^{\monster'}_{B} a$ and $a' \notin \matt( e B C) = \mat_e(B C)$.
\end{proof}

\begin{proof}[Proof of Proposition~\ref{prop:cl-elimination}]
\Wlog, $\bv = \emptyset$.
Let $e \in \monstereq$. 
Let $E$ be a $\emptyset$-definable equivalence relation on
$\monster$ and $X$ be an equivalence class of~$E$, such that $e = X$.
Let $\mat_e$ be defined as above.
Since $\mat(\emptyset)$ is dense in $\monster$ and $\mat \subseteq \mat_e$,
$\K := \mat_e(\emptyset)$ is $\mat_e$-dense in~$\monster'$.
Hence, by Lemma~\ref{lem:density-closure}, $\K \preceq \monster'$.
Thus, since $X$ is $\emptyset$\hyph definable in $\monster'$, 
there exists $\cv \in X \cap \K$.
\end{proof}


\section{D-minimal topological structures}
\label{sec:dmin}

In this section we will introduce d-minimal structures.
They are topological structures whose definable sets are particularly simple
from the topological point of view; they generalise o-minimal structures.
We will show that for d-minimal structure the topology induces a canonical
existential matroid, which we denote by $\zcl$.
Moreover, the abstract notion of density introduced in \S\ref{sec:density}
coincides with the usual topological notion.
Finally, if $T$ is a complete d-minimal theory expanding the theory of fields,
then in $\Td$ the condition that the smaller structure is \clclosed is
superfluous. 
Our definition of d-minimality extends an older definition by
C.~Miller~\cite{miller}, that applied only to linearly ordered structures.

Let $\K$ be a first-order topological structure in the sense
of~\cite{pillay87}.
That is, $\K$ is a structure with a topology, such that a basis of the
topology is given by $\set{\Phi(\K, \av): \av \in\K^m}$ for a certain formula
without parameters $\Phi(x, \y)$; fix such a formula $\Phi(x, \y)$, and denote
$B_{\av} := \Phi(\K, \av)$.
Examples of topological structures are valued fields, or ordered structures.
On $\K^n$ we put the product topology.%
\footnote{Allowing a different topology (\eg Zariski topology) might be a
better choice.}
Let $\monster \succeq \K$ be a monster model.
Given $X \subseteq \K^n$, we ill denote by $\cl X$ the topological closure of
$X$ inside~$\K^n$.

\begin{definizione}
$\K$ is 
\dminimal if: 
\begin{enumerate}
\item it it is $T_1$ (\ie, its points are closed);
\item it has no isolated points;
\item
for every $X \subseteq \monster$ definable subset 
(with parameters in~$\monster$), if $X$ has empty interior, 
then $X$ is a finite union of discrete sets.
\item 
for every $X \subset \K^n$ definable and discrete, 
$\Pi^n_1(X)$ has empty interior;
\item 
given $X \subseteq \K^2$ and $U \subseteq \Pi^2_1(X)$ definable sets,
if $U$ is open and non-empty, and $X_a$ has non-empty interior for every 
$a \in U$, then $X$ has non-empty interior.
\end{enumerate}
\end{definizione}

\begin{lemma}
Assume that $\K$ is 
\dminimal.
Let $Z \subset \K^2$ be definable, such that $\Pi^2_1(Z)$ has empty interior,
and $Z_x$ has empty interior for every $x \in \K$.
Then, $\theta(Z)$ has empty interior, where $\theta$ is the projection onto
the second coordinate.
\end{lemma}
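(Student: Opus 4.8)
The statement says: if $Z \subseteq \K^2$ is definable with $\Pi^2_1(Z)$ having empty interior and every vertical section $Z_x$ having empty interior, then the projection $\theta(Z)$ onto the second coordinate also has empty interior. The plan is to argue by contradiction: suppose $\theta(Z)$ has non-empty interior, so it contains some non-empty open set $V$. The goal is to produce either a discrete definable set whose projection has non-empty interior (contradicting axiom (4) of d-minimality) or a set forced to have non-empty interior that manifestly does not (contradicting axiom (5)).

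First I would reduce to the monster model $\monster$, since d-minimality is tested there on definable sets, and work with the reflection of $Z$. Next, I would pass to a ``cleaned up'' version of $Z$. Since each $Z_x$ has empty interior, axiom (3) tells us each $Z_x$ is a finite union of discrete sets; a natural move is to invoke a definable selection/decomposition so that, after partitioning $\Pi^2_1(Z)$, we may assume each fibre $Z_x$ is discrete (the number of discrete pieces needed is definable and bounded on a definable piece of the base, by the usual $\exists^\infty$-type compactness arguments available here; alternatively one partitions $Z$ into finitely many definable subsets each with uniformly discrete fibres). Having done this, I would look at $Z$ as a subset of $\K^2$ which is ``fibre-wise discrete over the first coordinate'' and whose base $\Pi^2_1(Z)$ has empty interior. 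The key claim to establish is that such a $Z$ is itself a finite union of discrete subsets of $\K^2$: this should follow from axiom (3) applied in $\K^2$ together with the fibre-wise information — if $Z$ had non-empty interior in $\K^2$, then (by the product topology) $\Pi^2_1(Z)$ would contain an open set on which the fibres have non-empty interior, contradicting either the empty-interior hypothesis on the base or the fibre-wise discreteness. So $Z$ has empty interior in $\K^2$, hence by (3) is a finite union of discrete sets, and we may assume $Z$ itself is discrete (replace $Z$ by one of the pieces whose projection $\theta$ still hits $V$ in a set with non-empty interior — here one uses that a finite union having interior forces one piece to have interior, which needs a small Baire-category-style or purely combinatorial argument that in a $T_1$ structure with no isolated points a finite union of nowhere-dense sets is nowhere dense; more carefully, one picks the piece whose $\theta$-image contains an open set).

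Once $Z$ is discrete, apply axiom (4): the projection $\Pi^2_1(Z)$ of a discrete set has empty interior — but that is already known. The real leverage is: a discrete $Z \subseteq \K^2$ also has $\theta(Z)$ with empty interior, because axiom (4) is symmetric in the two coordinates up to the coordinate-swap (which is a homeomorphism, and discreteness is preserved). That contradicts $\theta(Z) \supseteq V$.

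The main obstacle I expect is the reduction step making the fibres discrete and then making $Z$ itself discrete: this requires handling the ``finitely many discrete pieces, uniformly'' issue and the passage from ``finite union has non-empty interior'' to ``one piece has non-empty interior''. The latter is where $T_1$-ness plus no isolated points is used, but in a general (possibly totally disconnected, e.g. $p$-adic) topological structure one must be careful that a discrete set really is nowhere dense — in a $T_1$ space with no isolated points, a discrete set has empty interior, and a finite union of sets with empty interior that are also closed-ish need not have empty interior in general, so one should instead argue directly: if $\bigcup_{i=1}^k Z_i$ has $\theta$-image containing open $V$, cover $V$ by the $k$ images $\theta(Z_i)$ and use that $\theta(Z_i)$ is definable, so by axiom (3) applied on the line $\K$ (empty interior $\Rightarrow$ finite union of discrete sets) combined with a counting/compactness argument, at least one $\theta(Z_i)$ has non-empty interior. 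Then recurse. This bookkeeping, rather than any deep idea, is the crux; the rest is a clean application of axioms (3), (4) and the coordinate-swap symmetry.
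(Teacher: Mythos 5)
Your overall strategy---reduce to discrete sets and conclude via axiom~(4) applied after swapping coordinates---is the same as the paper's, but there is a genuine gap at the pivotal step. You write that $Z$ has empty interior in $\K^2$ and ``hence by (3) is a finite union of discrete sets''; this invokes a two-variable version of axiom~(3), but axiom~(3) as stated applies only to definable subsets of $\monster$ itself (one variable), and your proposal does not establish any $\K^2$ analogue of it. The paper avoids the need for such an analogue entirely: besides reducing each fibre $Z_x$ to discrete, it \emph{also} partitions the base $\Pi^2_1(Z)$ into discrete pieces (again a one-variable application of~(3), since $\Pi^2_1(Z)\subseteq\K$ has empty interior). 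Once the base is discrete and every fibre is discrete, $Z$~is discrete by a direct topological observation: given $(a,b)\in Z$, shrink a basic product neighbourhood $U\times V$ of $(a,b)$ so that $U\cap\Pi^2_1(Z)=\set{a}$ and $V\cap Z_a=\set{b}$; then $(U\times V)\cap Z=\set{(a,b)}$. Axiom~(4), applied to the coordinate-swapped copy of~$Z$, now gives that $\theta(Z)$ has empty interior. So the fix is small---partition the base too---but as written your argument relies on an instance of~(3) that is not among the axioms.

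The secondary worry you raise (that a finite union of sets with empty interior might have interior, necessitating a ``counting/compactness'' detour) is not actually an obstacle. A discrete set in a $T_1$ space with no isolated points is nowhere dense, and a finite union of nowhere dense sets is nowhere dense in any topological space, with no Baire-type hypothesis needed; so reassembling after the finite partition is unproblematic.
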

\begin{proof}
By assumption, \wloG $\Pi^2_1(Z)$ is discrete and, for every $x \in \K$,
$Z_x$ is also discrete.
Therefore, $Z$~is discrete, and hence $\theta(Z)$ has empty interior.
\end{proof}

\begin{definizione}
Given $A \subset \monster$ and $b \in \monster$,
we say that $b \in \zcl(A)$ if there exists $X \subset \monster$ $A$-definable
such that $b \in A$ and $A$ has empty interior (or, equivalently, $A$~is discrete).
\end{definizione}


\begin{examples}\
\begin{enumerate}
\item $p$-adic fields and algebraically closed valued fields are \dminimal;
\item densely ordered o-minimal structures are \dminimal.
\end{enumerate}
\end{examples}

\begin{example}\label{ex:dmin}
A structure $\K$ is \intro{definably complete} if it expands a linear 
order~$\pair{K, <}$, and every $\K$-definable subset of $K$ has a supremum in 
$K \sqcup \set  {\pm   \infty}$.
C.~Miller defines a \dminimal structure as a definably complete structure~$\K$
such that, given $\K'$ an $\aleph_0$-saturated
elementary extension of~$\K$, every $\K'$-definable subset of $\K'$ 
is the union of an open set and finitely many discrete sets.
In particular, o-minimal structures and ultra-products of o-minimal structures
are \dminimal in Miller's sense.
If $\K$ expands a field and is a \dminimal structures in the sense of Miller,
then $\K$ is \dminimal in our sense (the proof will be given elsewhere).
Conversely, any definably complete structure which is \dminimal in our sense
is also \dminimal in Miller's sense.
\end{example}

\begin{proviso*}
For the remainder of this section,
we assume that $\K$ is \dminimal, and $T$ is the theory of~$\K$.
\end{proviso*}


\begin{remark}
If $A$ and $B$ are definable subsets of $\K$ with empty interior, then
$A \cup B$ has empty interior.
Hence, for every $C \subseteq \K$ definable, $\bd(C)$ has empty interior.
\end{remark}

\begin{lemma}\label{lem:open-orbit}
If $c \notin \zcl(A)$, then $\Xi(c /A)$ has non-empty interior.
\end{lemma}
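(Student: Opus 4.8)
The plan is to argue by contraposition: assuming that $\Xi(c/A)$ has empty interior, show $c\in\zcl(A)$. The key observation is that $\Xi(c/A)$ is an $A$-invariant set, so it is a (possibly infinite) union of $A$-definable sets, and one would like to extract from this a single $A$-definable set of empty interior containing $c$. The natural first step is to pass to an $\omega$-saturated (or more) elementary extension and to work with the type $p:=\tp(c/A)$. If every formula $\phi(x)\in p$ defined a set $\phi(\monster,\cdot)$ with empty interior, then by Lemma~\ref{lem:type-def}-style reasoning (more precisely, by the \dminimal hypothesis that a set with empty interior is a finite union of discrete sets, hence has $\dimat=0$ — see the paragraph around $\zcl$) each such $\phi$ would be \xnarrow, and compactness together with finitariness of the closure operator would give a single \xnarrow $A$-formula witnessing $c\in\zcl(A)$. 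So the real content is: if $\Xi(c/A)$ has empty interior, then some $A$-definable superset of $\Xi(c/A)$ also has empty interior.

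So first I would suppose, for contradiction, that $c\notin\zcl(A)$, i.e.\ every $A$-definable set containing $c$ has non-empty interior. Equivalently, for every $\phi(x)\in p=\tp(c/A)$, the set $\phi(\monster,\cdot)$ has non-empty interior; since $\K$ has no isolated points and (by the remark just before the lemma) boundaries of definable sets have empty interior, the interior of each such set is a non-empty \emph{open} $A$-definable set. The family $\set{\interior(\phi(\monster,\cdot)):\phi\in p}$ is then a downward-directed family of non-empty open $A$-definable sets, so by saturation there is a point $c'$ lying in all of them; such a $c'$ realizes $p$ and moreover lies in the interior of every $A$-definable neighbourhood of its own type. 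The aim is to derive from this that $\Xi(c/A)=\Xi(c'/A)$ contains an open set, contradicting the hypothesis. The mechanism should be a Baire-category / genericity argument internal to the structure: $\Xi(c/A)$ is the intersection over $\sigma\in\aut(\monster/A)$ of $\sigma(\interior\phi)$ type sets; using that each $\interior\phi$ is open and that the topology is definable via $\Phi$, one wants to find a single basic open ball $B_{\bar t}$ with $B_{\bar t}\subseteq\Xi(c/A)$. Concretely, I would try to show that for a suitable $c'\equiv_A c$ there is a basic open $B_{\bar t}\ni c'$ contained in $\phi(\monster,\cdot)$ for \emph{every} $\phi\in p$, using compactness on the partial type expressing ``$x$ lies in an open ball all of whose points realize $p$''; the \dminimality axiom (5) (the Kuratowski–Ulam-type condition) and axiom (3) should be what guarantees this partial type is consistent.

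The main obstacle I anticipate is exactly that last consistency/compactness step: showing that the condition ``$c'$ lies in an $A$-definable open ball contained in every $\phi(\monster,\cdot)$ for $\phi\in p$'' is satisfiable. A priori the open interiors $\interior\phi(\monster,\cdot)$ could shrink without a common open ball at any fixed realization — this is where one genuinely needs the \dminimal structure, not just a topological one, and I expect axioms (3) and (5) of \dminimality to be invoked to rule this out (roughly: a definable set with empty interior is small/discrete, so if $p$ is not ``\xnarrow'' it must ``spread out'' enough to contain an open piece after moving by an automorphism). Once $B_{\bar t}\subseteq\Xi(c/A)$ is obtained, the conclusion is immediate since $B_{\bar t}$ is open and non-empty. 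If the direct compactness argument is awkward, an alternative is to use Lemma~\ref{lem:type-def} together with the identification (made in this section) of $\dimat$ with the topological notion of having non-empty interior, reducing the statement to: $\dimat(\Xi(c/A))\ge 1$ whenever $c\notin\zcl(A)$, which follows from $c\notin\mat(A)$ and the fact that $\mat=\zcl$ satisfies Existence.
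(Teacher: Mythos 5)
Your overall plan --- use saturation to produce a single basic open ball $B_{\bv}$ lying inside every $A$-definable set containing $c$, and observe that $B_{\bv}\subseteq\Xi(c/A)$ --- matches the paper's proof, but you stop short and declare a phantom obstacle at exactly the point where the argument is elementary. The one-line observation you are missing (and which the paper makes at the outset) is that $c\notin\zcl(A)$ already forces $c\in\inter X$ for \emph{every} $A$-definable $X\ni c$: the set $X\setminus\inter X$ is $A$-definable and has empty interior, so $c\in X\setminus\inter X$ would put $c$ into $\zcl(A)$. Once you have this, the detour through an auxiliary realization $c'$ of $\tp(c/A)$ is a no-op ($c$ itself already lies in $\inter\phi(\monster)$ for every $\phi\in\tp(c/A)$), and the finite satisfiability you worry about is automatic: given $\phi_1,\dots,\phi_n\in\tp(c/A)$, the set $\bigcap_i\phi_i(\monster)$ is $A$-definable and contains $c$, hence $c$ is in its interior, so some basic $B_{\bv}\ni c$ lies inside each $\phi_i(\monster)$. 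No appeal to d-minimality axioms (3) or (5) is needed for this step; the proof uses only the definition of $\zcl$ and the fact that $\Phi$ parametrizes a basis of the topology. (A small slip as well: the ball produced by saturation is $\bv$-definable for the realizing tuple $\bv$, not $A$-definable; this is harmless, since $B_{\bv}\subseteq\Xi(c/A)$ open and non-empty already gives the conclusion.)

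Your proposed fallback route is not available. You want to deduce the claim from Existence for $\mat=\zcl$ together with an "identification of $\dimat$ with the topological notion of having non-empty interior". But Existence for $\zcl$ is established in the very next theorem \emph{using} this lemma, so invoking it here is circular. Moreover, the implication in the direction you need --- dimension $\geq 1$ implies non-empty interior (possibly after projection) --- is stated in this section only as a conjecture; the proved implication runs the other way (non-empty interior implies full dimension). So that route would have to be repaired into essentially the direct compactness argument above anyway.
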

\begin{proof}
Let $X \subseteq \monster$ be an $A$-definable set containing~$c$.
Since $c \notin \zcl(A)$, $c \in \inter X$.
Consider the partial type over~$c A$
\[
\Gamma(\y) := \set{\Phi(c, \y) \et \Phi(\monster, \y) \subseteq X},
\]
where $X$ varies among the $A$-definable sets containing~$c$.
By the above consideration, $\Gamma$ is consistent;
let $\bv \subset \monster$ be a realisation of~$\Gamma$.
\begin{claim}
$c \in \Phi(\monster, \bv) \subseteq \Xi(c/ A)$.
\end{claim}
In fact, if $c \in X$, where $X$ is $A$-definable, then, by definition,
$c \in \Phi(\monster, \bv) \subseteq X$, and therefore any $x' \in
\Phi(\monster, \bv)$ satisfies all the $A$-formulae satisfied by~$c$.
\end{proof}

\begin{thm}
$\zcl$ is an existential matroid.
\end{thm}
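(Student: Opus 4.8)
The plan is to verify the three defining properties of an existential matroid (Definition~\ref{def:existential}): that $\zcl$ is a (finitary) matroid, that it is definable, and that it satisfies Existence, plus non-triviality. Most of this has already been set up: $\zcl$ is a finitary closure operator essentially by construction, and it is definable because the defining formula ``$\Phi(x,\y)$ cuts out a set with empty interior'' can be expressed, so the only substantial matters are the Exchange Principle and Existence.

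First I would establish the Exchange Principle. Suppose $a \in \zcl(Xc) \setminus \zcl(X)$ and, for contradiction, $c \notin \zcl(Xa)$. Pass to finite parameters: there is an $X$-definable set $Y \subseteq \monster^2$ with $\langle c, a\rangle \in Y$ and $Y_c := Y^{c}$ discrete (has empty interior). Let $Z := \{u : \dim\text{-type-style condition}\}$—more precisely the set of $u$ such that $Y_u$ has non-empty interior; since $c \notin \zcl(Xa)$, the fibre of $Y$ over $a$ in the other coordinate has non-empty interior, so $a$ lies in the projection of $\{\langle u,v\rangle \in Y : v \in \inter(Y^v)\}$ onto the second coordinate. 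Using the d-minimality axioms (4) and (5) together with the lemma just before the theorem (that if $\Pi^2_1(Z)$ and all vertical fibres $Z_x$ have empty interior then $\theta(Z)$ has empty interior), one derives that the set of $a'$ with $c' \notin \zcl(Xa')$ for suitable $c'$ has empty interior, contradicting $a \notin \zcl(X)$ via Lemma~\ref{lem:open-orbit}. The bookkeeping here—correctly tracking which coordinate is which and which fibres are discrete versus open—is the main obstacle.

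Next I would verify Existence. By Lemma~\ref{lem:cl-E}, it suffices to check, say, condition (2): if $\Xi(a/B) \subseteq \zcl(BC)$, then $a \in \zcl(B)$. Equivalently, by Lemma~\ref{lem:open-orbit}, if $a \notin \zcl(B)$ then $\Xi(a/B)$ has non-empty interior, so it cannot be contained in the set cut out by an $\xnarrow$ (\ie discrete-fibred) formula over $BC$, since such a set has empty interior by d-minimality axiom~(3) and the remark that finite unions of empty-interior sets have empty interior; so some conjugate of $a$ over $B$ escapes $\zcl(BC)$. Concretely I would take the $B$-definable open neighbourhood $\Phi(\monster,\bv) \subseteq \Xi(a/B)$ produced in the proof of Lemma~\ref{lem:open-orbit}, observe it has non-empty interior hence is not discrete, and conclude it is not contained in $\zcl(BC)$; any point of it outside $\zcl(BC)$ is the desired conjugate.

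Finally, non-triviality is immediate: since $\K$ has no isolated points and is $T_1$, $\monster$ itself has non-empty interior, so $\monster \notin$ the union of finitely many discrete sets, whence $\zcl(\emptyset) \neq \monster$; and taking any $b$ and the whole space shows $\zcl$ is not the identity either—indeed for $A$ infinite with a limit point the closure properly contains $A$ in general. I expect the Exchange Principle to be where essentially all the real work lies, invoking d-minimality axioms (4) and (5) and the preparatory lemma; Existence and non-triviality should then follow quickly from Lemma~\ref{lem:open-orbit} and the elementary topological remarks.
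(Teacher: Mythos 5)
Your plan aims at the right targets (definability, EP via axiom~(5), Existence via Lemma~\ref{lem:open-orbit}), but there is a genuine gap at the start: you assert that $\zcl$ is a finitary closure operator ``essentially by construction'', and in particular you never address \emph{idempotency}. This is not automatic. If $a\in\zcl(\bv\cv)$ and $\bv\subset\zcl(\cv)$, the witnessing discrete sets live over different parameter sets, and one must actually build a single $\cv$-definable set of empty interior containing $a$. The paper does this by forming
\[
Z := \{\langle x,\y\rangle : \phi(x,\y,\cv) \wedge \textstyle\bigwedge_i \psi_i(y_i,\cv)\},
\]
where $\phi(\cdot,\y,\z)$ and the $\psi_i(\cdot,\z)$ are discrete-fibred formulae; $Z$ is then a definable discrete subset of $\K^{n+1}$, and \emph{d-minimality axiom~(4)} is invoked to conclude that $\Pi^{n+1}_1(Z)$ has empty interior and contains $a$. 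This is a real use of one of the d-minimality axioms that your outline never touches, so the claim that ``the only substantial matters are the Exchange Principle and Existence'' is incorrect.

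Two smaller points. For EP, your sketch correctly points at axiom~(5) and the preparatory projection lemma, but you stop short of carrying the argument through and concede the coordinate bookkeeping is unresolved; the paper's argument is fairly short once one fixes conventions (take $Z\subseteq\K^2$ $\cv$-definable with $\langle a,b\rangle\in Z$, $Z^y$ discrete for all $y$; use $b\notin\zcl(a\cv)$ to shrink $Z$ so all vertical fibres $Z_x$ are open, use $a\notin\zcl(\cv)$ to get $a\in\inter\Pi^2_1(Z)$, then axiom~(5) forces $Z$ to have interior, contradiction). For non-triviality, your observation that $\monster$ is not a finite union of discrete sets is right, but to conclude $\zcl(\emptyset)\neq\monster$ you still need the compactness step (realize the partial type $\Lambda(x):=\{x\notin Y: Y\ \emptyset\text{-definable discrete}\}$, finitely satisfiable since $\K$ has no isolated points); your ``whence'' elides this. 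Your Existence argument is essentially the paper's and is fine as a sketch.
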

\begin{proof}
Finite character, extension and monotonicity are obvious.\\
The fact that $\zcl$ is definable is also obvious.
\smallskip\\ 
Idempotence) Let $\bv := \pair{b_1, \dotsc, b_n}$, $a \in \zcl(\bv \cv)$ and 
$\bv \subset \zcl(\cv)$. 
We must prove that $a \in \zcl(\cv)$.
Let $\phi(x, \y, \z)$ and $\psi_i(y, \z)$ be formulae, $i =1, \dotsc, n$,
such that $\phi(\K, \y, \z)$ and $\psi_i(\K, \z)$ are discrete for every $\y$
and~$\z$, and $\K \models \phi(a, \bv, \cv)$ and $\K \models \psi_i(b_i,
\cv)$, $i = 1, \dotsc, n$.
Let
\[
Z := \set{\pair{x, \y}: \K \models \phi(x, \y, \cv) \et 
\bigvee_{i = 1}^n   \psi_i(y_i, z_i)},
\]
and $W := \Pi^{n+1}_1{Z}$.
By hypothesis, $Z$~is a discrete subset of $\K^{n + 1}$, and therefore, by
Assumption~(4), $W$~has empty interior.
Moreover, $W$ is $\cv$-definable and $a \in W$, and hence $a \in \zcl(\cv)$.
Notice that weak \dminimality suffices.
\smallskip \\ 
EP)
Let $a \in \zcl(b \cv) \setminus \zcl(\cv)$.
We must prove that $b \in \zcl(a \cv)$.
Assume not.
Let $Z \subset \K^2$ be $\cv$-definable, such that $\pair{a, b} \in Z$ and
$Z^y$ is discrete for every $y \in \K$.
Since $b \in Z_a$ and $b \notin \zcl(a \cv)$, $b \in \interior(Z_a)$;
hence, \wloG $Z_x$ is open for every $x \in \K$.
Let $U := \Pi^2_1(Z)$.
Since $a \in U$ and $a \notin \zcl(\cv)$, $a \in \inter U$.
Hence, by Condition~(5), $Z$~has non-empty interior; but this contradict the
fact $Z^y$ is discrete for every $y \in \K$.
\smallskip \\
Existence follows from Lemma~\ref{lem:open-orbit}.
\smallskip \\
Non-triviality)
Consider the following partial type over the empty set:
\[
\Lambda(x) := \set{x \notin Y},
\]
where $Y$ varies among the discrete $\emptyset$-definable sets.
Since $\K$~has no isolated points, $\Lambda$~is finitely satisfiable;
if $a \in \monster$ is a realisation of~$\Lambda$, 
then $a \notin \zcl(\emptyset)$.
\end{proof}

We will denote by $\zrk$ and $\dim$ the rank and dimension on $\monster$
according to~$\zcl$.
We will say \Zclosed instead of $\zcl$-closed.

\begin{remark}
Let $X \subseteq \K^n$ be definable.
If $X$ has non-empty interior, then $dim(X) = n$.
If $\Pi^n_d(X)$ has non-empty interior, then $\dim(X) \geq d$.
\end{remark}

\begin{conjecture}
Let $X \subseteq \K^n$ be definable.
Then, $\dim(X) = d$ iff, after a permutation of variables,
$\Pi^n_d(X)$ has non-empty interior.
\end{conjecture}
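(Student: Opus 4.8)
The plan is to reduce the statement to the apparently one‑dimensional core
\[
(\star)\qquad \dim(Y)=n\iff Y\text{ has non-empty interior}\qquad(\text{for every definable }Y\subseteq\K^n),
\]
and then to attack $(\star)$ by induction on $n$. (Monster models of $T$ are again d‑minimal, so $(\star)$ may be proved for $\monster$ and pushed down to $\K$ whenever needed, since having non‑empty interior is a first‑order property of a definable set.)

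First I would carry out the reduction. One half is free: the Remark just before the statement gives ``$\Pi^n_d(X)$ has non-empty interior $\Rightarrow\dim(X)\geq d$'', and since a coordinate projection of an open set is open this yields that if $\dim(X)<d$ then no permuted projection $\Pi^n_d(X)$ has non-empty interior. For the other half, work in $\monster\succeq\K$ and pick $\av\in X$ with $\zrk(\av/C)=\dim(X)=:d$, where $C$ lists the parameters of $X$; permute coordinates so that $a_1,\dots,a_d$ is a maximal $\zcl$-independent subtuple over $C$, and put $Y:=\Pi^n_d(X)$. The projection $\Pi^n_d\colon X\to Y$ is surjective, so $\dim(Y)\leq\dim(X)=d$ by Lemma~\ref{lem:cl-function}(3), while $(a_1,\dots,a_d)\in Y$ has $\zcl$-rank $d$; hence $\dim(Y)=d$ and $Y\subseteq\monster^d$. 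Granting $(\star)$, $Y$ has non-empty interior. Combining the two halves gives exactly that $\dim(X)=d$ iff, after a permutation, $\Pi^n_d(X)$ has non-empty interior while $\Pi^n_{d+1}(X)$ does not.

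Next I would prove $(\star)$; the direction ``$\Leftarrow$'' is the Remark above, so it remains to show $\dim(Y)=n\Rightarrow Y$ has non-empty interior. For $n=1$: if $Y\subseteq\K$ had empty interior, then by condition~(3) of d‑minimality $Y$ would be a finite union of discrete definable sets, each of which has dimension $0$ (a discrete definable set witnesses membership of each of its points in $\zcl$ of its parameters); hence $\dim(Y)=0$, a contradiction. For the inductive step put $\pi:=\Pi^n_{n-1}$ and split $Y$ by the dimension of its vertical fibres $Y_b\subseteq\K$: on $U_0:=\{b:\dim(Y_b)=0\}$ one has $\dim(Y\cap\pi^{-1}(U_0))\leq\dim(U_0)\leq n-1$ by Lemma~\ref{lem:cl-function}(1), and on $U_1:=\{b:\dim(Y_b)=1\}$ one has $\dim(Y\cap\pi^{-1}(U_1))\leq\dim(U_1)+1$ by the same lemma, so we are done unless $\dim(U_1)=n-1$. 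In that case the inductive hypothesis (applied to $U_1\subseteq\K^{n-1}$) gives that $U_1$ has non-empty interior, say $U_1\supseteq V$ with $V$ open and non-empty, and for each $b\in V$ the fibre $Y_b\subseteq\K$ has dimension $1$, hence non-empty interior by the case $n=1$. It then remains to deduce that $Y$ itself has non-empty interior, contradicting our assumption.

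This last deduction is the crux, and is where I expect the real obstacle to lie: it is the \emph{fibered interior} principle stating that if $Z\subseteq\K^{n-1}\times\K$ is definable, $V\subseteq\K^{n-1}$ is open and non-empty, and $Z_b$ has non-empty interior for every $b\in V$, then $Z$ has non-empty interior — i.e.\ a full Kuratowski–Ulam theorem for definable sets, of which condition~(5) of d‑minimality is only the case $n-1=1$. I would try to bootstrap from condition~(5) by writing $V\supseteq V'\times W$ with $V'\subseteq\K^{n-2}$, $W\subseteq\K$ open and non-empty, applying condition~(5) inside each slice $\{c\}\times(W\times\K)$ to get that $Z_c\subseteq\K^2$ has non-empty interior for $c\in V'$, and then recursing; the difficulty is that this recursion reproduces the same statement with a higher-dimensional last factor and does not terminate, while reformulating via a generic point $a^\ast\notin\zcl(C)$ of $V$ (and computing $\zrk(a^\ast\,\bar y^\ast/C)=n$ for a generic $\bar y^\ast$ in the fibre, using Lemma~\ref{lem:Lascar}) only re-derives $\dim(Z)=n$, which is $(\star)$ again. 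So I expect the stated axioms to be insufficient: the honest route is presumably either to add the full Kuratowski–Ulam property to the definition of d‑minimality (it holds in all the intended examples), or to develop enough cell‑decomposition / definable‑choice machinery (available in Miller's ordered setting and for $p$-adic fields) to select the boxes inside the fibres uniformly in $b$. This is most likely why the statement is recorded as a conjecture.
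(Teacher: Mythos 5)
The paper records this statement as an open conjecture and supplies no proof, so there is no argument of the author's to compare against; the only question is whether the stated axioms suffice, and your diagnosis of the obstruction is accurate. Your reduction to the full-dimensional statement
\[
(\star)\qquad \dim(Y)=n\iff Y\text{ has non-empty interior}\qquad(Y\subseteq\K^n\text{ definable})
\]
is correct (with the understood reading of the conjecture as ``$d$ is the largest integer such that, after a permutation, $\Pi^n_d(X)$ has non-empty interior'', which you make explicit): the backward implication is the Remark preceding the conjecture, and the forward one follows by projecting a generic $\av\in X$ onto a maximal $\zcl$-independent coordinate subtuple over the parameters, which yields a definable $Y=\Pi^n_d(X)\subseteq\K^d$ of dimension $d$, so $(\star)$ in dimension $d$ finishes it. The base case $n=1$ is an immediate consequence of condition~(3). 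Your inductive step is also set up correctly: by Lemma~\ref{lem:cl-function}, $\dim(Y)=n$ forces $U_1:=\{b\in\K^{n-1}:\dim(Y_b)=1\}$ to have dimension $n-1$, the inductive hypothesis yields an open non-empty $V\subseteq U_1$, and for every $b\in V$ the fibre $Y_b$ has non-empty interior by the case $n=1$. What remains is a definable Kuratowski--Ulam (fibered-interior) property over a base of dimension $n-1$, and you are right that condition~(5) only gives the instance with one-dimensional base; the induction therefore goes through for $n\le 2$ but stalls at $n=3$. Your remark that the naive bootstrap is circular also checks out: schematically, the $(m,1)$-instance reduces to the $(m-1,2)$-instance, which reduces back to the $(m,1)$-instance, and re-slicing $\K^n$ along a different coordinate or recasting the claim via a generic point and $\zrk$-additivity (Lemma~\ref{lem:Lascar}) merely re-derives $\dim(Y)=n$ without producing an open box.

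One further point reinforcing your conclusion: condition~(3) (empty interior implies a finite union of discrete sets) is only postulated for subsets of $\K$, not of $\K^n$ with $n\ge 2$, so there is no analogue of the base-case argument showing that a full-dimensional $Y\subseteq\K^n$ with empty interior must decompose into thin pieces. Absent that, a cell decomposition, or a form of definable choice that selects a box inside each fibre $Y_b$ uniformly in $b$, I also do not see how to close the gap, and labeling the statement a conjecture seems justified. Your two suggested remedies---adding the higher-dimensional Kuratowski--Ulam property to the d-minimality axioms, or restricting to settings such as Miller's ordered d-minimal structures or $p$-adic and algebraically closed valued fields where cell decomposition and definable Skolem functions give uniform box selection---are both reasonable routes.
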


\begin{conjecture}
For every $X \subseteq \K^n$ definable,
$\dim(\cl X) = \dim X$.
\end{conjecture}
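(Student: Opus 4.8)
The plan is to prove the two inequalities separately. Since $\cl X$ is definable (with the same parameters as $X$: $\av\in\cl X$ iff every basic open box around $\av$ meets $X$, and this is first-order), monotonicity of $\dim$ applied to $X\subseteq\cl X$ gives $\dim X\le\dim(\cl X)$ at once, so the content is the reverse inequality. I would prove $\dim(\cl X)\le\dim X$ by strong induction on $n$, carried out in parallel with the auxiliary statement $(\star_n)$: \emph{every definable subset of $\K^n$ that is topologically dense in a non-empty open set has non-empty interior}. For $n=1$, $(\star_1)$ follows from \dminimality: a definable unary set with empty interior is a finite union of discrete sets, and the closure of a discrete set is $\bd$ of a definable set, hence has empty interior (the remark after the first lemma of \S\ref{sec:dmin}); so it cannot be dense in a non-empty open subset of $\K$. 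Condition~(5) of \dminimality is exactly the case $n=2$ of the Kuratowski--Ulam statement; iterating it (with its trivial converse) over coordinates gives a $\K^n$-version $\mathrm{KU}_n$, and then $(\star_n)$ for $n\ge 2$ follows from $(\star_{n-1})$, $(\star_1)$ and $\mathrm{KU}_n$: first one shows $\{\bv:Z_{\bv}\text{ is dense in }W\}$ is dense in $V$ (applying $\mathrm{KU}$ to the ``complement of the fibrewise closure''), then it has non-empty interior by $(\star_{n-1})$, and finally $\mathrm{KU}$ applied once more yields that $Z$ has non-empty interior.

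For the main induction, assume $\dim(\cl X)>\dim X=d$; let $C$ be the parameters of $X$ and pick $\av\in\cl X$ with $\zrk(\av/C)=e>d$. After permuting coordinates (legitimate, as $\cl$ and $\dim$ are permutation-equivariant) assume $a_1,\dots,a_e$ are $\zcl$-independent over $C$. The key device is to push $\av$ generically into $X$: the partial type over $C\av$ consisting of ``$x\in X$'' together with ``$x\in V$'' for every $C\av$-definable open $V\ni\av$ is finitely satisfiable, since a finite conjunction of the $V$'s is an open neighbourhood of $\av\in\cl X$ and hence meets $X$; let $\av'\in X$ be a realisation. Then $\zrk(\av'/C)\le d<e$, and moreover $\av'\notin E$ for every $C\av$-definable $E$ with $\av\notin\cl E$ (take $V:=\K^n\setminus\cl E$). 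Since $\{a'_1,\dots,a'_e\}$ is $\zcl$-dependent over $C$, after re-indexing inside $\{1,\dots,e\}$ we obtain (using Lemma~\ref{lem:type-def}) a $C$-formula $\delta(x_e;x_1,\dots,x_{e-1})$ with $\delta(\K;\bar b)$ of empty interior for every $\bar b$ and $\K\models\delta(a'_e;a'_1,\dots,a'_{e-1})$. Put $E:=\{\bar x\in\K^n:\delta(x_e;x_1,\dots,x_{e-1})\}$. Then $\av'\in E$, hence $\av\in\cl E$; and $E=E''\times\K^{n-e}$ with $E''\subseteq\K^e$. By the fibre-wise inequalities (Lemma~\ref{lem:cl-function}), $\dim E''\le e-1$, whereas $(a_1,\dots,a_e)\in\cl E''$ has $\zcl$-rank $e$ over $C$, so $\dim(\cl E'')=e>\dim E''$. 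If $e<n$ this contradicts the inductive hypothesis for $\K^e$. Hence $e=n$, i.e.\ $\dim(\cl X)=n$ while $\dim X<n$.

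It remains to rule out $\dim(\cl X)=n$ together with $\dim X<n$. In that situation $X$ has empty interior (by the remark that non-empty interior forces full dimension). Fibre $\cl X$ over $\pi:=\Pi^n_{n-1}$. By the inductive hypothesis in $\K^{n-1}$, $\dim\pi(\cl X)\le\dim\cl(\pi X)=\dim(\pi X)\le n-1$, so the part of $\cl X$ lying over $\{\bv:\dim(\cl X)_{\bv}\le 0\}$ has dimension $\le n-1$ by Lemma~\ref{lem:cl-function}. For the part over $S:=\{\bv:\dim(\cl X)_{\bv}=1\}$, consider the definable set $\widetilde{\cl X}:=\set{(\bv,t):t\in\interior\bigl((\cl X)_{\bv}\bigr)}\subseteq\cl X$; it has dimension $\dim S+1$. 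If $\widetilde{\cl X}$ had non-empty interior it would contain a box $V\times W$, forcing $V\times W\subseteq\cl X$ and hence $X$ dense in $V\times W$; by $(\star_n)$, $X$ would have non-empty interior, contradicting $\dim X<n$. So $\widetilde{\cl X}$ has empty interior, $\dim S\le n-2$, and the part of $\cl X$ over $S$ has dimension $\le(n-2)+1=n-1$. Therefore $\dim(\cl X)\le n-1<n$, a contradiction, completing the induction.

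The main obstacle is the auxiliary statement $(\star_n)$ for $n\ge 2$ — equivalently, the Baire/Kuratowski--Ulam behaviour of definable subsets of $\K^n$. Condition~(5) of \dminimality furnishes exactly the two-variable case, but promoting it to all $n$ via $\mathrm{KU}_n$, and interleaving the two inductions (on $(\star_n)$ and on the statement itself) without circularity, is the delicate part; once $(\star_n)$ is in hand, everything else is a routine application of the fibre-dimension lemma (Lemma~\ref{lem:cl-function}), Lemma~\ref{lem:type-def}, and elementary topology of first-order topological structures.
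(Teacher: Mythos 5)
The paper offers no proof of this statement --- it is left as an open conjecture --- so you are attempting new mathematics. The first half of your argument is sound: $\cl X$ is definable over the same parameters, the generic approximation $\av'\in X$ of a maximal-rank point $\av\in\cl X$ is legitimate ($\kappa$-saturation), and the reduction via $E''\subseteq\K^e$ to the case $\dim(\cl X)=n$ is correct; for $n=2$ the entire proof in fact goes through, because there the only Kuratowski--Ulam input needed is literally condition~(5). The gap sits exactly where you flag the ``delicate part'', and it is not merely delicate: the claim that iterating condition~(5) over coordinates yields $\mathrm{KU}_n$ does not work. Writing $\mathrm{KU}(m,k)$ for the statement with base $\K^m$ and fibre $\K^k$, splitting off a base coordinate reduces $\mathrm{KU}(m,k)$ to $\mathrm{KU}(m-1,k)$ \emph{together with} $\mathrm{KU}(1,m+k-1)$, while splitting off a fibre coordinate reduces it to $\mathrm{KU}(m,k-1)$ \emph{together with} $\mathrm{KU}(m+k-1,1)$; in particular $\mathrm{KU}(M,1)$ and $\mathrm{KU}(1,M)$ each reduce to the other and the induction on $m+k$ never closes. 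Your derivation of $(\star_n)$ invokes $\mathrm{KU}(n-1,1)$ twice, so for $n\ge3$ it rests on an unproved statement. A second, related gap is the inference ``$\widetilde{\cl X}$ has empty interior, hence $\dim S\le n-2$'': this silently uses ``empty interior $\Rightarrow$ non-full dimension'' in $\K^{n-1}$, which is the hard direction of the conjecture stated immediately before this one in the paper, and is nowhere established in your argument.

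I believe both gaps can be closed, but by a different mechanism: run a simultaneous induction on $m$ proving $(\dagger_m)$ (``a definable $Y\subseteq\K^m$ satisfies $\dim Y=m$ iff $Y$ has non-empty interior'') together with $\mathrm{KU}(m,1)$. For $(\dagger_m)\Rightarrow\mathrm{KU}(m,1)$: with $\cv$ the parameters, pick $\av$ in the base open set with $\rk(\av/\cv)=m$, then $c\in\interior(Z_{\av})$ with $\rk(c/\cv\av)=1$, and use Lemma~\ref{lem:neighbourhood-0} (hence Lemma~\ref{lem:order-ind}) to produce a basic box with $c\in B_{\bar t}\subseteq Z_{\av}$ and $\bar t\ind_c\cv\av$; additivity of rank then gives $\rk(\av/\cv\bar t)=m$, so $\set{\av':B_{\bar t}\subseteq Z_{\av'}}$ has dimension $m$, hence non-empty interior by $(\dagger_m)$, and $Z$ contains a box. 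Conversely $(\dagger_{m+1})$ follows from $(\dagger_m)$, $(\dagger_1)$ and $\mathrm{KU}(m,1)$ via the fibrewise decomposition of Lemma~\ref{lem:cl-function}. The essential missing idea is therefore the use of the matroid to choose a basic open box \emph{independent} of a generic base point; this, and not a bookkeeping iteration of condition~(5), is the real content, and your proof as written does not supply it.
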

However, it is not true that $\dim(\partial X) < \dim(X)$ if $X$ is definable
and non-empty.

\begin{lemma}
$X$ is \cldense in $\K$ according to definition~\ref{def:dense} (\wrt~$\zcl$)
iff $X$ is topologically dense in~$\K$.
\end{lemma}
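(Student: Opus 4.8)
The plan is to reduce the statement to a single ``dictionary'' between dimension and interior: for every $\K$-definable subset $U$ of $\K$,
\[
\dim(U) = 1 \iff U \text{ has non-empty interior in } \K
\]
(recall that in this section $\dim$ denotes $\zcl$-dimension, computed on the interpretation $U^{\monster}$ of $U$ in $\monster$), and hence, since $\dim(U)\le 1$ for unary $U$, also $\dim(U) = 0$ iff $U$ has empty interior. Granting this, the lemma is a short quantifier chase. If $X$ is topologically dense in $\K$ and $U$ is $\K$-definable with $\dim(U) = 1$, then $U$ has non-empty interior, hence contains a non-empty open subset, which meets $X$; so $U\cap X\neq\emptyset$ and $X$ is \cldense. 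Conversely, if $X$ is \cldense (Definition~\ref{def:dense}, with $\mat=\zcl$) and $V\subseteq\K$ is non-empty and open, pick a non-empty basic open set $B_{\av}=\Phi(\K,\av)\subseteq V$ (these exist because the sets $B_{\av}$ form a basis of the topology); then $B_{\av}$ is $\K$-definable with parameters $\av\in\K^m$ and, being open and non-empty, has non-empty interior, so $\dim(B_{\av}) = 1$, and the hypothesis gives $B_{\av}\cap X\neq\emptyset$, whence $V\cap X\neq\emptyset$; so $X$ is topologically dense.

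It remains to prove the dictionary. Let $A\subseteq\K$ be a set of parameters defining $U$. ``$U$ has non-empty interior'' is a first-order condition on $A$ — it asserts that some non-empty basic open set $\Phi(\K,\av)$ is contained in $U$ — so by elementarity $U$ has non-empty interior in $\K$ if and only if $U^{\monster}$ has non-empty interior in $\monster$. If $U$ has empty interior, then $U^{\monster}$ is an $A$-definable set of empty interior containing any given $b\in U^{\monster}$, hence $b\in\zcl(A)$ directly by the definition of $\zcl$, \ie $\zrk(b/A)=0$; therefore $\dim(U) = 0$. If instead $U$ has non-empty interior, then $\dim(U) = 1$ by the Remark that a definable subset of $\K^{n}$ with non-empty interior has dimension $n$ (here $n=1$); alternatively, the partial type over $A$ asserting $x\in U$ together with $x\notin Y$ for each $A$-definable $Y$ of empty interior is finitely satisfiable — a finite union of sets of empty interior has empty interior and so cannot cover the non-empty open set $U^{\monster}$ — and any realisation lies in $U^{\monster}\setminus\zcl(A)$, giving $\dim(U)\ge 1$.

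The only delicate point is precisely this passage between $\K$ and the monster model $\monster$: $\dim$ and $\zcl$ live on $\monster$, whereas Definition~\ref{def:dense} and topological density are stated for definable subsets of $\K$, so one needs ``having non-empty interior'' to transfer in both directions — which is the first-order expressibility invoked above. With that observed, everything else is a mechanical unwinding of Definition~\ref{def:dense}, of membership in $\zcl$, and of the definition of a first-order topological structure.
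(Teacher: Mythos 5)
Your proof is correct and follows essentially the same route as the paper: both reduce the lemma to the dictionary ``$\dim(U)=1$ iff $U$ has non-empty interior'' for definable $U\subseteq\K$, and then do a short quantifier chase. The paper states this dictionary without justification (the chain ``$A$ open $\Rightarrow \dim(A)=1$'' and ``$\dim(A)=1 \Rightarrow A$ has non-empty interior'' is asserted in one line), whereas you spell out both directions — including the elementarity transfer between $\K$ and $\monster$ and the fact that a finite union of empty-interior sets has empty interior — which makes your version more self-contained than the one in the text.
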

\begin{proof}
Assume that $X$ is \cldense in~$\K$ according to~$\zcl$.
Let $A \subseteq \K$ be an open definable set; thus, $\dim(A) = 1$, and
therefore $A \cap X \neq \emptyset$.
Conversely, if $X$ is topologically dense in~$\K$, let $A \subseteq \K$ be
definable and of dimension~1. 
Thus, $A$~has non-empty interior, and therefore $A \cap X \neq \emptyset$.
\end{proof}

\begin{proviso*}
For the remainder of this section, will assume that $\K$ is \dminimal and
expands an integral domain, that $+$ and $-$ are continuous (and
therefore $\pair{\K, + }$ is a topological group),  
and that $T$ is the theory of~$\K$.
\end{proviso*}
Notice that an algebraically closed field with the Zariski topology is not a
topological group, because $+$ is not continuous.
Notice also that, since we required that points are closed, $\K$~is a 
regular topological space.

\begin{remark}\label{rem:dense-sum}
Let $X \subseteq \K$ be  dense (but not necessarily definable).
Then, for every $b \in \K$ and every $V$ neighbourhood of~$0$, there exists 
$a \in X$ such that $b \in a + V$.
\end{remark}
\begin{proof}
Since $-$ is continuous, there exists $V'$ neighbourhood of $0$ such that
$V' = - V'$ and $V' \subseteq V$.
Since $X$ is dense, there exists $a \in X$ such that $a \in b + V'$.
Hence, $b \in a - V' \subseteq a + V$.
\end{proof}

\begin{corollary}\label{cor:dense-dminimal}
$\Td$ is complete.
Besides, $\Td$ is the theory of pairs $\pair{\K, \F}$ such that
$\F \prec \K \models T$ and $\F$ is a (topologically) dense 
proper subset of~$\K$.
\end{corollary}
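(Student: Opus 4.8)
The plan is to derive both assertions from Theorem~\ref{thm:Td-complete}, the preceding Lemma (which identifies $\zcl$\hyph density with topological density), and the observation that a topologically dense elementary substructure of a d-minimal field is automatically $\zcl$\hyph closed.

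Completeness of $\Td$ is immediate: since $\K$ expands an integral domain, the Proviso of \S\ref{sec:dense-pairs} is satisfied with $\mat=\zcl$ — which is an existential matroid by the Theorem just proved, and has definable dimension — so Theorem~\ref{thm:Td-complete} applies verbatim. For the description of $\Td$: by the preceding Lemma, ``$\F$ is $\zcl$\hyph dense in $\K$'' and ``$\F$ is topologically dense in $\K$'' express the same condition, and $\Td$ asserts exactly that $\F$ is a proper, $\zcl$\hyph closed, $\zcl$\hyph dense subset of $\K$; by Lemma~\ref{lem:density-closure} this forces $\F\prec\K$. Hence every model of $\Td$ is a pair $\pair{\K,\F}$ with $\F\prec\K\models T$, $\F$ topologically dense, and $\F\neq\K$. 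The class of such pairs is elementary (``$P$ is an elementary substructure'', ``$P$ meets every non-empty definable open set'', and ``$P\neq$ everything'' are first-order schemes, using that openness of a definable set is itself a definable condition), and $\Td$ is complete; so it suffices to prove the converse inclusion: every pair $\pair{\K,\F}$ with $\F\prec\K\models T$, $\F$ topologically dense and proper in $\K$, is a model of $\Td$. Equivalently, since density and properness are given, one must show that such an $\F$ is $\zcl$\hyph closed in $\K$.

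So suppose, towards a contradiction, that $b\in\zcl_\K(\F)\setminus\F$. Then $b$ lies in some $\F$-definable subset $X\subseteq\K$ with empty interior, which by \dminimality we may take to be discrete; thus $b$ is an isolated point of $X$, and there is a basic open neighbourhood $W$ of $b$ with $W\cap X=\set b$. The key step is to arrange $W$ to have parameters in $\F$: granting this, $X\cap W$ is an $\F$-definable singleton, so $b\in\dcl^1(\F)$, and since $\F\prec\K$ this yields $b\in\F$, a contradiction. Thus the whole proof reduces to the claim that the basic open sets with parameters from $\F$ form a basis of the topology of $\K$ — equivalently, that for every $\F$-definable $Y\subseteq\K$ the set $Y\cap\F$ is topologically dense in $Y$, which specialises to the discrete $X$ and the isolated point $b$.

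I expect this last claim to be the main obstacle, and the plan to establish it is to use that $\pair{\K,+}$ is a topological group together with Remark~\ref{rem:dense-sum}: given $b\in\K$ and a basic open $V\ni b$, translate $V$ to a neighbourhood $-b+V$ of $0$, pick a basic neighbourhood of $0$ inside it, translate back, and then use density to replace the translating element $b$ by a nearby $a\in\F$ with $b\in a+(\text{that neighbourhood})\subseteq V$; if the neighbourhood-defining formula $\Phi$ carries an auxiliary ``size'' parameter, one replaces it too, using that density of $\F$ forces the relevant parameter set of $\F$ to be cofinal (and coinitial) in that of $\K$. Everything else — the completeness, the reduction, and the bookkeeping with $\dcl^1$ and $\prec$ — is routine given the earlier results; the delicate point is making this purely topological argument rigorous at the level of generality of a first-order topological group, rather than in the concrete cases (densely ordered groups, valued fields) where it is transparent.
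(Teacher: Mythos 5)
Your high-level reduction matches the paper's: completeness follows from Theorem~\ref{thm:Td-complete} once $\zcl$ is known to be existential with definable dimension, and the description of $\Td$ hinges precisely on showing that a topologically dense proper elementary substructure $\F\prec\K$ is automatically $\zcl$\hyph closed; you also correctly see that this amounts to isolating $b$ inside the $\F$-definable discrete set by an $\F$-definable basic neighbourhood, and that a translation argument with $\pair{\K,+}$ and Remark~\ref{rem:dense-sum} is the right vehicle. The gap is in the step you yourself flag as ``the delicate point'': producing an $\F$-definable basic open neighbourhood $W$ of $0$ with $W\subseteq V$. Your proposed fix — replacing the auxiliary ``size'' parameters of $\Phi$ using that density forces ``cofinality/coinitiality'' of parameters from $\F$ — is only meaningful when the parameter space carries an order (densely ordered groups, valued fields) and does not go through for an abstract first-order topological group. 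In general the set $\set{\av : 0\in B_{\av}\subseteq V}$ need not be open, nor need it be approximable from $\F^m$ by any density argument, since density as defined acts on subsets of $\K$ of dimension $1$, not on arbitrary parameter tuples.

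The paper's resolution is genuinely different and you should note it. Since points are closed and $\K$ is d-minimal, $\K$ is a regular space; so there is a $\K$-definable open $X\ni 0$ with $\cl X\subseteq V$. Then $X_\F:=X\cap\F$ is a neighbourhood of $0$ in $\F$, and since $\F\prec\K$ has a definable basis for its topology with parameters from $\F$, one finds an $\F$-definable basic open $W_\F\subseteq X_\F$. Interpreting $W_\F$ in $\K$ gives an open $W$; now the key move is that density of $\F$ makes $W_\F$ dense in $W$, whence $W\subseteq\cl{W_\F}\subseteq\cl X\subseteq V$. In other words, one controls $W$ inside $V$ not by choosing its parameters to be ``small enough'' — which is meaningless in general — but by trapping $W$ between $W_\F$ and $\cl X$ via regularity. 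This closure argument is the content your plan is missing; the rest of your proof (the translation $W'=a+W$ with $a\in\F$ by Remark~\ref{rem:dense-sum}, the symmetric $V$ with $V+V\subseteq U$, and the conclusion $b\in\dcl^1(\F)=\F$) matches the paper exactly.
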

\begin{proof}
We must show that if $\F \preceq \K$ is topologically dense in~$\K$, then $\F$
is \clclosed in~$\K$.
\Wlog, the pair $\pair{\K, \F}$ is $\omega$-saturated.
Let $b \in \mat^{\K}(\F)$; we must prove that $b \in \F$.
Let $Z \subset \K$ be $\F$-definable and discrete, such that $b \in Z$.
Let $U'$ be a definable neighbourhood of~$b$, such that $Z \cap U' = \set{b}$.
Define $U := U' - b$; since $\K$ is a topological group, 
$U$~is a neighbourhood of~$0$ in~$\K$, and there exists $V$ open
neighbourhood of $0$ definable in $\K$, such that $V = - V$ and 
$V + V \subseteq U$.


\begin{claim}
There exists and $\F$-definable open neighbourhood $W$ of $0$ such that
$W \subseteq V$.
\end{claim}
Suppose the claim is not true.
Since $\F$ is a regular space, there exists $X$ definable open neighbourhood
of~$0$ such that $\cl{X} \subseteq V$.
Let $X_\F := X \cap \F$.
$X_\F$ is a neighbourhood of $0$ in~$\F$; thus, since the topology has a 
definable basis, there exists $W_\F \subseteq X_\F$ such that 
$W_\F$ is $\F$-definable and $W_\F$ is an open neighbourhood of~$0$.
Let $W$ be the interpretation of $W_\F$ in~$\K$.
Since $W$ is open and $\F$ is dense in~$\K$, $W_\F$ is dense in~$W$;
therefore, $W \subseteq \cl{W_\F} \subseteq \cl{X} \subseteq V$.

By Remark~\ref{rem:dense-sum}, there exists $a \in \F$
such that $b \in W'$, where $W' := a + W$.

\begin{claim}
$W' \subseteq U'$.
\end{claim}
The claim is equivalent to $a + W \subseteq b + U$, 
that is $W + (a - b) \subseteq U$.
By assumption, $b \in a + W$, and therefore $a - b \in - W$.
Hence, $W + (a - b) \subseteq W - W \subseteq V - V \subseteq U$.

However, $W'$ is $\F$-definable, 
and $b \in W' \cap Z \subseteq V \cap Z = \set{b}$.
Hence, $b$ is $\F$-definable, and therefore $b \in \F$.
\end{proof}

Denote $\ind := \ind[$\zcl$]$.
Given $\av := \pair{\av_1, \dotsc, \av_n} \in \monster^{n \times m}$ and
$\bv \in \monster^n$, denote 
\[
B_{\av} + \bv := (B_{\av_1} + b_1) \times \dots \times (B_{\av_n} + b_n)
\subseteq \monster^n.
\]

\begin{lemma}\label{lem:neighbourhood-0}
Let $d \in \monster$, $V$~be a definable neighbourhood of~$d$,
and $C\subset \monster$.
Then, there exists $\av \in \monster^m$ such that $\av \ind_d C$ and
$d \in B_{\av} \subseteq V$.
\end{lemma}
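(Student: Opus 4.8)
The plan is to deduce the statement almost immediately from Lemma~\ref{lem:order-ind}; the only idea needed is to pick the \emph{right} directed set — the family of basic neighbourhoods of $d$, \emph{ordered by reverse inclusion} — so that ``$B_{\av}\subseteq V$'' becomes a lower-bound condition and the parameters defining $V$ never have to enter the definition of the directed set.

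Concretely, first I would set
\[
I := \set{\z \in \monster^m : \monster \models \Phi(d, \z)} = \set{\z \in \monster^m : d \in B_{\z}},
\]
and order $I$ by $\z \leq \z'$ iff $B_{\z'} \subseteq B_{\z}$, that is, iff $\monster \models \forall x\,\Pa{\Phi(x,\z') \to \Phi(x,\z)}$. This $\leq$ is a quasi-order defined without parameters; and since $\set{B_{\z}}$ is a basis for the topology of $\monster$, the intersection of two basic neighbourhoods of $d$ again contains a basic neighbourhood of $d$, so any two elements of $I$ have a common upper bound. Hence $\pair{I, \leq}$ is a directed set, definable with the single parameter~$d$. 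Moreover $I \neq \emptyset$ (the basis covers $\monster$), and, since $V$ is a neighbourhood of $d$, there is $\av_1 \in I$ with $B_{\av_1} \subseteq V$.

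Next I would apply Lemma~\ref{lem:order-ind} — available for $\mat = \zcl$, since $\zcl$ is an existential matroid by the theorem above, so $\indmat$ there is precisely $\ind := \ind[\zcl]$ — with $\cv := d$, the element $\av_1 \in I$, and the tuple $\dv := C$. This produces $\bv \in I$ with $\bv \geq \av_1$ and $C\av_1 \ind_d \bv$. By Monotonicity and Symmetry of $\ind$ it follows that $\bv \ind_d C$. Finally $\bv \in I$ gives $d \in B_{\bv}$, and $\bv \geq \av_1$ gives $B_{\bv} \subseteq B_{\av_1} \subseteq V$; so $\av := \bv$ has the required properties.

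There is no serious obstacle: the whole argument is a short reduction, and the only things requiring (routine) verification are that $\pair{I,\leq}$ is genuinely a directed set definable over $d$ alone — which is exactly what the basis property of $\Phi$ delivers — and that the hypotheses of Lemma~\ref{lem:order-ind} hold for $\zcl$. The single point that makes it go through is ordering the neighbourhoods of $d$ by reverse inclusion, so that ``lying inside $V$'' is inherited automatically by every upper bound of a fixed $\av_1$ with $B_{\av_1}\subseteq V$, keeping the parameters of $V$ out of~$\cv$.
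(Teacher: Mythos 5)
Your proof is correct and follows the paper's own argument essentially verbatim: the paper likewise takes the set of basic neighbourhoods of $d$ ordered by reverse inclusion (a $d$-definable directed set), fixes one contained in $V$, and applies Lemma~\ref{lem:order-ind} to obtain a larger one independent from $C$ over~$d$. The only difference is that you spell out the routine verifications (directedness via the basis property, the monotonicity/symmetry step to pass from $C\av_1 \ind_d \bv$ to $\bv \ind_d C$) which the paper leaves implicit.
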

\begin{proof}
Let $X := \set{\av \in \monster^n: d \in B_{\av}}$.
Let $\leq$ be the quasi-ordering on $X$ given by reverse inclusion:
$\av \leq \av'$ if $B_{\av} \supseteq B_{\av'}$.
Fix $\bv \in X$ such that $B_{\bv} \subseteq V$.
Since $(X, \leq)$ is a directed set, by Lemma~\ref{lem:order-ind}, there
exists $\av \in X$ such that $\av \indmat_d C$ and $B_{\av} \subseteq B_{\bv}
\subseteq V$.
\end{proof}

\begin{lemma}\label{lem:neighbourhood-1}
Let $\dv \in \monster^n$, $V$~be a definable neighbourhood of~$\dv$, and
$C \subset \monster$.
Then, there exist $\av \in \monster^{m \times n}$ and $\bv \in \monster^n$
such that $\dv \in B_{\av} + \bv \subseteq V$ and $\av \bv \ind C \dv$.
\end{lemma}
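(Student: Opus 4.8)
The plan is to imitate the proof of Lemma~\ref{lem:neighbourhood-0}, replacing basic open sets by ``boxes'' $B_{\av}+\bv=\prod_{i=1}^{n}(B_{\av_i}+b_i)\subseteq\monster^{n}$ and handling all $n$ coordinates at once, with the independence statement then read off from Lemma~\ref{lem:order-ind}. Observe first that each $B_{\av_i}+b_i$ is open (a translate of a basic open set, and $\pair{\K,+}$ is a topological group), so $B_{\av}+\bv$ is open, and that ``$\dv\in B_{\av}+\bv$'' is the $\Lang$-formula $\bigwedge_{i=1}^{n}\Phi(d_i-b_i,\av_i)$ in the variables $\av\bv$ with parameters~$\dv$ only.

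First I would form the set $I:=\set{\pair{\av,\bv}\in\monster^{m\times n}\times\monster^{n}:\dv\in B_{\av}+\bv}$, ordered by reverse inclusion of boxes, $\pair{\av,\bv}\leq\pair{\av',\bv'}$ iff $B_{\av'}+\bv'\subseteq B_{\av}+\bv$; by the observation above, $\pair{I,\leq}$ is definable over~$\dv$. Since $V$ is a neighbourhood of $\dv$ for the product topology, the basis property provides $\av^{0}$ with $\dv\in\prod_iB_{\av^{0}_i}\subseteq V$, so, putting $\bv^{0}:=\pair{0,\dots,0}$, the pair $\pair{\av^{0},\bv^{0}}$ belongs to $I$ and $B_{\av^{0}}+\bv^{0}\subseteq V$; this is the starting element. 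Applying Lemma~\ref{lem:order-ind} to the directed set $\pair{I,\leq}$ (parameters~$\dv$), to the element $\pair{\av^{0},\bv^{0}}$, and to the tuple~$C$, one obtains $\pair{\av,\bv}\in I$ with $\pair{\av,\bv}\geq\pair{\av^{0},\bv^{0}}$ and $C\,\av^{0}\bv^{0}\ind_{\dv}\av\bv$; symmetry and monotonicity of $\ind$ then give $\av\bv\ind_{\dv}C$. Finally $\pair{\av,\bv}\in I$ yields $\dv\in B_{\av}+\bv$ and $\pair{\av,\bv}\geq\pair{\av^{0},\bv^{0}}$ yields $B_{\av}+\bv\subseteq B_{\av^{0}}+\bv^{0}\subseteq V$, which completes the argument. (If $C$ is infinite one first reduces to finite sub-tuples of $C$ using finite character and symmetry of $\ind$, exactly as in the proof of Lemma~\ref{lem:neighbourhood-0}.)

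The step that needs genuine care, and the place where the d-minimality hypotheses are actually used, is the verification that $\pair{I,\leq}$ is \emph{directed}. Given two boxes around $\dv$, their intersection is open and contains $\dv$, so, applying in each coordinate separately the fact that $\set{B_{\cv}:\cv\in\K^{m}}$ is a basis of the topology of~$\K$, one finds $\cv$ with $\dv\in\prod_iB_{\cv_i}$ contained in that intersection; the box $\prod_iB_{\cv_i}$ (that is, $B_{\cv}+\pair{0,\dots,0}$) then lies below both given boxes. I also note that the parameters of $V$ play no role in the independence: $V$ is used only to produce the starting element $\pair{\av^{0},\bv^{0}}$, after which every box we consider is automatically contained in $B_{\av^{0}}+\bv^{0}\subseteq V$ --- the same mechanism as in Lemma~\ref{lem:neighbourhood-0}.
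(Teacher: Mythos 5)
There is a genuine gap, and it concerns the base of the independence relation. Lemma~\ref{lem:order-ind}, applied to your directed set $I$ (which is necessarily definable only over~$\dv$, since membership $\dv\in B_{\av}+\bv$ mentions $\dv$), delivers $C\,\av^0\bv^0 \ind_{\dv} \av\bv$, and hence, by symmetry and monotonicity, $\av\bv \ind_{\dv} C$. But the statement of the lemma asserts $\av\bv \ind C\dv$ -- independence \emph{over the empty set} from the set $C\cup\set{\dv}$ -- and this is strictly stronger than $\av\bv \ind_{\dv} C$: the former additionally requires $\av\bv \ind \dv$, i.e.\ that adjoining $\dv$ does not drop the rank of $\av\bv$, whereas your conclusion says nothing about the interaction between the box parameters and the base point. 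The distinction matters downstream: in the corollary following this lemma one passes, via base monotonicity and symmetry, from $\av\bv \ind C\dv$ to $\dv \ind_C \av\bv$ in order to invoke Lemma~\ref{lem:neighbourhood-abstract}, and that passage fails if one only has $\av\bv \ind_{\dv} C$. The same deficiency breaks the second application, where $\bv \ind C\av_0\av_1$ is fed into Lemma~\ref{lem:ind-ternary}.

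The paper avoids this issue with a translation trick that you did not reproduce. After reducing to $n=1$ by induction, it shrinks $V$ to a symmetric neighbourhood $V_1$ of~$0$ with $V_1+V_1\subseteq V-d$, applies Lemma~\ref{lem:neighbourhood-0} \emph{at the point $0$} (so the base of the resulting independence is $0\in\dcl(\emptyset)$, hence effectively $\emptyset$) to get $\av$ with $\av\ind Cd$ and $0\in B_{\av}\subseteq V_1$, and then chooses the translation $b\in d-B_{\av}$ generic over $C\av d$, so $b\ind C\av d$. Transitivity then upgrades these two independences-over-$\emptyset$ to $\av b\ind Cd$. In other words, the extra strength in the conclusion (over $\emptyset$ rather than over $\dv$) is precisely what is bought by translating the reference point to a $\dcl(\emptyset)$-definable element and then choosing the shift $b$ freely; a direct appeal to Lemma~\ref{lem:order-ind} over the $\dv$-definable directed set cannot produce it.
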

\begin{proof}
Proceeding by induction on~$n$, it suffices to treat the case $n = 1$.
Let $V_0 := V - d$; it is a definable neighbourhood of~$0$.
Since $\monster$ is a topological group, there exists $V_1$ definable and
open, such that $0 \in V_1$, $V_1 = - V_1$, and $V_1 + V_1 \subseteq V_0$.
By Lemma~\ref{lem:neighbourhood-0}, there exists $\av \in \monster^m$ such
that $\av \ind C d$ and $0 \in B_{\av} \subseteq V_1$.
Let $W := d - B_{\av}$.
Since $\dim(W) = 1$, there exists $b \in W$ 
such that $b \notin \zcl(C \av d)$.

\begin{claim}
$d \in B_{\av} + b$.
\end{claim}
In fact, $b \in - B_{\av} + d$, and therefore 
$d - b \in B_{\av}$

\begin{claim}
$a \bv \ind C d$.
\end{claim}
By construction, $b \ind C \av d$, and therefore $b \ind_{\av} C d$, and hence
$\av b \ind_{\av} C d$.
Together with $\av \ind C d$, this implies the claim.
\end{proof}

\begin{corollary}
Let $X \subseteq \monster^n$ be a definable set, and $k \in \N$.
Assume that, for every $\x \in X$, there exists $V_{\x}$ definable neighbourhood
of~$\x$, such that $\dim(V_{\x} \cap X) \leq k$.
Then, $\dim(X) \leq k$.
\end{corollary}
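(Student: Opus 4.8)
The plan is to reduce the statement to Lemma~\ref{lem:neighbourhood-abstract} by taking for the covering family the family of \emph{basic boxes} $B_{\av}+\bv$. Concretely, let $\cv$ be a tuple of parameters over which $X$ is definable, put $I := \monster^{m\times n}\times\monster^n$, and for $\bar t = \pair{\av,\bv}\in I$ set $U_{\bar t} := B_{\av}+\bv \subseteq \monster^n$; each $U_{\bar t}$ is definable with parameters $\bar t$, hence \emph{a fortiori} with parameters $\bar t\cv$, so this family has exactly the shape required by Lemma~\ref{lem:neighbourhood-abstract}. One cannot feed the given neighbourhoods $V_{\x}$ directly into that lemma, since they carry uncontrolled parameters; the point of switching to the boxes $B_{\av}+\bv$ is that Lemma~\ref{lem:neighbourhood-1} lets us pick, inside any prescribed neighbourhood, a box whose defining parameters are suitably independent. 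It then suffices to verify the local hypothesis of Lemma~\ref{lem:neighbourhood-abstract} with $d:=k$: for each $\dv\in X$ we must exhibit $\bar t\in I$ with $\dv\in U_{\bar t}$, $\dv\ind_{\cv}\bar t$ (the relation written $\mind$ in Lemma~\ref{lem:neighbourhood-abstract} is, for the matroid $\zcl$, our $\ind$), and $\dim(X\cap U_{\bar t})\le k$; the conclusion $\dim(X)\le k$ is then immediate.

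For the local verification, fix $\dv\in X$. By hypothesis there is a definable neighbourhood $V_{\dv}$ of $\dv$ with $\dim(V_{\dv}\cap X)\le k$. Applying Lemma~\ref{lem:neighbourhood-1} to $\dv$, to this $V_{\dv}$, and to the parameter set $\cv$, I obtain $\av\in\monster^{m\times n}$ and $\bv\in\monster^n$ with $\dv\in B_{\av}+\bv\subseteq V_{\dv}$ and $\av\bv\ind\cv\dv$, the latter being independence over the empty set. Set $\bar t:=\pair{\av,\bv}$. Then $\dv\in U_{\bar t}$ by construction, and since $U_{\bar t}=B_{\av}+\bv\subseteq V_{\dv}$, monotonicity of $\dim$ gives $\dim(X\cap U_{\bar t})\le\dim(X\cap V_{\dv})\le k$.

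The one point needing a moment's care — and the only real obstacle, though it is entirely routine — is converting the independence produced by Lemma~\ref{lem:neighbourhood-1}, namely $\av\bv\ind_{\emptyset}\cv\dv$, into the form $\dv\ind_{\cv}\bar t$ demanded by Lemma~\ref{lem:neighbourhood-abstract}. This is a short passage through the pre-independence axioms satisfied by $\indmat$: Base Monotonicity (with $\emptyset\subseteq\cv\subseteq\cv\dv$) upgrades $\av\bv\ind_{\emptyset}\cv\dv$ to $\av\bv\ind_{\cv}\cv\dv$; Monotonicity then yields $\av\bv\ind_{\cv}\dv$; and Symmetry gives $\dv\ind_{\cv}\av\bv$, that is $\dv\ind_{\cv}\bar t$. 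This finishes the local check, so Lemma~\ref{lem:neighbourhood-abstract} applies and delivers $\dim(X)\le k$, as desired.
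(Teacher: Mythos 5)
Your proof is correct and follows the same route as the paper: use Lemma~\ref{lem:neighbourhood-1} to replace each $V_{\x}$ by a basic box $B_{\av}+\bv\subseteq V_{\x}$ whose defining parameters are suitably independent, then conclude by Lemma~\ref{lem:neighbourhood-abstract}. You additionally spell out the routine bookkeeping (Base Monotonicity, Monotonicity, Symmetry) that converts $\av\bv\ind_{\emptyset}\cv\dv$ into the form $\dv\ind_{\cv}\av\bv$ required by Lemma~\ref{lem:neighbourhood-abstract}, a step the paper's proof leaves implicit.
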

\begin{proof}
Let $C$ be the set of parameters of~$X$.
By Lemma~\ref{lem:neighbourhood-1}, for every $x \in X$ there exist
$\av \in \K^{n \times m}$ and $\bv \in \K^n$ such that
$\av \bv \ind C x$ and $\x \in B_{\av} + \bv \subseteq V_{\x}$;
notice that $\dim(X \cap (B_{\av} + \bv)) \leq k$
Hence, by Lemma~\ref{lem:neighbourhood-abstract}, $\dim(X) \leq k$.
\end{proof}

We do not know if the above Corollary remains true if we drop the assumption
that $\monster$ expands a group.

\begin{corollary}
Let $C \subset \monster$ and $p \in S_n(C)$.
Then, $p$ is stationary iff $p$ is realised in $\dcl(C)$.
\end{corollary}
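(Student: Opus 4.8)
Throughout write $\zcl$ for the matroid, $\indmat$ for the associated independence relation (it \emph{is} one, since $\zcl$ is existential), $\dimat$ for the induced dimension and $\rkmat$ for the rank. The plan is to reduce everything to a single statement about one‑variable types and then use the topology.

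First the bookkeeping. For the ``$\Leftarrow$'' direction, if $p$ is realised by some $\av\in\dcl(C)^n$, then $\av$ is the unique realisation of $p$ and $\av\subseteq\dcl(C)\subseteq\acl(C)\subseteq\zcl(C)$, so $\dimat(p)=\rkmat(\av/C)=0$; by the Remark following the definition of stationary (a $0$‑dimensional type is stationary iff realised in $\dcl$), $p$ is stationary. For ``$\Rightarrow$'' it therefore suffices to prove: \emph{if $p$ is stationary then $\dimat(p)=0$} (and then the same Remark gives that $p$ is realised in $\dcl(C)$). So assume $p\in S_n(C)$ is stationary with $\dimat(p)=d\ge 1$, aiming for a contradiction. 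A rank computation with Lemma~\ref{lem:Lascar} and the Extension axiom reduces this to the case $n=1$: after permuting coordinates (which changes neither stationarity nor dimension), pick $\av\models p$ with first coordinate $a\notin\zcl(C)$; if $p_1:=\tp(a/C)\in S_1(C)$ had at least two distinct $1$‑dimensional (equivalently: non‑forking) extensions over some $B\supseteq C$, one lifts each of them through $\av$, keeping the full rank $d$ by additivity of $\rkmat$, to get two distinct non‑forking extensions of $p$ over $B$, contradicting stationarity. Moreover, the same lift shows ``$p$ stationary $\Rightarrow p_1$ stationary''. So it is enough to prove: \emph{a stationary $p_1\in S_1(C)$ has $\dimat(p_1)=0$.}

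So suppose $p_1\in S_1(C)$ is stationary with $\dimat(p_1)=1$, and let $a\models p_1$, so $a\notin\zcl(C)$. The realisation set $R:=\Xi(a/C)$ of $p_1$ is \emph{open}: by Lemma~\ref{lem:open-orbit} it has non‑empty interior, it is $\aut(\monster/C)$‑invariant, and since the topology is $\emptyset$‑definable every $\sigma\in\aut(\monster)$ is a homeomorphism, so moving an interior point around by automorphisms over $C$ shows $R=\interior R$; in particular $a$ has an open neighbourhood inside $R$. Now, because $p_1$ is stationary, the non‑forking extension of $p_1$ to $S_1(Ca)$ is unique, i.e.\ there is exactly one type over $Ca$ of the form $\tp(a^\ast/Ca)$ with $a^\ast\models p_1$ and $a^\ast\notin\zcl(Ca)$ (equivalently $a^\ast\indmat_C a$; equivalently, by Lemma~\ref{lem:indipendent-morley-seq}(2), $\tp(a\,a^\ast/C)$ is forced). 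I will contradict this uniqueness by exhibiting two such types. One is obtained from a \emph{generic} realisation: take $a^\ast\models p_1$ with $a^\ast\indmat_C a$ (Extension). The other from an \emph{infinitesimally close} realisation: the partial type over $Ca$ saying ``$x\models p_1$, $x\ne a$, $x\in U$ for every $Ca$‑definable neighbourhood $U$ of $a$, and $x\notin X$ for every $Ca$‑definable set $X$ with empty interior'' is finitely satisfiable — given finitely many such $U$'s and $X$'s, their intersection resp.\ union, the open set $\interior(\bigcap U)\cap R$ is a non‑empty open set (it contains $a$ and lies in $R$ which is open at $a$), and removing from it $\{a\}\cup\bigcup X$, a set of empty interior (using d‑minimality axioms (2), (3) that $\{a\}$ and finite unions of empty‑interior sets have empty interior), still leaves a non‑empty set; let $a^{\ast\ast}$ realise it. Then $a^{\ast\ast}\models p_1$, $a^{\ast\ast}\notin\zcl(Ca)$, $a^{\ast\ast}$ lies in every $Ca$‑definable neighbourhood of $a$, and $a^{\ast\ast}\ne a$. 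If the two types $\tp(a^\ast/Ca)$ and $\tp(a^{\ast\ast}/Ca)$ are distinct we are done.

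The main obstacle is exactly this last point: showing that the generic realisation $a^\ast$ is \emph{not} infinitesimally close to $a$, i.e.\ producing a $Ca$‑definable neighbourhood of $a$ that $a^\ast$ avoids. This is where the topological‑group hypothesis and the neighbourhood lemmas of the previous subsection must enter. The intended argument: by Lemma~\ref{lem:neighbourhood-0} (or Lemma~\ref{lem:neighbourhood-1}) there is a \emph{definable} proper box $B_{\ev}$ with $a\in B_{\ev}$ and $\ev\indmat_a C$; choosing the ambient neighbourhood small enough that the complement $\monster\setminus B_{\ev}$ still has dimension $1$ (possible since $\monster$ is a topological group with no isolated points, so translates of a proper basic neighbourhood of $0$ have dimension‑$1$ complement), one checks that both $B_{\ev}\cap R$ and $R\setminus B_{\ev}$ have dimension $1$; hence $p_1$ has two $1$‑dimensional extensions over $C\ev$ — ``$x\in B_{\ev}$'' and ``$x\notin B_{\ev}$'' — and transporting back over $Ca$ via $\ev\indmat_a C$ yields two distinct non‑forking extensions of $p_1$ over $Ca$, contradicting stationarity. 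I expect the delicate steps to be (i) arranging the box $B_{\ev}$ with the right independence \emph{and} with dimension‑$1$ complement, and (ii) verifying that ``$x\in B_{\ev}$'' and ``$x\notin B_{\ev}$'' are both consistent with $p_1$ and of dimension $1$ over the relevant base; everything else is the routine bookkeeping indicated above, together with the d‑minimality axioms used to control empty‑interior sets.
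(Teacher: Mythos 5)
Your bookkeeping is fine: the reduction of both directions to the Remark on $0$\hyph dimensional stationary types, and the further reduction to $n=1$, are correct (the paper does not bother with the latter and argues with general $n$ directly). The genuine gap is the step you yourself flag as the main obstacle, and your proposed resolution of it does not work. To contradict stationarity you must produce a \emph{definable} open set, with parameters independent from the two realisations over $C$, containing one realisation and missing the other. Your plan is to take a basic box $B_{\ev}\ni a$ with independent parameters and argue that a generic realisation $a^\ast$ lands outside it, by arranging that $\monster\setminus B_{\ev}$ has dimension~$1$. This fails for two reasons. First, genericity of $a^\ast$ over $C\ev a$ only keeps $a^\ast$ out of \emph{dimension-$0$} definable sets; $B_{\ev}$ is open, hence of dimension~$1$, so nothing prevents $a^\ast\in B_{\ev}$, however large its complement is. Second, the claim that translates of a proper basic neighbourhood of $0$ have dimension-$1$ complement is unsupported (and, by the first point, would not help anyway). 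The ingredient you never invoke is Hausdorffness --- available here because a $T_1$ topological group is Hausdorff --- which is what lets you separate the two realisations by \emph{disjoint open sets before} choosing the box.

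The paper's proof is exactly this and is much shorter: take $\av_0,\av_1\models p$ independent over~$C$; since $\dim(p)>0$ they are distinct; by Hausdorffness and Lemma~\ref{lem:neighbourhood-1} there is a definable open $V\ni\av_0$ with $\av_1\notin V$, whose parameters $\bv$ satisfy $\bv\ind C\av_0\av_1$; Lemma~\ref{lem:ind-ternary} then gives that both $\tp(\av_0/C\bv)$ and $\tp(\av_1/C\bv)$ are non-forking extensions of~$p$, hence equal by stationarity, contradicting $\av_0\in V$ and $\av_1\notin V$. Note also that your ``generic versus infinitesimal'' dichotomy over $Ca$ cannot be completed as stated: the separating box necessarily carries new parameters $\ev$, so even if it contained $a^{\ast\ast}$ and excluded $a^{\ast}$ it would not distinguish their types \emph{over $Ca$}; and when you shift to extensions over $C\ev$ in your last paragraph, the infinitesimal realisation plays no role and you are back to the unproved claim that a generic point avoids $B_{\ev}$. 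Once the Hausdorff separation is inserted at the start, the reduction to $n=1$ and the infinitesimal type become unnecessary.
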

\begin{proof}
Assume for contradiction, that $p$ is stationary, but $\dim(p) > 0$.
Let $\av_0$ and $\av_1$ be realisations of $p$ independent over~$C$.
Since $\dim(p) > 0$, $\av_0 \neq \av_1$.
Since $\monster$ is Hausdorff, Lemma~\ref{lem:neighbourhood-1} implies that
there exists $V$ open neighbourhood of $\av_0$, definable with
parameters~$\bv$, such that $\av_1 \notin V$ and $\bv \ind C\av_0 \av_1$.
Hence, by Lemma~\ref{lem:ind-ternary}, $\av_0 \ind_{C \bv} \av_1$.
Since $p$ is stationary, Lemma~\ref{lem:indipendent-morley-seq} implies 
$\av_0 \equiv_{\bv} \av_1$, contradicting the fact that $\av_0 \in V$, while
$\av_1 \notin V$.
\end{proof}


\section{Cl-minimal structures}

Let $\monster$ be a monster model, $T$~be the theory of~$\monster$,
and $\mat$ be an existential matroid
on~$\monster$.
We denote by $\dim$ and $\rk$ the dimension and rank induced by~$\mat$.

\begin{definizione}
$p \in S_n(A)$ is a \intro{generic type} if $\dim(p) = n$.
$\monster$ is \clminimal if, for every $A \subset \monster$,
there exists only one generic $1$-type over~$A$.
\end{definizione}

\begin{lemma}
For every $0 < n \in \N$ and $A \subset \monster$, there exists at least one
generic type in~$S_n(A)$.
If $\monster$ is \clminimal, then for every $n$ and $A$ there exists exactly
one generic type in~$S_n(A)$.
\end{lemma}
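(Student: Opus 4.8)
The plan is to prove both parts separately. For the existence of a generic type in $S_n(A)$ for $n>0$: recall that $\dimat(\monster^n)=n$ and that $\dimat$ of a type is the maximum of $\rkmat(\bv/A)$ over realisations. Since $\monster$ is $\kappa$-saturated with $\kappa$ large, pick any $\bv=\pair{b_1,\dots,b_n}$ that is $\mat$-independent over $A$ (such a tuple exists because $\mat$ is non-trivial, hence $\dimat(\monster)=1>0$, and by the Existence axiom we can extend an independent set one coordinate at a time: given $b_1,\dots,b_i$ independent over $A$, the set $\monster\setminus\mat(Ab_1\dots b_i)$ is non-empty since $\dimat(\monster)=1$ implies $\rkmat(\monster/Ab_1\dots b_i)\geq\kappa>0$ by Lemma~\ref{lem:dim-rk}, so choose $b_{i+1}$ there). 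Then $p:=\tp(\bv/A)$ has $\dimat(p)\geq\rkmat(\bv/A)=n$, and by the earlier lemma $\dimat(p)\leq n$, so $\dimat(p)=n$ and $p$ is generic.

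For the uniqueness part under \clminimality: suppose $\monster$ is \clminimal, so there is exactly one generic $1$-type over every parameter set. I would argue by induction on $n$. The case $n=1$ is the hypothesis. For the inductive step, suppose $p,q\in S_n(A)$ are both generic. Let $\bv\models p$ and $\bv'\models q$, and write $\bv=\pair{b_1,\dots,b_n}$, $\bv'=\pair{b_1',\dots,b_n'}$. By Lemma~\ref{lem:Lascar} (additivity of rank), $\rkmat(\bv/A)=\rkmat(b_1/A)+\rkmat(b_2\dots b_n/Ab_1)=n$, which forces $\rkmat(b_1/A)=1$, i.e.\ $\tp(b_1/A)$ is the unique generic $1$-type over $A$; likewise for $b_1'$. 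Hence there is an automorphism $\sigma\in\aut(\monster/A)$ with $\sigma(b_1)=b_1'$; replacing $\bv$ by $\sigma(\bv)$, \wloG\ $b_1=b_1'$. Now $\rkmat(b_2\dots b_n/Ab_1)=n-1$, so $\tp(b_2\dots b_n/Ab_1)$ and $\tp(b_2'\dots b_n'/Ab_1)$ are both generic $(n-1)$-types over $Ab_1$; by the inductive hypothesis (applied with parameter set $Ab_1$) they coincide, so there is $\tau\in\aut(\monster/Ab_1)$ carrying one to the other. Then $\tau$ fixes $b_1$ and sends $\bv$ to a realisation of $q$, so $p=q$.

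The main obstacle, and the point requiring care, is making sure the inductive hypothesis legitimately applies over the enlarged base $Ab_1$: one must check that \clminimality, which is stated as a property of $\monster$ "for every $A\subset\monster$", does indeed transfer to the expansion by a constant — but this is immediate, since adding $b_1$ to the base is just considering $A':=Ab_1$, still a subset of $\monster$ of size $<\kappa$, and the definition quantifies over all such $A'$. A second small point is the choice of a maximal $\mat$-independent tuple in the existence part: one should invoke Lemma~\ref{lem:dim-rk} (or directly $\dimat(\monster)=1$ together with the remark $\dimat(\monster^n)=n$) to guarantee that at each stage $\monster\setminus\mat(Ab_1\dots b_i)\neq\emptyset$, rather than appealing to saturation in an ad hoc way. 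Everything else is a routine unwinding of the definitions of generic type, rank additivity, and homogeneity of $\monster$.
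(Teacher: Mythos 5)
Your proof is correct. The paper states this lemma without proof, so there is nothing to compare against directly; the argument you give --- building a tuple of full rank over $A$ to witness existence, and for uniqueness inducting on $n$ using additivity of rank (Lemma~\ref{lem:Lascar}) together with strong $\kappa$-homogeneity of $\monster$ --- is the natural one and matches what the paper implicitly expects the reader to supply.

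One small remark on phrasing in the existence part: the clause ``since $\mat$ is non-trivial, hence $\dimat(\monster)=1$'' is not formally an immediate consequence of ``non-trivial'' as the paper defines it (non-triviality only rules out the discrete matroid $\mat^0$); the clean way to justify this step is to cite the paper's standing Remark that $\dimat(\monster^n)=n$ for an existential matroid, and then Lemma~\ref{lem:dim-rk} gives $\rkmat(\monster)\geq\kappa$, so any parameter set of size $<\kappa$ fails to $\mat$-generate $\monster$ --- which is exactly what you need at each step of the inductive construction. You essentially say this yourself in your closing paragraph, so this is only a matter of tightening the exposition. The uniqueness argument is correct as written: once you observe that additivity forces $\rkmat(b_1/A)=1$ and $\rkmat(b_2\dots b_n/Ab_1)=n-1$, the reduction to the inductive hypothesis over the base $Ab_1$ by an automorphism fixing $A$ and sending $b_1$ to $b_1'$ is exactly right, and your justification that the inductive hypothesis applies over $Ab_1$ is also correct since $\lvert Ab_1\rvert<\kappa$.
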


\begin{lemma}
If $\monster$ is \clminimal, then $\dim$ is definable.
\end{lemma}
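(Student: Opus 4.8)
The plan is to reduce the statement, by means of the characterisation of definability of $\dim$ proved just above, to a one--variable assertion, and then to read off that assertion from \clminimality together with Lemma~\ref{lem:type-def}. By that characterisation it suffices to show that for every definable $X\subseteq\monster^m\times\monster$ the set $X^{1,1}=\set{\av:\dim(X_{\av})=1}$ is definable. Writing $X=\set{\pair{\av,b}:\monster\models\theta(\av,b,\cv)}$ with $\theta(\y,x,\z)$ an $\Lang$-formula without parameters and $\cv$ the parameters of~$X$, we have $X_{\av}=\set{b:\monster\models\theta(\av,b,\cv)}$, so the whole thing is an instance of the following general claim: for every $\Lang$-formula $\sigma(x,\bar w)$ without parameters, with $x$ a single variable, the set $D_\sigma:=\set{\bar w:\dim(\sigma(\monster,\bar w))=1}$ is $\emptyset$-definable (indeed, then $X^{1,1}$ is a section of $D_\sigma$ for $\sigma(x,\bar w):=\theta(\y,x,\z)$ and $\bar w=\y\z$). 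So the first step is just this reduction and the isolation of the general claim.

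The key step, where \clminimality is used, is the observation that for every tuple $\bar w$ and every formula $\chi(x,\bar w)$ over $\bar w$ one has
\[
\dim(\chi(\monster,\bar w))=1\quad\Longleftrightarrow\quad \chi(x,\bar w)\in g_{\bar w},
\]
where $g_{\bar w}$ denotes the generic $1$-type over~$\bar w$: this type exists by the lemma above guaranteeing the existence of generic types, and is \emph{unique} precisely by \clminimality. For the left-to-right direction, if $\dim(\chi(\monster,\bar w))=1$ pick $b\in\chi(\monster,\bar w)$ with $\rk(b/\bar w)=1$; then $\tp(b/\bar w)$ is generic, hence equals $g_{\bar w}$, so $\chi(x,\bar w)\in g_{\bar w}$. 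Conversely, any realisation $b$ of $g_{\bar w}$ lies in $\chi(\monster,\bar w)$ and satisfies $\rk(b/\bar w)=1$, so $\dim(\chi(\monster,\bar w))\ge 1$, and hence $=1$ because $\chi(\monster,\bar w)\subseteq\monster$.

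From this the second step follows: since $g_{\bar w}$ is a complete type it contains exactly one of $\sigma(x,\bar w)$ and $\neg\sigma(x,\bar w)$, so for every $\bar w$ exactly one of $\dim(\sigma(\monster,\bar w))=1$ and $\dim((\neg\sigma)(\monster,\bar w))=1$ holds; that is, $D_\sigma$ and $D_{\neg\sigma}$ partition $\monster^{\abs{\bar w}}$. Now Lemma~\ref{lem:type-def}, applied with a single object variable ($n=1$, $k=1$, and noting that a subset of $\monster$ has dimension $\ge 1$ exactly when it has dimension $1$), tells us that $D_\sigma=U^{\ge 1}_\sigma$ is type-definable over~$\emptyset$, and likewise $D_{\neg\sigma}=U^{\ge 1}_{\neg\sigma}$. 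A subset of $\monster^{\abs{\bar w}}$ that is type-definable over~$\emptyset$ and whose complement is also type-definable over~$\emptyset$ is $\emptyset$-definable, by compactness; hence $D_\sigma$ is $\emptyset$-definable, which (via the reduction of the first paragraph) gives that $X^{1,1}$ is definable and so that $\dim$ is definable.

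I do not anticipate a serious obstacle: the proof is an assembly of Lemma~\ref{lem:type-def}, the existence-of-generic-types lemma, and \clminimality. The two points that call for care are the equivalence displayed in the second paragraph (it uses both existence \emph{and} uniqueness of the generic $1$-type, the latter being exactly the hypothesis of the statement), and the routine bookkeeping of parameters in passing from the general one-variable claim back to an arbitrary definable $X\subseteq\monster^m\times\monster$; neither of these is difficult.
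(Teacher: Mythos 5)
Your proof is correct and takes essentially the same route as the paper's: reduce to the one-variable case, observe that by \clminimality exactly one of $\dim(\sigma(\monster,\bar w))=1$ and $\dim(\neg\sigma(\monster,\bar w))=1$ holds, so the set in question is both type-definable and co-type-definable over $\emptyset$ by Lemma~\ref{lem:type-def}, and conclude definability by compactness. You merely spell out more explicitly (via the unique generic $1$-type) the step the paper compresses into a citation of the preceding lemma.
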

\begin{proof}
Notice that, given $\x := \pair{x_1, \dotsc, x_n}$ and a formula~$\phi(\x, \y)$, the set $U_\phi^n := \set{\av:  \dim(\phi(\K,  \av)) = n}$ is always type-definable (Lemma~\ref{lem:type-def}).
By the above Lemma, $\K^n \setminus U^n_\phi = U^n_{\neg \phi}$,  and therefore
$U^n_\phi$ is both type-definable and ord-definable, and hence definable.
\end{proof}

\begin{remark}
$\monster$ is \clminimal iff, for every $n > 0$ and every $X$ definable subset
of~$\K^n$, exactly one among $X$ and $\K^n \setminus X$, has dimension~$n$.
\end{remark}

\begin{remark}\label{rem:clminimal-fixed}
If $\K \preceq \monster$ and $\dim$ is definable, then $\K$ is \clminimal iff,
for every $X$ definable subset of~$\K$, either $\dim(X) = 0$, or
$\dim(\K \setminus X) = 0$; that is, we can check \clminimality directly on~$\K$.
\end{remark}


\begin{examples}
\begin{enumerate}
\item $\monster$ is strongly minimal iff $\acl$ is a matroid and $\monster$ is
$\acl$-minimal.
\item 
Consider Example~\ref{ex:group}.
In that context, a type is generic in our sense iff it is generic in the sense
of stable groups.
Hence, $\Gm$~is \clminimal iff it has only one generic type iff it is
connected (in the sense of stable groups).
\end{enumerate}
\end{examples}



\begin{lemma}\label{lem:minimal-pair}
Assume that $T$ is \clminimal.
Then, $\Td$ is also \clminimal.
Moreover, $\Td$~coincides with~$\Ttwo$.
\end{lemma}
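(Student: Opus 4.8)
The statement has two parts: (i) $\Td$ is \clminimal, and (ii) $\Td$ coincides with $\Ttwo$ (i.e.\ the density condition is automatic when $T$ is \clminimal). I would prove (ii) first, since it is essentially a density-for-free statement, and then (i) using the small closure $\scl$ and Remark~\ref{rem:clminimal-fixed}.

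\textbf{Part (ii): $\Td = \Ttwo$.} Let $\pair{\K, \F} \models \Ttwo$, so $\F \prec \K$ is a proper \clclosed subset; I must show $\F$ is \cldense in $\K$. Suppose not: there is a $\K$-definable $U \subseteq \K$ with $\dim(U) = 1$ and $U \cap \F = \emptyset$, i.e.\ $\F \subseteq \K \setminus U$. Since $T$ is \clminimal and $\dim(U) = 1$, Remark~\ref{rem:clminimal-fixed} (or the Remark characterising \clminimality via dimension) gives $\dim(\K \setminus U) = 0$. But $\F$ is an elementary substructure of $\K$, and by Lemma~\ref{lem:weakly-dense} $\F$ is weakly dense in $\K$; since $\F \subseteq \K \setminus U$ and $\K \setminus U$ is definable of dimension $0 < 1 = \dim(\K)$, this contradicts weak density (equivalently: by saturation of the pair $\rk(\F/\emptyset)$ is infinite, forcing independent elements into a dimension-$0$ set, absurd, exactly as in the proof of Lemma~\ref{lem:weakly-dense}). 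Hence $\F$ is \cldense, so $\pair{\K,\F}\models\Td$. The reverse inclusion $\Td \subseteq \Ttwo$ is Lemma~\ref{lem:density-closure}, already noted in the text. Thus $\Td$ and $\Ttwo$ have the same models, hence coincide (both being complete by Theorem~\ref{thm:Td-complete}).

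\textbf{Part (i): $\Td$ is \clminimal.} Work in a monster model $\monster^* = \pair{\Bm^*, \Am^*}$ of $\Td$. By Theorem~\ref{thm:cl-unique} (as $T$, hence $\Td$, expands an integral domain) the unique existential matroid on $\monster^*$ is $\scl$, and by Remark~\ref{rem:clminimal-fixed} it suffices to check: for every set $X \subseteq \Bm^*$ definable in $\pair{\Bm^*, \Am^*}$, either $\sdim(X) = 0$ or $\sdim(\Bm^* \setminus X) = 0$. By Proposition~\ref{prop:3.5} (the ``in particular'' clause, applied degree by degree) every $\Ttwo$-definable subset of a $T$-definable set $Y$ with $\dim(Y) = 0$ is already $T$-definable; more usefully, I would argue as follows. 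Write $X = Z \cap (\text{something})$; by Theorem~\ref{thm:Td-eq}, $X$ is a Boolean combination of \basic formulae, but the cleanest route is: pick $\bv \subset \Bm^*$ \Uindependent with $X$ defined over $\bv$ (possible by moving $X$'s parameters to a \Uindependent tuple via an automorphism, or using Proposition~\ref{prop:3.5}), and apply Corollary~\ref{cor:3.4}/Proposition~\ref{prop:3.5}: outside a set $S$ with $\sdim(S) < 1$, $X$ agrees with a $T$-definable set $Z$ over $\bv\av$. Since $T$ is \clminimal, either $\dim(Z) = 0$ — whence $\sdim(Z) = \dim(Z) = 0$ by the Lemma equating $\sdim$ and $\dim$ on $\Bm^*$-definable sets, so $\sdim(X) \le \max(\sdim(Z),\sdim(S)) = 0$ — or $\dim(\Bm^* \setminus Z) = 0$, whence symmetrically $\sdim(\Bm^* \setminus X) = 0$. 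Hence exactly one of $X$, $\Bm^*\setminus X$ has $\sdim = 1$, which is \clminimality of $\Td$.

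\textbf{Main obstacle.} The delicate point is reducing an arbitrary $\Td$-definable subset of $\Bm^*$ to a $T$-definable one modulo a small set, with control of parameters — i.e.\ invoking Proposition~\ref{prop:3.5} (or Corollary~\ref{cor:3.4}) correctly, including arranging the parameter tuple $\bv$ to be \Uindependent and the auxiliary $\av \subset \Am^*$ with $\bv \ind_\av \Am^*$. One must be careful that the exceptional set $S$ from Proposition~\ref{prop:3.5} has $\sdim(S) < 1$, i.e.\ $\sdim(S) = 0$, so that it is negligible for the dimension dichotomy; this is exactly what Proposition~\ref{prop:3.5} delivers with $d = 1$. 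Everything else is bookkeeping with the equality $\sdim = \dim$ on $\Bm^*$-definable sets and Remark~\ref{rem:clminimal-fixed}.
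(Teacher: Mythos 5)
Your proof is correct, but for part (i) it takes a genuinely different route from the paper's.

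For part (i), the paper argues directly at the level of types: it fixes $C$, sets $\Am := \mat(\Am^* C)$, and observes that any $b,b'$ realising the partial type ``$x\notin\Am$'' are both generic over $\Am$ in the $\Lang$-sense (so by \clminimality of $T$ they have the same $\Lang$-type over $\Am^*C$), and then invokes Corollary~\ref{cor:Td-type-1} (ultimately Proposition~\ref{prop:back-and-forth}) to promote this to $\Ltwo$-equivalence over~$C$. This shows $q_C$ is the unique generic $1$-type over~$C$. You instead work at the level of definable sets via Remark~\ref{rem:clminimal-fixed}, using Proposition~\ref{prop:3.5} to approximate a $\Ttwo$-definable $Y\subseteq\Bm^*$ by a $T$-definable $Z$ up to a small set~$S$, and then apply \clminimality of $T$ to~$Z$. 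Both routes are valid and both rest on the back-and-forth machinery (you through Prop.~\ref{prop:3.5}, the paper through Cor.~\ref{cor:Td-type-1}); the paper's is shorter and avoids the Boolean-algebra bookkeeping and the parameter-management issues you rightly flag as the delicate point. That parameter step in your write-up is slightly loose: you do not actually need $\bv$ to be \Uindependent; given $X$ defined over a finite $\cv$, finitariness of $\mat$ gives a finite $\av\subset\Am^*$ with $\cv\cap\Am^*\subseteq\av$ and $\cv\indmat_{\av}\Am^*$, which is exactly the hypothesis of Prop.~\ref{prop:3.5} with $\bv:=\cv$; there is no need to ``move parameters by an automorphism.''

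For part (ii), your argument via Lemma~\ref{lem:weakly-dense} (weak density of an elementary substructure) combined with \clminimality is a clean complete proof, and is in fact an argument the paper's own proof does not spell out: the printed proof only establishes uniqueness of the generic $1$-type, leaving $\Td=\Ttwo$ implicit. So this part of your proposal is a genuine improvement on the exposition.
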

\begin{proof}
Let $\pair{\Bm^*, \Am^*}$ be a monster model of $\Td$.
Let $C \subset \Bm^*$ with $\card C < \kappa$.
Define $\Am := \mat(\Am^* C)$,
and $q_C(x)$ the partial $\Ltwo$-type over $C$ given by 
\[
q_C(x) := x \notin \Am.
\]
It is clear that every generic 1-$\Td$-type over $C$ expands~$q_C$.
Hence, it suffices to prove that $q_C$ is complete.
Let $b$ and $b' \in \Bm^*$ satisfy~$q_C$.
By Corollary~\ref{cor:vdd-27}, $\pair{\Bm^*, \Am^*} \preceq \pair{\Bm^*, \Am}$.
By assumption, $b$ and $b'$ are not in~$\Am$; hence, since $T$ is \clminimal,
they satisfy the same generic 1-$T$-type $p_{\Am}$; thus, by
Corollary~\ref{cor:Td-type-1}, $b \equiv^2_{\Am} b'$.
\end{proof}


\section{Connected groups}

Let $\monster$ be a monster model, and $\mat$ be an existential matroid on it.
Denote $\dim := \dimat$, $\rk := \rkmat$, and $\ind := \indmat$.

\begin{definizione}
Let $X \subseteq \monster^n$ be definable (with parameters).
Assume that $m := \dim(X) > 0$.
We say that $X$ is \emph{connected} if, for every $Y$ definable subset of~$X$,
either $\dim(Y) < n$, or $\dim(X \setminus Y) < n$.
\end{definizione}
For instance, if $\monster$ is \clminimal and $X = \monster$, then $X$ is
connected.

\begin{remark}
If $X$ is connected, then, for every $l \geq 0$, $X^l$ is also connected.
\end{remark}

\begin{remark}
Let $X \subseteq \monster^n$ be definable, of dimension $m > 0$.
Then, $X$ is connected iff for every $A \subset \monster$ containing the
parameters of definition of~$X$, there exists exactly
one $n$-type over $A$ in $X$ which is generic (\ie, of dimension~$m$).
\end{remark}

\begin{lemma}\label{lem:semigroup}
Let $G \subseteq \monster^n$ be definable and connected.
Assume that $G$ is a semigroup with left cancellation.
Assume moreover that $G$ has either right cancellation or right identity.
Then $G$ is a group.
\end{lemma}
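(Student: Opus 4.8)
The plan is to exploit the existential matroid's dimension function together with the semigroup structure, following the classical Weil-style group chunk argument adapted to the abstract dimension. Let $m := \dim(G) > 0$. The crucial tool is that a "generic" type for $G$ (one of dimension $m$) behaves well under the left cancellation: left translation $L_g: G \to G$, $x \mapsto gx$, is injective, hence by Lemma~\ref{lem:cl-function} preserves dimension of definable subsets and in particular sends generic elements to generic elements (when $g$ is chosen independent of the point being translated). The strategy is: (1) produce an identity element; (2) produce inverses; (3) check the group axioms hold everywhere, not just generically.

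First I would fix a large parameter set $A$ containing the parameters of $G$ and work with generic elements over $A$ and its finite extensions. Given $g \in G$ generic over $A$, the map $x \mapsto gx$ restricted to $G$ is injective, so $\dim(gG) = m = \dim(G)$; moreover for $h$ generic over $Ag$, the element $gh$ satisfies $gh \in \mat(Agh) = \mat(Ag h)$ and by additivity of rank (Lemma~\ref{lem:Lascar}) one computes $\rk(gh / Ag) = \rk(h / Ag) = m$, so $gh$ is generic over $Ag$ (and hence over $A$). This gives the key "genericity is preserved" facts. From the hypothesis of right cancellation OR right identity I would extract a candidate identity: in the right-identity case it is handed to us; in the right-cancellation case, take $g$ generic, and using that $x \mapsto xg$ and $x \mapsto gx$ are both injective (hence dimension-preserving, hence the generic type is their common value), find $e$ with $ge = g$ by a genericity/surjectivity-of-the-right-translation argument on dimension — the point being that the image of a dimension-$m$ translate must meet any dimension-$m$ set generically. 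Once $e$ is a right identity for one generic $g$, left-cancellation and the homogeneity of the generic type propagate it: for arbitrary $h \in G$, pick $g$ generic over $Ah$, then $ghe = gh$ forces $he = h$ by left cancellation. So $e$ is a genuine two-sided... (well, right) identity; a symmetric density argument on the other side, or a direct computation using $e e = e$ and cancellation, upgrades it to a two-sided identity.

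Next, for inverses: given $h \in G$, I want $h'$ with $hh' = e$. Take $g$ generic over $Ah$. Then $gh$ is generic over $A$, and the left translation by $gh$ hits $e$-fibers: more precisely, the set $\{x \in G : ghx \in \text{(generic orbit of } g)\}$ has dimension $m$, so one can solve $ghx = g$ for some $x$ generic over $Ah$, giving $ghx = g = ge$, hence $hx = e$ by left cancellation — a right inverse for $h$. Then left cancellation again gives that the right inverse is a two-sided inverse in the usual way ($x h x = x e = x$, so $hx = e$... and then $h x h = h$, cancel). Finally associativity is inherited from the semigroup structure on all of $G$, so $G$ is a group.

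**The main obstacle** I anticipate is the case split and the "solving equations generically" step: one must be careful that the elements produced (the identity, the inverses) genuinely lie in $G$ and not merely in some dimension-$m$ subset of $\mat(\cdot)$, and that the genericity-based arguments, which a priori only describe behaviour on a set of full dimension, can be bootstrapped via left cancellation to statements about \emph{every} element of $G$. The right-cancellation-versus-right-identity dichotomy must be handled so that both branches feed into the same identity-element construction; the right-identity branch is easy, but the right-cancellation branch requires the dimension-theoretic surjectivity argument (an image of dimension $m$ inside a connected set of dimension $m$ is "co-small", hence meets prescribed generic sets), which is where connectedness of $G$ is actually used. I would also need to check that $\dim$ is well-behaved on the relevant definable families — but this is exactly what Lemma~\ref{lem:cl-function} and Lemma~\ref{lem:Lascar} provide, so no new machinery is required.
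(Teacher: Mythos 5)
Your overall plan (dimension-theoretic group chunk using connectedness and genericity) matches the paper's spirit, but two of its steps fail as written. Finding $e$ with $ge = g$ for a \emph{single} generic $g$ is not achievable by the dimension argument you invoke: the set $gG$ is defined over a parameter set that includes $g$ itself, so the fact that $G \setminus gG$ has dimension $< m$ tells you nothing about whether the particular point $g$ lies in $gG$ --- $g$ is certainly not generic over parameters containing $g$. The missing key idea, which the paper uses, is to take \emph{two} independent generics $a, b$; then $a$ is generic over the parameters defining $bG$, so $a \in bG$, and symmetrically $b \in aG$, yielding $b = a\cdot u$ and $a = b\cdot v$, hence $a = a\cdot(uv)$, and $uv$ is the identity candidate.

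Your propagation step (``pick $g$ generic over $Ah$; then $ghe = gh$ forces $he = h$'') is also circular, since $(gh)\cdot e = gh$ is an instance of exactly what you want to prove. What left cancellation actually extracts from $a\cdot(uv) = a$ is that $uv$ is a \emph{left} identity; upgrading it to a two-sided identity in the right-cancellation branch requires an algebraic, not a genericity, argument --- e.g.\ from $(c\cdot e)\cdot e = c\cdot(e\cdot e) = c\cdot e$ cancel $e$ on the right to get $c\cdot e = c$ --- or else reduction to the right-identity branch by symmetry, which is what the paper does. The paper then bypasses explicit inverse construction entirely: it introduces $F := \{a \in G : a\cdot G = G\}$, shows $F$ is a subgroup with $\dim(F) < m$ whenever $F \neq G$, and derives a contradiction from $\rk(b/a) \leq \rk(u) < m$ (where $b = a\cdot u$ with $u \in F$) against the independence of $a$ and $b$. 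Your direct inverse argument, solving $(gh)\cdot x = g$ with $g$ generic over the parameters of $(gh)\cdot G$ and then cancelling, is a viable alternative once a correct two-sided identity has been produced.
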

Cf.\ \cite[1.1]{poizat87}.
\begin{proof}
Assume not. 
Let $m := \dim(G)$. 
\Wlog, $G$~is definable without parameters.
For every $a \in G$, 
let $a \cdot G := \set{a \cdot x: a \in G}$.
Since $G$ has left cancellation, we have $\dim(a \cdot \monster) = m$.

Let $F := \set{a \in G: a \cdot G = G}$.
Our aim is to prove that $F = G$.

It is easy to see that $G$ is multiplicatively closed.

First, assume that $G$ has a right identity element~$1$.
The following claim is true for any abstract semigroup with left cancellation
and right identity~$1$.
\begin{claim}
$1$ is also the left identity.
\end{claim}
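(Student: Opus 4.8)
The plan is to argue purely algebraically inside the abstract semigroup, using only associativity, the right-identity property of~$1$, and left cancellation; the connectedness and definability of~$G$ play no role in this particular claim, so the proof is the same as for any abstract semigroup with left cancellation and right identity.

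First I would record the two facts at hand: $g \cdot 1 = g$ for every $g \in G$ (right identity), and $a \cdot x = a \cdot y$ implies $x = y$ for all $a, x, y \in G$ (left cancellation). Taking $g = 1$ in the first gives $1 \cdot 1 = 1$. Next, fix an arbitrary $a \in G$ and compute $1 \cdot (1 \cdot a) = (1 \cdot 1) \cdot a = 1 \cdot a$, where the first equality is associativity and the second uses $1 \cdot 1 = 1$. Now apply left cancellation with the element~$1$ on the left to the identity $1 \cdot (1 \cdot a) = 1 \cdot a$, obtaining $1 \cdot a = a$. Since $a$ was arbitrary, $1$ is a left identity, as claimed.

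The only point requiring a moment of care is the direction in which cancellation is applied: we cancel the leftmost occurrence of~$1$, which is precisely what left cancellation permits, so no right-cancellation step is invoked (nor would one be needed). There is no genuine obstacle here; the argument is a two-line computation whose purpose is simply to set up the subsequent proof that $F = G$.
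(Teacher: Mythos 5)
Your proof is correct and follows essentially the same route as the paper's: both use associativity and the right-identity property to produce an equation of the form $x\cdot(1\cdot a)=x\cdot a$ and then apply left cancellation (the paper cancels an arbitrary $a$ from $a\cdot b=(a\cdot 1)\cdot b=a\cdot(1\cdot b)$, while you cancel the element $1$ itself after noting $1\cdot 1=1$; the difference is immaterial).
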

In fact, for every $a, b \in G$, $a \cdot b = (a \cdot 1) \cdot b = a \cdot (1 \cdot b)$.
Since we have left cancellation, we conclude that $b = 1 \cdot b$ for every~$b$, and we are done.

Obviously, $1 \in F$.
For every $a \in F$, denote by $a^{-1}$ the (unique) element of $G$ such that
$a \cdot a^{-1} = 1$.

\begin{claim}
$a^{-1} \cdot a = 1$.
\end{claim}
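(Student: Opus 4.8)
The plan is to mimic the standard argument for turning a semigroup with left cancellation and right identity into a group, using $\dim$-genericity as a substitute for finiteness. We already have a two-sided identity $1$, and for $a \in F$ an element $a^{-1}$ with $a \cdot a^{-1} = 1$; we must show $a^{-1} \cdot a = 1$ as well. First I would observe that $a^{-1} \in F$: indeed, left cancellation gives $\dim(a^{-1} \cdot G) = m$, and $a \cdot (a^{-1} \cdot G) = (a \cdot a^{-1}) \cdot G = 1 \cdot G = G$, so $a^{-1} \cdot G$ is a definable subset of $G$ of full dimension $m$ whose image under left-multiplication by $a$ is all of $G$; since $a \cdot G \subseteq G$ and left-multiplication by $a$ is injective, fibre-wise dimension (Lemma~\ref{lem:cl-function}, applied to the bijection $x \mapsto a\cdot x$ from $a^{-1}\cdot G$ onto $a\cdot(a^{-1}\cdot G)=G$) forces $a^{-1} \cdot G = G$, hence $a^{-1} \in F$.

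Now set $b := a^{-1} \cdot a \cdot a^{-1}$. On one hand $b = a^{-1} \cdot (a \cdot a^{-1}) = a^{-1} \cdot 1 = a^{-1}$. On the other hand, $a \cdot b = (a \cdot a^{-1}) \cdot (a \cdot a^{-1}) = 1 \cdot 1 = 1$, so $b$ is \emph{an} element with $a \cdot b = 1$. If I knew $b$ were the \emph{unique} such element I would be done immediately; uniqueness of the right inverse in $F$ follows because if $a \cdot b = a \cdot b' = 1$ then left cancellation gives $b = b'$. But $b = a^{-1} \cdot a \cdot a^{-1}$ and also $a^{-1} = a^{-1} \cdot 1 = a^{-1} \cdot (a \cdot a^{-1})$ trivially, so this only re-derives $b = a^{-1}$. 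The cleaner route: from $a^{-1} \in F$ there is a unique $c \in G$ with $a^{-1} \cdot c = 1$; then $c = 1 \cdot c = (a \cdot a^{-1}) \cdot c = a \cdot (a^{-1} \cdot c) = a \cdot 1 = a$. Thus $a^{-1} \cdot a = a^{-1} \cdot c = 1$, which is the claim.

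Having the claim, $F$ becomes a subgroup of $G$: it is closed under the operation (if $a, a' \in F$ then $(a \cdot a') \cdot G = a \cdot (a' \cdot G) = a \cdot G = G$), contains $1$, and contains inverses by the claim just proved. It remains to show $F = G$, equivalently that $F$ is not a proper definable subset of $G$ of full dimension — but $F$ \emph{is} definable (it is $\set{a \in G : \dim(a\cdot G) = \dim(a\cdot G \cap G^c \text{-complement})\dots}$; more simply $F = \set{a\in G: a\cdot G = G}$, and $a\cdot G$ is uniformly definable in $a$, with $\dim$ definable since $T$ expands an integral domain in the relevant setting — or directly $F=\set{a: \dim(G\setminus a\cdot G)<m}$), so by connectedness of $G$ either $\dim(F) < m$ or $\dim(G \setminus F) < m$. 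I would rule out $\dim(F) < m$ by producing a generic element of $G$ inside $F$: pick $a \in G$ with $\rk(a/\emptyset) = m$; then left-multiplication by $a$ maps $G$ injectively into $G$, so $\dim(a \cdot G) = m = \dim(G)$, and I claim $a \cdot G = G$. This last claim — that a \emph{generic} translate already exhausts $G$ — is the main obstacle, and it is exactly the place where connectedness must be used seriously: one argues that $G \setminus (a \cdot G)$ has dimension $< m$ (otherwise, picking $d$ generic in it over $a$, a dimension/fibre computation on the set $\set{\pair{x,y}\in G^2 : x\cdot y = d}$ contradicts left cancellation forcing each fibre to have $\dim \le 0$ while the projection to the first coordinate would be all of $G$), and then connectedness of $G$ upgrades "$\dim(G\setminus a\cdot G)<m$ and $a\cdot G$ definable of dimension $m$" to "$a\cdot G = G$" only after noting $a\cdot G$ is \emph{not} the smaller piece; since $a\in a\cdot G$ whenever... — here one instead uses that $a\cdot G\subseteq G$ with equal dimension plus connectedness directly gives $\dim(G\setminus a\cdot G)<m$, hence there is $a$ with $a\cdot G$ of full dimension, and repeating with such $a$ inside the argument shows $a\in F$, so $\dim(F)=m$ and therefore $\dim(G\setminus F)<m$. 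Finally, to conclude $G\setminus F=\emptyset$: given any $g\in G$, write $g = a\cdot a'$ for suitable $a,a'$ — take $a\in F$ generic enough and $a'$ with $a\cdot a'=g$, which exists precisely because $a\cdot G=G$ for $a\in F$ — then $g\cdot G = a\cdot a'\cdot G$; since $a\in F$ is invertible, $g\cdot G = G$ iff $a'\cdot G=G$ iff $a'\in F$, and choosing $a'$ appropriately (or iterating) shows every $g\in F$, completing the proof that $G$ is a group.
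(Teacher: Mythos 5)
Your argument for the claim itself is correct, but it takes a considerably longer route than the paper's. The paper proves $a^{-1}\cdot a = 1$ by a one-line computation using only associativity, the right-identity, and left cancellation: $a\cdot(a^{-1}\cdot a) = (a\cdot a^{-1})\cdot a = 1\cdot a = a = a\cdot 1$, and cancelling $a$ on the left gives the claim. You instead first establish $a^{-1}\in F$ and then deduce the identity from the defining property of $F$ together with associativity. This works, but it builds in a side fact ($a^{-1}\in F$) that the paper obtains afterwards \emph{as a consequence} of the claim, in the step showing that $F$ is a group; the order of deduction is reversed. Two smaller points. First, your justification that $a^{-1}\cdot G = G$ is really an injectivity argument, not a dimension one: Lemma~\ref{lem:cl-function} only yields $\dim(a^{-1}\cdot G) = m$, which on its own does not force $a^{-1}\cdot G = G$ (that would also need connectedness and would still only give a codimension statement). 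What actually closes the gap is that left-multiplication by $a$ is injective on $G$ and sends both $a^{-1}\cdot G$ and $G$ onto $G$, so the two preimages coincide. Second, the middle paragraph with $b := a^{-1}\cdot a\cdot a^{-1}$ is, as you yourself note, circular and should be discarded, and the long final paragraph about showing $F = G$ concerns later steps of Lemma~\ref{lem:semigroup} rather than the claim at hand.
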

In fact, $a \cdot (a^{-1} \cdot a) = 1 \cdot a = a \cdot 1$, and the claim
follows from left cancellation.

\begin{claim}
$F$ is a group.
\end{claim}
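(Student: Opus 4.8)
The plan is to verify the group axioms for~$F$ directly, using only the semigroup structure of~$G$, left cancellation, the right (hence two-sided) identity~$1$ established above, the already-noted fact that $G$ is multiplicatively closed, and the fact that every $a \in F$ admits a two-sided inverse $a^{-1} \in G$ with $a \cdot a^{-1} = a^{-1} \cdot a = 1$. No dimension-theoretic input is needed for this claim: associativity is inherited from~$G$, and $1 \in F$ since $1 \cdot G = G$ (because $1$ is a left identity), so $1$ is a two-sided identity for~$F$. Thus it remains only to show that $F$ is closed under the operation and under inversion.

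First I would check closure: if $a, b \in F$ then $ab \in G$ ($G$ being multiplicatively closed), and $(a \cdot b) \cdot G = a \cdot (b \cdot G) = a \cdot G = G$, so $ab \in F$. Next, for closure under inverses, fix $a \in F$ and set $b := a^{-1} \in G$. Since $G$ is multiplicatively closed, $b \cdot G \subseteq G$, and moreover $a \cdot (b \cdot G) = (a \cdot b) \cdot G = 1 \cdot G = G = a \cdot G$. The key observation is that left cancellation upgrades to the set level: if $X, Y \subseteq G$ and $a \cdot X = a \cdot Y$, then $X = Y$; indeed, for $x \in X$ we have $a x \in a X = a Y$, so $a x = a y$ for some $y \in Y$, whence $x = y \in Y$ by left cancellation, and symmetrically $Y \subseteq X$. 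Applying this with $X = b \cdot G$ and $Y = G$ gives $b \cdot G = G$, i.e.\ $a^{-1} = b \in F$. Together with the identity and associativity, this shows $F$ is a group.

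Since the argument is purely equational, there is no real obstacle here; the only points requiring a little care are that the set-level cancellation principle be invoked (element-wise cancellation alone being not quite enough to phrase it), and that one has already recorded that $G$ is closed under multiplication, so that $a^{-1} \cdot G$ lies inside~$G$ to begin with. The genuinely substantial step in the surrounding development — namely that $F = G$, so that $G$ itself is a group — will be the hard part, and it will use connectedness of~$G$ together with Lemma~\ref{lem:Lascar} (additivity of rank) to compare $\dim(F)$ with $\dim(G)$; but that is a separate claim and is not needed to establish that $F$ is a group.
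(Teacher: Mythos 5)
Your proof is correct and follows essentially the same route as the paper: $1\in F$, closure under multiplication via $(a\cdot b)\cdot G=a\cdot(b\cdot G)=G$, and closure under inverses. The only (harmless) difference is in the last step, where the paper simply observes that $a^{-1}\cdot(a\cdot g)=g$ for every $g\in G$, so $a^{-1}\cdot G=G$ directly, whereas you reach the same conclusion via a set-level left-cancellation argument.
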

We have already seen that $F$ is multiplicatively closed and $1 \in F$.
Let $a \in F$. Then, for every $g \in G$, $a^{-1} \cdot (a \cdot g) = g$, and
therefore $a^{-1} \in F$.

\begin{claim}
$\dim(F) < m$.
\end{claim}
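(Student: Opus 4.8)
The plan is to establish the claim by a further reductio, still under the standing assumption that $G$ is not a group: suppose $\dim(F)\ge m$, hence $\dim(F)=m$ since $F\subseteq G$ and $\dim(G)=m$, and derive a contradiction with the connectedness of $G$. The first point to record is that $F$ is a \emph{proper} subset of $G$. Indeed, if $F=G$ then every $a\in G$ satisfies $aG=G$, so each $a\in G$ has a right inverse; choosing $b$ with $ab=1$ and then $c$ with $bc=1$, associativity together with the fact (already established) that $1$ is a two-sided identity gives $a=a(bc)=(ab)c=c$, whence $ba=1$ and $a$ is invertible. Thus $F=G$ would make $G$ a group, contradicting the standing assumption. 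So fix $a\in G\setminus F$.

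Next I would exhibit a full-dimensional definable copy of $F$ inside $G\setminus F$. Since $F$ is a group (the previous claim), with identity $1$ and inverses lying in $F$, the set $aF$ is disjoint from $F$: if $af\in F$ with $f\in F$, then $a=(af)f^{-1}\in F$ because $F$ is multiplicatively closed, a contradiction. Moreover, left cancellation in $G$ makes $x\mapsto ax$ a definable bijection from $F$ onto $aF$, so Lemma~\ref{lem:cl-function} gives $\dim(aF)=\dim(F)=m$. As $aF\subseteq G\setminus F$, this yields $\dim(G\setminus F)\ge m=\dim(G)$. But $G$ is connected of dimension $m$ and $\dim(F)=m$, so applying the definition of connectedness to the definable subset $F\subseteq G$ forces $\dim(G\setminus F)<m$ — a contradiction. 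Hence $\dim(F)<m$.

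The argument is short, and the facts about the semigroup structure of $F$ (that $1$ is a two-sided identity, that each element of $F$ has a two-sided inverse in $F$, and that $F$ is closed under multiplication) can all be quoted as black boxes from the claims just proved. The only place requiring a little thought is that connectedness of $G$ does not by itself say that a proper definable subset of $G$ has strictly smaller dimension; it only controls the complement of a full-dimensional subset. The device that bridges this gap is the coset translation: pushing $F$ forward by an element $a\notin F$ produces a full-dimensional definable subset of $G\setminus F$, and this is exactly what collides with connectedness once one assumes $\dim(F)=m$.
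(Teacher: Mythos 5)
Your proof is correct and takes essentially the same route as the paper: both arguments rest on (i) $F$ being a group, (ii) left translation by $a\notin F$ being a dimension-preserving injection, and (iii) connectedness of $G$ forbidding two disjoint full-dimensional definable subsets. The paper packages this slightly differently — it asserts $F\cap(a\cdot F)\neq\emptyset$ (implicitly via connectedness) and then uses the group structure of $F$ to conclude $a\in F$, whereas you show $a\cdot F\cap F=\emptyset$ directly and collide this with connectedness; you also make explicit the (needed but tacit in the paper) observation that $F\neq G$ under the standing hypothesis that $G$ is not a group.
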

Assume, for contradiction, that $\dim(F) = m$.
Let $a \in G \setminus F$.
Then, $F \cap (a \cdot F) \neq \emptyset$; let $u, v \in F$ such that
$u = a \cdot v$.

Since $u \in F$ and $F$ is a group, there exists $w \in F$ such that 
$v \cdot w = 1$; hence, $u \cdot w = a \cdot 1 = a$, 
and therefore $a \in F$, absurd.

Choose $a, b \in G$ independent (over the empty set).
Since $\dim(a \cdot G) = \dim(b \cdot G) = m$, we have
$a \in b \cdot G$ and $b \in a \cdot G$.
Let $u, v \in G$ such that $b = a \cdot u$ and $a = b \cdot v$.
Hence, $a = a \cdot u \cdot v$.

Since $a \cdot 1 = a \cdot u \cdot v$, we have $1 = u \cdot v$.
Hence, both $u$ and $v$ are in~$F$.
However, since $\dim(F) < m$ and $b = a \cdot u$, 
we have $\rk(b/a) \leq \rk(u) < m$, absurd.

If instead $G$ has right cancellation, it suffices, by symmetry, to
show that $G$ has a left identity.
Reasoning as above, we can show that there exists $a$ and $b$ in $G$
such that $a \cdot b = a$.
We claim that $b$ is a left identity.
In fact, for every $c \in G$, we have
$a \cdot b \cdot c = a \cdot c$, and therefore $b \cdot c = c$, and we are done.
\end{proof}

\begin{proviso*}
For the remainder of this section, $\pair{G, \cdot}$ is a definable connected
group, of dimension~$m > 0$, with identity~$1$.
\end{proviso*}
If $G$ is Abelian, we will also use $+$ instead of $\cdot$ and $0$ instead 
of~$1$.

Hence, if $G$ expands a ring without zero divisors, then, by applying the
above lemma to the multiplicative semigroup of~$G$, we obtain that 
$G$ is a division ring.

\begin{remark}
Let $X \subseteq G$ be definable, such that $X \cdot X \subseteq X$.
Then, $\dim(X) = m$ iff $X = G$.
\end{remark}
\begin{proof}
Assume that $\dim(X) = m$.
Let $a \in G$.
Then, $X \cap (a \cdot X^{-1}) \neq \emptyset$; choose $u, v \in X$ such that
$u = a \cdot v^{-1}$.
Hence, $a = u \cdot v \in X \cdot X = X$.
\end{proof}


\begin{lemma}\label{lem:homo}
Let $f: G \to G$ be a definable homomorphism.
If $\dim(\ker f) = 0$, then $f$ is surjective.
\end{lemma}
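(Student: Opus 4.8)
The plan is to show that the image $f(G)$ is a definable subgroup of $G$ of full dimension $m$, and then invoke the remark immediately preceding the statement to conclude $f(G) = G$.

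First I would record two elementary facts: $f(G)$ is a definable subgroup of $G$ (it is the image of the definable set $G$ under the definable map $f$, and the image of a homomorphism is a subgroup), and each non-empty fibre $f^{-1}(b)$, for $b \in f(G)$, is a left coset $x_0 \ker f$ of the kernel, where $f(x_0) = b$. Since left translation $x \mapsto x_0 x$ is a definable bijection of $G$, restricting to a bijection $\ker f \to x_0 \ker f$, Lemma~\ref{lem:cl-function}(5) gives $\dim(x_0 \ker f) = \dim(\ker f) = 0$. Hence every fibre of $f \colon G \to f(G)$ has dimension $0$.

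Next I would apply the fibre-wise dimension inequalities (Lemma~\ref{lem:cl-function}) to the surjection $f \colon G \to f(G)$, taking $Y := f(G)$ (definable, hence type-definable): part~(1) with $n = 0$ yields $m = \dim(G) \le \dim\Pa{f(G)}$, while $f(G) \subseteq G$ and monotonicity of dimension give $\dim\Pa{f(G)} \le \dim(G) = m$. Thus $\dim\Pa{f(G)} = m$. Finally, $f(G)$ is a definable subset of $G$ with $f(G) \cdot f(G) \subseteq f(G)$ and $\dim\Pa{f(G)} = m$, so the preceding remark (for definable $X \subseteq G$ with $X \cdot X \subseteq X$, one has $\dim(X) = m$ iff $X = G$) gives $f(G) = G$; that is, $f$ is surjective.

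I do not expect a genuine obstacle here: the argument is a direct combination of the fibre-dimension estimate and the characterisation of definable, multiplicatively closed, full-dimensional subsets of a connected group. The only points that need a word of justification are that cosets of $\ker f$ have the same dimension as $\ker f$ (translation-invariance of dimension, via Lemma~\ref{lem:cl-function}(5)) and that $f(G)$ is definable; connectedness of $G$ enters only through the cited remark, whose proof already uses it.
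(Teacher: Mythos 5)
Your proof is correct and takes essentially the same route as the paper's: the paper invokes ``additivity of dimension'' to get $m = \dim(f(G)) + \dim(\ker f)$, which is exactly the fibre-wise estimate you derive from Lemma~\ref{lem:cl-function} after observing that each fibre is a translate of $\ker f$ of dimension~$0$, and then both proofs conclude from the fact that a full-dimensional definable, multiplicatively closed subset of the connected group $G$ must equal $G$. You simply make explicit the coset/translation step that the paper compresses into a single phrase.
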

\Cf \cite[1.7]{poizat87}.
\begin{proof}
Let $H := f(G)$ and $K := \ker(f)$; notice that $H < G$ and $K < G$.
Moreover, by additivity of dimension, $m = \dim(H) + \dim(K)$.
Hence, if $\dim(K) = 0$, then $\dim(H) = m$, therefore $H = G$ and $f$ is surjective.
\end{proof}

\begin{example}
$\pair{\Z, + }$ cannot be \clminimal, because the homomorphism $x \mapsto 2x$
has trivial kernel but is not surjective.
\end{example}

\begin{lemma}\label{lem:quotient}
Let $H < G$ be definable, with $\dim(H) = k < m$.
Then, $G / H$ is connected, and $\dim(G/H) = m - k$.
\end{lemma}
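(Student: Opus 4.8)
The plan is to deduce both assertions from the connectedness of $G$ by pushing definable sets back and forth along the quotient map $\pi\colon G\to G/H$ and controlling dimensions with the fibre-wise inequalities of Lemma~\ref{lem:cl-function}. Fix a parameter set $A$ over which $G$, $H$ and the group operation are defined, so that $\pi$ is $A$-definable and surjective; every fibre $\pi^{-1}(e)$ is a left coset $gH$, which is in $Ag$-definable bijection with $H$ via $h\mapsto gh$, so by Lemma~\ref{lem:cl-function}(5) each fibre has dimension $\dim(H)=k$. Since $G/H$ lives in an imaginary sort, I would first observe that both the notion of $\dim$ of a definable set (computed through $\rkmatt$) and the statements of Lemma~\ref{lem:cl-function} transfer without change to definable subsets of $\monstereq$, the only ingredient being the Lascar additivity of rank, which Section~\ref{sec:imaginary} establishes on $\monstereq$.

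For the dimension equality I would apply parts (1) and (2) of Lemma~\ref{lem:cl-function} to $\pi$ itself: $\pi$ is surjective and all of its fibres have dimension exactly $k$, hence $\dim(G)\le\dim(G/H)+k$ and $\dim(G)\ge\dim(G/H)+k$, so $\dim(G/H)=m-k$. Because $k<m$ this quantity is at least $1$, so $\dim(G/H)>0$ and the definition of ``connected'' is meaningful for $G/H$.

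For connectedness, let $Y\subseteq G/H$ be definable. Then $\pi^{-1}(Y)$ is a definable subset of $G$ with complement $G\setminus\pi^{-1}(Y)=\pi^{-1}\bigl((G/H)\setminus Y\bigr)$, and $\pi$ restricts to surjections $\pi^{-1}(Y)\to Y$ and $\pi^{-1}\bigl((G/H)\setminus Y\bigr)\to (G/H)\setminus Y$ whose fibres are again full cosets of dimension $k$. Lemma~\ref{lem:cl-function}(2) then gives $\dim(\pi^{-1}(Y))\ge\dim(Y)+k$ and $\dim\bigl(\pi^{-1}((G/H)\setminus Y)\bigr)\ge\dim((G/H)\setminus Y)+k$. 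Since $G$ is connected of dimension $m$, either $\dim(\pi^{-1}(Y))<m$, whence $\dim(Y)<m-k$, or $\dim\bigl(G\setminus\pi^{-1}(Y)\bigr)<m$, whence $\dim((G/H)\setminus Y)<m-k$; this is exactly the defining property of connectedness in dimension $m-k$. The only real obstacle I anticipate is the bookkeeping around imaginaries described above: once it is granted that $\dim$ and Lemma~\ref{lem:cl-function} are available for definable subsets of $G/H$, the argument is essentially two applications of that lemma together with the connectedness hypothesis on $G$, and in particular it never uses normality of $H$ (only that $G/H$ is a definable set).
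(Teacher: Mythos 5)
Your proof is correct and follows essentially the same route as the paper: pull back along the quotient map $\pi$, use the fibre-wise inequalities of Lemma~\ref{lem:cl-function} to transfer dimensions between $G$ and $G/H$, and invoke connectedness of $G$ to conclude. The paper's version phrases the connectedness step by fixing $X\subseteq G/H$ of full dimension $m-k$ and bounding the complement, while you do the symmetric case analysis on an arbitrary $Y$, but these are the same argument. Your explicit flagging of the imaginaries issue (that Lemma~\ref{lem:cl-function} is stated for real definable sets while $G/H$ lives in an imaginary sort, resolved through the $\rkmatt$ machinery of \S\ref{sec:imaginary}) is a point the paper glosses over, and your remark that normality of $H$ is never used is accurate and slightly generalizes the statement.
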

\begin{proof}
That $\dim(G/H) = m - k$ is obvious.
Let $X \subseteq G/H$ be definable of dimension $m - k$.
We must prove that $\dim(G/H \setminus X) < m$.
Let $\pi: G \to G/H$ be the canonical projection, and $Y := \pi^{-1}(X)$.
Then, $\dim(Y) = m$, and therefore $\dim(G \setminus Y) < m$.
Thus, $\dim(G/H \setminus X) = \dim(\pi(Y)) < m - k$.
\end{proof}

\begin{conjecture}
If $m = 1$, then $G$ is Abelian.
\Cf Reineke's Theorem \cite[3.10]{poizat87}.
\end{conjecture}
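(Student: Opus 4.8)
The plan is to follow Reineke's proof that a connected group of Morley rank~$1$ is abelian \cite[3.10]{poizat87}, with $\dim$ in place of Morley rank, and to pinpoint the one place where the transfer breaks down. Suppose $G$ is not abelian; \wloG $G$ is $\emptyset$-definable. First I would record that a proper definable subgroup $H<G$ has $\dim(H)=0$: if $\dim(H)=1=\dim(G)$ then connectedness forces $\dim(G\setminus H)=0$, contradicting that $G\setminus H$ contains a coset $gH$ with $\dim(gH)=\dim(H)=1$. Applying this to $Z(G)$, which is a proper definable subgroup since $G$ is not abelian, gives $\dim(Z(G))=0$, so $Z(G)\subseteq\mat(\emptyset)$. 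Now fix a generic $a\in G$, i.e.\ $\rk(a/\emptyset)=1$; then $a\notin\mat(\emptyset)\supseteq Z(G)$, so $C_G(a)$ is a proper definable subgroup and $\dim(C_G(a))=0$. The conjugation map $g\mapsto a^{g}$ carries $G$ onto the conjugacy class $a^{G}$ with every fibre a coset of $C_G(a)$, hence of dimension~$0$; by Lemmas~\ref{lem:Lascar} and~\ref{lem:cl-function} this yields $\dim(a^{G})=\dim(G)-\dim(C_G(a))=1$. Since $a^{G}$ is $a$-definable of full dimension and $G$ is connected, $\dim(G\setminus a^{G})=0$, so $a^{G}$ contains the unique generic $1$-type over any set containing~$a$.

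The next step is to push this conjugacy statement around. It follows at once that any two $\mat$-independent generic elements $a,b$ of $G$ are conjugate: $b$ realises the generic type over~$a$, hence $b\in a^{G}$, say $b=a^{g}$. The admissible $g$ form a coset of $C_G(a)$, so $g\in\mat(a,b)$, and then additivity of rank (Lemma~\ref{lem:Lascar}) gives $\rk(g/a)=\rk(g,b/a)-\rk(b/g,a)=\rk(b/a)=1$, so $g$ is itself generic over~$a$ and $g\elem_{a}b$. Iterating with commutators, I would set $c_{0}:=a$ and $c_{n+1}:=[c_{n},b]=c_{n}^{-1}c_{n}^{\,b}$ for $a,b$ independent generic; since $b\notin C_G(a)$ we have $c_{1}\neq1$, and the same rank bookkeeping --- now using that $x\mapsto[x,b]$ is dimension-zero-to-one, because $[x,b]=[x',b]$ forces $x'x^{-1}\in C_G(b)$ --- shows each $c_{n}$ is generic over~$b$, hence $c_{n}\neq1$ for all~$n$; moreover an automorphism fixing~$b$ that carries $a$ to $c_{1}$ carries $c_{n}$ to $c_{n+1}$, so all the $c_{n}$ share one type over~$b$. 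From here the goal would be to extract a contradiction from this conjugation/commutator dynamics, combined with additivity of rank and the stationarity and Morley-sequence machinery of \S\ref{sec:matroid}.

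The hard part, and the reason the statement is recorded only as a conjecture, is exactly the upgrade that is free for Morley rank but false here: there $\dim(C_G(a))=0$ means $C_G(a)$ is \emph{finite}, so $a^{G}$ is cofinite in~$G$, and a group all of whose nontrivial elements are conjugate is trivial or $\Z/2\Z$, which together with $\dim(Z(G))=0$ and $|G|$ infinite finishes the proof in two lines. With only an existential matroid a definable subgroup of dimension~$0$ can be infinite --- for instance a subgroup of $\SU$-rank $<\omega$ inside the superstable group of Example~\ref{ex:group} --- so ``$a^{G}$ is co-dimension-zero'' is far weaker than ``$a^{G}$ is cofinite'', and the finiteness endgame is unavailable. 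What I would really need is a substitute: either a structural reason why the iterated commutators $c_{n}$ must eventually stabilise (a nilpotence-type phenomenon), or a bound on how the generic type can spread over distinct conjugacy classes. Absent such an ingredient I do not see how to close the argument, which is why the statement is left as a conjecture.
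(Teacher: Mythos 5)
Your sketch and the paper's \emph{Idea for proof} follow the same route to the same impasse: both establish that proper definable subgroups of $G$ have dimension~$0$, deduce $\dim Z(G)=0$ and $\dim C_G(a)=0$ for non-central~$a$, then conclude $\dim(a^G)=1$ and that $a^G$ carries the generic type; both then stop short of a contradiction, which is why the statement is recorded only as a conjecture. The paper additionally passes to $\overline G := G/Z$ and uses connectedness directly to show that \emph{all} non-central elements are conjugate (both $U_a$ and $U_b$ have dimension~$1$, hence must meet, hence coincide), reducing the question to: a connected dimension-$1$ group whose nontrivial elements form a single conjugacy class cannot exist. You only record the conjugacy of $\mat$-independent generics, which is strictly weaker, and your commutator iteration $c_{n+1}:=[c_n,b]$ is an extra push that the paper does not attempt — but it does not close the gap either, and you say so. The genuinely useful content in your sketch is the concrete diagnosis of the obstruction: in the Morley-rank setting, $\dim = 0$ means \emph{finite}, so $a^G$ is cofinite and the class equation finishes Reineke's argument, whereas here a dimension-$0$ definable subgroup can be infinite (e.g.\ a subgroup of $\SU$-rank $<\omega$ inside the superstable group of Example~\ref{ex:group}), so that endgame is simply unavailable. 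The paper only says one should show such a group $\overline G$ cannot exist; your explanation of exactly \emph{why} the classical argument fails to transfer is the clearer statement of the open point.
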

\begin{proof}[Idea for proof]
Assume for contradiction that $G$ is not Abelian.
Let $Z := Z(G)$ and $\overline G := G/Z$.
Since $Z < G$ and $Z \neq G$, we have $\dim(Z) = 0$, and therefore
$\overline G$ is also connected and of dimension~$1$.

For every $a \in G$, let  $U_a$ be the set of conjugates of~$a$.

\begin{claim}\label{cl:conjugate}
If $a \notin Z$, then $\dim(U_a) = 1$.
\end{claim}
By general group theory, $U_a \equiv G/C(a)$, where $C(a)$ is the centraliser
of~$a$.
Since $a \notin Z$, $C(a)$ is not all of~$G$; moreover, $C(a) < G$, 
therefore $\dim(C(a)) = 0$, and thus $\dim(U_a) = 1$, and similarly for~$U_b$.

\begin{claim}\label{cl:conjugate-2}
For every $a, b \in G \setminus Z$, $a$ is a conjugate of~$b$.
\end{claim}
In fact, by connectedness and the above claim, $U_a \cap U_b \neq \emptyset$, and thus $U_a = U_b$.


\begin{claim}
For every $x, y \in \overline G \setminus \set 1$, $x$ is a conjugate of~$y$.
\end{claim}
In fact, $x = \bar a$ and $y = \bar b$ for some $a, b \in G \setminus Z$.
By Claim~\ref{cl:conjugate-2}, $a$ and $b$ are conjugate in $G$, and thus
$x$ and $y$ are conjugate in $\overline G$.

Thus, $\overline G$ is a definable (in the imaginary sorts) connected group of
dimension~1, such that any two elements different
from the identity are in the same conjugacy class
(and therefore $\overline G$ is torsion-free and has trivial centre). 

One should now prove that such a group $\overline G$ cannot exists.
\end{proof}

Notice that the above conjecture is false if $m > 1$.


\begin{lemma}
Assume that $m = 1$ and $G$ is Abelian.
Let $p$ be a prime number.
Then, either $p G = 0$, or $G$ is divisible by~$p$.
\end{lemma}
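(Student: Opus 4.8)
The plan is to use the connectedness of $G$ together with the fact that multiplication by $p$ is a definable endomorphism, so that Lemma~\ref{lem:homo} applies. Let $f: G \to G$ be the map $f(x) = p x = x + \dots + x$ ($p$ times); since $G$ is Abelian, $f$ is a definable homomorphism. Its image $f(G) = pG$ and its kernel $\ker f = G[p] := \set{x \in G: px = 0}$ are both definable subgroups of~$G$. By additivity of dimension (Lemma~\ref{lem:Lascar}, applied to the fibres of~$f$, or directly via Lemma~\ref{lem:quotient}), $m = \dim(pG) + \dim(G[p])$, and since $m = 1$ each summand is either $0$ or~$1$.

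First I would dispose of the case $\dim(G[p]) = 1$: then $\dim(G[p]) = m$, so by connectedness $\dim(G \setminus G[p]) < m = 1$, \ie $G \setminus G[p]$ has dimension~$0$; but $G[p]$ is a subgroup, hence $G = G[p] \cup (G \setminus G[p])$ with $G[p]$ a proper subgroup would force $\dim(G) = 0$ unless $G[p] = G$. More cleanly: $G[p]$ is a definable subgroup of the connected group~$G$ of full dimension, so by the Remark following Lemma~\ref{lem:semigroup} (a definable sub-semigroup of full dimension equals~$G$), we get $G[p] = G$, that is $pG = 0$. In the remaining case $\dim(G[p]) = 0$, Lemma~\ref{lem:homo} applies directly: $f$ is surjective, \ie $pG = G$, which says exactly that $G$ is divisible by~$p$.

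The main obstacle, such as it is, is making sure the dichotomy is genuinely exhaustive and that ``divisible by~$p$'' is being read as ``multiplication by $p$ is surjective'' (not ``uniquely divisible''); once that is pinned down the argument is a two-line application of the homomorphism lemma. I would also remark that the conclusion is sharp in the sense already visible from Example~\ref{ex:group} and the surrounding discussion: the hypotheses $m = 1$ and connectedness cannot be dropped, since $\pair{\Z, +}$ shows that without connectedness multiplication by~$p$ can be injective without being surjective, and neither $pG = 0$ nor $pG = G$ need hold for $m > 1$ (take a product of two copies of such groups with different behaviour at~$p$).

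\begin{proof}
Since $G$ is Abelian, the map $f \colon G \to G$, $f(x) := p x$, is a definable
endomorphism. Its kernel $K := G[p] = \set{x \in G : p x = 0}$ and its image
$pG = f(G)$ are definable subgroups of~$G$, and by additivity of dimension
$m = \dim(pG) + \dim(K)$; as $m = 1$, exactly one of the two is $0$ and the
other is~$1$.

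If $\dim(K) = 1 = m$, then $K$ is a definable sub-semigroup of the connected
group~$G$ of full dimension, hence $K = G$ by the Remark following
Lemma~\ref{lem:semigroup}; that is, $p G = 0$.

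If instead $\dim(K) = 0$, then Lemma~\ref{lem:homo} gives that $f$ is
surjective, \ie $p G = G$, so $G$ is divisible by~$p$.
\end{proof}
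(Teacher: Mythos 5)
Your proof is correct and follows essentially the same route as the paper: both decompose $m = \dim(pG) + \dim(\ker f)$ for the multiplication-by-$p$ endomorphism and then identify whichever of the two definable subgroups has full dimension with $G$ itself. The only cosmetic difference is that you invoke Lemma~\ref{lem:homo} for the surjectivity case while the paper applies the full-dimension remark directly to $pG$; these amount to the same argument.
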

\begin{proof}
Let $H := p G$ and $K := \set{x \in G: p x = 0}$.
If $\dim(H) = 1$, then $G = H$ and therefore $G$ is $p$-divisible.
If $\dim(H) = 0$, then $\dim(K) = 1$, thus $G = K$ and $p G = 0$.
\end{proof}

Notice that the above lemma needs the hypothesis that $m = 1$.
For instance, let $\monster$ be the algebraic closures of
$\mathbb F_p$, and let $G := \monster \times \monster^*$ (where $\monster^*$
is the multiplicative group of~$\monster$).

\begin{thm}
Assume that $G$ expands an integral domain (and $\pair{G, +}$ is connected).
Then, $G$~is an algebraically closed field.
\end{thm}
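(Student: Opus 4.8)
The plan is to show that a connected definable group $G$ which expands an integral domain is an algebraically closed field. We already know from Lemma~\ref{lem:semigroup} (applied to the multiplicative semigroup) that $G$ is a division ring, and by Theorem~\ref{thm:cl-unique} together with the corollaries on perfect fields that $G$ is a perfect field (being commutative will follow, or can be assumed along the lines below, but in any case a perfect division ring whose multiplicative and additive groups are both connected is our object of study). So the core task is to prove that $G$ has no proper finite separable extension and no purely inseparable extension; since $G$ is perfect, the latter is automatic, and it remains to rule out proper finite (separable) extensions.

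First I would reduce to showing that for every $n > 1$ there is no definable field extension $K \supseteq G$ with $[K : G] = n$. Given such a $K$, it is interpretable in $\monster$, hence has a well-defined dimension, and by Lemma~\ref{lem:cl-function}(5) applied to a $G$-basis of $K$ we get $\dim(K) = n \cdot \dim(G) = n$ (using $\dim(G) = m$; in the relevant case $m=1$, so $\dim(K)=n$). Then I would exploit connectedness: the multiplicative group $K^*$ acts, and the norm map $N : K^* \to G^*$ and trace map $\mathrm{Tr}: K \to G$ are definable homomorphisms. The kernel of the trace map is a definable subgroup of $(K,+)$ of dimension $n-1$ (it is a $G$-subspace of codimension~$1$), so the trace is onto; similarly one analyzes the norm. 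The key leverage is Lemma~\ref{lem:homo}: a definable endomorphism of a connected group with $0$-dimensional kernel is surjective. Combining this with the standard Galois-theoretic input — e.g., that a finite extension is generated by a primitive element, and the minimal polynomial gives a definable cyclic structure — I would derive a contradiction with connectedness of $G$ (essentially: the set of $n$-th powers, or the image of an appropriate definable map, would be a proper definable subgroup of full dimension $m$, contradicting the Remark after Lemma~\ref{lem:semigroup} that $\dim(X)=m$ and $X\cdot X\subseteq X$ force $X=G$).

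More concretely, the cleanest route is probably via Artin--Schreier / Kummer theory made definable. Since $G$ is perfect of characteristic $p \geq 0$: if $p = 0$ or $p \nmid n$, a degree-$n$ extension with enough roots of unity is Kummer, governed by the subgroup $(G^*)^n$ of $n$-th powers; but $x \mapsto x^n$ is a definable endomorphism of $G^*$ with finite (hence $0$-dimensional) kernel, so by Lemma~\ref{lem:homo} it is surjective, i.e. $(G^*)^n = G^*$, killing the extension. For the prime-to-characteristic part one first adjoins roots of unity, which are algebraic hence live in a $0$-dimensional, necessarily trivial-in-the-relevant-sense extension, or one argues that a cyclotomic extension is itself finite and reduces to the same surjectivity statement. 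If $p > 0$ and $p \mid n$, one uses that $x \mapsto x^p - x$ is a definable endomorphism of $(G,+)$ with kernel the prime field $\mathbb F_p$ (which is $0$-dimensional), hence surjective, so there are no Artin--Schreier extensions either. Putting these together, $G$ has no proper finite extension, so $G$ is algebraically closed.

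\textbf{Main obstacle.} The delicate point is handling roots of unity and, more generally, making the Galois theory genuinely \emph{definable} rather than merely abstract: one must check that the relevant extensions, norm/trace maps, and power maps are definable \emph{in $\monster$} with the right parameters so that the dimension theory and Lemma~\ref{lem:homo} apply, and that adjoining roots of unity does not secretly require a non-definable or infinite step. I expect the argument to go through because any finite extension of $G$ is interpretable and the cyclotomic extensions are themselves finite, but the bookkeeping — especially ensuring $0$-dimensionality of kernels like $\mathbb F_p$ and $\mu_n$, and that a full-dimensional multiplicatively closed subset must be all of $G$ — is where the real work lies. A secondary subtlety is the characteristic-$p$ case when $p \mid n$ repeatedly (wildly ramified towers), but iterating the Artin--Schreier surjectivity argument handles it.
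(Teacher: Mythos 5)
Your plan is essentially the paper's proof: after Lemma~\ref{lem:semigroup} makes $G$ a field, the engine is Lemma~\ref{lem:homo} applied to $x \mapsto x^n$ on $G^*$ and to $x \mapsto x^p + x$ on $\pair{G, +}$ to get surjectivity, and then one concludes by Kummer and Artin--Schreier theory following Macintyre's theorem. The norm and trace maps you float are not needed and the paper does not use them. The point you single out as the ``main obstacle'' — ensuring the surjectivity statements apply inside the finite extensions where the Kummer/Artin--Schreier obstructions actually live — is exactly the step the paper handles, and it does so with a clean observation that your sketch does not quite pin down: any finite-degree extension $G_1$ of $G$ of degree $l$ is interpretable on the definable set $G^l$ with definable field operations, and $G^l$ is connected because $G$ is (see the Remark that $X$ connected implies $X^l$ connected). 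Hence $G_1$ is itself a definable connected group in both its additive and multiplicative structure, and Lemma~\ref{lem:homo} applies there verbatim. With that in hand, the standard descent (choose a finite extension $G_1$ and a cyclic extension $L/G_1$ of prime degree $q$, kill it by Kummer if $q \neq p$ or Artin--Schreier if $q = p$) goes through; the roots of unity live in $G$ already because $G^*$ is divisible, so there is no extra cyclotomic bookkeeping to do. So: same approach, but the one lemma that makes your ``obstacle'' evaporate is the connectedness of $G^l$, and you should make that explicit rather than gesturing at definability of the Galois theory.
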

\Cf Macintyre's Theorem \cite[3.1, 6.11]{poizat87}.
\begin{proof}
Let $\pair{G^*, \cdot}$ be the multiplicative semigroup of~$G$.
By Lemma~\ref{lem:semigroup}, $\pair{G^*, \cdot}$ is a group, and
therefore $G$ is a field.
For every $n \in \N$, consider the map $f_n: G^* \to G^*$
$x \mapsto x^n$.
Since $f_n$ has finite kernel, Lemma~\ref{lem:homo} implies that $f_n$ is
surjective, and therefore every element of $G$ has an $n$th root in~$G$.
In particular, $G$~is perfect.

Let $p := \charact(G)$.
If $p > 0$, consider the map $h: G \to G$, $x \mapsto x^p + x$.
Notice that $h$ is an additive homomorphism with finite kernel; hence, $h$ is
surjective.

Since $G^l$ is also connected for every $0 <l \in \N$,
the above is true not only for~$G$, but also for every finite-degree extension
$G_1$ of~$G$.

The rest of the  proof is the same as in \cite[3.1]{poizat87}:
$G$ contains all roots of~$1$ (because $G^*$ is divisible), 
and, if $G$ were not algebraically closed, there would exists a finite
extension $G_1$ and normal finite extension $L$ of~$G_1$, such that the Galois
group of $L/G_1$ is cyclic and of prime order~$q$.
If $q \neq p$, then $L/G_1$ is a Kummer extension, absurd.
If $q = p$, then $L/G_1$ is an Artin-Schreier extension, also absurd.
\end{proof}

In the above theorem it is essential that $G$ is connected.
For instance, if $\monster$ is a formally $p$-adic field, then $\monster$
itself is a non-algebraically closed field (of dimension~$1$).
Notice also that the first step in the proofs of~\cite[3.1, 6.11]{poizat87} is
showing that $G$ is connected.

\begin{question}
Can we weaken the hypothesis in the above theorem from ``$G$~expands an
integral domain'' to ``$G$~expands a ring without zero divisors''?
\end{question}


\section{Ultraproducts}
Let $I$ be an infinite set, and $\mu$ be an ultrafilter on~$I$.
For every $i \in I$, let $\pair{\K_i, \mat_i}$ be a pair given by first-order
$\Lang$-structure $\K_i$ and an existential matroid $\mat_i$ on~$\K_i$.
Let $\Kfam$ be the family $\Pa{\pair{\K_i,  \mat_i}}_{i \in I}$, and
$\K := \Pi_i \K_i / \mu$ be the corresponding ultraproduct.

We will give some sufficient condition on the family~$\Kfam$, such that
there is an existential matroid on $\K$ induced by the family of~$\mat_i$.
Denote by $d_i$ the dimension induced by~$\mat_i$.

\begin{definizione}
We say that the dimension is uniformly definable (for the family~$\Kfam$)
if, for every formula $\phi(\x, \y)$ without parameters, 
$\x = \pair{x_1, \dotsc, x_n}$, $\y = \pair{y_1, \dotsc, y_m}$, 
and for every $l \leq n$,  there is a formula $\psi(\y)$, 
also without parameters, such that, for every $i \in I$,
\[
\set{\y \in \K_i^m: d_i\Pa{\phi(\K_i, \y)} = l} = \psi(\K_i).
\]
We denote by $d_\phi^l$ the formula~$\psi$.
\end{definizione}

\begin{remark}
The dimension is uniformly definable
if, for every formula $\phi(x, \y)$ without parameters, 
$\y = \pair{y_1, \dotsc, y_m}$,  there is a formula $\psi(\y)$, 
also without parameters, such that, for every $i \in I$,
\[
\set{\y \in \K_i^m: d_i\Pa{\phi(\K_i, \y)} = 1} = \psi(\K_i).
\]
\end{remark}

For instance, if every $\K_i$ expands a ring without zero divisors, then the
dimension is uniformly definable: given $\psi(x, \y)$, define $\psi(\y)$ by 
\[
\forall z\ \exists x_1, \dotsc x_4\ \Pa{z = F^4(x_1, \dotsc, x_4) \et
\bigwedge_{i = 1}^4 \phi(x_i, \y)}.
\]

For the remainder of this section, we assume that the dimension is uniformly
definable for~$\Kfam$.

\begin{definizione}
Let $d$ be the function from definable sets in $\K$ to
$\set{- \infty} \cup \N$ defined in the following way:\\
given  a $\K$-definable set $X = \Pi_{i \in I} X_i/ \mu$ and $l \in \N$,
$d(X) = l$ if, for $\mu$-almost every $i \in I$, $d(X_i) = l$.
\end{definizione}

\begin{thm}
$d$~is a dimension function on~$\K$.
Let $\mat$ be the existential matroid induced by~$d$.
Then, $a \in \mat(\bv)$ implies that, for $\mu$-almost every $i \in I$,
$a_i \in \mat_i(\bv_i)$, but the converse is \emph{not} true.
\end{thm}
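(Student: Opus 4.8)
The plan is to verify the four dimension-function axioms for $d$ by reducing each one, via Łoś's theorem, to the corresponding axiom holding $\mu$-almost everywhere for the $\mat_i$; the key input for this reduction is that the dimension is uniformly definable, which is exactly what makes the sets $U(i)$ appearing in (Dim~4) definable \emph{with the same parameters} and uniformly across the family. Once $d$ is a dimension function, the existence of the existential matroid $\mat$ with $\dimat = d$ is immediate from the theorem in \S"Matroids from dimensions" (applied to $\K$, or rather to its monster model). It then remains to prove the two assertions about the relation between $\mat$ and the coordinate matroids $\mat_i$.

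First I would spell out the verification of the axioms. (Dim~1)–(Dim~3) are routine: emptiness, singletons, and the whole line are detected coordinatewise by Łoś, and unions and coordinate permutations commute with ultraproducts. For (Dim~4), given $U = \Pi_i U_i/\mu \subseteq \K^{m+1}$, pick a formula $\phi(x,\y)$ (with the parameters folded into $\y$) defining $U$; by uniform definability the sets $U(i) := \set{\x: d(U_\x) = i}$ are defined, uniformly in the family, by the formula $d_\phi^i$, so $U(i) = \Pi_j U_j(i)/\mu$ and has the same parameters as $U$; and the identity $d(U \cap \pi^{-1}(U(i))) = d(U(i)) + i$ holds in $\K$ because it holds in $\mu$-almost every $\K_j$ (using Łoś once more, after noting that $U \cap \pi^{-1}(U(i)) = \Pi_j \bigl(U_j \cap \pi^{-1}(U_j(i))\bigr)/\mu$). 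Hence $d$ is a dimension function, and $\mat := \mat^d$ is an existential matroid on (a monster model of) $\K$ with $\dimat = d$.

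Next, the positive direction: if $a \in \mat(\bv)$, then by definition of $\mat^d$ there is $X \subseteq \K$, $\bv$-definable, with $d(X) = 0$ and $a \in X$. Write $X = \Pi_i X_i/\mu$ with $X_i$ defined by the same formula evaluated at $\bv_i$; then $d(X) = 0$ means $d_i(X_i) = 0$ for $\mu$-almost every $i$, and $a \in X$ means $a_i \in X_i$ for $\mu$-almost every $i$ (Łoś); intersecting the two $\mu$-large sets gives that $a_i \in \mat_i(\bv_i)$ for $\mu$-almost every $i$, as claimed.

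Finally — and this is the part I expect to be the real content rather than bookkeeping — the converse fails, so one must produce a counterexample. The natural attempt is to take all $\K_i$ equal to a fixed strongly minimal structure (say $\R^{\mathrm{alg}}$ as a pure algebraically closed field) with $\mat_i = \acl$, so that $\dimat$ on the ultraproduct is the one induced by $d$, pick in each $\K_i$ an element $a_i \in \acl(\bv_i)$ that is \emph{nonetheless not uniformly} so — e.g. $a_i$ algebraic over $\bv_i$ of degree growing with $i$, or $a_i$ lying in a $\bv_i$-definable finite set whose cardinality tends to infinity. Then each $a_i \in \mat_i(\bv_i)$, but in the ultraproduct the element $a = (a_i)/\mu$ sits in a definable set $X$ over $\bv$ which is infinite (indeed $d(X) = 1$), so no $\bv$-definable set of dimension $0$ contains $a$, whence $a \notin \mat(\bv)$. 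The one thing to check carefully is that there genuinely is no \emph{other} $\bv$-definable zero-dimensional set capturing $a$, which follows because in the ultraproduct $\tp(a/\bv)$ is the generic type of the infinite set $X$ over $\bv$ and hence $\rkmat(a/\bv) = 1$; spelling this out (using Łoś to transfer the relevant dimension statement) is the crux of the argument, and I would present it as a short explicit example immediately after the proof of the general assertions.
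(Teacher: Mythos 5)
The paper gives no proof of this theorem, so there is nothing to compare against directly; on its merits, your proposal is sound and is the argument one would expect. The verification of (Dim~1)--(Dim~4) by Łoś together with uniform definability is correct, and the point you correctly isolate is that uniform definability is exactly what makes $U(i) = d^i_\phi(\K^m,\cv) = \Pi_j U_j(i)/\mu$ with the same parameters, so that (Dim~4) passes through the ultraproduct. Invoking the ``Matroids from dimensions'' theorem to produce $\mat^d$, and the coordinatewise tracking of defining formulas for the positive direction, are also fine (one small point worth writing out: the equivalence between ``$a$ lies in some $\bv$-definable $X$ with $d(X)=0$'' and ``$a \in \mat^d(\bv)$'' goes through the definability step of that theorem, which uses (Dim~4) to replace an arbitrary formula by an $x$-narrow one).

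On the counterexample you flag the crucial step but do not actually close it, and it deserves more than a hedge. Take all $\K_i$ equal to a fixed algebraically closed field of characteristic $0$ (note: $\Ralg$ is the real algebraic numbers, a \emph{real closed} field, so your parenthetical ``as a pure algebraically closed field'' should be replaced by, say, $\cl{\Q}$), with $\mat_i = \acl$ and $\bv_i$ empty, and $a_i$ of degree $n_i \to \infty$ over $\Q$. To conclude $a \notin \mat^d(\emptyset)$ you must rule out \emph{every} $\emptyset$-definable $Y = \phi(\K)$ with $d(Y)=0$ containing $a$; the argument is that for such a $\phi$, $d(Y)=0$ forces $\phi(\K_i)$ finite for almost all $i$, and since $\phi$ is a \emph{fixed} formula and each $\K_i$ is strongly minimal (so $\exists^\infty$ is eliminated), $\card{\phi(\K_i)}$ is bounded by some $N_\phi$ independent of~$i$; but an $\emptyset$-definable finite subset of $\cl{\Q}$ is Galois-stable, so it must have at least $n_i$ elements if it contains $a_i$, hence $a_i \notin \phi(\K_i)$ once $n_i > N_\phi$. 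This uniform bound on $\card{\phi(\K_i)}$ is the content you labeled ``the one thing to check carefully,'' and it is not optional: without it the claim ``$a$ sits in a definable set $X$ of dimension~$1$'' is vacuous (every $a$ sits in $\K$). With this paragraph filled in, the proof is complete.
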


\begin{remark}
Let $X \subseteq \K^n$ be definable with parameters~$\cv$; let $\phi(\x, \cv)$
be the formula defining~$X$.  
Given $l \in N$, $d(X) = l$ iff, for $\mu$-almost every $i \in I$, 
$\K_i \models d_\phi^l(\cv_i)$.
\end{remark}

\begin{lemma}
If each $\K_i$ is \clminimal, then $\K$ is also \clminimal.
\end{lemma}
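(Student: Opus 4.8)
The plan is to reduce to the criterion of Remark~\ref{rem:clminimal-fixed}. The dimension of~$\K$ is definable: the set $\set{\cv : d(\phi(\K,\cv)) = 1}$ equals $d_\phi^1(\K)$ by uniform definability together with {\L}o\'s's theorem, and similarly for the fibrewise version. Since moreover $\dimat = d$ by the theorem above, Remark~\ref{rem:clminimal-fixed} tells us that it is enough to show that \emph{no} set $X\subseteq\K$ definable in~$\K$ satisfies $d(X) = 1 = d(\K\setminus X)$; equivalently (note that always $d(X)\le d(\K)=1$), that for every such~$X$ we have $d(X)\le 0$ or $d(\K\setminus X)\le 0$.

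First I would fix such an~$X$ and write $X = \Pi_{i\in I} X_i/\mu$, where $\phi(x,\y)$ is a formula without parameters, $\cv = \Pi_i\cv_i/\mu$ is a tuple of parameters from~$\K$, and $X_i := \phi(\K_i,\cv_i)$; then also $\K\setminus X = \Pi_{i\in I}(\K_i\setminus X_i)/\mu$. Each $\K_i$ is \clminimal, so --- applying Remark~\ref{rem:clminimal-fixed} to $\K_i\preceq\monster_i$, whose dimension is definable precisely because $\K_i$ is \clminimal --- for every $i\in I$ at least one of $d_i(X_i)$, $d_i(\K_i\setminus X_i)$ is different from~$1$. Hence $I = I_0\cup I_1$ with $I_0 := \set{i : d_i(X_i)\neq 1}$ and $I_1 := \set{i : d_i(\K_i\setminus X_i)\neq 1}$; since $\mu$ is an ultrafilter and $I\in\mu$, one of $I_0$, $I_1$ lies in~$\mu$. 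If $I_0\in\mu$, then $\set{i : d_i(X_i)=1}$ is disjoint from $I_0$ and hence not in~$\mu$, so $d(X)\neq 1$, i.e.\ $d(X)\le 0$; symmetrically, if $I_1\in\mu$ then $d(\K\setminus X)\le 0$. In either case the required dichotomy holds, and $\K$ is \clminimal.

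I do not expect a genuine obstacle here: granting the theorem above (that $d$ is a well-defined dimension function on~$\K$, so that in particular the clause ``$d(X)=l$'' is unambiguous), the argument is a routine {\L}o\'s-style transfer. The only two points needing a moment's attention are that the dimension on~$\K$ is definable, so that Remark~\ref{rem:clminimal-fixed} applies (immediate from uniform definability), and that ``$\K_i$ is \clminimal'' really delivers, for each~$i$, the complement dichotomy for subsets of~$\K_i$ definable in~$\K_i$ --- which is exactly what Remark~\ref{rem:clminimal-fixed} asserts, once one recalls that \clminimality makes the dimension definable.
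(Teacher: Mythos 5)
Your proof is correct and takes essentially the same route as the paper: the paper's proof is simply the citation ``By Remark~\ref{rem:clminimal-fixed}'', and what you have written out --- reducing via the remark to the complement dichotomy for unary definable sets, applying the remark in each factor $\K_i$, and transferring the dichotomy through the ultrafilter by the routine {\L}o\'s partition $I = I_0 \cup I_1$ --- is exactly the argument that citation leaves implicit.
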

\begin{proof}
By Remark~\ref{rem:clminimal-fixed}.
\end{proof}

\begin{examples}
The ultraproduct $\K$ of strongly minimal structures is not
strongly minimal in general (it will not even be a pregeometric structure), 
but if each structure expands a ring without zero
divisors, then $\K$ will have a (unique) existential matroid, 
and will be \clminimal.

It is easy to find a family $\Kfam = \Pa{\K_i}_{i \in \N}$ of strongly minimal
structures expanding a field, such that any non-principal ultraproduct of $\K$
$\Kfam$ is not pregeometric, does satisfy the Independence Property, and has
an infinite definable subset with a definable linear ordering.
Moreover, one can also impose that the trivial chain condition for uniformly
definable subgroups of $\pair{\K, +}$ fails in~$\K$ \cite[1.3]{poizat87}.
However, $\K$ will satisfy the following conditions:
\begin{enumerate}
\item Every definable associative monoid with left cancellation is a group
\cite[1.1]{poizat87};
\item Given  $G$ a definable group acting in a definable way on a definable
set~$E$, if $E$ is a definable subset of $A$ and $g \in G$ such that
$g \cdot A \subseteq A$, then $g \cdot A = A$
\cite[1.2]{poizat87}.
\end{enumerate}
\end{examples}

We do not know if conditions (1) and (2) in the above example are true for an
arbitrary \clminimal structure expanding a field.

\begin{remark}
Assume that each $\K_i$ is a first-order topological structure, and that the
definable basis of the topology of each $\K_i$ is given by the same function
$\Phi(x, \y)$. 
Then, $\K$ is also a first-order topological structure, and $\Phi(x, \y)$
defines a basis for the topology of~$\K$.
If each $\K_i$ is \dminimal, then $\K$ has an existential matroid,  
but it needs not be~\dminimal.

Assume that each $\K$ is \dminimal and satisfies the additional condition
\begin{itemize}
\item[(*)] Every definable subset of $\K_i$ of dimension $0$ is discrete.
\end{itemize}
Then, $\K$ is also \dminimal and satisfies condition~(*).
\end{remark}

\begin{example}
The ultraproduct of o-minimal structures is not necessarily
o-minimal, but it is \dminimal, and satisfies condition~(*).
\end{example}


\section{Dense tuples of  structures}
In this section we assume that $T$ expands the theory of integral domains.
We will extend the results of \S\ref{sec:dense-pairs} to dense tuples of
models of~$T$.

\begin{definizione}
Fix $n \geq 1$.  Let $\Ln$ be the expansion of $\Lang$ by $(n-1)$ new unary
predicates $P_1, \dotsc, P_{n - 1}$.  
Let $\Tn$ be the $\Ln$-expansion of $T$, whose models are sequences $\K_1
\prec \dots \prec \K_{n - 1} \prec \K_n \models T$, where each $\K_i$ is a
proper \clclosed elementary substructure of $\K_{i+1}$.  
Let $T^{nd}$ be the expansion of $T^{n+1}$ saying that $\K_1$ is \cldense
in~$\K_n$.
We also define $T^{0d} := T$.
\end{definizione}
For instance, $T^1 = T$, $\Ttwo$ is the theory we already defined in
\S\ref{sec:dense-pairs}, and $T^{1d} = \Td$.

\begin{lemma}
If $T$ is \clminimal, then $\Tn$ is complete for every $n \geq 1$ (and
therefore coincides with $T^{(n - 1)d}$).
Moreover, $\Tn$ has a (unique) existential matroid~$\mat^n$:
given $\pair{\K_n, \dotsc, \K_1} \models \Tn$, we have
$b \in \mat^n(A)$ iff $b \in \mat(A \K_{n-1})$.
Finally, $\Tn$ is $\mat^n$-minimal.
\end{lemma}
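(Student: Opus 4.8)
I would argue by induction on~$n$, at each step presenting $T^{n+1}$ as the theory of an $\mat^{n}$\hyph closed pair of models of~$\Tn$ and then quoting Lemma~\ref{lem:minimal-pair}. The base case $n=1$ is immediate: $T^{1}=T$ is complete, $\mat^{1}=\mat$ (there is no $\K_{0}$), and $T$ is \clminimal by hypothesis. The case $n=2$ is precisely Lemma~\ref{lem:minimal-pair} together with Lemma~\ref{lem:scl}: $T^{2}$ is complete and \clminimal, its unique existential matroid is $\scl$, and reading $\scl(A)=\mat(A\Am^{*})$ inside a model $\pair{\K_{2},\K_{1}}$ gives $b\in\mat^{2}(A)\iff b\in\mat(A\K_{1})$, i.e.\ the asserted formula with $\K_{2-1}=\K_{1}$.

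For the inductive step, assume $\Tn$ is complete, $\mat^{n}$\hyph minimal, and carries the existential matroid $\mat^{n}$ with $b\in\mat^{n}(A)\iff b\in\mat(A\K_{n-1})$; since $\Tn$ expands an integral domain, this matroid is the unique existential one (Theorem~\ref{thm:cl-unique}) and $\dim^{\mat^{n}}$ is definable, so $(\Tn)^{2}$ and $(\Tn)^{d}$ are both defined. The heart of the step is the identification, up to renaming predicates, of $T^{n+1}$ with $(\Tn)^{2}$. Given a model $\K_{1}\prec\dots\prec\K_{n+1}$ of $T^{n+1}$, set $\mathcal M:=\struc{\K_{1},\dots,\K_{n-1},\K_{n+1}}$ and $\mathcal N:=\struc{\K_{1},\dots,\K_{n-1},\K_{n}}$, $\Ln$\hyph structures whose predicates name $\K_{1},\dots,\K_{n-1}$. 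Transitivity of \clclosure shows $\K_{n-1}$ is \clclosed in $\K_{n+1}$, so $\mathcal M,\mathcal N\models\Tn$; moreover $\mathcal N\neq\mathcal M$, and $\mathcal N$ is $\mat^{n}$\hyph closed in $\mathcal M$ because $\mat^{n}(\mathcal N)\cap\mathcal M=\mat(\mathcal N\,\K_{n-1})\cap\K_{n+1}=\mat(\K_{n})\cap\K_{n+1}=\K_{n}$. Conversely, any $\mat^{n}$\hyph closed pair $\pair{\mathcal M,\mathcal N}$ of $\Tn$\hyph models satisfies $\K_{n-1}\subseteq\mat^{n}(\mathcal N)\cap\mathcal M=\mathcal N$, so its underlying data is exactly a properly nested \clclosed $(n+1)$\hyph tuple of $T$\hyph models. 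The only delicate point is the elementarity $\mathcal N\prec_{\Ln}\mathcal M$ in the ``conversely'' direction — equivalently, that a pair\hyph extension of $\Tn$\hyph models with the same ``small'' predicates and with $\Lang$\hyph elementarily nested universes is itself $\Ln$\hyph elementary. This follows by induction from the analogue of Corollary~\ref{cor:vdd-27} for~$\Tn$, whose hypotheses involve only the already\hyph given $\Lang$\hyph level data (for $n=2$ it is trivial, and the inductive passage uses the ``in particular'' clause of that corollary). Granting this, $T^{n+1}=(\Tn)^{2}$, and since $\Tn$ is \clminimal, Lemma~\ref{lem:minimal-pair} applied to~$\Tn$ gives $(\Tn)^{2}=(\Tn)^{d}$, which is complete by Theorem~\ref{thm:Td-complete} and \clminimal by Lemma~\ref{lem:minimal-pair}; hence $T^{n+1}$ is complete (and equals $T^{nd}$) and is $\mat^{n+1}$\hyph minimal, where $\mat^{n+1}$ denotes the small closure of $(\Tn)^{d}$.

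It then remains to unwind $\mat^{n+1}$. By Lemma~\ref{lem:scl}, $\mat^{n+1}(X)=\scl(X)=\mat^{n}(X\cdot P)$ with $P$ the new small predicate; in the $(n+1)$\hyph tuple picture $P=\K_{n}$, so $\mat^{n+1}(A)=\mat^{n}(A\K_{n})=\mat(A\K_{n}\K_{n-1})=\mat(A\K_{n})$, which is the claimed description with $\K_{(n+1)-1}=\K_{n}$; uniqueness of $\mat^{n+1}$ is again Theorem~\ref{thm:cl-unique}. I expect the main obstacle to be precisely the elementarity claim in the middle paragraph — obtaining $\mathcal N\prec_{\Ln}\mathcal M$ and not merely $\mathcal N\equiv_{\Ln}\mathcal M$ — which forces the induction to carry along enough of the apparatus of \S\ref{sec:dense-pairs} (Corollaries~\ref{cor:vdd-26}--\ref{cor:Td-type-1} and Lemma~\ref{lem:density-closure}) rather than just the three bulleted conclusions of the lemma; everything else is routine bookkeeping with the matroid formula and transitivity of \clclosure.
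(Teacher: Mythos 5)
Your proposal is correct and takes the same route as the paper, whose entire proof is the one-line remark ``iterate $n$ times Lemma~\ref{lem:minimal-pair}''. You have usefully unpacked the substance that this hint leaves implicit: the identification of $T^{n+1}$ with $(\Tn)^{2}$, the transitivity argument for \clclosure, the computation $\mat^{n+1}(A)=\mat^{n}(A\K_{n})=\mat(A\K_{n})$, and uniqueness via Theorem~\ref{thm:cl-unique}. You also correctly isolate the one genuinely delicate point — that $\K_{n}\prec_{\Lang}\K_{n+1}$, which is all that the definition of $T^{n+1}$ hands you, must be upgraded to $\mathcal N\prec_{\Ln}\mathcal M$ in order to view the $(n+1)$-tuple as a model of $(\Tn)^{2}$ — and that this upgrade is exactly the ``in particular'' clause of Corollary~\ref{cor:vdd-27} applied at the level of $\Tn$ (which is legitimate, since $\Tn$ expands an integral domain, is complete and $\mat^{n}$-minimal by the inductive hypothesis, so the machinery of \S\ref{sec:dense-pairs} is available for it). One small misstatement: you locate this elementarity obstacle in the ``conversely'' direction, but it actually lives in the \emph{forward} direction (passing from a $T^{n+1}$-tuple to a $(\Tn)^{2}$-pair); in the backward direction $\mathcal N\prec_{\Ln}\mathcal M$ is given, and restricting to $\Lang$ immediately yields $\K_{n}\prec_{\Lang}\K_{n+1}$. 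That mislabeling does not affect the soundness of your argument, and the rest is routine bookkeeping exactly as you say.
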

\begin{proof}
By induction on~$n$: iterate $n$ times Lemma~\ref{lem:minimal-pair}.
\end{proof}

\begin{corollary}
Assume that $T$ is strongly minimal.
Then, $\Tn$ is complete, and coincides with the theory of tuples
$\K_1 \prec \dots \prec \K_n \models T$.
\end{corollary}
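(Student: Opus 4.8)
The plan is to deduce this corollary from the preceding lemma, which requires only observing that a strongly minimal theory $T$ fits into the hypotheses of that lemma. First I would recall that, by the first item of the Examples following the definition of \clminimal structures, $\monster$ is strongly minimal precisely when $\acl$ is a matroid and $\monster$ is $\acl$\hyph minimal. Thus a strongly minimal $T$ expanding the theory of integral domains is in particular a complete theory with an existential matroid (namely $\acl$, which is nontrivial since the models are infinite fields) which is moreover \clminimal. So the previous lemma applies verbatim: $\Tn$ is complete for every $n \geq 1$, it has a unique existential matroid $\mat^n$, and it is $\mat^n$\hyph minimal.

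It then remains only to identify $\Tn$, for strongly minimal $T$, with the theory of tuples $\K_1 \prec \dots \prec \K_n \models T$ without the \clclosedness clauses. The key point is that when $\mat = \acl$, the condition ``$\K_i$ is \clclosed in $\K_{i+1}$'' is automatic: $\acl_{\K_{i+1}}(\K_i) = \acl(\K_i) \cap \K_{i+1} = \K_i$ because $\K_i \prec \K_{i+1}$ and an elementary substructure is algebraically closed (in the model-theoretic sense) in any extension. Hence in the definition of $\Tn$ every \clclosedness requirement is vacuous, so the models of $\Tn$ are exactly the strictly increasing elementary chains $\K_1 \prec \dots \prec \K_n \models T$. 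Combined with the completeness statement from the lemma, this gives the corollary.

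I do not anticipate a genuine obstacle here; the only thing to be careful about is the bookkeeping between ``$\clclosed$ subset'' in the sense of Definition~\ref{def:dense} (\ie $\mat_\K(X) = X$) and the classical notion of algebraic closedness of a substructure, together with making sure the properness clause ``$\K_i \neq \K_{i+1}$'' is really part of the setup of $\Tn$ so that one is talking about \emph{proper} elementary extensions. One might also remark, for completeness, that the existence of such a chain (\ie consistency of $\Tn$) is already covered: by Lemma~\ref{lem:Td-consistent} one finds a proper \cldense \clclosed pair, and iterating $n-1$ times---using that $T^{(k)d}$ is again a complete theory with an existential matroid, so the same lemma applies---produces the desired tuple.
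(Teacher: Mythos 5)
Your proposal is correct and takes the first of the two routes the paper's one‑line proof mentions ("use the above Lemma"), filling in the details: strongly minimal gives $\acl$‑minimality, so the preceding lemma yields completeness, and the observation that elementary substructures are automatically $\acl$‑closed (Remark~\ref{rem:geometric-density}) identifies $\Tn$ with the theory of proper elementary chains. The paper also mentions an alternative route via Keisler's back‑and‑forth argument using Lemma~\ref{lem:infinite-dimension}, but the one you chose is the same as the paper's primary suggestion.
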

\begin{proof}
One can use either the above Lemma, or reason as in~\cite{keisler}, using 
Lemma~\ref{lem:infinite-dimension}.
\end{proof}

\begin{remark}\label{rem:pair-rank}
Let $\pair{\Bm, \Am}$ be a $\kappa$-saturated model of $T^d$.
Let $U \subseteq \Bm$ be $\Bm$-definable and of dimension~$1$.
Then 
$\rk(U \cap \Am) \geq \kappa$.
\end{remark}

\begin{thm}
$T^{nd}$ is complete.
There is a (unique) existential matroid on~$T^{nd}$.
\end{thm}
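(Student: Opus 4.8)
The plan is to prove both assertions simultaneously by induction on $n$, feeding at each step the results of \S\ref{sec:dense-pairs} and \S\ref{subsec:small-closure} applied not to $T$ but to $T^{(n-1)d}$. For $n=0$ there is nothing to do: $T^{0d}=T$ is complete by hypothesis, carries the existential matroid $\mat$, and this is its only one by Theorem~\ref{thm:cl-unique} (recall $T$ expands the theory of integral domains). The case $n=1$ is $T^{1d}=\Td$, handled by Theorem~\ref{thm:Td-complete} (completeness), Lemma~\ref{lem:scl} (the small closure $\scl$ is an existential matroid), and again Theorem~\ref{thm:cl-unique} (uniqueness). For the inductive step, assume $n\geq 1$ and that $T':=T^{(n-1)d}$ is complete and carries a unique existential matroid $\mat'$. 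The underlying ring of a model $\K_1\prec\dots\prec\K_n$ of $T'$ is that of $\K_n$, so $T'$ still expands the theory of integral domains, and therefore the whole of \S\ref{sec:dense-pairs}--\S\ref{subsec:small-closure} applies verbatim with $T'$ in place of $T$: the dense \clclosed pair theory $(T')^d$ (with \clclosed and \cldense understood with respect to $\mat'$) is consistent (Lemma~\ref{lem:Td-consistent}), complete (Theorem~\ref{thm:Td-complete}), and carries the small closure $\scl$ of $T'$ as an existential matroid (Lemma~\ref{lem:scl}), which by Theorem~\ref{thm:cl-unique} is its only one. Thus it suffices to identify $T^{nd}$, up to a definitional change of language, with $(T')^d$.

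This identification is the technical heart of the argument, and it is where the description of the iterated small closure is used. A model of $(T')^d$ is a pair $\pair{\Bm,\Am}$ with $\Am\prec\Bm\models T'$, $\Am$ proper, $\mat'$-closed and $\mat'$-dense in $\Bm$. Writing $\Bm=\pair{\K_2\prec\dots\prec\K_{n+1}}$ and $\Am=\pair{\K_1'\prec\dots}$, the condition $\Am\prec\Bm$ forces each level of $\Am$ to be $(\text{top of }\Am)\cap\K_i$; and, using the computation of the small closure of $T'$ — iterating the formula $\scl(X)=\mat(X\Am^*)$ of \S\ref{subsec:small-closure} together with Lemma~\ref{lem:cl-small-relative} — one gets $\mat'(X)=\mat(X\K_2)$, i.e. $\mat'$ is $\mat$ relativised to the bottom structure $\K_2$. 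Feeding this into ``$\Am$ is $\mat'$-closed in $\Bm$'' first forces $\K_2\subseteq(\text{top of }\Am)$, so that the intermediate levels of $\Am$ collapse onto $\K_2$ and only a single new level, call it $\K_1:=\K_2$'s image... more precisely only one genuinely new model $\K_1$ is gained below $\K_2$; ``$\mat'$-closed'' then unwinds to ``$\K_1\prec\K_2\prec\dots\prec\K_{n+1}$ has each consecutive pair \clclosed (w.r.t.~$\mat$)'', while ``$\mat'$-dense'' unwinds, together with the density already built into $\Bm\models T'$, to ``$\K_1$ is \cldense in $\K_{n+1}$''. These are exactly the axioms of $T^{nd}$. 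Conversely, a model $\K_1\prec\dots\prec\K_{n+1}$ of $T^{nd}$ gives back the pair $\pair{\pair{\K_3\prec\dots\prec\K_{n+1}},\pair{\K_2\prec\dots\prec\K_{n+1}}}$ of models of $T'$, which is \clclosed and \cldense in the $\mat'$-sense by Corollary~\ref{cor:vdd-27}, Lemma~\ref{lem:density-closure} and the transfer of density through the chain. Finally the predicate introduced in forming $(T')^d$ and the predicates of $\Ln$ are interdefinable with $P_1,\dots,P_n$, so the two theories agree up to a definitional expansion. Granting this, completeness of $T^{nd}$ and the existence of an existential matroid on $T^{nd}$ are immediate from the inductive step, and uniqueness of that matroid follows from Theorem~\ref{thm:cl-unique} since $T^{nd}$ expands the theory of integral domains.

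The main obstacle is exactly this last identification. One must verify carefully (i) that the iterated small closure $\mat'$ of $T^{(n-1)d}$ coincides with $\mat$ relativised to the bottom model of the chain, (ii) that $\mat'$-closedness of the pair $\Am\prec\Bm$ really forces the bottom model of $\Bm$ to lie inside $\Am$ — so that the two stacked chains of length $n$ glue to a single chain of length $n+1$ rather than of length $2n$, the intermediate levels being absorbed precisely because they are already \clclosed — and (iii) that $\mat'$-density of $\Am$ in $\Bm$, together with the density already present inside $\Bm\models T^{(n-1)d}$, amounts to density of the very bottom model in the very top one. Items (ii) and (iii) are the places where no single cited result does the job and the definitions of \S\ref{sec:dense-pairs} and \S\ref{subsec:small-closure} must be combined; everything else is bookkeeping with the translation of languages. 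Once the identification $T^{nd}\cong(T')^d$ is in place, no further work is needed.
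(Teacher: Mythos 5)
Your overall plan—induction on $n$, identifying $T^{nd}$ with $(T^{(n-1)d})^d$ by repeatedly applying the machinery of \S\ref{sec:dense-pairs}--\S\ref{subsec:small-closure} to $T^{(n-1)d}$ in place of $T$—is exactly the paper's strategy, and the observation that $T^{(n-1)d}$ still expands the theory of integral domains (so that the whole apparatus applies and Theorem~\ref{thm:cl-unique} gives uniqueness) is also right. But two things are off.

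First, your description of the iterated small closure is wrong. You write that $\mat'$ is $\mat$ relativised to the \emph{bottom} structure $\K_2$ of the chain $\Bm = \pair{\K_2 \prec \dots \prec \K_{n+1}}$, and that $\mat'$-closedness of $\Am$ therefore forces $\K_2 \subseteq \Am$. In fact, unwinding $\scl(X) = \mat(X\Am^*)$ inductively, the existential matroid of $T^{(n-1)d}$ is $\mat$ relativised to the \emph{second-from-top} model of the chain (the largest predicate), not the bottom: for $\Td$ it is $\mat(\,\cdot\, \K_1)$, for $(\Td)^d$ it is $\mat(\,\cdot\, \K_2)$ where $\K_2$ is the \emph{middle} model, and so on. Consequently the new predicate of $(T^{(n-1)d})^d$ is inserted \emph{second from the top}, not at the bottom. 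This happens to coincide with ``the bottom'' when $n=2$ (the only case the paper writes out), but your general description is incorrect and, more importantly, it makes the subsequent unwinding of the \clclosed and \cldense conditions come out wrong.

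Second, and more seriously, the genuine content of the theorem is precisely the step you flag as item (iii) and then wave at as ``transfer of density through the chain.'' Given a $T^{nd}$-model $\K_1 \prec \dots \prec \K_{n+1}$, one must prove that the second-from-top model is $\scl$-dense in the corresponding pair. None of Corollary~\ref{cor:vdd-27}, Lemma~\ref{lem:density-closure}, nor any bookkeeping of languages does this. The paper's argument for $n=2$ (which by the inductive replacement of $T$ with $T^{(n-2)d}$ is the general argument) goes through Proposition~\ref{prop:3.5}: a $\Td$-definable set $X$ with $\sdim(X) = 1$ is, up to a small symmetric difference, a $T$-definable set $U$ with $\dim(U) = 1$; then $\K_2 \cap U$ has large $\rk$ by Remark~\ref{rem:pair-rank}, while containment in a small set bounds its $\srk$, giving the contradiction when $X \cap \K_2 = \emptyset$. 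This decomposition-plus-rank argument is the substantive step, and it is absent from your proposal. Acknowledging that ``no single cited result does the job'' is not a substitute for supplying the argument; without it the identification $T^{nd} \cong (T^{(n-1)d})^d$ is not established and neither assertion of the theorem follows.
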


\begin{proof}
By induction on~$n$, we will prove
$T^{nd}$ coincides with $(\dots (\Td)^d \dots)^d$ iterated $n$ times.
This implies both that $T^{nd}$ is complete, and that it has an existential
matroid. 

It suffices to treat the case $n = 2$.
Notice that $\pair{\K_2, \K_1} \prec \pair{\K_3, \K_1} \models \Td$.
It suffices to show that $\K_2$ is $\scl$-dense in $\pair{\K_3, \K_1}$.
\Wlog, we can assume that $\pair{\K_3, \K_2, \K_1}$ is $\kappa$-saturated.

Let $X \subseteq \K_3$ be definable in $\pair{\K_3, \K_1}$ (with parameters
from~$\K_3$), such that $\sdim(X) = 1$.
We need to show that $X$ intersects~$\K_2$.
By Corollary~\ref{prop:3.5}, there exists $U$ and $S$ subsets of $\K_3$,
such that $X$ is definable in~$\K_3$, $S$~is definable in $\pair{\K_3,\K_1}$
and small, and $X = U \sdiff S$. Therefore, $\dim(U) = 1$.
If, by contradiction, $X \cap \K_2 = \emptyset$, then
$\K_2  \cap U \subseteq S$; therefore, $\srk(\K_2 \cap U) < \omega$
(where $\srk$ is the rank induced by~$\scl$),
contradicting Remark~\ref{rem:pair-rank}.
\end{proof}

\begin{corollary}
Assume that $T$ is \dminimal.
Then, $T^{nd}$ coincides with the theory of $(n+1)$-tuples
$\K_1 \prec \dots \prec \K_n \prec \K_{n+1} \models T$, such that
$\K_1$ is (topologically) dense in $\K_{n+1}$.
\end{corollary}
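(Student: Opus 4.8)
The plan is to combine two facts already established in the paper: first, that for \dminimal $T$ the theory $\Td$ is exactly the theory of dense elementary pairs (Corollary~\ref{cor:dense-dminimal}); and second, that $T^{nd}$ coincides with $(\dots(\Td)^d\dots)^d$ iterated $n$ times (the theorem immediately preceding this corollary). So the statement is essentially a matter of unwinding these iterated definitions and checking that the ``$\mat$\hyph closed'' conditions appearing in the definition of $\Tn$ become automatic once we know density, exactly as they did in the pair case.

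First I would fix $\pair{\K_{n+1}, \K_n, \dotsc, \K_1}$ a model of $T^{nd}$ and argue by induction on $n$, the base case $n=0$ being trivial and $n=1$ being precisely Corollary~\ref{cor:dense-dminimal}. For the inductive step, I would use the identification $T^{nd} = (T^{(n-1)d})^d$ (from the preceding theorem): a model of $T^{nd}$ is a dense \clclosed pair $\pair{\Bm, \Am}$ where $\Bm \models T^{(n-1)d}$ and $\Am$ is \cldense and \clclosed in $\Bm$ \wrt the matroid $\mat^{(n-1)}$ induced on $T^{(n-1)d}$ (equivalently, \wrt the small closure iterated the appropriate number of times). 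By the inductive hypothesis $\Bm$, as a model of $T^{(n-1)d}$, is just a nested chain $\K_2 \prec \dots \prec \K_{n+1} \models T$ with $\K_2$ topologically dense in $\K_{n+1}$, and $\Am = \K_1$. I then need: (i) that $\K_1 \prec \K_2 \prec \dots \prec \K_{n+1} \models T$, i.e.\ the extra predicate really adds a genuine elementary chain; (ii) that $\K_1$ is topologically dense in $\K_{n+1}$; and conversely (iii) any such topologically dense nested tuple is a model of $T^{nd}$.

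For (ii), the point is that being \cldense \wrt the iterated small closure matroid on $\Bm$ implies being \cldense \wrt the original matroid $\zcl$ on $\K_{n+1}$, because $\mat \subseteq \scl$ (indeed $\zcl \subseteq \mat^{(n-1)}$), so a set of $\zcl$-dimension $1$ in $\K_{n+1}$ has small-closure-dimension $\geq 1$ and hence is met by $\K_1$; and by the \dminimal lemma relating the matroid density to topological density (the lemma stating ``$X$ is \cldense iff $X$ is topologically dense''), $\zcl$-density is topological density. For (iii), the converse direction, I would argue as in the proof of Corollary~\ref{cor:dense-dminimal}: given a topologically dense nested tuple, $\K_1$ is automatically \clclosed in $\K_{n+1}$ (hence in each $\K_j$) by the same topological-group argument used there — if $b \in \mat^{\K_{n+1}}(\K_1)$ then $b$ lies in a discrete $\K_1$-definable set, and one produces a $\K_1$-definable neighbourhood isolating $b$, forcing $b \in \K_1$ — and density \wrt the iterated small closure follows from density \wrt $\zcl$ by running the dimension comparison in the other direction (using the lemma $\sdim(X) = \dim(X)$ on sets definable in the reduct, applied inductively). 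This shows the two theories have the same models, and since $T^{nd}$ is complete, they coincide.

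The main obstacle is bookkeeping the several matroids in play: $\zcl$ on $\K_{n+1}$, the matroid $\mat^{(n-1)}$ on $T^{(n-1)d}$, and the small closure $\scl$ used to build $(T^{(n-1)d})^d$; I must be careful that ``\cldense'' in the definition of $T^{nd}$ (which literally refers to $\K_1$ being \cldense in $\K_n$ \wrt the base matroid, per the definition of $T^{nd}$ as an expansion of $T^{n+1}$) is genuinely equivalent to $\K_1$ being \cldense \wrt the iterated small closure in the pair-of-pairs decomposition — this is where Lemma~\ref{lem:pair-small} and the identity $\sdim = \dim$ on reduct-definable sets do the real work, and where the inductive hypothesis must be invoked at the right level. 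Once the translation between the three notions of density is pinned down, the rest is the pair case repeated.
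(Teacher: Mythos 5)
Your argument routes through the identification $T^{nd} = (\dots(\Td)^d\dots)^d$, but the decomposition on which you base the induction is set up incorrectly. In the outer application of $(\cdot)^d$ to $T^{(n-1)d}$, the new predicate names an elementary $\Ln$-substructure $\Am \prec \Bm \models T^{(n-1)d}$, and for $\Am$ to itself be a model of $T^{(n-1)d}$ its induced predicates $P_i^{\Bm} \cap \Am$ must again form a proper nested elementary chain; this forces the universe of $\Am$ to contain the existing predicates. So the structure peeled off at each stage is $\K_n$, sitting just below the top (this is exactly what the proof of the preceding theorem does for $n=2$: $\pair{\K_2,\K_1} \prec \pair{\K_3,\K_1}$, with $\K_2$ the new set) — not $\K_1$. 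With $\Am = \K_1$ as you have it, the induced predicates all collapse to $\K_1$, so $\Am$ is not a model of $T^{(n-1)d}$ and the inductive step does not parse. Repairing it would require you to characterise when the middle structure is $\scl$-dense and $\scl$-closed in the ambient pair, which is the genuinely hard content of the preceding theorem (via Proposition~\ref{prop:3.5} and Remark~\ref{rem:pair-rank}) and is not supplied by your appeal to the identity $\sdim = \dim$, which only holds for sets definable in the reduct.

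The corollary in fact needs none of this machinery. By definition $T^{nd}$ is $T^{n+1}$ together with the statement that $\K_1$ is \cldense in the top structure, so the only two things separating it from the theory of topologically dense tuples are: (a) the equivalence of $\mat$-density with topological density, which is the (unlabelled) lemma in the \dminimal section; and (b) the fact that the \clclosed{}ness conditions of $T^{n+1}$ are automatic, since $\K_1 \subseteq \K_i$ and $\K_1$ topologically dense in $\K_{n+1}$ make each $\K_i$ topologically dense in $\K_{i+1}$, whence Corollary~\ref{cor:dense-dminimal} applies to each consecutive pair. That two-step observation is the paper's entire proof.
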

\begin{proof}
Notice that if $\pair{\K_n, \dotsc, \K_1}$ satisfy the assumption, 
then, by Corollary~\ref{cor:dense-dminimal}, each $\K_i$ is \clclosed in $\K_n$.
\end{proof}


\subsection{Dense tuples of topological structures}
Assume that $T$ expands the theory of integral domains.
Assume that $\monster$ has both an existential matroid~$\mat$ and a definable
topology (in the sense of~\cite{pillay87}.
Let $\phi(x, \y)$ be a formula such that the family of sets
\[
B_{\bv} := \phi(\monster, \bv),
\]
as $\bv$ varies in $\monster^k$, is a basis of the topology of~$\monster$.
If $\bv = \pair{\bv_1, \dotsc, \bv_m}$, 
we denote by $B^n_{\bv} := B_{\bv_1} \times \dots \times B_{\bv_m} \subseteq \monster^m$.

Assume the following conditions:
\begin{hypothesis*}
\begin{enumerate}[\upshape I.]
\item Every definable non-empty open subset of $\monster$ has dimension~$1$.
\item For every $m \in \N$, every $U$ open subset of $\monster^m$,
and every $\av \in U$, the set
$\set{\bv: \av \in B_{\bv}}$ has non-empty interior.
\end{enumerate}
\end{hypothesis*}

\begin{remark}
Assumption~I implies that a definable subset of $\monster^m$ with non-empty
interior has dimension $m$ (but the converse is not true: there can be
definable subsets of dimension $m$ but with empty interior).
Moreover, it implies that a $\mat$-dense subset of $\monster^m$ is also
topologically dense (but, again, the converse is not true).
\end{remark}

\begin{example}
\begin{enumerate}
\item 
If $\monster$ is either a valued field (with the valuation topology) or a
linearly ordered field (with the order topology), then it satisfies
Assumption~II.
\item
If $\monster$ is a d-minimal structure, then it satisfies Assumption~I.
\item
Let $\monster$ be either a formally $p$-adic field, or an algebraically closed
valued field, or a d-minimal expansion of 
a linearly ordered definably complete field (\cf Example~\ref{ex:dmin}).
Then, $\monster$ satisfies both assumptions.
\end{enumerate}
\end{example}

We have two notions of closure and of density on~$\monster$: 
the ones given by the topology and the ones given by the matroid;
to distinguish them, we will speak about topological closure and
\clclosure respectively (and similarly for density).

The following theorem follows easily from~\cite{BH} (we are assuming that the
Hypothesis holds).
\begin{thm}[{\cite[Corollary~3.4]{BH}}]
Let $\Cm := \pair{\Bm, \Am_{n-1}, \dotsc, \Am_1} \models T^{n d}$.
Let $\cv \subset \Bm$ be \clindependent over $\cv \cap \Am_{n-1}$.
Let $U \subseteq \Bm^m$ be open and definable in~$\Cm$, with parameters~$\cv$.
Then, $U$ is definable in~$\Bm$, with parameters~$\cv$.
\end{thm}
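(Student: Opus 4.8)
The plan is to obtain this as an instance of \cite[Corollary~3.4]{BH}; the work lies in recognising $\Cm$, equipped with the topology it inherits from $\Bm$, as one of the structures to which that result applies, and in checking that the present hypotheses translate into theirs. First recall that, by the preceding theorem, $T^{nd}$ carries a (unique) existential matroid, built by $n$-fold iteration of the dense-pair construction, the matroid at each stage being a small closure $\scl$; moreover on $\Lang$-definable subsets of $\Bm^m$ this matroid has the same dimension as $\mat$, so its dimension $\sdim$ restricts to $\dimat$ on such sets. Both Assumptions of the Hypothesis pass to every intermediate theory of the tower: Assumption~I because a non-empty definable open subset of $\Bm$ is $\Lang$-definable, so its $\mat$-dimension equals its $\sdim$-dimension, and the former is $1$; Assumption~II because it concerns only the topology of $\Bm$, which is unchanged. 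In particular the Remark following the Hypothesis applies at every level, so an $\sdim$-small subset of $\Bm^m$ --- one contained in the small closure of a finite tuple --- has empty interior; this is the topological avatar of smallness we use.

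Next, the hypothesis that $\cv$ is $\mat$-independent over $\cv \cap \Am_{n-1}$ says exactly that $\cv$ is \Uindependent with respect to the largest proper member $\Am_{n-1}$ of the tower, i.e.\ $\cv \indmat_{\cv \cap \Am_{n-1}} \Am_{n-1}$; this is the genericity hypothesis ``$\bv \indmat_{\av} \Am$ and $\bv \cap \Am \subseteq \av$'' of Proposition~\ref{prop:3.5}, taken with $\bv := \cv$ and $\av := \cv \cap \Am_{n-1}$. Applying Proposition~\ref{prop:3.5} (with $X := \Bm^m$, $d := m$, $Y := U$), iterated down the tower in the same way as in the proof that $T^{nd}$ is complete, produces a set $Z \subseteq \Bm^m$ that is $\Lang$-definable with parameters $\cv$ (note $\bv\av = \cv$) together with an $\Ln$-definable, $\sdim$-small set $S$ over $\cv$ with $U \sdiff Z \subseteq S$. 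Since $S$ has empty interior and $U$ is open, the open set $U \cap (\Bm^m \setminus \cl Z)$ lies inside $U \setminus Z \subseteq S$ and is therefore empty; hence $U \subseteq \cl Z$, and since $U$ is open, $U \subseteq \interior(\cl Z)$. The set $\interior(\cl Z)$ is $\Lang$-definable with parameters $\cv$, because interior and closure in the product topology of $\Bm^m$ are expressible with the same parameters via the basic open sets $B_{\bv}$; so it remains to prove the reverse inclusion $\interior(\cl Z) \subseteq U$.

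This reverse inclusion is the crux, and it is what \cite[Corollary~3.4]{BH} delivers. The mechanism is that a point $\av \in \interior(\cl Z)$ has a basic box neighbourhood whose parameter tuple $\bv$ can, by Assumption~II, be chosen in ``general position'' over $\cv$; for such $\bv$ the $\Ln$-statement ``$B_{\bv}$ is contained in $U$'' becomes, via Proposition~\ref{prop:3.5} applied to the enlarged parameter tuple $\cv\bv$, controlled by the $\Lang$-type of $\bv$ over $\cv$, hence by the $\Lang$-type of $\av$ over $\cv$; together with $U \subseteq \interior(\cl Z)$ and Assumption~I this forces $\av \in U$. One then concludes that $U$ is invariant under $\Lang$-elementary maps fixing $\cv$ and, being $\Ln$-definable with parameters $\cv$, is $\Lang$-definable with parameters $\cv$ by a Beth-style argument. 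I expect the main obstacle to be exactly this step: controlling how the small ``$P$-part'' $S$ meets the open set $U$ near its topological boundary, which is precisely where both Assumptions and the parameter bookkeeping of Proposition~\ref{prop:3.5} must be used with care; once that is in place, the argument is the one in \cite{BH}, and verifying that the present Hypothesis is among their standing assumptions is routine.
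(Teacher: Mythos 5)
The paper's own proof reduces to the case $n=2$ by induction and then verifies, directly, the three hypotheses of \cite[Corollary~3.1]{BH} (a more primitive criterion than Corollary~3.4) for the dense set $D_m$ of $\scl$-generic $m$-tuples over $\cv$: (a) $D_m$ is topologically dense, using that $\sdim$-generic tuples over $\cv$ exist in every $\Lang$-definable set of full dimension; (b) every type realised in an open set is realised by a generic point in it, using $\mat$-density of $\Am$ and the box-shrinking trick of Lemma~\ref{lem:neighbourhood-1}; (c) for $\dv \in D_m$, the $\Ltwo$-type over $\cv$ is determined by the $\Lang$-type plus genericity, via the back-and-forth Proposition~\ref{prop:back-and-forth}. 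Your proposal takes a genuinely different route and, as it stands, has a real gap.

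Your route is to apply Proposition~\ref{prop:3.5}, write $U \sdiff Z \subseteq S$ with $Z$ an $\Lang$-definable set over $\cv$ and $S$ an $\sdim$-small $\Ln$-definable set, and then try to recover $U$ as $\interior(\cl Z)$. The inclusion $U \subseteq \cl Z$ is fine: since $S$ has empty interior, $U \setminus \cl Z \subseteq U \setminus Z \subseteq S$ forces $U \setminus \cl Z = \emptyset$. But the reverse inclusion $\interior(\cl Z) \subseteq U$ is where the argument breaks. To deduce it from $Z \setminus U \subseteq S$ you would need $S$ to be \emph{nowhere dense}, not merely of empty interior; under the standing Hypothesis nothing rules out a set of $\sdim < m$ that is topologically dense in some open box (the paper's Remark after the Hypothesis only says non-empty interior forces full dimension, not the converse; and $S$ is $\Ln$-definable, so even the conjectured ``$\dim(\cl X) = \dim X$'' from \S\ref{sec:dmin} would not apply). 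You half-acknowledge this (``the crux''), but the hand-wave towards ``controlling how the small $P$-part $S$ meets the boundary of $U$'' is precisely the missing content, and invoking \cite[Corollary~3.4]{BH} there is circular, since that is the statement under proof. The paper sidesteps this entirely by never trying to reconstruct $U$ from a definable approximant: it checks the \cite[Corollary~3.1]{BH} criterion, where the type-theoretic content is carried by Proposition~\ref{prop:back-and-forth} (uniqueness of $\Ltwo$-types over $P$-independent data) rather than by the $Z \sdiff S$ decomposition of Proposition~\ref{prop:3.5}. If you want to pursue your decomposition route you would need a stronger topological input (something like ``small $\Rightarrow$ nowhere dense''), which the Hypothesis does not provide.
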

In the terminology of~\cite{DMS}, the above theorem proves that $\Bm$ is the
\intro{open core} of~$\Cm$.
\begin{proof}
By induction on $n$, it suffices to do the case when $n = 2$,
\ie when $\Dm = \pair{\Bm, \Am}$.
\Wlog, $\Dm$ is $\lambda$-saturated and $\lambda$-homogeneous, for some
$\abs T < \lambda < \kappa$
We want to verify that the hypothesis of \cite[Corollary~3.1]{BH} is satisfied.
Let $D_m := \set{\bv \in \Bm^m: \srk(\bv/\cv) = m}$.
\begin{enumerate}
\item 
If $V \subseteq \Bm^n$ is $\Bm$-definable and of dimension~$m$, then 
$V \cap D_m$ is non-empty: therefore, $D_m$ is topologically dense in $\Bm^m$.
\item
Let $\dv \in D_m$, and $U \subseteq \monster^m$ be open, such that
$p := \tp^1(\dv/\cv)$ is realized in~$U$.
We have to show that $p$ is realized in $U \cap D_m$.
Let $\dv' \in U$ be such a realization, and let $\bv \subset \Bm$ such
that $\dv' \in B_{\bv}$.
Since $\dv' \elem^1_{\cv} \dv$, we have that $\dv'$ is \clindependent 
over~$\cv$.
By changing $\bv$ if necessary, we can also assume that $\dv' \ind \bv \cv$
(\cf the proof of Lemma~\ref{lem:neighbourhood-1}), and thus $\dv'$ is
\clindependent over $\bv \cv$.
Finally, since $\Am$ is $\mat$-dense in $\Bm$, there exists
$\dv'' \elem^1_{\bv \cv} \dv$ such that $\dv''$ is \clindependent over
$\bv \cv \Am$.
\item
By Proposition~\ref{prop:back-and-forth}, for every $dv \in D_m$, 
$\tp^2(\dv / \cv)$ is determined by $\tp^1(\dv / \cv)$ in conjunction with 
``$\dv \in D_m$''.
\end{enumerate}
Hence, we can apply \cite[Corollary~3.1]{BH} and we are done.
\end{proof}



\section{The (pre)geometric case}

Remember that $\monster$ is a pregeometric structure if $\acl$ satisfies EP.
If moreover $\monster$ eliminates the quantifier $\exists^\infty$, then
$\monster$ is geometric.

In this section we gather various results about (pre)geometric structures,
mainly in order to clarify and motivate the general case of structures with an
existential matroid. 

Remember that $\monster$ has geometric elimination of imaginaries if every
for imaginary tuple $\av$ there exists a real tuple $\bv$ such that $\av$ and
$\bv$ are inter-algebraic.

\begin{remark}
$T$~is pregeometric iff $T$ is a real-rosy theory of real \th-rank~1.
Moreover, if $T$ is pregeometric and has geometric elimination of imaginaries,
then $\tind = \ind[$\acl$]$,  and $\dim^{\acl}$ is equal to the \th-rank: see
\cite{EO} for definitions and proofs.
\end{remark}

\begin{remark}
The model-theoretic algebraic closure $\acl$ is a definable
closure operator.
\end{remark}

\begin{fact}
Let $\monster$ be a definably complete and \dminimal expansion of a field.
Then, $\monster$ has elimination of imaginaries an \DSF; moreover,
$\monster$ is rosy iff it is o-minimal.
In particular, an ultraproduct of o-minimal structures expanding a field is
rosy iff it is o-minimal.
\end{fact}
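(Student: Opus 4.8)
The plan is to establish the assertions about $\monster$ one by one and then transfer the last one to the ultraproduct.

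\emph{Definable Skolem functions and elimination of imaginaries.} Since $\monster$ is definably complete it expands an ordered field, and by \dminimality every $\monster$-definable $X\subseteq K$ is a union $U\cup D$ of a definable open set $U$ and a finite union $D$ of definable discrete sets. Using the supremum and infimum operators supplied by definable completeness --- applied to the bounded parts of $U$ after the definable reparametrisation $x\mapsto x/(1+x^{2})$, and to a ``leftmost'' point of each discrete piece of $D$ --- one extracts a definable choice function for $\monster$-definable subsets of $K$; projecting onto the first coordinate and inducting on the arity then yields \DSF. (The details, in the style of cell decomposition for Miller's \dminimal structures, I would carry out separately.) Granting \DSF, elimination of imaginaries follows from the standard criterion: $\monster$ has \DSF and, being an expansion of a field, codes each finite subset of $K^{n}$ by the coefficients of a polynomial vanishing exactly on it, and these two facts together give elimination of imaginaries (\cf \cite{CF}).

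\emph{The equivalence of rosiness and o-minimality.} That o-minimality implies rosiness is classical (an o-minimal theory is super-rosy, with \th-rank equal to the o-minimal dimension). For the converse, assume $\monster$ is rosy. By the Theorem in \S\ref{sec:dmin}, $\zcl$ is an existential matroid, so $\ind[$\zcl$]$ is a symmetric independence relation (in the sense of \S\ref{sec:matroid}) carrying the anti-reflexivity $a\indmat_{X}a\iff a\in\zcl(X)$; as \th-forking is the weakest strict independence relation (\cite{adler}, \cite{EO}), we get $A\ind[$\zcl$]_{C}B\Rightarrow A\tind_{C}B$. Now let $a\in K$ and $C\subseteq D$: if $\tp(a/D)$ \th-forked over $C$ then $a\notind[\th]_{D}C$, hence $a\notind[$\zcl$]_{D}C$, hence (by the rank characterisation of $\ind[$\zcl$]$) $\zrk(a/D)=1$ while $a\in\zcl(CD)$; but $C\subseteq D$ gives $\zcl(CD)=\zcl(D)$, so $a\in\zcl(D)$, contradicting $\zrk(a/D)=1$. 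Thus real $1$-types never \th-fork, so $\monster$ has real \th-rank $1$ (it is $\geq 1$ since $K$ is infinite); being real-rosy, $\monster$ is then pregeometric by the Remark above.

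Since $\monster$ expands a field, $\acl$ is now a non-trivial definable matroid; it also satisfies Existence (a non-algebraic type has a non-algebraic extension, \eg a coheir), hence is an existential matroid, and Theorem~\ref{thm:cl-unique} forces $\acl=\zcl$. Consequently any definable subset of $K$ with empty interior is contained in $\acl$ of its parameters, hence is finite. Now for an arbitrary $\monster$-definable $X\subseteq K$ write $X=\interior(X)\cup(X\setminus\interior(X))$: the second summand has empty interior, hence is finite; and $\interior(X)$ is open with boundary of empty interior, hence finite, so by definable completeness (which forbids proper nonempty definable clopen subsets of an interval) $\interior(X)$ is a finite union of intervals. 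Therefore $X$ is a finite union of points and intervals, \ie $\monster$ is o-minimal.

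\emph{The ultraproduct statement.} Let $\K=\prod_{i\in I}\K_{i}/\mu$ be an ultraproduct of o-minimal expansions of fields. Each $\K_{i}$ is definably complete and, being densely ordered o-minimal, \dminimal; definable completeness is expressible by a first-order scheme and so passes to $\K$; $\K$ is \dminimal by the ultraproduct results above; and $\K$ expands a field. Hence a monster model of $\Theory(\K)$ meets the hypotheses of the first part, and $\K$ is rosy iff it is o-minimal. The main obstacle I foresee is the proof of \DSF for \dminimal expansions of fields (deferred here, as with the analogous deferral in Example~\ref{ex:dmin}); a subtler point is the passage from ``$\ind[$\zcl$]$ is an independence relation'' to ``$\ind[$\zcl$]$ implies \th-forking independence'', where one must check that the closure operator attached to $\ind[$\zcl$]$ being $\zcl$ rather than $\acl$ causes no loss when invoking the minimality of \th-forking.
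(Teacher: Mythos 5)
The paper explicitly postpones the proof of this Fact to a later work, so there is no in-paper argument to compare against; your proposal is necessarily an independent attempt, and the first claim (\DSF and elimination of imaginaries) you defer just as the paper does.

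The core of your ``rosy implies o-minimal'' argument has a genuine gap, located exactly where you flag it at the end. You appeal to ``\th-forking is the weakest \emph{strict} independence relation'' to conclude that $\ind[$\zcl$]$ implies $\tind$. But $\ind[$\zcl$]$ is not a strict independence relation in Adler's sense: strictness requires anti-reflexivity with respect to $\acl$ (that $a\ind_{C}a$ forces $a\in\acl(C)$), whereas for $\ind[$\zcl$]$ one has $a\ind[$\zcl$]_{C}a$ iff $a\in\zcl(C)$, and in a \dminimal structure that is not already pregeometric, $\zcl(C)$ is strictly larger than $\acl(C)$. So Adler's minimality theorem for $\tind$ does not apply to $\ind[$\zcl$]$; and requiring it to apply --- i.e.\ requiring $\zcl(C)\subseteq\acl(C)$ --- is essentially the conclusion (pregeometricity, hence $\acl=\zcl$ by Theorem~\ref{thm:cl-unique}) you are heading for, so the step is circular as written. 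There are also subsidiary slips in the same paragraph: the base and non-base sets are swapped ($\tp(a/D)$ \th-forking over $C$ means dependence \emph{over the base} $C$, so with the subscripts corrected one deduces $\zrk(a/C)=1$ and $a\in\zcl(CD)$, which is not a contradiction), and ``real $1$-types never \th-fork'' is both false and not the statement you need; the correct intermediate claim is that a chain of \th-forking extensions of a real $1$-type has length at most one, because each \th-forking step strictly drops $\zrk$, which is $\set{0,1}$-valued --- that is what gives real \th-rank $1$. The remainder of the plan --- from real-rosy of real \th-rank $1$ to pregeometric and (being a field expansion) geometric, to $\acl$ being the unique existential matroid so $\acl=\zcl$, to $\zcl$-dimension-$0$ sets being finite, to o-minimality via definable completeness, and the ultraproduct transfer --- is coherent, but it hinges on a substitute for the minimality-of-\th-forking step, which you have not supplied.
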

The proof of the above fact will be given elsewhere.

For the remainder of this section, $\monster$ is pregeometric (and $T$ is its
theory). 

\begin{remark}
$\acl$ is an existential matroid on $\monster$.
The induced independence relation $\ind[$\acl$]$ coincides with
real \th-independence $\tind$ and with the $M$-dividing
notion $\mind$ of~\cite{adler}.
A formula is \xnarrow (for $\acl$) iff it is algebraic in~$x$.
\end{remark}

\begin{remark}
Let $X \subseteq \monster^n$ be definable.
$\dim^{\acl}(X) = 0$ iff $X$ is finite.
\end{remark}

\begin{remark}
$\monster$ is geometric iff $\dim^{\acl}$ is definable.
\end{remark}

\begin{remark}
$\monster$ is $\acl$-minimal iff it is strongly minimal.
\end{remark}

In \S \ref{sec:imaginary} we defined $\tilde \acl$, the extension of $\acl$ to
the imaginary sorts.

\begin{remark}
If $a$~is real and $B$ is imaginary, then
$a \in \tilde\acl(B)$ iff $a \in \acleq(B)$.
\end{remark}

\begin{remark}
\Tfae:
\begin{enumerate}
\item $\acleq$ coincides with the extension of $\acl$ defined in \S\ref{sec:imaginary};
\item $T$ is superrosy of \th-rank~1 \cite{EO};
\item $T$ is surgical \cite{gagelman}.
\end{enumerate}
\end{remark}

\begin{remark}\label{rem:geometric-density}
$X$ is dense in $\monster$ iff for every $U$ infinite definable subset 
of~$\monster$,  $U \cap X \neq \emptyset$.
If $\F \preceq \K$, then $\F$ is $\acl$-closed in~$\K$.
\end{remark}

\begin{remark}
Assume that $T$ is geometric.
Then, $\Ttwo$ is  the theory of pairs $\pair{\K,\F}$, with $\F \prec \K
\models T$, and  $\Td$ is the theory of pairs $\pair{\K,\F} \models \Ttwo$, 
such that $\F$ is dense in~$\K$.
For every $X \subseteq \K$, $\scl(X) = \acl(\F X)$ (\cf
Question~\ref{question:dmin-scl-1}).
\end{remark}

For more on the theory $\Td$ in the case when $T$ is geometric, and in
particular when $T$ is o-minimal, see \cite{berenstein}.

\section*{Acknowledgments}
I thank H.~Adler, A.~Berenstein,
G.~Boxall, J.~Ramakrishnan, K.~Tent, and M.~Ziegler for
helping me to understand the subject of this article.


\end{document}